\newtheorem{theorem}{Theorem}[section]
\newtheorem{lemma}[theorem]{Lemma}
\newtheorem{corollary}[theorem]{Corollary}
\newtheorem{proposition}[theorem]{Proposition}
\newtheorem{Prob}[theorem]{Problem}
\newtheorem{claim}[theorem]{Claim}
\theoremstyle{definition}
\newtheorem{definition}[theorem]{Definition}
\newtheorem{example}[theorem]{Example}
\newtheorem{remark}[theorem]{Remark}
\newcommand{\tf}{\ensuremath{\widetilde{f}}\xspace}
\newcommand{\tv}{\ensuremath{\widetilde{v}}\xspace}
\newcommand{\tG}{\ensuremath{\widetilde{G}}\xspace}
\newcommand{\tX}{\ensuremath{\widetilde{X}}\xspace}
\newcommand{\cA}{\ensuremath{\mathcal{A}}\xspace}
\newcommand{\cB}{\ensuremath{\mathcal{B}}\xspace}
\newcommand{\cC}{\ensuremath{\mathcal{C}}\xspace}
\newcommand{\cE}{\ensuremath{\mathcal{E}}\xspace}
\newcommand{\cF}{\ensuremath{\mathcal{F}}\xspace}
\newcommand{\cG}{\ensuremath{\mathcal{G}}\xspace}
\newcommand{\cH}{\ensuremath{\mathcal{H}}\xspace}
\newcommand{\cS}{\ensuremath{\mathcal{S}}\xspace}
\newcommand{\cT}{\ensuremath{\mathcal{T}}\xspace}
\newcommand{\cX}{\ensuremath{\mathcal{X}}\xspace}
\newcommand{\N}{\ensuremath{\mathbb{N}}\xspace}
\newcommand{\R}{\ensuremath{\mathbb{R}}\xspace}
\newcommand{\Z}{\ensuremath{\mathbb{Z}}\xspace}
\newcommand{\bfG}{\ensuremath{G\xspace}}
\newcommand{\fcom}[1]{\ensuremath{F(#1)}\xspace}      \newcommand {\thi}[1]{\ensuremath{Th(#1)}\xspace} \DeclareMathOperator{\He}{Helly}
\DeclareMathOperator{\rk}{rk}
\DeclareMathOperator{\cP}{\mathcal{P}}
\newcommand{\hR}{\ensuremath{\widehat{R}}\xspace}
\newcommand{\dm}{\ensuremath{\bar{d}}\xspace}
\newcommand{\du}[3]{\ensuremath{{#1} \bowtie_{#3} {#2}}\xspace}
\newcommand{\mr}{\mathrm}
\newcommand{\conf}{\mathop{\rm conf}\xspace}
\newcommand{\helly}{\mathop{\rm Helly}\xspace}
\def\lgate{{\langle\langle}}
\def\rgate{{\rangle\rangle}}
\numberwithin{equation}{section}
\newcommand{\diam}{\mathop{\rm diam} }
\newcommand{\catz}{$\mathrm{CAT}(0)$ }
\newcommand{\cftf}{$\mathrm{C}(4)\mathrm{-T}(4)$ }
\begin{document}

\title[Helly groups]{Helly groups}

\begin{abstract}
Helly graphs are graphs in which every family of pairwise intersecting balls has a non-empty intersection.
This is a classical and widely studied class of graphs. In this article
we focus on groups acting geometrically on Helly graphs -- \emph{Helly groups}. We provide numerous examples
of such groups: all (Gromov) hyperbolic, \catz cubical, finitely presented graphical \cftf small cancellation groups, and type-preserving uniform lattices in Euclidean buildings of type $C_n$ are
Helly; free products of Helly groups with amalgamation over finite subgroups, graph products of Helly groups,
some diagram products of Helly groups, some right-angled graphs of Helly groups, and quotients of Helly groups by finite normal subgroups are Helly.
We show many properties of Helly groups: biautomaticity, existence of finite dimensional models for
classifying spaces for proper actions, contractibility of asymptotic cones, existence of EZ-boundaries, satisfiability of the Farrell-Jones conjecture
and of the coarse Baum-Connes conjecture. This leads to new results for some classical families of groups (e.g.\ for FC-type Artin groups)
and to a unified approach to results obtained earlier.
\end{abstract}

\author[J.\ Chalopin]{J\' er\'emie Chalopin}
\address{Laboratoire d'Informatique et Syst\`emes, CNRS and Aix-Marseille Universit\'e,  Marseille, France}
\email{jeremie.chalopin@lis-lab.fr}

\author[V.\ Chepoi]{Victor Chepoi}
\address{Laboratoire d'Informatique et Syst\`emes, Aix-Marseille Universit\'e and CNRS, Marseille, France}
\email{victor.chepoi@lis-lab.fr}

\author[A. Genevois]{Anthony Genevois}
\address{D\'epartement de Math\'ematiques B\^atiment 307, Facult\'e des Sciences
d'Orsay Universit\'e Paris-Sud, F-91405 Orsay, France}
\email{anthony.genevois@math.u-psud.fr}

\author[H.\ Hirai]{Hiroshi Hirai}
\address{Graduate School of Mathematics,
	Nagoya University, 
	Furocho, Chikusaku, Nagoya, 464-8602, Japan}
\email{hirai.hiroshi@math.nagoya-u.ac.jp}

\author[D.\ Osajda]{Damian Osajda}
\address{Instytut Matematyczny,
Uniwersytet Wroc\l awski\\
pl.\ Grunwaldzki 2/4,
50--384 Wroc{\l}aw, Poland}
\address{Institute of Mathematics, Polish Academy of Sciences\\
	\'Sniadeckich 8, 00-656 War\-sza\-wa, Poland}
\email{dosaj@math.uni.wroc.pl}

\date{\today}

\maketitle

\tableofcontents

\section{Introduction}\label{s:intro}

\subsection{Motivations and main results}
\label{s:intro_main}
A geodesic metric space is \emph{injective} if any family of pairwise intersecting balls has a non-empty
intersection \cite{AroPan1956}. Injective metric spaces appear independently in various fields of mathematics and computer science:
in topology and metric geometry -- also known as \emph{hyperconvex spaces} or \emph{absolute retracts} (in the category of
metric spaces with $1$-Lipschitz maps); in combinatorics -- also known as \emph{fully spread spaces}; in functional analysis and fixed point theory -- also known as \emph{spaces with binary intersection property}; in the theory of algorithms -- known as \emph{convex hulls}, and elsewhere. They form a very natural and important class
of spaces and have been studied thoroughly. The distinguishing  feature of injective spaces is that any metric space admits an {\it injective hull}, i.e.,
the smallest injective space into which the input space isometrically embeds;  this important result was  rediscovered several times in the past \cite{Isbell64,Dress1984,ChLa94}.
\medskip

A discrete counterpart of injective metric spaces are \emph{Helly graphs} -- graphs in which any family of pairwise intersecting (combinatorial) balls has a non-empty
intersection. Again, there are many equivalent definitions of such graphs, hence they are also known as e.g.\ \emph{absolute retracts} (in the category of graphs with nonexpansive maps) \cite{BP-absolute,BaPr,JaMiPou,Qui,Pesch87,Pesch88}.
\medskip

As the similarities in the definitions suggest, injective metric spaces and Helly graphs exhibit a plethora of analogous features. A simple but important example of an injective metric space is $(\mathbb R^n,d_{\infty})$, that is, the $n$-dimensional real vector space with the metric coming from the supremum norm.
The discrete analog is $\boxtimes_1^n L$, the direct product of $n$ infinite lines $L$, which embeds isometrically into $(\mathbb R^n,d_{\infty})$ with vertices being the points with integral coordinates.
The space $(\mathbb R^n,d_{\infty})$ is quite different from the `usual' Euclidean $n$-space $\mathbb{E}^n=(\mathbb R^n,d_{2})$. For example, the geodesics between two points in $(\mathbb R^n,d_{\infty})$ are not unique, whereas such uniqueness
is satisfied in the `nonpositively curved' $\mathbb{E}^n$. However, there is a natural `combing' on $(\mathbb R^n,d_{\infty})$ -- between any two points there is a unique `straight' geodesic line. More generally, every
injective metric space admits a unique geodesic bicombing of a particular type (see Subsection~\ref{s:bicombing} for details). The existence of such a bicombing allows us to conclude many properties
typical for nonpositively curved -- more precisely, for \catz -- spaces. Therefore, injective metric spaces
can be seen as metric spaces satisfying some version of `nonpositive curvature'.
Analogously, Helly graphs and the associated \emph{Helly complexes} (that is, flag completions of Helly graphs), enjoy many nonpositive-curvature-like features. Some of them were exhibited in our earlier work: in \cite{CCHO} we prove e.g.\ a version of the Cartan-Hadamard theorem for Helly complexes. Moreover, the construction of the injective hull associates with every Helly graph an  injective metric space into which the graph embeds isometrically and coarsely surjectively. For the example presented above, the injective hull of
$\boxtimes_1^n L$ is $(\mathbb R^n,d_{\infty})$.
\medskip

Exploration of groups acting nicely on nonpositively curved complexes is one of the main activities in
Geometric Group Theory. In the current article we initiate the study of groups acting geometrically (that is, properly and cocompactly, by automorphisms) on Helly graphs. We call them \emph{Helly groups}. We show that the class is vast -- it contains many large classical families of groups (see Theorem~\ref{t:classes} below), and is closed
under various group theoretic operations (see Theorem~\ref{t:operations}). In some instances, the Helly group structure is the only known nonpositive-curvature-like structure. Furthermore, we show in Theorem~\ref{t:properties1} that Helly groups satisfy some strong algorithmic, group theoretic, and
coarse geometric properties. This allows us to derive new results for some classical groups and present
a unified approach to results obtained earlier.

\begin{theorem}
	\label{t:classes}
	Groups from the following classes are Helly:
	\begin{enumerate}
		\item \label{t:classes(1)} groups acting geometrically on graphs with `near' injective metric hulls, in particular, (Gromov) hyperbolic groups;
		\item \label{t:classes(2)} \catz cubical groups, that is, groups acting geometrically on \catz cube complexes;
		\item \label{t:classes(3)} finitely presented graphical \cftf small cancellation groups;
		\item \label{t:classes(4)} groups acting geometrically on swm-graphs, in particular, type-preserving uniform lattices in Euclidean buildings of type $C_n$.
	\end{enumerate}
\end{theorem}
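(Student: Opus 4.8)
The plan is to treat each of the four classes separately, in every case producing a Helly graph that carries a geometric action of the group in question; once such a graph is exhibited, the group is Helly by definition. The common engine is the operation of \emph{thickening}: from a graph $X$ one forms \thick{X} by joining two vertices whenever they lie in a common clique/cube/cell, which amounts to passing from an $\ell^1$-type metric to an $\ell^\infty$-type one — precisely the passage that sends the nonpositively curved model $(\mathbb{R}^n,d_2)$ to its injective analogue $(\mathbb{R}^n,d_\infty)$. Since thickening commutes with the automorphism action and, on a finite-dimensional cocompact complex, distorts distances only by a factor controlled by the dimension, a geometric action on $X$ yields a geometric action on \thick{X}. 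The entire difficulty is thereby concentrated in verifying that the relevant thickening (or hull) is Helly, i.e.\ that pairwise intersecting balls have a common point.

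I would argue class~(4) most directly. The key input is the structural fact that the thickening of an swm-graph is a Helly graph — this is where the weak-modularity machinery of \cite{CCHO} enters — so any group acting geometrically on an swm-graph acts geometrically on its Helly thickening. It then remains to check that the $1$-skeleton of a Euclidean building of type $C_n$ is an swm-graph and that a type-preserving uniform lattice acts on it properly and cocompactly; the lattice statement reduces to the building itself being an swm-complex, which one reads off from its residue/folding structure. Class~(2) fits the same pattern: the $1$-skeleton of a \catz cube complex is a median graph, a prototypical swm-graph, its thickening \thick{X} (two vertices adjacent iff they span a common cube) is Helly, and a \catz cubical group acts geometrically on \thick{X}. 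Class~(3) I would bring into this framework by showing that the $1$-skeleton of the graphical \cftf small cancellation complex, or a bounded modification of it, is weakly modular of the appropriate type, so that its thickening is again Helly; here the combinatorics of reduced disk diagrams forced by the \cftf conditions plays the role that nonpositive curvature plays in the cubical case, and finite presentation guarantees properness and cocompactness.

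Class~(1) is different in flavour, and I expect it to be the main obstacle, because there is no cube or diagram combinatorics to lean on. For a group $G$ acting geometrically on a graph $X$ whose injective metric hull $E(X)$ is \emph{near} $X$ — meaning $X$ is coarsely dense in $E(X)$ and $G$ continues to act cocompactly on $E(X)$ — the strategy is to build a genuine Helly graph out of $E(X)$ by a discrete Helly-hull construction and to prove that it remains within bounded Hausdorff distance of $X$, so that the action stays geometric. The hyperbolic case is then exactly the verification of this hypothesis: by Lang's theorem $G$ acts geometrically on the proper, finite-dimensional injective hull $E(X)$, and Gromov hyperbolicity forces $X$ to be coarsely dense in $E(X)$ with the hull confined to a uniformly bounded neighbourhood of $X$.

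The delicate points, and where I anticipate the real work, all lie in class~(1): controlling the properness and finite dimensionality of the discrete Helly hull, and proving that the passage from the continuous injective hull to an honest Helly graph preserves cocompactness of the action. In the thickening-based classes the Helly property is supplied wholesale by the weak-modularity results of \cite{CCHO}, so the effort there is the more routine matter of recognising each model complex as (a bounded modification of) an swm- or median complex; by contrast, in the injective-hull case one must manufacture the Helly graph by hand and quantitatively track distances through the hull construction.
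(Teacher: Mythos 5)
Your routes for classes (1), (2) and (4) are essentially the paper's: hyperbolic (more generally, coarse Helly with stable intervals) groups via Lang's properness theorem, the bounded-distance property of $E(X)$, and a geometric action on the discrete injective hull $E^0(X)=\He(X)$; cubical and swm-groups via the Helly thickenings of median and swm-graphs. One small correction on (4): the paper does not show that the $1$-skeleton of a type-$C_n$ building is itself an swm-graph; it invokes the construction of \cite{CCHO} associating to the building $\Delta$ an auxiliary swm-graph $H(\Delta)$ on which any type-preserving geometric action induces a geometric action. Also, in (1) you assume cocompactness of the action on $E(X)$ as part of the hypothesis, whereas the paper derives both cocompactness and properness of the action on $E^0(G)$ from properness of $E(G)$ plus the bounded distance property alone.

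The genuine gap is in class (3). You propose to show that the $1$-skeleton of the graphical \cftf complex is ``weakly modular of the appropriate type, so that its thickening is again Helly,'' with the Helly property then ``supplied wholesale by the weak-modularity results of \cite{CCHO}.'' This would fail on two counts. First, weak modularity is nowhere near sufficient for a thickening to be Helly: Hellyness of a thickening requires a gated cell structure satisfying the 3-cell and graded monotonicity conditions (Proposition~\ref{graph-cell-complex}), which is exactly what \cite{CCHO} provides for median/swm-graphs and what is absent for small cancellation complexes (systolic graphs, for instance, are weakly modular, yet the paper treats the $7$-systolic case by nerve graphs, not thickening). Second, the actual proof does not pass through weak modularity at all: the thickening is defined on the cone-cells (relators), and reduced disk diagram arguments establish that relators embed, that pairwise intersections of relators are nonempty finite trees, that three pairwise-intersecting relators triply intersect, and---the crucial $T(4)$ input---that the intersection of any two of three pairwise-intersecting relators is contained in the third (Lemmas~\ref{lem:intersections} and~\ref{l:c4t4strongH}). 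This makes the relator hypergraph \emph{triangle-free}, whence its $2$-section is clique-Helly by Proposition~\ref{hypergraph-clique-Helly}; Borsuk's nerve theorem gives simple connectedness and the local-to-global Theorem~\ref{t:lotogloHell} concludes. Your instinct that disk diagram combinatorics replaces curvature is right, but it is deployed to verify the triangle-free Helly-type conditions for the relator system directly, not to place the complex in the swm framework. (For the \emph{classical} \cftf case only, a weak-modularity route does exist in the paper, but via Hoda's quadric complexes---hereditary modular $1$-skeletons, coarse Helly with $1$-stable intervals, then Hellyfication---not via thickening, and it does not cover the graphical case.)
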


As a result of its own interest, as well as a potentially very useful tool for establishing Hellyness of groups (in particular, used successfully in the current paper) we prove the following theorem. The \emph{coarse Helly} property is a natural `coarsification' of the Helly property, and the property of \emph{$\beta$-stable intervals} was
introduced by Lang \cite{Lang2013} in the context of injective metric spaces and is related to Cannon's property of having finitely many cone types (see Subsection~\ref{s:intro_org} of this Introduction
for further explanations).

\begin{theorem}\label{t:prop-coarse-Helly-groups-Helly}
	A group acting geometrically on a coarse Helly graph with
	$\beta$-stable intervals is Helly.
\end{theorem}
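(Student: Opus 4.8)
The plan is to realize the desired Helly graph as the combinatorial injective hull of $X$. Recall from \cite{CCHO} that every graph $X$ embeds isometrically into a canonical Helly graph $\mathcal{H}(X)$ -- the set of integer points of the injective hull of the vertex set $(V(X),d)$ -- and that this construction is functorial: every automorphism of $X$ extends uniquely to an automorphism of $\mathcal{H}(X)$. Consequently the geometric $G$-action on $X$ induces an action of $G$ on $\mathcal{H}(X)$ by automorphisms. Since $\mathcal{H}(X)$ is Helly by construction, it remains only to upgrade this induced action to a geometric one, and this is exactly where the two hypotheses enter, playing complementary roles.

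First I would use the coarse Helly property to show that the isometric embedding $X \hookrightarrow \mathcal{H}(X)$ is coarsely surjective, i.e.\ that there is a constant $D$ so that every vertex of $\mathcal{H}(X)$ lies within distance $D$ of $X$. The point is that $\mathcal{H}(X)$ is built by adjoining the intersection points of pairwise intersecting families of balls that $X$ itself fails to realize; coarse Helly asserts precisely that such intersections are already populated in $X$ up to a uniform error $\delta$, so that no point of $\mathcal{H}(X)$ is created beyond bounded distance from $X$. Combined with cocompactness of $G \curvearrowright X$, coarse surjectivity then yields cocompactness of $G \curvearrowright \mathcal{H}(X)$.

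Next I would invoke the $\beta$-stability of intervals to control the local and asymptotic combinatorics of $\mathcal{H}(X)$. Following Lang \cite{Lang2013}, $\beta$-stable intervals force the injective hull of $(V(X),d)$ to be finite-dimensional and locally finite at its integer points, with a uniform bound on the number of such points in a ball of given radius. This guarantees that $\mathcal{H}(X)$ is a genuine locally finite graph and that the induced action $G \curvearrowright \mathcal{H}(X)$ is metrically proper, properness transferring from $X$ along the coarse equivalence and finite vertex-stabilizers being preserved. Together with the previous step, $G \curvearrowright \mathcal{H}(X)$ is proper and cocompact, hence geometric, so $G$ is Helly.

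I expect the second step to be the main obstacle: one must translate the purely coarse-geometric $\beta$-stability hypothesis into the fine combinatorial statement that $\mathcal{H}(X)$ is locally finite with uniformly bounded local complexity. Concretely, one has to show that stability of intervals bounds both the dimension of the cells of the injective hull and the number of integer points they carry, uniformly over $X$ -- a quantitative strengthening of Lang's finite-dimensionality results adapted to the discrete (graph) setting. By contrast, the coarse-surjectivity argument of the first step should be a relatively direct unwinding of the definitions of coarse Helly and of the hull $\mathcal{H}(X)$.
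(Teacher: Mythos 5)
Your proposal follows essentially the same route as the paper's: there, too, one acts on the Hellyfication $E^0(G)$, converts the coarse Helly property into the bounded distance property of the injective hull (Proposition~\ref{coarse}) to obtain cocompactness, and invokes Lang's theorem (Theorem~\ref{t:langstable}) via $\beta$-stable intervals to obtain properness, exactly as in Proposition~\ref{prop-proper-inj-hull-Helly}. The obstacle you anticipate in your second step does not in fact arise: Lang's theorem already gives properness of $E(G)$ as a metric space, and since $E^0(G)$ is a discrete subspace, its balls are compact hence finite, so local finiteness and properness of the action follow without any quantitative strengthening of Lang's results.
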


Furthermore, it has been shown recently in \cite{HuangO} that FC-type Artin groups and weak Garside groups of finite type are Helly. The latter class contains e.g. fundamental groups of the complements of complexified finite simplicial arrangements of hyperplanes; braid groups of well-generated complex reflection groups; structure groups of non-degenerate, involutive and braided set-theoretical solutions of the quantum Yang-Baxter equation; one-relator groups with non-trivial center and, more generally, tree products of cyclic groups. Conjecturally, there are many more Helly groups -- see the discussion in Section~\ref{s:questions}.

\begin{theorem}
	\label{t:operations}
	Let $\Gamma,\Gamma_1,\Gamma_2,\ldots,\Gamma_n$ be Helly groups. Then:
	\begin{enumerate}
		\item \label{t:operations(1)} a free product $\Gamma_1 \ast_F \Gamma_2$ of $\Gamma_1,\Gamma_2$ with amalgamation over
		a finite subgroup $F$, and the HNN-extension $\Gamma_1\ast _F$ over $F$ are Helly;
		\item \label{t:operations(2)} every graph product of $\Gamma_1,\ldots,\Gamma_n$ is Helly, in particular, the direct product $\Gamma_1 \times \cdots \times \Gamma_n$ is Helly;
		\item \label{t:operations(3)} the $\square$-product of $\Gamma_1,\Gamma_2$, that is,
		$\Gamma_1 \square \Gamma_2 = \langle \Gamma_1,\Gamma_2,t \mid [g,h]=[g,tht^{-1}]=1, \ g \in \Gamma_1, h \in \Gamma_2 \rangle$ is Helly;
		\item \label{t:operations(4)} the \emph{$\rtimes$-power} of $\Gamma$, that is,
		$\Gamma^{\rtimes} = \langle \Gamma,t \mid [g,tgt^{-1}]=1, \ g \in \Gamma \rangle$ is Helly;
		\item \label{t:operations(5)} the quotient $\Gamma/N$ by a finite normal subgroup $N\lhd \Gamma$ is Helly.
	\end{enumerate}
\end{theorem}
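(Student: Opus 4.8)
The unifying principle is that a group is Helly once it acts geometrically on a Helly graph, and, more flexibly, once it acts geometrically on a coarse Helly graph with $\beta$-stable intervals, by Theorem~\ref{t:prop-coarse-Helly-groups-Helly}. So for each operation the plan is to fix geometric actions of the inputs $\Gamma,\Gamma_1,\dots,\Gamma_n$ on Helly graphs $X,X_1,\dots,X_n$, to assemble from these a graph on which the output group acts geometrically, and then to verify that the assembled graph is either Helly, or at least coarse Helly with $\beta$-stable intervals; in the latter case Theorem~\ref{t:prop-coarse-Helly-groups-Helly} closes the argument. To certify that a \emph{concretely built} complex is Helly I would rely throughout on the local-to-global (Cartan--Hadamard) principle for Helly complexes from \cite{CCHO}: a simply connected, locally Helly complex is Helly. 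This splits every Hellyness check into a local part (clique-Hellyness, imported from the factors) and a global part (simple connectivity, forced by the construction).

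For the direct product in part~(2) the candidate is the strong product $X_1\boxtimes\cdots\boxtimes X_n$, on which $\Gamma_1\times\cdots\times\Gamma_n$ acts geometrically coordinatewise; that a strong product of Helly graphs is again Helly is the discrete counterpart of $d_\infty$-products of injective spaces being injective, the feature already visible in $\boxtimes_1^n L$. For a general graph product over a graph $\mGamma$ I would build the corresponding \emph{graph product of the graphs} $X_i$: a complex whose pieces are strong products $\boxtimes_{i\in\sigma}X_i$ over the cliques $\sigma$ of $\mGamma$ (locally Helly by the product case), amalgamated along the gated sub-products and coned off so as to be simply connected, mirroring how the graph product interpolates between direct products (along the edges of $\mGamma$) and free products (along the non-edges). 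Hellyness then follows from the Cartan--Hadamard principle, and the graph product acts geometrically on it by construction. The diagram products in parts~(3) and~(4) are particular assemblies of this kind: reading the presentations, $\Gamma_1\square\Gamma_2$ and $\Gamma^\rtimes$ are HNN-type amalgams of (direct) products of the factors glued along gated Helly subgraphs, so I would realize them by the same product-and-amalgam graphs and again verify local Hellyness and simple connectivity, reducing to parts~(1) and~(2).

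For the amalgams and HNN-extensions of part~(1) I would pass to the Bass--Serre tree $T$ of the splitting and form a tree of Helly graphs: over the vertices of $T$ place copies of $X_1,X_2$ (resp.\ of $X$ in the HNN case), glued along copies of an orbit of the finite amalgamated subgroup $F$. Because $F$ is finite, the gluing loci have uniformly bounded diameter, the output group acts geometrically on the resulting graph, and there are only finitely many local patterns up to the action, which together with the bounded gluings yields $\beta$-stable intervals. Such a graph need not be clique-Helly at the seams, so rather than literal Hellyness I would verify the \emph{coarse} Helly property --- a pairwise-intersecting family of balls admits a common coarse intersection, produced by projecting to $T$ and combining Hellyness of each piece with Hellyness of the tree --- and then invoke Theorem~\ref{t:prop-coarse-Helly-groups-Helly}. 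Finally, for part~(5), since $N$ is finite and normal it acts on $X$ with bounded orbits, and by normality $\Gamma$ descends to an action on the quotient graph $X/N$ on which $N$ acts trivially; thus $\Gamma/N$ acts geometrically on $X/N$. As $X\to X/N$ is a quasi-isometry with bounded fibres, coarse Hellyness and $\beta$-stability of intervals pass from $X$ to $X/N$, and Theorem~\ref{t:prop-coarse-Helly-groups-Helly} yields that $\Gamma/N$ is Helly.

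The main obstacle throughout is the verification of the (coarse) Helly property for the assembled objects: local Hellyness is inherited from the factors, but exhibiting the common point --- or common coarse intersection --- of a pairwise-intersecting family of balls requires controlling how balls interact across the gluing regions and across the product factors. For the amalgams of part~(1) this is the crux, since a ball can straddle infinitely many pieces along the tree, and it is precisely here that coarsification together with Theorem~\ref{t:prop-coarse-Helly-groups-Helly} becomes indispensable; for the graph product of part~(2) the corresponding difficulty is to organize the product-and-amalgam structure so that local Hellyness and simple connectivity hold at once.
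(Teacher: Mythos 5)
The central gap is in your items (3) and (4): the proposed reduction to parts (1) and (2) fails because the relevant splittings are over \emph{infinite} subgroups. Concretely, $\Gamma^{\rtimes}$ is the HNN extension of $\Gamma\times\Gamma$ identifying the two factor copies of $\Gamma$ (eliminate the second copy via $t$ to recover the presentation $\langle \Gamma,t \mid [g,tgt^{-1}]=1\rangle$), and $\Gamma_1\square\Gamma_2$ is the HNN extension of the graph product $\langle \Gamma_2,\Gamma_1,\Gamma_2' \mid [\Gamma_1,\Gamma_2]=[\Gamma_1,\Gamma_2']=1\rangle$ identifying the two copies of $\Gamma_2$. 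Part (1) covers only \emph{finite} edge groups, and neither the paper nor your sketch supplies closure of Hellyness under amalgams or HNN extensions over infinite subgroups, gated or not; indeed, whether Helly groups are closed under amalgamation over an infinite \emph{cyclic} subgroup is posed as an open problem (Problem~\ref{q:Hops}). This is exactly why the paper routes items (2)--(4) through the quasi-median combination theorem (Theorem~\ref{thm:HellyGeneral}): the actions of graph products, diagram products, and right-angled graphs of groups on quasi-median graphs from \cite{Qm} have special features (topical-transitivity, finitely many orbits of prisms, prism stabilizers splitting as products of clique stabilizers) that substitute for the missing general combination theorem, and Proposition~\ref{prop:ExtendingHelly} --- an SGP/$3$-piece-condition argument close in spirit to your graph-product sketch --- shows that a coherent system of Helly metrics on the cliques extends to a globally Helly graph. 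Note also that your ``blow up the cliques and cone off'' plan for (2) does not obviously carry a \emph{proper} action (vertex stabilizers in the quasi-median Cayley graph are conjugates of the vertex groups, hence infinite); the paper fixes this via Lemma~\ref{lem:HellyVertexTrivial} and a result of \cite{Qm}, and no coning is needed --- simple connectivity is Lemma~\ref{lem:ExtendingSimplyConnected}.

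Two smaller gaps. For item (1) the paper does not coarsify: by the Fixed Point Theorem~\ref{t:fixedpt} a finite subgroup fixes a clique of a Helly graph, hence a \emph{vertex} of the face complex, which is again (clique-)Helly by Lemma~\ref{l:Hellyface}; gluing face complexes along single vertices over the Bass--Serre tree then yields a genuinely Helly graph with a geometric action (Theorem~\ref{t:tree_amalgam_groups}), bypassing Theorem~\ref{t:prop-coarse-Helly-groups-Helly} entirely. Your alternative --- glue along bounded $F$-orbits and verify coarse Hellyness plus $\beta$-stable intervals --- leaves precisely those two verifications unproven, and your justification of stable intervals (``finitely many local patterns up to the action'') is not an argument: cocompactness does not imply interval stability; for Helly graphs the paper derives it from weak modularity (Lemma~\ref{l:wmstable}), a metric property your glued space no longer enjoys at the seams. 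Similarly for item (5), the paper works exactly, showing the $N$-fixed-point subcomplex of the face complex is Helly (Theorem~\ref{t:quotient} via Corollary~\ref{c:fixsetHelly}); in your version, uniform boundedness of $N$-orbits requires normality \emph{together with} cocompactness (pointwise finiteness of orbits is not uniform boundedness), and the transfer of $\beta$-stable intervals through $X\to X/N$ is asserted rather than proved --- interval stability is not a priori a quasi-isometry invariant. Items (1) and (5) look repairable along your lines with real work, but the treatment of (3)--(4) as reductions to (1)--(2) is a genuine missing idea.
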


Observe also that, by definition, finite index subgroups of Helly groups are Helly. Again, we conjecture that Hellyness is closed under other group theoretic constructions -- see the discussion in Section~\ref{s:questions}.
The items (\ref{t:operations(2)})-(\ref{t:operations(4)}) in Theorem~\ref{t:operations} are consequences of the following combination theorem for actions on quasi-median graphs with Helly stabilisers. Further consequences of the same result are presented in Subsection~\ref{s:qmedian} in the main body of the article.

\begin{theorem}\label{thm:HellyGeneral_int}
	Let $\Gamma$ be a group acting topically-transitively on a quasi-median graph $G$. Suppose that:
	\begin{itemize}
		\item any vertex of $G$ belongs to finitely many cliques;
		\item any vertex-stabiliser is finite;
		\item the cubical dimension of $G$ is finite;
		\item $G$ contains finitely many $\Gamma$-orbits of prisms;
		\item for every maximal prism $P= C_1 \times \cdots \times C_n$, $\mathrm{stab}(P)= \mathrm{stab}(C_1) \times \cdots \times \mathrm{stab}(C_n)$.
	\end{itemize}
	If clique-stabilisers  are Helly, then $\Gamma$ is a Helly group.
\end{theorem}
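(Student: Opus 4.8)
The plan is to construct an auxiliary graph \cX on which $\Gamma$ acts geometrically and which is Helly, thereby exhibiting $\Gamma$ as a Helly group. For each clique $C$ of $G$ the stabiliser $\mathrm{stab}(C)$ is Helly by hypothesis, so it acts geometrically on some Helly graph $Y_C$; because $\Gamma$ acts topically-transitively these graphs can be chosen $\Gamma$-equivariantly, so that cliques in one $\Gamma$-orbit receive isomorphic fibres and the vertex set of $C$ is identified with an $\mathrm{stab}(C)$-orbit inside $Y_C$. The graph \cX is then obtained from the prism complex of $G$ by replacing each clique $C$ by its Helly model $Y_C$ and each maximal prism $P = C_1 \times \cdots \times C_n$ by the strong product $Y_{C_1} \boxtimes \cdots \boxtimes Y_{C_n}$, the replacements being glued along gated faces exactly as the prisms of $G$ are glued. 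The hypothesis $\mathrm{stab}(P) = \mathrm{stab}(C_1) \times \cdots \times \mathrm{stab}(C_n)$ is precisely what guarantees that these product structures are mutually compatible and $\Gamma$-equivariant, so that \cX is well defined and inherits a $\Gamma$-action.

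First I would verify that the action $\Gamma \curvearrowright \cX$ is geometric. Properness holds because a point-stabiliser for this action is contained in a point-stabiliser of one of the proper actions $\mathrm{stab}(C) \curvearrowright Y_C$, hence finite; the hypotheses that vertex-stabilisers of $G$ are finite and that each vertex lies in finitely many cliques handle the vertices lying in several fibres simultaneously, while finiteness of the cubical dimension bounds the number of factors of any prism, keeping \cX locally finite in complexity. Cocompactness follows from the assumption that $G$ has finitely many $\Gamma$-orbits of prisms together with the cocompactness of each $\mathrm{stab}(C) \curvearrowright Y_C$, which yields finitely many $\Gamma$-orbits of cells in \cX.

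The heart of the argument is to prove that \cX is Helly, for which I would invoke the local-to-global (Cartan--Hadamard) theorem for Helly complexes from \cite{CCHO}: a simply connected, locally Helly complex is Helly. Local Hellyness holds because, near any vertex, \cX is assembled from the products $Y_{C_1} \boxtimes \cdots \boxtimes Y_{C_n}$, and a strong product of finitely many Helly graphs is again Helly (balls in the product are products of balls, so the Helly property is inherited coordinatewise); the gatedness of the faces along which the prisms are glued ensures that these Helly pieces fit together into a locally Helly structure. Simple connectivity of \cX follows from the contractibility of the prism complex of the quasi-median graph $G$, since the fibres $Y_C$ and their products are Helly and therefore contractible, so replacing prisms by their Helly models does not change $\pi_1$; a van Kampen argument over the prism decomposition makes this precise.

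The main obstacle is the verification of the local Helly condition at the seams where several prisms meet: although each prism becomes a product of Helly graphs and hence Helly, one must show that the gluing dictated by the quasi-median combinatorics of $G$ preserves Hellyness locally. This is exactly where the gatedness of faces and the product decomposition of prism-stabilisers are used in tandem — the former makes the shared faces convex Helly subgraphs admitting retractions, and the latter forces the product decompositions on overlapping prisms to agree, so that the local model around a vertex is itself a coherent product of Helly graphs. Setting up the fibre assignment $\Gamma$-equivariantly and checking the compatibility of all these product structures is the most delicate bookkeeping; once it is in place, the product lemma for Helly graphs together with the Cartan--Hadamard theorem of \cite{CCHO} completes the proof.
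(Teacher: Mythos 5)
Your overall strategy is the one the paper follows: blow up each clique of $G$ into a Helly graph for its stabiliser, metrize prisms as strong products, and conclude by a local-to-global argument. The paper packages the construction by citing \cite{Qm}*{Proposition 7.8} to obtain a quasi-median graph with a \emph{coherent system} of Helly metrics on its cliques, and then proves Hellyness of the $\delta_\infty$-extension (Proposition~\ref{prop:ExtendingHelly}) by exhibiting it as an SGP over the clique fibres, verifying the $3$-piece condition via gatedness of prisms, and applying Theorem~\ref{th-SGP-3piece} together with the simple-connectivity Lemma~\ref{lem:ExtendingSimplyConnected}. However, your write-up has genuine gaps exactly where the real work sits. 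First, and most seriously: the fibres $Y_C$ cannot merely be chosen equivariantly along $\Gamma$-orbits of cliques; they must be constant on \emph{parallelism classes} (hyperplanes) and compatible with the projections $t_{C\to C'}$ between parallel cliques. Two parallel cliques occur as parallel faces of a common prism, and your strong-product replacement identifies their fibres via these projections, so without coherence the graph $\mathcal{X}$ is not even well defined. This is precisely the content of a coherent system of metrics, and it is where the first condition of topical-transitivity (hyperplane stabilisers inducing the same permutations of sectors as clique stabilisers) enters; your proposal never mentions hyperplanes or parallelism, and the prism-stabiliser splitting alone does not resolve this. Second, identifying $V(C)$ with a $\mathrm{stab}(C)$-orbit in $Y_C$ requires, in the case where $\mathrm{stab}(C)$ acts freely and transitively on $C$, that the chosen orbit in $Y_C$ be free; one must first show every Helly group acts geometrically on a Helly graph with a vertex of trivial stabiliser (Lemma~\ref{lem:HellyVertexTrivial} in the paper). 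You assert the equivariant choice without this.

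Third, your Helly verification is not in a form any theorem supports. The local-to-global theorem (Theorem~\ref{t:lotogloHell}) requires \emph{clique-Helly} plus simple connectivity of the clique complex, not ``Helly near every vertex''; and establishing clique-Helliness of the glued graph requires knowing that every clique of $\mathcal{X}$ is contained in a single blown-up prism (Proposition~\ref{prop-clique-SGP}), which is itself a consequence of the $3$-piece condition coming from gatedness of prisms — it does not follow just from the pieces being Helly. Similarly, your claim that ``replacing prisms by their Helly models does not change $\pi_1$'' is asserted rather than proved: the contractible pieces overlap in complicated unions, and a nerve/van Kampen argument would need their intersections controlled. The paper instead proves simple connectivity directly by induction on the number of hyperplanes crossed by a loop, using gated hulls and sectors (Lemma~\ref{lem:ExtendingSimplyConnected}). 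So the skeleton of your argument matches the paper's, but as written the construction is not well defined (coherence), the fibre identification is unjustified (trivially-stabilised vertex), and both halves of the local-to-global input (clique-Helliness via containment of cliques in pieces, and simple connectivity) are missing their proofs.
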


The results above show that the class of Helly groups is vast. Nevertheless, we may prove a number
of strong properties of such groups. One very interesting and significant aspect of the theory is that
the Helly group structure equips the group not only with a specific combinatorial structure being the source of
important algorithmic and algebraic features (as e.g.\ (\ref{t:properties1(1)}) in the theorem below) but -- via the Helly hull construction -- provides a more concrete `nonpositively curved' object acted upon the group: a metric space with convex geodesic bicombing (see (\ref{t:properties1(5)}) below). Such spaces
might be approached using methods typical for the \catz setting, and are responsible for many `$\mathrm{CAT}(0)$-like' results
on Helly groups, such as e.g.\ items (\ref{t:properties1(6)})--(\ref{t:properties1(9)}) in the following theorem.

\begin{theorem}
	\label{t:properties1}
	Let $\Gamma$ be a group acting geometrically on a Helly graph $G$, that is, $\Gamma$ is a Helly group. Then:
	\begin{enumerate}
		\item \label{t:properties1(1)} $\Gamma$ is biautomatic.
		\item \label{t:properties1(2)} $\Gamma$ has finitely many conjugacy classes of finite subgroups.
		\item \label{t:properties1(3)} $\Gamma$ is (Gromov) hyperbolic if and only if $G$ does not contain an isometrically embedded infinite
		$\ell_{\infty}$--grid.
		\item \label{t:properties1(4)} The clique complex $X(G)$ of $G$ is a finite-dimensional cocompact model for the classifying space
		$\underline{E}\Gamma$ for proper actions. As a particular case, $\Gamma$ is always of type $F_\infty$ (see e.g. \cite[Theorem~7.3.1]{MR2365352}); and of type $F$ when it is torsion-free.
		\item \label{t:properties1(5)} $\Gamma$ acts geometrically on a proper injective metric space of finite combinatorial dimension, and hence on a metric space with a convex geodesic bicombing.
		\item \label{t:properties1(6)} $\Gamma$ admits an EZ-boundary $\partial G$.
		\item \label{t:properties1(7)} $\Gamma$ satisfies the Farrell-Jones conjecture with finite wreath products.
		\item \label{t:properties1(8)} $\Gamma$ satisfies the coarse Baum-Connes conjecture.
		\item \label{t:properties1(9)} The asymptotic cones of $\Gamma$ are contractible.
	\end{enumerate}
\end{theorem}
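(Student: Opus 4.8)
The plan is to derive the nine properties from two structures attached to the Helly graph $G$: on the discrete side, its clique complex $X(G)$ together with the combinatorial theory of Helly graphs; on the metric side, the injective hull $E(G)$, which is the engine behind all the `$\mathrm{CAT}(0)$-like' statements. I would establish item~(\ref{t:properties1(5)}) first, since everything metric rests on it. As recalled in the Introduction, the injective hull of a Helly graph is an injective metric space into which $G$ embeds isometrically and coarsely surjectively; by functoriality of the hull construction the geometric action of $\Gamma$ on $G$ extends to an action on $E(G)$, and coarse surjectivity of the embedding makes this extended action again proper and cocompact. Every injective metric space carries a (unique) convex geodesic bicombing (Lang, \cite{Lang2013}), which supplies the nonpositive-curvature-like tool used repeatedly below.

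The purely combinatorial items come from $X(G)$. First I would record that $X(G)$ is contractible: this is the Cartan--Hadamard phenomenon for Helly complexes proved in \cite{CCHO}. Since $\Gamma$ acts geometrically, $G$ is locally finite, so $X(G)$ is finite-dimensional, cocompact, and the action is proper, giving the structural part of~(\ref{t:properties1(4)}). To upgrade $X(G)$ to a model for $\underline{E}\Gamma$ I need the fixed-point sets $X(G)^H$ for finite $H\le\Gamma$ to be contractible; the key lemma is that the fixed-point set of a finite group acting on a Helly graph is again Helly (a bounded orbit admits a well-defined quasicentre by the Helly property), whence $X(G)^H$ is contractible by the same result of \cite{CCHO}. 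The finiteness statements ($F_\infty$, and $F$ in the torsion-free case) are then formal. The same circumcentre argument shows every finite subgroup is contained in the stabiliser of a cell, and cocompactness leaves only finitely many orbits of cells, hence finitely many conjugacy classes of cell-stabilisers; this is~(\ref{t:properties1(2)}).

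For biautomaticity~(\ref{t:properties1(1)}) I would build an explicit $\Gamma$-equivariant normal combing of $G$ (a canonical choice of geodesic between each pair of vertices, compatible with the Helly/gatedness structure) and verify the two defining properties of a biautomatic structure. The two-sided fellow-traveller property should follow from the convexity of the bicombing together with the consistency of the chosen combing. Regularity of the combing language is where finiteness enters: cocompactness of the action yields finitely many cone types, the discrete counterpart of Lang's $\beta$-stable intervals \cite{Lang2013} noted in the Introduction, so the combing is recognised by a finite-state automaton. I expect this construction to be the main obstacle of the theorem: unlike the metric items it is not a black-box application of an existing theorem, and one must simultaneously arrange equivariance, geodesity, the fellow-traveller property, and regularity of a single combing.

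The remaining metric items follow from the convex bicombing on $E(G)$ via known machinery, applied through the geometric action of~(\ref{t:properties1(5)}). The visual boundary of $E(G)$, compactified using the geodesic rays of the bicombing, is a $\mathcal{Z}$-set satisfying the equivariance axioms of an EZ-boundary, giving~(\ref{t:properties1(6)}). For the Farrell--Jones conjecture with finite wreath products~(\ref{t:properties1(7)}) and the coarse Baum--Connes conjecture~(\ref{t:properties1(8)}) I would verify the hypotheses of the corresponding criteria for groups acting geometrically on spaces with convex geodesic bicombing, the bicombing providing the flow/homotopy data these criteria require, exactly as in the $\mathrm{CAT}(0)$ setting. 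For~(\ref{t:properties1(9)}), the convex bicombing passes to any asymptotic cone as an ultralimit, and a metric space carrying a convex geodesic bicombing is contractible by the straight-line homotopy to a basepoint; since the asymptotic cones of $\Gamma$ agree up to bi-Lipschitz equivalence with those of $E(G)$, they are contractible. Finally,~(\ref{t:properties1(3)}) is handled separately: if $G$ contains an isometric infinite $\ell_\infty$-grid then $\Gamma$ contains a quasi-isometrically embedded flat and cannot be hyperbolic, while for the converse I would prove a flat-plane-type theorem, using the bicombing to extract such a grid whenever $G$ fails to be hyperbolic; this converse is the one other place, besides biautomaticity, demanding genuine new work rather than citation.
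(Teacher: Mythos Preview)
Your overall architecture matches the paper's: item~(\ref{t:properties1(5)}) via the injective hull and Lang's work, items~(\ref{t:properties1(6)})--(\ref{t:properties1(9)}) as black-box consequences of the convex bicombing (citing Descombes--Lang, Kasprowski--R\"uping, Fukaya--Oguni, and a direct ultralimit argument, respectively), and items~(\ref{t:properties1(2)}) and~(\ref{t:properties1(4)}) from a fixed-point theorem for finite groups on Helly graphs. Two points deserve comment.

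For biautomaticity you propose to derive the fellow-traveller property from the convex bicombing on $E(G)$ and regularity from ``finitely many cone types''. The paper does something rather different and entirely combinatorial: it defines \emph{normal clique-paths} via an explicit imprint operator $f_\tau(\sigma)=B^*_{k-1}(\tau)\cap B^*_1(\hR_\tau(\sigma))$, proves existence/uniqueness and a local-to-global characterisation, and then establishes the $2$-sided fellow-traveller property directly for these paths; regularity comes from {\'S}wi{\c{a}}tkowski's framework of $2$-locally recognised path systems, not from cone types. Your route is not obviously wrong, but transferring the continuous bicombing on $E(G)$ to a \emph{regular} language of geodesics in $G$ is exactly the subtle step, and the paper sidesteps it by never invoking $E(G)$ for this item. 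Also, for item~(\ref{t:properties1(4)}) the paper does not show that the fixed set in $X(G)$ is Helly; it passes to the \emph{face complex} $F(X(G))$ and shows $F(X(G))^H$ is Helly (clique-Hellyness is checked directly, simple connectedness via dismantlability of balls and a result on barycentric subdivisions).

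For item~(\ref{t:properties1(3)}) you overestimate the work: the converse is not a new flat-plane theorem but a citation of a result from \cite{CCHO} characterising hyperbolicity of weakly modular graphs via bounded metric triangles and bounded isometric square grids (metric triangles in Helly graphs have size $\le 1$), followed by a standard diagonal/compactness argument to upgrade arbitrarily large finite grids to an infinite one under a cocompact action. So the only item requiring substantial original argument is~(\ref{t:properties1(1)}).
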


As immediate consequences of the main theorems above we obtain new results on some classical group classes.
For example it follows that FC-type Artin groups and finitely presented graphical \cftf small cancellation groups are biautomatic. Further discussion of important consequences of our main results is presented in Subsection~\ref{s:intro_discuss} below. Note also that by Theorem~\ref{t:properties1}(\ref{t:properties1(5)}) further properties of Helly groups can be deduced from e.g.\ \cite{DesLang2015,DesLang2016,Desc2016} (see also the discussion in \cite[Introduction]{HuangO}).

The above Theorems~\ref{t:classes}--\ref{t:properties1} are proved by
the use of corresponding more general results on Helly graphs. A
fundamental property that we use is the following local-to-global
characterization of Helly graphs from~\cite{CCHO}: A graph $G$ is Helly
if and only if $G$ is clique-Helly (i.e., any family of pairwise
intersecting maximal cliques of $G$ has a non-empty intersection) and
its clique complex $X(G)$ is simply connected.  Here, we present some
of the results we obtained about Helly graphs (or complexes) in a
simplified form (see Subsection~\ref{s:intro_org} of this Introduction
for further explanations).

\begin{theorem}\label{t:main-graphs}
The following constructions give rise to Helly graphs:
  \begin{enumerate}
  \item \label{t:main-graphs(1)} A union of graph-products (UGP)
of clique-Helly graphs satisfying the $3$-piece condition is
    clique-Helly.  If its clique complex is simply connected then it
    is Helly.
\item \label{t:main-graphs(2)} Thickenings of simply connected \cftf graphical small cancellation
    complexes are Helly.
  \item \label{t:main-graphs(3)} Rips complexes and face complexes of Helly graphs are Helly.
\item \label{t:main-graphs(4)} Nerve complexes of the cover of a Helly graph or of a
    $7$-systolic graph by maximal cliques are Helly.
  \end{enumerate}
\end{theorem}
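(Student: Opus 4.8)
The engine for all four parts is the local-to-global characterization recalled above \cite{CCHO}: a graph is Helly precisely when it is clique-Helly and its clique complex is simply connected. The plan is therefore uniform — for each construction I would (i) verify clique-Hellyness by a purely local argument and (ii) establish simple connectivity of the clique complex, and then quote the characterization. For step (i) the natural tool is the classical local characterization of clique-Helly graphs: a graph is clique-Helly iff for every triangle the associated \emph{extended triangle} (the subgraph induced by all vertices adjacent to at least two of its vertices) contains a vertex adjacent to all the others. This reduces clique-Hellyness to a condition that can be checked inside the local building blocks of each construction.

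For (\ref{t:main-graphs(1)}), I would first analyse the maximal cliques of a space of graph-products: by the gate structure of the graph-product operation each maximal clique decomposes along the factors, so an extended triangle of the SGP projects to extended-triangle data in the factor graphs. Clique-Hellyness of the factors supplies universal vertices there, and the \emph{$3$-piece condition} is exactly what is needed to amalgamate these local universal vertices into a single universal vertex of the SGP; this yields clique-Hellyness, and adjoining the simple-connectivity hypothesis gives Hellyness. For (\ref{t:main-graphs(2)}), the local step is governed by the \cftf small cancellation combinatorics: these conditions bound how pieces can overlap and thereby force every extended triangle of the thickening to possess a universal vertex, while simple connectivity of the thickening is inherited from that of the underlying simply connected small cancellation complex (the two being homotopy equivalent).

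For (\ref{t:main-graphs(3)}) and (\ref{t:main-graphs(4)}) the global step is essentially free, because a Helly, respectively $7$-systolic, graph has a contractible clique complex. For Rips complexes and face complexes of a Helly graph $G$, the local step reduces to translating maximal cliques of the new graph back into ball- or clique-intersection data in $G$ and invoking the Helly property of $G$ to produce the required universal vertices, while contractibility of $X(G)$ propagates to the new complex. For (\ref{t:main-graphs(4)}), the cover of a Helly (or $7$-systolic) graph by its maximal cliques is a good cover — all nonempty intersections of maximal cliques are themselves cliques, hence contractible — so the nerve is homotopy equivalent to $X(G)$ by the nerve lemma and is in particular simply connected; clique-Hellyness of the nerve then follows from the Helly property of the maximal-clique cover, i.e.\ from the fact that pairwise intersecting maximal cliques share a common vertex.

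The main obstacle is concentrated in the local (clique-Helly) step of (\ref{t:main-graphs(1)}) and (\ref{t:main-graphs(2)}): in both cases one must pin down precisely how a maximal clique of the constructed graph decomposes in terms of the original data and then show that the structural hypothesis — the $3$-piece condition in (\ref{t:main-graphs(1)}), the \cftf conditions in (\ref{t:main-graphs(2)}) — forces the locally available universal vertices to combine into a genuine universal vertex of the extended triangle. By contrast, the simple-connectivity steps are either assumed or follow from standard homotopy and nerve-lemma arguments, and the verifications in (\ref{t:main-graphs(3)})--(\ref{t:main-graphs(4)}) are comparatively direct once maximal cliques are re-expressed through the Helly structure of $G$.
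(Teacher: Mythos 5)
Your overall scheme (verify clique-Hellyness locally via the extended-triangle criterion of Proposition~\ref{clique_Helly_triangle}, establish simple connectivity, then invoke the local-to-global Theorem~\ref{t:lotogloHell}) is exactly the paper's scheme, and your sketches for parts (\ref{t:main-graphs(1)})--(\ref{t:main-graphs(3)}) track the actual proofs reasonably well: for (\ref{t:main-graphs(1)}) the paper's key intermediate step is that the $3$-piece condition forces \emph{every clique of the SGP to lie in a piece} (Proposition~\ref{prop-clique-SGP}), after which the Helly property of the pieces and factor-wise clique-Hellyness supply the universal vertex coordinatewise; for (\ref{t:main-graphs(2)}) the paper does not verify the triangle criterion directly but proves the relator hypergraph is \emph{triangle-free} (pairwise intersections of relators are trees and, for three pairwise intersecting relators, $G_i\cap G_j$ lies in the third, Lemmas~\ref{lem:intersections} and~\ref{l:c4t4strongH}) and then quotes Proposition~\ref{hypergraph-clique-Helly}; these are precisely the obstacles you flagged, so your plan is compatible even if the mechanisms differ in detail.

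There is, however, a genuine gap in your treatment of (\ref{t:main-graphs(4)}) in the $7$-systolic case. You derive clique-Hellyness of the nerve ``from the fact that pairwise intersecting maximal cliques share a common vertex,'' i.e.\ from the Helly property of the maximal-clique cover. For a $7$-systolic graph this fact is \emph{false}: $7$-systolic graphs need not be clique-Helly (the $3$-sun is $7$-systolic --- its links are paths and its clique complex is a disk --- yet its three outer triangles pairwise intersect with empty total intersection; were $7$-systolic graphs clique-Helly, they would already be Helly by Theorem~\ref{t:lotogloHell}, which they are not). This is why the paper's proof of Theorem~\ref{7-systolic} requires the nontrivial domination result Lemma~\ref{lem-VC-simple} (every set of diameter at most $3$ in a $7$-systolic graph is dominated by a clique), applied to the union $V_\cF$ of all cliques occurring in a pairwise intersecting family of maximal cliques of the nerve graph, followed by Claim~\ref{KKij} upgrading domination to actual intersection with each clique $K_j^i$; without some replacement for this step your local argument has no support. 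A smaller but related slip affects even the Helly case: clique-Hellyness of the \emph{nerve} graph concerns families of maximal cliques of $NG(\cX(G))$, i.e.\ maximal pairwise-intersecting families of maximal cliques of $G$, and does not reduce to clique-Hellyness of $G$ alone; the paper's Escalante-style argument additionally uses conformality --- all vertices of $(u\cap v)\cup(v\cap w)\cup(w\cap u)$ are pairwise adjacent, hence lie in a common maximal clique which serves as the universal vertex --- a step your sketch omits, though here the repair is routine.
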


It was already known that the thickening operation allows to obtain
Helly graphs from several classes of graphs: the thickenings of
locally finite median graphs~\cite{BavdV3}, swm-graphs~\cite{CCHO},
and hypercellular graphs~\cite{ChKnMa19} are Helly. In fact all these
three results can be seen as particular cases of the following
proposition.

\begin{proposition}\label{prop-cell-helly}
  If $G$ is a graph endowed with a cell structure $X$ such that each
  cell of $X$ is gated in $G$ and the family of cells satisfies the
  3-cell and the graded monotonicity conditions, then the thickening
  of the cells of $G$ is a clique-Helly graph and each maximal clique
  of the thickening corresponds to a cell of $X$.
\end{proposition}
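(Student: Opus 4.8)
The plan is to prove the two assertions separately, deducing the clique-Helly property from the correspondence between maximal cliques and cells together with the gatedness hypothesis. Recall that the thickening $Th(G)$ shares its vertex set with $G$ and declares two vertices adjacent exactly when some cell contains both; thus every cell spans a clique of $Th(G)$, and a cell strictly contained in another spans a non-maximal clique. The content of the clique--cell correspondence therefore lies entirely in the reverse direction: that every clique of $Th(G)$ is contained in a single cell, from which it follows that the maximal cliques are precisely the inclusion-maximal cells.

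To prove that a clique $K=\{v_0,\dots,v_k\}$ of $Th(G)$ lies in a cell, I would induct on $|K|$, using the rank furnished by the graded monotonicity condition. The base case of three pairwise cell-adjacent vertices is exactly the $3$-cell condition, which places them in a common cell. For the inductive step, assume $v_0,\dots,v_{k-1}$ lie in a cell $C$; since each of these is cell-adjacent to $v_k$, I would exploit gatedness of $C$ to locate the gate of $v_k$ in $C$ and then invoke graded monotonicity to amalgamate $C$ and $v_k$ into a cell of strictly larger rank containing $C\cup\{v_k\}$. Because the ranks are bounded, this enlargement terminates, producing a single cell containing all of $K$. The same amalgamation argument shows a cell admits no strictly larger clique in $Th(G)$, so maximal cliques are maximal cells.

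For the clique-Helly property I would appeal to the well-known fact that in any graph the family of gated sets has the Helly property: any finite family of pairwise intersecting gated sets has a common vertex. This rests only on gated sets being convex, so that the gate in one of them of a point lying in two others is pinned into their common intersection. By the correspondence above, a family of pairwise intersecting maximal cliques of $Th(G)$ is a family of pairwise intersecting cells; as each cell is gated in $G$, the family has non-empty total intersection, which is exactly the clique-Helly property.

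The main obstacle is the inductive step of the clique--cell correspondence. Gatedness and its Helly property only guarantee a common \emph{vertex} of pairwise intersecting cells, whereas part of the statement demands a common \emph{cell} engulfing an entire clique. Graded monotonicity is what upgrades this: it lets one absorb clique vertices one at a time while strictly increasing rank, and the subtle point to verify is that each amalgamated object is again a member of the prescribed family of cells. Coordinating the $3$-cell condition, to initiate the local amalgamation, with the rank control, to guarantee both termination and cell-membership, is the delicate heart of the argument.
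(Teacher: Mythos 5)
Your high-level architecture is sound and in fact matches the paper's: the statement reduces to showing that the cell-hypergraph is \emph{conformal} (every clique of the thickening lies in a cell), after which the clique-Helly property follows from the finite Helly property of gated sets, exactly as in the paper's route through Propositions~\ref{3-cell}, \ref{hypergraph-clique-Helly} and \ref{graph-cell-complex}. The gap is that you have not actually proved conformality, and both steps of your induction fail as stated. First, the base case: three pairwise cell-adjacent vertices $x,y,z$ are witnessed by three cells $H_1\ni x,y$, $H_2\ni y,z$, $H_3\ni z,x$, but the 3-cell condition is \emph{not} the statement that these lie in a common cell. It applies only in the very rigid configuration where $H_1\cap H_2$, $H_1\cap H_3$, $H_2\cap H_3$ are facets of the respective pairs and $H_1\cap H_2\cap H_3$ is a facet of each pairwise intersection. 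For arbitrary witnessing cells (think of two large cubes meeting in a low-dimensional face) none of this holds, and reducing the general configuration to the facet configuration is precisely the hard content of the paper's Proposition~\ref{3-cell}: a double induction on $\alpha(C_1,C_2,C_3)=\sum\dim(C_i)-\sum\dim(C_i\cap C_j)$ and then on $\beta(C_1,C_2,C_3)=\sum|C_i|$, in which GMC is used repeatedly (together with Lemma~\ref{lem-GMC}) to trade one cell for a face or a controlled enlargement until the 3-cell condition becomes applicable. Your "exactly the $3$-cell condition" conflates this local facet axiom with the Gilmore condition (Proposition~\ref{Gilmore}) that it is used to establish.

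Second, your inductive step misuses graded monotonicity. GMC is a statement about two intersecting \emph{faces $A,B$ of a given ambient cell $C$}: it produces a face $D$ of that same $C$. It has no power to "amalgamate $C$ and $v_k$ into a cell of strictly larger rank," because an ambient cell containing $C\cup\{v_k\}$ is exactly what you are trying to manufacture; the only axiom in the hypotheses that creates new, larger cells is the 3-cell condition, and invoking it requires first arranging the facet configuration. (Gatedness of $C$ gives you the gate of $v_k$, but no cell.) A repairable version of your induction on clique size does exist -- as in Proposition~\ref{prop-clique-SGP} for SGPs, a clique $K$ with $|K|\geq 4$ yields three cells containing $K\setminus\{u\}$, $K\setminus\{v\}$, $K\setminus\{w\}$, and one then needs a cell containing the union of their pairwise intersections -- but that is again the Gilmore condition for three arbitrary cells, i.e.\ the unproven core. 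Your final paragraph correctly identifies this coordination of the 3-cell condition with rank control as "the delicate heart of the argument," but the proposal leaves that heart unsupplied, so as it stands the proof has a genuine gap rather than a complete alternative route. The remaining ingredients (Helly property of gated sets via convexity and gates, and the identification of maximal cliques with cells once conformality holds) are fine and agree with the paper.
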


The 3-piece condition from Theorem~\ref{t:main-graphs}(1) and
the 3-cell condition from Proposition~\ref{prop-cell-helly} can be
viewed as generalizations of Gromov's flagness condition for \catz
cube complexes~\cite{Gr} and as a strengthening of Gilmore's condition
for conformality of hypergraphs~\cite{Berge}.

\subsection{Historical note and general context}
As already mentioned, injective metric spaces have been introduced by
Aronszajn and Panitchpakdi~\cite{AroPan1956} and they show the
equivalence between injective metric spaces and hyperconvex
spaces. Isbell~\cite{Isbell64} proves that for any metric space
$(X,d)$ there exists a smallest injective space which contains $(X,d)$
as an isometric subspace. This smallest injective space is called the
injective hull of $(X,d)$. Later, this result was independently
rediscovered by Dress~\cite{Dress1984} and for finite metric spaces by
Chrobak and Larmore~\cite{ChLa94}.

Dress provided other characterizations of injective hulls and
developed the theory of combinatorial dimension of injective hulls
viewed as cell complexes.  This concept of dimension was further
developed by Lang~\cite{Lang2013} who was also the first to use
injective metric spaces in the context of geometric group theory. Lang
also introduced the important concept of $\beta$-stable
intervals~\cite{Lang2013} and showed that the injective hulls of
locally finite graphs with $\beta$-stable intervals are proper and
have the structure of a locally finite polyhedral complex with
finitely many isometry types of cells of each dimension.
This result of Lang is particularly important in the proof of
Theorem~\ref{t:classes}(\ref{t:classes(1)}) and
Theorem~\ref{t:prop-coarse-Helly-groups-Helly}. In these proofs, we
also use his concept of bounded distance property~\cite{Lang2013},
that we show to be equivalent to the coarse Helly property introduced
in~\cite{ChEs}. As a matter of fact, $\delta$-hyperbolic geodesic
spaces and graphs satisfy the bounded distance
property~\cite{Lang2013} and the coarse Helly property~\cite{ChEs}.

The fact that CAT(0) cubical groups are Helly
(Theorem~\ref{t:classes}(\ref{t:classes(2)})) follows from the
bijection between CAT(0) cube complexes and median
graphs~\cite{Chepoi2000,Roller1998} and the result of Bandelt and van
de Vel~\cite{BavdV3} establishing that the thickenings of median
graphs are Helly graphs.  This result was generalized in~\cite{CCHO} to swm-graphs, thus yielding Theorem~\ref{t:classes}(\ref{t:classes(4)}).

The Helly property is an ubiquitous property in combinatorics which is
captured by the concept of Helly hypergraphs~\cite{Berge}. Berge and
Duchet~\cite{BeDu75} presented a simple ``local'' characterization of
Helly hypergraphs that is useful to show that the maximal cliques of a
graph satisfy the Helly property. This result and the local-to-global
characterization of Helly graphs of Chalopin et al.~\cite{CCHO}
provide a useful tool to establish the Hellyness of a graph. This
method is used in the proof of Theorems~\ref{t:classes}
and~\ref{t:main-graphs} and of Proposition~\ref{prop-cell-helly}.

Besides the local-to-global characterization of Helly graphs, other
characterizations of Helly graphs have been obtained earlier in the
papers~\cite{BP-absolute,BaPr,HeRi} (see Theorem~\ref{Helly1}). 
The proof of
Theorem~\ref{t:properties1}(\ref{t:properties1(2)})-(\ref{t:properties1(9)})
uses other properties of Helly graphs and injective spaces.
Theorem~\ref{t:properties1}(\ref{t:properties1(2)}) follows from
Polat's~\cite{Polat1993} fixed point result for Helly
graphs. Theorem~\ref{t:properties1}(\ref{t:properties1(4)}) uses the
fact that Helly graphs are dismantlable~\cite{BP-absolute} and that
fixed point sets in dismantlable graphs are
contractible~\cite{BarmakMinian2012}. The proof of
Theorem~\ref{t:properties1}(\ref{t:properties1(3)}) relies on the
characterization of (Gromov) hyperbolic weakly modular graphs
of~\cite{CCHO,ChDrEsHaVa}.

Theorem~\ref{t:properties1}(\ref{t:properties1(5)}) follows from the
fact that a geometric action on a Helly graph extends to a geometric
action on its injective hull. The second assertion then follows from a
property of injective spaces of finite combinatorial dimension
established by Descombes and Lang~\cite{DesLang2015} that they admit a
convex geodesic
bicombing. Theorem~\ref{t:properties1}(\ref{t:properties1(6)})-(\ref{t:properties1(9)})
follows from the existence of this geodesic bicombing and results
established in~\cite{DesLang2015,KasRup2017,FukayaOguni2019}.

To establish the biautomaticity of Helly groups
(Theorem~\ref{t:properties1}(\ref{t:properties1(5)})), we use the
technique introduced by {\'S}wi{\c{a}}tkowski~\cite{Swiat} of locally
recognized path systems in a graph. In this setting, one design a
canonical path system satisfying a combinatorial bicombing property
(this bicombing is different from the convex geodesic bicombing of~\cite{DesLang2015}). 
That groups acting geometrically on Helly graphs are different from 
groups acting on injective spaces follows from the recent 
result of Hughes and Valiunas,~\cite{HuVa} showing 
that there exist groups acting geometrically on injective spaces
that are neither Helly nor biautomatic.

\subsection{Discussion of consequences of main results}\label{s:intro_discuss}
Biautomaticity is an important algorithmic property of a group. It implies, among others, that the Dehn
function is at most quadratic, and that the Word Problem and the Conjugacy Problem are solvable; see e.g.\ \cite{ECHLPT}.

Biautomaticity of classical  \cftf small cancellation groups was proved in \cite{GerShort1990}. Our results (Theorem~\ref{t:classes}(\ref{t:classes(3)}) and Theorem~\ref{t:properties1}(\ref{t:properties1(1)})) imply biautomaticity in the more general graphical small cancellation case.

Biautomaticity of all FC-type Artin groups is a new result of the current paper together with \cite{HuangO}. Also new are the solution to the Conjugacy Problem and the quadratic bound on the Dehn function.
Altobelli \cite{Alto1998} showed that FC-type Artin groups are asynchronously automatic, and hence have solvable Word Problem.
Biautomaticity for few classes of Artin groups was
shown before in \cite{Pride1986,GerShort1990,Charney1992,Peifer1996,BradyMcCammond2000,HuaOsa1} (see \cite[Subsection 1.3]{HuangO} for a more detailed account).

Although the classical \cftf small cancellation groups have
been thoroughly investigated and quite well understood (see e.g.\
\cite{LySch,GerShort1990}), there was no nonpositive curvature
structure similar to \catz known for them. Note that Wise
\cite{Wisecub} equipped groups satisfying the stronger $\mathrm{B}(4)\mathrm{-T}(4)$ small
cancellation condition with a structure of a \catz cubical group,
but the question of a similar cubulation of \cftf groups is open
\cite[Problem 1.4]{Wisecub}. Our results --
Theorem~\ref{t:properties1} and
Theorem~\ref{t:classes}(\ref{t:classes(3)}) -- equip such groups with
a structure of a group acting geometrically on an injective metric
space.  This allows us to conclude that the Farrell-Jones conjecture and
the coarse Baum-Connes conjecture hold for them.  These results are
new, and moreover, we prove them in the much more general setting of
graphical small cancellation. Note that -- although quite similar in
definition and basic tools -- the graphical small cancellation
theories provide examples of groups not achievable in the classical
setting (see e.g.\ \cite{Osajda-scliga,Osajda-rfneg,OsajdaPrytula} for
details and references).

Important examples to which our theory applies are presented in \cite{HuangO}. These -- besides the FC-type Artin groups mentioned above -- are the weak Garside groups of finite type. This class includes among others: fundamental groups of the complements of complexified finite simplicial arrangements of
	hyperplanes, spherical Artin groups, braid groups of well-generated complex reflection groups, structure groups of non-degenerate, involutive and braided set-theore\-ti\-cal solutions of
	the quantum Yang-Baxter equation, one-relator groups with non-trivial center and, more generally, tree products of cyclic groups.  To our best knowledge there were no other `$\mathrm{CAT}(0)$-like' structures known for these groups before. Consequently, such results as the existence of an EZ-structure, the validity of the Farrell-Jones conjecture and of the coarse Baum-Connes conjecture obtained by using our approach are new in these settings.

Yet another class to which our theory applies and provides new results are quadric groups introduced and investigated in \cite{HodaQuadric}. See e.g.\ \cite[Example 1.4]{HodaQuadric} for a class of quadric groups that are a priori neither \catz cubical nor \cftf small cancellation groups.

Finally, we believe that many other groups are Helly -- see the discussion in Section~\ref{s:questions}.
Proving Hellyness of those groups would equip them with a very rich discrete and continuous structures, and would immediately
imply a plethora of strong features described above. On the other hand, there are still many other properties to be discovered, with the hope that most \catz results can be shown in this setting.

\subsection{Organization of the article and further results}
\label{s:intro_org}
\medskip

The proofs of items (\ref{t:classes(1)})--(\ref{t:classes(4)}) in Theorem~\ref{t:classes} are provided as follows. Item (\ref{t:classes(1)}) follows from Proposition~\ref{prop-proper-inj-hull-Helly} and Corollary~\ref{cor-hyperbolic-groups-Helly}. Items (\ref{t:classes(2)}) and (\ref{t:classes(4)}) follow from
Proposition~\ref{p:ccswmhc} and Corollary~\ref{c:buildC}. Item (\ref{t:classes(3)}) is Corollary~\ref{c:c4t4}.
\medskip

The coarse Helly property is discussed in Subsection~\ref{s:coarseHelly}, and the proof of Theorem~\ref{t:prop-coarse-Helly-groups-Helly} (appearing as Proposition~\ref{prop-coarse-Helly-groups-Helly} in the text) is presented in Subsection~\ref{s:hypquad}.

\medskip

The proofs of items (\ref{t:operations(1)})--(\ref{t:operations(5)}) in Theorem~\ref{t:operations} are provided as follows. Item (\ref{t:operations(1)}) is proved in Subsection~\ref{s:free_prod}. Items (\ref{t:operations(2)})--(\ref{t:operations(4)}) are consequences of Theorem~\ref{thm:HellyGeneral_int} (i.e., Theorem~\ref{thm:HellyGeneral} in the text) and are shown
in Subsection~\ref{s:qmedian}. There, we also show more general results: Theorem~\ref{thm:DiagProducts} on diagram products of Helly groups, and Theorem~\ref{thm:RAGG} on right-angled graphs of Helly groups.
Item (\ref{t:operations(5)}) follows directly from Theorem~\ref{t:quotient}.
\medskip

Theorem~\ref{thm:HellyGeneral_int} is the same as Theorem~\ref{thm:HellyGeneral} in the main body of the article and is discussed and proved in Subsection~\ref{s:qmedian}.
\medskip

The proofs of items (\ref{t:properties1(1)})--(\ref{t:properties1(9)}) in Theorem~\ref{t:properties1} are provided as explained below.
The proof of (\ref{t:properties1(1)}) is presented in Section~\ref{s:biautomatic}.
Item (\ref{t:properties1(2)}) follows from the Fixed Point Theorem~\ref{t:fixedpt}, and is proved in Subsection~\ref{s:fixedpts}. The proof of (\ref{t:properties1(3)}) is presented in Subsection~\ref{s:flats-hyper}. Item (\ref{t:properties1(4)}) follows from Corollary~\ref{c:EG} in Subsection~\ref{s:contrfix}, (\ref{t:properties1(5)}) follows from Theorems~\ref{t:injbicomb} and~\ref{t:helly=inj}, and (\ref{t:properties1(6)}), (\ref{t:properties1(7)}), (\ref{t:properties1(8)}), (\ref{t:properties1(9)}) are proved, respectively, in Subsections~\ref{s:EZ}, \ref{s:FJC}, \ref{s:coarseBC}, \ref{s:AsCones}.
\medskip

The proofs of items (\ref{t:main-graphs(1)})--(\ref{t:main-graphs(4)})
in Theorem~\ref{t:main-graphs} are provided as follows.  A union of
graph-products (UGP) is defined and studied in
Subsection~\ref{s:dirandamal}, and (\ref{t:main-graphs(1)}) is a part
of Theorem~\ref{th-UGP-3piece} proved there.  Graphical small
cancellation complexes are studied in Subsection~\ref{s:c4t4}, and
(\ref{t:main-graphs(2)}) is proved there as Theorem~\ref{l:thicksc}.
Rips complexes and face complexes are discussed in, respectively,
Subsection~\ref{s:Rips} and \ref{s:face}, and (\ref{t:main-graphs(3)})
is shown there. We discuss nerve complexes and prove
(\ref{t:main-graphs(4)}) in Subsection~\ref{s:nerve}. This result is
used in Subsection~\ref{s:7sys} to establish that $7$-systolic groups
are Helly.  \medskip

In Section~\ref{sec-abstract}, we introduce the 3-cell and the graded
monotonicity conditions and we establish that flag simplicial
complexes, CAT(0) cube complexes, hypercellular complexes and
swm-complexes satisfy both conditions.
Proposition~\ref{prop-cell-helly} then follows from
Proposition~\ref{graph-cell-complex}.  \medskip

Due to the relevance to the subject of our paper, in Section \ref{s:bahgraphs} we present in details the Helly property in the general setting of hypergraphs (set systems). We also discuss the conformality property for hypergraphs, which is
dual to the Helly property and which is an analog of flagness for simplicial complexes. For the same reason, in Section \ref{s:injective_isbell} we present the main ideas of Isbell's proof of the existence of injective hulls.

 \section{Preliminaries}
\label{s:prel}

\subsection{Graphs}\label{s:bagraphs}

A \emph{graph} $G=(V,E)$ consists of a set of vertices $V:=V(G)$ and a set of edges $E:=E(G)\subseteq V\times V$. All graphs considered in this paper are
undirected, connected, contain no multiple edges, no loops, are not
necessarily finite, but will be supposed to be locally finite. (With the exception of the quasi-median graphs considered in Section \ref{s:qmedian}, which are allowed to be locally infinite.) That is, they are \emph{locally finite one-dimensional simplicial complexes}.
For two distinct vertices $v,w\in V$ we write $v\sim w$ (respectively, $v\nsim w$) when there is an (respectively, there is no) edge connecting
$v$ with $w$, that is, when $vw:=\{v,w \} \in E$.
For vertices $v,w_1,\ldots,w_k$, we write $v\sim w_1,\ldots,w_k$ (respectively, $v\nsim w_1,\ldots,w_k$) or $v\sim A$ (respectively,
$v\nsim A$) when $v\sim w_i$ (respectively, $v\nsim w_i$), for each $i=1,\ldots, k$, where $A=\{ w_1,\ldots,w_k\}$.
As maps between graphs $G=(V,E)$ and $G'=(V',E')$ we always consider \emph{simplicial maps}, that is functions of the form
$f\colon V\to V'$ such that if $v\sim w$ in $G$ then $f(v)=f(w)$ or $f(v)\sim f(w)$ in $G'$.
A $(u,w)$--path $(v_0=u,v_1,\ldots,v_k=w)$ of \emph{length} $k$ is a sequence of vertices with $v_i\sim v_{i+1}$. If $k=2,$
then we call $P$ a \emph{2-path} of $G$. If $x_i\ne x_j$ for $|i-j|\ge 1$, then $P$ is
called a \emph{simple $(a,b)$--path}.
A \emph{$k$--cycle} $(v_0,v_1,\ldots,v_{k-1})$ is a path $(v_0,v_1,\ldots,v_{k-1},v_0)$.
For a subset
$A\subseteq V,$ the subgraph of $G=(V,E)$  \emph{induced by} $A$
is the graph $G(A)=(A,E')$ such that $uv\in E'$ if and only if $uv\in E$
($G(A)$ is sometimes called a \emph{full subgraph} of $G$). A \emph{square} $uvwz$ (respectively, \emph{triangle} $uvw$) is an induced $4$--cycle $(u,v,w,z)$ (respectively, $3$--cycle $(u,v,w)$).
The {\em wheel} $W_k$ is a graph obtained by connecting a single
vertex -- the {\em central vertex} $c$ -- to all vertices of the
$k$--cycle $(x_1,x_2, \ldots, x_k)$.

The \emph{distance} $d(u,v)=d_G(u,v)$ between two vertices $u$ and $v$ of a graph $G$ is the
length of a shortest $(u,v)$--path.  For a vertex $v$ of $G$ and an integer $r\ge 1$, we  denote  by $B_r(v,G)$ (or by $B_r(v)$)
the \emph{ball} in $G$
(and the subgraph induced by this ball)  of radius $r$ centered at  $v$, that is,
$B_r(v,G)=\{ x\in V: d(v,x)\le r\}.$ More generally, the $r$--\emph{ball  around a set} $A\subseteq V$
is the set (or the subgraph induced by) $B_r(A,G)=\{ v\in V: d(v,A)\le r\},$ where $d(v,A)=\mbox{min} \{ d(v,x): x\in A\}$.
As usual, $N(v)=B_1(v,G)\setminus\{ v\}$ denotes the set of neighbors of a vertex
$v$ in $G$.
A graph $G=(V,E)$
is \emph{isometrically embeddable} into a graph $H=(W,F)$
if there exists a mapping $\varphi : V\rightarrow W$ such that $d_H(\varphi (u),\varphi
(v))=d_G(u,v)$ for all vertices $u,v\in V$.

A \emph{retraction}
$\varphi$ of a graph $G$ is an idempotent nonexpansive mapping of $G$ into
itself, that is, $\varphi^2=\varphi:V(G)\rightarrow V(G)$ with $d(\varphi
(x),\varphi (y))\le d(x,y)$ for all $x,y\in W$ (equivalently, a retraction is a
simplicial idempotent map $\varphi: G\rightarrow G$). The subgraph of $G$
induced by the image of $G$ under $\varphi$ is referred to as a \emph{retract} of $G$.

The \emph{interval}
$I(u,v)$ between $u$ and $v$ consists of all vertices on shortest
$(u,v)$--paths, that is, of all vertices (metrically) \emph{between} $u$
and $v$: $I(u,v)=\{ x\in V: d(u,x)+d(x,v)=d(u,v)\}.$  An induced
subgraph of $G$ (or the corresponding vertex-set $A$) is called \emph{convex}
if it includes the interval of $G$ between any pair of its
vertices. The smallest convex subgraph containing a given subgraph $S$
is called the \emph{convex hull} of $S$ and is denoted by conv$(S)$. An induced subgraph $H$ (or the corresponding vertex-set of $H$)
of a graph $G$
is \emph{gated}~\cite{DrSch} if for every vertex $x$ outside $H$ there
exists a vertex $x'$ in $H$ (the \emph{gate} of $x$)
such that  $x'\in I(x,y)$ for any $y$ of $H$. Gated sets are convex and
the intersection of two gated sets is
gated. By Zorn's lemma there exists a smallest gated subgraph
$\lgate S \rgate$ containing a given subgraph $S$, called the
\emph{gated hull} of $S$.

Let $G_{i}$, $i \in \Lambda$ be an arbitrary family of graphs. The
\emph{Cartesian product} $\prod_{i \in \Lambda} G_{i}$ is a graph whose vertices
are all functions $x: i \mapsto x_{i}$, $x_{i} \in V(G_{i})$ and
two vertices $x,y$ are adjacent if there exists an index $j \in \Lambda$
such that $x_{j} y_{j} \in E(G_{j})$ and $x_{i} = y_{i}$ for all $i
\neq j$. Note that a Cartesian product of infinitely many nontrivial
graphs is disconnected. Therefore, in this case the connected
components of the Cartesian product are called \emph{weak Cartesian products}. The \emph{direct
product}  $\boxtimes_{i \in \Lambda} G_{i}$  of graphs  $G_{i}$, $i \in \Lambda$ is a graph having the same set of vertices as the Cartesian product
and two vertices $x,y$ are adjacent if
$x_{i} y_{i} \in E(G_{i})$ or $x_{i} = y_{i}$ for all $i\in \Lambda$.

We continue with  definitions of weakly modular graphs and their subclasses. We follow the  paper \cite{CCHO} and the survey \cite{BaCh}.
Recall  that a graph is \emph{weakly modular} if it satisfies  the following two distance conditions (for every $k > 0$):
\begin{itemize}
	\item \emph{Triangle condition} (TC): For any vertex $u$ and
	any two adjacent vertices $v,w$ at distance $k$ to $u$,
	there exists a common neighbor $x$ of $v,w$ at distance $k-1$ to $u$.
	\item \emph{Quadrangle condition} (QC):
	For any vertices $u,z$ at distance $k$ and any two neighbors $v,w$ of $z$ at distance
	$k-1$ to $u$, there exists a common neighbor $x$ of $v,w$ at distance $k-2$ from $u$.
\end{itemize}

Vertices $v_1,v_2,v_3$ of a graph $G$ form a {\it metric triangle}
$v_1v_2v_3$ if the intervals $I(v_1,v_2), I(v_2,v_3),$ and
$I(v_3,v_1)$ pairwise intersect only in the common end-vertices, that is,
$I(v_i, v_j) \cap I(v_i,v_k) = \{v_i\}$ for any distinct $1 \leq i, j,  k \leq 3$.
If $d(v_1,v_2)=d(v_2,v_3)=d(v_3,v_1)=k,$ then this metric triangle is
called {\it equilateral} of {\it size} $k.$ A metric triangle
$v_1v_2v_3$ of $G$ is a {\it quasi-median} of the triplet $x,y,z$
if the following metric equalities are satisfied:
\begin{align*}
d(x,y)&=d(x,v_1)+d(v_1,v_2)+d(v_2,y),\\
d(y,z)&=d(y,v_2)+d(v_2,v_3)+d(v_3,z),\\
d(z,x)&=d(z,v_3)+d(v_3,v_1)+d(v_1,x).
\end{align*}
If $v_1$, $v_2$, and $v_3$ are the same vertex $v$,
or equivalently, if the size of $v_1v_2v_3$ is zero,
then this vertex $v$ is called a {\em median} of $x,y,z$.
A median may not exist and may not be unique.
On the other hand, a quasi-median of every $x,y,z$ always exists:
first select any vertex $v_1$ from $I(x,y)\cap I(x,z)$ at maximal
distance to $x,$ then select a vertex $v_2$ from $I(y,v_1)\cap
I(y,z)$ at maximal distance to $y,$ and finally select any vertex
$v_3$ from $I(z,v_1)\cap I(z,v_2)$ at maximal distance to $z.$
The following characterization of weakly modular graphs holds:

\begin{lemma} \cite{Ch_metric} \label{lem-weakly-modular} A graph $G$
  is weakly modular if and only if for any metric triangle $v_1v_2v_3$
  of $G$ and any two vertices $x,y\in I(v_2,v_3),$ the equality
  $d(v_1,x)=d(v_1,y)$ holds. In particular, all metric triangles of weakly modular graphs are equilateral.
\end{lemma}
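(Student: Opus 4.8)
The plan is to prove the two implications separately and to read off the final sentence (``all metric triangles are equilateral'') as an immediate consequence of the biconditional: applying the displayed equality to the two endpoints $x=v_2$ and $y=v_3$ of the interval $I(v_2,v_3)$ gives $d(v_1,v_2)=d(v_1,v_3)$, and cycling the roles of $v_1,v_2,v_3$ then forces $d(v_1,v_2)=d(v_2,v_3)=d(v_3,v_1)$.

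\textbf{Forward direction.} Assume $G$ is weakly modular and fix a metric triangle $v_1v_2v_3$. I would prove the stronger statement that $d(v_1,\cdot)$ is constant on $I(v_2,v_3)$ by induction on the size $k=d(v_2,v_3)$, transporting the value $d(v_1,v_2)$ step by step along a geodesic of $I(v_2,v_3)$. For two adjacent vertices $x',x\in I(v_2,v_3)$ with $x'$ one step closer to $v_2$, the value $d(v_1,x)$ differs from $d(v_1,x')$ by at most $1$, so it suffices to exclude both a decrease and an increase. A decrease is ruled out by combining the triangle condition with the metric-triangle incidences $I(v_i,v_j)\cap I(v_i,v_k)=\{v_i\}$, which prevent a geodesic from $v_1$ from reentering the interval $I(v_2,v_3)$ transversally. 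An increase is excluded using the quadrangle condition: the two interval-neighbours of $x$ (towards $v_2$ and towards $v_3$) are neighbours of $x$ at distance $d(v_1,x)-1$ from $v_1$, so (QC), together with (TC) in the degenerate adjacent case, produces a common neighbour $y$ at distance $d(v_1,x)-2$ from $v_1$, and one checks that $y$ again lies in $I(v_2,v_3)$, allowing the induction to close.

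\textbf{Backward direction.} Assume the equidistance property holds for every metric triangle and deduce (TC) and (QC). The engine is the quasi-median: every triple admits a quasi-median metric triangle, which by hypothesis (and the equilateral corollary above) is equilateral, and whose size is at most $d(v,w)$ when built on the triple $(u,v,w)$, since the $v$- and $w$-corners lie on $I(v,w)$. For (TC), with $v\sim w$ both at distance $k$ from $u$, the quasi-median of $(u,v,w)$ has size at most $d(v,w)=1$; the degenerate size-$0$ (median) case is impossible, as it would place a median in $I(v,w)=\{v,w\}$ and force $d(u,w)=k+1$, so the size is exactly $1$ and its apex is the required common neighbour of $v,w$ at distance $k-1$ from $u$. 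For (QC) the same construction on $(u,v,w)$ yields a quasi-median of size at most $d(v,w)\le 2$; analysing the few possible sizes together with the equilateral property and the constraints $d(u,v)=d(u,w)=k-1$ produces the common neighbour at distance $k-2$ from $u$.

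\textbf{Main obstacle.} The delicate point is the forward direction, specifically showing that $d(v_1,\cdot)$ never increases as one moves along $I(v_2,v_3)$. The subtlety is that a sub-triple of a metric triangle need not itself form a metric triangle, so after the quadrangle condition produces a candidate vertex $y$ closer to $v_1$ one must verify that $y$ remains in $I(v_2,v_3)$ before the inductive hypothesis applies; keeping the vertices produced by (TC) and (QC) inside the interval, and re-extracting genuine smaller metric triangles when the naive sub-triple fails to be one, is where the careful bookkeeping lies.
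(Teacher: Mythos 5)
The paper itself contains no proof of this lemma --- it is quoted from \cite{Ch_metric} --- so your proposal can only be judged on its own merits, and as it stands both directions have genuine gaps. In the backward direction your (TC) argument is complete and correct, but your (QC) analysis never uses the vertex $z$, and the ingredients you list (``the few possible sizes together with the equilateral property and the constraints $d(u,v)=d(u,w)=k-1$'') demonstrably do not suffice. Indeed, with equilateral quasi-medians and $d(u,v)=d(u,w)=k-1$, $d(v,w)=2$, the equalities force $d(v,v')=d(w,w')$ and hence $2=2d(v,v')+\mathrm{size}$, so the size is $0$ (median, which is the desired vertex) or $2$; size $2$ genuinely occurs and cannot be excluded without $z$: in the triangular grid take $u,v,w$ to be the corners of an equilateral triangle of side $2$ --- the quasi-median of $(u,v,w)$ is the triple itself, and the unique common neighbour of $v$ and $w$ is the midpoint of the side, at distance $k-1$ from $u$, not $k-2$ (weak modularity is not violated only because no $z$ adjacent to both $v,w$ with $d(u,z)=k$ exists there). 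The repair needs the \emph{full} equidistance hypothesis, not mere equilateralness: if the size were $2$ then $v'=v$, $w'=w$, and since $z\in I(v,w)$, equidistance applied to the metric triangle $u'vw$ gives $d(u',z)=2$, whence $d(u,z)\le d(u,u')+2=(k-3)+2<k$, contradicting $d(u,z)=k$.

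In the forward direction the sketch does not close. First, your claim that interior decreases of $d(v_1,\cdot)$ along a geodesic in $I(v_2,v_3)$ are ``ruled out by combining the triangle condition with the metric-triangle incidences'' is not an argument: the incidences $I(v_i,v_j)\cap I(v_i,v_k)=\{v_i\}$ only forbid a decrease at the first step from $v_2$ and at the last step into $v_3$; excluding interior dips is essentially the content of the lemma. Second, your (QC) step applies only at a strict local maximum, and the plateau case you delegate to ``(TC) in the degenerate adjacent case'' is exactly where the membership check fails: applying (TC) to an adjacent equal-value pair $x_i,x_{i+1}$ on the geodesic can return a vertex $y$ with $d(v_2,y)=i+1$ and $d(y,v_3)=k-i$, i.e.\ outside $I(v_2,v_3)$, so the asserted verification that ``$y$ again lies in $I(v_2,v_3)$'' is false in general and the induction does not restart. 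You candidly flag this in your ``main obstacle'' paragraph --- keeping the produced vertices in the interval and re-extracting smaller metric triangles via quasi-medians is indeed where the real work of \cite{Ch_metric} lies --- but flagging the obstacle is not overcoming it. So: the architecture is the right one (quasi-medians for the backward direction, (TC)/(QC) ironing for the forward one), the backward half is reparable once $z$ is brought in as above, but the forward half remains a plan rather than a proof.
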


In this paper we use some classes of weakly modular graphs defined either by forbidden isometric or
induced subgraphs or by restricting the size of the metric
triangles of $G$.

A graph is called {\it median} if
$|I(u,v)\cap I(v,w)\cap I(w,v)|=1$ for every triplet $u,v,w$ of
vertices, that is, every triplet of vertices has a unique median.
Median graphs can be characterized in several different ways and they
play an important role in geometric group theory.  By a result of
\cite{Chepoi2000,Roller1998}, median graphs are exactly the
$1$-skeletons of \catz cube complexes (see below).  For other
properties and characterizations of median graphs, see the survey
\cite{BaCh}; for some other results on \catz cube complexes, see the
paper \cite{Sag}.

A graph is called {\it modular} if
$I(u,v)\cap I(v,w)\cap I(w,v)\ne \emptyset$ for every triplet $u,v,w$
of vertices, that is, every triplet of vertices admits a
median. Clearly a median graph is modular.  In view of Lemma
\ref{lem-weakly-modular}, modular graphs are weakly modular. Moreover,
they are exactly the graphs in which all metric triangles have size 0.
The term ``modular'' comes from a connection to modular lattices:
Indeed, a lattice is modular if and only if its covering graph is
modular. A modular graph is {\em hereditary modular} if any of its
isometric subgraph is modular.  It was shown in \cite{Ba_hereditary}
that a graph is hereditary modular if and only if all its isometric cycles have
length $4$. A modular graph is called {\it strongly modular} if it does not contain $K^-_{3,3}$ as an isometric subgraph.

Those graphs contain {\it orientable modular graphs}, that is, modular graphs whose edges can be oriented is such a way that two opposite edges of any
square have the same orientation. For example, any median graph is orientable.

We will also consider a nonbipartite generalization of strongly
modular graphs, called {\em sweakly modular graphs} or {\em
  swm-graphs}, which are defined as weakly modular graphs without
induced $K_{4}^-$ and isometric $K_{3,3}^-$ ($K_{4}^-$ is $K_4$ minus
one edge and $K_{3,3}^-$ is $K_{3,3}$ minus one edge). The swm-graphs
have been introduced and studied in depth in \cite{CCHO}.  The cell
complexes of swm-graphs can be viewed as a far-reaching generalization
of \catz cube complexes in which the cubes are replaced by cells
arising from dual polar spaces.

According to Cameron \cite{Ca}, the dual polar spaces can be
characterized by the conditions (A1)-(A5), rephrased in \cite{BaCh} in
the following (more suitable to our context) way:
\begin{theorem} \cite{Ca} \label{th:cameron_dual_polar} A graph $G$ is the collinearity
graph of a dual polar space $\Gamma$ of rank $n$  if and only if the following axioms are satisfied:
\begin{itemize}
\item[(A1)] for any point $p$ and any line $\ell$ of $\Gamma$ (i.e., maximal clique of $G$), there
is a unique point of $\ell$ nearest to $p$ in $G$;
\item[(A2)] $G$ has diameter $n$;
\item[(A3$\&$4)] the gated hull $\lgate u,v\rgate$
of two vertices $u,v$ at distance $2$ has diameter $2$;
\item[(A5)] for every pair of nonadjacent vertices $u,v$ and
every neighbor $x$ of $u$ in $I(u,v)$ there exists a neighbor $y$ of $v$ in $I(u,v)$
such that $d(u,v)=d(x,y)=d(u,y)+1=d(x,v)+1$.
\end{itemize}
\end{theorem}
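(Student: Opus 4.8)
The plan is to prove both implications, treating the forward direction by direct verification and the converse by reconstructing the polar space from the graph. Throughout, let $\Pi$ denote a polar space of rank $n$, and recall the standard dictionary for its dual polar space $\Gamma$: the \emph{points} of $\Gamma$ (the vertices of $G$) are the generators, i.e.\ the maximal singular subspaces of $\Pi$; two generators are adjacent in $G$ exactly when they meet in a subspace of codimension $1$; and more generally $d(U,W)$ equals the codimension of $U\cap W$ inside a generator. A \emph{line} of $\Gamma$ is the set of all generators containing a fixed next-to-maximal singular subspace, which is precisely a maximal clique of $G$.

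For the direction ``$\Gamma$ a dual polar space $\Rightarrow$ (A1)--(A5)'', I would check each axiom against this dictionary. Axiom (A2) is immediate, since two generators meet in codimension at most $n$. For (A1), a line $\ell$ consists of the generators through a fixed codimension-$1$ subspace $S$, and projecting a generator $U$ onto $S$ yields the unique nearest point of $\ell$ to $U$; this is exactly the property that makes the collinearity graph a \emph{near polygon}. For (A3\&4) one observes that the gated hull $\lgate U,W\rgate$ of two generators meeting in codimension $2$ is the dual polar space of the rank-$2$ residue of $U\cap W$, hence a generalized quadrangle, which has diameter $2$. Finally, (A5) is the translation of the usual exchange property for singular subspaces of $\Pi$.

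The converse is the substantial direction, and I would organize it as a reconstruction. First, (A1) together with (A2) says precisely that $G$ is the collinearity graph of a near $2n$-gon. Next, (A3\&4) guarantees that any two vertices at distance $2$ lie in a gated subgraph of diameter $2$, i.e.\ in a \emph{quad}; invoking the Shult--Yanushka theory of near polygons, these quads are generalized quadrangles (or grids) and are uniquely determined. I would then use (A5) as a classicality condition, forcing every vertex to be classical with respect to every quad (collinear with exactly one point of the quad, or with a full sub-quadrangle), which by Cameron's analysis upgrades the near polygon to a dual polar space. Concretely, I would recover $\Pi$ by declaring its \emph{points} to be the maximal convex subgraphs of $G$ of diameter $n-1$ (the ``maxes''), its higher singular subspaces to be the convex subgraphs of smaller diameter, and incidence to be reverse inclusion; one then verifies the Buekenhout--Shult ``one-or-all'' axiom for $\Pi$ and checks that the dual polar space of the reconstructed $\Pi$ is isomorphic to $G$.

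The main obstacle is exactly this reconstruction step: showing that the locally controlled data supplied by (A3\&4) and (A5) assemble into a \emph{global} polar space. Verifying the one-or-all axiom for the maxes, and confirming that the lattice of convex subgraphs of $G$ behaves like the singular-subspace lattice of a genuine polar space, is the technical heart of the argument and is where I would lean most heavily on the established classification of dense classical near polygons. Everything else---the forward verification and the identification of the quads---is comparatively routine.
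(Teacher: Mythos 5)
You should first be aware that the paper contains no proof of this statement: it is imported wholesale from Cameron \cite{Ca} (in the rephrasing of \cite{BaCh}), so the only meaningful benchmark is Cameron's original argument. Your overall architecture does match that argument's shape -- the forward direction by the generators/codimension dictionary, and the converse by reconstructing the polar space with the maximal proper convex (``max'') subgraphs of diameter $n-1$ as points, the smaller convex subgraphs as the higher singular subspaces under reverse inclusion, and a verification of the Buekenhout--Shult one-or-all axiom.

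There are, however, two genuine defects in the converse as you have set it up. First, a circularity: you propose to close the technical heart by ``leaning on the established classification of dense classical near polygons,'' but Cameron's theorem \emph{is} the cornerstone of that classification -- the Shult--Yanushka quad machinery together with Cameron's reconstruction is what that body of results rests on -- so invoking it assumes the conclusion. A self-contained proof must actually run the reconstruction: an induction on $k$ producing, for every pair of vertices at distance $k$, a gated subgraph of diameter $k$ (the base $k=2$ is axiom (A3\&4); the inductive step is where (A5) enters), followed by the one-or-all verification, which uses (A1). Second, and relatedly, you mislocate the role of (A5). Point--quad classicality is not what (A5) supplies -- it is already contained in (A3\&4), since the hull $\lgate u,v\rgate$ there is \emph{gated}, so every vertex automatically has a gate in every quad. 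What (A5) actually provides is an exchange/extension property: geodesics inside a convex subspace can be prolonged to diametral pairs, which is precisely how the present paper uses the axiom (see the proof of Lemma~\ref{lem-diametralpair-swm}), and which is what drives both the inductive construction of convex subspaces of all diameters and, dually, the existence of complements in the reconstructed polar space. As written, the step you yourself flag as the technical heart is therefore not merely unfinished but mis-specified: with (A5) assigned to a job that (A3\&4) already does, nothing in your outline powers the induction that makes the local data global.
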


We call a (non-necessarily finite) graph $G$ a \emph{dual polar graph}
if it satisfies the axioms (A1),(A3$\&$A4), and (A5) of
Theorem~\ref{th:cameron_dual_polar}, that is, we do not require
finiteness of the diameter (axiom (A2)).  By \cite[Theorem~5.2]{CCHO},
dual polar graphs are exactly the thick weakly modular graphs not
containing any induced $K^-_4$ or isometric $K^-_{3,3}$ (a graph is
\emph{thick} if the interval between any two vertices at distance 2
has at least two other vertices).  A set $X$ of vertices of an
swm-graph $G$ is \emph{Boolean-gated} if $X$ induces a gated and thick
subgraph of $G$ (the subgraph induced by $X$ is called a
\emph{Boolean-gated subgraph} of $G$). It was established
in~\cite[Section~6.3]{CCHO} that a set $X$ of vertices of an swm-graph
$G$ is Boolean-gated if and only if $X$ is a gated set of $G$ that
induces a dual-polar graph.

A graph $G$ is called {\it pseudo-modular} if
any three pairwise intersecting balls of $G$ have a non-empty
intersection \cite{BaMu_pmg}. This condition easily implies both the triangle
and quadrangle conditions, and thus pseudo-modular graphs are weakly modular. In fact,
pseudo-modular graphs are quite specific weakly modular graphs: from the definition
also follows that all metric triangles of pseudo-modular graphs have size 0 or 1.
Pseudo-modular graphs can be also characterized  by a single metric condition similar
to (but stronger than)  both triangle and quadrangle conditions:

\begin{proposition} \cite{BaMu_pmg} \label{pseudo-modular}
A graph $G$ is pseudo-modular if and only if for any three vertices $u,v,w$ such that
$1\le d(u,w)\le 2$ and $d(v,u)=d(v,w)=k\ge 2,$ there exists
a vertex  $x\sim u,w$ and $d(v,x)=k-1$.
\end{proposition}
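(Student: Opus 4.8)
The plan is to prove the two implications separately, the forward one being a direct application of the definition and the converse requiring an inductive Helly-type argument that exploits weak modularity.

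For the forward implication, assume $G$ is pseudo-modular and take $u,v,w$ with $1\le d(u,w)\le 2$ and $d(v,u)=d(v,w)=k\ge 2$. I would apply pseudo-modularity to the three balls $B_1(u)$, $B_1(w)$ and $B_{k-1}(v)$. These pairwise intersect: $B_1(u)\cap B_1(w)\ne\emptyset$ because $d(u,w)\le 2$ furnishes a common neighbour on a shortest $(u,w)$-path (or $u$ itself when $d(u,w)=1$); and $B_1(u)\cap B_{k-1}(v)\ne\emptyset$, respectively $B_1(w)\cap B_{k-1}(v)\ne\emptyset$, because $k\ge 2$ provides a neighbour of $u$ (resp.\ $w$) on a shortest path to $v$, lying at distance $k-1$ from $v$. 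Hence there is a vertex $x\in B_1(u)\cap B_1(w)\cap B_{k-1}(v)$. Since $d(x,v)\le k-1<k=d(u,v)=d(w,v)$, the vertex $x$ can be neither $u$ nor $w$, so $d(x,u)=d(x,w)=1$, i.e.\ $x\sim u,w$; and $d(v,x)\ge d(v,u)-1=k-1$ combined with $d(v,x)\le k-1$ gives $d(v,x)=k-1$, as required.

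For the converse, assume the metric condition. First I would record that it implies weak modularity: specializing the condition to $d(u,w)=1$ yields the triangle condition (TC), and specializing it to $d(u,w)=2$ yields the quadrangle condition (QC); thus $G$ is weakly modular and, by Lemma~\ref{lem-weakly-modular}, all its metric triangles are equilateral. A short additional step, applying the condition to the two far vertices of a putative equilateral metric triangle of size $2$, produces a vertex in the intersection of two of its sides other than the apex, contradicting the definition of a metric triangle; by weak modularity this rules out metric triangles of every size $\ge 2$, so each quasi-median has size $0$ or $1$.

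It then remains to establish the ball-Helly property: three pairwise intersecting balls $B_{r_1}(u_1),B_{r_2}(u_2),B_{r_3}(u_3)$ have a common vertex, which I would prove by induction on $r_1+r_2+r_3$. If some $r_i=0$, the pairwise intersections force $u_i$ into the other two balls. Otherwise all $r_i\ge 1$, and since $u_i'\sim u_i$ implies $B_{r_i-1}(u_i')\subseteq B_{r_i}(u_i)$, it suffices to replace one ball $B_{r_i}(u_i)$ by a concentric ball $B_{r_i-1}(u_i')$ still meeting the other two, i.e.\ to find a neighbour $u_i'$ of $u_i$ with $d(u_i',u_j)\le r_i+r_j-1$ for both $j\ne i$; weak modularity supplies such a simultaneously-approaching neighbour whenever the configuration is not pairwise tight, and one then concludes by induction. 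The one irreducible situation is the \emph{all-tight} case $d(u_i,u_j)=r_i+r_j$ for every pair: here any common vertex $x$ must satisfy $d(x,u_i)=r_i$ and $x\in I(u_i,u_j)$ for all pairs, i.e.\ $x$ is a median of $u_1,u_2,u_3$. Writing $s_i=d(u_i,v_i)$ for the legs of a quasi-median $v_1v_2v_3$ of size $m\le 1$, the all-tight equalities give $2(r_1+r_2+r_3)=\sum_{i<j}d(u_i,u_j)=2(s_1+s_2+s_3)+3m$, so $m$ is even, whence $m=0$ and the quasi-median is the desired median. The main obstacle is precisely this all-tight case: the reduction step cannot touch it, and its resolution rests on the parity identity together with the fact, derived above from the metric condition, that metric triangles have size at most $1$.
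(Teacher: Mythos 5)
The paper does not prove this proposition at all --- it is imported with a citation to Bandelt and Mulder \cite{BaMu_pmg} --- so your attempt can only be measured against the classical argument, which your outline essentially reconstructs. Your forward direction is complete and correct: applying the Helly property to $B_1(u)$, $B_1(w)$, $B_{k-1}(v)$ and ruling out $x\in\{u,w\}$ by the distance count is exactly right. Likewise the derivation of (TC) and (QC) from the metric condition, the exclusion of size-$2$ metric triangles, the induction on $r_1+r_2+r_3$, and the all-tight case via the parity identity $2(r_1+r_2+r_3)=2(s_1+s_2+s_3)+3m$ (with the median then satisfying $d(x,u_i)=r_i$ by solving $s_i+s_j=r_i+r_j$) are all sound.

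Two steps, however, are asserted rather than proved, and the first is a genuine lemma, not a remark. (i) ``By weak modularity this rules out metric triangles of every size $\ge 2$'' does not follow from Lemma~\ref{lem-weakly-modular} by any one-line observation: you must actually shrink a size-$k$ triangle to a size-$2$ one. One way: given an equilateral metric triangle $v_1v_2v_3$ of size $k\ge 3$, pick $x\in I(v_2,v_3)$ with $d(x,v_3)=2$, so $d(v_1,x)=k$ by Lemma~\ref{lem-weakly-modular}; take a quasi-median $q_1q_2q_3$ of $(x,v_3,v_1)$ of size $m$. The three quasi-median equalities give $d(x,q_1)=d(v_3,q_2)$ and $2d(x,q_1)+m=2$, hence $m\in\{0,2\}$. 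If $m=0$, the median $q$ satisfies $d(q,v_3)=1$, $q\in I(v_3,v_1)$, and a two-line distance count puts $q\in I(v_3,v_2)$ as well, contradicting $I(v_3,v_1)\cap I(v_3,v_2)=\{v_3\}$; hence $m=2$, forcing $q_1=x$, $q_2=v_3$, and $x\,v_3\,q_3$ is a metric triangle of size $2$ --- contradiction. (ii) Your claim that ``weak modularity supplies such a simultaneously-approaching neighbour whenever the configuration is not pairwise tight'' is inaccurate as stated: in the pattern where the two tight pairs share a center, say $d(u_1,u_j)=r_1+r_j$ for $j=2,3$ and $d(u_2,u_3)\le r_2+r_3-1$, you need a neighbour of $u_1$ in $I(u_1,u_2)\cap I(u_1,u_3)$, and taking a quasi-median $w_1w_2w_3$ of $(u_1,u_2,u_3)$ the bad case $w_1=u_1$ is excluded not by weak modularity but by the size bound: it would give $d(u_2,u_3)=2r_1+r_2+r_3-m$, forcing $m\ge 2r_1+1\ge 3$. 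A short case analysis of tightness patterns (any two tight pairs share a vertex; if at most one pair is tight, some center has both incident pairs slack and its radius shrinks in place) then completes the induction. With (i) and (ii) supplied your proof is correct and coincides with the route of \cite{BaMu_pmg}.
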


An important subclass of pseudo-modular graphs is constituted by Helly
graphs, which is the main subject of our paper and which will be defined below.

The {\it quasi-median graphs}  are the $K^-_4$ and $K_{2,3}$--free weakly modular
graphs; equivalently, they are exactly the retracts of Hamming graphs (weak Cartesian products
of complete graphs). From the definition it follows that quasi-median graphs are pseudo-modular and swm-graphs.
For many results about quasi-median graphs, see \cite{quasimedian} and \cite{Qm} and for a theory of groups
acting on quasi-median graphs,   see \cite{Qm}.

Bridged graphs constitute another important subclass of weakly modular
graphs. A graph $G$ is called {\it bridged} \cites{FaJa,SoCh} if it
does not contain any isometric cycle of length greater than $3$.
Alternatively, a graph $G$ is bridged if and only if the balls
$B_r(A,G)=\{ v\in V: d(v,A)\le r\}$ around convex sets $A$ of $G$ are
convex.  Bridged graphs are exactly weakly modular graphs that do not
contain induced $4$-- and $5$--cycles (and therefore do not contain
$4$-- and $5$--wheels) \cite{Ch_metric}.
A graph $G$ (or its clique-complex $X(G)$) is called \emph{locally
  systolic} if the neighborhoods of vertices do not induce $4$- and
$5$-cycles.  If additionally, the clique complex $X(G)$ of $G$ is
simply connected, then the graph $G$ (or its clique-complex $X(G)$) is
called \emph{systolic}. If the neighborhoods of vertices of a
(locally) systolic graph $G$ do not induce 6-cycles, then $G$ is
called \emph{(locally) 7-systolic}.  It was shown in \cite{Chepoi2000}
that bridged graphs are exactly the $1$-skeletons of {\it systolic
  complexes} of \cite{JS}. In the following, we will use the name {\it
  systolic graphs} instead of {\it bridged graphs}.

A graph $G=(V,E)$ is called \emph{hypercellular}~\cite{ChKnMa19} if
$G$ can be isometrically embedded into a hypercube and $G$ does not
contain $Q^-_3$ as a partial cube minor ($Q^-_3$ is the 3-cube $Q_3$
minus one vertex).  A graph $H$ is called a \emph{partial cube minor}
of $G$ if $G$ contains a finite convex subgraph $G'$ which can be
transformed into $H$ by successively contracting some classes of
parallel edges of $G'$. Hypercellular graphs are not weakly modular
but however, they represent another generalization of median
graphs~\cite{ChKnMa19}.

\subsection{Complexes}\label{s:bacom}

All complexes considered in this paper are locally finite CW complexes. Following~\cite[Chapter 0]{Hat}, we call them simply \emph{cell complexes} or
just \emph{complexes}. If all cells are simplices (respectively, unit solid cubes) and the non-empty intersection of two cells is their common face,
then $X$ is called a {\em simplicial} (respectively, \emph{cube}) \emph{complex}. For a cell complex $X$, by $X^{(k)}$ we denote its \emph{$k$--skeleton}.
All cell complexes considered in this paper will have graphs (that is, one-dimensional simplicial complexes)
as their $1$--skeleta. Therefore, we use the notation $G(X):=X^{(1)}$.  The \emph{star} of a vertex $v$ in a complex $X$, denoted $\mr{St}(v,X)$, is
the set of all cells containing $v$.

An {\em abstract simplicial complex} $\Delta$ on a set $V$
is a set of non-empty subsets of $V$ such that each member of $\Delta$, called a {\em simplex},
is a finite set, and any non-empty subset of a simplex is also a simplex.
A simplicial complex $X$ naturally gives rise
to an abstract simplicial complex $\Delta$ on the set of vertices ($0$--dimensional cells) of $X$
by setting $U \in \Delta$ if and only if there is a simplex in $X$ having $U$ as its vertices.
Combinatorial and topological structures of $X$ are completely recovered from $\Delta$.
Hence we sometimes identify simplicial complexes and abstract simplicial complexes.

The \emph{clique complex} of a graph $G$ is the abstract simplicial
complex $X(G)$ having the cliques (i.e., complete subgraphs) of $G$ as
simplices. A simplicial complex $X$ is a \emph{flag simplicial complex}
if $X$ is the clique complex of its $1$--skeleton. Given a simplicial
complex $X$, the \emph{flag-completion} $\widehat{X}$ of $X$ is the
clique complex of the $1$--skeleton $G(X)$ of $X$.

Let $C$ be a cycle in the
$1$--skeleton of a complex $X$. Then a cell complex $D$ is called a
\emph{singular disk diagram} (or Van Kampen diagram) for $C$ if the
$1$--skeleton of $D$ is a plane graph whose inner faces are exactly
the $2$--cells of $D$ and there exists a cellular map
$\varphi:D\rightarrow X$ such that $\varphi|_{\partial D}=C$ (for more
details see~\cite[Chapter V]{LySch}). According to Van Kampen's
lemma~\cite[pp.~150--151]{LySch}, a cell complex $X$ is simply
connected if and only if for every cycle $C$ of $X$, one can construct
a singular disk diagram. A singular disk diagram with no cut vertices (that is, its
$1$--skeleton is $2$--connected) is called a \emph{disk diagram.} A
\emph{minimal (singular) disk} for $C$ is a (singular) disk diagram
${D}$ for $C$ with a minimum number of $2$--faces.  This number is
called the {\it (combinatorial) area} of $C$ and is denoted Area$(C).$
If $X$ is a simply connected triangle, (respectively, square,
triangle-square) complex, then for each cycle $C$ all inner faces in a
singular disk diagram $D$ of $C$ are triangles (respectively, squares,
triangles or squares).

As morphisms between cell complexes we always consider \emph{cellular maps},
that is, maps sending the $k$--skeleton into the $k$--skeleton.
An \emph{isomorphism} is a bijective
cellular map being a linear isomorphism (isometry) on each cell. A
\emph{covering (map)} of a cell complex $X$ is a cellular surjection $p\colon
\widetilde{X} \to X$ such that $p|_{\mbox{St}(\tv,\widetilde{X})}\colon \mbox{St}(\tv,\widetilde{X})\to \mbox{St}(p(\tv),X)$ is
an isomorphism for every vertex $\tv$ in $\tX$; compare \cite{Hat}*{Section 1.3}.
The space $\widetilde{X}$ is then called a \emph{covering space}.

\subsection{\catz spaces and Gromov hyperbolicity} Let $(X,d)$ be a metric space.
A {\it geodesic segment} joining two
points $x$ and $y$ from $X$ is an isometric embedding $\rho \colon {\mathbb R}^1\supset [0,\ell] \to X$
such that
$\rho(0)=x, \rho(\ell)=y$, and $d(\rho(t),\rho(t')) = |t - t'|$ for any $t,t' \in [0,\ell]$ ($d(x,y) = \ell$ is the \emph{length} of the geodesic $\rho$). A metric space $(X,d)$ is {\it geodesic} if every pair of
points in $X$ can be joined by a geodesic segment. Every
graph $G=(V,E)$ equipped with its standard
distance $d_G$ can be transformed into a geodesic
space $(X_G,d)$ by replacing every edge $e=uv$ by a segment
$\gamma_{uv}=[u,v]$ of length 1; the segments may intersect only at
common ends.  Then $(V,d_G)$ is isometrically embedded in a natural
way into $(X_G,d)$.

A {\it geodesic triangle} $\Delta (x_1,x_2,x_3)$ in a geodesic
metric space $(X,d)$ consists of three points in $X$ (the vertices
of $\Delta$) and a geodesic  between each pair of vertices (the
edges of $\Delta$). A {\it comparison triangle} for $\Delta
(x_1,x_2,x_3)$ is a triangle $\Delta (x'_1,x'_2,x'_3)$ in the
Euclidean plane  ${\mathbb E}^2=(\mathbb R^2,d_{2})$ such that $d_{2}(x'_i,x'_j)=d(x_i,x_j)$ for $i,j\in \{ 1,2,3\}.$ A geodesic
metric space $(X,d)$ is defined to be a {\it \catz space}
\cite{Gr} if all geodesic triangles $\Delta (x_1,x_2,x_3)$ of $X$
satisfy the comparison axiom of Cartan--Alexandrov--Toponogov:

\medskip\noindent
{\it If $y$ is a point on the side of $\Delta(x_1,x_2,x_3)$ with
vertices $x_1$ and $x_2$ and $y'$ is the unique point on the line
segment $[x'_1,x'_2]$ of the comparison triangle
$\Delta(x'_1,x'_2,x'_3)$ such that $d_{2}(x'_i,y')=
d(x_i,y)$ for $i=1,2,$ then $d(x_3,y)\le d_{2}(x'_3,y').$}

\medskip\noindent
The \catz property is also equivalent to the convexity of the function
$f\colon[0,1]\rightarrow X$ given by $f(t)=d(\alpha (t \ell_{\alpha}),\beta (t \ell_{\beta})),$
for any two geodesics $\alpha$ and $\beta$ of respective lengths $\ell_{\alpha}$ and $\ell_{\beta}$ (which is further equivalent to
the convexity of the neighborhoods of convex sets). This implies that
\catz spaces are contractible. Any two points of a \catz space can be
joined by a unique geodesic.  See the book \cite{BrHa} for a detailed
account on \catz spaces and their isometry groups.

A cube complex $X$ is \catz if $X$ endowed with the intrinsic $\ell_2$
metric is a \catz metric space.  Gromov~\cite{Gr} characterized \catz
cube complexes in a very nice combinatorial way: those are precisely
the simply connected cube complexes such that the following \emph{cube
  condition} holds: if three $(k+2)$-dimensional cubes intersect in a
$k$-dimensional cube and pairwise intersect in $(k+1)$-dimensional
cubes, then they are all three contained in a $(k+3)$-dimensional
cube. The cube condition is equivalent to the \emph{flagness
  condition} that states that the geometric link of any vertex is a
flag simplicial complex. The $1$-skeletons of \catz cube complexes are
precisely the median graphs~\cite{Chepoi2000,Roller1998}.

A metric space $(X,d)$ is $\delta$--{\it hyperbolic}
\cites{Gr,BrHa} if for any four points $u,v,x,y$ of
$X$, the two larger of the three distance sums $d(u,v)+d(x,y)$,
$d(u,x)+d(v,y)$, $d(u,y)+d(v,x)$ differ by at most $2\delta \geq 0$. A
graph $G = (V,E)$ is $\delta$--\emph{hyperbolic} if $(V,d_G)$ is
$\delta$--{hyperbolic}. A metric space or a graph has \emph{bounded hyperbolicity} if it is $\delta$--hyperbolic for some finite $\delta$.
For geodesic metric spaces and graphs, $\delta$--hyperbolicity can be
defined (up to the value of the hyperbolicity constant $\delta$) as spaces in which all geodesic triangles
are $\delta$--slim. Recall that  a geodesic triangle $\Delta(x,y,z)$ is called
$\delta$--{\it slim} if for
any point $u$ on the side $[x,y]$ the distance from $u$ to $[x,z]\cup
[z,y]$ is at most $\delta$. Equivalently, $\delta$--hyperbolicity can be defined
via the linear isoperimetric inequality: all cycles in a  $\delta$--hyperbolic
graph or  geodesic
metric space admit a disk diagram of linear area and vice-versa all graphs or geodesic
metric spaces  in which all  cycles   admit disk diagrams of linear area are hyperbolic.

\subsection{Group actions}
\label{s:bagroups}

For a set $X$ and a group $\Gamma$, a \emph{$\Gamma$--action on $X$} is a group homomorphism $\Gamma \to \mr{Aut}(X)$.
If $X$ is equipped with an additional structure then $\mr{Aut}(X)$ refers to the automorphisms group of this structure.
We say then that \emph{$\Gamma$ acts on $X$ by automorphisms}, and $x\mapsto gx$ denotes the automorphism being the image of $g$.
In the current paper $X$ will be a graph or a cell complex, and thus $\mr{Aut}(X)$ will denote graph automorphisms or cellular
automorphisms. Let $\Gamma$ be a group acting by automorphisms on a cell complex $X$. Recall that the action is \emph{cocompact}
if the orbit space $X/G$
is compact. The action of $\Gamma$ on a locally finite cell complex $X$
is \emph{proper} if stabilizers of cells are finite. Finally, the action is \emph{geometric}
(or $\Gamma$ \emph{acts geometrically} on $X$) if it is cocompact and proper. If a group $\Gamma$ acts geometrically on
a graph $G$ or on a cell complex $X$, then $G$ and $X$ are locally finite. This explains why in this paper we consider locally finite
graphs, complexes, and hypergraphs.

\subsection{Hypergraphs (set families)}\label{s:bahgraphs}
In this subsection, we recall the main notions in hypergraph
theory. We closely follow the book by Berge~\cite{Berge} on
hypergraphs (with the single difference, that our hypergraphs may be
infinite). A \emph{hypergraph} is a pair $\cH=(V,\cE)$, where $V$ is a
set and $\cE=\{ H_i\}_{i\in I}$ is a family of non-empty subsets of
$V$; $V$ is called the set of vertices and $\cE$ is called the set of
edges (or hyperedges) of $\cH$.  Abstract simplicial complexes are
examples of hypergraphs. The \emph{degree} of a vertex $v$ is the number of
edges of $\cH$ containing $v$.  A hypergraph $\cH$ is called {\it
  edge-finite} if all edges of $\cH$ are finite and {\it
  vertex-finite} if the degrees of all vertices are finite. $\cH$ is
called a \emph{locally finite hypergraph} if $\cH$ is edge-finite and
vertex-finite. A hypergraph $\cH$ is \emph{simple} if no edge of $\cH$
is contained in another edge of $\cH$. The \emph{simplification} of a
hypergraph $\cH=(V,\cE)$ is the hypergraph
$\breve{\cH}=(V,\breve{\cE})$ whose edges are the maximal by inclusion
edges of $\cH$.

The \emph{dual} of a hypergraph $\cH=(V,\cE)$ is the hypergraph
$\cH^*=(V^*,\cE^*)$ whose vertex-set $V^*$ is in bijection with the
edge-set $\cE$ of $\cH$ and whose edge-set $\cE^*$ is in bijection
with the vertex-set $V$, namely $\cE^*$ consists of all
$S_v=\{ H_j\in \cE: v\in H_j\}, v\in V$. By definition,
$(\cH^*)^*=\cH$. The dual of a locally finite hypergraph is also
locally finite.  The \emph{hereditary closure} $\widehat{\cH}$ of a
hypergraph $\cH$ is the hypergraph whose edge set is the set of all
non-empty subsets $F\subset V$ such that $F\subseteq H_i$ for at least
one index $i$. Clearly, the hereditary closure $\widehat{\cH}$ of a
hypergraph $\cH$ is a simplicial complex and
$\widehat{\cH}=\widehat{\breve{\cH}}$. The \emph{2-section} $[\cH]_2$
of a hypergraph $\cH$ is the graph having $V$ as its vertex-set and
two vertices are adjacent in $[\cH]_2$ if they belong to a common edge
of $\cH$. By definition, the 2-section $[\cH]_2$ is exactly the
1-skeleton $\widehat{\cH}^{(1)}$ of the simplicial complex
$\widehat{\cH}$ and the 2-section of $\cH$ coincides with the
2-section of its simplification $\breve{\cH}$. The \emph{line graph}
$L(\cH)$ of $\cH$ has $\cE$ as its vertex-set and $H_i$ and $H_j$ are
adjacent in $L(\cH)$ if and only if $H_i\cap H_j\ne\emptyset$.  By
definition (see also~\cite[Proposition~1, p.\ 32]{Berge}), the line
graph $L(\cH)$ of $\cH$ is precisely the 2-section $[\cH^*]_2$ of its
dual $\cH^*$. A \emph{cycle of length} $k$ of a hypergraph $\cH$ is a
sequence $(v_{1},H_1,v_2,H_2,v_3,\ldots, H_k,v_1)$ such that
$H_1,\ldots,H_k$ are distinct edges of $\cH$, $v_1,v_2,\ldots,v_k$ are
distinct vertices of $V$, $v_i,v_{i+1}\in H_i,$ $i=1,\ldots,k-1$, and
$v_k,v_1\in H_k$.  A \emph{copair hypergraph} is a hypergraph $\cH$ in
which $V\setminus H_i\in \cE$ for each edge $H_i\in \cE$.

The \emph{nerve complex} of a hypergraph  $\cH = (V,\cE)$ is
the simplicial complex $N(\cH)$ having $\cE$ as its vertex-set
such that a finite subset $\sigma \subseteq \cE$ is a simplex of
$N(\cH)$ if $\bigcap_{H_i \in \sigma} H_i \neq \emptyset$
(see~\cite{Bj}). The \emph{nerve graph} $NG(\cH)$ of a hypergraph $\cH$
is the $1$--skeleton of the nerve complex $N(\cH)$.
The following result is straightforward:
\begin{lemma} For any hypergraph $\cH$, $N(\cH)={\widehat{\cH}}^*$ and $NG(\cH)=[{\cH}^*]_2=({\widehat{\cH}}^*)^{(1)}$.
\end{lemma}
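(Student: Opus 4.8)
The plan is to check both displayed equalities directly on the level of vertices and simplices, reducing the first one to a single set-theoretic equivalence and deducing the second from the facts about $2$-sections already recorded above. Throughout I read the right-hand side ${\widehat{\cH}}^*$ of the first identity as the hereditary closure $\widehat{\cH^*}$ of the \emph{dual} hypergraph $\cH^*$ (the only reading making the statement true, since the literal $(\widehat{\cH})^*$ would have the far larger vertex set of all faces of $\widehat{\cH}$).

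First I would observe that both $N(\cH)$ and $\widehat{\cH^*}$ are simplicial complexes on the same vertex set. Indeed, $N(\cH)$ has vertex set $\cE$, while $\widehat{\cH^*}$ lives on the vertex set $V^*$ of $\cH^*$, and $V^*$ is in bijection with $\cE$ by the very definition of the dual; I identify the two via this bijection. It then remains to compare their simplices.

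The heart of the matter is one equivalence. Recall that the edges of $\cH^*$ are the sets $S_v=\{H_j\in\cE: v\in H_j\}$ for $v\in V$. A finite subset $\sigma\subseteq\cE$ is a simplex of $\widehat{\cH^*}$ exactly when it is contained in some edge of $\cH^*$, i.e. $\sigma\subseteq S_v$ for some $v\in V$. Now $\sigma\subseteq S_v$ means $v\in H$ for every $H\in\sigma$, that is $v\in\bigcap_{H\in\sigma}H$; quantifying over $v$, such a $v$ exists if and only if $\bigcap_{H\in\sigma}H\neq\emptyset$, which is precisely the condition for $\sigma$ to be a simplex of $N(\cH)$. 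Hence the two complexes have the same simplices and $N(\cH)=\widehat{\cH^*}$.

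For the second chain of equalities I would argue as follows. By definition $NG(\cH)$ is the $1$-skeleton of $N(\cH)$, so the identity just proved gives $NG(\cH)=(\widehat{\cH^*})^{(1)}$. To bring in the $2$-section of the dual, I invoke the general fact recorded above that the $2$-section of any hypergraph coincides with the $1$-skeleton of its hereditary closure; applying this with $\cH^*$ in the role of $\cH$ yields $[\cH^*]_2=(\widehat{\cH^*})^{(1)}$. Combining the two displays gives $NG(\cH)=[\cH^*]_2=(\widehat{\cH^*})^{(1)}$, as claimed. I do not expect any genuine obstacle, in keeping with the statement being straightforward; the only points requiring attention are the bookkeeping of the duality bijection (reading the vertices of $\cH^*$ as the edges of $\cH$) and the standing convention that the simplices of both complexes are finite sets, which is what legitimizes the existential step turning ``$\sigma$ lies in a common dual edge $S_v$'' into ``the edges in $\sigma$ share a common vertex.''
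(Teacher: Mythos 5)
Your proof is correct, and since the paper states this lemma without proof (calling it straightforward), your definitional unwinding---identifying $V^*$ with $\cE$ via the duality bijection, checking that $\sigma\subseteq S_v$ for some $v\in V$ is equivalent to $\bigcap_{H\in\sigma}H\neq\emptyset$, and then citing the recorded fact that the $2$-section equals the $1$-skeleton of the hereditary closure---is exactly the intended argument, including your correct parsing of ${\widehat{\cH}}^*$ as $\widehat{\cH^*}$. One small quibble: the finiteness convention is not what legitimizes the existential step (that equivalence holds for arbitrary $\sigma$); it is only needed so that $\widehat{\cH^*}$ has exclusively finite faces and hence literally coincides with $N(\cH)$, whose simplices are finite by definition.
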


A family of subsets $\mathcal F$ of a set $V$ satisfies the
\emph{(finite) Helly property} if for any (finite) subfamily
${\mathcal F}'$ of $\mathcal F$, the intersection
$\bigcap {\mathcal F}'=\bigcap \{ F: F\in {\mathcal F}'\}$ is
non-empty if and only if $F\cap F'\ne \emptyset$ for any pair
$F,F'\in {\mathcal F}'$. A hypergraph $\cH=(V,\cE)$ is called
\emph{(finitely) Helly} if its family of edges $\cE$ satisfies the
(finite) Helly property. We continue with a characterization of Helly hypergraphs. In the
finite case this result is due to Berge and
Duchet~\cite{Berge,BeDu75}. The case of edge-finite hypergraphs
follows from a more general result~\cite[Proposition 1]{BaChEp}.

\begin{proposition}[\cite{Berge,BeDu75}]\label{Helly_triangle}
  An edge-finite hypergraph $\cH$ is Helly if and only if for any
  triplet $x,y,z$ of vertices the intersection of all edges containing
  at least two of $x,y,z$ is non-empty.
\end{proposition}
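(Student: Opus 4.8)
The plan is to prove the two implications separately. Since the ``only if'' part of the Helly property (a non-empty total intersection forces pairwise intersections) is trivial, throughout I take ``$\cH$ satisfies the Helly property'' to mean the substantive statement: every pairwise-intersecting subfamily $\cF'\subseteq\cE$ has $\bigcap\cF'\neq\emptyset$.

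For the easy direction, suppose $\cH$ satisfies the Helly property and fix a triplet $x,y,z$. Let $\cF'$ be the family of all edges containing at least two of $x,y,z$. Any two such edges meet: each contains one of the three pairs $\{x,y\},\{y,z\},\{x,z\}$, and any two of these pairs share a vertex, so two edges of $\cF'$ have a common vertex among $\{x,y,z\}$. Hence $\cF'$ is pairwise intersecting, and the Helly property yields $\bigcap\cF'\neq\emptyset$, which is precisely the triplet condition.

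For the converse, assume the triplet condition. First I would establish the \emph{finite} Helly property by induction on the number $n$ of edges in a pairwise-intersecting family $\{H_1,\dots,H_n\}$, the cases $n\le 2$ being immediate. For the inductive step ($n\ge 3$), apply the induction hypothesis to each of the $n-1$ subfamilies obtained by deleting one edge; this produces, for every $i$, a witness $x_i\in\bigcap_{j\ne i}H_j$. Now select the three witnesses $x_1,x_2,x_3$. A direct check shows every edge $H_k$ contains at least two of them: for $k\ge 4$ one has $x_1,x_2,x_3\in H_k$, while $H_1\supseteq\{x_2,x_3\}$, $H_2\supseteq\{x_1,x_3\}$, and $H_3\supseteq\{x_1,x_2\}$. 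Applying the triplet condition to $x_1,x_2,x_3$, the intersection of all edges containing at least two of them is non-empty; as this intersection is contained in $\bigcap_{i=1}^n H_i$ (each $H_i$ being one of those edges), we conclude $\bigcap_{i=1}^n H_i\neq\emptyset$.

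It remains to upgrade the finite Helly property to the full Helly property, and this is the step where edge-finiteness is indispensable. Given an arbitrary pairwise-intersecting subfamily $\cF'\subseteq\cE$, fix one edge $H_0\in\cF'$, which by edge-finiteness is a finite set $\{v_1,\dots,v_m\}$. If $\bigcap\cF'=\emptyset$, then for each $v_i$ there is an edge $H_{v_i}\in\cF'$ with $v_i\notin H_{v_i}$; the finite subfamily $\{H_0,H_{v_1},\dots,H_{v_m}\}$ is pairwise intersecting, so by the finite Helly property it has a common vertex $p$, which lies in $H_0$ and hence equals some $v_i$, contradicting $v_i\notin H_{v_i}$. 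Thus $\bigcap\cF'\neq\emptyset$, completing the proof. The genuine point to get right is the arrangement of witnesses so that every edge meets the chosen triple in at least two points; once this combinatorial bookkeeping is in place, both the induction and the compactness-style finiteness argument are routine, with edge-finiteness entering only in the very last reduction to a single finite edge.
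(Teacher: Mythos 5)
Your proof is correct. Note that the paper itself contains no proof of this proposition: it is stated with references, attributing the finite case to Berge--Duchet \cite{Berge,BeDu75} and deriving the edge-finite case from the more general \cite[Proposition 1]{BaChEp}. Your write-up therefore effectively reconstructs the omitted argument, and it does so along the classical lines: the induction with the three witnesses $x_1\in\bigcap_{j\neq 1}H_j$, $x_2\in\bigcap_{j\neq 2}H_j$, $x_3\in\bigcap_{j\neq 3}H_j$, followed by the observation that every $H_k$ contains at least two of them, is exactly the Berge--Duchet proof of the finite case, while your final paragraph replaces the appeal to the general result of \cite{BaChEp} with the standard compactness reduction (fix one edge $H_0$, which is finite, and pick for each $v\in H_0$ an edge of $\cF'$ avoiding $v$); you also correctly isolate edge-finiteness as entering only at this last step, since the finite Helly property follows from the triplet condition in any hypergraph. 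One pedantic point worth a sentence in a polished version: the witnesses $x_1,x_2,x_3$ need not be distinct, and the triplet condition is cleanest for distinct vertices; but if two of them coincide, say $x_1=x_2$, then that vertex already lies in $\bigcap_{j}H_j$ and the inductive step concludes immediately, so you may assume distinctness before invoking the triplet condition. Similarly, in the easy direction one should adopt the usual convention for the (possibly empty) family of edges containing at least two of $x,y,z$; neither issue affects the substance of the argument.
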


We call the condition in Proposition~\ref{Helly_triangle} the \emph{Berge-Duchet} condition.

A hypergraph $\cH=(V,\cE)$ is \emph{conformal} if all maximal cliques of the 2-section $[\cH]_2$ are edges of $\cH$. In other words,  $\cH$ is conformal
if and only if its hereditary closure $\widehat{\cH}$ is a flag simplicial complex. The following result establishes the duality between conformal
and Helly hypergraphs:

\begin{proposition}[\cite{Berge}*{p.\ 30}]\label{conformal-Helly}  A hypergraph $\cH$ is conformal if and only if its dual $\cH^*$ is Helly.
\end{proposition}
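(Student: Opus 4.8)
The plan is to prove the equivalence by directly unwinding the two definitions and observing that they translate into one another under the duality $\cH \leftrightarrow \cH^*$. Recall that the edges of the dual $\cH^*=(V^*,\cE^*)$ are the ``stars'' $S_v=\{H_j\in\cE : v\in H_j\}$, one for each vertex $v\in V$; thus a subfamily of $\cE^*$ is precisely a collection $\{S_v : v\in W\}$ for some $W\subseteq V$. The goal is to show that the Helly property of the family $\{S_v\}_{v\in V}$ is the same statement as the flagness of $\widehat{\cH}$, which by definition is conformality of $\cH$.

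First I would record the two elementary translations. For any vertices $v,v'\in V$ we have $S_v\cap S_{v'}=\{H_j\in\cE : v,v'\in H_j\}$, so this intersection is non-empty exactly when some edge of $\cH$ contains both $v$ and $v'$, i.e.\ when $v\sim v'$ in the $2$-section $[\cH]_2$ (the case $v=v'$ being trivial, as every vertex lies in some edge). Consequently the family $\{S_v : v\in W\}$ is pairwise intersecting if and only if $W$ is a clique of $[\cH]_2$. On the other hand, for any $W\subseteq V$ one has $\bigcap_{v\in W}S_v=\{H_j\in\cE : W\subseteq H_j\}$, so the whole subfamily has non-empty intersection if and only if some edge of $\cH$ contains $W$.

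Combining the two translations, the Helly property for $\cE^*$ --- every pairwise intersecting subfamily has a common element --- reads: every clique $W$ of $[\cH]_2$ is contained in some edge of $\cH$. This is exactly the assertion that every clique of the $1$-skeleton $[\cH]_2=\widehat{\cH}^{(1)}$ spans a simplex of $\widehat{\cH}$, i.e.\ that $\widehat{\cH}$ is a flag simplicial complex, which is the definition of conformality of $\cH$. Reading the equivalence in both directions yields the proposition.

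The only point requiring care --- and the main, mild, obstacle --- is the bookkeeping between the finite and the unrestricted Helly property. Since simplices of $\widehat{\cH}$ are by definition finite sets, flagness only constrains finite cliques, so it matches the finite Helly property cleanly; and when $\cH$ is locally finite the $2$-section $[\cH]_2$ has no infinite cliques, so the two versions coincide. Alternatively, in the edge-finite case one may bypass this discussion by invoking the Berge--Duchet characterization (Proposition~\ref{Helly_triangle}) applied to $\cH^*$, translating its triplet condition on $\cE^*$ directly into the containment-in-an-edge condition on $[\cH]_2$.
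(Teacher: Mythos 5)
Your proof is correct, and there is in fact no proof in the paper to compare it with: Proposition~\ref{conformal-Helly} is quoted from Berge's book with a citation only. Your dictionary is the standard argument: subfamilies of $\cE^*$ correspond to subsets $W\subseteq V$, the family $\{S_v: v\in W\}$ is pairwise intersecting exactly when $W$ is a clique of $[\cH]_2$, and $\bigcap_{v\in W}S_v\neq\emptyset$ exactly when $W$ is contained in an edge of $\cH$. One simplification worth noting: if you use the paper's primary definition of conformality (every \emph{maximal} clique of $[\cH]_2$ is an edge of $\cH$) instead of flagness of $\widehat{\cH}$, the finite-versus-unrestricted bookkeeping you worry about disappears entirely. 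By Zorn's lemma every clique of $[\cH]_2$ extends to a maximal one (unions of chains of cliques are cliques), so ``every clique lies in an edge'' and ``every maximal clique is an edge'' are equivalent with no finiteness hypothesis, and the proposition then holds for arbitrary hypergraphs; your route through flagness is sound but, as you note yourself, flagness only constrains finite cliques, forcing the local-finiteness detour. Two small caveats: the ``trivial'' case $v=v'$ uses the standing convention, implicit in the definition of $\cH^*$ (whose edges must be non-empty), that every vertex lies in some edge; and your closing alternative via Proposition~\ref{Helly_triangle} is not really a shortcut --- applied to $\cH^*$ it requires $\cH$ to be vertex-finite and yields the Gilmore condition, whose equivalence with conformality is itself a further inductive step, and in the paper's logical order Proposition~\ref{Gilmore} is \emph{derived from} Propositions~\ref{Helly_triangle} and~\ref{conformal-Helly}, so leaning on it here would be circular.
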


Analogously to the Helly property, the conformality can be characterized in a local way, via the following \emph{Gilmore condition}
(the proof follows from Propositions~\ref{Helly_triangle} and~\ref{conformal-Helly}):

\begin{proposition}[\cite{Berge}*{p.\ 31}]\label{Gilmore}  A vertex-finite hypergraph $\cH$ is conformal if and only if for any three
edges $H_1,H_2,H_3$ of $\cH$ there exists an edge $H$ of $\cH$ containing the set $(H_1\cap H_2)\cup (H_1\cap H_3)\cup (H_2\cap H_3)$.
\end{proposition}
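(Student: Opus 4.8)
The plan is to dualize and combine the two preceding propositions. First I would record the basic duality between finiteness conditions: for a vertex $v$ of $\cH$, the corresponding edge $S_v=\{H_j\in\cE : v\in H_j\}$ of $\cH^*$ has cardinality equal to the degree of $v$, so $\cH$ is vertex-finite precisely when its dual $\cH^*$ is edge-finite. This is exactly what makes Proposition~\ref{Helly_triangle} applicable to $\cH^*$.

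Next, by Proposition~\ref{conformal-Helly}, $\cH$ is conformal if and only if $\cH^*$ satisfies the Helly property. Since $\cH^*$ is edge-finite, Proposition~\ref{Helly_triangle} characterizes the latter: $\cH^*$ is Helly if and only if for every triplet of vertices of $\cH^*$ -- that is, for every triplet of edges $H_1,H_2,H_3$ of $\cH$ -- the intersection of all edges of $\cH^*$ meeting at least two of them is non-empty. Thus the entire argument reduces to translating this Berge--Duchet condition on $\cH^*$ back into the language of $\cH$.

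This translation is the only step requiring care. An edge $S_v$ of $\cH^*$ contains at least two of $H_1,H_2,H_3$ exactly when $v$ lies in at least two of them, i.e.\ when $v\in W:=(H_1\cap H_2)\cup(H_1\cap H_3)\cup(H_2\cap H_3)$. Hence the edges of $\cH^*$ under consideration are precisely the $S_v$ with $v\in W$, and their intersection is
$$\bigcap_{v\in W}S_v=\{H_j\in\cE : W\subseteq H_j\}.$$
This set is non-empty if and only if some edge $H$ of $\cH$ contains $W$, which is exactly the condition in the statement. Chaining the equivalences then yields the proposition. The main obstacle is purely bookkeeping rather than conceptual: one must check that the triplets of vertices appearing in Proposition~\ref{Helly_triangle} correspond exactly to triplets of edges of $\cH$ (with coinciding edges yielding trivially satisfied instances, since then $W$ is already one of the $H_i$), and that the edge-finiteness hypothesis invoked there is indeed supplied by the vertex-finiteness of $\cH$ established in the first paragraph.
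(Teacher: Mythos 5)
Your proof is correct and takes exactly the paper's route: the paper gives no written argument beyond noting that the proposition ``follows from Propositions~\ref{Helly_triangle} and~\ref{conformal-Helly},'' which is precisely your dualization of the Berge--Duchet criterion via $\cH^*$. Your proposal simply makes explicit the bookkeeping the paper leaves implicit, namely that vertex-finiteness of $\cH$ gives edge-finiteness of $\cH^*$ (since $|S_v|=\deg(v)$) and that $\bigcap_{v\in W} S_v=\{H_j\in\cE: W\subseteq H_j\}$ for $W=(H_1\cap H_2)\cup(H_1\cap H_3)\cup(H_2\cap H_3)$, all of which checks out.
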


A hypergraph $\cH$ is \emph{balanced}~\cite{Berge} if any cycle of
$\cH$ of odd length has an edge containing three vertices of the
cycle.  Balanced hypergraphs represent an important class of
hypergraphs with strong combinatorial properties (the K\"onig
property)~\cite{Berge,BeLV}.  It was noticed in~\cite[p.\ 179]{Berge}
that the finite balanced hypergraphs are at the same time Helly and
conformal; the duals of balanced hypergraphs are also balanced.  In
fact, those three fundamental properties still hold for a larger class
of hypergraphs: we call a hypergraph $\cH$ \emph{triangle-free} if any
cycle of $\cH$ of length three has an edge containing the three
vertices of the cycle, that is, for any three distinct vertices
$x,y,z$ and any three distinct edges $H_1,H_2,H_3$ such that
$x,y\in H_1, y,z\in H_2, z,x\in H_3$, one of the edges $H_1,H_2,H_3$
contains the three vertices $x,y,z$. Equivalently, a hypergraph $\cH$
is triangle-free if and only if it satisfies a stronger version of the
Gilmore condition: for any three edges $H_1,H_2,H_3$ of $\cH$ there
exists an edge $H_i$ in $\{H_1,H_2,H_3\}$ that contains
$(H_1\cap H_2)\cup (H_1\cap H_3)\cup (H_2\cap H_3)$. Since the dual of
a triangle-free hypergraph is also triangle-free, the following holds:

\begin{proposition}[\cite{BeLV,Berge}]\label{balanced}
  Locally finite triangle-free hypergraphs are conformal and Helly.
\end{proposition}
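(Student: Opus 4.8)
The plan is to treat the two assertions separately, using the fact that conformality and the Helly property are dual notions (Proposition~\ref{conformal-Helly}) together with the self-duality of triangle-freeness. The first assertion, conformality, is essentially immediate: by the equivalent reformulation of triangle-freeness recorded just above the statement, for any three edges $H_1,H_2,H_3$ of $\cH$ one of them --- in particular, \emph{some} edge of $\cH$ --- contains $(H_1\cap H_2)\cup(H_1\cap H_3)\cup(H_2\cap H_3)$. This is exactly the Gilmore condition. Since a locally finite hypergraph is vertex-finite, Proposition~\ref{Gilmore} applies and yields that $\cH$ is conformal.

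For the Helly property I would pass to the dual. The dual $\cH^*$ of a locally finite hypergraph is again locally finite, so it suffices to know that $\cH^*$ is triangle-free: applying the conformality argument above to $\cH^*$ then shows $\cH^*$ is conformal, and Proposition~\ref{conformal-Helly}, read with $\cH$ replaced by $\cH^*$ and using $(\cH^*)^*=\cH$, gives at once that $\cH$ satisfies the Helly property. The whole matter therefore reduces to establishing that triangle-freeness is preserved under taking duals, and this is the one point that requires a genuine (if short) argument.

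To verify the self-duality I would unwind the stronger Gilmore condition for the three edges $S_{v_1},S_{v_2},S_{v_3}$ of $\cH^*$ associated with vertices $v_1,v_2,v_3\in V$: since $S_{v_i}\cap S_{v_j}$ is the set of edges of $\cH$ containing both $v_i$ and $v_j$, the condition asks for an index $i$ such that every edge of $\cH$ meeting $\{v_1,v_2,v_3\}$ in at least two vertices also contains $v_i$. Suppose no such $i$ exists. Then for each $i$ there is an edge $A_i$ containing the two vertices other than $v_i$ but not $v_i$ itself; reading off which pairs they share, the pairwise distinct edges $A_1,A_2,A_3$ yield the $3$-cycle $(v_1,A_3,v_2,A_1,v_3,A_2,v_1)$. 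Triangle-freeness of $\cH$ then forces one of $A_1,A_2,A_3$ to contain all of $v_1,v_2,v_3$, contradicting the fact that $A_i$ omits $v_i$. Hence $\cH^*$ satisfies the stronger Gilmore condition, i.e.\ it is triangle-free.

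The only nontrivial step is thus this self-duality computation; once it is in place the two cited propositions assemble the conclusion mechanically, so I expect no serious obstacle. I would also remark that the very same triangle argument, phrased directly on $\cH$ rather than on $\cH^*$, shows that for each triplet $x,y,z$ the common vertex $v_i$ lies in every edge containing two of them, which is precisely the Berge--Duchet condition of Proposition~\ref{Helly_triangle} (edge-finiteness again coming from local finiteness). This gives a self-contained, duality-free derivation of the Helly property, which I would include as an alternative in case one prefers to avoid invoking the dual hypergraph.
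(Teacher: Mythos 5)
Your proof is correct and follows essentially the same route the paper sketches in the paragraph preceding the proposition: triangle-freeness is equivalent to the strengthened Gilmore condition (giving conformality via Proposition~\ref{Gilmore}, with vertex-finiteness supplied by local finiteness), and since triangle-freeness is self-dual and duals of locally finite hypergraphs are locally finite, the Helly property follows from Proposition~\ref{conformal-Helly} applied to $\cH^*$ together with $(\cH^*)^*=\cH$. Your explicit $3$-cycle verification of self-duality, and the duality-free Berge--Duchet variant, correctly fill in the details the paper delegates to the citations \cite{BeLV,Berge}.
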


Another important class of Helly hypergraphs, extending the class of balanced hypergraphs is the class of normal hypergraphs. A hypergraph $\cH$ is
called \emph{normal}~\cite{Berge,Lov79} if it satisfied the Helly
property and its line graph $L(\cH)$ is perfect (i.e., by the Strong Perfect Graph Theorem, $L(\cH)$ does not contain odd cycles of length $>3$ and their complements as induced subgraphs).

With any graph $G=(V,E)$ one can associate several hypergraphs,
depending on the studied problem and of the studied class of
graphs. In the context of our current work, we consider the following
combinatorial and geometric hypergraphs: (1) the
\emph{clique-hypergraph} ${\mathcal X}(G)$ of all maximal cliques of
$G$, (2) the \emph{ball-hypergraph} ${\mathcal B}(G)$ of all balls of
$G$, and (3) the \emph{$r$-ball-hypergraph} ${\mathcal B}_r(G)$ of all
balls of a given radius $r$ of $G$.  The ball-hypergraph can be
considered for an arbitrary metric space $(X,d)$. The
clique-hypergraph ${\mathcal X}(G)$ of any graph $G$ is simple and
conformal and its hereditary closure $\widehat{{\mathcal X}}(G)$
coincides with the clique complex $X(G)$ of $G$. In the case of median
graphs $G$ (and \catz cube complexes), together with the cube complex
(cube hypergraph) an important role is played by the copair hypergraph
${\mathcal H}(G)$ of all halfspaces of $G$ (convex sets with convex
complements).  Since convex sets of median graphs are
gated~\cite[Theorem~1.22]{Isbell-med} and gated sets satisfy the
finite Helly property, the hypergraph ${\mathcal H}(G)$ is
finitely Helly. For a graph $G$ we will also consider the nerve complex $N(\cX(G))$ of
the clique-hypergraph $\cX(G)$ as well as the nerve complex
$N(\cB_r(G))$ of the $r$-ball-hypergraph $\cB_r(G)$ for $r \in \N$.

\subsection{Abstract cell complexes}\label{sec-abstract}

An \emph{abstract cell complex} $X$ (also called \emph{convexity
  space} or \emph{closure space}) is a locally finite hypergraph
$\cH(X)=(V,\cE)$ with $\emptyset\in \cE$ and whose edges are closed
under intersections, i.e., if $H_i,i\in I$ are edges $\cH$, then
$\cap_{i\in I} H_i$ is also an edge of $\cH(X)$. We call the edges of
$\cH(X)$ the \emph{cells} of $X$ and $\cH(X)$ the
\emph{cell-hypergraph} of $X$. The cells of $X$ contained in a given
cell $C$ are called the \emph{faces} of $C$. The faces of a cell $C$
ordered by inclusion define the face-lattice $F(C)$ of
$C$. $C'\varsubsetneq C$ is a \emph{facet} of $C$ if $C'$ is a maximal
by inclusion proper face of $C$; in other words, $C'$ is a coatom of
the face-lattice $F(C)$. The \emph{dimension} $\dim(C)$ of a cell $C$
is the length of the longest chain in the face-lattice of $C$.
Locally finite abstract simplicial complexes are examples of abstract
cell complexes.  In fact, simplicial complexes are the cell complexes
in which the face-lattices are Boolean lattices. The dimension of a
simplex with $d+1$ vertices is $d$. Cube complexes also lead to
abstract cell complexes: it suffices to consider the vertex-set of
each cube as an edge of the cell-hypergraph; the dimension of a cube
is the standard dimension.

Abstract cell complexes also arise from swm-graphs and hypercellular
graphs. The cells of an swm-graph are its Boolean-gated sets and the
dimension of a Boolean-gated set is its diameter. Observe that in a
swm-graph, any maximal clique is boolean-gated. In the corresponding
abstract cell complex, each such clique is a 1-dimensional cell whose
0-cells are the vertices of the clique.  It was shown in~\cite{CCHO}
that one can also associate a contractible geometric cell complex to
any swm-graph $G$, in which the cells are the orthoscheme complexes of
the Boolean-gated subgraphs of $G$. Note that the geometric dimension
of this geometric complex is larger than the dimension of the abstract
cell complex.  The cells of a hypercellular graph $G$ are the gated
subgraphs of $G$ which are the convex hulls of the isometric cycles of
$G$. It was shown in~\cite{ChKnMa19} that those cells are Cartesian
products of edges and even cycles. It was established
in~\cite{ChKnMa19} that the geometric realization of the abstract cell
complex of a hypercellular graph is contractible. The dimension of
such a cell is the number of edge-factors plus twice the number of
cycle-factors. Notice that swm-graphs and hypercellular graphs
represent two far-reaching and quite different generalizations of
median graphs. Swm-graphs do not longer have hyperplanes (i.e.,
classes of parallel edges) and halfspaces, and their cells
(Boolean-gated subgraphs) have a complex combinatorial structure;
nevertheless, they are still weakly modular and admit a
local-to-global characterization. On the other hand, hypercellular
graphs are no longer weakly modular but they still admit hyperplanes
(whose carriers are gated) and halfspaces, and each triplet of
vertices admit a unique median cell.

We say that an abstract cell complex $X$ satisfies the \emph{3-cell
  condition} if for any three cells $C_1,C_2,C_3$ such that
\begin{itemize}
\item $C_1\cap C_2$ is a facet of $C_1$ and $C_2$;
\item $C_1\cap C_3$ is a facet of $C_1$ and $C_3$;
\item $C_2\cap C_3$ is a facet of $C_2$ and $C_3$; 
\item $C_1\cap C_2 \cap C_3$ is a facet of $C_1\cap C_2$,
  $C_1\cap C_3$, and $C_2\cap C_3$;
\end{itemize}
then the union $C_1\cup C_2\cup C_3$ is contained in a common cell $C$
of $X$.  For cube complexes, observe that the 3-cell condition is
equivalent to the cube condition. Simplicial complexes do not always
satisfy the 3-cell condition, but we show in
Lemma~\ref{lem-3cell-simpl} that flag simplicial complexes
do. For hypercellular complexes, the $3$-cell condition has been
established in~\cite[Theorem~B]{ChKnMa19}. We establish that
swm-complexes satisfy the $3$-cell condition in
Lemma~\ref{lem-3cell-swm}.

We say that an abstract cell complex $X$ satisfies the \emph{graded
  monotonicity condition (GMC)} if for any cell $C$ of $X$ and any two
intersecting faces $A,B$ of $C$ with $B \not\subseteq A$, there exists
a face $D$ of $C$ such that $A$ is a facet of $D$ with
$\dim(D) = \dim(A)+1$ and $\dim(D\cap B) = \dim(A\cap B) +1$. Note
that (GMC) is only about intersecting subcells of a cell and in
particular, it does not imply that the face-lattice $F(C)$ of a cell
$C$ is graded (i.e., that all maximal chains in $F(C)$ have the same
length).  We establish that simplicial complexes, cube complexes,
hypercellular complexes, and swm complexes satisfy the graded
monotonicity condition in Lemmas~\ref{GMC}, \ref{GMC-hypercell},
and~\ref{GMC-swm}.

Since the cells of $X$ are finite, we can apply iteratively the graded monotonicity condition to get the following lemma:

\begin{lemma}\label{lem-GMC}
  If an abstract cell complex $X$ satisfies the graded monotonicity
  condition, then for any cells $A,B,C$ such that $A\cap B \neq \emptyset$,
  $A\cup B \subseteq C$, and $B \not\subseteq A$, there exists a face
  $E$ of $C$ such that $A \cup B \subseteq E$ with
  $\dim(E) - \dim(A) = \dim(E \cap B)- \dim(A\cap B)$.
\end{lemma}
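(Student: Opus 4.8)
The plan is to iterate the graded monotonicity condition, producing an increasing chain of faces of $C$ that begins at $A$ and eventually contains $B$. Explicitly, I would define a sequence of faces $A = A_0 \subseteq A_1 \subseteq \cdots$ of $C$ recursively: having constructed $A_i$, if $B \subseteq A_i$ I stop and set $E := A_i$; otherwise $A_i$ and $B$ are two faces of $C$ with $A_i \cap B \neq \emptyset$ (because $A \cap B \subseteq A_i \cap B$) and $B \not\subseteq A_i$, so the graded monotonicity condition applies to $A_i$ and $B$ inside $C$ and furnishes a face $A_{i+1} := D$ of $C$ with $A_i$ a facet of $A_{i+1}$, $\dim(A_{i+1}) = \dim(A_i) + 1$, and $\dim(A_{i+1} \cap B) = \dim(A_i \cap B) + 1$.

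First I would argue that this recursion terminates. For each $i$, the cell $A_i \cap B$ is contained in $B$ and is therefore a face of $B$, so $\dim(A_i \cap B) \leq \dim(B)$; but $\dim(A_i \cap B) = \dim(A \cap B) + i$ increases by one at every step, so at most $\dim(B) - \dim(A \cap B)$ steps can be performed before the stopping condition $B \subseteq A_i$ is forced. Let $k$ be the index at which the recursion halts and put $E := A_k$; by construction $E$ is a face of $C$ and $B \subseteq E$.

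It then remains to check the two conclusions. Each $A_i$ is a facet, hence a face, of $A_{i+1}$, so $A = A_0 \subseteq A_k = E$; combined with $B \subseteq E$ this yields $A \cup B \subseteq E$. For the dimension identity, summing the two telescoping relations $\dim(A_{i+1}) - \dim(A_i) = 1$ and $\dim(A_{i+1} \cap B) - \dim(A_i \cap B) = 1$ over $i = 0, \ldots, k-1$ gives $\dim(E) - \dim(A) = k = \dim(E \cap B) - \dim(A \cap B)$, which is exactly the asserted equality (here $E \cap B = B$ since $B \subseteq E$).

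The argument is bookkeeping once the graded monotonicity condition is available, so I do not expect a genuine obstacle. The only delicate points are verifying that the condition is legitimately applicable at each step -- that $A_i \cap B$ never becomes empty (guaranteed by $A \cap B \subseteq A_i \cap B$) and that $A_i$ remains a face of $C$ (built into the output of the condition) -- together with invoking the finiteness of the cells of $X$, which is precisely what forces the iteration to stop after finitely many steps.
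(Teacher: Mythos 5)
Your proof is correct and follows exactly the route the paper intends: the paper states that Lemma~\ref{lem-GMC} follows by iterating the graded monotonicity condition, using the finiteness of cells to guarantee termination, which is precisely your construction of the chain $A = A_0 \subseteq A_1 \subseteq \cdots \subseteq A_k = E$ with the telescoping dimension bookkeeping. Your verification that the hypotheses of GMC persist at each step (non-emptiness of $A_i \cap B$ and $A_i$ remaining a face of $C$) and the termination bound $\dim(A_i \cap B) \leq \dim(B)$ simply make explicit what the paper leaves to the reader.
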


We say that an abstract cell complex satisfies the \emph{Helly
  property for three cells} if any three pairwise intersecting cells
have a non-empty intersection.

\begin{proposition}\label{3-cell}
  If an abstract cell complex $X$ satisfies the 3-cell condition, the
  graded monotonicity condition, and the Helly property for three
  cells, then its cell-hypergraph $\cH(X)$ is conformal.
\end{proposition}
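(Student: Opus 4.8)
The plan is to verify the \emph{Gilmore condition} of Proposition~\ref{Gilmore}: since an abstract cell complex is by definition a locally finite (hence vertex-finite) hypergraph, conformality of $\cH(X)$ is equivalent to the assertion that for any three cells $C_1,C_2,C_3$ there is a cell $C$ containing $(C_1\cap C_2)\cup(C_1\cap C_3)\cup(C_2\cap C_3)$. Write $A_{ij}=C_i\cap C_j$ and $F=C_1\cap C_2\cap C_3$; these are cells because the edges of $\cH(X)$ are closed under intersection. First I would dispose of the degenerate cases. If some $A_{ij}$ is empty, say $A_{23}=\emptyset$, then the whole union equals $A_{12}\cup A_{13}\subseteq C_1$, so $C=C_1$ works. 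If some $A_{ij}$ is contained in the third cell, say $A_{12}\subseteq C_3$ (equivalently $A_{12}=F$), then all three pairwise intersections lie in $C_3$ and $C=C_3$ works. It thus remains to treat the case where the three cells pairwise intersect and every $A_{ij}$ strictly contains $F$; by the Helly property for three cells, $F\neq\emptyset$.

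Next I would replace the $C_i$ by canonical minimal cells. Applying Lemma~\ref{lem-GMC} inside $C_1$ to the faces $A_{12},A_{13}$ (which meet in $F\neq\emptyset$, with neither contained in the other) yields a face $E_1\subseteq C_1$ with $A_{12}\cup A_{13}\subseteq E_1$ and $\dim E_1=\dim A_{12}+\dim A_{13}-\dim F$; define $E_2\subseteq C_2$ and $E_3\subseteq C_3$ symmetrically. Because $E_i\subseteq C_i$, the inclusion $E_i\cap E_j\subseteq C_i\cap C_j=A_{ij}$ together with $A_{ij}\subseteq E_i\cap E_j$ gives $E_i\cap E_j=A_{ij}$, and likewise $E_1\cap E_2\cap E_3=F$. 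Hence $E_1,E_2,E_3$ have exactly the same pairwise and triple intersections as $C_1,C_2,C_3$, so it suffices to find a cell containing $E_1\cup E_2\cup E_3$, and the dimension formula $\dim E_i=\dim A_{ij}+\dim A_{ik}-\dim F$ is now available.

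The base case is when $F$ is a facet of each $A_{ij}$, that is $\dim A_{ij}=\dim F+1$ for all pairs. Then $\dim E_i=\dim F+2$, so each $A_{ij}=E_i\cap E_j$ has codimension one in both $E_i$ and $E_j$; since graded monotonicity makes the face lattices graded by dimension, such a codimension-one face is a maximal proper face, i.e.\ a facet, of $E_i$ and of $E_j$, while $F$ is a facet of each $A_{ij}$ by assumption. Thus $E_1,E_2,E_3$ satisfy precisely the hypotheses of the \emph{3-cell condition}, which produces a cell $C\supseteq E_1\cup E_2\cup E_3\supseteq A_{12}\cup A_{13}\cup A_{23}$, as desired.

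Finally I would remove the facet assumption by induction on $\dim E_1+\dim E_2+\dim E_3$, equivalently on $\sum_{i<j}(\dim A_{ij}-\dim F)$, which is minimized exactly in the base case. If some $A_{ij}$, say $A_{23}$, has $\dim A_{23}\ge\dim F+2$, then graded monotonicity lets one grow $F$ by one step toward $A_{23}$ and pass to a configuration with strictly smaller pairwise intersections, to which the inductive hypothesis applies, after which the spanning cell obtained must be enlarged to recover all of $A_{23}$. \textbf{This regrowth is the crux of the argument}: replacing a pairwise intersection by a smaller cell changes the target union, and the delicate point is to arrange the reduction so that the cell returned by induction genuinely contains the full union $A_{12}\cup A_{13}\cup A_{23}$ rather than a truncation of it. I expect this to be controlled by keeping the $E_i$-type minimal spans as an invariant and re-applying Lemma~\ref{lem-GMC} inside the cell constructed at the previous stage, so that each enlargement takes place inside an already-built cell and the 3-cell condition is only ever invoked on codimension-one corners of $F$.
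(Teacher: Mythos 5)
Your degenerate cases and base case are sound, and the base case is handled by a genuinely different (and arguably cleaner) device than the paper's: replacing each $C_i$ by the minimal span $E_i$ from Lemma~\ref{lem-GMC} is legitimate (the hypotheses hold, since $A_{ik}\subseteq A_{ij}$ would force $A_{ik}\subseteq F$, contradicting $F\subsetneq A_{ik}$), the identities $E_i\cap E_j=A_{ij}$ and $E_1\cap E_2\cap E_3=F$ are correct, and since dimension is strictly monotone under strict inclusion of cells, a codimension-one proper face is automatically a facet; so when $\dim A_{ij}=\dim F+1$ for all pairs, the triple $E_1,E_2,E_3$ does satisfy the hypotheses of the 3-cell condition. (The paper's facet case is organized differently: it assumes each $A_{ij}$ is a facet of $C_i$ and $C_j$ and then uses GMC to deduce that $F$ is a facet of each $A_{ij}$.)

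However, the inductive step is a genuine gap, not an omitted routine verification, and you flag it yourself: the ``regrowth'' is only something you \emph{expect} to work. Concretely, three things are missing. First, ``growing $F$ one step toward $A_{23}$'' via GMC produces a face $D\subseteq A_{23}$ with $F$ a facet of $D$, but this does not produce a new triple of \emph{cells} whose pairwise and triple intersections realize the reduced configuration: $A_{23}$ and $F$ are determined as intersections, and you cannot simply decree smaller ones. Second, your measure $\sum_{i<j}(\dim A_{ij}-\dim F)$ has no demonstrated decreasing move: the operations that actually make progress here \emph{enlarge} a cell (and hence enlarge pairwise intersections, and possibly $F$ itself), so your measure can move either way under them. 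Third, the regrowth -- guaranteeing the cell returned by induction contains all of $A_{23}$ rather than a truncation -- is exactly where the paper needs machinery you do not have: a standing minimality assumption (no proper face of $C_i$ contains $A_{ij}\cup A_{ik}$), justified by a primary induction on $\alpha=\sum_i\dim C_i-\sum_{i<j}\dim A_{ij}$, together with a \emph{secondary} induction on $\beta=|C_1|+|C_2|+|C_3|$. The paper first grows $C_1\cap C_2$ by one dimension inside $C_1$ to a proper face $D_1$ (this keeps $\alpha$ constant -- so a single induction would not terminate -- but decreases $\beta$), applies induction to $(D_1,C_2,C_3)$, proves via the minimality assumption that the resulting cell $D_2'$ contains $C_2$, then grows $C_2$ inside $D_2'$ to $D_2\supsetneq C_2$ with $\dim D_2=\dim C_2+1$, which strictly decreases $\alpha$ for $(C_1,D_2,C_3)$; the full union is recovered precisely because $C_2\subseteq D_2$. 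None of this two-parameter, two-stage mechanism is present in, or obviously reconstructible from, your $E_i$-invariant sketch, so the proof as proposed is incomplete at its central step.
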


\begin{proof}
  We show that $\cH(X)$ satisfies the Gilmore condition. Let
  $C_1,C_2,C_3$ be three arbitrary cells of $X$. We proceed by
  induction on
  $\alpha(C_1,C_2,C_3) := \dim(C_1) +\dim(C_2) + \dim(C_3) - \dim(C_1
  \cap C_2) - \dim(C_1 \cap C_3) - \dim(C_2 \cap C_3)$ and then on
  $\beta(C_1,C_2,C_3) := |C_1|+|C_2|+|C_3|$.

  If any of the pairwise intersections $C_1 \cap C_2$, $C_1 \cap C_3$,
  $C_2 \cap C_3$ is empty, then
  $(C_1 \cap C_2) \cup (C_1 \cap C_3) \cup (C_2 \cap C_3)$ is
  contained in one of the three cells $C_1, C_2, C_3$. Thus, we
  suppose that the pairwise intersections are non-empty and, by the
  Helly property for three cells, we can assume that
  $C_1 \cap C_2 \cap C_3 \neq \emptyset$. If $C_1 \cap C_2 \cap C_3$
  is not a proper face of $C_1 \cap C_2$, i.e., if
  $C_1 \cap C_2 \cap C_3 = C_1 \cap C_2$, then
  $C_1 \cap C_2 \subseteq C_3$ and
  $(C_1 \cap C_2) \cup (C_1 \cap C_3) \cup (C_2 \cap C_3) \subseteq
  C_3$. Thus, we can assume that $C_1 \cap C_2 \cap C_3$ is a proper
  face of $C_1 \cap C_2$, and for similar reasons of $C_1 \cap C_3$
  and of $C_2 \cap C_3$.
If there exists a proper face $D_1$ of $C_1$ such that
  $(C_1 \cap C_2) \cup (C_1 \cap C_3) \subseteq D_1$, then
  $\alpha(D_1, C_2, C_3) < \alpha(C_1,C_2,C_3)$ and by the induction
  hypothesis applied to $D_1, C_2, C_3$, we are done. Thus we can
  assume that there is no proper face $D_i$ of $C_i$ such that
  $(C_i \cap C_j) \cup (C_i \cap C_k) \subseteq D_i$ for any
  $\{i,j,k\} = \{1,2,3\}$.
Suppose that $C_1 \cap C_2$ is a facet of $C_1$ and $C_2$,
  $C_1 \cap C_3$ is a facet of $C_1$ and $C_3$, $C_2 \cap C_3$ is a
  facet of $C_2$ and $C_3$. By GMC applied to the faces $C_1\cap C_2$
  and $C_1\cap C_3$ of $C_1$, there exists a face $D_1$ of $C_1$
  containing strictly $C_1 \cap C_2$ such that $C_1 \cap C_2 \cap C_3$
  is a facet of $D_1 \cap C_3$. Since $C_1 \cap C_2$ is a facet of
  $C_1$, necessarily, $D_1 = C_1$. Consequently,
  $C_1 \cap C_2 \cap C_3$ is a facet of $C_1 \cap C_3$ and for similar
  reasons of $C_1\cap C_2$ and $C_2\cap C_3$. Then the Gilmore
  property follows from the 3-cell condition applied to $C_1$, $C_2$,
  and $C_3$.  Therefore, without loss of generality, we can suppose
  that $C_1 \cap C_2$ is not a facet of $C_1$.

  By GMC applied to the faces $C_1 \cap C_2$ and
  $C_1 \cap C_3$ of $C_1$, there exists a face $D_1$ of $C_1$ such
  that $C_1 \cap C_2 \subsetneq D_1$,
  $\dim(D_1) = \dim (C_1 \cap C_2) + 1$, and
  $\dim(D_1 \cap C_3) = \dim (C_1 \cap C_2\cap C_3) + 1$. We assert that $\alpha(D_1,C_2,C_3)=\alpha(C_1,C_2,C_3)$.
  Indeed, first observe that
  \begin{align*}
    \alpha(C_1,C_2,C_3) - \alpha(D_1,C_2,C_3) =
    \dim(C_1) &- \dim(D_1) - \dim(C_1 \cap C_2) + \dim(D_1 \cap C_2) \\
    &- \dim(C_1 \cap C_3) +
      \dim(D_1 \cap C_3).
  \end{align*}
  Note that $D_1 \cap C_2 = C_1 \cap C_2$. Moreover, by applying
  Lemma~\ref{lem-GMC} to $D_1$, $C_1 \cap C_3$, and $C_1$, we can find a
  face $E_1$ of $C_1$ such that
  $D_1 \cup (C_1 \cap C_3) \subseteq E_1$ and
  $\dim(E_1) - \dim(D_1) = \dim(E_1 \cap C_3)- \dim(D_1 \cap
  C_3)$. Since $E_1$ cannot be a proper face of $C_1$, we conclude that $E_1 = C_1$,
  and thus
  $\dim(C_1) - \dim(D_1) = \dim(C_1 \cap C_3)- \dim(D_1 \cap C_3)$.
  Consequently, we get
$\alpha(C_1,C_2,C_3) = \alpha(D_1,C_2,C_3)$,
  establishing our assertion.
  Since $C_1 \cap C_2$ is a facet of $D_1$ but not of $C_1$, $D_1$ is
  a proper face of $C_1$ and thus
  $\beta(D_1,C_2,C_3) < \beta(C_1,C_2,C_3)$. Therefore we can apply the induction hypothesis
  to $D_1$, $C_2$, and $C_3$, and  conclude that there exists a cell $D'_2$ such
  that
  $(D_1 \cap C_2) \cup (D_1 \cap C_3) \cup (C_2 \cap C_3) \subseteq
  D'_2$.

  We assert that $C_2 \subsetneq D'_2$. Indeed, $C_2 \cap D'_2$ is a
  face of $C_2$ containing $(C_1\cap C_2) \cup (C_2 \cap C_3)$ and
  since it cannot be a proper face of $C_2$, we have $C_2 \cap D'_2 =
  C_2$. Since
  $C_1 \cap C_2 \cap C_3 \subsetneq D_1 \cap C_3 \subseteq D'_2$, the
  inclusion of $C_2$ in $D_2'$ is strict. We apply GMC to $C_2$, $D_1 \cap C_3$, and $D_2'$ to get a
  face $D_2$ of $D_2'$ such that $C_2 \subsetneq D_2$, $\dim(D_2) =
  \dim(C_2)+1$, $\dim(D_2 \cap D_1 \cap C_3) = \dim(C_2 \cap D_1 \cap
  C_3) + 1$. Observe that
  \begin{align*}
    \alpha(C_1,C_2,C_3) - \alpha(C_1,D_2,C_3) =  \dim(C_2)
    &- \dim(D_2) - \dim(C_1 \cap C_2) + \dim (C_1 \cap D_2)\\
    &- \dim(C_2 \cap C_3) + \dim (D_2 \cap C_3).
  \end{align*}

  Since $C_1\cap C_2 \subsetneq C_1 \cap D_2$ and
  $C_2\cap C_3 \subsetneq D_2 \cap C_3$, we have
  $\dim (C_1 \cap D_2) - \dim(C_1 \cap C_2) \geq 1$ and
  $\dim (D_2 \cap C_3) - \dim(C_2 \cap C_3) \geq 1$. Since
  $\dim(D_2) - \dim(C_2) = 1$, we get
  $\alpha(C_1,C_2,C_3) - \alpha(C_1,D_2,C_3) \geq 1$. Therefore, we
  can apply the induction hypothesis to $C_1, D_2, C_3$ and find a
  cell $C$ containing
  $(C_1 \cap D_2) \cup (C_1 \cap C_3) \cup (D_2 \cap C_3)$. Since
  $D_2$ contains $C_2$, we conclude that $(C_1 \cap C_2) \cup (C_1 \cap C_3) \cup (C_2
  \cap C_3) \subseteq C$, and we are done.
\end{proof}

We now show that flag simplicial complexes satisfy the 3-cell
condition.

\begin{lemma}\label{lem-3cell-simpl}
  Flag simplicial complexes satisfy the 3-cell condition.
\end{lemma}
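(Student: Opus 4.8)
The plan is to verify the 3-cell condition directly from the definition, using the characterizing property of flag simplicial complexes: a set of vertices spans a simplex if and only if every pair of its vertices is joined by an edge. Recall that in a simplicial complex, a cell is a simplex, i.e., a finite set of pairwise-adjacent vertices, and the intersection of two simplices is again a simplex (possibly empty); a facet relation $C' \subsetneq C$ with $C'$ a facet means $|C| = |C'| + 1$, so $C = C' \cup \{v\}$ for a single vertex $v \notin C'$.

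Concretely, let me take three simplices $C_1, C_2, C_3$ satisfying the hypotheses of the 3-cell condition. Write $D = C_1 \cap C_2 \cap C_3$. The hypothesis says $C_1 \cap C_2$, $C_1 \cap C_3$, $C_2 \cap C_3$ are facets of the respective $C_i$'s, and $D$ is a facet of each pairwise intersection. First I would translate these facet conditions into statements about vertex sets. Since $C_i \cap C_j$ is a facet of both $C_i$ and $C_j$, each of $C_i$ and $C_j$ is obtained from $C_i \cap C_j$ by adding exactly one vertex; and since $D = (C_i \cap C_j) \cap (C_i \cap C_k)$ is a facet of $C_i \cap C_j$, the intersection $C_i \cap C_j$ is obtained from $D$ by adding exactly one vertex. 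The goal is to show that $C_1 \cup C_2 \cup C_3$ is itself a simplex, i.e., is contained in a common cell.

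The key step is to identify the vertices involved and check pairwise adjacency. I expect to find, by the facet conditions, that there are exactly three "extra" vertices: writing the pairwise intersections as $C_1 \cap C_2 = D \cup \{a\}$, $C_1 \cap C_3 = D \cup \{b\}$, $C_2 \cap C_3 = D \cup \{c\}$ (with $a,b,c$ distinct vertices not in $D$), one finds $C_1 = D \cup \{a, b\}$, $C_2 = D \cup \{a, c\}$, $C_3 = D \cup \{b, c\}$, so that $C_1 \cup C_2 \cup C_3 = D \cup \{a,b,c\}$. To conclude that this union is a simplex via flagness, I must verify that every pair of its vertices is adjacent. The vertices of $D$ are pairwise adjacent (being in a simplex), and each of $a, b, c$ is adjacent to every vertex of $D$ (since $a \in C_1$, etc.). The only genuinely new adjacencies to check are $a \sim b$, $a \sim c$, and $b \sim c$: but $a, b \in C_1$ gives $a \sim b$, $a, c \in C_2$ gives $a \sim c$, and $b, c \in C_3$ gives $b \sim c$. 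Hence all pairs of vertices of $D \cup \{a,b,c\}$ are adjacent, and by the flagness hypothesis $D \cup \{a,b,c\}$ is a simplex of $X$, which is the desired common cell $C$.

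The main obstacle, and the place requiring care rather than difficulty, is the bookkeeping that pins down the vertex sets precisely from the facet conditions—in particular confirming that the three extra vertices $a, b, c$ are pairwise distinct and that no degeneracies collapse the configuration (for instance ruling out $a = b$, which would force $C_1 \cap C_2 = C_1 \cap C_3$ and contradict the facet hypotheses). Once the combinatorial structure $C_i = D \cup \{\text{two of } a,b,c\}$ is established, flagness does all the real work in a single stroke, since it is exactly the property that lets pairwise adjacency be promoted to the existence of a spanning simplex. I do not anticipate any serious analytic or topological difficulty here; the entire content is that flagness converts the "pairwise compatible" data encoded in the 3-cell hypotheses into a genuine higher cell.
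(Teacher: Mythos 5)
Your proof is correct and follows essentially the same route as the paper's: both use the facet hypotheses to show that every vertex of $C_1 \cup C_2 \cup C_3$ lies in at least two of the cells (so that any two vertices of the union share a common $C_i$ and are adjacent), and then invoke flagness to promote pairwise adjacency to a simplex. Your version merely makes the combinatorics explicit by exhibiting the normal form $C_i = D \cup \{\text{two of } a,b,c\}$, where the paper instead argues by contradiction that no vertex of $C_1$ can avoid $C_2 \cup C_3$; the content is identical.
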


\begin{proof}
  Consider a flag simplicial complex $X$ and any three simplices
  $C_1, C_2, C_3$ such that $C_1 \cap C_2$ (respectively,
  $C_1 \cap C_3$, $C_2 \cap C_3$) is a facet of $C_1$ and $C_2$
  (respectively, $C_1$ and $C_3$, $C_2$ and $C_3$) and
  $C_1 \cap C_2 \cap C_3$ is a facet of
  $C_1\cap C_2, C_1\cap C_3, C_2\cap C_3$. If there exists
  $v \in C_1 \setminus (C_2 \cup C_3)$, then
  $C_1 \cap C_2 = C_1 \cap C_3 = C_1 \setminus \{v\}$ and
  $C_1 \cap C_2 \cap C_3$ is not a facet of $C_1 \cap C_2$ or
  $C_1 \cap C_3$. Consequently,
  $C_1 = (C_1 \cap C_2) \cup (C_1 \cap C_3)$ and similarly,
  $C_2 = (C_1 \cap C_2) \cup (C_2 \cap C_3)$ and
  $C_3 = (C_1 \cap C_3) \cup (C_2 \cap C_3)$. Therefore, any two
  vertices of $C_1 \cup C_2 \cup C_3$ both belong to a common $C_i$,
  $i \in \{1,2,3\}$.  Since $X$ is a
  flag simplicial complex,  $C_1 \cup C_2 \cup C_3$ is a simplex of
  $X$, establishing the 3-cell condition for $X$.
\end{proof}

We now establish that simplicial complexes, cube complexes, and
hypercellular complexes satisfy the graded monotonicity condition.

\begin{lemma}\label{GMC}
  Simplicial complexes and cube complexes satisfy the graded
  monotonicity condition.
\end{lemma}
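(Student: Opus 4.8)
The plan is to treat the two families separately and in each case to produce the required face $D$ by hand, since the abstract cell structures of simplices and cubes are fully explicit.

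For a simplicial complex the faces of a simplex $C$ are exactly the nonempty subsets of its vertex set, with $\dim$ equal to cardinality minus one and facets obtained by deleting a single vertex. Given intersecting faces $A,B$ of $C$ with $B\not\subseteq A$, I would pick any vertex $v\in B\setminus A$ and set $D:=A\cup\{v\}$. Then $D$ is again a face of $C$, $A$ is a facet of $D$ with $\dim(D)=\dim(A)+1$, and since $v\in B\setminus A$ we get $D\cap B=(A\cap B)\cup\{v\}$ with $v\notin A\cap B$, so $\dim(D\cap B)=\dim(A\cap B)+1$. This immediately gives GMC, and no hypothesis beyond $B\not\subseteq A$ is needed here.

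For a cube $C$ I would first fix coordinates, writing $C=\{0,1\}^d$ and describing each face by a pair $(F,\sigma)$, where $F\subseteq[d]$ is the set of free coordinates and $\sigma\colon[d]\setminus F\to\{0,1\}$ records the fixed ones; then $\dim$ of this face equals $|F|$ and its facets are obtained by fixing one free coordinate. Writing $A=(F_A,\sigma_A)$ and $B=(F_B,\sigma_B)$, nonemptiness of $A\cap B$ amounts to $\sigma_A$ and $\sigma_B$ agreeing on $[d]\setminus(F_A\cup F_B)$, and $\dim(A\cap B)=|F_A\cap F_B|$. The key step is the observation that $A\cap B\neq\emptyset$ together with $B\not\subseteq A$ forces $F_B\not\subseteq F_A$: indeed, if $F_B\subseteq F_A$ then $[d]\setminus(F_A\cup F_B)=[d]\setminus F_A$, so the agreement of $\sigma_A$ and $\sigma_B$ there combines with $F_B\subseteq F_A$ to yield $B\subseteq A$, a contradiction.

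Granting this, I would choose $i\in F_B\setminus F_A$ and define $D$ to be $A$ with coordinate $i$ freed, i.e. $F_D=F_A\cup\{i\}$ and $\sigma_D=\sigma_A|_{[d]\setminus F_D}$. Then $A$ is a facet of $D$ and $\dim(D)=\dim(A)+1$; moreover $D\cap B$ is nonempty because on $[d]\setminus(F_D\cup F_B)\subseteq[d]\setminus(F_A\cup F_B)$ the fixings still agree, and $F_D\cap F_B=(F_A\cap F_B)\cup\{i\}$ gives $\dim(D\cap B)=\dim(A\cap B)+1$. I expect the only real obstacle to be the cube case, and specifically the key claim $F_B\not\subseteq F_A$: this is the single place where the hypothesis $A\cap B\neq\emptyset$ is genuinely used, and once it is in hand the free coordinate $i$ is exactly what lets $D$ grow toward $B$.
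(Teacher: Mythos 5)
Your proof is correct and takes essentially the same approach as the paper: in the simplicial case the paper also sets $D=A\cup\{x\}$ for any $x\in B\setminus A$, and in the cube case the paper takes $D$ to be the smallest face (gated hull) of $C$ containing $A\cup\{x\}$, where $x\in B\setminus A$ is adjacent to a vertex of $A\cap B$ --- the direction of that edge is exactly your coordinate $i\in F_B\setminus F_A$, so your $D$ is the same face. Your explicit coordinate bookkeeping, including the key claim $F_B\not\subseteq F_A$, is simply a hands-on rendering of the paper's appeal to convexity and connectedness of $A$ and $B$ to produce such a vertex $x$.
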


\begin{proof}
  We need to show that for any cell $C$ of $X$, if $A,B$ are two
  intersecting faces of $C$ with $B \not\subseteq A$, then there
  exists a face $D$ of $C$ such that $A$ is a facet of $D$ with
  $\dim(D) = \dim(A)+1$ and $\dim(D\cap B) = \dim(A\cap B) +1$.
If $X$ is a simplicial complex, as $D$ it suffices to take
  $A\cup \{ x\}$ for any $x\in B\setminus A$. If $X$ is a cube
  complex, then as $D$ we can take the smallest face of $C$ containing
  $A\cup \{ x\}$, where $x$ is a vertex of $B\setminus A$ adjacent to
  a vertex of $A\cap B$ ($D$ can be viewed as the gated hull of
  $A\cup \{ x\}$). Such $x$ exists because $A$ and $B$ are convex and
  thus connected.  Indeed, from the definition of $D$ it follows that
  $A$ is a facet of $D$ and $A\cap B$ is a facet of $D\cap B$.
\end{proof}

\begin{lemma}\label{GMC-hypercell}
  Hypercellular complexes satisfy the graded monotonicity condition.
\end{lemma}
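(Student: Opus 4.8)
The plan is to reduce the statement to a one-factor computation by exploiting the product structure of cells. Recall from~\cite{ChKnMa19} (see also Subsection~\ref{sec-abstract}) that every cell $C$ of a hypercellular graph is a gated subgraph that decomposes as a Cartesian product $C = F_1 \times \cdots \times F_k$ whose factors $F_i$ are edges or even cycles, and that $\dim(C) = \sum_{i=1}^k \dim(F_i)$, an edge-factor contributing $1$ and a cycle-factor contributing $2$. The structural input I would first isolate is a description of the faces of such a cell: every face of $C$ is a product $D_1 \times \cdots \times D_k$ in which each $D_i$ is a face of $F_i$, where the faces of an edge are its two endpoints and the edge itself, and the faces of an even cycle are its vertices (dimension $0$), its edges (dimension $1$), and the whole cycle (dimension $2$). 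The point here is that a path of length at least $2$ inside a cycle is not gated, hence not a face. Consequently, for faces $A = \prod_i A_i$ and $B = \prod_i B_i$ of $C$ one has $A \cap B = \prod_i (A_i \cap B_i)$, and dimension is additive across factors.

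Granting this, I would argue as follows. Since $A \cap B \neq \emptyset$, every $A_i \cap B_i$ is non-empty; and since $B \not\subseteq A$, there is an index $j$ with $B_j \not\subseteq A_j$. I would then produce a face $D_j$ of $F_j$ with $A_j \subsetneq D_j$, $\dim(D_j) = \dim(A_j) + 1$, and $\dim(D_j \cap B_j) = \dim(A_j \cap B_j) + 1$, and set $D = A_1 \times \cdots \times D_j \times \cdots \times A_k$, changing only the $j$-th factor. Because $D$ differs from $A$ in a single factor by one dimension, $A$ is a facet of $D$ with $\dim(D) = \dim(A) + 1$; and since $D \cap B$ differs from $A \cap B$ only in its $j$-th factor, $\dim(D \cap B) = \dim(A \cap B) + 1$ as required.

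Building $D_j$ is a short case analysis on $F_j$, governed by $A_j \cap B_j \neq \emptyset$ and $B_j \not\subseteq A_j$. If $F_j$ is an edge, then $A_j$ is forced to be a single endpoint and $B_j$ the whole edge, and I take $D_j = F_j$. If $F_j$ is a cycle and $A_j$ is a vertex, then $B_j$ is an edge or the whole cycle meeting $A_j$, and I take $D_j$ to be an edge of the cycle incident to $A_j$; then $\dim(D_j) = 1$ and $\dim(D_j \cap B_j) = 1 = \dim(A_j \cap B_j) + 1$. If $F_j$ is a cycle and $A_j$ is an edge, then $A_j \neq F_j$, so I take $D_j = F_j$; here $B_j$ cannot be a vertex (that would force $B_j \subseteq A_j$), so $B_j$ is either a distinct edge meeting $A_j$ in one vertex or the whole cycle, and in both cases $\dim(D_j \cap B_j) = \dim(B_j) = \dim(A_j \cap B_j) + 1$, while $A_j$ is a facet of the cycle $F_j$.

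I expect no serious obstacle in the case analysis itself; the only points requiring care are confirming that an edge is a facet of an even cycle (so that the facet and dimension conditions are compatible) and ruling out the degenerate configuration $B_j \subseteq A_j$ in the last subcase, which is exactly where both hypotheses on $A_j, B_j$ are needed together. The genuine content lies entirely in the product description of faces recalled in the first paragraph; once that is in hand, the lemma follows from the one-factor construction above.
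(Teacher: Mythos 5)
Your proof is correct and is essentially the paper's argument: the paper likewise decomposes each cell $C=F_1\square\cdots\square F_k$ into edge- and even-cycle factors, uses that faces are products of gated subgraphs of the factors (vertices, edges, or whole factors) with factorwise intersections and additive dimension, and then bumps a single factor by one dimension — it packages this by choosing a vertex $x\in B\setminus A$ adjacent to a vertex of $A\cap B$ (the edge $xy$ lying in one factor $F_j$) and taking $D$ to be the gated hull of $A\cup\{x\}$, which is exactly the one-factor modification your case analysis produces explicitly. One small point to tighten in your subcase where $F_j$ is a cycle, $A_j$ a vertex and $B_j$ an edge: you must take $D_j=B_j$, i.e., the incident edge \emph{contained in} $B_j$ (as the paper's choice of the edge $xy$ with $x\in B\setminus A$ automatically does), since the other edge of the cycle incident to $A_j$ would give $\dim(D_j\cap B_j)=0$ rather than $1$.
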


\begin{proof}
  Consider a cell $C$ in a hypercellular complex $X$. Then $C$, viewed
  as a graph, is the Cartesian product $C=F_1\square\cdots\square F_k$
  of even cycles and edges. Since each cell $C'$ of $C$ is a gated
  subgraph of $C$, $C'$ is a Cartesian product
  $F'_1\square\cdots\square F'_k$, where each $F'_i$ is a gated
  subgraph of $F_i$, $i=1,\ldots,k$. Since each proper gated subgraph
  of an even cycle is a vertex or an edge, each $F'_i$ either
  coincides with $F_i$ or is a vertex or an edge of $F_i$.  The
  dimension $\dim(C')$ of $C'=F'_1\square\cdots\square F'_k$ is the
  number of edge-factors $F'_i$ plus twice the number of cycle-factors
  $F'_i$.

  Let $A=F'_1\square\cdots\square F'_k$ and
  $B=F''_1\square\cdots\square F''_k$, where $F'_i$ and $F''_i$ are
  gated subgraphs of $F_i$. Notice also that
  $A\cap B=F'''_1\square\cdots\square F'''_k$, where
  $F'''_i=F'_i\cap F''_i$ for $i=1,\ldots,k$.  As for cube complexes,
  let $x$ be a vertex of $B\setminus A$ adjacent to a vertex $y$ of
  $A\cap B$ and suppose that the edge $xy$ of $C$ arises from the
  factor $F_j$.  Let $D$ be the gated hull of $A \cup \{ x\}$. Then
  one can see that $D=F'_1\square\cdots\square F^+_j\square\cdots\square F'_k$,
  where $F^+_j$ is the edge of $F_j$ corresponding to the edge $xy$ if
  $F'_j$ is a single vertex and $F^+_j=F_j$ if $F'_j$ is an edge.  One
  can also see that
  $D\cap B=F'''_1\square\cdots\square F^+_j\square\cdots\square F'''_k$. Therefore,
  $A$ is a facet of $D$ and $A\cap B$ is a facet of $D\cap B$.  This
  establishes that hypercellular complexes satisfy the graded
  monotonicity condition.
\end{proof}

We now establish that swm-complexes satisfy the 3-cell condition and
the graded monotonicity condition.  Recall that in swm-complexes the
cells are the Boolean-gated sets of the corresponding swm-graphs and
that they induce dual polar graphs. We first establish some useful
properties satisfied by the cells of swm-complexes.

\begin{lemma}\label{lem-diametralpair-swm}
  For any cell $A$ of an swm-graph $G$ and any $x \in A$, there exists
  $y \in A$ such that $A = \lgate x,y \rgate$.
\end{lemma}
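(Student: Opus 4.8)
The plan is to use the fact recalled above that a cell $A$ of an swm-graph $G$ is a Boolean-gated set, and hence induces a dual polar graph; since cells are finite, $A$ is a finite dual polar graph of rank $n := \diam(A) = \dim(A)$ satisfying the axioms (A1), (A3\&4), (A5). I will also use that gated hulls behave well: for $u,v\in A$ the gated hull $\lgate u,v\rgate$ (taken in $G$, equivalently in $A$ since $A$ is gated) is again a Boolean-gated set and therefore induces a dual polar graph. This is part of the structure theory of gated subgraphs of dual polar graphs from \cite{CCHO}; in the distance-$2$ case the bound on its diameter is exactly axiom (A3\&4). With this in hand the strategy is: given $x\in A$, produce a vertex $y\in A$ that is \emph{opposite} to $x$, i.e. with $d(x,y)=n$, and then show $\lgate x,y\rgate = A$.

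The whole argument rests on a single fact about dual polar graphs, which I expect to be the main obstacle:

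\textbf{(OPP)} \emph{In a finite dual polar graph of rank $n$, every vertex has a vertex at distance $n$.}

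This is the standard existence of opposite generators in a dual polar space, and I would either cite it (Cameron \cite{Ca}, or \cite{CCHO}) or prove it by induction on $n$: for $n=1$ the graph is a thick maximal clique and any second vertex works; for $n\ge 2$ one passes to a point-residue of $x$ (a gated, hence isometric, dual polar subgraph of rank $n-1$ through $x$), applies the induction hypothesis there to obtain a vertex meeting $x$ minimally, and then uses axiom (A5) to ``push it off'' $x$ to a genuinely opposite vertex. Carrying out this last step purely within the graph-theoretic axioms (A1), (A3\&4), (A5), rather than in the geometry of the underlying polar space, is the delicate point; invoking the known theory sidesteps it.

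Granting (OPP), the rest is short. Choose $y$ opposite to $x$ and set $H := \lgate x,y\rgate$. Since $H$ is gated it is isometrically embedded in $A$, so $n = d(x,y) \le \diam(H) \le \diam(A) = n$, giving $\diam(H) = n$; thus $H$ is itself a dual polar graph of rank $n$, to which (OPP) again applies. Now take an arbitrary $a\in A$ and let $a'$ be its gate in $H$. By (OPP) applied inside $H$ there is $w\in H$ with $d(a',w)=n$, and the defining property of the gate gives $d(a,w)=d(a,a')+d(a',w)=d(a,a')+n$. As $d(a,w)\le\diam(A)=n$, this forces $d(a,a')=0$, i.e. $a=a'\in H$. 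Since $a\in A$ was arbitrary, $A\subseteq H$ and hence $A=\lgate x,y\rgate$, as claimed. This argument also explains why $y$ must be taken at distance exactly $n$: any pair generating $A$ necessarily realizes the diameter, so (OPP) is not merely convenient but unavoidable.
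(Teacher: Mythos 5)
Your endgame is correct and genuinely different from the paper's: granting (OPP), your gate argument -- take $a\in A$, its gate $a'$ in $H=\lgate x,y\rgate$, a vertex $w\in H$ with $d(a',w)=n$, and force $d(a,a')=0$ from $d(a,w)=d(a,a')+n\le n=\diam(A)$ -- is valid, using only that $H$ is gated and, via \cite[Lemma~5.11]{CCHO}, a dual polar graph of diameter $n$. In effect you re-prove the implication $d(x,y)=\diam(A)\Rightarrow\lgate x,y\rgate=A$ of \cite[Lemma~5.12]{CCHO}, which the paper simply cites.

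But (OPP) is not a reducible preliminary: modulo that cited equivalence it \emph{is} the lemma, and it is exactly where the paper's proof does all its work, so your proposal stands or falls with it. Neither source you name states it in the form you need (\cite[Lemma~5.12]{CCHO} is the equivalence, not existence; Cameron's theorem would give it only through the classification back to polar spaces plus the standard existence of opposite generators -- defensible since cells are finite, but it would have to be argued). Moreover your fallback induction fails as sketched, for two concrete reasons. First, the ``point-residue of $x$'' is not available graph-theoretically: a gated rank-$(n-1)$ dual polar subgraph through $x$ would be $\lgate x,y'\rgate$ for some $y'$ with $d(x,y')=n-1$, and the existence of such a $y'$ is an eccentricity statement of exactly the kind being proved (the incidence-geometric residue of a point of a dual polar space is a projective space, not a dual polar graph). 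Second, (A5) cannot ``push off'': from a nonadjacent pair $u,v$ and a neighbor of $u$ inside $I(u,v)$ it produces a neighbor of $v$ in $I(u,v)$ at the \emph{same} distance -- it preserves the distance of a pair and never increases an eccentricity. The paper turns this distance-preserving property into the whole proof: among diametral pairs $(x',y')$ of $A$ choose one minimizing $d(x,x')$; if $x'\neq x$, any neighbor $u$ of $x'$ in $I(x',x)$ must satisfy $d(u,y')=\diam(A)-1$ (otherwise $(u,y')$ already contradicts minimality), hence $u\in I(x',y')$, and then (A5) applied to the pair $(x',y')$ with the neighbor $u$ yields $v\sim y'$ with $d(u,v)=\diam(A)$ -- a diametral pair strictly closer to $x$, a contradiction; \cite[Lemma~5.12]{CCHO} then finishes. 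That sliding argument is the missing idea: rather than constructing a vertex opposite to $x$ from scratch, slide an existing diametral pair onto $x$.
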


\begin{proof} Since $A$ is a cell of $G$, $A$ is a gated set inducing a dual polar subgraph of $G$.
By~\cite[Lemma~5.12]{CCHO}, for any $x, y \in A$,
  $\lgate x,y \rgate = A$ if and only if $d(x,y) = \diam(A)$, where
  $\diam(A)$ is the diameter of $A$.

  Given a vertex $x \in A$, we choose $x',y' \in A$ such that
  $d(x',y') = \diam(A)$ and $d(x,x')$ is minimized. If $x = x'$, we are
  done by~\cite[Lemma~5.12]{CCHO}. Suppose now that $x \neq x'$. Pick
  a neighbor $u$ of $x'$ in $I(x',x)$. By our choice of $x'$ and $y'$
  and since $d(x,u) < d(x,x')$, we must have $d(u,y') = d(x',y')-1$,
  i.e., $u \in I(x',y')$.  But then, by the axiom (A5) of
  dual polar graphs, there exists $v \sim y'$ such that
  $d(u,v) = d(x',y')$, contradicting our choice of $x',y'$ since
  $d(u,x) < d(x,x')$.
\end{proof}

\begin{lemma}\label{lem-swm-gate}
  Consider two cells $A,B$ of an swm-graph $G$ such that
  $B \subseteq A$ and any two vertices $x \in B$ and $y \in A$. If
  $A = \lgate x,y \rgate$, then $B = \lgate x,y^* \rgate$ where $y^*$
  is the gate of $y$ on $B$.
\end{lemma}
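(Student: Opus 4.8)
The plan is to reduce the set-theoretic equality $B=\lgate x,y^*\rgate$ to a single equality of distances, using the diameter characterization of gated hulls of cells. Recall that $B$, being a cell of the swm-graph $G$, is a gated set inducing a dual polar graph, so by \cite[Lemma~5.12]{CCHO} (the same fact used in the proof of Lemma~\ref{lem-diametralpair-swm}) we have, for any two vertices $p,q\in B$, that $\lgate p,q\rgate=B$ if and only if $d(p,q)=\diam(B)$. Since $x\in B$ by hypothesis and $y^*\in B$ as the gate of $y$ on $B$, and since $B$ is gated, the gated hull $\lgate x,y^*\rgate$ is automatically contained in $B$. Thus the whole statement reduces to proving $d(x,y^*)=\diam(B)$, of which one inequality, $d(x,y^*)\le\diam(B)$, is free because $x,y^*\in B$.

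For the reverse inequality I would exploit the gate property of $y^*$ twice. First, since $y^*$ is the gate of $y$ on $B$ and $x\in B$, we have $y^*\in I(y,x)$, so that $d(x,y)=d(x,y^*)+d(y^*,y)$. As $A=\lgate x,y\rgate$ with $x,y\in A$, the same diameter characterization applied to the cell $A$ gives $d(x,y)=\diam(A)$, whence
\[
d(x,y^*)=\diam(A)-d(y,y^*).
\]
It therefore suffices to establish the bound $d(y,y^*)\le\diam(A)-\diam(B)$.

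To obtain this bound I would invoke Lemma~\ref{lem-diametralpair-swm} for the cell $B$ and the vertex $y^*\in B$: it produces a vertex $z\in B$ with $B=\lgate y^*,z\rgate$, hence $d(y^*,z)=\diam(B)$. Using the gate property of $y^*$ a second time (now with $z\in B$) gives $y^*\in I(y,z)$, so $d(y,z)=d(y,y^*)+d(y^*,z)=d(y,y^*)+\diam(B)$. Finally, since $y,z\in A$, we have $d(y,z)\le\diam(A)$, and combining the last two relations yields exactly $d(y,y^*)\le\diam(A)-\diam(B)$. Substituting into the displayed identity gives $d(x,y^*)\ge\diam(B)$, and together with the free inequality this forces $d(x,y^*)=\diam(B)$, which is what was needed.

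The argument is short, and its only genuinely inventive step is the choice of the auxiliary diametral partner $z$ of $y^*$ inside $B$, which converts the desired bound on $d(y,y^*)$ into the triangle-type comparison $d(y,z)\le\diam(A)$. The main obstacle I anticipate is purely a matter of bookkeeping: checking that the diameter criterion of \cite[Lemma~5.12]{CCHO} genuinely applies to $B$, i.e.\ that $B$—as a cell contained in the cell $A$—is itself a gated dual polar subgraph of $G$; this is guaranteed by the definition of cells of an swm-graph as Boolean-gated sets. No weakly modular distance computations beyond the two gate identities are required.
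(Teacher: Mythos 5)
Your proof is correct, but it takes a genuinely different route from the paper's. The paper argues through the clique structure of dual polar graphs: it introduces the sets $\cS(x,y)$ of maximal cliques at $x$ meeting $I(x,y)\setminus\{x\}$, invokes \cite[Lemmas 5.10 \& 5.11]{CCHO} to identify $\lgate x,y\rgate$ with $\{z : \cS(x,z)\subseteq\cS(x,y)\}$, and then verifies $\cS(x,z)\subseteq\cS(x,y^*)$ for every $z\in B$ by a pointwise gate argument, giving $B\subseteq\lgate x,y^*\rgate$ directly. You instead reduce everything to the single metric identity $d(x,y^*)=\diam(B)$ via the diameter criterion of \cite[Lemma~5.12]{CCHO}, and your distance bookkeeping checks out: $y^*\in I(y,x)$ gives $d(x,y^*)=\diam(A)-d(y,y^*)$ (using $d(x,y)=\diam(A)$, again by Lemma~5.12 applied to $A=\lgate x,y\rgate$); the diametral partner $z$ of $y^*$ in $B$ supplied by Lemma~\ref{lem-diametralpair-swm} together with a second use of the gate property yields $d(y,z)=d(y,y^*)+\diam(B)\le\diam(A)$, hence $d(x,y^*)\ge\diam(B)$; the opposite inequality is trivial since $x,y^*\in B$, and $\lgate x,y^*\rgate\subseteq B$ follows from gatedness exactly as in the paper. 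Note there is no circularity: Lemma~\ref{lem-diametralpair-swm} precedes the present lemma and its proof does not depend on it, and your worry about applying the diameter criterion to $B$ is resolved simply because $B$ is assumed to be a cell, hence a gated dual polar subgraph of finite diameter. What each approach buys: yours is shorter and more elementary, needing only the diameter characterization plus two gate identities, while the paper's proof exposes the finer clique-level structure ($\cS(x,z)\subseteq\cS(x,y^*)$ for all $z\in B$), which is the kind of information reused elsewhere in arguments about swm-complexes.
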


\begin{proof}
  Let $\cS$ denotes the set of all maximal cliques of the gated dual
  polar subgraph $A$ of $G$.  Since dual polar graphs are
  $K^-_4$--free, $|K \cap K'| \leq 1$ for all $K,K' \in
  \mathcal{S}$. For a vertex $u$, let $\cS(u)$ denote the set of all
  maximal cliques of $A$ containing $u$.  For two vertices $u,v$ of
  $A$, let $\cS(u,v)$ denote the set of cliques $K$ of $\cS(u)$
  meeting $I(u,v) \setminus \{u\}$.  Note that
  $\mathcal{S}(u,u)=\emptyset$. The gated hull
  $\lgate\bigcup_{K \in {\mathcal S}(u,v)} K \rgate$ will be denoted
  by $\lgate{\mathcal S}(u,v)\rgate$. From~\cite[Lemmas 5.10 \&
  5.11]{CCHO}, we know that
  $\lgate u,v \rgate = \lgate \cS(u,v) \rgate = \{z \in A: \cS(u,z)
  \subseteq \cS(u,v)\}$ induces a dual polar graph of diameter
  $d(u,v)$.

  Since $B$ is gated and since $x,y^* \in B$, we have 
  $\lgate x, y^* \rgate \subseteq B$. In order to establish the
  reverse inclusion, we show that for any $z \in B$,
  $\cS(x,z) \subseteq \cS(x,y^*)$.  Since $x,z \in B$, $B$ is
  gated, and  $\bigcup_{K \in \cS(x,z)} K \subseteq \lgate \cS(x,z) \rgate =
  \lgate x,z \rgate \subseteq B$, any maximal clique $K \in \cS(x,z)$
  is contained in $B$. Pick any clique $K \in \cS(x,z)$.
  Since $z \in A =\lgate x,y \rgate$, we have
  $K \in \cS(x,z) \subseteq \cS(x,y)$. Thus there exists a neighbor
  $t$ of $x$ in $K \cap I(x,y)$. Since $t \in K \subseteq B$ and since
  $y^*$ is the gate of $y$ in $B$, we have $y^* \in I(t,y)$. Since
  $t \in I(x,y)$, we thus have $t \in I(x,y^*)$, yielding
  $K \in \cS(x,y^*)$. Consequently, $\cS(x,z) \subseteq \cS(x,y^*)$
  and thus $B = \lgate x, y^* \rgate$.
\end{proof}

\begin{lemma}\label{lem-3cell-swm}
  Swm-complexes satisfy the 3-cell condition.
\end{lemma}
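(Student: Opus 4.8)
The plan is to reduce the statement to the production of a single diametral pair. Recall that the cells of an swm-complex are the Boolean-gated sets, that these are exactly the gated hulls $\lgate x,y\rgate$ of pairs of vertices, that such a gated hull induces a dual polar graph of diameter $d(x,y)$, and that $\dim(\lgate x,y\rgate)=d(x,y)$; in particular a facet of a cell has diameter one less than the cell, and intersections of cells are again cells (being gated subgraphs inducing dual polar graphs). So, writing $F:=C_1\cap C_2\cap C_3$ and $k:=\dim F$, the hypotheses of the 3-cell condition give $\dim(C_i\cap C_j)=k+1$ and $\dim C_i=k+2$ for all $i\neq j$. Fix a vertex $p\in F$. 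It then suffices to produce a vertex $w$ with $d(p,w)=k+3$ and $C_1\cup C_2\cup C_3\subseteq\lgate p,w\rgate$: indeed $\lgate p,w\rgate$ is then a cell of dimension $k+3$ containing the union, which is exactly the required common cell $C$.

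To build $w$, I would first fix a diametral partner $q$ of $p$ in $F$, so that $F=\lgate p,q\rgate$ and $d(p,q)=k$. Since each $C_i\cap C_j$ is a dual polar cell of diameter $k+1$ in which $q$ sits at distance $k=\diam(C_i\cap C_j)-1$ from $p$, the vertex $q$ is not opposite to $p$ there, so (by a standard consequence of axiom (A5)) the geodesic $[p,q]$ extends by one edge inside $C_i\cap C_j$ to a neighbor $e_{ij}\sim q$ with $d(p,e_{ij})=k+1$ and $\lgate p,e_{ij}\rgate=C_i\cap C_j$. The three vertices $e_{12},e_{13},e_{23}$ are pairwise distinct neighbors of $q$ (coincidence of two of them would force two of the facets to coincide, contradicting $\dim F=k$), all lying beyond $q$ in the direction away from $p$. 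The heart of the construction is then to complete these three directions into a diameter-$3$ cell based at $q$: using the quad axiom (A3\&4) together with (A5) in the dual polar structure around $q$, I would produce a vertex $w$ with $d(q,w)=3$, $q\in I(p,w)$ (so that $d(p,w)=k+3$), and $e_{12},e_{13},e_{23}\in I(q,w)$.

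It remains to verify the containment. Since $q\in I(p,w)$ and each $e_{ij}\in I(q,w)$ lies beyond $q$ in the $p$-direction, we get $e_{ij}\in I(p,w)$, whence $\lgate p,e_{ij}\rgate=C_i\cap C_j\subseteq\lgate p,w\rgate$. Finally each $C_i$ is recovered as the gated hull of its two distinct facets $C_i\cap C_j$ and $C_i\cap C_k$ (two distinct hyperplanes of the dual polar cell $C_i$ span it, which one checks via Lemma~\ref{lem-swm-gate} and a comparison of diameters, producing a diametral pair of $C_i$ with one endpoint in each facet). Hence $C_i=\lgate(C_i\cap C_j)\cup(C_i\cap C_k)\rgate\subseteq\lgate p,w\rgate$ for each $i$, and as $d(p,w)=k+3$ the cell $C:=\lgate p,w\rgate$ has dimension $k+3$ and contains $C_1\cup C_2\cup C_3$, establishing the 3-cell condition.

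The main obstacle is precisely the middle step: turning the three directions $e_{12},e_{13},e_{23}$ at $q$ into a genuine diameter-$3$ cell $\lgate q,w\rgate$ with all three on geodesics to one common opposite vertex $w$. This is where the dual polar axioms must be used in full. One has to rule out the degenerate configurations that would drop the diameter (two of the $e_{ij}$ collinear with $q$, or all three contained in a common quad) and, more importantly, to show that the $p$-direction at $q$ and the three new directions are mutually compatible, so that a single $w$ lies opposite $p$ for the whole configuration. I expect the exclusion of induced $K_4^-$ and isometric $K_{3,3}^-$ in swm-graphs, combined with (A5), to be exactly what forces such a $w$ to exist at distance precisely $k+3$ from $p$; this local analysis—essentially the base case $k=0$ of three quads pairwise sharing only lines through $p$—is the technical core of the argument.
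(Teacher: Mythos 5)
Your reduction has the right shape in spirit---the paper also produces the big cell as the gated hull $\lgate u,w\rgate$ of a single well-chosen pair---but your proposal contains two genuine gaps. First, your opening premise is false: in an swm-graph the gated hulls $\lgate x,y\rgate$ of pairs of vertices are \emph{not} in general cells, and ``$\dim(\lgate x,y\rgate)=d(x,y)$'' fails outside cells. Already in a tree (an swm-graph) the gated hull of two vertices at distance $2$ is a path, which is gated but not thick, hence not Boolean-gated. What is true is the one-directional statement of Lemma~\ref{lem-diametralpair-swm} (every cell is spanned by a diametral pair) and, \emph{inside} a given cell $A$, that $\lgate x,y\rgate$ is a dual polar graph of diameter $d(x,y)$ (\cite{CCHO}, Lemmas~5.11--5.12). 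So even if you construct your vertex $w$ with $d(p,w)=k+3$, the concluding sentence ``the cell $C:=\lgate p,w\rgate$ has dimension $k+3$'' is unjustified: you must separately certify that $\lgate p,w\rgate$ is Boolean-gated. The paper does exactly this for its pair $(w,u)$, and can only do so because $u$ is adjacent to $u_2$ with $\lgate w,u_2\rgate=C_2$ already known to be a cell and $d(w,u)=d(w,u_2)+1$, which is the configuration needed to invoke \cite{CCHO}*{Proposition~6.5 \& Lemma~6.6}. Your pair $(p,w)$ comes with no such adjacent ``ladder'' to an established cell, so this certification is missing.

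Second, the step you yourself flag as the ``heart of the construction''---completing the three directions $e_{12},e_{13},e_{23}$ at $q$ into a single vertex $w$ with $d(q,w)=3$, $q\in I(p,w)$, and all three $e_{ij}\in I(q,w)$---is not proved, only conjectured (``I would produce\dots'', ``I expect\dots to be exactly what forces''). This is not a routine verification: it amounts to a local rank-$3$ existence statement that is essentially equivalent to the $3$-cell condition itself, and you give no argument ruling out the degenerate configurations or establishing compatibility with the $p$-direction. The paper's proof shows this head-on construction can be avoided entirely: it takes $z$ in the triple intersection, a diametral partner $u$ of $z$ in $C_1$ (Lemma~\ref{lem-diametralpair-swm}), the gates $u_2,u_3$ of $u$ in $C_2,C_3$ (identified via Lemma~\ref{lem-swm-gate}), then the \emph{quadrangle condition} of weak modularity to produce $v\sim u_2,u_3$ in $C_1\cap C_2\cap C_3$, and finally a diametral partner $w$ of $v$ in $C_2\cap C_3$; all the required distance equalities then follow from gate arguments, with no analysis of three directions at one vertex. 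Note also that the paper's spanning pair has one endpoint deep in $C_1$ and the other deep in $C_2\cap C_3$, whereas anchoring one endpoint $p$ in the triple intersection, as you do, is precisely what forces you into the unproven three-directions lemma. Your surrounding bookkeeping (distinctness of the $e_{ij}$, the interval concatenation $e_{ij}\in I(p,w)$, and recovering $C_i$ as the gated hull of two distinct facets by the diameter-comparison argument) is fine granted the two missing steps, but as it stands the proposal defers the technical core rather than supplying it.
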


\begin{proof}
  Consider three cells $C_1, C_2, C_3$ such that $C_1 \cap C_2$
  is a facet of $C_1$ and $C_2$,  $C_1 \cap C_3$ is a facet of $C_1$ and $C_3$,
  $C_2\cap C_3$ is a facet of $C_3$, and, finally, $C_1 \cap C_2 \cap C_3$ is a  facet of
  $C_1\cap C_2, C_1\cap C_3, C_2\cap C_3$. This implies
  that
  $\dim(C_1) = \dim(C_2) = \dim(C_3) = \dim(C_1 \cap C_2) +1 =\dim(C_1
  \cap C_3) +1 =\dim(C_2 \cap C_3) +1 = \dim (C_1\cap C_2\cap
  C_3)+2$. Let $k = \dim(C_1 \cap C_2)$.

  Since cells of swm-complexes are gated, they satisfy the Helly
  property and there exists $z \in C_1 \cap C_2 \cap C_3$. By
  Lemma~\ref{lem-diametralpair-swm}, there exists $u \in C_1$ such
  that $C_1 = \lgate u,z \rgate$, i.e., such that $d(u,z) =
  k+1$. Since $C_1 \cap C_2$ and $C_1 \cap C_3$ are Boolean-gated sets
  of diameter $k$, $u \notin C_2 \cup C_3$. Let $u_2$ and $u_3$ be the
  gates of $u$ in $C_2$ and $C_3$, respectively.  By
  Lemma~\ref{lem-swm-gate}, $C_1 \cap C_2 = \lgate z, u_2 \rgate$ and
  $C_1 \cap C_3 = \lgate z, u_3 \rgate$. Consequently,
  $d(z,u_2) = d(z,u_3) = k$ and $u \sim u_2,u_3$. Since
  $C_1\cap C_2 \cap C_3$ is a facet of $C_1 \cap C_2$ and
  $C_1\cap C_3$, necessarily $u_2 \notin C_3$, and $u_3 \notin C_2$,
  and thus $u_2 \neq u_3$.  By the quadrangle condition, there exists
  $v \sim u_2,u_3$ with $d(z,v) = k-1$. Since $C_1$, $C_2$ and $C_3$
  are gated and thus convex, $v \in C_1\cap C_2 \cap C_3$. By
  Lemma~\ref{lem-diametralpair-swm}, there exists $w \in C_2\cap C_3$
  such that $\lgate v,w \rgate = C_2\cap C_3$ and $d(v,w) = k$. Since
  $u_2 \notin C_3$, $u_2 \notin \lgate v,w \rgate = C_2 \cap C_3$ and
  since $v \sim u_2$, $v$ is the gate of $u_2$ on $C_2 \cap
  C_3$. Consequently, $d(w,u_2) = d(w,v)+1 = k+1$ and similarly
  $d(w,u_3) = k+1$.  Since $d(w,u_2) = d(w,u_3) = k+1$,
  $\lgate w,u_2 \rgate = C_2$ and $\lgate w,u_3 \rgate = C_3$
  by~\cite[Lemma~5.12]{CCHO}. Consequently, $\lgate w,u_2 \rgate$ and
  $\lgate w,u_3 \rgate$ are Boolean-gated sets of $G$. Since $C_2$ is
  gated, $u \notin C_2$, $u_2 \in C_2$ and $u \sim u_2$, we get that
  $d(w,u) = k+2$. By~\cite[Proposition~6.5 \& Lemma 6.6]{CCHO},
  $\lgate w,u \rgate$ is thus a Boolean-gated set of $G$ of diameter
  $k+2$.

  Since $w, u_2 \in I(w,u) \subseteq \lgate w,u \rgate$, we have
  $C_2 = \lgate w,u_2\rgate \subseteq \lgate w,u\rgate$ and similarly,
  $C_3 \subseteq \lgate w,u\rgate$. Since
  $z \in C_2 \subseteq \lgate w,u\rgate$ and since
  $C_1 = \lgate u,z\rgate$, we also have
  $C_1 \subseteq \lgate w,u\rgate$. Consequently, $\lgate w,u \rgate$
  is a cell of dimension $k+2$ containing $C_1 \cup C_2 \cup C_3$.
\end{proof}

\begin{lemma}\label{GMC-swm}
  Swm-complexes satisfy the graded monotonicity condition.
\end{lemma}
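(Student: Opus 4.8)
The plan is to mimic the cube-complex argument of Lemma~\ref{GMC} together with the cell-building technique of Lemma~\ref{lem-3cell-swm}: I will enlarge $A$ by a single ``direction'' pointing into $B$. Let $C$ be a cell of the swm-complex and let $A,B$ be two intersecting faces of $C$ with $B\not\subseteq A$; write $a=\dim(A)=\diam(A)$ and $c=\dim(A\cap B)=\diam(A\cap B)$ (recall that cells are closed under intersection, so $A\cap B$ is again a cell). Since $B$ is gated, hence connected, and $A\cap B\subsetneq B$, there is an edge from $A\cap B$ to $B\setminus A$; fix such an edge $y_0x$ with $y_0\in A\cap B$ and $x\in B\setminus A$. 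As $x\sim y_0\in A$ and $x\notin A$, the gate of $x$ on the gated set $A$ is $y_0$, so $d(x,v)=1+d(y_0,v)$ for every $v\in A$.

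Next I build $D$. By Lemma~\ref{lem-diametralpair-swm} there is $w\in A$ with $A=\lgate y_0,w\rgate$ and $d(y_0,w)=a$; then $d(x,w)=a+1$. I set $D:=\lgate x,w\rgate\subseteq C$. Exactly as in the proof of Lemma~\ref{lem-3cell-swm}, since $A=\lgate w,y_0\rgate$ is Boolean-gated, $x\sim y_0$, $x\notin A$ and $d(w,x)=a+1$, \cite[Proposition~6.5 \& Lemma~6.6]{CCHO} shows that $D$ is a Boolean-gated set, i.e.\ a cell, of diameter $a+1$. Thus $A\subseteq D$ (as $y_0\in I(x,w)$) and $\dim(D)=\dim(A)+1$; moreover a strict inclusion of gated dual polar subgraphs strictly increases the diameter (via \cite[Lemma~5.12]{CCHO}, since a diametral pair of the smaller cell already generates it), so no cell lies strictly between $A$ and $D$ and $A$ is a facet of $D$.

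It remains to verify the intersection condition $\dim(D\cap B)=\dim(A\cap B)+1$, which is the crux. By Lemma~\ref{lem-swm-gate} applied to $A\cap B\subseteq A=\lgate y_0,w\rgate$ we get $A\cap B=\lgate y_0,w^{*}\rgate$, where $w^{*}$ is the gate of $w$ on $A\cap B$; since $w^{*}\in I(w,y_0)$ this gives $d(w,w^{*})=a-c$. The key observation is that the gate $g$ of $w$ on $B$ already lies in $A\cap B$: for any $z\in A\cap B\subseteq B$ we have $g\in I(w,z)$, and since $w,z\in A$ with $A$ convex, $g\in A$; hence $g\in A\cap B$, so $g=w^{*}$ and $d(w,B)=d(w,w^{*})=a-c$. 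Now apply Lemma~\ref{lem-swm-gate} to $D\cap B\subseteq D=\lgate x,w\rgate$ (with $x\in D\cap B$) to obtain $D\cap B=\lgate x,w^{\sharp}\rgate$, where $w^{\sharp}$ is the gate of $w$ on $D\cap B$, so that $\dim(D\cap B)=d(x,w^{\sharp})$. Since $w^{*}\in A\cap B\subseteq D\cap B\subseteq B$, we have $d(w,B)\le d(w,w^{\sharp})\le d(w,w^{*})=d(w,B)$, whence $d(w,w^{\sharp})=a-c$; the gate relation $d(x,w)=d(x,w^{\sharp})+d(w^{\sharp},w)$ then yields $\dim(D\cap B)=d(x,w^{\sharp})=(a+1)-(a-c)=c+1$, as required.

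The main obstacle is precisely the last paragraph: one must bound $d(w,D\cap B)$ from below, i.e.\ rule out that some vertex of $B$ lying outside $A\cap B$ is closer to the ``far'' endpoint $w$ of $A$ than $w^{*}$ is. This is exactly what the gate observation $g\in A\cap B$ settles, by forcing the gate of $w$ on $B$ back into $A\cap B$. A secondary point requiring care is that $D=\lgate x,w\rgate$, being the gated hull of a pair of vertices, need not be Boolean-gated in a general swm-graph; its Boolean-gatedness is guaranteed here only through the CCHO extension result invoked above.
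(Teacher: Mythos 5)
Your proof is correct and takes essentially the same route as the paper's: the same edge from $A\cap B$ into $B\setminus A$, the same diametral partner $w$ (the paper's $y'$) supplied by Lemma~\ref{lem-diametralpair-swm}, the same cell $D=\lgate x,w\rgate$, and the same double application of Lemma~\ref{lem-swm-gate}. The only deviations are cosmetic and in fact tighten the exposition: you explicitly verify two points the paper leaves implicit (that the gate of $w$ on $B$ already lies in $A\cap B$, which the paper asserts parenthetically, and that $A$ is genuinely a facet of $D$, via strict diameter growth under strict inclusion of cells), and you obtain $\dim(D\cap B)=\dim(A\cap B)+1$ by sandwiching $d(w,D\cap B)$ between $d(w,B)$ and $d(w,w^{*})$ instead of identifying $D\cap B$ with $\lgate x,w^{*}\rgate$ outright as the paper does.
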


\begin{proof}
  Consider two intersecting cells $A,B$ that are
  faces of a cell $C$ such that $B \not\subseteq A$. As in case of
  cube complexes, pick a vertex $x \in B \setminus A$ that is adjacent
  to a vertex $y \in A\cap B$. By Lemma~\ref{lem-diametralpair-swm},
  there exists $y' \in A $ such that $\lgate y,y' \rgate = A$. Let
  $y''$ be the gate of $y'$ on $B$ (and on $A\cap B$) and note that by
  Lemma~\ref{lem-swm-gate}, $\lgate y,y'' \rgate = A\cap B$. Let
  $D' = \lgate x,y' \rgate$ and $D'' = \lgate x,y'' \rgate$. By
  Lemma~\ref{lem-swm-gate} applied to $D'$ and $D'\cap B$, we have
  $D'' = D'\cap B$.  By \cite[Lemma~5.11]{CCHO},
  $D' = \lgate x,y' \rgate$ and $D'' = \lgate x,y'' \rgate$ are dual
  polar graphs of dimensions $d(x,y')=d(y,y')+1=\dim(A)+1$ and
  $d(x,y'')=d(y,y'')+1=\dim(A\cap B)+1$, respectively. This
  establishes the graded monotonicity condition for swm-complexes.
\end{proof}

\subsection{Helly graphs and Helly groups}

We continue with the definitions of the main objects studied in this article: Helly and clique-Helly graphs, Helly and clique-Helly complexes, and Helly groups.

\begin{definition}\label{d:Helly}
  A graph $G$ is a \emph{Helly graph} if the ball-hypergraph
  ${\mathcal B}(G)$ is Helly.  A graph $G$ is a \emph{$1$--Helly
    graph} if the 1-ball-hypergraph ${\mathcal B}_1(G)$ is Helly.  A
  \emph{clique-Helly graph} is a graph $G$ in which the hypergraph
  ${\mathcal X}(G)$ of maximal cliques is Helly.
\end{definition}

Observe that a Helly graph is $1$--Helly and that a $1$--Helly graph
is clique-Helly but that the reverse implications do not hold: a cycle
of length at least $7$ is $1$--Helly but not Helly and a cycle of
length $4$ is clique-Helly but is not $1$--Helly. Notice also that
Helly graphs are pseudo-modular and thus weakly-modular.

For arbitrary graphs, the following compactness result for the Helly
property has been proved by Polat and Pouzet:

\begin{proposition} \label{Helly_Polat} \cite{Po_helly} A graph $G$ not containing infinite cliques
is Helly if and only if $G$ is finitely Helly.
\end{proposition}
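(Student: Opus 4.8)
The plan is to prove the two implications separately, the forward one being immediate and the reverse one carrying all the content. If $G$ is Helly then ${\mathcal B}(G)$ satisfies the Helly property for \emph{all} subfamilies, in particular for finite ones, so $G$ is finitely Helly; this direction uses neither connectivity nor the absence of infinite cliques. For the converse I would argue by contraposition: assuming that $G$ is finitely Helly but \emph{not} Helly, I would produce an infinite clique, contradicting the standing hypothesis. (Note that in the locally finite case treated elsewhere in the paper the statement is anyway immediate, since then any ball is finite and the finite intersection property on subsets of a finite set already forces a common vertex; so the real target is the non-locally-finite setting, where balls may be infinite and ``no infinite clique'' is a genuine constraint.)

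Suppose then that $\mathcal{F}=\{B_{r_i}(v_i)\}_{i\in I}$ is a family of pairwise intersecting balls with $\bigcap\mathcal{F}=\emptyset$, while every finite subfamily has a common vertex. First I would fix one ball $B_0=B_{r_0}(v_0)\in\mathcal{F}$ and work inside it: for every finite $J\subseteq I$ with $0\in J$ the partial intersection $C_J:=\bigcap_{i\in J}B_{r_i}(v_i)$ is non-empty and contained in $B_0$, and the family $\{C_J\}_J$ is directed downwards (via $C_{J\cup J'}\subseteq C_J\cap C_{J'}$) with empty total intersection. Fixing $B_0$ confines all the relevant vertices to a set at bounded distance from $v_0$, so any ``escaping'' phenomenon produced by the empty total intersection must be horizontal at bounded distance rather than an escape to infinity in distance --- and horizontal escape at bounded distance is exactly what an infinite clique permits, as the example of an infinite clique with an attached vertex $w_n$ non-adjacent to exactly one clique-vertex $u_n$ (giving pairwise intersecting balls $B_1(w_n)$ with empty intersection) shows. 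I would also record that finite Helly forces three pairwise intersecting balls to meet, so $G$ is pseudo-modular and hence the quadrangle and triangle conditions of Proposition~\ref{pseudo-modular} are available as the geometric tool for keeping successively chosen vertices mutually close.

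The heart of the argument is to extract an infinite clique from the directed family $\{C_J\}$, and this is the step I expect to be the main obstacle. The naive attempt --- repeatedly choosing, for a growing finite index set $J_k$, a vertex $x_{k+1}\in C_{J_k}$ that is killed only by some next ball $B_{f(x_{k+1})}$ --- does produce, by the finite intersection property, an infinite sequence of distinct vertices inside $B_0$, but gives no control over adjacency. Likewise, restricting to vertices of $C_J$ at minimal distance from $v_0$ does \emph{not} by itself yield a clique: pseudo-modularity lets one find a common neighbour of two such vertices closer to $v_0$, but this deeper vertex need not lie in the remaining balls, so minimality is not contradicted and the nearest-point sets may genuinely be infinite independent sets. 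The real content is therefore to show that the failure of the finite intersection property to upgrade to an actual common vertex can only be caused by an infinite clique. I would do this by a global selection rather than a one-centre distance minimization: tracking a candidate vertex together with the finite set of already-chosen vertices it must remain adjacent to, and invoking the triangle and quadrangle conditions to preserve pairwise adjacency while the intersection $C_J$ keeps shrinking. Carrying this bookkeeping through so that the limiting configuration is a clique --- and not merely an infinite bounded set of vertices --- is precisely where the hypothesis of having no infinite clique is consumed to reach the contradiction, and is the delicate technical core of the proof.
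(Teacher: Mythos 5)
The easy direction and your preliminary reductions are correct, and your diagnosis of why the naive attempts fail (including the example of an infinite clique with attached vertices $w_n$, and the observation that nearest-point sets in $C_J$ can be infinite independent sets) is accurate. But the proposal has a genuine gap exactly where you flag it yourself: the extraction of an infinite clique from the directed family $\{C_J\}$ is announced, not performed, and nothing you set up supplies a mechanism for it. Concretely, the only way the tools you invoke (finite Hellyness, pseudo-modularity via Proposition~\ref{pseudo-modular}) could force adjacency among successively chosen vertices would be to pick $x_{k+1}\in C_{J_k}\cap B_1(x_1)\cap\cdots\cap B_1(x_k)$ by applying the finite Helly property to the balls $B_{r_i}(v_i)$, $i\in J_k$, together with the unit balls $B_1(x_j)$. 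That application requires all these balls to pairwise intersect, and there is no reason $B_1(x_j)$ should meet $B_{r_i}(v_i)$ for indices $i$ added after step $j$: the vertex $x_j$ was precisely ``killed'' by those later balls and may lie at distance $r_i+2$ or more from $v_i$. The triangle and quadrangle conditions let you move one vertex one step while staying close to one reference point, but they give no invariant that preserves adjacency to a growing finite set against an arbitrarily shrinking family $C_J$; the proposal names this as ``the delicate technical core'' and then stops, so what is delivered is only that the $C_J$ form a nonempty downward-directed family inside $B_0$ with empty total intersection --- which is where the theorem begins, not where it ends.

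For calibration: the paper does not prove this statement at all; it is quoted from Polat \cite{Po_helly}, so there is no in-paper argument to match, and the entire burden of a blind proof is the compactness step --- showing that the finite Helly property upgrades to the full Helly property once infinite cliques are excluded (equivalently, that a pairwise-intersecting family of balls with empty intersection forces an infinite simplex, or an infinite strictly increasing configuration that the no-infinite-clique hypothesis forbids). Since that step is precisely the one left unexecuted, the proposal should be regarded as a plausible strategy sketch rather than a proof; to complete it you would need an explicit invariant (for instance, a carefully maintained simplex together with a certificate that each enlargement can be realized inside every sufficiently large $C_J$, with the finite Helly property applied only to families whose pairwise intersections are actually verified) and a proof that this process cannot terminate, which is exactly where ``no infinite cliques'' must be consumed.
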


\begin{definition}\label{d:Helly-complex}
  A \emph{Helly complex} is the clique
  complex of some Helly graph. A \emph{clique-Helly complex} is the clique
  complex of some clique-Helly graph.
\end{definition}

\begin{remark} If in Definitions~\ref{d:Helly}
  and~\ref{d:Helly-complex} instead of a Helly property we consider
  the corresponding finite Helly property, then the graphs satisfying
  it are called finitely Helly. For example, finitely clique-Helly
  graphs are graphs $G$ in which the hypergraph ${\mathcal X}(G)$ has
  the finite Helly property. For locally finite graphs, the finite Helly properties for balls and
  cliques implies the Helly property, thus finitely Helly
  (respectively, clique-Helly) graphs and complexes are Helly
  (respectively, clique-Helly).  By Proposition
  \ref{Helly_Polat}, the same implication holds for arbitrary graphs not containing
  infinite cliques.
\end{remark}

We continue with the definition of Helly groups:

\begin{definition}
	\label{d:Hellygroup}
	A group $\Gamma$ is \emph{Helly}
if it acts geometrically on a Helly complex
$X$.
\end{definition}

If a group $\Gamma$ acts geometrically on a Helly complex $X$, then $X$ is locally finite, moreover $X$ has uniformly bounded degrees.

In case of the clique-Helly property, the Berge-Duchet condition in Proposition~\ref{Helly_triangle} can be specified in the following way:

\begin{proposition}[\cite{Dragan,Szwarc}]\label{clique_Helly_triangle} A graph $G$ with finite cliques is clique-Helly if and only
if for any triangle $T$ of $G$ the set $T^*$ of all vertices of $G$ adjacent with at least two vertices of $T$
contains a vertex adjacent to all remaining vertices of $T^*$.
\end{proposition}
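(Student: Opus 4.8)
The plan is to derive this characterization from the Berge–Duchet condition for the clique-hypergraph $\mathcal{X}(G)$. Since all cliques of $G$ are finite, $\mathcal{X}(G)$ is edge-finite, so Proposition~\ref{Helly_triangle} applies: $G$ is clique-Helly if and only if for every triplet $x,y,z$ of vertices the intersection of all maximal cliques containing at least two of $x,y,z$ is non-empty. First I would dispose of the degenerate triplets. If at most two of the pairs $xy, yz, zx$ are edges, then every maximal clique containing two of $x,y,z$ shares a common vertex of $\{x,y,z\}$ (or no such clique exists at all, in which case the intersection is taken over the empty family and equals $V\neq\emptyset$); either way the Berge–Duchet condition holds automatically. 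Hence the condition is non-trivial only when $x,y,z$ form a triangle $T$, in which case the maximal cliques containing at least two of $x,y,z$ are precisely the maximal cliques in the family $\mathcal{K}_T$ of those containing at least one edge of $T$.

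The heart of the argument is to identify $\bigcap\mathcal{K}_T$ explicitly. I would first observe that any $K\in\mathcal{K}_T$ satisfies $K\subseteq T^*$: every vertex of a clique containing, say, the edge $xy$ is adjacent to both $x$ and $y$, hence to at least two vertices of $T$. With this in hand I claim that
\[
  \bigcap\mathcal{K}_T = \{w\in T^*: w \text{ is adjacent to all remaining vertices of } T^*\}.
\]
For the inclusion $\supseteq$: if $w\in T^*$ is adjacent to every vertex of $T^*\setminus\{w\}$ and $K\in\mathcal{K}_T$, then $w$ is adjacent to every vertex of $K\subseteq T^*$, so by maximality of $K$ we get $w\in K$. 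For the inclusion $\subseteq$: if $w\in\bigcap\mathcal{K}_T$ then $w$ lies in some maximal clique extending an edge of $T$, whence $w\in T^*$; moreover, for any $v\in T^*$, the vertex $v$ is adjacent to two vertices of $T$, say $x$ and $y$, so $\{x,y,v\}$ is a clique whose maximal extension $K'$ belongs to $\mathcal{K}_T$, forcing $w\in K'$, and since $v\in K'$ we conclude $w\sim v$ (or $w=v$).

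Combining these steps, $\bigcap\mathcal{K}_T\neq\emptyset$ precisely when $T^*$ contains a vertex adjacent to all the remaining vertices of $T^*$, which is the asserted condition. I expect the main subtlety to lie in the inclusion $\subseteq$: for an arbitrary $v\in T^*$ one must check that the two $T$-vertices to which $v$ is adjacent span an edge of $T$, so that the triangle $\{x,y,v\}$ shares an edge with $T$ and its maximal extension genuinely lies in $\mathcal{K}_T$ and therefore contains $w$. This step uses in an essential way that $T$ is a triangle, i.e., that all three pairs among its vertices are edges, so that every pair of $T$-vertices adjacent to $v$ is itself an edge of $T$.
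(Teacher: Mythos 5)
Your proof is correct and takes exactly the route the paper intends: the paper gives no proof of this proposition (it cites Dragan and Szwarc), but it explicitly frames the statement as a specialization of the Berge--Duchet condition of Proposition~\ref{Helly_triangle} to the clique-hypergraph $\mathcal{X}(G)$, which is precisely what you carry out. Your identification of the intersection of the maximal cliques meeting $T$ in an edge with the set of vertices of $T^*$ adjacent to all remaining vertices of $T^*$ is sound, including the two points where finiteness of cliques is genuinely needed (edge-finiteness of $\mathcal{X}(G)$ for Proposition~\ref{Helly_triangle}, and extension of every clique to a maximal one) and the dismissal of non-triangle triplets.
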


\begin{remark}
  Proposition~\ref{clique_Helly_triangle} does not hold for graphs
  containing infinite cliques. For example, consider the graph $G$
  defined as follows. First, consider an infinite clique
  $K = \{v_0,v_1,v_2, \ldots, v_k, \ldots\}$ whose vertex-set is
  indexed by $\N$. For each $i \in \N$, we add a vertex $u_i$ that is
  adjacent to all $v_j$ such that $j \geq i$. Observe that any two
  maximal cliques of $G$ have a non-empty intersection but there is no
  universal vertex in $G$. Consequently, $G$ is not clique-Helly. On
  the other hand, one can easily check that $G$ satisfies the
  criterion of Proposition~\ref{clique_Helly_triangle}.
\end{remark}

For any locally finite graph $G$, the clique-hypergraph $\cX(G)$ is
conformal and $G$ is isomorphic to the $2$-section of $\cX(G)$.
Moreover, if $G$ is clique-Helly, then $\cX(G)$ is Helly.
We conclude this subsection with the following simple but useful
converse result that is well-known (see e.g.\ \cite{BaPr}).
\begin{proposition}\label{hypergraph-clique-Helly} For a locally finite
hypergraph $\cH=(V,\cE)$ the following conditions are equivalent:
\begin{itemize}
\item[(i)] the 2-section $[\cH]_2$ of $\cH$ is a clique-Helly graph
  and $\cH$ is conformal (i.e., each maximal clique of $[\cH]_2$ is an
  edge of $\cH$);
\item[(ii)] the simplification $\breve{\cH}$ of $\cH$ is  conformal and Helly;
\item[(iii)] $\breve{\cH}$ satisfies Berge-Duchet and Gilmore conditions.
\end{itemize}
In particular, the 2-section of any locally finite triangle-free hypergraph is clique-Helly.
\end{proposition}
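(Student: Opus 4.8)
The plan is to establish the cycle of equivalences (i) $\Leftrightarrow$ (ii) $\Leftrightarrow$ (iii) and then deduce the concluding assertion, reducing everything to the simplification $\breve{\cH}$ and invoking the earlier characterizations of the Helly and conformality properties. I first record the facts that make this reduction legitimate. Because $\cH$ is locally finite, so is $\breve{\cH}$: its edges are (maximal) edges of $\cH$, hence finite, and the degree of a vertex in $\breve{\cH}$ does not exceed its degree in $\cH$; moreover $\breve{\cH}$ is simple by definition. Next, $[\breve{\cH}]_2 = [\cH]_2$ and $\widehat{\breve{\cH}} = \widehat{\cH}$; since conformality amounts to the hereditary closure being a flag complex, $\cH$ is conformal if and only if $\breve{\cH}$ is.

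The equivalence (ii) $\Leftrightarrow$ (iii) should then be immediate. As $\breve{\cH}$ is simple, it is Helly exactly when its edge family satisfies the Helly property, which by Proposition~\ref{Helly_triangle} (applicable since $\breve{\cH}$ is edge-finite) is equivalent to the Berge-Duchet condition. Dually, by Proposition~\ref{Gilmore} (applicable since $\breve{\cH}$ is vertex-finite), conformality of $\breve{\cH}$ is equivalent to the Gilmore condition. Conjoining the two gives (ii) $\Leftrightarrow$ (iii).

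The substantive step is (i) $\Leftrightarrow$ (ii). Conformality of $\cH$ and of $\breve{\cH}$ already coincide, so it remains to show, \emph{under the assumption of conformality}, that $[\cH]_2$ is clique-Helly precisely when $\breve{\cH}$ is Helly. The key observation I would prove is that a conformal $\cH$ has the maximal cliques of $[\cH]_2$ equal to the edges of $\breve{\cH}$. Indeed, a maximal clique $C$ is an edge of $\cH$ by conformality and cannot sit properly inside another edge $H$, as $H$ would be a strictly larger clique; hence $C \in \breve{\cE}$. Conversely an edge $H \in \breve{\cE}$ is a clique, lies in some maximal clique $C' \in \cE$, and by maximality of $H$ among edges must equal $C'$, so $H$ is itself a maximal clique. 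Consequently the clique-hypergraph $\cX([\cH]_2)$ is exactly the edge family of $\breve{\cH}$, whence $[\cH]_2$ is clique-Helly iff that family has the Helly property iff $\breve{\cH}$ is Helly. This delivers (i) $\Leftrightarrow$ (ii).

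For the final clause, let $\cH$ be locally finite and triangle-free. Proposition~\ref{balanced} gives that $\cH$ is conformal and that its edge family satisfies the Helly property. Conformality passes to $\breve{\cH}$, and since $\breve{\cE} \subseteq \cE$ the Helly property is inherited by the subfamily $\breve{\cE}$; being simple, $\breve{\cH}$ is thus Helly, so condition (ii) holds and by (i) $\Leftrightarrow$ (ii) the $2$-section $[\cH]_2$ is clique-Helly. I do not anticipate a genuinely hard step; the only points needing care are matching the edge- and vertex-finiteness hypotheses of Propositions~\ref{Helly_triangle} and~\ref{Gilmore} to $\breve{\cH}$, and keeping track of the passage between $\cH$ and its simplification.
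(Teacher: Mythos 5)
Your proof is correct and follows essentially the same route as the paper's: reduce to the simplification $\breve{\cH}$, obtain (ii)$\Leftrightarrow$(iii) from Propositions~\ref{Helly_triangle} and~\ref{Gilmore}, and settle (i)$\Leftrightarrow$(ii) by identifying, under conformality, the maximal cliques of $[\cH]_2$ with the edges of $\breve{\cH}$. You merely spell out details the paper leaves implicit (the locally finite/simple bookkeeping for $\breve{\cH}$, the two-sided verification of the clique--edge bijection, and the deduction of the triangle-free clause from Proposition~\ref{balanced}), all of which check out.
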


\begin{proof} Since $[\cH]_2=[\breve{\cH}]_2$, we can suppose that $\cH$ is simple. The equivalence (ii)$\Leftrightarrow$(iii) follows from Propositions~\ref{Helly_triangle} and~\ref{Gilmore}.
If (i) holds, then $\cH$ coincides with the hypergraph of maximal cliques of $[\cH]_2$, thus $\cH$ is Helly. Also $\cH$
is conformal as the clique-hypergraph of a graph. This establishes (i)$\Rightarrow$(ii). Conversely, if (ii) holds, since $\cH$ is conformal,
each clique of $[\cH]_2$ is included in an edge of $\cH$. Thus the maximal cliques of $[\cH]_2$ are in bijection with the edges of $\cH$. This shows that
$[\cH]_2$ is clique-Helly.
\end{proof}

From Propositions~\ref{3-cell} and~\ref{hypergraph-clique-Helly} we obtain the following result:

\begin{proposition}\label{cell-complex-cliqueHelly} If $X$ is an abstract cell complex for which the cell-hypergraph $\cH(X)$ satisfies the Helly property,  the 3-cell and the graded
  monotonicity conditions,
then the 2-section $[\cH]_2$ of $\cH$ is a clique-Helly graph and each maximal clique of $[\cH]_2$
is an edge of $\cH$.
\end{proposition}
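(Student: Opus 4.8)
The plan is to deduce the statement directly from Propositions~\ref{3-cell} and~\ref{hypergraph-clique-Helly}, which already contain all the combinatorial substance; what remains is to check that their hypotheses are met and to pass carefully between $\cH(X)$ and its simplification $\breve{\cH}$. First I would apply Proposition~\ref{3-cell}. By hypothesis $X$ satisfies the 3-cell condition and the graded monotonicity condition, and $\cH(X)$ satisfies the Helly property; the latter in particular yields the Helly property for any three cells. Hence Proposition~\ref{3-cell} gives that $\cH(X)$ is conformal, i.e., every maximal clique of the 2-section $[\cH]_2$ is an edge of $\cH(X)$. This is already the second assertion of the proposition, so only clique-Hellyness of $[\cH]_2$ remains.

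For that I would invoke the equivalences of Proposition~\ref{hypergraph-clique-Helly}, applied to the hypergraph $\cH(X)$, which is locally finite since local finiteness is part of the definition of an abstract cell complex. Because $[\cH]_2 = [\breve{\cH}]_2$, it suffices to verify condition (ii), namely that $\breve{\cH}$ is conformal and Helly, and then to use the implication (ii)$\Rightarrow$(i). The simplification $\breve{\cH}$ is simple by construction, and its edge family, being a subfamily of the edge family of $\cH(X)$, inherits the Helly property (a pairwise-intersecting subfamily of $\breve{\cE}$ is also pairwise-intersecting in $\cE$, hence has non-empty total intersection); thus $\breve{\cH}$ is Helly. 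For conformality, I would observe that a maximal clique $K$ of $[\cH]_2$ which is an edge $H_i$ of $\cH(X)$ is automatically a \emph{maximal} edge: any edge $H_j$ with $H_i \subseteq H_j$ is itself a clique of $[\cH]_2$ containing $K$, so $H_i = H_j$ by maximality of $K$. Therefore every maximal clique of $[\cH]_2 = [\breve{\cH}]_2$ is an edge of $\breve{\cH}$, i.e., $\breve{\cH}$ is conformal. Condition (ii) thus holds, and (ii)$\Rightarrow$(i) gives that $[\cH]_2$ is clique-Helly.

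The only mildly delicate point—and the one I expect to require the most care—is precisely this passage from $\cH(X)$ to $\breve{\cH}$: one must confirm that both conformality and the Helly property survive simplification. Both facts rest on simple observations (maximal cliques of the 2-section are automatically maximal edges, and the Helly property is monotone under passing to subfamilies), but they are what allow Proposition~\ref{hypergraph-clique-Helly}, stated for simple hypergraphs, to be applied to the non-simple $\cH(X)$. Everything else is a direct citation of the two preceding propositions, so the proof reduces to these verifications.
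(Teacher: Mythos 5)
Your proof is correct and follows exactly the route the paper intends: Proposition~\ref{3-cell} yields conformality of $\cH(X)$ (the second assertion), and Proposition~\ref{hypergraph-clique-Helly}, via condition (ii) applied to the simplification $\breve{\cH}$, yields clique-Hellyness of $[\cH]_2$. The verifications you flag as delicate---that the Helly property is inherited by the subfamily of maximal edges and that a maximal clique of $[\cH]_2$ which is an edge is automatically a maximal edge---are precisely the bookkeeping the paper leaves implicit in its one-line derivation, and you carry them out correctly.
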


\subsection{Hellyfication} There is a canonical way to extend any
hypergraph $\cH=(V,\cE)$ to a conformal hypergraph
$\conf(\cH)=(V,\cE')$: $\cE'$ consists of $\cE$ and all maximal by
inclusion cliques $C$ in the 2-section $[\cH]$ of $\cH$. Any conformal
hypergraph $\cH''$ extending $\cH$ and having the same 2-section
$[\cH''] = [\cH]$ as $\cH$ also contains $\conf(\cH)$ as a
sub-hypergraph, thus $\conf(\cH)$ can be called the \emph{conformal closure} of
$\cH$. Since the Helly property and conformality are dual to each
other, any hypergraph $\cH=(V,\cE)$ can be extended to a Helly hypergraph
$\helly(\cH)=(V',\cE')$: for every
maximal pairwise intersecting set $\cF$ of edges of $\cH$ with empty
intersection, add a new vertex $v_{\mathcal F}$ to $V$ and to each
member of $\mathcal F$. In the thus extended hypergraph $\helly(\cH)$
any two edges intersect exactly when their traces on $V$
intersect. Hence $\helly(\cH)$ satisfies the Helly property and we
call $\helly(\cH)$ the \emph{Hellyfication} of $\cH$. Again,
$\helly(\cH)$ is contained in any hypergraph satisfying the Helly
property, extending $\cH$ and having the same line graph as $H$.  This
kind of Hellyfication approach was used in~\cite{BaChEp} to Hellyfy discrete copair
hypergraphs  and to relate this Hellyfication procedure
with the cubulation (median hull) of the associated wall space; see
\cite[Proposition 3]{BaChEp}.

 \section{Injective spaces and injective hulls}\label{s:injective}

In this section we discuss injective metric spaces and Isbell's construction of injective hulls. Those notions are strongly related
to Helly graphs:  roughly, Helly graphs and ball-Hellyfication can be seen as discrete analogues of, respectively, (continuous) injective metric spaces and injective hulls.

\subsection{Injective spaces}
Recall that a metric space $(X,d)$ is called {\it hyperconvex} if every family of closed balls $B_{r_i}(x_i)$ of radii $r_i\in {\mathbb R}^{+}$ with centers $x_i$
satisfying $d(x_i, x_j)\leq r_i + r_j$, has a non-empty intersection.
Rephrasing the definition, $(X,d)$ is hyperconvex if it is
\emph{Menger-convex} (that is,
$B_r(x)\cap B_{d(x,y)-r}(y)\neq \emptyset$, for all $x,y\in X$ and
$r\in [0,d(x,y)]$) and the family of closed balls in $(X,d)$ satisfies
the Helly property. A metric space $(X,d)$ is called {\it
  integer-valued} if $d(x,y)$ is an integer for any $x,y\in X$.
An integer-valued metric
space $(X,d)$ is \emph{discretely geodesic} if for any two points
$x,y\in X$ with $d(x,y)=n$ there exists a sequence of points
$x_0:=x,x_1,x_2,\ldots,x_n:=y$ such that $d(x_i,x_{i+1})=1$.
The set of vertices of a connected graph equipped with a graph
distance is an example of an integer-valued and discretely geodesic
metric space.

Let $(Y,d')$ and $(X,d)$ be two metric spaces. For $A\subset Y$, a map $f:A\rightarrow X$ is \emph{$1$-Lipschitz} if $d(f(x),f(y))\le d'(x,y)$ for all $x,y\in A$.
The pair $(Y,X)$ has the \emph{extension property} if for any $A\subset Y$, any 1-Lipschitz map $f:A\rightarrow X$ admits a 1-Lipschitz extension, i.e., a
1-Lipschitz map $\widetilde{f}: Y\rightarrow X$ such that $\widetilde{f}|_A=f$. A metric space $(X,d)$ is \emph{injective} if for any metric space $(Y,d')$,
the pair $(Y,X)$ has the extension property.

For $Y\subset X$, the map $f:X\rightarrow Y$ is a \emph{(nonexpansive) retraction} if $f$ is 1-Lipschitz and $f(y)=y$ for any $y\in Y$. A
metric space $(Y,d')$ is an \emph{absolute retract} if whenever $(Y,d')$ is isometrically embedded in a metric space $(X,d)$, there exists a
retraction $f$ from $X$ to $Y$.

In 1956, Aronszajn and Panitchpakdi  established the following equivalence between hyperconvex spaces, injective spaces, and absolute retracts (in the category of
metric spaces with $1$-Lipschitz maps):

\begin{theorem}[\cite{AroPan1956}]\label{injective=hyperconvex}
  A metric space $(X,d)$ is injective if and only if $(X,d)$ is
  hyperconvex if and only if $(X,d)$ is an absolute ($1$-Lipschitz) retract.
\end{theorem}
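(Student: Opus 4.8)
The plan is to prove the two equivalences as a cycle of three implications: \emph{injective} $\Rightarrow$ \emph{absolute retract} $\Rightarrow$ \emph{hyperconvex} $\Rightarrow$ \emph{injective}. The elementary fact underlying everything is that two closed balls $B_r(x)$ and $B_s(y)$ meet if and only if $d(x,y)\le r+s$; thus the pairwise condition $d(x_i,x_j)\le r_i+r_j$ in the definition of hyperconvexity is exactly the combinatorial pairwise-intersection hypothesis of the Helly property for the ball-hypergraph. The first implication is immediate: if $(X,d)$ is injective and isometrically embedded in some $(Z,d')$, then applying the extension property to the ambient space $Z$, the subset $X\subseteq Z$, and the $1$-Lipschitz identity $\mathrm{id}\colon X\to X$ produces a $1$-Lipschitz extension $\widetilde{f}\colon Z\to X$ that fixes $X$ pointwise, i.e.\ a retraction. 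Hence $(X,d)$ is an absolute retract.

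For \emph{absolute retract} $\Rightarrow$ \emph{hyperconvex} I would first record that for any set $S$ the space $\ell_\infty(S)$ of bounded functions with the sup norm is hyperconvex: given balls $B_{r_i}(f_i)$ with $\|f_i-f_j\|_\infty\le r_i+r_j$, the function $g(s):=\sup_i\bigl(f_i(s)-r_i\bigr)$ is well-defined and bounded, since fixing any index $i_0$ gives $f_{i_0}(s)-r_{i_0}\le g(s)\le f_{i_0}(s)+r_{i_0}$, and it satisfies $|g(s)-f_i(s)|\le r_i$ for all $s,i$, so $g\in\bigcap_i B_{r_i}(f_i)$. Then embed $(X,d)$ isometrically into $E:=\ell_\infty(X)$ by the Kuratowski map $x\mapsto d(x,\cdot)-d(x_0,\cdot)$ for a fixed basepoint $x_0$. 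Since $X$ is an absolute retract there is a $1$-Lipschitz retraction $\pi\colon E\to X$. Given balls $B_{\rho_i}(x_i)$ in $X$ with $d(x_i,x_j)\le\rho_i+\rho_j$, the isometry preserves the pairwise condition in $E$, so by hyperconvexity of $E$ there is $p\in\bigcap_i B_{\rho_i}(x_i)\subseteq E$; then $\pi(p)\in X$ and $d(\pi(p),x_i)=d(\pi(p),\pi(x_i))\le d_E(p,x_i)\le\rho_i$, so $\pi(p)$ lies in all the balls, proving $X$ hyperconvex.

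For \emph{hyperconvex} $\Rightarrow$ \emph{injective}, the heart is a one-point extension lemma: if $(Y,d')$ is a metric space, $A\subseteq Y$, $f\colon A\to X$ is $1$-Lipschitz, and $p\in Y\setminus A$, then $f$ extends $1$-Lipschitzly to $A\cup\{p\}$. Indeed one must pick $f(p)\in\bigcap_{a\in A}B_{d'(p,a)}(f(a))$, and this intersection is nonempty by hyperconvexity because the balls meet pairwise: for $a,a'\in A$, the $1$-Lipschitz property and the triangle inequality give $d(f(a),f(a'))\le d'(a,a')\le d'(a,p)+d'(p,a')$. One upgrades this to a full extension by Zorn's lemma, applied to the poset of $1$-Lipschitz extensions $(B,g)$ with $A\subseteq B\subseteq Y$ ordered by restriction; chains have upper bounds (their unions are consistent and $1$-Lipschitz since any two points lie in a common member), and a maximal element must have domain all of $Y$, for otherwise the one-point lemma would extend it.

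The main obstacle is this last step, and specifically the fact that the one-point extension must handle \emph{infinite} families: since $A$ may be infinite, nonemptiness of $\bigcap_{a\in A}B_{d'(p,a)}(f(a))$ genuinely requires the full (not merely finite) Helly property for balls, which is precisely what hyperconvexity supplies. Once the one-point extension is secured, the transfinite bookkeeping is routine via Zorn's lemma; the only points demanding care are the boundedness and well-definedness of $g:=\sup_i\bigl(f_i(s)-r_i\bigr)$ in the hyperconvexity of $\ell_\infty(S)$ and the verification that chains of partial extensions have $1$-Lipschitz unions.
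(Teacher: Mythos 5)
Your proof is correct. Note that the paper offers no proof of this statement at all: Theorem~3.1 is stated with a citation to Aronszajn--Panitchpakdi, so there is no in-paper argument to compare against; what you have written is essentially the classical proof from that source, organized as the standard cycle \emph{injective} $\Rightarrow$ \emph{absolute retract} $\Rightarrow$ \emph{hyperconvex} $\Rightarrow$ \emph{injective}. All three implications check out: the retraction from injectivity is the immediate application of the extension property to an ambient space; the hyperconvexity of $\ell_\infty(S)$ via $g(s)=\sup_i\bigl(f_i(s)-r_i\bigr)$ is sound (the two-sided bound $f_{i_0}(s)-r_{i_0}\le g(s)\le f_{i_0}(s)+r_{i_0}$ gives boundedness, and $|g(s)-f_i(s)|\le r_i$ follows from the pairwise condition), the basepointed Kuratowski embedding $x\mapsto d(x,\cdot)-d(x_0,\cdot)$ correctly handles unbounded $X$, and the transport of the distance condition through an isometric embedding followed by the $1$-Lipschitz retraction is exactly right. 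The one-point extension plus Zorn argument is also complete, and you correctly identify the crucial point that the intersection $\bigcap_{a\in A}B_{d'(p,a)}(f(a))$ requires the Helly property for \emph{arbitrary} families, which is what hyperconvexity (as defined in the paper) provides.

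One small inaccuracy worth flagging, though it does no damage: your opening sentence asserts that in a metric space $B_r(x)\cap B_s(y)\neq\emptyset$ \emph{if and only if} $d(x,y)\le r+s$. Only the forward implication holds in general (consider a two-point space at distance $1$ with two balls of radius $1/2$); the converse is Menger convexity, which is a \emph{consequence} of hyperconvexity rather than a generality --- indeed the paper explicitly separates hyperconvexity into Menger convexity plus the Helly property for balls. Since every actual step of your proof invokes only the distance-sum condition $d(x_i,x_j)\le r_i+r_j$ (the hypothesis of hyperconvexity) and never the alleged equivalence with pairwise intersection, the argument is unaffected; just rephrase that motivational remark.
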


\subsection{Injective hulls}\label{s:injective_isbell}
By a construction of Isbell~\cite{Isbell64} (rediscovered twenty years later by Dress~\cite{Dress1984} and yet another ten years later by Chrobak and Larmore~\cite{ChLa94} in computer science), for every metric space $(X,d)$ there exists a smallest (wrt.\ inclusions) injective metric space containing $X$. More precisely, an \emph{injective hull} (or \emph{tight span}, or  \emph{injective envelope}, or  \emph{hyperconvex hull}) of $(X,d)$ is a pair $(e,E(X))$ where $e\colon X\to E(X)$ is an isometric embedding into an injective metric space $E(X)$, and such that no injective proper subspace of $E(X)$ contains $e(X)$. Two injective hulls $e:X\rightarrow E(X)$ and $f:X\rightarrow E'(X)$ are {\it equivalent} if they are related by an isometry $i: E(X)\rightarrow E'(X)$.
Below we describe Isbell's construction in some details  and  we remind few important features of injective hulls --- all this will be of use in Section~\ref{Helly_groups}.

\begin{theorem}[\cite{Isbell64}]\label{injective-hull-isbell}
  Every metric space $(X,d)$ has an injective hull and all its
  injective hulls are equivalent.
\end{theorem}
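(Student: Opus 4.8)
The plan is to give Isbell's explicit construction of the injective hull, usually called the \emph{tight span}, to verify that it is injective via Theorem~\ref{injective=hyperconvex}, and then to deduce both minimality and uniqueness from a single extremality property of its elements. First I would set up the model. Let $\Delta(X)$ be the set of functions $f\colon X\to\R$ satisfying $f(x)+f(y)\ge d(x,y)$ for all $x,y\in X$; every such $f$ is nonnegative (take $y=x$) and $1$-Lipschitz. Equip the space of real functions on $X$ with the supremum distance $d_\infty(f,g)=\sup_{x\in X}|f(x)-g(x)|$, and let $E(X)\subseteq\Delta(X)$ consist of the functions minimal for the pointwise order. A routine argument shows $f\in E(X)$ if and only if $f(x)=\sup_{y\in X}(d(x,y)-f(y))$ for every $x$: if this identity fails at some $x_0$ one can decrease $f(x_0)$ and stay in $\Delta(X)$, contradicting minimality, and conversely the identity forces minimality. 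Define $e\colon X\to E(X)$ by $e(x)=d_x$ with $d_x(y):=d(x,y)$. Then $d_x\in\Delta(X)$ by the triangle inequality, $d_x$ is minimal (if $g\le d_x$ lies in $\Delta(X)$ then $g(x)=0$, hence $g(y)\ge d(x,y)=d_x(y)$, forcing $g=d_x$), and $d_\infty(d_x,d_y)=\sup_z|d(x,z)-d(y,z)|=d(x,y)$, so $e$ is an isometric embedding; finiteness of $d_\infty$ on $E(X)$ follows from $d_\infty(f,g)\le f(x_0)+g(x_0)$ for a fixed basepoint $x_0$.

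The main step is to prove that $(E(X),d_\infty)$ is hyperconvex, after which Theorem~\ref{injective=hyperconvex} yields injectivity. Given a family $\{f_i\}\subseteq E(X)$ and radii $r_i\ge0$ with $d_\infty(f_i,f_j)\le r_i+r_j$, I would form the upper envelope $h_+(x)=\inf_i(f_i(x)+r_i)$. Using $f_i\in\Delta(X)$ and $f_j\ge f_i-(r_i+r_j)$ one checks $f_i(x)+f_j(y)\ge d(x,y)-r_i-r_j$, whence $h_+\in\Delta(X)$. Since functions in $\Delta(X)$ are nonnegative and the pointwise infimum of a chain in $\Delta(X)$ again lies in $\Delta(X)$, Zorn's lemma produces a minimal $g\in\Delta(X)$ with $g\le h_+$, and this $g$ lies in $E(X)$. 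From $g\le h_+\le f_i+r_i$ we get $g-f_i\le r_i$, while the extremality identity for $g$ together with $g(y)\le f_i(y)+r_i$ and $f_i\in E(X)$ gives $g(x)\ge\sup_y(d(x,y)-f_i(y))-r_i=f_i(x)-r_i$; hence $d_\infty(g,f_i)\le r_i$ for all $i$, so $g$ lies in the common intersection of the balls. This is the part I expect to be the main obstacle, the delicate points being that the envelope stays in $\Delta(X)$ and that the minimal function below it still meets every ball.

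It remains to prove minimality of $E(X)$ and uniqueness. The key computation is $d_\infty(f,e(x))=f(x)$ for every $f\in E(X)$: the value at $y=x$ gives $\ge f(x)$, the inequality $f(y)-d(x,y)\le f(x)$ follows from $1$-Lipschitzness, and $d(x,y)-f(y)\le f(x)$ is the defining condition of $\Delta(X)$. For minimality, let $Y\subseteq E(X)$ be injective with $e(X)\subseteq Y$. As $Y$ is an absolute retract there is a $1$-Lipschitz retraction $\rho\colon E(X)\to Y$ fixing $e(X)$; for $f\in E(X)$ and $g:=\rho(f)\in Y\subseteq E(X)$ one gets $g(x)=d_\infty(g,e(x))\le d_\infty(f,e(x))=f(x)$, so $g\le f$ with $g\in\Delta(X)$, and minimality of $f$ forces $f=g\in Y$; thus $Y=E(X)$, so $(e,E(X))$ is an injective hull.

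For uniqueness, given another injective hull $(e',E')$, injectivity of $E'$ and of $E(X)$ extends the isometry $e(x)\mapsto e'(x)$ and its inverse to $1$-Lipschitz maps $\phi\colon E(X)\to E'$ and $\psi\colon E'\to E(X)$ with $\phi\circ e=e'$ and $\psi\circ e'=e$. Then $\psi\circ\phi\colon E(X)\to E(X)$ fixes $e(X)$, and exactly the retraction argument above, applied with $\rho=\psi\circ\phi$, shows $\psi\circ\phi=\mathrm{id}$. Consequently $d_\infty(\phi p,\phi q)\ge d_\infty(\psi\phi p,\psi\phi q)=d_\infty(p,q)$, so $\phi$ is an isometric embedding; its image is an injective subspace of $E'$ containing $e'(X)$, hence equals $E'$ by the minimality of $E'$ just established. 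Therefore $\phi$ is an isometry with $\phi\circ e=e'$, proving that the two injective hulls are equivalent.
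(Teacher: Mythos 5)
Your proof is correct, and while the overall architecture matches the paper's — the tight-span model $E(X)$ of extremal (pointwise-minimal) metric forms with the sup-metric, the Kuratowski embedding $e\colon x\mapsto d_x$, the key identity $f(x)=d_\infty(f,e(x))$ (the paper's Claim~\ref{f(x)}), and the rigidity-of-retractions argument for both minimality and uniqueness (Claim~\ref{equivalence}) — you take a genuinely different route at the central step, hyperconvexity of $E(X)$. The paper follows Isbell's original argument: it extends the radius function $\{r_i\}$ to a metric form $r$ on all of $E(X)$, picks an extremal form $s\in E(E(X))$ below $r$, and uses the composition claim (Claim~\ref{composition}, ``the main technical tool in Isbell's proof'') to show $se\in E(X)$ lies in every ball — a detour through the second hull $E(E(X))$. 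You instead argue directly inside $\Delta(X)$: form the envelope $h_+=\inf_i(f_i+r_i)$, verify $h_+\in\Delta(X)$ via $f_j\ge f_i-(r_i+r_j)$, extract a minimal $g\le h_+$ by Zorn's lemma (chains in $\Delta(X)$ have pointwise infima in $\Delta(X)$), and use the extremality identity $g(x)=\sup_y(d(x,y)-g(y))$ — which is Dress's characterization, equivalent to the paper's Claim~\ref{tight} — for both $g$ and $f_i$ to get $d_\infty(g,f_i)\le r_i$. This is the standard Dress--Lang presentation: shorter and self-contained, with the Zorn step made explicit (the paper invokes it only parenthetically). Isbell's composition claim, by contrast, earns its keep later in the paper, where it is reused verbatim in the proof of the discrete analogue (Theorem~\ref{t:dinjhull}), so the two choices trade local economy against downstream reuse. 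Your uniqueness argument is also marginally cleaner than the paper's: since the rigidity statement is only proven for the concrete model $E(X)$, you correctly route through $\psi\circ\phi=\mathrm{id}$ alone and then invoke minimality of $E'$ — note only that this minimality is \emph{definitional} for an arbitrary injective hull $(e',E')$, not something you ``just established'' (what you established is that the specific pair $(e,E(X))$ satisfies it).
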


We continue with the main steps in the proof of Theorem~\ref{injective-hull-isbell}. We follow the proof of Isbell's paper~\cite{Isbell64} but also use some notations and results from Dress~\cite{Dress1984}
and Lang~\cite{Lang2013}) (see these three papers for a full proof).
Let $(X,d)$ be a metric space. A \emph{metric form} on $X$ is a real-valued function $f$ on $X$ such that $f(x)+f(y)\geq d(x,y)$, for all
	$x,y\in X$. Denote by $\Delta(X)$ the set of all metric forms on $X$, i.e., $\Delta(X)=\{ f\in {\mathbb R}^X: f(x)+f(y)\ge d(x,y) \mbox{ for all } x,y\in X\}$.
For $f,g\in \Delta(X)$ set $f\le g$ if $f(x)\le g(x)$ for each $x\in X$. A metric form is called {\it extremal} on $X$ (or {\it minimal}) if there is no $g\in \Delta(X)$ such that $g\ne f$ and $g\le f$.
Let $E(X)=\{ f\in \Delta(X): f \text{ is extremal}\}$.

\begin{claim}\label{1-lip} If $f\in E(X)$, then $f(x)+d(x,y)\ge f(y)$ for any $x,y\in X$, i.e., $f$ is $1$-Lipschitz.
\end{claim}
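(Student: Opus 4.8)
The plan is to argue by contradiction, using the minimality (extremality) of $f$ directly. Suppose $f\in E(X)$ is not $1$-Lipschitz; then there exist points $x,y\in X$ with $f(y)>f(x)+d(x,y)$. I would exploit extremality by exhibiting a metric form $g\in\Delta(X)$ with $g\le f$ and $g\ne f$, which is forbidden by the definition of $E(X)$. The natural candidate modifies $f$ at the single point $y$: set $g(z)=f(z)$ for all $z\ne y$ and lower the value at $y$ to $g(y):=f(x)+d(x,y)$.

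The verification then splits into three short checks. First, every metric form is nonnegative, since $f(z)+f(z)\ge d(z,z)=0$ forces $f\ge 0$; in particular $g(y)=f(x)+d(x,y)\ge 0$, and the standing hypothesis gives $g(y)<f(y)$, so $g\le f$ with $g\ne f$. Second, $g$ is again a metric form: the only inequalities that could possibly fail are those involving $y$, and for $z\ne y$ one has
$$g(y)+g(z)=f(x)+d(x,y)+f(z)\ge d(x,z)+d(x,y)\ge d(y,z),$$
where the first inequality is the metric-form property of $f$ (namely $f(x)+f(z)\ge d(x,z)$) and the second is the triangle inequality; the diagonal case $g(y)+g(y)\ge 0=d(y,y)$ is immediate. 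Hence $g\in\Delta(X)$ with $g\le f$ and $g\ne f$, contradicting the extremality of $f$. Thus $f(y)\le f(x)+d(x,y)$ for all $x,y\in X$, which is exactly the claimed inequality (and, by symmetry in $x,y$, $1$-Lipschitzness).

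The only genuine design choice — and the step I regard as the crux — is which perturbation of $f$ to use. Lowering $f$ at $y$ all the way down to the value $\sup_{z\in X}\big(d(y,z)-f(z)\big)$ permitted by the metric-form constraints is the most obvious move, but it risks not producing a \emph{strict} decrease: for an extremal $f$ this supremum in fact equals $f(y)$, so $g$ would coincide with $f$. Choosing instead $g(y)=f(x)+d(x,y)$ sidesteps this difficulty entirely, since the assumption $f(y)>f(x)+d(x,y)$ makes the drop strict by construction, while the triangle inequality is precisely what keeps all the metric-form constraints intact. Beyond this one observation I expect no obstacle: the whole argument is a single-point modification combined with the triangle inequality.
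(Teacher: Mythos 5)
Your proof is correct and follows essentially the same route as the paper: the paper's argument is precisely the single-point modification $g(z)=f(z)$ for $z\neq y$, $g(y)=f(x)+d(x,y)$, which lies in $\Delta(X)$ by the triangle inequality and contradicts the minimality of $f$. Your additional verifications (nonnegativity of metric forms and the explicit check $g(y)+g(z)\ge d(y,z)$) merely spell out details the paper leaves implicit.
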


Indeed, if this was false for some $x,y \in X$, then defining $g$ to coincide with $f$ everywhere except at $y$, where $g(y)=f(x)+d(x,y)$, we conclude that $g\in \Delta(X)$. 
Since $g\le f$, we must conclude $g=f$. 

The difference $d_{\infty}(f,g)=\sup_{x\in X} |f(x)-g(x)|$ between any
two extremal forms $f,g$ is bounded; any number $f(x)+g(x)$ is a
bound. Thus $(E(X),d_\infty)$ is a metric space. For a point $x\in X$,
let $d_x$ be defined by setting $d_x(y)=d(x,y)$ for any $y\in X$. 

\begin{claim}\label{Kuratowski}
  For any $x \in X$, the map $d_x: y \mapsto d(x,y)$ is extremal on
  $X$ and the map $e: X\rightarrow E(X)$ sending $x$ to $d_x$ is an
  isometric embedding of $(X,d)$ into $(E(X),d_{\infty})$.
\end{claim}

The map $e$ is often called the {\it Kuratowski embedding}. 

From the definition of extremal metric forms, the following useful property of $E(X)$ easily follows (this explains why extremal maps have been called {\it tight extentions} in~\cite{Dress1984}):

\begin{claim}\label{tight} If $(X,d)$ is compact then for any $f\in E(X)$ and $x\in X$, there exists $y$ in $X$ such that $f(x)+f(y)=d(x,y)$. In general metric spaces, for any $x\in X$ and any $\epsilon>0$ there exists $y$ in $X$ such that $f(x)+f(y)<d(x,y)+\epsilon$.
\end{claim}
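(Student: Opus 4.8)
The plan is to dispose of the strict/$\epsilon$-statement first, since both exact statements reduce to it together with a question of attainment. Throughout I fix $f\in E(X)$ and $x\in X$ and write $h_x(y)=f(x)+f(y)-d(x,y)$; because $f$ is a metric form we have $h_x(y)\ge 0$ for all $y$, so the content is to control the infimum of $h_x$.

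First I would prove the general statement by the same minimality trick used for Claim~\ref{1-lip}. Suppose it fails for some $x$ and some $\epsilon>0$, that is, $h_x(y)\ge\epsilon$ for every $y\in X$. Taking $y=x$ gives $2f(x)\ge\epsilon$, so $f(x)>0$. Put $\delta=\min\{\epsilon,f(x)\}>0$ and define $g$ to agree with $f$ off $x$ and to equal $f(x)-\delta$ at $x$. One checks $g\in\Delta(X)$: for $z\ne x$ we get $g(x)+g(z)=f(x)+f(z)-\delta\ge d(x,z)+\epsilon-\delta\ge d(x,z)$ since $\delta\le\epsilon$, while $g(x)+g(x)=2(f(x)-\delta)\ge 0$ since $\delta\le f(x)$, and every constraint not involving $x$ is untouched. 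Then $g\le f$ and $g\ne f$, contradicting extremality of $f$. Hence for every $\epsilon>0$ there is $y$ with $h_x(y)<\epsilon$, and since $h_x\ge 0$ this gives $\inf_y h_x(y)=0$.

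With $\inf_y h_x(y)=0$ established, the compact case is immediate. By Claim~\ref{1-lip} the form $f$ is $1$-Lipschitz and $y\mapsto d(x,y)$ is $1$-Lipschitz, so $h_x$ is continuous (indeed $2$-Lipschitz). A continuous function on a compact space attains its infimum, producing $y$ with $h_x(y)=0$, i.e.\ $f(x)+f(y)=d(x,y)$.

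The remaining, and genuinely delicate, case is the discrete one, where the task is precisely to upgrade $\inf_y h_x(y)=0$ to attainment without compactness. Here I would fix a minimizing sequence $y_n$ with $h_x(y_n)\to 0$ and exploit that $d$ is integer-valued: writing $f(y_n)=d(x,y_n)-f(x)+h_x(y_n)$ with $d(x,y_n)\in\mathbb{Z}$, the metric-form inequalities $f(y_n)+f(y_m)\ge d(y_n,y_m)$ translate into lower bounds on the Gromov products $(y_n\mid y_m)_x$ tending to $f(x)$, so the $y_n$ concentrate in a single ``direction'' from $x$ and integrality plays the role that compactness played above. The main obstacle is that a minimizing sequence need \emph{not} converge in $X$ (already on $\mathbb{Z}$ the minimizers can march off to infinity), so one cannot simply pass to a limit; the real work is to use integrality to locate an exact zero of $h_x$, and this is the step I expect to require the most care. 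Rather than reprove it from scratch I would follow Dress~\cite{Dress1984} and Lang~\cite{Lang2013}, where this attainment for discrete metric spaces is carried out.
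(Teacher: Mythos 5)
Your proof of the general ($\epsilon$-)statement is correct, and it is exactly what the paper has in mind when it says the claim ``easily follows'' from extremality: the same one-point perturbation as in Claim~\ref{1-lip} (the paper gives no further argument). The compact case, via the $2$-Lipschitz continuity of $h_x$ coming from Claim~\ref{1-lip} and attainment of the infimum on a compact space, is also correct.

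The discrete case, however, is a genuine gap: you do not prove it, and the deferral to Dress and Lang cannot fill it, because for real-valued extremal forms the attainment statement on a discrete (i.e., integer-valued) metric space is \emph{false}, so no elaboration of the minimizing-sequence/Gromov-product sketch can succeed. Take $X=\{x\}\cup\{y_n: n\geq 1\}\cup\{z_n: n\geq 1\}$ with $d(x,y_n)=n$, $d(x,z_n)=1$, $d(y_n,y_m)=|n-m|$, $d(z_n,z_m)=1$ for $n\neq m$, $d(y_n,z_n)=n+1$, and $d(y_m,z_n)=m$ if $m>n$, $d(y_m,z_n)=m+1$ if $m<n$; a routine case check confirms the triangle inequality, and the metric is integer-valued. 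Fix a strictly decreasing sequence $\epsilon_n\downarrow 0$ with $\epsilon_1<\tfrac{1}{2}$ and set $f(x)=\tfrac{1}{2}$, $f(y_n)=n-\tfrac{1}{2}+\epsilon_n$, $f(z_n)=\tfrac{3}{2}-\epsilon_n$. The gaps $f(u)+f(v)-d(u,v)$ are: $\epsilon_n$ for $(x,y_n)$; $1-\epsilon_n$ for $(x,z_n)$; $0$ for $(y_n,z_n)$; $\epsilon_m-\epsilon_n>0$ for $(y_m,z_n)$ with $m<n$; $1+\epsilon_m-\epsilon_n>0$ for $(y_m,z_n)$ with $m>n$; $2-\epsilon_n-\epsilon_m$ for $(z_n,z_m)$; and $2\min(n,m)-1+\epsilon_n+\epsilon_m$ for $(y_n,y_m)$ --- all nonnegative, so $f\in\Delta(X)$. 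At every point the infimum of the gaps is $0$: it is attained at each $y_n$ and each $z_n$ through the tight pair $(y_n,z_n)$, and it equals $\inf_n\epsilon_n=0$ at $x$. This forces $f\in E(X)$: if $g\leq f$, $g\neq f$, pick $w$ with $\delta:=f(w)-g(w)>0$ and a partner $u$ with gap less than $\delta$; then $g(w)+g(u)\leq f(w)+f(u)-\delta<d(w,u)$, so $g\notin\Delta(X)$ --- this is your own perturbation argument run in reverse (equivalently, Dress's criterion $f(w)=\sup_u\left(d(w,u)-f(u)\right)$ holds at every $w$). Yet at $x$ no gap vanishes: there is no $y$ with $f(x)+f(y)=d(x,y)$.

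The version of the discrete case that is true --- and the only one the paper later uses (its proof of Theorem~\ref{t:dinjhull} invokes ``the discrete analog of Claim~\ref{tight}'' only for $f\in E^0(X)=E(X)\cap\mathbb{Z}^X$) --- is attainment for \emph{integer-valued} extremal forms. Under that hypothesis your first paragraph already finishes the proof: all gaps lie in $\mathbb{Z}_{\geq 0}$, and the $\epsilon$-statement with any $\epsilon<1$ produces a gap equal to $0$. So the step you flagged as requiring the most care is trivial once $f$ is integer-valued, and false without that hypothesis; integrality of $d$ alone, which is all the stated claim and your sketch assume, does not suffice.
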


The inequalities $f(x)+f(y)\ge d(x,y)$ and $f(x)+d(x,y)\ge f(y)$ together are equivalent to:

\begin{claim}\label{f(x)} If $f\in E(X)$, then $f(x)=d_{\infty}(f,e(x))$ for all $x\in X$.
\end{claim}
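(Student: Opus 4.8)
The plan is to unwind the definition of the sup-metric $d_{\infty}$ evaluated against the Kuratowski image $e(x) = d_x$, and to observe that the two displayed inequalities preceding the claim control precisely the two signs of the absolute value. By definition,
\[
  d_{\infty}(f, e(x)) = \sup_{y \in X} |f(y) - d_x(y)| = \sup_{y \in X} |f(y) - d(x,y)|,
\]
so it suffices to show that this supremum equals $f(x)$.

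First I would establish the upper bound $d_{\infty}(f,e(x)) \le f(x)$. Fix $y \in X$. The metric-form inequality $f(x) + f(y) \ge d(x,y)$ rearranges to $d(x,y) - f(y) \le f(x)$, while the $1$-Lipschitz inequality of Claim~\ref{1-lip}, namely $f(x) + d(x,y) \ge f(y)$, rearranges to $f(y) - d(x,y) \le f(x)$. Together these give $|f(y) - d(x,y)| \le f(x)$ for every $y$, so the supremum is at most $f(x)$. For the matching lower bound I would simply evaluate the quantity inside the supremum at $y = x$: since $d(x,x) = 0$ and $f(x) \ge 0$ (because $2f(x) \ge d(x,x) = 0$), the term $|f(x) - d(x,x)| = f(x)$ occurs among those over which the supremum is taken, whence $d_{\infty}(f, e(x)) \ge f(x)$. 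Combining the two bounds yields $f(x) = d_{\infty}(f, e(x))$.

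There is essentially no obstacle here: the claim is a direct reformulation of the two inequalities recorded just above it, and the only point to keep straight is that those inequalities account for the two signs of the absolute value, with the supremum attained at $y = x$. The statement that ``the two inequalities together are equivalent to'' the claim is also transparent from this argument, since running the bound $|f(y)-d(x,y)| \le f(x)$ in reverse recovers both $f(x)+f(y)\ge d(x,y)$ and $f(x)+d(x,y)\ge f(y)$.
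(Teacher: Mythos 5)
Your proof is correct and matches the paper's treatment: the paper records this claim precisely as the observation that the metric-form inequality $f(x)+f(y)\ge d(x,y)$ and the $1$-Lipschitz inequality $f(x)+d(x,y)\ge f(y)$ together are equivalent to $f(x)=d_{\infty}(f,e(x))$, which is exactly the bound $|f(y)-d(x,y)|\le f(x)$ you derive, combined with evaluating at $y=x$ (using $f(x)\ge 0$) for the reverse inequality. There is nothing missing; you have simply written out in full the equivalence the paper states without detail.
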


The following claim is the main technical tool in Isbell's proof. Let $\Delta(E(X))$ denote the set of all metric forms on $E(X)$ and let $E(E(X))$ denote the set of all extremal metric forms on $E(X)$.

\begin{claim}\label{composition} If $s$ is extremal  on $E(X)$, then $se$ is extremal on $X$.
\end{claim}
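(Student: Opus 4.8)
The plan is to verify the extremality of $se$ directly from the definition of a minimal metric form: I take an arbitrary metric form $t \in \Delta(X)$ with $t \le se$ and show that necessarily $t = se$, so that no strictly smaller form exists. First I record the routine facts that $se$ lies in $\Delta(X)$ and is $1$-Lipschitz. Indeed, $se(x)+se(y) = s(e(x))+s(e(y)) \ge d_\infty(e(x),e(y)) = d(x,y)$, using that $s \in \Delta(E(X))$ and that $e$ is an isometric embedding (Claim~\ref{Kuratowski}); and $se$ is $1$-Lipschitz as the composition of the $1$-Lipschitz map $s$ (Claim~\ref{1-lip} applied to the space $E(X)$) with the isometry $e$.

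Fix a point $x_0 \in X$; the goal is to prove $t(x_0) = se(x_0) = s(e(x_0))$. The key idea is to perturb $s$ at the \emph{single} point $e(x_0) \in E(X)$: I define $s'' \colon E(X) \to \mathbb{R}$ by $s''(f) = s(f)$ for $f \ne e(x_0)$ and $s''(e(x_0)) = t(x_0)$. Since $t(x_0) \le se(x_0) = s(e(x_0))$, we have $s'' \le s$ pointwise. If I can show that $s''$ is again a metric form on $E(X)$, then the minimality of $s$ forces $s'' = s$, whence $t(x_0) = s(e(x_0)) = se(x_0)$; as $x_0$ is arbitrary this yields $t = se$, and the extremality of $se$ follows.

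Thus the whole proof reduces to checking that $s'' \in \Delta(E(X))$. Every defining inequality $s''(f)+s''(g) \ge d_\infty(f,g)$ not involving $e(x_0)$ coincides with the corresponding inequality for $s$ and hence holds, and the case $f=g=e(x_0)$ is trivial since $2t(x_0) \ge 0$. So everything comes down to the single family of inequalities $t(x_0) + s(g) \ge g(x_0)$ for all $g \in E(X)$, where I have rewritten $d_\infty(e(x_0),g) = g(x_0)$ via Claim~\ref{f(x)}. To prove these I invoke the tightness property (Claim~\ref{tight}): for the given $g$ and the point $x_0$ there exists $y \in X$ with $g(x_0)+g(y) = d(x_0,y)$ in the compact or discrete case. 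Combining the $1$-Lipschitz estimate $s(e(y)) \le s(g) + d_\infty(g,e(y)) = s(g)+g(y)$ with $se(y) \ge t(y)$ (from $t \le se$) gives $s(g)+g(y) \ge se(y) \ge t(y)$, and then the metric-form inequality $t(x_0)+t(y) \ge d(x_0,y)$ yields $t(x_0) + s(g) + g(y) \ge t(x_0)+t(y) \ge d(x_0,y) = g(x_0)+g(y)$; cancelling $g(y)$ gives exactly $t(x_0) + s(g) \ge g(x_0)$.

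The only delicate point, and the step I expect to require the most care, is this passage through tightness in the general (neither compact nor discrete) case, where Claim~\ref{tight} only furnishes the approximate relation. There I instead pick, for each $\epsilon > 0$, a point $y$ with $g(x_0)+g(y) < d(x_0,y)+\epsilon$ and run the identical chain to obtain $t(x_0)+s(g) > g(x_0)-\epsilon$ for every $\epsilon > 0$, concluding by letting $\epsilon \to 0$. Everything else is a mechanical verification of metric-form inequalities, so the substance of the argument is concentrated entirely in establishing $t(x_0)+s(g) \ge g(x_0)$.
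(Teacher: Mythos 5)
Your proof is correct and takes essentially the same route as the paper's: the core move is the identical single-point perturbation of $s$ at $e(x_0)$, and the verification that the perturbed form lies in $\Delta(E(X))$ uses the same chain of estimates (tightness via Claim~\ref{tight}, the $1$-Lipschitz bound of Claim~\ref{1-lip} applied to $s$, the identity $d_\infty(e(y),g)=g(y)$ from Claim~\ref{f(x)}, and the metric-form inequality for the competitor), including the same $\epsilon$-limiting argument in the general case. The only difference is cosmetic: you argue directly with an arbitrary $t\in\Delta(X)$ below $se$, whereas the paper argues by contradiction via an extremal $h\in E(X)$ below $se$ --- and since the extremality of $h$ is never actually used there, your framing is, if anything, marginally more self-contained.
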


First notice that $se\in \Delta(X)$. To prove Claim~\ref{composition},
we suppose by way of contradiction that $se$ is not extremal and we
obtain a contradiction with the assumption that $s$ is extremal on
$E(X)$. Then there exists $h\in E(X)$ such that $h\le se$ and
$h(x)<se(x)$ for some $x\in X$. Define the map $t: E(X)\rightarrow \R$
by setting $t(f)=s(f)$ for all $f\in E(X)$ different from $e(x)$. Set
$t(e(x))=h(x) < s(e(x))$. Since $t<s$, to contradict the extremality
of $s$ on $E(X)$, it remains to show that $t\in \Delta(E(X))$, i.e.,
$t(f)+t(g)\ge d_{\infty}(f,g)$ for any $f,g\in E(X)$. Since
$s\in \Delta(E(X))$, from the definition of $t$ it suffices to establish
the previous inequality for any $f\in E(X)$ and $g=e(x)$ with
$f \neq e(x)$, i.e., to show that $te(x)+t(f)\ge
d_{\infty}(f,e(x))$. This is done using the definition of $e(x)$ and
the Claims~\ref{1-lip} and~\ref{f(x)}.  Indeed, for any $\epsilon>0$
pick $y\in X$ such that $f(x)+f(y)<d(x,y)+\epsilon$.  Then
$te(x) + t(f) = te(x) + se(y) - se(y)+s(f) \geq h(x) + h(y) -
d_{\infty}(e(y),f) \geq d(x,y) - f(y) > f(x) - \epsilon =
d_{\infty}(e(x),f) - \epsilon$.
Since $\epsilon>0$ is arbitrary, $te(x)+t(f)\ge d_{\infty}(f,e(x))$, as required. 

\begin{claim}\label{EX-injective} The metric space $(E(X),d_{\infty})$ is injective.
\end{claim}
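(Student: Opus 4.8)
The plan is to deduce injectivity from hyperconvexity via Theorem~\ref{injective=hyperconvex} (Aronszajn--Panitchpakdi), so it suffices to prove that the ball-hypergraph of $(E(X),d_{\infty})$ satisfies the Helly property. Concretely, I would start from an arbitrary family of closed balls $B_{r_i}(f_i)$, $i\in I$, with centers $f_i\in E(X)$ and radii $r_i\ge 0$ subject to the pairwise compatibility $d_{\infty}(f_i,f_j)\le r_i+r_j$, and produce an extremal form lying in every $B_{r_i}(f_i)$.

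The candidate I would try first is the lower envelope $g\colon X\to\R$ given by $g(x)=\inf_{i\in I}(f_i(x)+r_i)$. First I would check that $g$ is a metric form: for all $i,j\in I$ and $x,y\in X$ one has $(f_i(x)+r_i)+(f_j(y)+r_j)\ge f_j(x)+f_j(y)\ge d(x,y)$, where the first inequality uses $f_j(x)-f_i(x)\le d_{\infty}(f_i,f_j)\le r_i+r_j$ and the second is the metric-form inequality for $f_j$; taking infima over $i$ and $j$ gives $g(x)+g(y)\ge d(x,y)$, so $g\in\Delta(X)$ (and $g$ is finite, since $\sup_i(f_i-r_i)\le g\le f_{i_0}+r_{i_0}$ for any fixed $i_0$). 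By construction $g\le f_i+r_i$ pointwise, and the compatibility condition gives $f_i(x)-r_i\le f_j(x)+r_j$, whence $g(x)\ge f_i(x)-r_i$; thus $g$ itself already satisfies $|g(x)-f_i(x)|\le r_i$ for all $x$.

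The main obstacle is that $g$ need not be \emph{extremal}, so in general $g\notin E(X)$. To repair this I would pass to an extremal minorant $h\le g$, which exists by Zorn's lemma applied to $\{h'\in\Delta(X):h'\le g\}$ (the pointwise infimum of a chain of metric forms is again a metric form, bounded below, and a minimal such form is extremal in $\Delta(X)$). The upper estimate $h\le g\le f_i+r_i$ survives, so the delicate point is the matching lower bound $h(x)\ge f_i(x)-r_i$, which could a priori be destroyed when we lower $g$ to $h$. Here I would invoke that each \emph{center} $f_i$ is extremal and hence tight (Claim~\ref{tight}): given $x\in X$ and $\epsilon>0$, choose $z\in X$ with $f_i(x)+f_i(z)<d(x,z)+\epsilon$. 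Since $h\in\Delta(X)$ and $h(z)\le g(z)\le f_i(z)+r_i$, this yields $h(x)\ge d(x,z)-h(z)\ge d(x,z)-f_i(z)-r_i>f_i(x)-r_i-\epsilon$, and letting $\epsilon\to 0$ gives $h(x)\ge f_i(x)-r_i$. Combined with the upper bound, this gives $d_{\infty}(h,f_i)\le r_i$ for every $i$, so $h\in E(X)\cap\bigcap_{i}B_{r_i}(f_i)$, establishing hyperconvexity and hence injectivity of $(E(X),d_{\infty})$.

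I expect the tightness step to be the crux: it is precisely the extremality of the centers $f_i$ (Claim~\ref{tight}) that keeps the passage to an extremal minorant from escaping the balls, which is the reason extremal forms, rather than arbitrary metric forms, are the right objects. I would also note that Claim~\ref{composition} does not appear to be needed for injectivity itself; it is presumably reserved for the \emph{minimality} part of the injective-hull statement, namely that no proper injective subspace contains $e(X)$.
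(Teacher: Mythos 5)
Your proof is correct, and it takes a genuinely different route from the paper's. The paper also reduces to hyperconvexity via Theorem~\ref{injective=hyperconvex}, but then works \emph{upstairs}: it extends the radius function $f_i\mapsto r_i$ to a metric form $r$ on all of $E(X)$ (Zorn), picks an extremal $s\in E(E(X))$ with $s\le r$, and uses Claim~\ref{composition} to pull $s$ back to $se\in E(X)$, checking $se\in B_{r(f)}(f)$ via Claims~\ref{1-lip} and~\ref{f(x)}. You instead work entirely \emph{downstairs} on $X$: the explicit envelope $g=\inf_i(f_i+r_i)$ is a metric form with $|g-f_i|\le r_i$ (your verification of both inequalities from $f_j(x)-f_i(x)\le d_\infty(f_i,f_j)\le r_i+r_j$ is right), an extremal minorant $h\le g$ exists by Zorn (the chain argument is fine since metric forms are nonnegative, and minimality below $g$ does give extremality in $\Delta(X)$), and the tightness of the \emph{centers} (Claim~\ref{tight}) rescues the lower bound $h\ge f_i-r_i$ after minimization — you correctly identify this as the crux, and it is exactly where extremality of the $f_i$, rather than mere membership in $\Delta(X)$, is used. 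What each approach buys: yours is more self-contained, avoiding the double hull $E(E(X))$ and Claim~\ref{composition} altogether, and it exhibits an explicit common point of the balls as an extremal minorant of an explicit envelope; the paper's two-level formulation, by contrast, sets up machinery that it reuses verbatim later — in the proof of Theorem~\ref{t:dinjhull} the same forms $r$, $t'$, $s$ on $E^0(X)$ and $E(X)$ and Claim~\ref{composition} are redeployed to get the discrete Helly property, where the interplay between $E^0(X)$ and $E(X)$ genuinely matters. One small correction to your closing remark: Claim~\ref{composition} is not reserved for the minimality part of Isbell's theorem — minimality (Claim~\ref{equivalence}) uses only Claim~\ref{f(x)} and the retraction argument; in the paper, Claim~\ref{composition} is used precisely for Claim~\ref{EX-injective} (the step your argument bypasses) and again in Theorem~\ref{t:dinjhull}.
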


To prove Claim~\ref{EX-injective}, in view of Theorem
\ref{injective=hyperconvex} it suffices to show that
$(E(X),d_{\infty})$ is hyperconvex, i.e., if
$f_i\in E(X), r_i\in {\mathbb R}^+, i\in I$ such that
$d_{\infty}(f_i,f_j)\le r_i+r_j$, then
$\bigcap_{i\in I} B(f_i,r_i)\ne \emptyset$. We may suppose that
$r: E(X)\rightarrow \Delta(E(X))$ is a metric form on $E(X)$ extending
the radius function $r_i$, i.e., $r(f_i)=r_i$ (this extension exists by Zorn lemma).
Let $s\in E(E(X))$ such
that $s\le r$. By Claim~\ref{composition}, $se$ belongs to $E(X)$. We
assert that $se$ belongs to any $r(f)$-ball centered at $f\in
E(X)$. Indeed, for any $x\in X$, we have
$se(x)-f(x)=se(x)-d_{\infty}(f,e(x))\le s(f)\le r(f)$, where the
equality follows from Claim~\ref{f(x)} and the first inequality
follows from Claim~\ref{1-lip} (applied to $E(X)$ and $E(E(X))$
instead of $X$ and $E(X)$). On the other hand,
$f(x)-se(x)=d_{\infty}(f,e(x))-se(x)\le s(f)\le r(f)$, where the
equality follows from Claim~\ref{f(x)} and the inequality follows by
the choice of $s$ in $\Delta(E(X))$. This establishes Claim
\ref{EX-injective}.

\begin{claim}\label{equivalence} $e:X\rightarrow E(X)$ is an injective hull and is equivalent to every injective hull of $X$.
\end{claim}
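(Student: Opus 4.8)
The plan is to establish two things: that $e\colon X\to E(X)$ is an injective hull, and that it is equivalent to any other injective hull. By Claims~\ref{Kuratowski} and~\ref{EX-injective} we already know that $e$ is an isometric embedding into the injective space $E(X)$, so the only remaining point for the first assertion is \emph{minimality}: no proper injective subspace of $E(X)$ contains $e(X)$. The key tool I would isolate first is a rigidity statement: every $1$-Lipschitz map $h\colon E(X)\to E(X)$ with $h|_{e(X)}=\mathrm{id}$ is the identity. Indeed, fixing $f\in E(X)$ and putting $g=h(f)$, the fact that $h$ is $1$-Lipschitz and fixes each $e(x)$ gives, for every $x\in X$, $d_{\infty}(g,e(x))=d_{\infty}(h(f),h(e(x)))\le d_{\infty}(f,e(x))$, which by Claim~\ref{f(x)} reads $g(x)\le f(x)$. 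Thus $g\le f$ with $g\in\Delta(X)$, and the extremality (minimality) of $f$ forces $g=f$, so $h=\mathrm{id}$.

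To deduce minimality, I would suppose $E'\subseteq E(X)$ is injective with $e(X)\subseteq E'$. Since $E'$ is an absolute retract by Theorem~\ref{injective=hyperconvex}, and it is isometrically embedded in $E(X)$, there is a nonexpansive retraction $r\colon E(X)\to E'$; as $r$ fixes $e(X)$, the rigidity statement gives $r=\mathrm{id}$, hence $E'=E(X)$. This shows that $e\colon X\to E(X)$ is an injective hull.

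For the equivalence, let $\iota\colon X\to Y$ be any injective hull. Using injectivity of $Y$, I would extend the isometry $\iota\circ e^{-1}\colon e(X)\to\iota(X)$ to a $1$-Lipschitz map $\phi\colon E(X)\to Y$; using injectivity of $E(X)$, I would extend the isometry $e\circ\iota^{-1}\colon\iota(X)\to e(X)$ to a $1$-Lipschitz map $\psi\colon Y\to E(X)$. Then $\psi\circ\phi\colon E(X)\to E(X)$ is $1$-Lipschitz and fixes $e(X)$, so rigidity yields $\psi\circ\phi=\mathrm{id}$. A squeezing computation then shows $\phi$ is an isometric embedding: for $f,f'\in E(X)$ one has $d_{\infty}(f,f')=d_{\infty}(\psi\phi(f),\psi\phi(f'))\le d_{\infty}(\phi(f),\phi(f'))\le d_{\infty}(f,f')$, forcing equality throughout. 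Hence $\phi(E(X))$ is an injective subspace of $Y$ containing $\iota(X)$, and minimality of the hull $Y$ forces $\phi(E(X))=Y$. Thus $\phi$ is an isometry intertwining $e$ and $\iota$, which is precisely the asserted equivalence.

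The main obstacle I anticipate is organizing the rigidity statement correctly and applying it to self-maps fixing $e(X)$ rather than attempting to argue directly about $E'$; once rigidity is in place, both minimality and the squeezing argument (that $\psi\circ\phi=\mathrm{id}$ forces $\phi$ to be an isometric embedding) become routine. The only remaining subtlety is remembering to invoke the minimality of the comparison hull $Y$ to conclude surjectivity of $\phi$, which is what upgrades the isometric embedding to an isometry.
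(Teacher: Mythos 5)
Your proof is correct and follows essentially the same route as the paper's: the identical rigidity computation (via Claim~\ref{f(x)} and extremality) showing that any $1$-Lipschitz self-map of $E(X)$ fixing $e(X)$ is the identity, the absolute-retract argument for minimality, and the two-way extension plus composition argument for uniqueness. Your final step is in fact slightly more explicit than the paper's terse "by considering the composition $\tf\tf'$" — you spell out the squeezing inequality making $\phi$ an isometric embedding and invoke minimality of the comparison hull $Y$ for surjectivity — but this is exactly the intended argument, not a different one.
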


Let $\alpha: E(X)\rightarrow E(X)$ be a 1-Lipschitz map such that
$\alpha(e(x))=e(x)$ for any $x\in X$. Let $f\in E(X)$ and let
$g=\alpha(f)$. By Claim~\ref{f(x)}, for any $x\in X$ we have
$g(x)=d_{\infty}(g,e(x)) = d_\infty(\alpha(f),\alpha(e(x))) \le
d_{\infty}(f,e(x))=f(x)$. Hence $g\le f$, whence $\alpha$ is the
identity map.  Therefore $E(X)$ cannot be retracted to any proper
subset $S\subset E(X)$ containing the image of $X$ under $e$, hence
$S$ is not injective.

Finally, consider an arbitrary injective hull $e':X\rightarrow E'(X)$
of $(X,d)$. Let $f$ be an isometry from $e(X)$ to $e'(X)$ and let $f'$
be its inverse. Since both $E(X)$ and $E'(X)$ are injective spaces,
there exist $1$-Lipschitz maps $\tf: E(X) \to E'(X)$ and
$\tf': E'(X) \to E(X)$ extending respectively $f$ and $f'$. Observe
that the composition $\tf'\tf$ is a $1$-Lipschitz map from $E(X)$ to
$E(X)$ that is the identity on $e(X)$. Therefore, $\tf'\tf$ is the
identity map by what has been shown above and thus $\tf$ is injective
and $\tf'$ is surjective. Since $\tf$ and $\tf'$ are $1$-Lipschitz and
since $\tf'\tf$ is the identity on $E(X)$, necessarily $\tf$ is an
isometric embedding of $E(X)$ in $E'(X)$. Then since $E(X)$ is
injective, the image of $\tf$ contains $e'(X) = \tf(e(X))$ and $E'(X)$
is an injective hull, necessarily $\tf$ must be surjective and thus
$\tf$ is an isometry. Necessarily $\tf'$ is injective as otherwise,
$\tf'\tf$ cannot be the identity map on $E(X)$. Thus both $\tf$ and
$\tf'$ are isometries.  This concludes the proof of Theorem
\ref{injective-hull-isbell}.

Dress~\cite{Dress1984}  defined $E(X)$ as the set of all maps $f\in {\mathbb R}^X$ such that $f(x)=\sup \{ d(x,y)-f(y): y\in X\}$ for all $x\in X$. He established the following nice property  of $E(X)$ (which in fact
characterizes $E(X)$, see~\cite[Theorem 1]{Dress1984}):

\begin{claim}\label{dress} If $f,g\in E(X)$, then $d_{\infty}(f,g)=\sup \{ d_{\infty}(e(x),e(y))-d_{\infty}(e(y),f)-d_{\infty}(e(x),g): x,y\in X\}$.
\end{claim}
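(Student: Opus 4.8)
The plan is to first rewrite the right-hand side in a transparent form using identities already in hand. By Claim~\ref{Kuratowski} the Kuratowski embedding is isometric, so $d_\infty(e(x),e(y)) = d(x,y)$; and by Claim~\ref{f(x)} we have $d_\infty(e(y),f) = f(y)$ and $d_\infty(e(x),g) = g(x)$ for all $x,y \in X$. Substituting these, the asserted equality becomes
\[
  d_\infty(f,g) = \sup\{\, d(x,y) - f(y) - g(x) : x,y \in X \,\},
\]
so it suffices to prove this reformulation.

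For the inequality $\geq$, I would use only that $f$ is a metric form. For any $x,y \in X$ one has $d(x,y) \le f(x) + f(y)$, hence
\[
  d(x,y) - f(y) - g(x) \le f(x) - g(x) \le |f(x) - g(x)| \le d_\infty(f,g).
\]
Taking the supremum over $x,y$ gives this bound; note this step is purely formal and does not use extremality.

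For the reverse inequality $\le$, the idea is to realize each difference $f(z) - g(z)$ (and, symmetrically, $g(z) - f(z)$) as a value — or a near-value — of the expression $d(x,y) - f(y) - g(x)$; this is exactly where extremality of $f$ and $g$ enters, through the tightness property of Claim~\ref{tight}. Fix $z \in X$. In the compact or discrete case Claim~\ref{tight} provides $y \in X$ with $f(z) + f(y) = d(z,y)$; taking $x = z$ then yields $d(z,y) - f(y) - g(z) = f(z) - g(z)$, so the supremum is at least $f(z) - g(z)$. Symmetrically, choosing via Claim~\ref{tight} a point $x$ with $g(z) + g(x) = d(z,x)$ and setting $y = z$ gives $d(x,z) - f(z) - g(x) = g(z) - f(z)$. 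Hence the supremum dominates $|f(z)-g(z)|$ for every $z$, and taking the supremum over $z$ yields $d_\infty(f,g) \le \sup\{ d(x,y) - f(y) - g(x)\}$. In a general metric space one repeats the argument with the $\epsilon$-version of Claim~\ref{tight}, obtaining the bound up to an arbitrarily small $\epsilon$ and then letting $\epsilon \to 0$.

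The only delicate point is the $\le$ direction, and within it the need to invoke tightness rather than merely the metric-form inequalities; the $\ge$ direction is immediate. No genuine obstacle is expected, since both ingredients — the isometry identities of Claims~\ref{Kuratowski} and~\ref{f(x)}, and the tightness lemma — are already available.
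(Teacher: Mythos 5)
Your proof is correct and follows essentially the same route as the paper's: both reduce the claim via Claims~\ref{Kuratowski} and~\ref{f(x)} and then use the tightness property of Claim~\ref{tight} to produce the witness pair $x,y$. If anything, your write-up is more complete than the paper's, which treats only the compact/discrete case and exhibits $d_\infty(f,g)$ as an attained value of the expression without spelling out the easy converse bound $d(x,y)-f(y)-g(x)\le f(x)-g(x)\le d_\infty(f,g)$ or the $\epsilon$-limiting argument for general spaces, both of which you supply.
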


For simplicity, we will prove the Claim~\ref{dress} for compact metric spaces, for which  the supremum can be replaced by  maximum. The claim asserts that any pair of extremal functions $f,g$ lies on a geodesic
between the images $e(x),e(y)$ in $E(X)$ of two points $x,y$ of $X$. Let $x$ be a point of $X$ such that $d_{\infty}(f,g)=f(x)-g(x)$. By Claim~\ref{tight} there exists $y\in X$ such that $f(x)=d(x,y)-f(y)$. Hence
$d_{\infty}(f,g)=f(x)-g(x)=d(x,y)-f(y)-g(x)=d_{\infty}(e(x),e(y))-f(y)-g(x)$. By Claim~\ref{f(x)}, $f(y)=d_{\infty}(f,e(y))$ and $g(x)=d_{\infty}(g,e(x))$. Consequently, $d_{\infty}(f,g)=d_{\infty}(e(x),e(y))-f(y)-g(x)=d_{\infty}(e(x),e(y))-d_{\infty}(f,e(y))-d_{\infty}(g,e(x))$ and we are done.

One interesting property of injective hulls is their monotonicity:

\begin{corollary}\label{monotone} If $(X,d)$ is isometrically embeddable into $(X',d')$, then $E(X)$ is isometrically embeddable into $E(X')$.
\end{corollary}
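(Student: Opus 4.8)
The plan is to establish a universal property of the injective hull and then apply it to $E(X')$. Concretely, I will show that whenever $X$ embeds isometrically into an injective metric space $Z$, the injective hull $E(X)$ also embeds isometrically into $Z$, compatibly with the Kuratowski embedding. The Corollary then follows at once: given an isometric embedding $\iota\colon X\to X'$, compose it with the Kuratowski embedding $e'\colon X'\to E(X')$ (Claim~\ref{Kuratowski}) to obtain an isometric embedding $j:=e'\iota\colon X\to E(X')$ of $X$ into the space $E(X')$, which is injective by Claim~\ref{EX-injective}.

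First I would produce a $1$-Lipschitz extension. Since $e\colon X\to E(X)$ is an isometric embedding (Claim~\ref{Kuratowski}), the map $j\circ e^{-1}\colon e(X)\to Z$ is an isometry, in particular $1$-Lipschitz, defined on the subset $e(X)\subseteq E(X)$. As $Z$ is injective, it has the extension property, so this map extends to a $1$-Lipschitz map $\Phi\colon E(X)\to Z$ with $\Phi\circ e=j$. It then remains only to upgrade $\Phi$ to an isometric embedding, i.e.\ to verify the reverse inequality $d_Z(\Phi(f),\Phi(g))\ge d_{\infty}(f,g)$ for all $f,g\in E(X)$.

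The main point --- and the only real obstacle, since a $1$-Lipschitz extension is in general far from isometric --- is precisely this reverse inequality, and I would derive it from the tightness of the injective hull recorded in Claim~\ref{dress}. For every $\varepsilon>0$ there exist $x,y\in X$ with
\[
d_{\infty}(f,g)-\varepsilon\ \le\ d_{\infty}(e(x),e(y))-d_{\infty}(e(y),f)-d_{\infty}(e(x),g).
\]
Writing $j(x)=\Phi(e(x))$ and $j(y)=\Phi(e(y))$ and applying the triangle inequality in $Z$ along the path $j(x)\to\Phi(g)\to\Phi(f)\to j(y)$, together with the facts that $\Phi$ is $1$-Lipschitz (so $d_Z(\Phi(e(x)),\Phi(g))\le d_{\infty}(e(x),g)$ and $d_Z(\Phi(f),\Phi(e(y)))\le d_{\infty}(f,e(y))$) and that $j$ is isometric (so $d_Z(j(x),j(y))=d_{\infty}(e(x),e(y))$), one obtains
\[
d_Z(\Phi(f),\Phi(g))\ \ge\ d_Z(j(x),j(y))-d_{\infty}(e(x),g)-d_{\infty}(e(y),f)\ \ge\ d_{\infty}(f,g)-\varepsilon.
\]
Letting $\varepsilon\to 0$ gives $d_Z(\Phi(f),\Phi(g))\ge d_{\infty}(f,g)$, and combined with the $1$-Lipschitz bound this shows $\Phi$ is an isometric embedding. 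Taking $Z=E(X')$ yields the desired isometric embedding $E(X)\hookrightarrow E(X')$. I expect the invocation of the tightness formula (Claim~\ref{dress}) to be the crux of the argument, all remaining steps being formal.
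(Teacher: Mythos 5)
Your proposal is correct and follows essentially the same route as the paper: both embed $X$ isometrically into the injective space $E(X')$, use the extension property to produce a $1$-Lipschitz map $E(X)\to E(X')$ agreeing with the given embedding on $e(X)$, and then invoke the tightness formula of Claim~\ref{dress} to rule out any contraction. The only difference is presentational --- the paper argues by contradiction and leaves the approximation implicit, whereas you carry out the $\varepsilon$-argument directly from the supremum in Claim~\ref{dress}, which in fact fills in a detail the paper's sketch glosses over (its proof of Claim~\ref{dress} is written only for the compact and discrete cases).
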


\begin{proof} $(X,d)$ is isometrically embeddable into $(X',d')$ and into $E(X)$ and $(X',d')$ is isometrically embeddable into $E(X')$. Therefore there exists an isometric embedding of $e(X)\subset E(X)$
into $E(X')$. Since $E(X')$ is injective, this isometric embedding extends to a 1-Lipschitz map $\alpha$ from $E(X)$ to $E(X')$. If $d_{\infty}(\alpha(f),\alpha(g))<d_{\infty}(f,g)$ for $f,g\in E(X)$,
we will deduce that $d_{\infty}(\alpha(e(x)),\alpha(e(y)))<d_{\infty}(e(x),e(y))$ for points $x,y\in X$ occurring in Claim~\ref{dress}, contrary to the assumption that $\alpha$ isometrically embeds
$e(X)$.
\end{proof}

Dress~\cite{Dress1984} described the combinatorial types of injective
hulls of metric spaces on 3, 4, and 5 points. Sturmfels and
Yu~\cite{StuYu} described all 339 combinatorial types of injective
hulls of 6-point metric spaces.

As shown by Dress~\cite{Dress1984}, the injective hull of a finite
metric space is a finite polyhedral complex. Using this, he defined
the \emph{combinatorial dimension} of a general metric space $X$ as
the supremum of the dimensions of the polyhedral complexes $E(Y)$ for
all finite subspaces $Y$ of $X$. Any $f \in E(X)$ belongs to the
interior of a unique cell of the polyhedral complex. Dress
characterized combinatorially the cells of $E(X)$. Goodman and Moulton
gave a presentation of Dress's result in the finite
case~\cite{GoodmanMoulton}. Lang presented Dress's result in the case
of general metric spaces and formulated conditions under which the
injective hull is finite dimensional or has a finite number of types
of cells for each dimension~\cite{Lang2013}. In the following, we
continue with this combinatorial description following the
presentation of~\cite{Lang2013,DesLang2015}.

For any $f \in \Delta(X)$, we consider the graph $(X,A(f))$ where
$A(f)$ is the set of all unordered pairs $\{x,y\}$ of points in $X$
with the property that $f(x) + f(y) = d(x,y)$. If $X$ is finite (or
compact), then $f$ belongs to $E(X)$ if and only if $(X,A(f))$ has no
isolated vertices. This is no longer true when $X$ is not compact (see
Claim~\ref{tight}). For this, Dress and Lang introduced the subset
$E'(X) = \{f \in \Delta(X): \bigcup A(f) = X \}$ of $E(X)$. They show that $E'(x)$ is dense in $E(X)$ if the metric on $X$ is integer-valued.

A set $A$ of unordered pairs of points in $X$ is called
\emph{admissible} if there exists $f \in E'(X)$ with $A(f) = A$, and
denote by $\cA(X)$ the set of all such admissible sets. The family of
polyhedral faces of $E(X)$ is then given by $\{P(A)\}_{a \in \cA(X)}$
where $P(A) = \{f \in \Delta(X) : A \subseteq A(f)\}$. As noticed
in~\cite{Lang2013}, $P(A) = P(A) \cap E(X) = P(A) \cap E'(X)$. The
\emph{rank} $\rk(A)$ of an admissible set $A$ is the dimension of
$P(A)$. The rank $\rk(A)$ can be characterized as follows. If
$f, g \in P(A)$, then $f(x)+f(y) = d(x,y) = g(x) + g(y)$ for
$\{x,y\} \in A$ and thus $f(y) - g(y) = - (f(x) - g(x))$. Thus the
difference $f-g$ has alternating sign along all paths in the graph
$(X,A)$. Consequently, for each connected component of $(X,A)$, there
is at most one degree of freedom for the values of $f \in P(A)$. If
the connected component $C$ contains an odd cycle, then $f$ and $g$
coincide on all vertices of $C$. Alternatively, if the connected
component $C$ is bipartite, then the restrictions of all functions
$f \in P(A)$ on the vertices of $C$ form a 1-parameter family: given
the value $f(x)$ on one vertex of $C$, one can deduce all the other
values of $f$ on $C$. Then, the rank $\rk(A) = \dim(P(A))$ is
precisely the number of bipartite components of the graph $(X,A)$

Dress~\cite{Dress1984} charaterized spaces of combinatorial dimension
at most $n$ by a $2(n+1)$-point inequality. These notions are
important to state and establish some results of Lang~\cite{Lang2013}
that we will present and use in Section~\ref{s:hypquad}.

\subsection{Coarse Helly property}
\label{s:coarseHelly}

A metric space $(X,d)$ is {\it coarsely hyperconvex} if there exists
some $\delta \geq 0$ such that for any set of centers
$\{x_i\}_{i \in I}$ in $X$ and any set of radii $\{r_i\}_{i \in I}$ in
$\R^+$ satisfying $d(x_i,x_j) \leq r_i + r_j$, there exists $x \in X$
such that $d(x,x_i) \leq r_i + \delta$ for all $i \in I$, i.e., the
intersection $\bigcap_{i\in I}B_{r_i+\delta}(x_i)$ is not empty.  A
metric space $(X,d)$ has the {\it coarse Helly property} if there
exists some $\delta\ge 0$ such that for any family
$\cB = \{ B_{r_i}(x_i): i\in I\}$ of pairwise intersecting closed
balls of $X$, the intersection $\bigcap_{i\in I}B_{r_i+\delta}(x_i)$
is not empty. If the space $(X,d)$ is Menger-convex (in particular, if
$(X,d)$ is geodesic), both properties are equivalent.  In a discretely
geodesic metric space (in particular, in a graph), if
$d(x_i,x_j) \leq r_i+r_j$, then the balls $B_{\lceil r_i\rceil}(x_i)$
and $B_{\lceil r_j\rceil}(x_j)$ intersect. In particular if the
$\{r_i\}_{i \in I}$ are integers, then $d(x_i,x_j) \leq r_i+r_j$ if
and only if $B_{r_i}(x_i)$ and $B_{r_j}(x_j)$ intersect.
Consequently, a discretely geodesic metric space $(X,d)$ is coarsely
hyperconvex with some constant $\delta$ if and only if it satisfies
the coarse Helly property with some constant $\delta'$, where $\delta$
and $\delta'$ differ by at most $1$.
The injective hull $E(X)$ of a metric space $(X,d)$ has the {\it
  bounded distance property} if there exists $\delta\ge 0$ such that
for any $f\in E(X)$ there exists a point $x\in X$ such that
$d_{\infty}(f,e(x))\le \delta$.  The coarse Helly property has been
introduced in~\cite{ChEs} and the bounded distance property has been
introduced in~\cite{Lang2013}, in both cases, for $\delta$-hyperbolic
spaces and graphs.

We show that the coarse hyperconvexity of a metric space is equivalent
to the fact that its injective hull satisfies the bounded distance
property.\footnote{Independently, this was also observed by Urs Lang
  (personal communication).}.

\begin{proposition}\label{coarse}
  A metric space $(X,d)$ is coarsely hyperconvex if and only if its
  injective hull $E(X)$ satisfies the bounded distance property.
Consequently, if $(X,d)$ is a geodesic or discretely geodesic metric
  space, then the coarse hyperconvexity of $(X,d)$, the coarse Helly
  property for $(X,d)$ and the bounded distance property for $E(X)$
  are all equivalent.
\end{proposition}

\begin{proof} First suppose that $(X,d)$ is coarsely hyperconvex with
  some constant $\delta\ge 0$. Let $f\in E(X)$. Then
  $f(x)+f(y)\ge d(x,y)$ for any $x,y$. By the coarse hyperconvexity of
  $(X,d)$ applied to the radius function $f$, there exists a point
  $z\in X$ such that $d(z,x)\le f(x)+\delta$ for any $x\in X$. We
  assert that $d_{\infty}(f,e(z))\le \delta$.  Indeed,
  $d_{\infty}(f,e(z))=\sup_{x\in X} |f(x)-d(x,z)|$. By the choice of
  $z$ in $B_{f(x)+\delta}(x)$, $d(x,z)-f(x)\le \delta$. It remains to
  show the other inequality $f(x)-d(x,z)\le \delta$.  Assume by
  contradiction that $f(x) - d(x,z) > \delta$. Let
  $\epsilon = \frac{1}{2}(f(x) - d(x,z) - \delta)$ and observe that
  $f(x)>d(x,z)+\delta + \epsilon$. By Claim~\ref{tight}, there exists
  $y \in X$ such that $f(x) + f(y) < d(x,y) + \epsilon$. But since
  $z\in B_{f(y)+\delta}(y)$, we have $f(y)\ge d(y,z)-\delta$, and
  consequently, we have
  $f(x)+f(y)>d(x,z)+\delta + \epsilon +d(y,z)-\delta=d(x,z)+d(y,z) +
  \epsilon \ge d(x,y) + \epsilon$ (the last inequality follows from
  the triangle inequality), a contradiction.

  Conversely, suppose that $E(X)$ satisfies the bounded distance
  property with $\delta\ge 0$ and we will show that $(X,d)$ is
  coarsely hyperconvex. Let $B(x_i,r_i),i\in I$ be a collection of
  closed balls of $(X,d)$ such that $r_i+r_j\ge d(x_i,x_j)$ for all
  $i,j\in I$. Let $r\in \Delta(X)$ be a metric form on $X$ extending
  the radius function $r_i, i\in I$ (its existence follows from Zorn's
  lemma). Let $f\in E(X)$ such that $f(x)\le r(x)$ for any $x\in
  X$. By the bounded distance property, $X$ contains a point $z$ such
  that $d_{\infty}(f,e(z))\le \delta$. This implies that
  $|f(x)-e(z)(x)|=|f(x)-d(x,z)|\le \delta$ for any $x\in X$.  In
  particular, this yields $d(x,z)\le f(x)+\delta\le r(x)+\delta$, thus
  $z$ belongs to all closed balls $B_{r(x)+\delta}(x), x\in X$.
\end{proof}

\subsection{Geodesic bicombings}
\label{s:bicombing}
One important feature of injective metric spaces is
the existence of a nice (bi)combing.
Recall that a \emph{geodesic bicombing} on a metric space $(X,d)$ is a map
\begin{align}
\sigma \colon X\times X \times [ 0,1] \to X,
\end{align}
such that for every pair $(x,y)\in X\times X$ the function $\sigma_{xy}:=\sigma(x,y,\cdot)$ is
a constant speed geodesic from $x$ to $y$. We call $\sigma$ \emph{convex} if the function
$t\mapsto d(\sigma_{xy}(t),\sigma_{x'y'}(t))$ is convex for all $x,y,x',y'\in X$. The bicombing
$\sigma$ is \emph{consistent} if $\sigma_{pq}(\lambda)=\sigma_{xy}((1-\lambda)s+\lambda t)$, for all
$x, y \in X$, $0\leq s \leq t \leq 1$, $p := \sigma_{xy}(s)$, $q := \sigma_{xy}(t)$, and $\lambda \in [ 0, 1]$.
It is called \emph{reversible} if $\sigma_{xy}(t)=\sigma_{yx}(1-t)$ for all $x,y\in X$ and $t\in [0,1]$.

From the definition of injective hulls and~\cite[Lemma~2.1 and Theorems
1.1\&1.2]{DesLang2015} we have the following:

\begin{theorem}\label{t:injbicomb}
  A proper injective metric space of finite combinatorial dimension
  admits a unique convex, consistent, reversible geodesic bicombing.
\end{theorem}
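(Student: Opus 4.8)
The plan is to prove the two assertions --- existence and uniqueness --- by quite different means, and to leverage throughout the identification (Theorem~\ref{injective=hyperconvex}) of injective spaces with hyperconvex spaces and absolute retracts, together with the injective hull machinery of Subsection~\ref{s:injective_isbell}. For existence I would first produce \emph{some} geodesic bicombing, then a \emph{conical} one, and finally straighten it to a \emph{convex} one; uniqueness (together with consistency and reversibility) is where the finite combinatorial dimension hypothesis does the real work.

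For the existence of geodesics, note that the two-point space $\{x,y\}$ has injective hull isometric to the segment $[0,d(x,y)]$ (the injective hull of a finite metric space is a finite polyhedral complex, and on two points it degenerates to a segment). The inclusion $\{x,y\}\hookrightarrow X$ therefore extends, by injectivity of $X$, to a $1$-Lipschitz map of this segment into $X$ fixing the endpoints; such a map is forced to be an isometric embedding, hence a geodesic from $x$ to $y$. This already gives geodesics, but not a coherent choice. To obtain a conical bicombing I would instead build the combing by dyadic midpoint filling: for $x,y$ with $d(x,y)=2r$ the balls $B_r(x),B_r(y)$ touch, so by hyperconvexity their intersection is non-empty, and a careful simultaneous selection of such midpoints (extending the inclusion of a growing net of dyadic parameters and passing to the limit, using that absolute retracts are complete) yields a bicombing $\sigma$ that is \emph{conical}, i.e.\ satisfies $d(\sigma_{xy}(t),\sigma_{x'y'}(t))\le(1-t)\,d(x,x')+t\,d(y,y')$ for all $t$ --- a condition weaker than convexity. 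One then promotes $\sigma$ to a genuinely convex bicombing by an iterative straightening: define $\sigma^{(n+1)}$ from $\sigma^{(n)}$ by recombining subsegments, show the sequence is Cauchy in the sup-metric on bicombings, and let it converge to a consistent, convex limit $\sigma^{\infty}$, where completeness of $X$ guarantees the limit is again a bicombing.

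The heart of the matter, and the step I expect to be the main obstacle, is \emph{uniqueness} under the finite combinatorial dimension hypothesis, from which consistency and reversibility also follow. The idea is to compare two convex bicombings $\sigma,\sigma'$ via the quantity $D=\sup\{d(\sigma_{xy}(t),\sigma'_{xy}(t))\}$ and to derive $D=0$. Convexity of both combings makes $t\mapsto d(\sigma_{xy}(t),\sigma'_{xy}(t))$ convex and vanishing at the endpoints, which already rigidly constrains the comparison; the role of finite combinatorial dimension (in the sense of Dress, bounding the dimension of the injective hull of every finite subset) is to supply the compactness and local finiteness needed to locate an extremal configuration and to rule out geodesic branching, so that a convex combing cannot bifurcate and must coincide with any other. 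Once uniqueness is in hand, applying it to the reparametrized combing $(x,y,t)\mapsto\sigma_{yx}(1-t)$ --- which is again convex --- forces reversibility, and applying it to restrictions of $\sigma$ along its own geodesics forces consistency. Since these structural facts are precisely the content of \cite[Theorems~1.1\&1.2]{DesLang2015}, in the write-up I would either invoke that reference directly or reproduce the straightening and extremal-configuration arguments above, with the finite-dimensionality entering exactly at the no-branching step.
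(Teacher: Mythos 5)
Your proposal is correct, and in the end it coincides with what the paper actually does: the paper gives no independent argument for Theorem~\ref{t:injbicomb}, deducing it directly from \cite[Theorems~1.1 \& 1.2]{DesLang2015} (combined with Lang's construction of a conical bicombing on injective spaces), and you likewise defer to that same reference after sketching its internal architecture. Your outline --- a conical bicombing from hyperconvexity via a coherent $1$-Lipschitz midpoint selection, an iterative straightening to a convex bicombing, and uniqueness, consistency and reversibility extracted from the finite combinatorial dimension hypothesis --- faithfully mirrors the Descombes--Lang proof, so there is nothing essential to add.
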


 \section{Helly graphs and complexes}
\label{s:prop}

In this section, we recall the basic
properties and characterizations of Helly graphs. We also show that any graph admits a Hellyfication, a discrete counterpart of Isbell's
construction (again this is well-known).  

\subsection{Characterizations}

Helly graphs are the discrete analogues of
hyperconvex spaces: namely, the requirement that radii of balls are
nonnegative reals is modified by replacing the reals by the
integers.  In perfect analogy with hyperconvexity, there is a close
relationship between Helly graphs and absolute retracts. A graph is an
\emph{absolute retract} exactly when it is a retract of any larger
graph into which it embeds isometrically.  A vertex $x$ of a graph $G$ is \emph{dominated} by another
vertex $y$ if the unit ball $B_1(y)$ includes $B_1(x).$ A graph $G$ is
\emph{dismantlable} if its vertices can be well-ordered $\prec$ so
that, for each $v$ there is a neighbor $w$ of $v$ with $w\prec v$
which dominates $v$ in the subgraph of $G$ induced by the vertices
$u\preceq v$.  The following theorem summarizes some of the
characterizations of finite Helly graphs:

\begin{theorem} \label{Helly1} For a finite  graph $G$, the following
statements  are equivalent:

\begin{itemize}

\item[(i)] $G$ is a Helly graph;

\item[(ii)]  \cite{HeRi} $G$ is a retract of a direct product of
paths;

\item[(iii)] \cite{BaPr} $G$ is a dismantlable clique-Helly graph;

\item[(iv)] \cite{BP-absolute} $G$ is a weakly modular $1$--Helly graph.

\end{itemize}
\end{theorem}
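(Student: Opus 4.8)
The plan is to prove that (i) is equivalent to each of (ii), (iii), and (iv) separately, using (i) as a hub. Several implications are already essentially free from the material above: since a Helly graph is $1$--Helly, hence clique-Helly, and since Helly graphs are pseudo-modular and thus weakly modular, the implication (i)$\Rightarrow$(iv) is immediate, as is the clique-Helly half of (i)$\Rightarrow$(iii). So the real work is to produce a dismantling order, a retraction of the ambient graph onto $G$, and — in the converse directions — to recover the \emph{full} Helly property from the weaker hypotheses.

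For (i)$\Rightarrow$(iii) it remains to show that a finite Helly graph $G$ is dismantlable, for which I would establish a \emph{dominated-vertex lemma}. Assuming $|V(G)|\ge 2$, fix a basepoint $u$ and a vertex $v$ with $k:=d(u,v)$ maximal, and consider the family $\{B_1(x):x\in B_1(v)\}\cup\{B_{k-1}(u)\}$. These balls pairwise intersect: any two unit balls centered in $B_1(v)$ contain $v$; $B_1(v)$ meets $B_{k-1}(u)$ at the neighbour of $v$ on a shortest $(v,u)$--path; and for $x\sim v$ the ball $B_1(x)$ meets $B_{k-1}(u)$ either at $x$ (when $d(u,x)\le k-1$) or, when $d(u,x)=k$, at a common neighbour of $x$ and $v$ at distance $k-1$ furnished by the triangle condition. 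By the Helly property there is a vertex $y$ in the common intersection; then $d(y,w)\le 1$ for all $w\in B_1(v)$, i.e. $B_1(v)\subseteq B_1(y)$, while $d(y,u)\le k-1<k$ forces $y\ne v$ and $y\sim v$. Thus $v$ is dominated by a neighbour $y$, the map fixing every vertex and sending $v\mapsto y$ is a retraction onto $G\setminus\{v\}$, which is therefore again Helly, and induction produces a dismantling order. For (iii)$\Rightarrow$(i) I would observe that a dismantling sequence realizes $X(G)$ as an iterated elementary collapse of the stars of the successively dominated vertices onto the cliques through their dominating neighbours, so $X(G)$ is contractible and in particular simply connected; combined with clique-Hellyness, the local-to-global characterization of Helly graphs recalled in the Introduction (from \cite{CCHO}) yields that $G$ is Helly.

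For (ii)$\Rightarrow$(i) I would first note that a strong (direct) product of paths carries the metric $d(x,y)=\max_i d_{P_i}(x_i,y_i)$, so its balls are products of path-balls; since paths are Helly and intervals on a line satisfy the one-dimensional Helly property, pairwise intersecting balls of such a product meet in every coordinate and hence globally, so the product is Helly. Next, Hellyness passes to retracts: if $r$ retracts a Helly graph $H$ onto an isometric subgraph and the subgraph-balls $B_{r_i}(x_i)$ pairwise intersect, then the corresponding balls of $H$ pairwise intersect, a common point $z$ exists by Hellyness of $H$, and $r(z)$ lies in every original ball because $r$ is nonexpansive and fixes the centers. For (i)$\Rightarrow$(ii) I would embed $G$ isometrically into $\boxtimes_{v\in V}P_D$, $D=\diam G$, via $x\mapsto (d(x,v))_{v\in V}$, which preserves distances since $\max_v|d(x,v)-d(y,v)|=d(x,y)$, and then use that finite Helly graphs are absolute retracts: extend the identity on $G$ over the product one vertex at a time (via Zorn's lemma), sending each new vertex $h$ to a point of the intersection of the balls $B_{d(h,a)}(f(a),G)$, which is non-empty because a suitable vertex on a shortest path between any two centers lies in both balls and $G$ is Helly. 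The resulting retraction exhibits $G$ as a retract of the strong product of paths.

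Finally, (i)$\Rightarrow$(iv) is immediate, so the crux — and the step I expect to be the main obstacle — is (iv)$\Rightarrow$(i): upgrading the radius-one Helly property to the full Helly property inside a weakly modular graph. Here I would argue by induction on the largest radius $R=\max_i r_i$ of a finite pairwise-intersecting family $\{B_{r_i}(x_i)\}$, the base case $R\le 1$ being exactly $1$--Hellyness. In the inductive step one must shrink every radius-$R$ ball to radius $R-1$ by moving its centre one step inward, \emph{simultaneously and consistently} across all maximal balls, while preserving all pairwise intersections and staying inside the original balls. This is precisely where weak modularity is indispensable: the quadrangle condition produces, for two maximal balls whose intersection must be respected, a common neighbour moving both centres inward, while the triangle condition governs the interaction with the smaller balls, so that no pairwise intersection is destroyed. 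Once the maximal radius is reduced the induction hypothesis applies. Managing these simultaneous center choices so that pairwise intersection is never lost is the delicate point, and is the reason this implication is substantially harder than the others; it is the content of \cite{BP-absolute}.
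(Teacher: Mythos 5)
The paper offers no proof of this theorem at all: it is stated as a survey-style summary of known characterizations, with each item attributed to the literature ((ii) to the cited Hell--Rival reference, (iii) to \cite{BaPr}, (iv) to \cite{BP-absolute}), and the paper immediately afterwards points to Theorem~\ref{t:lotogloHell} from \cite{CCHO} as the result refining and generalizing items (iii) and (iv). So there is no internal argument to compare against, and your proposal is a reconstruction of the classical proofs. The parts you prove are correct: the dominated-vertex lemma for (i)$\Rightarrow$(iii) (applying the Helly property to $\{B_1(x): x\in B_1(v)\}\cup\{B_{k-1}(u)\}$, with the triangle condition -- available since Helly graphs are weakly modular -- supplying the intersections with $B_{k-1}(u)$) is exactly the standard dismantlability argument; the collapsibility of $X(G)$ along a dismantling order is right, since $B_1(v)\subseteq B_1(y)$ guarantees $\sigma\cup\{y\}$ is a simplex whenever $v\in\sigma$; and the absolute-retract argument for (i)$\Leftrightarrow$(ii) (boxes in the $\ell_\infty$-product, one-dimensional Helly in each coordinate, the distance-vector embedding $x\mapsto(d(x,v))_{v\in V}$, and one-vertex-at-a-time extension of the identity) is the classical proof and is carried out correctly. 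One cosmetic remark: you use retract-preservation of Hellyness in the (i)$\Rightarrow$(iii) induction before establishing it in the (ii)$\Rightarrow$(i) paragraph, so the lemma should be stated first.

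The one step you do not actually prove is (iv)$\Rightarrow$(i), which you sketch (induction on the maximal radius, moving centers inward via the quadrangle and triangle conditions) and then defer to \cite{BP-absolute}, candidly flagging the simultaneous-center-management as the obstacle. As a self-contained proof this is a gap -- the sketch as written does not explain how to shrink all maximal-radius balls consistently, and that is indeed the delicate combinatorial heart of the Bandelt--Pesch argument. But note that you could close it with no extra work using the very tool you already invoke for (iii)$\Rightarrow$(i): Theorem~\ref{t:lotogloHell} of the paper (from \cite{CCHO}) lists ``weakly modular $1$--Helly'' as its condition (ii), equivalent to Helly for arbitrary graphs, so (iv)$\Rightarrow$(i) follows by the identical citation. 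Conversely, if you insist on not using \cite{CCHO} as a black box, then your (iii)$\Rightarrow$(i) step inherits the same incompleteness, since there the simple-connectivity criterion is doing all the work. Either way, your treatment is consistent with -- and considerably more detailed than -- the paper's citation-only handling of this statement.
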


The following result presents a local-to-global and a topological
characterization of all (not necessarily finite or locally finite)
Helly graphs, refining and generalizing Theorem~\ref{Helly1}
(iii),(iv).

\begin{theorem}[\cite{CCHO}]\label{t:lotogloHell}
For a graph $G$, the following conditions are equivalent:
\begin{itemize}
\item[(i)] $G$ is Helly;
\item[(ii)] $G$ is a weakly modular $1$--Helly graph;
\item[(iii)] $G$ is a dismantlable clique-Helly graph;
\item[(iv)] $G$ is clique-Helly with a simply connected clique complex.
\end{itemize}
Moreover, if the clique complex $X(G)$ of $G$ is finite-dimensional, then the conditions (i)-(iv) are equivalent to
\begin{itemize}
\item[(v)] $G$ is clique-Helly with a contractible clique complex.
\end{itemize}
\end{theorem}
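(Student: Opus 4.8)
The plan is to establish the equivalence of (i)--(iv) by a cycle of implications, isolating the genuinely local-to-global step for last, and to treat the finite-dimensional addendum (v) separately. The trivial half is dealt with first: if $G$ is Helly then unit balls form a subfamily of all balls, so $G$ is $1$--Helly and in particular clique-Helly; moreover Helly graphs are pseudo-modular (cf.\ the discussion following Definition~\ref{d:Helly}), hence weakly modular by Proposition~\ref{pseudo-modular} and Lemma~\ref{lem-weakly-modular}, which gives (i)$\Rightarrow$(ii). For the converse (ii)$\Rightarrow$(i) I would generalize the finite Bandelt--Prisner equivalence of Theorem~\ref{Helly1}(iv) to arbitrary graphs. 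To bypass finiteness one reduces, via the Polat--Pouzet compactness Proposition~\ref{Helly_Polat} (treating the case of infinite cliques separately), to finite subfamilies of pairwise intersecting balls, so that it suffices to verify the Berge--Duchet condition for triples; the triangle and quadrangle conditions together with $1$--Helly then supply the required common point.

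Next comes the dismantling characterization. For (ii)$\Rightarrow$(iii) the aim is to build a well-order witnessing dismantlability: fix a basepoint $x_0$, order the vertices compatibly with $d(\cdot,x_0)$, and show that in a weakly modular $1$--Helly graph any vertex that is locally farthest from $x_0$ is dominated by a neighbor nearer to $x_0$, the dominating neighbor being produced by the triangle condition and the ball inclusion $B_1(\cdot)\subseteq B_1(\cdot)$ being guaranteed by $1$--Helly. Conversely (iii)$\Rightarrow$(i): a dismantling order folds $G$ onto finite dismantlable clique-Helly retracts, to which the finite equivalence Theorem~\ref{Helly1}(iii) applies, and Proposition~\ref{Helly_Polat} promotes finite Hellyness to Hellyness. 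Finally (iii)$\Rightarrow$(iv) is topological: deleting a dominated vertex is an elementary collapse of the clique complex $X(G)$, so transfinite dismantling contracts every finite subcomplex and in particular kills $\pi_1$, yielding simple connectivity, while clique-Helly is already part of (iii).

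The crux, and what I expect to be the main obstacle, is the remaining implication (iv)$\Rightarrow$(ii) (equivalently (iv)$\Rightarrow$(i)): upgrading the purely local clique-Helly condition to the global metric conditions using only simple connectivity. This is a combinatorial Cartan--Hadamard argument. I would take a putative violation of the triangle condition, the quadrangle condition, or $1$--Helly, realize the offending closed path as the boundary of a minimal singular disk diagram $\varphi\colon D\to X(G)$ (available by Van Kampen's lemma since $X(G)$ is simply connected), and use the local clique-Helly statement of Proposition~\ref{clique_Helly_triangle} about the common neighborhood of a triangle to perform area-decreasing local modifications of $D$, contradicting minimality. The delicate point is to organize these local moves so that they terminate: one must show that a minimal disk filling any configuration witnessing non-weak-modularity necessarily contains a triangle or a square whose image can be locally flattened by the clique-Helly condition. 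This termination, which encodes the nonpositive-curvature content of the theorem, is where the real work lies.

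For the finite-dimensional addendum, (iv)$\Leftrightarrow$(v) reduces to the gap between simple connectivity and contractibility of $X(G)$. Having established (iv)$\Rightarrow$(i), the graph $G$ is Helly and hence dismantlable; a finite-dimensional dismantlable complex collapses to a point on every finite subcomplex, so all its homology vanishes, and together with simple connectivity Whitehead's theorem forces $X(G)$ to be contractible, giving (iv)$\Rightarrow$(v). The reverse (v)$\Rightarrow$(iv) is immediate, since contractible spaces are simply connected, so the finite-dimensionality hypothesis is used only to pass from simple connectivity to full contractibility.
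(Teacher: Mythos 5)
The decisive implication here is (iv)$\Rightarrow$(i) -- that the purely local clique-Helly condition together with simple connectivity of $X(G)$ yields the global Helly property -- and this is exactly where your proposal stops being a proof. You propose realizing a violation of the triangle condition, the quadrangle condition, or $1$-Hellyness as the boundary of a minimal singular disk diagram and performing area-decreasing moves supplied by Proposition~\ref{clique_Helly_triangle}, but you yourself flag the termination of these moves as ``where the real work lies.'' That step \emph{is} the theorem: nothing in the sketch shows that a minimal filling of such a configuration must contain a locally flattenable triangle or square, and there are nontrivial set-up issues even before termination (a failure of TC, QC, or $1$-Hellyness is not the boundary condition of a single cycle, and $1$-Hellyness is a condition on vertex stars rather than on $2$-cells of a triangle-square structure, so it is unclear what diagram is being contradicted). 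The route actually used in \cite{CCHO}, and surfaced in this paper as Theorem~\ref{t:lotogloHell_bis}, is different: one proves that the $1$-skeleton of the universal cover of \emph{any} clique-Helly complex is Helly, by constructing the cover ball-by-ball and verifying the finite Helly property inductively along the construction; (iv)$\Rightarrow$(i) then follows at once because simple connectivity makes $X(G)$ its own universal cover. Without executing your diagrammatic scheme or reproducing some such construction, the core of the proposal is a genuine gap, not a proof.

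The remaining issues are concentrated in the infinite-graph bookkeeping and in the addendum (v). Proposition~\ref{Helly_Polat} requires the absence of infinite cliques, and ``treating the case of infinite cliques separately'' is not an argument -- the Remark after Proposition~\ref{clique_Helly_triangle} shows that local criteria genuinely misbehave in that regime. Dismantlability is defined via an arbitrary well-order, so your (iii)$\Rightarrow$(i) and (iii)$\Rightarrow$(iv) arguments implicitly assume an order of type $\omega$ (otherwise initial segments are infinite and the composite retractions need limit-stage care); moreover ``collapses to a point on every finite subcomplex'' is false as stated, since finite subcomplexes of $X(G)$ need not be collapsible -- the usable statement is that suitable initial segments are \emph{retracts}, hence isometrically embedded, Helly by Theorem~\ref{Helly1}, and with contractible clique complexes. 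Your treatment of (v) is self-undermining: the Whitehead argument you give for (iv)$\Rightarrow$(v) never invokes finite-dimensionality, which would render that hypothesis in the theorem redundant; it is present precisely because contractibility of infinite-dimensional, transfinitely dismantlable complexes is delicate, so you must either locate where your argument uses it or explain why it can be dropped -- as written you have asserted its role rather than identified it. Finally, in (ii)$\Rightarrow$(iii), a common neighbor of the $1$-balls around $B_1(v)$ produced by $1$-Hellyness alone need not lie closer to the basepoint; the standard domination argument adjoins the ball $B_{k-1}(x_0)$ and therefore uses the full Helly property, so your dismantling step tacitly depends on having proved (ii)$\Rightarrow$(i) first, a dependence that should be made explicit.
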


The following result shows the connection between Helly
complexes and clique-Helly complexes:

\begin{theorem}[\cite{CCHO}]\label{t:lotogloHell_bis}
Let $G$ be a (finitely) clique-Helly graph and let $\tG$ be the
$1$--skeleton of the universal cover $\widetilde
X:=\widetilde{X}(G)$ of the clique complex $X:=X(G)$ of
$G$. Then $\tG$ is a (finitely) Helly graph.  In particular, $G$ is a
(finitely) Helly graph if and only if $G$ is (finitely) clique-Helly
and its clique complex is simply connected.
\end{theorem}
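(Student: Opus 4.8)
The plan is to reduce everything to the local-to-global criterion of Theorem~\ref{t:lotogloHell}(iv): a graph is (finitely) Helly if and only if it is (finitely) clique-Helly and its clique complex is simply connected. By construction the universal cover \tX is simply connected. Moreover, since being a clique (flag) complex is a purely local condition and a covering map is a local isomorphism (an isomorphism on each star), the cover \tX of the flag complex $X=X(G)$ is again flag, so $\tX = X(\tG)$. Thus the substance of the theorem is the single assertion that \emph{clique-Hellyness lifts to the cover}: if $G$ is (finitely) clique-Helly, then so is \tG. Granting this, Theorem~\ref{t:lotogloHell}(iv) applied to \tG immediately gives that \tG is (finitely) Helly, and for graphs without infinite cliques Proposition~\ref{Helly_Polat} lets us pass freely between the finite and the unrestricted Helly properties.

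To prove that \tG is clique-Helly I would verify the local triangle criterion of Proposition~\ref{clique_Helly_triangle}, systematically exploiting that $p\colon\tX\to X$ restricts to an isomorphism $p|_{\mathrm{St}(\tv,\tX)}\colon \mathrm{St}(\tv,\tX)\to\mathrm{St}(p(\tv),X)$ on every star. Let $\widetilde T=\{\ta,\tb,\tc\}$ be a triangle of \tG. Its image $T=\{a,b,c\}:=p(\widetilde T)$ is a genuine triangle of $G$: the three vertices are distinct and pairwise adjacent because $p$ is injective on each star and carries edges to edges. Every vertex of $\widetilde T^{*}$ (those adjacent to at least two vertices of $\widetilde T$) projects into $T^{*}$, and $p$ is in fact \emph{injective} on $\widetilde T^{*}$: by a pigeonhole argument any two of its vertices each select two of the three triangle-vertices, hence share one, and so lie in a common star on which $p$ is injective.

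The heart of the argument is to produce the required dominating vertex. Let $w\in T^{*}$ be adjacent to all of $T^{*}$, furnished by clique-Hellyness of $G$; in particular $\{a,b,c,w\}$ is a $K_4$, hence a $3$-simplex contained in $\mathrm{St}(a,X)$. Lifting this simplex through $p|_{\mathrm{St}(\ta,\tX)}$ and invoking uniqueness of the lifts of $b,c$ in $\mathrm{St}(\ta,\tX)$ (which must be \tb and \tc) yields a $4$-clique $\{\ta,\tb,\tc,\tw\}$, so $\tw\in\widetilde T^{*}$ and $\tw\sim\ta$. To see that \tw dominates $\widetilde T^{*}$, fix any $\tx\in\widetilde T^{*}$ with $\tx\neq\tw$ and choose a triangle-vertex, say \ta, with $\tx\sim\ta$. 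Writing $x=p(\tx)$, the domination of $w$ gives that $\{a,x,w\}$ is a triangle of $G$ lying in $\mathrm{St}(a,X)$; its lift containing \ta has, as its remaining vertices, the unique lifts of $x$ and $w$ adjacent to \ta, namely \tx and \tw. Hence $\tx\sim\tw$, so \tw is adjacent to all of $\widetilde T^{*}$, establishing the criterion of Proposition~\ref{clique_Helly_triangle} for \tG.

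The main obstacle is precisely this last lifting step: $T^{*}$ is spread across the three stars of the triangle, so one cannot transport the dominating vertex by a single star-isomorphism, and one must instead lift the individual $K_4$'s and triangles witnessing the adjacencies $w\sim a,b,c$ and $w\sim x$ within the appropriate stars, using the uniqueness of lifts throughout. Once clique-Hellyness of \tG is secured, the ``in particular'' clause follows formally: when $X(G)$ is already simply connected we have $\tX=X$ and $\tG=G$, while conversely a (finitely) Helly $G$ is (finitely) clique-Helly (as Helly $\Rightarrow$ $1$-Helly $\Rightarrow$ clique-Helly) with simply connected clique complex by the implication $(i)\Rightarrow(iv)$ of Theorem~\ref{t:lotogloHell}.
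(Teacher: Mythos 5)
The paper contains no proof of this theorem at all---it is imported wholesale from \cite{CCHO}---so there is no in-paper argument to compare against; your proposal has to be judged on its own terms. Judged so, your overall route is legitimate within this paper's deductive structure (both Theorem~\ref{t:lotogloHell} and Theorem~\ref{t:lotogloHell_bis} are quoted results, so using the former to derive the latter is fine, even though it likely inverts the logical order of \cite{CCHO}, where the ``in particular'' clause, which is literally Theorem~\ref{t:lotogloHell}(i)$\Leftrightarrow$(iv), would be \emph{deduced from} the cover statement). Your central lifting argument is essentially correct: covers of flag complexes are flag by the star-isomorphism argument you sketch, so $\tX=X(\tG)$ is simply connected; the pigeonhole injectivity of $p$ on $\widetilde{T}^{*}$ is right; constructing $\tw$ by lifting the $4$-clique $\{a,b,c,w\}$ through $p|_{\mathrm{St}(\ta,\tX)}$ works; and the domination step, lifting each triangle $\{z,x,w\}$ at the star of a triangle-vertex $\tz$ adjacent to $\tx$, goes through using uniqueness of lifts in stars. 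Two omissions are harmless: the degenerate case $w\in\{a,b,c\}$ (take $\tw$ to be the corresponding vertex of $\widetilde{T}$; the same triangle-lifting applies), and the identification of the lift of $w$ adjacent to $\tb$ or $\tc$ with $\tw$ (immediate, since $\tw$ is a vertex of the lifted $4$-clique).

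The one genuine gap concerns generality, and it matters because the theorem carries the parenthetical ``(finitely)'' precisely to cover graphs that are not locally finite. Proposition~\ref{clique_Helly_triangle} is stated, and is true, only for graphs with finite cliques, and the direction you use upstairs---triangle criterion $\Rightarrow$ clique-Helly---is exactly the direction that fails beyond that hypothesis: the Remark immediately following Proposition~\ref{clique_Helly_triangle} exhibits a graph with an infinite clique satisfying the criterion that is \emph{not} clique-Helly. So your argument proves the theorem for graphs with finite cliques (in particular under the paper's standing local-finiteness convention, where it suffices for every application made in this paper), but not in the generality in which Theorem~\ref{t:lotogloHell} is explicitly advertised (``not necessarily finite or locally finite''). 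In general your criterion for $\tG$ yields only that $\tG$ is \emph{finitely} clique-Helly (the Berge--Duchet-style induction needs no finiteness in that direction), and then the appeal to Theorem~\ref{t:lotogloHell}(iv) requires a ``finitely'' analogue that is not what that theorem states; Proposition~\ref{Helly_Polat} repairs the last step only for graphs without infinite cliques, and it concerns balls, not cliques. Relatedly, the dominating vertex $w$ downstairs should in general be extracted directly from clique-Hellyness (the maximal cliques meeting $T$ in at least two vertices pairwise intersect, and a common vertex of this family dominates $T^{*}$, by Zorn) rather than through Proposition~\ref{clique_Helly_triangle}, whose hypothesis again demands finite cliques. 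With those substitutions your proof is complete in the finite-clique regime; the fully general statement would need an argument not routed through the local triangle criterion.
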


As noticed in~\cite{CCHO}, Theorem~\ref{t:lotogloHell_bis} and its proof
lead to two  conclusions. The first one is: if a simplicial complex  $X$ is clique-Helly
(for arbitrary families of maximal cliques),
then its universal cover $\widetilde{X}$ is Helly (for arbitrary families of balls of its 1-skeleton).
The second one is: if $X$ is finitely clique-Helly, then its universal cover
is finitely Helly (this holds even if $X$ contains infinite cliques). From~\cite[Theorem 9.1]{CCHO} it follows that Helly graphs satisfy a
quadratic isoperimetric inequality. It was shown in~\cite{Qui} that any finite Helly graph $G$
has the stabilized clique property, i.e., there exists
a complete subgraph of $G$ invariant under the action of the automorphism group of $G$.
Other properties of Helly graphs will be presented below.

\subsection{Injective hulls  and Hellyfication}\label{injhull+hellyfication}

We will show that for any graph
$G$ there exists a smallest Helly graph $\He(G)$ comprising $G$ as an isometric
subgraph; we call $\He(G)$ the \emph{Hellyfication} of $G$ (analogously, we will
denote by $\He(X(G))$ the clique complex of $\He(G)$ and refer to it as to the
\emph{Hellyfication} of $X(G)$).

Let $(X,d)$ be an integer-valued metric space. An \emph{integer metric form}
on $X$ is a function $f\colon X\to \mathbb{Z}$ such that
$f(v)+f(w)\geq d(v,w)$, for all $v,w\in X$. Let $\Delta^0(X)$
denote the set of all integer metric forms on $X$. An integer metric
form is \emph{extremal} if it is minimal pointwise.  We define the
metric space $E^0(X)\subset \Delta^0(X)$ as the set of all extremal
integer metric forms on $(X,d)$ endowed with the sup-metric
$d_{\infty}$.  The embedding $e\colon X\to E^0(X)$ is defined as
$v\mapsto d(v,\cdot)$. The pair $(e,E^0(X))$ is the \emph{discrete
  injective hull} of $X$. We define a graph structure on $E^0(X)$ by
putting an edge between two extremal forms $f,g \in E^0(X)$ if
$d_\infty(f,g) = 1$. With some abuse of notation, we also denote this
graph by $E^0(X)$. If $G=(V,E)$ is a graph with the path metric $d$,
we will denote by $E^0(G)$ and $E(G)$ the discrete injective hull
$E^0(V(G))$ and the injective hull of the metric space $(V(G),d)$,
respectively. Similarly, we write $e(G)$ instead of $e(V(G))$.

The following result is well known, see~\cite{JaMiPou,Pesch87,Pesch88}, and
is the discrete counterpart of Isbell's Theorem~\ref{injective-hull-isbell}.

\begin{theorem}\label{t:dinjhull}
  If $(X,d)$ is an integer-valued metric space, then
  $E^0(X) = E(X) \cap \Z^X$ is the smallest Helly graph into which
  $(X,d)$ is isometrically embedded.  In particular, the discrete
  injective hull $E^0(G)$ of a graph $G$ is contained as an isometric
  subgraph in any Helly graph $G'$ containing $G$ as an isometric
  subgraph and is the Hellyfication $\He(G)$ of $G$.
\end{theorem}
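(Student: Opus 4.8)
The plan is to reproduce Isbell's construction (Claims~\ref{1-lip}--\ref{dress}) verbatim over $\Z$, exploiting that on a \emph{discrete} metric space every supremum in the definition of extremality is attained (Claim~\ref{tight}), so that all the $\epsilon$-estimates of the continuous proof collapse to exact equalities. I begin with the set-theoretic identity $E^0(X)=E(X)\cap\Z^X$. The inclusion $E(X)\cap\Z^X\subseteq E^0(X)$ is immediate: an integer form that is minimal among all real metric forms is a fortiori minimal among integer ones. For the converse, suppose $f\in E^0(X)$ is not extremal as a real form; then it fails to be tight at some $x$, i.e.\ $f(x)+f(y)>d(x,y)$ for every $y$, and since $d$ and $f$ are integral this reads $f(x)+f(y)\ge d(x,y)+1$ for all $y$ (in particular $f(x)\ge 1$, since $y=x$ would otherwise force tightness). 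Lowering $f$ by $1$ at $x$ then yields a strictly smaller integer metric form, contradicting minimality. Hence $f$ is tight, so $f\in E(X)$ by the extremality criterion, and $E^0(X)\subseteq E(X)\cap\Z^X$.

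The heart of the argument is to show that $(E^0(X),d_{\infty})$ is a \emph{discrete hyperconvex space}. I would transcribe Claims~\ref{1-lip}, \ref{f(x)}, \ref{composition} and~\ref{EX-injective} to integer forms: for $f\in E^0(X)$ one has $f$ $1$-Lipschitz and $f(x)=d_{\infty}(f,e(x))$; if $s$ is an extremal integer form on $E^0(X)$ then $se$ is an extremal integer form on $X$; and, given integer forms $f_i$ and integer radii $r_i$ with $d_{\infty}(f_i,f_j)\le r_i+r_j$, the function $r(f):=\inf_i\bigl(r_i+d_{\infty}(f,f_i)\bigr)$ is an integer metric form on $E^0(X)$ with $r(f_i)\le r_i$. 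Choosing a minimal integer form $s\le r$ (it exists by Zorn, minima being attained by discreteness) and setting $h:=se$, the integer analogue of Claim~\ref{EX-injective} gives $d_{\infty}(h,f_i)\le r(f_i)\le r_i$ for all $i$, so $h\in\bigcap_i B_{r_i}(f_i)$. This is the Helly property for integer balls. Applying it to the single pair of radii $r$ and $k-r$, where $k:=d_{\infty}(f,g)$ (whose hypothesis $d_{\infty}(f,g)\le k$ holds trivially), produces a point at $d_{\infty}$-distance exactly $1$ from $f$ and $k-1$ from $g$; iterating yields a $d_{\infty}$-geodesic of integer forms, so the \emph{graph} metric of $E^0(X)$ equals $d_{\infty}$. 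Consequently the graph balls coincide with the $d_{\infty}$-balls, $E^0(X)$ is a Helly graph, and $e\colon x\mapsto d(x,\cdot)$ embeds $X$ isometrically into it (Claims~\ref{Kuratowski},~\ref{f(x)}).

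The main obstacle here is the integer composition claim (the analogue of Claim~\ref{composition}): one must verify that the form obtained from $s$ by decreasing its value by $1$ at the single point $e(x)$ is still an integer metric form on $E^0(X)$. This is exactly the step where the \emph{exact} tightness of Claim~\ref{tight} (the attained supremum, giving some $y\in X$ with $f(x)+f(y)=d(x,y)$) replaces the continuous $\epsilon$-argument, turning the chain of inequalities of the original proof into a clean integral estimate; everything else is routine bookkeeping.

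Finally, for minimality let $G'$ be a Helly graph containing $X$ isometrically. Since Helly graphs are precisely the absolute retracts in the category of graphs with nonexpansive maps (\cite{JaMiPou,Pesch87,Pesch88}; cf.\ Theorems~\ref{Helly1} and~\ref{t:lotogloHell}), $G'$ is an injective object, so the isometric inclusion $e(X)\hookrightarrow G'$ extends to a nonexpansive map $\alpha\colon E^0(X)\to G'$; in particular $d_{G'}(\alpha f,\alpha g)\le d_{\infty}(f,g)$. For the reverse inequality I would invoke the discrete Dress identity, namely the restriction of Claim~\ref{dress} to the integer points $E^0(X)\subseteq E(X)$ (valid with the supremum attained): picking $x,y\in X$ with $d_{\infty}(f,g)=d(x,y)-f(y)-g(x)$, and using $\alpha e(y)=y$, $\alpha e(x)=x$ so that $d_{G'}(\alpha f,y)\le f(y)$ and $d_{G'}(\alpha g,x)\le g(x)$, the triangle inequality $d(x,y)\le d_{G'}(x,\alpha g)+d_{G'}(\alpha g,\alpha f)+d_{G'}(\alpha f,y)$ gives $d_{\infty}(f,g)\le d_{G'}(\alpha f,\alpha g)$. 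Thus $\alpha$ is an isometric embedding, exhibiting $E^0(X)$ as an isometric subgraph of every Helly graph containing $X$ and hence as the smallest such graph; this is the exact discrete counterpart of the monotonicity Corollary~\ref{monotone}. Specializing to $X=V(G)$ with the path metric identifies $E^0(G)$ with $\He(G)$ and yields the final assertion, the passage between finite and infinite families of balls being supplied, if needed, by Proposition~\ref{Helly_Polat}.
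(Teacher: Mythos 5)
Your proof is correct, and it diverges from the paper's own argument at exactly one structural point: the verification that the balls of $(E^0(X),d_\infty)$ satisfy the Helly property. The paper does not redo Isbell's machinery over $\Z$; instead it transfers to the continuous hull: given the radius function it takes an integer extremal form $t\in E^0(E^0(X))$, extends it (Zorn) to a real metric form $t'$ on $E(X)$, picks a real extremal $s\le t'$, and uses discrete tightness of $t$ to show that $s$ and $t$ agree on $E^0(X)$; the already-proved continuous Claims~\ref{composition} and~\ref{EX-injective} then place $se=te$ in all the balls, and $te\in E(X)\cap\Z^X=E^0(X)$ by the set-theoretic identity. You instead re-prove the integer analogues of Claims~\ref{1-lip}, \ref{f(x)}, \ref{composition} and~\ref{EX-injective} internally, and you correctly isolate the only delicate point --- the integer composition claim --- together with the correct fix: the exact tightness available in the discrete setting (some $y$ with $f(x)+f(y)=d(x,y)$) collapses the $\epsilon$-chain of the continuous proof to the exact estimate $te(x)+t(f)\ge h(x)+h(y)-d_\infty(e(y),f)\ge d(x,y)-f(y)=f(x)$, so lowering $s$ by $1$ at the single point $e(x)$ still yields an integer metric form. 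Your explicit McShane-type extension $r(f)=\inf_i\bigl(r_i+d_\infty(f,f_i)\bigr)$ (a genuine integer metric form, the infimum being attained by integrality) likewise replaces the paper's Zorn-lemma extension of the radius function; both work. The remaining pieces match the paper: the identity $E^0(X)=E(X)\cap\Z^X$ via tightness and the lower-by-one trick, the connectivity argument (Helly for the pair of radii $1$ and $k-1$, iterated), and the minimality step, which the paper dispatches as ``identical to Claim~\ref{equivalence}'' and ``similar to Corollary~\ref{monotone}'' --- your write-up (extend $e(X)\hookrightarrow G'$ to a nonexpansive $\alpha$ by injectivity of Helly graphs, then force $d_\infty(f,g)\le d_{G'}(\alpha f,\alpha g)$ via the discrete Dress identity) is precisely what those references unfold to. What your route buys is self-containedness, needing nothing about $E(X)$ beyond its definition; what the paper's buys is economy, reusing the continuous claims verbatim at the cost of the slightly fiddly restriction argument $s|_{E^0(X)}=t$. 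One cosmetic remark: your closing appeal to Proposition~\ref{Helly_Polat} is unnecessary, since the radius-function argument already handles arbitrary families of balls.
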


\begin{proof}
  First we show that the sets $E^0(X)$ and $E(X) \cap \Z^X$
  coincide. Observe that by the definitions of $E^0(X)$ and
  $E(X) \cap \Z^X$, we have $E(X)\cap {\mathbb Z}^X \subseteq E^0(X)$.
  To show the converse inclusion, first note that $E^0(X)$ satisfies
  the discrete analog of Claim~\ref{tight}: if $f\in E^0(X)$, then
  for any $x$ in $X$, there exists $y$ in $X$ such that
  $f(x)+f(y)=d(x,y)$. By way of contradiction, suppose there exist
  $f\in E^0(X)$ and $g\in E(X)$ such that $g\ne f$ and $g\leq f$. Then
  $g(x)<f(x)$ for some point $x$ of $X$. By the discrete analog of
  Claim~\ref{tight}, there exists $y$ in $X$ such that
  $f(x)+f(y)=d(x,y)$. But since $g(x)<f(x)$ and $g(y)\le f(y)$, we obtain
  $g(x)+g(y)<d(x,y)$, contrary to the assumption that $g\in
  E(X)$. Therefore, $E^0(X)\subseteq E(X)\cap {\mathbb Z}^X$ and thus
  $E^0(X)=E(X)\cap {\mathbb Z}^X$. Consequently, $(E^0(X),d_{\infty})$
  is also an integer-valued metric space.

  Next we show that the balls of $(E^0(X),d_{\infty})$ satisfy the
  Helly property. Let $f_i\in E^0(X), r_i\in {\mathbb Z}^+, i\in I$
  such that $d_{\infty}(f_i,f_j)\le r_i+r_j$. We may suppose that
  $r\in \Delta^0(E^0(X))$ is an integer metric form on $E^0(X)$
  extending the radius function $r_i$ (i.e., $r(f_i)=r_i, i\in I$) and
  $t \in E^0(E^0(X))=E(E^0(X))\cap {\mathbb Z}^{E^0(X)}$ is an
  integer metric form on $E^0(X)$ such that $t\le r$. Let
  $t'\in \Delta(E(X))$ be a metric form on $E(X)$ extending $t$, i.e.,
  for any $f \in E^0(X), t'(f)=t(f)$ (its existence follows by Zorn
  lemma).  Let $s\in E(E(X))$ such that
  $s\le t'$.  By the discrete analog of Claim~\ref{tight}, for any
  $f \in E^0(X)$, there exists $g \in E^0(X)$ such that
  $t(f) + t(g) = d_\infty(f,g)$. Since
  $s(f) + s(g) \leq t'(f) + t'(g) = t(f) + t(g) = d_\infty(f,g) \leq
  s(f) + s(g)$, we have that $s(f) = t'(f) = t(f)$ and
  $s(g) = t'(g) = t(g)$ since $s(h) \leq t'(h) = t(h)$ for any
  $h \in E^0(X)$. Consequently, $s_{|E^0(X)} = t$. By
  Claim~\ref{composition} and the proof of Claim~\ref{EX-injective},
  $se$ belongs to $E(X)$ and is a common point of all balls
  $B_{r_i}(f_i)$. Since $e(x) \in E^0(X)$ for any $x \in X$, and since
  $s$ and $t$ coincide on $E^0(X)$, $se = te$. Therefore, $te$ belongs
  to $E^0(X)$ and is a common point of all balls $B_{r_i}(f_i)$. This
  shows that the balls of $(E^0(X),d_\infty)$ satisfy the Helly
  property.

  We show by induction on the distance $d_\infty(f,g)$ that any two
  vertices $f,g \in E^0(X)$ are connected in the graph $E^0(X)$ by a
  path of length $d_\infty(f,g)$. Indeed, if $d_\infty(f,g) = k$,
  consider a ball of radius $1$ centered at $f$ and a ball of radius
  $k-1$ centered at $g$. By the Helly property, there exists
  $h \in E^0(X)$ such that $d_\infty(f,h) \leq 1$ and
  $d_\infty(h,g) \leq k-1$. By the triangle inequality, these two
  inequalities are equalities. Thus $E^0(X)$ is a Helly graph
  isometrically embedded in $E(X)$. The proof that $E^0(X)$ does not
  contain any Helly subgraph containing $X$ and that all discrete
  injective hulls are isometric is identical to the proof of
  Claim~\ref{equivalence}.  The proof that $E^0(X)$ is an isometric
  subgraph of any Helly graph $G'$ containing $G$ as an isometric
  subgraph is similar to the proof of Corollary~\ref{monotone}.
\end{proof}

\begin{remark}
  A direct consequence of the second assertion of Theorem~\ref{t:dinjhull} is
  that if $G$ is Helly, then $\He(G)$ coincides with $G$.
\end{remark}

\begin{remark}\label{rem-EE0}
  For an integer-valued metric space $(X,d)$, the injective hull $E(E^0(X))$
  of the discrete injective hull $E^0(X)$ of $X$ coincides with the
  injective hull $E(X)$ of $X$.
\end{remark}

\subsection{Hyperbolicity and Helly graphs}

In Helly graphs, hyperbolicity can be characterized by forbidding
isometric square-grids.

\begin{proposition}\label{prop-Helly-hyp}
  For a Helly graph $G$, the following are equivalent:
  \begin{enumerate}[(1)]
  \item $G$ has bounded hyperbolicity,
  \item the size of isometric  $\ell_1$--square-grids of $G$
    is bounded,
  \item the size of isometric $\ell_\infty$--square-grids of $G$
    is bounded.
  \end{enumerate}
\end{proposition}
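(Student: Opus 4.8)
The plan is to establish the cyclic chain $(1)\Rightarrow(2)\Rightarrow(3)\Rightarrow(1)$, so that only the last implication carries real content; throughout I take $G$ locally finite (the standing convention of the paper), which guarantees that the clique complex, and hence the injective hull, is finite-dimensional so that Theorem~\ref{t:injbicomb} applies.

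For $(1)\Rightarrow(2)$: an isometric embedding preserves all pairwise distances and therefore the four-point inequality defining $\delta$-hyperbolicity, so every isometric subgraph of a $\delta$-hyperbolic graph is $\delta$-hyperbolic. It then suffices to note that $P_k\square P_k$ is not $\delta$-hyperbolic for $k>\delta$: at the four corners $u=(0,0)$, $v=(k,k)$, $x=(k,0)$, $y=(0,k)$ one has $d(u,v)+d(x,y)=4k$, whereas the other two distance sums equal $2k$; the two largest differ by $2k$, forcing $\delta\ge k$. Hence a $\delta$-hyperbolic Helly graph admits no isometric $\ell_1$-grid of size exceeding $\delta$.

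For $(2)\Rightarrow(3)$ I argue contrapositively, exhibiting a large $\ell_1$-grid inside any large $\ell_\infty$-grid. Let $\iota\colon P_n\boxtimes P_n\hookrightarrow G$ be an isometric $\ell_\infty$-grid and let $D=\{(a,b):0\le a,b\le n,\ a+b\text{ even}\}$ be its even diagonal sublattice. The map $\psi(a,b)=(\tfrac{a+b}{2},\tfrac{a-b}{2})$ sends $D$ bijectively onto the lattice diamond $\{(u,v)\in\Z^2:0\le u+v\le n,\ 0\le u-v\le n\}$, and since $\max(|a-c|,|b-d|)=\tfrac12(|(a-c)+(b-d)|+|(a-c)-(b-d)|)$, it carries the $\ell_\infty$-metric on $D$ to the $\ell_1$-metric on $\psi(D)$. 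As $\iota$ is isometric, the $G$-distance between two points of $D$ equals their $\ell_\infty$-distance, hence the $\ell_1$-distance of their $\psi$-images; restricting to an axis-parallel square of side $\lfloor n/2\rfloor$ inscribed in the diamond yields an isometric $P_{\lfloor n/2\rfloor}\square P_{\lfloor n/2\rfloor}$ in $G$. Thus unbounded $\ell_\infty$-grids force unbounded $\ell_1$-grids, i.e.\ $\neg(3)\Rightarrow\neg(2)$. Note that only this ``easy'' sublattice direction is needed; the cycle deliberately avoids the converse filling-in of the diagonal points.

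The crux is $(3)\Rightarrow(1)$, which I prove contrapositively: assuming $G$ is not hyperbolic I produce arbitrarily large isometric $\ell_\infty$-grids. First pass to the injective hull $E(G)$ of Section~\ref{s:injective}. Because $G$ is Helly we have $E^0(G)=G$ (the Remark after Theorem~\ref{t:dinjhull}), and $E^0(G)=E(G)\cap\Z^{V(G)}$ is coarsely dense in $E(G)$; hence the Kuratowski embedding $e\colon G\to E(G)$ is isometric and coarsely surjective, so it is a quasi-isometry and $E(G)$ is not hyperbolic either. Being finite-dimensional and injective, $E(G)$ carries a convex, consistent, reversible geodesic bicombing by Theorem~\ref{t:injbicomb}, so the flat-plane alternative for spaces with convex bicombing (Descombes--Lang~\cite{DesLang2016,Desc2016}) yields an isometrically embedded $2$-flat $F\subset E(G)$. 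Since $E(G)$ is finite-dimensional injective, and such spaces are polyhedral with the $\ell_\infty$-metric on their cells, the flat $F$ runs in coordinate directions and therefore contains isometric $\ell_\infty$-grids of every size. It remains to transport these back into $G$: using that $G=E^0(G)$ is coarsely dense in $E(G)$ together with the integrality of $E^0(G)$ furnished by Theorem~\ref{t:dinjhull}, each continuous $\ell_\infty$-grid is, after rescaling, approximated by a grid with integer coordinates in $E^0(G)=G$, and the rigidity of the $\ell_\infty$-metric under Helly nearest-point projections keeps the approximation isometric rather than merely quasi-isometric. This final discretization --- turning a continuous flat into \emph{genuinely} isometric finite $\ell_\infty$-grids inside $G$ itself, and checking that the limiting flat is $\ell_\infty$ rather than some other polyhedral norm --- is the main obstacle; an equivalent route replaces the flat-plane theorem by an asymptotic-cone argument (non-hyperbolicity gives a non-tree cone, the cones of Helly graphs are again injective-type spaces with convex bicombing, and un-scaling a flat in the cone produces the grids), but the discretization difficulty is identical.
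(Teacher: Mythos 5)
Your first two implications are fine: the four-point computation on the corners of $P_k\square P_k$ correctly shows a $\delta$-hyperbolic graph has no isometric $\ell_1$-grid of side exceeding $\delta$, and your even-diagonal sublattice extraction of an $\ell_1$-grid from an $\ell_\infty$-grid is exactly the second half of the paper's own proof (the paper's grid $H_2'$). Your cyclic chain even cleverly avoids the $\ell_1\Rightarrow\ell_\infty$ grid construction, which the paper handles by Hellyfying the $\ell_1$-grid via Theorem~\ref{t:dinjhull}. But the entire content of the proposition is then concentrated in your step $(3)\Rightarrow(1)$, and that step has a genuine gap --- one you partly acknowledge yourself. First, the flat-plane alternative of Descombes--Lang for spaces with convex geodesic bicombing is a theorem about proper metric spaces admitting a \emph{cocompact} isometry group; Proposition~\ref{prop-Helly-hyp} is a statement about an arbitrary locally finite Helly graph with no group action whatsoever, so the tool you invoke simply does not apply (and without cocompactness, non-hyperbolicity does not yield an isometric $2$-flat: glue larger and larger flat squares along a ray to see why no flat plane need exist). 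Second, even where the theorem applies, the flat it produces is a plane with \emph{some} norm, not necessarily $\ell_\infty$, and your final ``discretization'' --- that ``rigidity of the $\ell_\infty$-metric under Helly nearest-point projections keeps the approximation isometric rather than merely quasi-isometric'' --- is an assertion with no supporting argument; converting a continuous flat in $E(G)$ into \emph{genuinely isometric} finite $\ell_\infty$-grids in the graph $G$ is precisely the hard combinatorial content, and nothing in the paper or in the cited literature does it for you. A smaller but real error: local finiteness of $G$ does not make $X(G)$ or $E(G)$ finite-dimensional (a locally finite Helly graph may contain cliques of unbounded finite size), so even the appeal to Theorem~\ref{t:injbicomb} at the outset is unjustified as stated.

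What replaces all of this in the paper is a purely combinatorial citation: by \cite[Theorem~9.6]{CCHO}, a weakly modular graph has bounded hyperbolicity if and only if its metric triangles and its isometric ($\ell_1$-)square-grids have bounded size; since Helly graphs are pseudo-modular, their metric triangles have size at most $1$, so $(1)\Leftrightarrow(2)$ follows immediately, with no group action, no injective hull, and no norm-identification problem. That theorem about weakly modular graphs is exactly the missing ingredient your flat-plane-plus-discretization scheme was trying to reconstruct; without it (or a full proof of your discretization step, including the identification of the flat's norm), your argument establishes only $(1)\Rightarrow(2)$ and $\neg(3)\Rightarrow\neg(2)$, which do not close the cycle.
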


\begin{proof}
  Since any Helly graph $G$ is weakly modular, by
  \cite[Theorem~9.6]{CCHO}, $G$ has bounded hyperbolicity if and only
  if the metric triangles and the isometric square-grids are of
  bounded size. Since Helly graphs are pseudo-modular, all metric triangles 
  of $G$ are of size at most one. Therefore $G$ has bounded hyperbolicity if and only the size
  of the isometric $\ell_1$--square-grids of $G$ are bounded. We now
  show that in a Helly graph $G$, the size of the isometric
  $\ell_1$--square-grids is bounded if and only if the size of the isometric
  $\ell_\infty$--square-grids is bounded.

  Suppose first that $G$ contains an isometric $2k \times 2k$
  $\ell_1$--grid $H_1$. Observe that we can represent $H_1$ as follows:
  $V(H_1) = \left\{(i,j) \in \Z^2 : |i|+|j| \leq 2k \text{ and } i+j
    \text{ is even}\right\}$ and $(i,j)(i',j') \in E(H_1)$ if and only
  if $|i-i'| = |j-j'| = 1$, i.e., if and only if
  $d_\infty((i,j),(i',j')) = 1$. Since $G$ is Helly, the Hellyfication
  $H_1'$ of $H_1$ is an isometric subgraph of $G$ and $H_1'$ can then
  be described as follows:
  $V(H_1') = \left\{(i,j) \in \Z^2 : |i|+|j| \leq 2k\right\}$ and
  $(i,j)(i',j') \in E(H_1')$ if and only if
  $d_\infty((i,j),(i',j')) = 1$. But then, observe that the set of
  vertices
  $\left\{(i,j) \in V(H_1'): |i| \leq k \text{ and } |j| \leq
    k\right\}$ induce a $2k \times 2k$ $\ell_\infty$--grid in $H_1'$
  and thus in $G$.

  Suppose now that $G$ contains an isometric $2k \times 2k$
  $\ell_\infty$--grid $H_2$. We can represent $H_2$ as
  follows:
  $V(H_2) = \left\{(i,j) \in \Z^2: |i| \leq k \text{ and } |j| \leq
    k\right\}$ and $(i,j)(i',j') \in E(H_1')$ if and only if
  $d_\infty((i,j),(i',j')) = 1$. Let $H_2'$ be the graph induced
  by
  $V(H'_2) = \left\{(i,j) \in \Z^2: |i| + |j| < k \text{ and }
    i+j \text{ is even}\right\}$. Observe that $H'_2$ is
  isomorphic to a $k \times k$ $\ell_1$--grid. Since $H'_2$ is an
  isometric subgraph of $H_2$, $G$ contains an isometric
  $k \times k$ $\ell_1$--grid.
\end{proof}

Dragan and Guarnera~\cite{DrGu} characterize precisely the
hyperbolicity of a Helly graph by presenting three families of
isometric subgraphs of the $\ell_\infty$--grid that are the only
obstructions to a small hyperbolicity.
 \section{Helly graphs constructions}\label{s:operations}

In the previous section,  with any connected graph $G$ we associated in a canonical way a Helly graph $\He(G)$. However, not every group acting geometrically on $G$ acts also geometrically on $\He(G)$.
In this section, we prove or recall that several standard graph-theoretical operations preserve Hellyness and that other operations applied to some non-Helly graphs lead to Helly graphs.
As we will show in the next section, those constructions also preserve the geometric action of the group, allowing to prove that some classes of groups are Helly.

\subsection{Direct products and amalgams}
\label{s:dirandamal}

We start with the following well-known result:

\begin{proposition}\label{direct-products} The classes of Helly and clique-Helly graphs are closed by taking direct products of finitely many factors and retracts.
\end{proposition}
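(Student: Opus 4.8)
The plan is to treat the two operations separately and, within each, the ball-Helly and the clique-Helly properties separately, exploiting the transparent metric and clique structure of direct (strong) products and the push-forward behaviour of retractions.

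First I would record the two structural facts about a direct product $G=\boxtimes_{i=1}^{n} G_i$ that do all the work. Since adjacency in $G$ is coordinatewise ``equal-or-adjacent'', the distance satisfies $d_G(x,y)=\max_i d_{G_i}(x_i,y_i)$; consequently every ball splits as a product of equal-radius balls, $B_r(x,G)=\prod_i B_r(x_i,G_i)$, and every maximal clique of $G$ is a product $\prod_i C_i$ of maximal cliques of the factors (each projection $\pi_i(C)$ is a clique, so $C\subseteq\prod_i\pi_i(C)\subseteq\prod_i M_i$ for maximal $M_i\supseteq\pi_i(C)$, and maximality of $C$ forces equality). In both settings the decisive observation is the same: by the triangle inequality, two balls $B_{r_\alpha}(x^\alpha,G)$ and $B_{r_\beta}(x^\beta,G)$ (resp.\ two maximal cliques $\prod_i C^\alpha_i$ and $\prod_i C^\beta_i$) meet in $G$ if and only if their $i$-th projections meet in $G_i$ for every $i$. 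Hence a pairwise-intersecting family in $G$ projects, coordinate by coordinate, to a pairwise-intersecting family in each $G_i$; applying the (clique-)Helly property of each factor yields a common point $p_i\in G_i$, and $p=(p_i)_i$ lies in the whole intersection in $G$. With finitely many factors this reconstruction causes no difficulty, which settles closure of both classes under direct products.

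For retracts, let $\varphi$ be a retraction of $G$ onto an induced subgraph $H$. I would first note the standard fact that $H$ is an isometric subgraph of $G$: applying the nonexpansive map $\varphi$ to a $G$-geodesic between two vertices of $H$ produces an $H$-walk of no greater length, so $d_H=d_G$ on $H$, and thus $B_r(x,H)=B_r(x,G)\cap H$. For the ball-Helly property this gives a one-line argument: a pairwise-intersecting family of balls of $H$ is a fortiori pairwise intersecting in $G$, so by Hellyness of $G$ it has a common point $p$; then $\varphi(p)\in H$, and since $\varphi$ fixes every $x^\alpha\in H$ and is nonexpansive, $d_G(x^\alpha,\varphi(p))\le d_G(x^\alpha,p)\le r_\alpha$, so $\varphi(p)$ lies in the intersection taken in $H$.

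The clique-Helly case for retracts is the step I expect to be the main obstacle, because $\varphi$ sends cliques to cliques but need not preserve maximality, so the product-style bookkeeping breaks down. Here I would instead use the triangle characterization (Proposition~\ref{clique_Helly_triangle}): fix a triangle $T=\{a,b,c\}$ of $H$, which is also a triangle of $G$, and write $T^*$ for its extended triangle; since $H$ is induced, $T^*_H=T^*_G\cap V(H)$. Let $w\in T^*_G$ be the vertex adjacent to all other vertices of $T^*_G$ furnished by clique-Hellyness of $G$. The candidate universal vertex of $T^*_H$ is $\varphi(w)$: for any $v\in T^*_H\subseteq T^*_G$ we have $w\sim v$ or $w=v$, hence (as $\varphi$ is simplicial and fixes $v$) $\varphi(w)\sim v$ or $\varphi(w)=v$; applying this to $a,b,c$ shows $\varphi(w)$ is adjacent or equal to all three, so $\varphi(w)\in T^*_H$, while the displayed dichotomy for every $v\in T^*_H$ says precisely that $\varphi(w)$ is adjacent to all remaining vertices of $T^*_H$. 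This is exactly the criterion of Proposition~\ref{clique_Helly_triangle}, so $H$ is clique-Helly. (Local finiteness, assumed throughout, guarantees finite cliques, so that this criterion applies.)
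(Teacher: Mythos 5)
Your proposal is correct, and for direct products and for the ball-Helly property of retracts it coincides with the paper's (deliberately terse) argument: since $d_G=\max_i d_{G_i}$ on $\boxtimes_{i=1}^n G_i$, balls and maximal cliques factor coordinatewise, pairwise intersection is detected coordinate by coordinate, and for retracts a common point furnished by Hellyness of $G$ is pushed into $H=\varphi(G)$ by the nonexpansive retraction. The one place where you genuinely deviate is the clique-Helly property of retracts. The paper's one-liner (``retractions are 1-Lipschitz and preserve the Helly property'') points to a direct extend-and-retract argument: given pairwise intersecting maximal cliques $\{C_\alpha\}$ of $H$, extend each to a maximal clique $\widehat{C}_\alpha$ of $G$; these still pairwise intersect, so clique-Hellyness of $G$ gives $p\in\bigcap_\alpha \widehat{C}_\alpha$; then $\varphi(p)$ is adjacent or equal to every vertex of each $C_\alpha=\varphi(C_\alpha)$, hence $C_\alpha\cup\{\varphi(p)\}$ is a clique of $H$ and maximality forces $\varphi(p)\in C_\alpha$. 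You instead route this step through the local triangle criterion of Proposition~\ref{clique_Helly_triangle}, pushing the universal vertex $w$ of $T^*_G$ to $\varphi(w)$ and verifying both that $\varphi(w)\in T^*_H$ and that it dominates $T^*_H\setminus\{\varphi(w)\}$; your verification is complete and correct, but it invokes the finite-clique hypothesis that criterion requires — harmless under the paper's standing local finiteness assumption, whereas the extend-and-retract argument is shorter and works for arbitrary clique-Helly graphs. Everything else, including the preliminary observation that a retract is an isometric subgraph so that $H$-balls are traces of $G$-balls, is exactly what the paper's proof uses.
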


The first assertion follows from the fact that the balls in a direct product are direct products of balls in the factors and that the maximal cliques of
a direct product are direct products of maximal cliques. The second assertion follows from the fact that retractions are 1-Lipschitz maps and therefore preserve the Helly property.

The amalgam of two Helly graphs along a Helly graph is not necessarily
Helly: the $3$-sun (which is not Helly) can be obtained as an amalgam over an edge of a
triangle and a $3$-fan (which are both Helly); see Figure~\ref{fig-3sun-amalgam}.
\begin{figure}[h]
\centering
\includegraphics{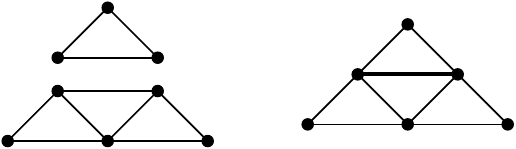}\caption{The $3$-sun can be obtained from the amalgam of a triangle
  and a 3-fan over an edge.}\label{fig-3sun-amalgam}
\end{figure}

Now we
consider amalgams of direct products of (clique-)Helly graphs and, more
generally, of graphs obtained by amalgamating together a collection of
direct products of (clique-)Helly graphs along common subproducts.  We
provide sufficient conditions for these amalgams to be (clique-)Helly.

Given a family $\cH = \{H_j\}_{j\in J}$ of locally finite graphs, a
\emph{finite subproduct} of the direct product
$\boxtimes \cH = \boxtimes_{j \in J} H_j$ is a subgraph
$G = \boxtimes_{j\in J} G^j$ of $\boxtimes \cH$ such that $G^j = H_j$
for finitely many indices and $G^j = \{v_j\}$ where $v_j \in V(H_j)$
for all other indices. For each vertex $v$ of $\boxtimes H$ (or any of
its subgraphs), we denote by $v_j$ the coordinate of $v$ in $H_j$.

A locally finite connected graph $G$ is a \emph{union of graph
  products (UGP)} over a family $\cH = \{H_j\}_{j\in J}$ of locally
finite graphs if there exists a family $\{G_i\}_{i \in I}$ of distinct
finite subproducts of $\boxtimes \cH$ such that $G = \bigcup_i G_i$.
The graphs $G_i$ are called the \emph{pieces} of $G$. Since each
$H_j \in \cH$ is locally finite and each piece of $G$ is a finite
subproduct of $\boxtimes \cH$, each piece of $G$ is also locally
finite.  Observe that $G$ is a subgraph of $\boxtimes \cH$ but not
necessarily an induced subgraph. However, each piece $G_i$ of $G$ is
an induced subgraph of $\boxtimes \cH$.

We say that the pieces of a collection $\{G_{i_k}\}_{k \in K}$ of pieces  of an UGP
$G = \bigcup_{i \in I} G_i \subseteq \boxtimes \cH$ over
$\cH = \{H_j\}_{j \in J}$ \emph{agree} on a factor $H_j$ if there
exists $v_j \in V(H_j)$ such that for each $k \in K$, either
$G_{i_k}^j = H_j$ or $G_{i_k}^j = \{v_j\}$.

\begin{lemma}\label{lem-UGP-agreement}
  Two pieces $G_1$ and $G_2$ of an UGP $G\subseteq \boxtimes \cH$
  have a non-empty intersection if and only if $G_1$ and $G_2$ agree on all
  factors $H_j \in \cH$.

  The set of pieces $\{G_i\}_{i \in I}$ satisfies the Helly property:
  any collection $\{G_{i_k}\}_{k\in K}$ of pairwise intersecting
  pieces has a non-empty intersection, i.e., there exists a vertex $w$
  of $G$ such that for each $k \in K$ and each factor $H_j \in \cH$,
  either $G_{i_k}^j = \{w_j\}$ or $G_{i_k}^j = H_j$.
\end{lemma}

\begin{proof}
  First note that if $G_1$ and $G_2$ agree on all factors $H_j$, then
  for each $j$ there exists $w_j' \in V(G_1^j) \cap
  V(G_2^j)$. Let $w$ be a vertex of $\boxtimes \cH$ such that
  $w_j = w'_j$ for all $j$. Since for each $j$, $G_1^j = \{w_j\}$ or
  $G_1^j = H_j$, the vertex $w$ belongs to $G_1$. Similarly, $w$
  belongs to $G_2$ and thus $G_1$ and $G_2$ have a non-empty
  intersection. Conversely, let $u \in V(G_1) \cap V(G_2)$ and note
  that $u_{j} \in V(G_1^j)$ for every $j$. Consequently, either
  $G_1^j = H_j$ or $G_1^j = \{u_j\}$. Similarly, either $G_2^j = H_j$
  or $G_2^j = \{u_j\}$. In both cases, $G_1$ and $G_2$ agree on $H_j$.

  Let $\{G_{i_k}\}_{k\in K}$ be a collection of pairwise
  intersecting pieces. By the first statement, any two
  pieces of this collection agree on all factors $H_j \in
  \cH$. Consequently, for any factor $H_j$, there exists
  $w_j' \in V(H_j)$ such that for any $k \in K$, $G_{i_k}^j = \{w_j\}$
  or $G_{i_k}^j = H_j$. Consider the vertex $w$ of $\boxtimes \cH$
  such that $w_j = w'_j$ for all $j$ and observe that $w$ belongs to every piece
  of the collection.
\end{proof}

We say that an UGP satisfies the \emph{$3$-piece condition} if for any
three pairwise intersecting pieces $G_1, G_2, G_3$, there exists a
piece $G_4$ intersecting $G_1$, $G_2$, and $G_3$ such that for every
factor $H_j \in \cH$, if for two pieces $G_{i_1}, G_{i_2}$ among
$G_1, G_2, G_3$ we have $G_{i_1}^j = G_{i_2}^j = H_j$, then
$G_4^j = H_j$.

\begin{proposition}\label{prop-clique-UGP}
  If an UGP $G$ over $\cH$   satisfies the $3$-piece condition, then
  every clique of $G$ is contained in a piece of $G$.
\end{proposition}

\begin{proof}
Since $G$ is locally finite, the cliques of $G$ are finite and we
  can proceed by induction on the size $k$ of the clique. By
  definition of $G$, each edge belongs to a piece of $G$. Suppose that
  the assertion holds for all cliques of size at most $k-1$ and
  consider a clique $K$ of size $k$. Let $u, v, w$ be three vertices
  of $K$. Since $K\setminus \{w\}$ is a clique of size $k-1$, there
  exists a piece $G_1$ containing all vertices of
  $K\setminus\{w\}$. If $w \in V(G_1)$, we are done. Assume now that
  $w \notin V(G_1)$. Similarly, we can assume there exist pieces $G_2$
  and $G_3$ such that $K \cap V(G_2) = K\setminus\{u\}$ and
  $K \cap V(G_3) = K\setminus\{v\}$.  Since
  $u \in V(G_1) \cap V(G_3)$, the pieces $G_1$ and $G_3$ agree on
  every factor $H_j \in \cH$. Similarly, $G_1$ and $G_2$ as well as
  $G_2$ and $G_3$ agree on every factor $H_j \in \cH$. Since
  $u \notin V(G_2)$, necessarily there exists a factor $H_{j_2}$ such
  that $G_2^{j_2}$ does not contain $u_{j_2}$. Thus $G_2^{j_2}$
  consists of a single vertex $v_2 \neq u_{j_2}$. Since both $G_1$ and
  $G_3$ agree with $G_2$ on $H_{j_2}$ and since they both contain
  $u_{j_2}$, necessarily $G_1^{j_2} = G_3^{j_2} = H_{j_2}$. Similarly,
  there exist $H_{j_1}, H_{j_3} \in \cH$ and vertices
  $v_1 \in H_{j_1}$ and $v_3 \in H_{j_3}$ such that
  $G_1^{j_1} = \{v_1\}$, $G_2^{j_1} = G_3^{j_1} = H_{j_1}$,
  $G_3^{j_3} = \{v_3\}$, and $G_1^{j_3} = G_2^{j_3} = H_{j_3}$.

  By the $3$-piece condition, there exists $G_4$ intersecting $G_1$,
  $G_2$, and $G_3$ such that for every factor $H_j \in \cH$, if for two
  pieces $G_{i_1}, G_{i_2}$ among $G_1, G_2, G_3$ we have
  $G_{i_1}^j = G_{i_2}^j = H_j$, then $G_4^j = H_j$. We assert that
  $K$ is a clique of $G_4$. Pick any vertex $x \in K$ and note that
  $x$ belongs to at least two pieces among $G_1, G_2, G_3$, say to
  $G_1$ and $G_2$. For each factor $H_j \in \cH$, if $G_4^j \neq H_j$,
  then since $G_4$ agrees with $G_1$ and $G_2$ and by the definition
  of $G_4$, either $G_4^j = G_1^j = \{x_j\}$ or
  $G_4^j = G_2^j = \{x_j\}$. Consequently, $x$ is a vertex of $G_4$
  and thus $K$ is a clique of $G_4$.
  Therefore, all vertices of $K$ belong to a piece of $G$ and since
  any piece is an induced subgraph of $\boxtimes \cH$, we conclude
  that $K$ is a clique of this piece.
\end{proof}

\begin{theorem}\label{th-UGP-3piece}
  If an UGP $G$ over $\cH$  satisfies the $3$-piece condition and every piece of $G$ is clique-Helly,
  then $G$ is a clique-Helly graph. Furthermore, if the clique complex $X(G)$ of $G$ is simply connected, then
  $G$ is a Helly graph.
\end{theorem}

\begin{proof}
Since $G$ has finite cliques, we can use
  Proposition~\ref{clique_Helly_triangle} to establish the
  clique-Helly property for $G$.  Pick any triangle $T = u_1u_2u_3$ of
  $G$ and let $T^*$ be the set of vertices of $G$ adjacent to at least
  two vertices of $T$. For any $v \in T^*$, by
  Proposition~\ref{prop-clique-UGP}, there exists a piece containing a
  triangle $vu_iu_j$; let $P^*$ be the set of all pieces containing
  such triangles. Since the pieces of $P^*$ piecewise intersect, by
  the first assertion of Lemma~\ref{lem-UGP-agreement}, they pairwise
  agree on every factor $H_j \in \cH$. By the second assertion of
  Lemma~\ref{lem-UGP-agreement}, there exists a vertex $w \in G$ such
  that either $G_i^j = \{w_j\}$ or $G_i^j = H_j$ for any piece $G_i$
  of $P^*$. Therefore, $w$ belongs to every piece of $P^*$.

  For each factor $H_j \in \cH$, let $T_j = \{u_j: u \in T\}$ and
  $T_j^* = \{v_j: v \in T_j^*\}$. Note that $T_j$ is either a vertex,
  an edge, or a triangle in $H_j$. Moreover, in the first two cases,
  there exists $u_j \in T_j$ that belongs to the $1$--ball of every
  vertex $v_j \in T_j^*$. If $T_j$ is a triangle, then every vertex
  $v_j \in T_j^*$ is in the $1$--ball of at least two vertices of
  $T_j$. Since $H_j$ is clique-Helly, in all three cases, there exists
  a vertex $w_j \in V(H_j)$ belonging to the $1$--ball of each vertex
  $v_j \in T_j^*$. Observe that if there exists a piece $G_i$ of $P^*$
  such that $G_i^j$ contains only one vertex, then necessarily, $T_j$
  is a vertex or an edge and we can choose $w_j \in V(H_j)$ such that
  $G_i^j = \{w_j\}$.

  Let $w^*$ be the vertex of $G$ such that $w^*_j = w_j$ for every
  factor $H_j \in \cH$. By our choice of $w_j$, for any piece $G_i$ of
  $P^*$ such that $G_i^j$ contains only one vertex, $G_i^j = \{w_j\}$
  and for any other piece $G_i$ of $P^*$, $w_j$ is a vertex of
  $G_i^j = H_j$. Therefore $w^*$ is a vertex that belongs to all
  pieces of $P^*$. For any vertex $v \in T^*$ and any factor
  $H_j \in \cH$, $v_j$ is in the $1$--ball of $w_j$ in $H_j$ by our
  choice of $w_j$. Since each piece $G_i$ of $G$ is an induced
  subgraph of $\boxtimes \cH$, $w^*$ is in the $1$--ball in $G$ of all
  vertices $v$ of $T^*$, establishing that $G$ is clique-Helly.

  The second assertion of the theorem follows from Theorem~\ref{t:lotogloHell}.
\end{proof}

Given a family $\cH = \{H_j\}_{j\in J}$ of locally finite graphs, an
\emph{abstract graph of subproducts (GSP)} $(\cH,\cG,\ell)$ is given by a connected graph
$\cG$ without infinite clique and a map $\ell: V(\cG) \to 2^\cH$
satisfying the following conditions:
\begin{enumerate}[{(A}1)]
\item $\ell(v)$ is a finite subset of $\cH$ for each $v \in V(\cG)$;
\item for each edge $uv \in E(\cG)$, $\ell(u) \neq \ell(v)$.
\end{enumerate}

A \emph{realization} of an abstract GSP $(\cH,\cG,\ell)$ is a set of
maps
\[\left\{p_v: \cH \setminus \ell(v) \to \bigcup_{j\in J}
    V(H_j)\right\}_{v \in V(\cG)}\]
satisfying the following conditions:
\begin{enumerate}[{(A}3)]
\item for each $v \in V(\cG)$, $p_v(H_j) \in V(H_j)$ for every factor
  $H_j \in \cH \setminus \ell(v)$;
\item[{(A}4)] for any vertices $u,v \in V(\cG)$, there is an edge
  $uv \in E(\cG)$ if and only if for every factor
  $H_j \in \cH \setminus (\ell(u) \cup \ell(v))$, $p_u(H_j) = p_v(H_j)$.
\end{enumerate}
A GSP admitting a realization is called a \emph{realizable GSP}.

\begin{proposition}
  For any realizable GSP $(\cH,\cG,\ell)$ and any of its realizations
  $\{p_v\}_{v \in V(\cG)}$, we can define an UGP
  $G(\cG) = \bigcup_{v \in V(\cG)} G_v$ where there is a piece
  $G_v = \boxtimes_{j\in J} G_v^j$ for each $v \in V(\cG)$ such that
  $G_v^j = H_j$ if $H_j \in \ell(v)$ and $G_v^j = \{p_v(H_j)\}$
  otherwise.

  Conversely, any UGP $G \subseteq \boxtimes \cH$ is the realization
  of a realizable GSP over $\boxtimes \cH$.
\end{proposition}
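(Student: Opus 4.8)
The plan is to prove the two directions separately, using Lemma~\ref{lem-SGP-agreement} as the main bridge: both the edge relation of $\cG$ (axiom (A4)) and the intersection pattern of pieces are governed by the notion of agreement on factors, and the lemma translates agreement into non-empty intersection.

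For the forward direction, starting from a realizable GSP $(\cH,\cG,\ell)$ with realization $\{p_v\}$, I would first check that each $G_v=\boxtimes_{j\in J}G_v^j$ is a legitimate finite subproduct of $\boxtimes\cH$: by (A1) the set $\ell(v)$ of full factors is finite, and for every factor outside $\ell(v)$ the coordinate is the single vertex $p_v(H_j)$, so $G_v$ has the required form. Next I would verify that $v\mapsto G_v$ is injective, so that the $G_v$ are indeed distinct pieces: for $u\neq v$, if $\ell(u)\neq\ell(v)$ the full-factor sets already differ, while if $\ell(u)=\ell(v)$ then (A2) forbids $uv\in E(\cG)$, so by (A4) there is a factor $H\notin\ell(u)\cup\ell(v)$ with $p_u(H)\neq p_v(H)$, making $G_u\neq G_v$. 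The crucial observation is that $G_u$ and $G_v$ agree on every factor exactly when $p_u(H)=p_v(H)$ for all $H\notin\ell(u)\cup\ell(v)$, which by (A4) is precisely the condition $uv\in E(\cG)$; by Lemma~\ref{lem-SGP-agreement} this is equivalent to $G_u\cap G_v\neq\emptyset$. Hence adjacency in $\cG$ coincides with non-empty intersection of pieces, so connectedness of $\cG$ yields connectedness of $G(\cG)=\bigcup_v G_v$. Finally, local finiteness of $G(\cG)$ follows because the pieces through a fixed vertex $w$ are pairwise intersecting, hence form a clique of $\cG$ by the equivalence just established, which is finite since $\cG$ has no infinite clique; as each piece is itself locally finite, so is their union.

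For the converse, given an SGP $G=\bigcup_{i\in I}G_i\subseteq\boxtimes\cH$ I would take $\cG$ to have vertex set $I$, set $\ell(i)=\{H_j\in\cH:\ G_i^j=H_j\}$, define $p_i(H_j)$ to be the unique vertex with $G_i^j=\{p_i(H_j)\}$ for $H_j\notin\ell(i)$, and declare $ii'\in E(\cG)$ iff $G_i\cap G_{i'}\neq\emptyset$. Axiom (A1) holds since each piece is a finite subproduct, (A3) holds by construction, and (A4) is exactly the translation, via Lemma~\ref{lem-SGP-agreement}, of ``$G_i\cap G_{i'}\neq\emptyset$'' into ``$G_i,G_{i'}$ agree on all factors'', i.e.\ $p_i=p_{i'}$ off $\ell(i)\cup\ell(i')$. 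For (A2), if $\ell(i)=\ell(i')$ and the two pieces agree on all factors then they have identical full factors and identical singleton coordinates, so $G_i=G_{i'}$ and thus $i=i'$; contrapositively, an edge forces $\ell(i)\neq\ell(i')$. Connectedness of $\cG$ comes from connectedness of $G$: any path in $G$ traverses a sequence of pieces (each edge lies in a piece), with consecutive pieces sharing a vertex and hence adjacent in $\cG$.

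The main obstacle I anticipate is verifying that $\cG$ has no infinite clique. Here I would argue that an infinite clique of $\cG$ is an infinite family of pairwise intersecting pieces, which by the Helly property in Lemma~\ref{lem-SGP-agreement} share a common vertex $w$; every piece through $w$ is determined by its set of full factors (off the full factors the coordinate must equal $w_j$), and each full factor with at least two vertices contributes, since pieces are induced subgraphs of $\boxtimes\cH$, a distinct neighbour of $w$ in $G$. Thus infinitely many distinct such pieces would force $w$ to have infinitely many neighbours, contradicting local finiteness of $G$. The only delicate point is that single-vertex factors are irrelevant to the subproduct and may be discarded from $\cH$ at the outset, after which distinct pieces through $w$ genuinely differ in their nontrivial full factors.
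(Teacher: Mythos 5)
Your proposal is correct and follows essentially the same route as the paper's proof: you translate axiom (A4) into the intersection condition on pieces, invoke Lemma~\ref{lem-SGP-agreement} (including its Helly property) to rule out infinite cliques via local finiteness of $G$, and read off $\ell$ and the maps $p$ from the pieces in the converse. If anything, you are slightly more thorough than the paper, which leaves the distinctness of the pieces, the connectedness checks, and the degenerate single-vertex-factor issue in the infinite-clique argument implicit.
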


\begin{proof}
  First notice that condition (A4) is equivalent to the following
  condition on the pieces of $G(\cG)$:
  \begin{enumerate}[{(A}4')]
  \item for any vertices $u,v \in V(\cG)$, there is an edge
    $uv \in E(\cG)$ if and only if $V(G_u) \cap V(G_v) \neq \emptyset$.
  \end{enumerate}

  In order to show that $G(\cG)$ is an UGP, we must show that it is
  locally finite. Consider a vertex $u \in G(\cG)$ that has an
  infinite number of neighbors. Since each piece containing $u$ is
  locally finite, there are an infinite number of pieces containing
  $u$. By Condition (A4'), these pieces form an infinite clique in
  $\cG$, a contradiction. Moreover, if there exists two vertices
  $u,v \in V(\cG)$ such that the pieces $G_u$ and $G_v$ coincide, then
  $\ell(u) = \ell(v)$ and for any $H_j \in \cH \setminus \{\ell(u)\}$,
  $p_u(H_j) = p_v(H_j)$. Consequently, $uv \in E(\cG)$ and
  $\ell(u) = \ell(v)$, contradicting (A2).

  Conversely, given an UGP $G$ over $\cH$, we construct a realizable
  GSP as follows. In $\cG$, there is a vertex $v_i$ for each piece
  $G_i$ of $G$, and we set $\ell(v_i)= \{H_j \in \cH : G_i^j =
  H_j\}$. For each $H_j \notin \ell(v_i)$, there exists
  $w_j \in V(H_j)$ such that $G^j_i = \{w_j\}$ and we set
  $p_{v_i}(H_j) = w_j$.  For any vertices $v_i, v_{i'} \in V(\cG)$,
  there is an edge $v_iv_{i'} \in E(\cG)$ if and only if for every
  factor
  $H_j \notin \ell(v_i) \cup \ell(v_{i'}), p_{v_i}(H_j) =
  p_{v_{i'}}(H_j)$.

  Since each piece $G_i$ is a finite subproduct of $\boxtimes H$,
  $\ell(v_i)$ is finite for each $v_i \in V(\cG)$ and thus (A1)
  holds. By definition of $p_{v_i}$ and of the edges of $E(\cG)$, (A2)
  and (A4) also hold. Observe also that $G(\cG)$ and $G$ are
  isomorphic and thus $G$ is the realization of $\cG$.  It remains to
  show that $\cG$ does not contain infinite cliques. By (A4'), if
  there exists an infinite clique in $\cG$, then there exists an
  infinite collection $\{G_{i_k}\}_{k \in K}$ of pairwise intersecting
  pieces. By Lemma~\ref{lem-UGP-agreement}, this implies that there
  exists a vertex $w$ that belongs to every piece $G_{i_k}$. Since all
  pieces of $G$ are distinct and since $w$ belongs to an infinite
  number of pieces, there exists an infinite collection of factors
  $\{H_{j'}\}_{j' \in J'}$ such that for each $H_{j'}$ there exists a piece
  $G_{i_k}$ with $w \in G_{i_k}$ and
  $G_{i_k}^{j'}=H_{j'}$. Consequently, for each $j' \in J'$, one can
  find a vertex $w^{j'} \in \boxtimes \cH$ in $G$ obtained from $w$ by
  replacing the coordinate $w_{j'}$ by one of its neighbors in
  $H_{j'}$. All the $w^{j'}$ constructed in this way are distinct and
  they are all neighbors of $w$ in $G$.  Consequently, $w$ has
  infinitely many neighbors in $G$ and thus $G$ is not locally finite, a
  contradiction.
\end{proof}

We say that a GSP $(\cH,\cG,\ell)$ satisfies the \emph{product-Gilmore
  condition} if for every triangle $\cT=x_1x_2x_3$ of $\cG$ there
exists $y \in V(\cG)$ such that $y = x_i$ or $y \sim x_i$ for
$1 \leq i \leq 3$ and
$(\ell(x_1)\cap\ell(x_2)) \cup (\ell(x_2)\cap\ell(x_3)) \cup (\ell(x_1)\cap\ell(x_3)) \subseteq \ell(y)$.

\begin{proposition}\label{prop-GSP-Gilmore}
  For a realizable GSP $(\cH,\cG,\ell)$ and any of its realizations
  $\{p_v\}_{v \in V(\cG)}$, the UGP $G(\cG)$ obtained from $\cG$ and
  $\{p_v\}_{v \in V(\cG)}$ satisfies the $3$-piece condition if and
  only if $(\cH,\cG,\ell)$ satisfies the product-Gilmore condition.
\end{proposition}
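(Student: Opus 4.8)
The plan is to translate both conditions into the common language of the pieces $\{G_v\}_{v\in V(\cG)}$ and then read off the equivalence almost verbatim. Two dictionary entries, both coming from the preceding proposition and its proof, drive everything. First, condition (A4'): for distinct $u,v\in V(\cG)$ the pieces $G_u$ and $G_v$ intersect if and only if $uv\in E(\cG)$; together with the fact that distinct vertices give distinct pieces, this makes $v\mapsto G_v$ a bijection under which adjacency corresponds to nonempty intersection. Second, the defining equivalence $G_v^j=H_j\Leftrightarrow H_j\in\ell(v)$, so that for a factor $H_j$ the hypothesis ``$G_{i_1}^j=G_{i_2}^j=H_j$ for two of three pieces'' reads as $H_j\in\ell(x_{i_1})\cap\ell(x_{i_2})$. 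Consequently the set of factors governed by the $3$-piece condition is exactly the union $(\ell(x_1)\cap\ell(x_2))\cup(\ell(x_2)\cap\ell(x_3))\cup(\ell(x_1)\cap\ell(x_3))$ from the product-Gilmore condition, and the requirement that the output piece meet $G_{x_1},G_{x_2},G_{x_3}$ becomes, via (A4'), the requirement that the output vertex be equal or adjacent to each $x_i$.

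For the forward implication I would begin with an arbitrary triangle $\cT=x_1x_2x_3$ of $\cG$. Its vertices are distinct and pairwise adjacent, so by (A4') the pieces $G_{x_1},G_{x_2},G_{x_3}$ are pairwise intersecting, and the $3$-piece condition yields a piece $G_4=G_y$ meeting all three with $G_4^j=H_j$ whenever $G_{x_{i_1}}^j=G_{x_{i_2}}^j=H_j$. Translating back through the dictionary, $y$ is equal or adjacent to each $x_i$, and $\ell(y)$ contains every factor lying in two of the $\ell(x_i)$; this is precisely the product-Gilmore conclusion for $\cT$.

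For the converse I would take three pairwise intersecting pieces $G_1=G_{x_1}$, $G_2=G_{x_2}$, $G_3=G_{x_3}$. When $x_1,x_2,x_3$ are distinct, (A4') makes them a genuine triangle of $\cG$, and product-Gilmore supplies $y$ equal or adjacent to each $x_i$ with $(\ell(x_1)\cap\ell(x_2))\cup(\ell(x_2)\cap\ell(x_3))\cup(\ell(x_1)\cap\ell(x_3))\subseteq\ell(y)$; the dictionary then certifies $G_4:=G_y$ as the required piece. The only genuine obstacle is that product-Gilmore speaks only of honest triangles, so the degenerate cases in which two or three of the pieces coincide must be dispatched separately. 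These, however, are immediate: if for instance $G_1=G_2$, then $G_4:=G_1$ already meets all three pieces, and any instance $G_{i_1}^j=G_{i_2}^j=H_j$ forces $G_1^j=H_j=G_4^j$, so the factor condition holds automatically; the case of all three equal is identical. Assembling the distinct and degenerate cases completes the converse and hence the equivalence.
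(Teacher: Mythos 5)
Your proof is correct and takes essentially the same route as the paper's: both directions are handled by translating between pieces and triangles of $\cG$ via (A4') together with the dictionary $G_v^j = H_j \Leftrightarrow H_j \in \ell(v)$, exactly as in the paper's argument. The only difference is your explicit dispatch of the degenerate cases with repeated pieces, which the paper implicitly avoids by reading ``three pairwise intersecting pieces'' as three distinct pieces (hence three pairwise adjacent vertices of $\cG$); your treatment of these cases is a harmless, correct addition.
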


\begin{proof}
  Assume that $(\cH,\cG,\ell)$ satisfies the product-Gilmore condition.
  By condition (A4'), two pieces in the UGP $G(\cG)$ obtained from a
  realization of a GSP $\cG$ intersect if and only if there is an edge
  between the corresponding vertices of $\cG$. Thus, it is enough to
  consider three pieces $G_{x_1}, G_{x_2}, G_{x_3}$ corresponding to
  three vertices $x_1, x_2, x_3$ that are pairwise adjacent in
  $\cG$. By our assumption, there exists a vertex $y \in V(\cG)$ such
  that $y = x_i$ or $y \sim x_i$ for any $1 \leq i \leq 3$ and such
  that
  $(\ell(x_1)\cap\ell(x_2)) \cup (\ell(x_2)\cap\ell(x_3)) \cup
  (\ell(x_1)\cap\ell(x_3)) \subseteq \ell(y)$.
Consider the piece $G_y$ in $G(\cG)$. By condition (A4'), $G_y$
  intersect $G_{x_1}$, $G_{x_2}$, and $G_{x_3}$. Moreover, since for
  any factor $H_j \in \cH$, if $G_{x_1}^j = G_{x_2}^j = H_j$, by the
  definition of $G(\cG)$, we obtain
  $H_j \in \ell(x_1) \cap \ell(x_2) \subseteq \ell(y)$.
Similarly, for any factor $H_j\in \cH$ such that
  $G_{x_2}^j = G_{x_3}^j = H_j$ or $G_{x_1}^j = G_{x_3}^j = H_j$, we
  have $H_j \in \ell(y)$.  This establishes the $3$-piece condition
  for $G(\cG)$.

  Conversely, suppose that $G(\cG)$ satisfies the $3$-piece condition
  and consider a triangle $x_1x_2x_3$ of $\cG$ and the three
  corresponding pieces $G_{x_1}, G_{x_2}, G_{x_3}$ of $G(\cG)$. By
  (A4'), $V(G_{x_1}), V(G_{x_2}), V(G_{x_3})$ pairwise intersect. By
  the $3$-piece condition, there exists a vertex $x_4 \in V(\cG)$ such
  that $V(G_{x_4})$ intersects $V(G_{x_1}), V(G_{x_2})$, and
  $V(G_{x_3})$, i.e., $x_4$ either coincides with or is adjacent to
  each $x_i$, $1\leq i\leq 3$. Moreover, for each
$H_j \in  \ell(x_1) \cap \ell(x_2)$,
  $G_{x_1}^j = G_{x_2}^j = H_j$ and the definition of $G_{x_4}$
  implies that $G_{x_4}^j = H_j$, i.e., $H_j \in
  \ell(x_4)$. Consequently, $\ell(x_1)\cap\ell(x_2) \subseteq \ell(x_4)$ and
  similarly,
  $(\ell(x_2)\cap \ell(x_3)) \cup (\ell(x_1)\cap \ell(x_3)) \subseteq
  \ell(x_4)$. This establishes the product-Gilmore condition for
  $(\cH,\cG,\ell)$.
\end{proof}

From Proposition~\ref{direct-products},
Proposition~\ref{prop-GSP-Gilmore}, and Theorem~\ref{th-UGP-3piece}  we
obtain the following corollary:

\begin{corollary}\label{cor-GSP}
  Consider a realizable GSP $(\cH,\cG,\ell)$ and any of its
  realizations $\{p_v\}_{v \in V(\cG)}$.
  If $(\cH,\cG,\ell)$ satisfies the product-Gilmore condition and if
  each factor $H \in \cH$ is clique-Helly, then $G(\cG)$ is a
  clique-Helly graph. Furthermore, if the clique complex $X(G(\cG))$ is simply connected,
  then $G(\cG)$ is a Helly graph.
\end{corollary}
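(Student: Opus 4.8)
The plan is to assemble the statement directly from the three results cited just above it, using the correspondence between GSPs and SGPs that was just established. The first step is to convert the product-Gilmore hypothesis into a combinatorial condition on the pieces of the associated SGP. By Proposition~\ref{prop-GSP-Gilmore}, since $(\cH,\cG,\ell)$ satisfies the product-Gilmore condition, the SGP $G(\cG) = \bigcup_{v \in V(\cG)} G_v$ obtained from the given realization $\{p_v\}_{v \in V(\cG)}$ satisfies the $3$-piece condition. This is precisely the hypothesis needed to apply Theorem~\ref{th-SGP-3piece}.

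The second step is to verify that every piece of $G(\cG)$ is clique-Helly, which is the remaining hypothesis of Theorem~\ref{th-SGP-3piece}. By construction each piece is a finite subproduct $G_v = \boxtimes_{j \in J} G_v^j$, where $G_v^j = H_j$ when $H_j \in \ell(v)$ and $G_v^j = \{p_v(H_j)\}$ is a single vertex otherwise. By condition (A1) the set $\ell(v)$ is finite, so $G_v$ is a direct product of \emph{finitely many} factors, each of which is either one of the graphs $H_j$ (clique-Helly by hypothesis) or a one-vertex graph (trivially clique-Helly). Applying the first assertion of Proposition~\ref{direct-products}, namely the closure of the clique-Helly class under direct products of finitely many factors, we conclude that each piece $G_v$ is clique-Helly.

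With both hypotheses of Theorem~\ref{th-SGP-3piece} verified, the two conclusions follow at once: the first part of that theorem gives that $G(\cG)$ is clique-Helly, and under the additional assumption that the clique complex $X(G(\cG))$ is simply connected, its second part gives that $G(\cG)$ is Helly. No step presents a genuine obstacle, since the corollary is by design a bookkeeping assembly of the preceding three results; the only point requiring attention is the appeal to condition (A1), which guarantees that each piece is a product of \emph{finitely} many factors and hence that Proposition~\ref{direct-products} — whose closure statement is stated for finitely many factors — indeed applies.
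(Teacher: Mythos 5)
Your proof is correct and is exactly the route the paper intends: the corollary is stated as an immediate consequence of Proposition~\ref{prop-GSP-Gilmore} (translating product-Gilmore into the $3$-piece condition), Proposition~\ref{direct-products} (each piece, being a finite subproduct and hence up to isomorphism a direct product of the finitely many nontrivial clique-Helly factors in $\ell(v)$), and Theorem~\ref{th-SGP-3piece}. Your attention to condition (A1) ensuring finitely many nontrivial factors per piece is the right point to flag, and the assembly is sound.
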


Thickenings of locally finite median graphs (i.e., of \catz cube
complexes) is an instructive example of clique-Helly graphs that can
be obtained via Theorem~\ref{th-UGP-3piece} or
Corollary~\ref{cor-GSP}. The pieces of a median graph $G$ seens as an
UGP are the thickenings of the maximal cubes of $G$. The fact that it
satisfies the product-Gilmore condition follows from the fact that the
cell hypergraph is conformal which follows from the cube condition of
the \catz cube complex $X_{\text{cube}}(G)$, Lemma~\ref{GMC} and
Proposition~\ref{3-cell}.

\subsection{Thickening}

The direct product  of graphs considered above is the $l_{\infty}$ version of the Cartesian
product. Thus, when we turn all $k$--cubes of the Cartesian product
of $k$ paths into simplices, then we have the corresponding direct
product of $k$ paths. More generally, a similar operator
transforms median graphs into Helly graphs: let $G^{\Delta}$ be
the graph having the same vertex set as $G,$ where two vertices
are adjacent if and only if they belong to a common cube of $G$;
$G^{\Delta}$ is called the \emph{thickening} of $G$ (for $l_{\infty}$-metrization of
cube complexes, of median graphs and, more generally, of median spaces, see \cite{Bowditch,vdV_median}).

\begin{proposition}[\cite{BavdV3}]\label{p:medHel}
If $G$ is a locally finite median graph, then $G^{\Delta}$ is a Helly graph and each maximal clique of $G^{\Delta}$ is a cube of $G$.
\end{proposition}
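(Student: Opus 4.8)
The plan is to recognize $G^{\Delta}$ as the thickening of a natural cell structure on $G$, apply the cell-complex machinery to get the clique-Helly property, and then upgrade to Helly via the topological characterization of Theorem~\ref{t:lotogloHell}. Since $G$ is a locally finite median graph, it is the $1$-skeleton of a \catz cube complex $X_{\mathrm{cube}}(G)$ whose cells are exactly the cubes of $G$. First I would let $\cH(X)$ be the associated cell-hypergraph, whose cells are the vertex-sets of cubes of $G$, and observe that the thickening $G^{\Delta}$ is precisely the $2$-section $[\cH(X)]_2$: two vertices are adjacent in $G^{\Delta}$ if and only if they lie in a common cube, i.e.\ in a common cell.

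Then I would verify the three hypotheses of Proposition~\ref{cell-complex-cliqueHelly}. The cells are gated: each cube is convex in $G$, and convex sets of median graphs are gated; since gated sets satisfy the finite Helly property and the cells are finite (because $G$ is locally finite), any three pairwise intersecting cells share a common vertex, which gives the Helly property for three cells. The graded monotonicity condition holds by Lemma~\ref{GMC}, which covers cube complexes. Finally, for cube complexes the $3$-cell condition coincides with Gromov's cube condition, and the latter holds precisely because $X_{\mathrm{cube}}(G)$ is \catz (equivalently, because $G$ is median). Proposition~\ref{cell-complex-cliqueHelly} then yields that $G^{\Delta}=[\cH(X)]_2$ is clique-Helly and that every maximal clique of $G^{\Delta}$ is a cell of $\cH(X)$, i.e.\ a maximal cube of $G$; this already establishes the second assertion of the proposition.

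To pass from clique-Helly to Helly I would invoke Theorem~\ref{t:lotogloHell}(iv), which reduces the statement to proving that the clique complex $X(G^{\Delta})$ is simply connected. Because the maximal cliques of $G^{\Delta}$ are exactly the cubes of $G$, the complex $X(G^{\Delta})$ is obtained from $X_{\mathrm{cube}}(G)$ by replacing each cube $Q$ with the full simplex on the vertices of $Q$; this replacement is compatible with intersections, since the intersection of two cubes is a cube and the simplex it spans is the intersection of the two corresponding simplices. Hence $X(G^{\Delta})$ is homotopy equivalent to $X_{\mathrm{cube}}(G)$, which is contractible as a \catz space, so $X(G^{\Delta})$ is simply connected and the Hellyness follows. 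Concretely, one can argue simple connectivity directly: every edge of $G^{\Delta}$ is a diagonal of some cube $Q$ and is therefore homotopic rel endpoints, inside the simplex of $X(G^{\Delta})$ spanned by $Q$, to a path along the edges of $G$; thus any loop in $G^{\Delta}$ is freely homotopic in $X(G^{\Delta})$ to a loop in $G$, which bounds a disc assembled from cubes by contractibility of $X_{\mathrm{cube}}(G)$, each such cube being filled in $X(G^{\Delta})$ since it spans a simplex.

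The hard part will be this last step, namely relating the topology of the clique complex of the thickening to that of the original \catz cube complex and justifying the homotopy equivalence (equivalently, making the loop-filling argument precise). By contrast, the combinatorial verifications of the gatedness, graded monotonicity, and $3$-cell/cube-condition hypotheses are routine given Lemma~\ref{GMC} and the stated equivalences, so the entire proposition follows once the simple-connectivity of $X(G^{\Delta})$ is secured.
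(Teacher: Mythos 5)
Your proof is correct and follows essentially the same route as the paper's: the paper derives Proposition~\ref{p:medHel} as a special case of Proposition~\ref{graph-cell-complex} (that is, Proposition~\ref{cell-complex-cliqueHelly} combined with Theorem~\ref{t:lotogloHell}), using gatedness of cubes in median graphs, Lemma~\ref{GMC}, and the equivalence of the $3$-cell condition with Gromov's cube condition, exactly as you do. Your explicit loop-filling argument for the simple connectivity of $X(G^{\Delta})$ correctly supplies a step the paper only asserts (``all such complexes are simply connected''), whereas the classical proof you bypass (and which the paper also mentions) instead shows directly that balls of $G^{\Delta}$ are gated subgraphs of $G$.
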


The \emph{thickening} $X^{\Delta}$ of an abstract cell complex $X$ is
a graph obtained from $X$ by making adjacent all pairs of vertices of
$X$ belonging to a common cell of $X$. Equivalently, the thickening of
$X$ is the $2$-section $[\cH(X)]_2$ of the hypergraph $\cH(X)$.  We
say that an abstract cell complex $X$ is \emph{simply connected} if
the clique complex of its thickening $X^{\Delta}$ is simply connected.

Proposition~\ref{p:medHel} of Bandelt and van de Vel  was extended to the thickenings of the abstract cell complexes arising from swm-graphs and from hypercellular graphs.

\begin{proposition}[\cite{CCHO,ChKnMa19}]\label{swm-hypercellular}
  The thickening $G^{\Delta} := X(G)^{\Delta}$ of the abstract cell
  complex $X(G)$ associated to any locally finite swm-graph or any
  hypercellular graph $G$ is a Helly graph. Each maximal clique of
  $G^{\Delta}$ is a cell of $X(G)$.
\end{proposition}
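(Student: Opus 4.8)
The plan is to treat both the swm and the hypercellular cases uniformly through the abstract-cell-complex machinery developed above, the only case-specific input being the verification of the structural hypotheses. In both cases the cells of the abstract cell complex $X(G)$ are gated subgraphs of $G$ (Boolean-gated sets for swm-graphs, convex hulls of isometric cycles for hypercellular graphs), so the cell-hypergraph $\cH(X(G))$ is a locally finite hypergraph whose edges are closed under intersection. Since pairwise intersecting gated sets have a common point, gated sets form a Helly family, and hence $\cH(X(G))$ satisfies the Helly property; in particular the Helly property holds for any three cells. First I would record that this cell-hypergraph also satisfies the 3-cell condition and the graded monotonicity condition: for swm-complexes these are Lemmas~\ref{lem-3cell-swm} and~\ref{GMC-swm}, while for hypercellular complexes the 3-cell condition is \cite[Theorem~B]{ChKnMa19} and the graded monotonicity condition is Lemma~\ref{GMC-hypercell}.

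With these three hypotheses in hand, Proposition~\ref{cell-complex-cliqueHelly} applies verbatim and yields at once that the $2$-section $[\cH(X(G))]_2$ -- which is by definition the thickening $G^{\Delta}$ -- is a \emph{clique-Helly} graph and that each maximal clique of $G^{\Delta}$ is an edge of $\cH(X(G))$, i.e.\ a cell of $X(G)$. This already gives the second assertion of the proposition, and it reduces the first assertion to upgrading clique-Hellyness to Hellyness. By Theorem~\ref{t:lotogloHell}, it therefore suffices to show that the clique complex $X(G^{\Delta})$ is simply connected.

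For this last step I would first identify $X(G^{\Delta})$ with the hereditary closure $\widehat{\cH(X(G))}$: by the clique--cell correspondence just established, a set of vertices is a clique of $G^{\Delta}$ exactly when it is contained in some cell, so the simplices of $X(G^{\Delta})$ are precisely the subsets of cells. It then remains to compare this flag complex with the geometric realization of $X(G)$, which is contractible by \cite{CCHO} in the swm case and by \cite{ChKnMa19} in the hypercellular case. Both complexes are assembled from the same poset of cells, each cell contributing a contractible piece and every nonempty intersection of cells being again a cell (hence contractible); a nerve argument then shows that $X(G^{\Delta})$ and the geometric realization are both homotopy equivalent to the order complex of the poset of cells, and so share its homotopy type. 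Thus $X(G^{\Delta})$ is contractible, a fortiori simply connected, and Theorem~\ref{t:lotogloHell} lets us conclude that $G^{\Delta}$ is Helly. I expect this nerve/homotopy-equivalence comparison to be the main obstacle, since it is the only place where one must pass between the combinatorial thickening and the genuine geometric cell complex; everything preceding it is a direct application of the results assembled in Subsection~\ref{sec-abstract}.
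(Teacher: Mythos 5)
Your proposal is correct and takes essentially the same route as the paper's proof: the paper likewise deduces clique-Hellyness and the clique--cell correspondence from gatedness of cells together with the 3-cell and graded monotonicity conditions (Proposition~\ref{graph-cell-complex}, via Lemmas~\ref{lem-3cell-swm}, \ref{GMC-swm}, \ref{GMC-hypercell} and \cite[Theorem~B]{ChKnMa19}), and then upgrades to Hellyness by simple connectivity and Theorem~\ref{t:lotogloHell}. The only difference is that your concluding Borsuk-nerve comparison with the contractible geometric realizations of \cite{CCHO} and \cite{ChKnMa19} explicitly justifies the simple-connectivity of the thickening, a point the paper asserts without detailed proof.
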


The existing proofs of Propositions~\ref{p:medHel} and \ref{swm-hypercellular}
are based on the following global property of $G^{\Delta}$: each ball of
$G^{\Delta}$ defines a gated subgraph of $G$ thus $G^{\Delta}$ is
Helly because the gated sets of $G$ satisfy the finite
Helly property. Proposition
\ref{cell-complex-cliqueHelly} allows us to provide a new proof of
Propositions~\ref{p:medHel} and \ref{swm-hypercellular}.
Namely, the results of Section~\ref{sec-abstract} establish that CAT(0) cube complexes,
hypercellular complexes, and swm-complexes satisfy the 3-cell and the
graded monotonicity conditions.  Since all such complexes are simply connected
and their cells are gated,  Propositions~\ref{p:medHel} and \ref{swm-hypercellular} can be viewed
as particular cases of Theorem \ref{t:lotogloHell} and the following general result:

\begin{proposition}\label{graph-cell-complex}
  If $X$ is an abstract cell complex defined on the vertex-set of a
  graph $G$ such that each edge of $G$ is contained in a cell of $X$
  and each cell of the cell-hypergraph $\cH(X)$ is gated in $G$ and
  $\cH(X)$ satisfies the 3-cell and the graded monotonicity
  conditions, then the thickening $X^{\Delta}$ is a clique-Helly graph
  and each maximal clique of $X^{\Delta}$ is the thickening of a cell
  of $X$. Additionally, if $X$ is simply connected, then $X^{\Delta}$
  is Helly.
\end{proposition}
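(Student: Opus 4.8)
The plan is to reduce the statement to Proposition~\ref{cell-complex-cliqueHelly}, which already assumes the $3$-cell and graded monotonicity conditions and delivers precisely the desired conclusions about $X^{\Delta}=[\cH(X)]_2$; the one hypothesis of that proposition not listed here is the Helly property of the cell-hypergraph $\cH(X)$, so the real work is to derive it from gatedness.

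First I would check that $\cH(X)$ satisfies the Helly property. Being an abstract cell complex, $\cH(X)$ is locally finite, hence edge-finite, so by the Berge--Duchet characterization (Proposition~\ref{Helly_triangle}) it suffices to verify: for every triple $x,y,z$ of vertices, the family $\cF$ of all cells containing at least two of $x,y,z$ has non-empty intersection. Vertex-finiteness makes $\cF$ finite. Any two members of $\cF$ each contain a $2$-element subset of $\{x,y,z\}$, and two such subsets of a $3$-element set necessarily meet, so the members of $\cF$ pairwise intersect. Since every cell is gated in $G$ and gated sets satisfy the finite Helly property, the finite pairwise-intersecting family $\cF$ has a common vertex. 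This is the Berge--Duchet condition, whence $\cH(X)$ is Helly.

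With the Helly property available, Proposition~\ref{cell-complex-cliqueHelly} applies and shows that $X^{\Delta}$ is clique-Helly and that every maximal clique of $X^{\Delta}$ is an edge of $\cH(X)$, i.e.\ a cell $C$ of $X$. As all vertices of $C$ are pairwise adjacent in $X^{\Delta}$, such a maximal clique is exactly the thickening of $C$, giving the first assertion. For the additional claim, note that by definition $X$ simply connected means the clique complex of $X^{\Delta}$ is simply connected; combining this with clique-Hellyness, the implication (iv)$\Rightarrow$(i) of Theorem~\ref{t:lotogloHell} yields that $X^{\Delta}$ is Helly.

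The only load-bearing step is the passage from gatedness to the Helly property of $\cH(X)$; the two points that make it go through are that $\cF$ is \emph{finite} (so that the finite Helly property of gated sets suffices) and that its members pairwise intersect through the triple $x,y,z$. Once $\cH(X)$ is known to be Helly, the remainder is a direct invocation of Proposition~\ref{cell-complex-cliqueHelly} and Theorem~\ref{t:lotogloHell}.
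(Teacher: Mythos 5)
Your proposal is correct and follows exactly the route the paper intends (the paper states Proposition~\ref{graph-cell-complex} without a written proof, the surrounding text indicating precisely this reduction): gatedness plus the finite Helly property of gated sets, made applicable by local finiteness of $\cH(X)$ and the Berge--Duchet criterion of Proposition~\ref{Helly_triangle}, yields the Helly property of $\cH(X)$, after which Proposition~\ref{cell-complex-cliqueHelly} and Theorem~\ref{t:lotogloHell} finish the argument. Your filled-in details --- finiteness of the family $\cF$ via vertex-finiteness and pairwise intersection through two-element subsets of $\{x,y,z\}$ --- are exactly what is needed and are sound.
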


\subsection{Coarse Helly graphs}\label{sec-coarse}
The coarse Helly property of a graph $G$ is a property that can be
used to show via Hellyfication that a group acting on $G$
geometrically is Helly. In this subsection, we recall the result
of~\cite{ChEs} that $\delta$-hyperbolic graphs are coarse Helly and we
deduce from a result of~\cite{Chepoi1998} that several subclasses of
weakly modular graphs (in particular, cube-free median graphs,
hereditary modular graphs, and 7-systolic graphs) are coarse Helly.

\begin{proposition}[\cite{ChEs}]\label{coarse-Helly-hyperbolic}
  If $G$ is a $\delta$-hyperbolic graph, then $G$ is coarse Helly with
  constant $2\delta$.
\end{proposition}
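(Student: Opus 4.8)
The plan is to prove the statement directly from the four-point definition of $\delta$-hyperbolicity by exhibiting an explicit common point. First I would reduce to \emph{finite} families of balls. Since $G$ is locally finite, every closed ball is a finite set; intersecting all the enlarged balls $B(x_i,r_i+2\delta)$ with one fixed ball $B(x_0,r_0+2\delta)$ turns the problem into one about a family of subsets of a single finite set, so a family with the finite intersection property has non-empty total intersection. Hence it suffices to find a common point for every finite subfamily, and from now on I assume the family $\{B(x_i,r_i)\}_{i\in I}$ is finite and pairwise intersecting, i.e. $d(x_i,x_j)\le r_i+r_j$ for all $i,j$.

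The heart of the argument is the choice of centre. For each index $k$ put $g_k:=\max_i\,(d(x_k,x_i)-r_i)$; this is an integer since all distances and radii are integers. Let $a$ be an index maximising $g_k$, and let $b$ be an index realising the maximum defining $g_a$, so that $g_a=d(x_a,x_b)-r_b$. If $g_a<0$ then $x_a$ already lies in every ball and we are done; so assume $g_a\ge 0$, and note $0\le g_a\le d(x_a,x_b)$. Let $z$ be a vertex on a shortest $(x_a,x_b)$-path with $d(x_a,z)=g_a$, so that $d(x_b,z)=d(x_a,x_b)-g_a=r_b$.

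The key estimate is a standard consequence of hyperbolicity, which I would isolate as a lemma: \emph{if $z$ lies on a shortest $(x_a,x_b)$-path, then for every vertex $y$} we have $d(z,y)\le\max\bigl(d(x_a,y)-d(x_a,z),\,d(x_b,y)-d(x_b,z)\bigr)+2\delta$. To see this, apply the four-point condition to $x_a,x_b,z,y$: writing $S_1=d(x_a,x_b)+d(z,y)$, $S_2=d(x_a,z)+d(x_b,y)$ and $S_3=d(x_a,y)+d(x_b,z)$, the betweenness $d(x_a,x_b)=d(x_a,z)+d(z,x_b)$ together with the triangle inequality gives $S_1\ge S_2$ and $S_1\ge S_3$; hence $S_1$ is the largest of the three sums and $S_1-\max(S_2,S_3)\le 2\delta$, which rearranges to the claimed inequality. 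Applying the lemma with $y=x_i$, the first term of the maximum satisfies $d(x_a,x_i)-g_a\le r_i$ by the very definition of $g_a$, and the second satisfies $d(x_b,x_i)-d(x_b,z)=d(x_b,x_i)-r_b\le r_i$ since the balls centred at $b$ and $i$ meet. Both terms are thus at most $r_i$, so $d(z,x_i)\le r_i+2\delta$ for every $i$, i.e. $z\in\bigcap_i B(x_i,r_i+2\delta)$.

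I expect the main obstacle to be discovering the right centre: one must choose the pair $(a,b)$ and the position $g_a$ of $z$ along the geodesic so that the two one-sided estimates $d(x_a,x_i)-d(x_a,z)\le r_i$ and $d(x_b,x_i)-d(x_b,z)\le r_i$ hold \emph{simultaneously for every} $i$. Taking $a$ to maximise $g_k$ makes the first estimate automatic from the definition of $g_a$, while taking $b$ to realise $g_a$ forces $d(x_b,z)=r_b$, after which the second estimate is exactly the pairwise-intersection hypothesis $d(x_b,x_i)\le r_b+r_i$; everything else is the routine four-point computation of the lemma. As an alternative route one could instead invoke Proposition~\ref{coarse}, reducing the statement to showing that the injective hull $E(G)$ has the bounded distance property with constant $2\delta$, i.e. that every extremal form $f$ satisfies $\inf_x f(x)\le 2\delta$; this follows by picking an almost-minimiser $u_0$ of $f$, using the tightness property (Claim~\ref{tight}) to find $v_0$ with $f(u_0)+f(v_0)\approx d(u_0,v_0)$, and estimating $f$ at the vertex of a shortest $(u_0,v_0)$-path at distance $f(u_0)$ from $u_0$ via the same four-point inequality. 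I would present the direct argument above as the main proof, since it stays inside $G$ and yields the sharp constant $2\delta$ with no appeal to the hull.
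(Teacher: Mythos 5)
Your proof is correct and follows essentially the same route as the paper: the paper picks an arbitrary basepoint $z$, selects the ball $B_{r_1}(x_1)$ maximizing $d(z,x_i)-r_i$, takes the point $c$ on a $(z,x_1)$-geodesic at distance $r_1$ from $x_1$, and concludes $d(c,x_i)\le r_i+2\delta$ by hyperbolicity -- exactly your construction with the basepoint taken to be one of the centers $x_a$, and with the four-point computation and the reduction to finite families (which the paper leaves implicit) spelled out. One minor remark: your outer maximization choosing $a$ to maximize $g_k$ is never used -- any index $a$ with $g_a\ge 0$ works, since the first estimate needs only the definition of $g_a$ and the second only the pairwise intersection hypothesis -- but this is harmless.
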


The idea of the proof of Proposition~\ref{coarse-Helly-hyperbolic} comes from the proof of the Helly property for trees. Let ${\mathcal B}=\{ B_{r_i}(x_i): i\in I\}$ be a finite collection of pairwise intersecting balls of $G$. Pick an arbitrary basepoint vertex $z$ of $G$ and suppose that $B_{r_1}(x_1)$ is a ball of ${\mathcal B}$ maximizing $d(z,x_i)-r_i, i\in I$ (equivalently, $B_{r_1}(x_1)$ is a ball of $\mathcal B$ maximizing $d(z,B_{r_i}(x_i)), i\in I$). If $d(z,B_{r_1}(x_1))\le 2\delta$, then $z\in B_{r_i+2\delta}(x_i), i\in I$ and we are done. Let $c$ be a vertex on a shortest path between $z$ and $x_1$ at distance $r_1$ from $x_1$. Then using the hyperbolicity of $G$ and the choice of $B_{r_1}(x_1)$ it can be shown that $d(c,x_i)\le r_i+2\delta$.

\begin{proposition}[\cite{Chepoi1998}]\label{coarse-Helly-hwm}
  If $G$ is a weakly modular graph not containing isometric cycles of
  length $>5$, houses, or 3-deltoids (see
  Figure~\ref{fig-house-3deltoid}), then $G$ is coarse Helly with
  constant 1. In particular, cube-free median graphs, hereditary
  modular graphs, and 7-systolic graphs are coarse Helly.
\end{proposition}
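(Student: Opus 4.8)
The plan is to verify the coarse Helly property for $\delta=1$ directly from the definition, by minimizing an ``excess'' function over the vertex set, after first reducing to finite families of balls. So let $\{B_{r_i}(x_i)\}_{i\in I}$ be a family of pairwise intersecting balls, and fix one index $i_0$. Since $G$ is locally finite, the ball $B_{r_{i_0}+1}(x_{i_0})$ is a finite set, and any common point of all the expanded balls must lie in it. If $\bigcap_{i\in I}B_{r_i+1}(x_i)$ were empty, then every vertex $v\in B_{r_{i_0}+1}(x_{i_0})$ would fail to lie in some $B_{r_{i_v}+1}(x_{i_v})$; the finite subfamily consisting of $B_{r_{i_0}}(x_{i_0})$ together with the finitely many $B_{r_{i_v}}(x_{i_v})$ would then have empty intersection of its $1$-expanded balls, since any common point lies in $B_{r_{i_0}+1}(x_{i_0})$ yet each of its vertices is excluded. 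Hence it suffices to treat a finite family.

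For a finite family I would introduce the excess function $f(v)=\max_i\bigl(d(v,x_i)-r_i\bigr)$. It is integer-valued, bounded below, and its sublevel sets are finite, so it attains a minimum value $m$ at some vertex $v^*$; the goal becomes $m\le 1$, as then $v^*\in B_{r_i+1}(x_i)$ for all $i$. Assume for contradiction that $m\ge 2$ and let $T=\{i:d(v^*,x_i)-r_i=m\}$ be the set of \emph{tight} balls. Pairwise intersection yields $d(x_i,x_j)\le r_i+r_j$ for all $i,j$. The strategy is a local move: to produce a neighbour $v'$ of $v^*$ with $d(v',x_i)=d(v^*,x_i)-1$ for every tight $i\in T$ while $d(v',x_j)\le d(v^*,x_j)$ for every $j\notin T$. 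Such a $v'$ satisfies $f(v')\le m-1$, contradicting the minimality of $m$ (possibly after refining the choice of $v^*$ by a secondary minimisation, e.g.\ of $|T|$, so that the move can be iterated).

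The hard part will be producing this common good neighbour, and this is exactly where weak modularity and the three forbidden configurations enter; it is the step drawn from~\cite{Chepoi1998} and the main obstacle. Weak modularity, through the triangle and quadrangle conditions and the quasi-median structure, organises the positions of the tight centres $x_i$ as seen from $v^*$, and the local characterisation of pseudo-modularity in Proposition~\ref{pseudo-modular} serves as the basic tool for stepping one unit closer to two nearby centres at once. The absence of isometric cycles of length greater than $5$, of the house, and of the $3$-deltoid is precisely what excludes the metric configurations of tight centres around $v^*$ that would otherwise block a single neighbour from being simultaneously closer to all of them without retreating from the remaining centres. I expect the rigorous argument to proceed by a case analysis according to the pairwise distances among the relevant centres and their distances to $v^*$, using the forbidden subgraphs to eliminate the obstructive cases.

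Finally I would check the hypotheses for each listed class, so that the proposition applies with $\delta=1$. Cube-free median graphs and hereditary modular graphs are bipartite and weakly modular with all isometric cycles of length $4$; this already forbids isometric cycles of length greater than $5$ and, since the house contains a triangle, the house itself, while the exclusion of the $3$-deltoid is a direct consequence of cube-freeness, respectively of hereditary modularity. For $7$-systolic graphs, bridgedness makes the graph weakly modular with all isometric cycles of length $3$ and, having no induced $4$-cycle, forbids the house; the defining condition that vertex links contain no induced $6$-cycle is exactly what rules out the $3$-deltoid, which is why the stronger $7$-systolic (rather than merely systolic) hypothesis is needed. In each case $G$ satisfies the assumptions of the proposition and is therefore coarse Helly with constant $1$.
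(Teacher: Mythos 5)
There is a genuine gap: the entire combinatorial content of the proposition is deferred rather than proved. Your reduction to finite families (via local finiteness) and the minimisation of the excess function $f(v)=\max_i\bigl(d(v,x_i)-r_i\bigr)$ are fine as a skeleton, but the descent step --- producing a neighbour $v'$ of $v^*$ that is strictly closer to every tight centre while not retreating from the centres at excess level $m-1$ --- is exactly the assertion that the hypotheses (weak modularity, no isometric cycles of length $>5$, no house, no $3$-deltoid) are designed to deliver, and you explicitly leave it as ``I expect the rigorous argument to proceed by a case analysis.'' That case analysis \emph{is} the proof; without it nothing has been established. Moreover, it is not clear your vertex-level descent lemma is even the right formulation. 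The argument of \cite{Chepoi1998}, which the paper invokes, proves a stronger \emph{clique} version under the weaker hypothesis $d(x_i,x_j)\le r_i+r_j+1$: there exists a clique of $G$ meeting every ball $B_{r_i}(x_i)$, obtained by induction, transforming a clique hitting the balls of a subfamily into a clique hitting one more ball. Coarse Hellyness with constant $1$ is then immediate, since any vertex $v$ of that clique satisfies $d(v,x_i)\le r_i+1$ for all $i$. The relaxation from vertices to cliques is structurally important: already for three radius-$0$ balls centred at the vertices of a triangle (pairwise distances $1=r_i+r_j+1$) no single vertex hits all balls but the clique does. This shows that at the bottom level the ``centre'' must genuinely be allowed to be a clique, and it casts doubt on whether your single-vertex improving move can be carried out uniformly at all levels $m\ge 2$; in any case you have given no argument for it.

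Two smaller points on the class verification. For cube-free median graphs, the claim that all isometric cycles have length $4$ is not automatic from the definition: it requires the fact that in a median graph the convex (or gated) hull of an isometric cycle of length $\ge 6$ contains a $3$-cube, which is where cube-freeness actually enters; by contrast, the exclusion of the house and the $3$-deltoid in both bipartite classes (cube-free median and hereditary modular) follows simply from triangle-freeness, not from cube-freeness or hereditary modularity as you state. For $7$-systolic graphs, the passage from ``no induced $6$-cycles in links'' to ``no isometric $3$-deltoid'' is plausible but again asserted rather than argued: the $3$-deltoid itself contains no $6$-wheel, so one must show that an isometric embedding of it into a bridged graph forces, after filling, a vertex whose link contains an induced $6$-cycle.
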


\begin{figure}[h]
\centering
\includegraphics{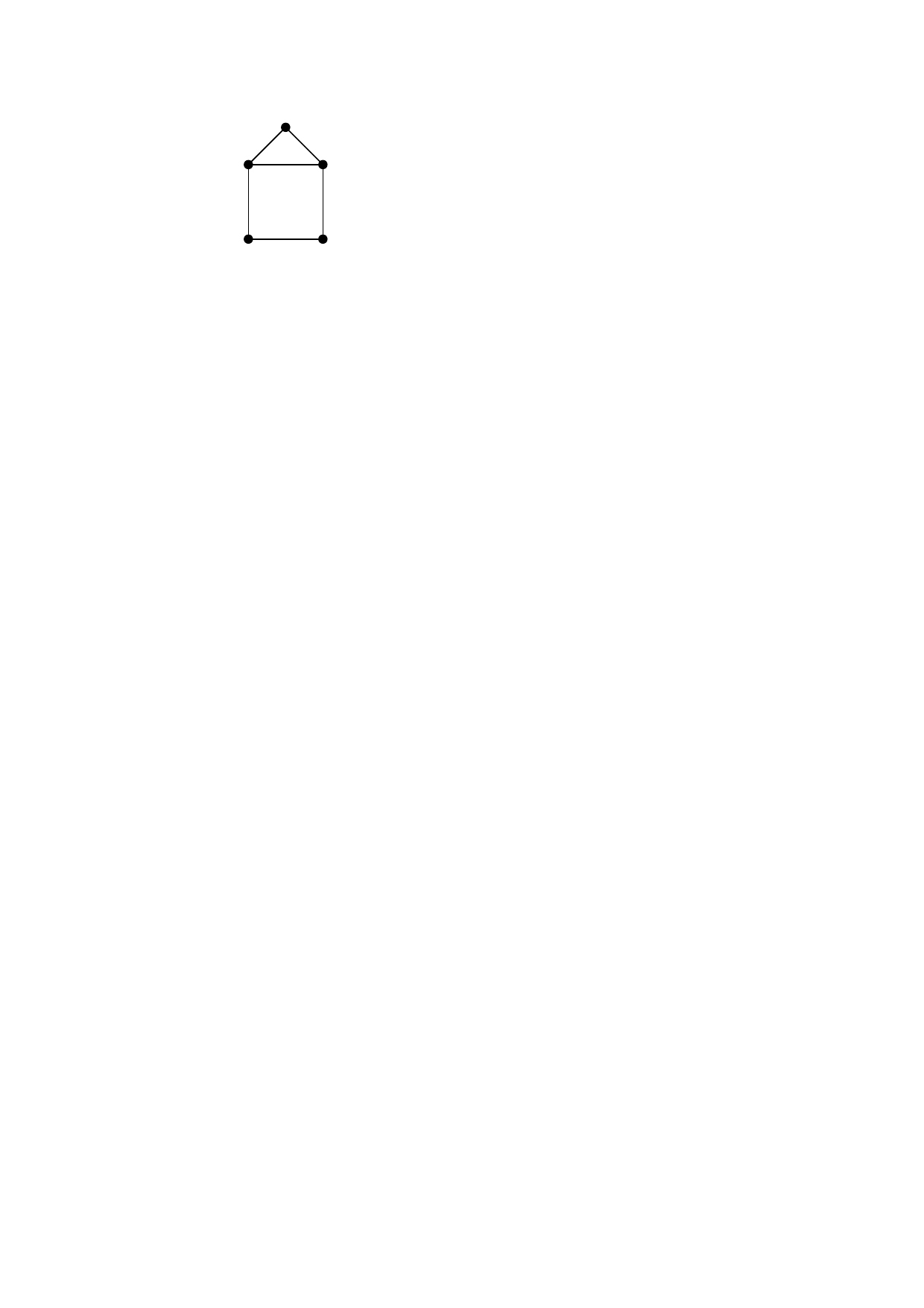}\qquad\qquad\qquad\qquad\includegraphics{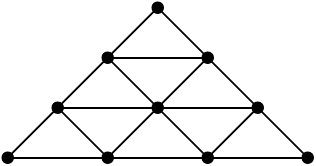}\caption{A house (left) and a $3$-deltoid (right).}\label{fig-house-3deltoid}
\end{figure}

In~\cite{Chepoi1998}, the established result was actually stronger: if
$S$ is a finite set of vertices of a graph $G$ as in
Proposition~\ref{coarse-Helly-hwm} and $d(x_i,x_j)\le r_i+r_j+1$ for
all $x_i, x_j \in S$, then there exists a clique of $G$ hitting all
balls $B_{r_i}(x_i), x_i\in S$. The idea of the proof is to show that
if a clique $C'$ of $G$ hits the balls of a subfamily
$B_{r_i}(x_i), x_i\in S'$ and $x_j\in S\setminus S'$, then the clique
$C'$ can be transformed into a clique $C$ which hits $B_{r_j}(x_j)$
and all balls centered at the vertices of $S'$.

It is known that the systolic (bridged) graphs satisfying the conditions of
Proposition~\ref{coarse-Helly-hwm} are all
hyperbolic~\cite{CCHO,ChDrEsHaVa}. Cube-free median graphs and, more
generally, hereditary modular graphs (which by a result
of~\cite{Ba_hereditary} are exactly the graphs in which all isometric
cycles have length 4) in general are not hyperbolic. On the other
hand, general median graphs are not coarse Helly: already the cubic
grid ${\mathbb Z}^3$ is not coarse Helly as shown by the following
example.

\begin{example}
In $\Z^3$, for any integer $n$, consider 4 balls of radius $2n$
centered at
$x_1 = (-2n,2n,-2n), x_2 = (2n,2n,2n), x_3=(-2n,-2n,2n), x_4 =
(2n,-2n,-2n)$. Observe first that for any two such nodes $x_l, x_{l'}$,
$d(x_l,x_{l'}) = 4n$ and thus the four balls pairwise intersect. We show
that for any node $y = (i,j,k) \in \Z^3$,
$\max \{d(y,x_l): 1\leq l \leq 4\} \geq 6n$. Assume that $y$ minimizes
this maximum. Observe that if $y \notin [-2n,2n]^3$, then its gate
$y'$ in the box $[-2n,2n]^3$ is strictly closer to each $x_l$,
contrary to our choice of $y$. Consequently, $i, j, k \in [-2n,2n]$
and
$d(y,x_1) = i+2n+2n-j+k+2n = 6n +i -j +k, d(y,x_2) = 6n -i -j -k,
d(y,x_3) = 6n +i + j -k, d(y,x_4) = 6n -i + j + k$ and thus
$\Sigma_{i=1}^4 d(x_i,y) = 24n$. Therefore
$\max \{d(y,x_l): 1\leq l \leq 4\} \geq 6n$.
\end{example}

Analogously, the triangular grid (alias, the systolic plane) is also not coarse Helly:

\begin{example} ${\mathbb T}_3$ is the graph of the tiling of the
  plane into equilateral triangles with side 1. ${\mathbb T}_3$ is a
  bridged graph.  Pick three vertices $x_1,x_2,z=x_3$ of
  ${\mathbb T}_3$ which define a deltoid $\Delta(x_1,x_2,x_3)$ of size
  $6n$, i.e., an equilateral triangle of ${\mathbb T}_3$ with side
  $6n$. Consider the three balls
  $B_{3n}(x_1), B_{3n}(x_2), B_{3n}(x_3)$.  We assert that
  $\max\{d(y,x_i): 1\le i\le 3\}\ge 4n$ for any vertex $y$ of
  $V({\mathbb T}_3)$. If $y\notin \Delta(x_1,x_2,x_3)$, then $y$ is in
  one of the halfplanes defined by the sides of $\Delta(x_1,x_2,x_3)$
  and not containing $\Delta(x_1,x_2,x_3)$, say in the halfspace
  defined by $x_1$ and $x_2$.  But then $d(x_3,y)\ge 6n$ because $x_3$
  has distance $\ge 6n$ to any vertex of ${\mathbb T}_3$ defined by
  the line between $x_1$ and $x_2$.  Now suppose that
  $y\in \Delta(x_1,x_2,x_3)$. It can be shown easily by induction on
  $k$ that if $\Delta(x_1,x_2,x_3)$ is a deltoid of size $k$ of
  ${\mathbb T}_3$, then $d(y,x_1)+d(y,x_2)+d(y,x_3)=2k$ for any
  $y\in \Delta(x_1,x_2,x_3)$. This shows that in our case
  $d(y,x_1)+d(y,x_2)+d(y,x_3)\ge 12n$, i.e.,
  $\max\{d(y,x_i): 1\le i\le 3\}\ge 4n$.
\end{example}

\subsection{Nerve graphs  of clique-hypergraphs}\label{s:nerve}

We first show that (clique-)Hellyness is preserved by taking the nerve
complex $N(\cX(G))$ of the clique-hypergraph $\cX(G)$ of a Helly graph
$G$. Nerve complexes of clique-hypergraphs are also called
\emph{clique graphs} in the literature, see e.g.~\cite{BaPr}.
Note that in general, the nerve complex $N(\cX(G))$ of the
clique-hypergraph of a graph $G$ is not a flag simplicial
complex. However, if $G$ is clique-Helly, then $N(\cX(G))$ is a flag
simplicial complex.

\begin{lemma}
  For any locally finite graph $G$, $N(\cX(G))$ is a flag simplicial
  complex if and only if $G$ is a (finitely) clique-Helly graph.
\end{lemma}

\begin{proof}
  By the definition of $N(\cX(G))$, $N(\cX(G))$ is a flag simplicial
  complex if and only if any finite set of pairwise intersecting
  cliques $K_1, K_2, \ldots, K_p$ of $G$ have a non-empty
  intersection. This is precisely the definition of a finitely
  clique-Helly graph. Since $G$ is locally finite, $G$ is finitely
  clique-Helly if and only if $G$ is clique-Helly.
\end{proof}

The first assertion of the following result was first proved by
Escalante~\cite{Esc73} (he also proved the converse that any
clique-Helly graph is the clique graph of some graph).

\begin{proposition}If $G$ is a locally finite clique-Helly graph, then the nerve graph
  $NG(\cX(G))$ of the clique-hypergraph $\cX(G)$ is a clique-Helly
  graph and its flag-completion is a clique-Helly complex.

  If $G$ is a locally finite Helly graph, then $NG(\cX(G))$ is a Helly
  graph and its flag-completion is a Helly complex.
\end{proposition}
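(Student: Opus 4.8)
The plan is to obtain both assertions from the duality between conformality and the Helly property (Proposition~\ref{conformal-Helly}), combined with the identity $NG(\cX(G)) = [\cX(G)^*]_2$ recorded in the lemma following the definition of the nerve complex. Local finiteness of $G$ ensures that $\cX(G)$, its dual $\cX(G)^*$, and the nerve graph $NG(\cX(G))$ are all locally finite; in particular every clique of $NG(\cX(G))$ is finite. For the clique-Helly statement, recall that $\cX(G)$ is always conformal, so by Proposition~\ref{conformal-Helly} its dual $\cX(G)^*$ satisfies the Helly property. If $G$ is clique-Helly, then $\cX(G)$ satisfies the Helly property; applying Proposition~\ref{conformal-Helly} to the hypergraph $\cX(G)^*$ (whose dual is $\cX(G)$) shows that $\cX(G)^*$ is moreover conformal. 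Thus $\cX(G)^*$, and hence its simplification $\breve{\cX(G)^*}$, is conformal and Helly, so Proposition~\ref{hypergraph-clique-Helly} applied to $\cH = \cX(G)^*$ yields that $[\cX(G)^*]_2 = NG(\cX(G))$ is clique-Helly; its flag-completion is then a clique-Helly complex by Definition~\ref{d:Helly-complex}.

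For the Helly statement, the key observation I would make first is that when $G$ is clique-Helly the flag-completion $X(NG(\cX(G)))$ coincides with the nerve complex $N(\cX(G))$: a finite set of maximal cliques forms a clique of $NG(\cX(G))$ precisely when its members pairwise intersect, and by clique-Hellyness this is equivalent to their having a common vertex, i.e.\ to being a simplex of $N(\cX(G))$. Since $G$ Helly implies $G$ clique-Helly, the previous paragraph already gives that $NG(\cX(G))$ is clique-Helly, so by Theorem~\ref{t:lotogloHell} it remains only to prove that $X(NG(\cX(G))) = N(\cX(G))$ is simply connected. Here I would invoke the Nerve Lemma (see \cite{Bj}): the maximal cliques of $G$, viewed as subcomplexes of the clique complex $X(G)$, form a locally finite cover whose nonempty finite intersections are common faces and hence contractible, so $N(\cX(G))$ is homotopy equivalent to $X(G)$. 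As $G$ is Helly, $X(G)$ is simply connected by Theorem~\ref{t:lotogloHell}, whence so is $N(\cX(G))$. A second application of Theorem~\ref{t:lotogloHell} then shows $NG(\cX(G))$ is Helly, and its flag-completion is a Helly complex by Definition~\ref{d:Helly-complex}.

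The purely combinatorial parts — the local-finiteness bookkeeping and the fact that conformality and the Helly property descend to the simplification — are routine. The step requiring genuine care is the simple-connectivity of $N(\cX(G))$: one must verify that the Nerve Lemma is applicable in this possibly infinite, locally finite simplicial setting, and correctly identify the \emph{combinatorial} nerve graph $NG(\cX(G))$ with the \emph{topological} nerve $N(\cX(G))$. This identification, valid precisely because $G$ is clique-Helly and false for general graphs, is exactly the mechanism by which the Helly-type hypothesis on $G$ propagates to its clique graph.
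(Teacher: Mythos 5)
Your proof is correct, but its first half takes a genuinely different route from the paper's. For the clique-Helly assertion the paper argues directly via the triangle criterion of Proposition~\ref{clique_Helly_triangle}: given a triangle $uvw$ of the nerve graph, i.e.\ three pairwise (hence, by clique-Hellyness, globally) intersecting maximal cliques of $G$, it observes that $(u\cap v)\cup(v\cap w)\cup(w\cap u)$ is a clique of $G$, extends it to a maximal clique $x$, and checks that $x$ is a universal vertex for the triangle in $NG(\cX(G))$. You instead run the conformal/Helly duality: $\cX(G)$ is always conformal, so $\cX(G)^*$ satisfies the Helly property by Proposition~\ref{conformal-Helly}; clique-Hellyness of $G$ says $\cX(G)=(\cX(G)^*)^*$ is Helly, so the same duality makes $\cX(G)^*$ conformal; passing to the simplification and invoking Proposition~\ref{hypergraph-clique-Helly} then yields that $[\cX(G)^*]_2=NG(\cX(G))$ is clique-Helly. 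This is a legitimate and arguably more structural derivation --- it exhibits the statement as an instance of hypergraph duality machinery the paper develops anyway --- at the cost of the routine verifications you flag (conformality and the Helly property descend to the simplification; the duality of Proposition~\ref{conformal-Helly} is valid in the locally finite setting, where every clique of the $2$-section is finite and extends to a maximal one). The paper's direct argument is more elementary and self-contained. Your second half coincides with the paper's proof: Borsuk's Nerve Theorem gives that $N(\cX(G))$ is homotopy equivalent to $X(G)$, hence simply connected, and Theorem~\ref{t:lotogloHell} upgrades clique-Hellyness of the nerve graph to Hellyness. On one point you are in fact more careful than the paper: you explicitly justify the identification of the flag-completion $X(NG(\cX(G)))$ with the nerve complex $N(\cX(G))$ --- valid precisely because $\cX(G)$ is a Helly hypergraph when $G$ is clique-Helly --- which the paper uses implicitly when it applies Theorem~\ref{t:lotogloHell} to the $1$-skeleton of the nerve complex.
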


\begin{proof}
  Let $G$ be a locally finite clique-Helly graph. Let $G'$ be the
  nerve graph of the clique-hypergraph $\cX(G)$. Since $G$ is locally
  finite, $G'$ is also locally finite. We prove that $G'$ is
  clique-Helly by using the triangle criterion from
  Proposition~\ref{clique_Helly_triangle}. Let $uvw$ be a triangle in
  $G'$.  It corresponds to three pairwise intersecting, and thus
  intersecting, maximal cliques in $G$, denoted by the same symbols
  $u,v,w$. Observe that all vertices of
  $(u\cap v) \cup (v\cap w) \cup (w\cap u)$ are pairwise adjacent in
  $G$ and thus $u\cap v,v\cap w, w\cap u$ are all contained in a
  common maximal clique $x$ in $G(X)$. We claim that every vertex $y$
  in $G'$ that is adjacent to $u$ and $v$ in $G'$ is also adjacent to
  $x$ in $G'$.  This is so because in $G$, the maximal clique $y$
  intersects $u$ and $v$, hence intersects $u\cap v$ since $G$ is a
  clique-Helly graph. Since $u \cap v \subseteq x$, $y$ intersects $x$
  in $G$ and thus $x \sim y$ in $G'$.  Similarly, the vertex $x\in G'$
  is a universal vertex for triangles containing $v,w$ and $w,u$ in
  $G'$. Consequently, the nerve graph $G'$ is clique-Helly.

  Suppose now that $G$ is a Helly graph, i.e., by
  Theorem~\ref{t:lotogloHell} that the clique complex $X(G)$ is simply
  connected and that $G$ is a clique-Helly graph. By the first part of
  the theorem, the $1$--skeleton $G'=G(Y)$ of the nerve complex $Y$ of
  the clique-hypergraph $\cX(G)$ is clique-Helly. By Borsuk's Nerve
  Theorem~\cite{Bor,Bj}, $X(G)$ and $Y$ have the same homotopy
  type. Consequently, $Y$ is also simply connected. By
  Theorem~\ref{t:lotogloHell}, this implies that $G' = G(Y)$ is Helly.
\end{proof}

We now show that the clique-Hellyness of the nerve graph of a
clique-hypergraph is preserved by taking covers.

\begin{theorem}\label{t:nerve-cover}
  Given two locally finite graphs $G, G'$ such that the clique complex
  $X(G)$ is a cover of the clique complex $X(G')$, then the nerve
  graph $NG(\cX(G))$ is clique-Helly if and only if the nerve graph
  $NG(\cX(G'))$ is clique-Helly.
\end{theorem}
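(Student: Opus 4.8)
The plan is to reduce both clique-Hellyness assertions to the triangle criterion of Proposition~\ref{clique_Helly_triangle} and to transfer it across the covering, exploiting that a covering of clique complexes is an isomorphism on stars. First I would record the dictionary supplied by the covering $p\colon X(G)\to X(G')$. Since $p$ restricts to an isomorphism $\mr{St}(\tv,X(G))\to \mr{St}(p(\tv),X(G'))$ for every vertex $\tv$, and every maximal clique $C$ of $G$ is contained in $\mr{St}(\tv,X(G))$ for each $\tv\in C$, the assignment $C\mapsto p(C)$ sends maximal cliques to maximal cliques, restricts to a bijection between the maximal cliques through $\tv$ and those through $p(\tv)$, and preserves non-empty intersection of cliques. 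Thus $p$ induces a surjective graph morphism $\hat p\colon NG(\cX(G))\to NG(\cX(G'))$ that carries triangles to triangles: a triangle $uvw$ of $NG(\cX(G))$ is a triple of pairwise intersecting maximal cliques with, say, $a\in u\cap v$, $b\in v\cap w$, $c\in w\cap u$, and $p(u)p(v)p(w)$ is again such a triangle.

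Next I would analyse a single triangle $uvw$ together with its support. As $a,b,c$ are pairwise adjacent and $X(G)$ is flag, $\{a,b,c\}$ spans a (possibly degenerate) simplex $\sigma$ of $X(G)$; and a universal vertex for $uvw$ in the sense of Proposition~\ref{clique_Helly_triangle}, if it exists, must be a maximal clique meeting each of $u,v,w$, because $u,v,w$ themselves lie in the set $T^{*}$ (each is adjacent to the other two in the nerve). Every clique entering the criterion — the cliques $u,v,w$, the members of $T^{*}$, and a candidate universal vertex — meets $u\cup v\cup w$ and so lies in a bounded neighbourhood of $\sigma$; moreover each such clique, being a simplex, is contained in the star of any one of its own vertices, where $p$ is an isomorphism. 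Using these star-isomorphisms I would set up a correspondence between candidate universal vertices upstairs and downstairs by lifting a downstairs clique $x'$ based at a chosen vertex of $x'\cap p(u)$, and then check that $x$ meets $u,v,w$ and every $y\in T^{*}$ exactly when $p(x)$ meets $p(u),p(v),p(w)$ and every element of the corresponding $T'^{*}$. Combined with the fact that every downstairs triangle lifts (lift $\sigma$ through the stars, then lift the cliques), this yields the triangle criterion for $uvw$ if and only if for $p(u)p(v)p(w)$, hence the clique-Hellyness of one nerve graph if and only if the other.

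The main obstacle is exactly the \emph{coherence} of these lifts. The configuration is not contained in the star of a single vertex: it spreads over the radius-two neighbourhood of $\sigma$, where a covering need not be injective, so vertices of $G$ lying in the stars of different vertices of $\sigma$ may collide under $p$. Consequently the lift of a downstairs universal vertex, pinned down near one face of $\sigma$, could a priori fail to meet some $y\in T^{*}$ whose intersection with the triangle sits on another face of $\sigma$. I would overcome this by lifting coherently over $\sigma$: since $\sigma$ is contractible, the three star-isomorphisms at $a,b,c$ agree on their overlaps (the stars of the faces of $\sigma$), so the lifts of all intersection points appearing in the criterion can be chosen simultaneously and compatibly, and the entire intersection pattern defining $T^{*}$ and universality is then read off from data inside these compatible stars, which $p$ reproduces faithfully. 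Enumerating the ways an element of $T^{*}$ can meet two of $u,v,w$, and verifying compatibility of the lifts across all faces of $\sigma$ at once, is the technical heart of the argument.

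As an alternative, should this bookkeeping become unwieldy, I would factor $p$ through the common universal cover $\widetilde{X}$ of $X(G)$ and $X(G')$ and reduce to comparing each of $NG(\cX(G))$ and $NG(\cX(G'))$ with $NG(\cX(\widetilde{G}))$, where $\widetilde{G}=\widetilde{X}^{(1)}$; here one can invoke Borsuk's Nerve Theorem (as in the proof of the preceding proposition), which matches the homotopy type of $X(G)$ with that of the nerve complex, together with Theorems~\ref{t:lotogloHell} and~\ref{t:lotogloHell_bis}, to control the simple connectivity distinguishing the clique-Helly case from the Helly case. I expect, however, the local triangle-criterion transfer above to be the cleanest route to Theorem~\ref{t:nerve-cover}.
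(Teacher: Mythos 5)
There is a genuine gap, and it sits exactly where you located the ``technical heart'': the coherence of lifts over the radius-two neighbourhood of $\sigma$. Your proposed fix --- that the star-isomorphisms at $a,b,c$ agree on the stars of the faces of $\sigma$, so all intersection points can be lifted compatibly --- only controls the radius-one neighbourhood of $\sigma$, but the configuration genuinely lives at radius two. A clique $y\in T^{*}$ meets $u$ at some vertex $d$ and $v$ at some vertex $e$ with possibly $d,e\notin\{a,b,c\}$; the remaining vertices of $y$, and in particular the vertex where a candidate universal clique $x$ meets $y$, need only be adjacent to $d$ and $e$, hence lie at distance $2$ from $\sigma$ and outside $\mr{St}(a)\cup\mr{St}(b)\cup\mr{St}(c)$. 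To lift $y$ you must base the lift at $d$, and nothing in the compatibility of the three star-isomorphisms at $a,b,c$ prevents the covering from identifying distinct preimages at distance two: already for the line covering the $4$-cycle, two vertices at distance $2$ from a given vertex (on opposite sides) map to the same vertex downstairs. So the lift of a downstairs universal clique, pinned near one face of $\sigma$, can indeed fail to meet the lift of some $y\in T^{*}$, and your argument as stated does not exclude this. The paper's proof supplies precisely the missing global ingredient: it takes a whole family $\cF$ of pairwise intersecting maximal cliques of the nerve graph, forms the subcomplex $X(\cF)$ spanned by the union of \emph{all} the maximal cliques of $G$ occurring in the members of $\cF$, and proves (Proposition~\ref{p:XF-sc}, via the rainbow-loop homotopy argument) that $X(\cF)$ is \emph{simply connected}. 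Simple connectivity is what forces the preimage of $X(\cF)$ in the cover to be a disjoint union of isomorphic copies, so the entire intersection pattern --- not just the part near $\sigma$ --- transfers faithfully in both directions; contractibility of the single simplex $\sigma$ is far too weak a substitute. (The paper also bypasses the triangle criterion, verifying the finite clique-Helly property directly on maximal families, but that difference is cosmetic; the coherence issue is the same either way.)

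Your fallback route has a related gap: factoring through the universal cover and invoking Borsuk's Nerve Theorem presupposes that the map induced on nerve complexes by a covering $X(G)\to X(G')$ is itself well behaved (e.g.\ a covering), which is false in general --- two disjoint maximal cliques upstairs can have intersecting images downstairs, so edges of $NG(\cX(G'))$ need not lift. Indeed the paper's logical order is the opposite of what you need: Corollary~\ref{c:nerve-cover} (the universal-cover statement) is \emph{deduced from} Theorem~\ref{t:nerve-cover}, so this alternative is circular as a proof strategy. To repair your argument you would essentially have to prove simple connectivity of the configuration complex spanned by $u,v,w$, $T^{*}$, and the candidate universal vertex --- which is the content of Proposition~\ref{p:XF-sc}.
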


By Theorem~\ref{t:lotogloHell}, we immediately get the following
corollary since the nerve complex of the maximal simplices of a simply
connected simplicial complex is simply connected by Borsuk's Nerve
Theorem~\cite{Bor,Bj}.

\begin{corollary}\label{c:nerve-cover}
  For a locally finite graph $G$, the nerve graph $NG(\cX(\tG))$ of
  the clique-hypergraph of the $1$--skeleton $\tG$ of the universal
  cover $\tX(G)$ of $X(G)$ is Helly if and only if the nerve graph
  $NG(\cX(G))$ of the clique-hypergraph $\cX(G)$ is clique-Helly.
\end{corollary}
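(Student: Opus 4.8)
The plan is to combine the local-to-global characterization of Helly graphs (Theorem~\ref{t:lotogloHell}) with the cover-invariance of clique-Hellyness of nerve graphs (Theorem~\ref{t:nerve-cover}) and Borsuk's Nerve Theorem. Concretely, I would use that a graph is Helly if and only if it is clique-Helly and its clique complex is simply connected; this splits the claimed equivalence into a ``clique-Helly part'' and a ``simple connectivity part''. The clique-Helly part will be handled by Theorem~\ref{t:nerve-cover}, while the simple connectivity of the relevant complex will be automatic, since $\tX(G)$ is simply connected by construction.

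First I would record that $X(\tG) = \tX(G)$. Indeed, $X(G)$ is a flag complex, and a covering map restricts to an isomorphism on the star of each vertex, so flagness is inherited by covers; hence the universal cover $\tX(G)$ is again a flag complex and therefore coincides with the clique complex of its own $1$--skeleton $\tG$. In particular $X(\tG)$ is a (universal) cover of $X(G)$, so Theorem~\ref{t:nerve-cover} applies to the pair $(\tG, G)$ and yields that $NG(\cX(\tG))$ is clique-Helly if and only if $NG(\cX(G))$ is clique-Helly.

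Next I would supply the topological ingredient. By Borsuk's Nerve Theorem~\cite{Bor,Bj}, the nerve complex $N(\cX(\tG))$ is homotopy equivalent to $X(\tG) = \tX(G)$, which is simply connected as a universal cover; hence $N(\cX(\tG))$ is simply connected. The clique complex of the nerve graph $NG(\cX(\tG))$ is the flag-completion of the $1$--skeleton of $N(\cX(\tG))$, so it has the same $1$--skeleton and contains every $2$--simplex of $N(\cX(\tG))$ (and possibly more). Since the fundamental group depends only on the $2$--skeleton and adjoining $2$--simplices can only introduce relations, this flag-completion is simply connected as well.

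Finally I would assemble the two directions. If $NG(\cX(G))$ is clique-Helly, then by the previous two steps $NG(\cX(\tG))$ is clique-Helly with simply connected clique complex, so Theorem~\ref{t:lotogloHell} (implication (iv)$\Rightarrow$(i)) gives that $NG(\cX(\tG))$ is Helly. Conversely, if $NG(\cX(\tG))$ is Helly, then Theorem~\ref{t:lotogloHell} (implication (i)$\Rightarrow$(iii)) makes it clique-Helly, and Theorem~\ref{t:nerve-cover} transfers this to $NG(\cX(G))$. The only genuinely delicate point is the passage from simple connectivity of the nerve complex $N(\cX(\tG))$ to that of the clique complex of $NG(\cX(\tG))$, i.e.\ checking that flag-completion preserves simple connectivity, together with the mild observation that covers of flag complexes are flag; everything else is a direct application of the two theorems above.
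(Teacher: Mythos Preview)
Your proposal is correct and follows essentially the same approach as the paper: the paper's one-line proof invokes Theorem~\ref{t:lotogloHell} together with Borsuk's Nerve Theorem to handle simple connectivity, with Theorem~\ref{t:nerve-cover} supplying the clique-Helly equivalence. You have simply spelled out the details (flagness of covers, flag-completion preserving simple connectivity) that the paper leaves implicit.
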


The proof of Theorem~\ref{t:nerve-cover} follows from the following lemma
establishing that a covering map between the clique complexes of
two graphs extends to a covering map between the nerve complexes of
the corresponding clique-hypergraphs.

\begin{lemma} \label{lem:covering-nerves}
  Given two locally finite simple graphs $G, G'$, any covering map
  $\varphi: X(G) \to X(G')$ induces a covering map from $N(\cX(G))$ to
  $N(\cX(G'))$.
\end{lemma}

\begin{proof}
  In the nerve complex $N(\cX(G))$, the vertices are the maximal
  cliques of $G$ and a finite set $\sigma = \{K_1, \ldots, K_p\}$ of
  cliques of $G$ is a simplex of $N(\cX(G))$ if
  $\bigcap_{i=1}^p K_i \neq \emptyset$. Since $G$ is locally finite, each maximal clique $K$ of $G$ is
  finite. We extend the map $\varphi$ to all cliques of $G$: for any
  clique $K = \{u_1, \ldots, u_k\}$ of $G$, we set
  $\varphi(K) = \{\varphi(u_1),\ldots, \varphi(u_k)\}$. Observe that
  for any clique $K = \{u_1, \ldots, u_k\}$ of $G$, $u_i \sim u_j$ and
  thus $\varphi(u_i) \sim \varphi(u_j)$. Since $G'$ does not contain
  loops, $\varphi(K)$ is a clique of $G'$ and
  $|\varphi(K)| = |K|$.

  Consider two cliques $K$ of $G$ and $K'$ of $G'$ such that
  $K' = \varphi(K)$. For any $u \in K$, $\varphi$ induces a bijection
  between the cliques containing $u$ and the cliques containing
  $\varphi(u)$. Consequently, $K$ is a maximal clique of $G$ if and
  only if $K'$ is a maximal clique of $G'$. Therefore, $\varphi$
  induces a map from $V(N(\cX(G)))$ to $V(N(\cX(G')))$.

  We now prove a useful claim.

  \begin{claim}\label{claim-intersection-cliques-nerve}
    For any maximal cliques $K_1,K_2$ of $G$ such that
    $K_1 \cap K_2\ne \emptyset$, we have
    $\varphi(K_1) \cap \varphi(K_2) = \varphi(K_1 \cap K_2)$.
  \end{claim}

  \begin{proof}
    The inclusion
    $\varphi(K_1 \cap K_2) \subseteq \varphi(K_1) \cap \varphi(K_2)$
    is trivial. Suppose now that the reverse inclusion does not hold,
    i.e., that there exist $u_1 \in K_1 \setminus K_2$ and
    $u_2 \in K_2 \setminus K_1$ such that
    $\varphi(u_1) = \varphi(u_2)$. Pick $u \in K_1 \cap K_2$ and
    observe that $u \sim u_1$ since $K_1$ is a clique and $u \sim u_2$
    since $K_2$ is a clique. Consequently, the map $\varphi$ is not
    locally injective at $u$, a contradiction.
  \end{proof}

  Note that if $\sigma = \{K_1, \ldots, K_p\}$ is a simplex of
  $N(\cX(G))$, then there exists $u \in \bigcap_{i=1}^p
  K_i$. Consequently, $\varphi(u) \in \bigcap_{i=1}^p \varphi(K_i)$
  and thus the image of a simplex of $N(\cX(G))$ is a simplex of
  $N(\cX(G'))$. Thus $\varphi$ is a simplicial map from
  $N(\cX(G))$ to $N(\cX(G'))$. Moreover, for any
  $1 \leq i < j \leq p$, $u \in K_i \cap K_j$ and consequently, by
  Claim~\ref{claim-intersection-cliques-nerve},
  $\varphi(K_i) \cap \varphi(K_j) = \varphi(K_i \cap K_j)$. Since
  $|\varphi(K_i)| = |K_i|$ and $|\varphi(K_j)| = |K_j|$, this implies
  that if $K_i \neq K_j$, then $\varphi(K_i) \neq
  \varphi(K_j)$. Consequently, we have $|\varphi(\sigma)| = |\sigma|$.

  We now show that $\varphi$ is locally surjective. Let
  $K_0 \in V(N(\cX(G)))$ and $K'_0 = \varphi(K) \in V(N(\cX(G')))$ and
  consider a simplex $\sigma' = \{K'_0, K'_1, \ldots, K'_p\}$ in
  $N(\cX(G')$. By definition of $N(\cX(G')$, there exists
  $u' \in \bigcap_{i=1}^p K'_i$. Since $K'_0 = \varphi(K_0)$, there
  exists $u \in K_0$ such that $u' = \varphi(u)$. Since $\varphi$ is a
  covering map from $G$ to $G'$, for each $1 \leq i\leq p$, there
  exists $K_i \in V(N(\cX(G)))$ such that $u \in K_i$ and
  $K'_i = \varphi(K_i)$. Since $u \in \bigcap_{i=1}^p K_i$,
  $\sigma = \{K_0, K_1, \ldots, K_p\}$ is a simplex of $N(\cX(G')$
  that is mapped to $\sigma'$ by $\varphi$.

  We now show that $\varphi$ is locally injective.  Consider
  $K_0 \in V(N(\cX(G)))$ and assume that there exist two distinct
  simplices $\sigma_1, \sigma_2$ in $N(\cX(G))$ such that
  $K_0 \in \sigma_1 \cap \sigma_2$ and
  $\varphi(\sigma_1) = \varphi(\sigma_2)$. Since
  $|\varphi(\sigma_1)| = |\sigma_1|$ and
  $|\varphi(\sigma_2)| = |\sigma_2|$, it implies that there exist
  $K_1 \in \sigma_1\setminus \sigma_2$ and
  $K_2 \in \sigma_2 \setminus \sigma_1$ such that
  $\varphi(K_1) = \varphi(K_2)$. If $K_1 \cap K_2 \neq \emptyset$,
  since $|\varphi(K_1)| = |K_1|$ and $|\varphi(K_2)| = |K_2|$, by
  Claim~\ref{claim-intersection-cliques-nerve}, we have  $K_1 = K_2$, a
  contradiction. Consequently, $K_1 \cap K_2 = \emptyset$. Consider
  two distinct vertices $u_1 \in K_0 \cap K_1$ and
  $u_2 \in K_0 \cap K_2$. Since $|\varphi(K_0)| = |K_0|$, we have
  $\varphi(u_1) \neq \varphi(u_2)$. Since
  $\varphi(K_2) = \varphi(K_1)$, there exists $v_1 \in K_2$ such that
  $\varphi(v_1) = \varphi(u_1)$. But $u_2 \sim u_1$ since
  $u_1, u_2 \in K_0$ and $u_2 \sim v_1$ since $u_2, v_1 \in K_2$. This
  contradicts the local injectivity of $\varphi$ at $u_2$.

  Consequently, $\varphi$ defines a simplicial map from $N(\cX(G))$ to
  $N(\cX(G'))$ that induces a bijection between the simplices
  containing a vertex of $N(\cX(G))$ and the simplices containing its
  image, i.e., $\varphi$ defines a covering map from $N(\cX(G))$ to
  $N(\cX(G'))$.
\end{proof}

Since a covering map is locally bijective, from Lemma \ref{lem:covering-nerves}
and Proposition \ref{clique_Helly_triangle} we conclude that the nerve
graph $NG(\cX(G))$ is clique-Helly if and only if the nerve graph $NG(\cX(G'))$ is clique-Helly.
This concludes the proof of Theorem~\ref{t:nerve-cover}.

\medskip
Recall that a graph $G$ is \emph{locally 7-systolic} if the
neighborhoods of vertices do not induce $4$-, $5$-, and $6$-cycles.
If additionally, the clique complex $X(G)$ of $G$ is simply connected,
then the graph $G$ is \emph{7-systolic}.  A $7$-systolic graph does
not contain induced $4$-, $5$-, and $6$-cycles~\cite{Chepoi2000,JS}.
It was shown in~\cite{JS} that 7-systolic graphs are hyperbolic, and
in fact, they are 1-hyperbolic~\cite{ChDrEsHaVa}. Thus they are coarse
Helly by Proposition~\ref{coarse-Helly-hyperbolic} or
Proposition~\ref{coarse-Helly-hwm}. We now show that the nerve complex
of the clique-hypergraph of a 7-systolic graph is Helly.

\begin{theorem}\label{7-systolic}
  If $G$ is a locally 7-systolic graph that is locally finite, then
  the nerve graph $NG(\cX(G))$ of its clique-hypergraph $\cX(G)$ is
  clique-Helly.  In particular, if $G$ is a locally finite 7-systolic
  graph, then the nerve graph $NG(\cX(G))$ is Helly.
\end{theorem}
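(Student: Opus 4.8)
The plan is to establish the clique-Helly property of the nerve graph $N:=NG(\cX(G))$ by the triangle criterion of Proposition~\ref{clique_Helly_triangle}, and then to obtain the ``in particular'' statement from simple connectivity together with Corollary~\ref{c:nerve-cover}. I emphasize at the outset that one cannot reduce to clique-Hellyness of $G$ itself: the graph obtained from a triangle $abc$ by attaching three vertices $x,y,z$ with $x\sim b,c$, $y\sim a,c$, $z\sim a,b$ is $7$-systolic (all vertex-links are induced paths and its clique complex is simply connected) yet fails the triangle criterion at $abc$, since there $T^{*}=\{a,b,c,x,y,z\}$ has no universal vertex. Hence the argument must genuinely take place in the nerve.

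To verify the triangle criterion for $N$, I would start from a triangle of $N$, i.e.\ a triple $M_1,M_2,M_3$ of pairwise intersecting maximal cliques of $G$. Choosing $v_{12}\in M_1\cap M_2$, $v_{13}\in M_1\cap M_3$, $v_{23}\in M_2\cap M_3$ produces a triangle $T_0=\{v_{12},v_{13},v_{23}\}$ of $G$ (if two of these vertices coincide the three cliques share a vertex and the argument only simplifies). As candidate universal vertex I would take a maximal clique $K^{*}$ of $G$ containing $T_0$, and the task is to show that $K^{*}$ meets every maximal clique $L$ intersecting at least two of $M_1,M_2,M_3$.

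The first, local, step uses only local systolicity. Suppose $L$ meets $M_1$ and $M_2$ and pick $p\in L\cap M_1$, $q\in L\cap M_2$; if $p$ or $q$ lies in $T_0$ we are done, so assume not. Then $p\sim v_{12},v_{13}$, $q\sim v_{12},v_{23}$, and $p\sim q$, so the four vertices $p,q,v_{23},v_{13}$ all lie in the link of $v_{12}$ and bound a $4$-cycle there. As this link contains no induced $4$-cycle, either $p\sim v_{23}$ or $q\sim v_{13}$, and in either case $L$ contains a common neighbour of all of $T_0$. Thus every such $L$ meets $B_1(v_{12})\cap B_1(v_{13})\cap B_1(v_{23})$, the triangle together with its common neighbours. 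When $T_0$ lies in a unique maximal clique this intersection set is exactly $K^{*}$ and we are finished. The remaining, and I expect principal, difficulty is the case in which $T_0$ is contained in several maximal cliques: then $L$ might attach to $T_0$ only through a common neighbour lying outside $K^{*}$, and to exclude $L\cap K^{*}=\emptyset$ one must analyse, in the links of the vertices $v_{ij}$, the cycle formed by this common neighbour, a vertex of $K^{*}\setminus T_0$ it fails to see, and the remaining vertices of $L$. Ruling out such a cycle is precisely where the absence of induced $5$- and $6$-cycles in the links is used, and is the reason the hypothesis must be $7$-systolic rather than merely systolic.

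Finally, for the ``in particular'' clause: if $G$ is $7$-systolic then $X(G)$ is simply connected, so $G$ coincides with the $1$-skeleton of the universal cover of $X(G)$, and Corollary~\ref{c:nerve-cover} converts the clique-Hellyness of $NG(\cX(G))$ just proved into its Hellyness. Equivalently, the clique complex of $NG(\cX(G))$ is the nerve complex of $\cX(G)$, which by Borsuk's Nerve Theorem~\cite{Bor,Bj} is homotopy equivalent to the simply connected complex $X(G)$, whence Theorem~\ref{t:lotogloHell} upgrades clique-Helly to Helly.
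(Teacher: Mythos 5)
Your preliminary observations are sound: the 3-sun example you give (which is exactly the paper's Figure~\ref{fig-3sun-amalgam}) correctly shows that one cannot reduce to clique-Hellyness of $G$ itself, and your first step — the chordless $4$-cycle $p,v_{13},v_{23},q$ in the link of $v_{12}$ forcing $L$ to contain a common neighbour of $T_0$ — is a valid local argument. But at the decisive point you stop proving and start predicting: when $T_0$ lies in more than one maximal clique you write that ``one must analyse'' certain cycles in links and that this ``is precisely where'' the absence of $5$- and $6$-cycles is used, without carrying out any such analysis. This case is completely generic (already two $4$-cliques sharing a triangle realize it), and it is where the entire content of the theorem sits. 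Note the quantitative obstacle: you need a \emph{single} maximal clique of $G$ meeting every $L\in T^*$ simultaneously, and the union of the vertex sets of these $L$'s has diameter up to $3$ in $G$ (any two members of $T^*$ share one of $M_1,M_2,M_3$, giving $d\leq 1+1+1$). No argument confined to a single vertex-link — which sees only a radius-$1$ neighbourhood — can produce such a clique, so your sketch as it stands cannot be completed by ``more of the same''.

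The paper's proof is structured in the opposite order, and for a reason. It first proves the second (global) assertion, as Lemma~\ref{l:7systolic}: given pairwise intersecting maximal cliques of the nerve graph, the union $V_\cF$ of the underlying vertices has diameter $\leq 3$, then Lemma~\ref{lem-VC-simple} — every finite diameter-$3$ set in a $7$-systolic graph is dominated by a clique — supplies a clique $K$ with $d(u,K)\le 1$ for all $u\in V_\cF$, and Claim~\ref{KKij} upgrades domination to intersection $K\cap K^i_j\neq\emptyset$ by the same no-induced-$C_4$ chord trick as your step one. The engine, Lemma~\ref{lem-VC-simple}, is proved by a delicate induction on $d(v,K)$ using the triangle and quadrangle conditions together with the exclusion of $W_5$ and $W_6$ (Claim~\ref{c-S3}); but TC and QC are consequences of bridgedness, i.e.\ of \emph{simple connectivity}, and can fail in a merely locally $7$-systolic graph. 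That is exactly why the paper derives the first (local) assertion from the global one through the covering machinery of Theorem~\ref{t:nerve-cover}/Corollary~\ref{c:nerve-cover} (resting on Proposition~\ref{p:XF-sc}), and explicitly remarks that this reverses the usual local-to-global order. Your plan inverts that order and would have to rebuild distance-$2$ and distance-$3$ control from radius-$1$ link conditions alone, which there is no reason to expect is possible; to complete a proof you should prove the $7$-systolic case first and deduce the locally $7$-systolic case via the cover. Your final paragraph deriving the ``in particular'' clause (Borsuk plus Theorem~\ref{t:lotogloHell}, or Corollary~\ref{c:nerve-cover}) is correct, but in your architecture it rests on the unproved first assertion.
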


This is a generalization of a result by Larri\'on, Neumann-Lara, and
Piza\~na \cite{Larrion+2002}. Formulated in different terms, the result
of \cite{Larrion+2002} can be rephrased as follows: the nerve
graphs of the clique-hypergraphs of 2-dimensional (i.e., $K_4$-free)
locally 7-systolic complexes are clique-Helly.

Contrary to the usual approach to other local-to-global proofs (such
as Theorem~\ref{t:lotogloHell}), we first prove the second assertion of
Theorem~\ref{7-systolic} and then the first assertion follows from
Corollary~\ref{c:nerve-cover}.

In the proof, we need the following technical lemma. This is a
particular case of the result of~\cite{Chepoi1998}, providing a
characterization of graphs admitting $r$-dominating cliques.  We
present here a much simpler proof of this particular case.

\begin{lemma}\label{lem-VC-simple}
  Given a locally finite $7$--systolic graph $G$, for any finite set
  $S \subseteq V(G)$ of diameter $3$ in $G$, there exists a clique $K$
  dominating $S$ (i.e., $d(u,K) \leq 1$ for any $u \in S$).
\end{lemma}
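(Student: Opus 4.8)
The plan is to use that a $7$--systolic graph $G$ is, in particular, a systolic (bridged) graph: it is weakly modular (so it satisfies the triangle and quadrangle conditions), its balls are convex, it contains no induced $4$-- or $5$--cycle, and, by $7$--systolicity, the neighborhood $N(v)$ of every vertex contains no induced $4$--, $5$--, or $6$--cycle. Two elementary facts will be used repeatedly. First, the common neighbors of any two vertices at distance $2$ form a clique: if vertices $x,y$ with $d(x,y)=2$ had common neighbors $a\nsim b$, then $x,a,y,b$ would be an induced $4$--cycle, which is impossible in a bridged graph. Second, each ball $B_1(x_i)$ is convex.

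I would argue by building up a dominating clique one point of $S$ at a time. Concretely, I would establish the following enlargement step: if a clique $C$ dominates a proper subset $S'\subsetneq S$ (that is, $S'\subseteq B_1(C)$) and $x\in S\setminus S'$, then there is a clique $C^{+}$ dominating $S'\cup\{x\}$. Starting from any edge $pq$ lying on a geodesic between two points of $S$ at maximal distance (which already dominates those two endpoints) and iterating the enlargement step over the finitely many points of $S$ then yields a clique dominating all of $S$. For the enlargement step, set $t=d(x,C)\ge 2$, pick $p\in C$ realizing this distance and, by the triangle condition, a neighbor $q\sim p$ with $d(q,x)=t-1$; the core of the argument is to \emph{rotate} $C$ towards $x$ through $q$, replacing $C$ by the clique formed by the inward neighbors of its vertices, and to verify that the resulting clique still dominates $S'$. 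Since $\operatorname{diam}(S)\le 3$ and $C$ is a clique, all relevant vertices lie in a ball of radius $3$ about any vertex of $C$, so only finitely many configurations occur and the rotation terminates with $d(x,C)\le 1$.

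The main obstacle is exactly the preservation of domination during the rotation: a priori, pushing $C$ towards $x$ could move the witness of some point $u\in S'$ out of $B_1(u)$. This is where $7$--systolicity is indispensable and where this particular case becomes simple. Using convexity of the balls $B_1(u)$ together with the quadrangle condition and the absence of induced $4$--, $5$--, and $6$--cycles in the links, the inward neighbors are forced to cohere into a single clique and each witness can be pushed inward simultaneously, so no point of $S'$ is lost. That this breaks down without the girth hypothesis on links is illustrated by the triangular grid ${\mathbb T}_3$, which is bridged but not $7$--systolic and for which, as the deltoid computation shows, no such dominating clique need exist. Equivalently, since every vertex of a dominating clique is within distance $2$ of all of $S$, the lemma also encodes the radius bound $r(S)\le 2$ for diameter--$3$ sets, a statement that genuinely requires ruling out the hexagonal configurations excluded by $7$--systolicity.
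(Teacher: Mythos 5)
You have correctly located where the difficulty lies, but you have not resolved it: the entire content of the lemma is concentrated in your unproven assertion that, during the rotation, ``the inward neighbors are forced to cohere into a single clique and each witness can be pushed inward simultaneously.'' No argument is given for this beyond naming the hypotheses (convexity of unit balls, the quadrangle condition, and $W_4$-, $W_5$-, $W_6$-freeness of links), and the specific mechanism you propose --- replacing $C$ by the clique formed by inward neighbors of its vertices --- does not work as stated. A witness $u\in S'$ may be adjacent only to vertices of $C$ lying at maximal distance from $x$; moving all of $C$ inward can then strand $u$ outside the unit ball of the new clique, and nothing in your sketch rules this out. The paper's proof handles precisely this configuration, and does so differently: in the critical case $d(v,K)=2$ it does not move the clique wholesale but keeps the near part $K_2=\{u\in K: d(v,u)=2\}$, manufactures for each stranded witness $u$ a bridging vertex $u''$ adjacent to $u$ and to all of $K_2$ (via the triangle and quadrangle conditions), proves that these bridging vertices pairwise coincide or are adjacent --- and it is only here, in Claim~\ref{c-S3}, that $W_5$- and $W_6$-freeness is actually deployed, through an explicit induced-wheel contradiction --- and then advances the enlarged clique one step toward $v$ using Claim~\ref{c-clique-1}. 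The cases $d(v,K)=3$ and $d(v,K)=4$ need separate (easier) arguments. None of this case analysis, nor any substitute for it, appears in your proposal: the key step is restated, not proved.

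Structurally, your greedy one-point-at-a-time scheme is in fact closer to the original argument of \cite{Chepoi1998} (which the paper cites and explicitly replaces by a simpler proof): there, too, a clique hitting the balls of a subfamily is transformed so as to additionally hit one more ball. The paper instead runs an extremal argument --- choose a maximal clique $K$ maximizing $|N_S[K]|$ and, among these, a non-dominated vertex $v$ minimizing $d(v,K)$, then contradict extremality --- which removes the need to track domination through a whole sequence of rotations. If you pursue your route, the enlargement step you need is essentially the paper's case analysis verbatim, so nothing is saved; note also a quantitative slip: from $\operatorname{diam}(S)\le 3$ and $C$ dominating $S'$ one only gets $d(x,C)\le 4$, not that all relevant vertices lie in a ball of radius $3$ about a vertex of $C$, so the rotation would have to be controlled for $t\in\{2,3,4\}$ in any case.
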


\begin{proof}
  Consider a maximal clique $K$ of $G$ that maximizes the size of
  $N_S[K] = \{u \in S \mid d(u,K) \leq 1\}$ and assume that there
  exists $v \in S$ such that $d(v,K) >1$. Among all such cliques and
  vertices, consider a clique $K$ and a vertex $v$ that minimizes
  $d(v,K)$.

  \begin{claim}\label{c-clique-1}
    For any clique $K$ and any vertex $v$ such that
    $d(v,u) = d(v,K) = k$ for any $u \in K$, there exists
    $v' \in B(v,k-1)$ such that $v' \sim K$.
  \end{claim}

  \begin{proof}
    Consider $v' \in B(v,k-1)$ that maximizes $|N(v') \cap K|$ and
    assume that there exists $u'' \in K$ such that $v' \nsim
    u''$. Consider $u' \in K \cap N(v')$. By (TC), there exists
    $v''\in B(v,k-1)$ such that $v'' \sim u',u''$. Since
    $v', v'' \in B(v,k-1) \cap N(u')$ and $d(v,u') = k$, we have
    $v' \sim v''$ (otherwise, by (QC), there exists an induced
    square in $G$). For any $u \in N(v') \cap K$, the 4-cycle
    $v'uu''v''$ cannot be induced and thus $u \sim v''$.
    Therefore, $N(v')\cap K \subsetneq N(v'')\cap K$, contradicting the choice
    of $v'$.
  \end{proof}

  We also recall the following well-known property of systolic
  graphs~\cite{SoCh}.

  \begin{claim}\label{claim-INC}
    In a systolic graph $G$, for any $u,v,w,z$ such that $u \sim v,w$
    and $d(u,z) = d(v,z)+1 = d(w,z)+1$, we have $v \sim w$.
  \end{claim}

  \begin{proof}
    By (QC), there exists $x \sim v,w$ such that $d(u,x) =
    d(u,v)-1$. Since $G$ is systolic, the 4-cycle $xvzw$ cannot be
    induced and thus $v \sim w$.
  \end{proof}

  By Claim~\ref{c-clique-1} and our choice of $K$ and $v$, there
  exists $u \in K$ such that $d(v,u) = d(v,K)+1$. We distinguish
  several cases depending on the value of $d(v,K)$. Since the diameter
  of $S$ is $3$ and since $K$ is adjacent to at least one vertex of
  $S$, necessarily $d(v,K) \leq 4$.  If $d(v,K) = 4$, let
  $K_4 = K\cap B(v,4)$. Note that for any $u \in N_S[K]$, since
  $d(v,u) \leq 3$, we have $d(u,K_4) \leq 1$ and thus
  $N_S(K) = N_S(K_4)$. By Claim~\ref{c-clique-1}, there
  exists a clique $K'$ containing $K_4$ such that
  $N_S[K] = N_S[K_4] \subseteq N_S[K']$ and $d(v,K') = 3$,
  contradicting our choice of $K$ and $v$.

  If $d(v,K) = 3$, let $K_3 = \{u \in K: d(v,u) =3 \}$ and
  $K_4 = \{u \in K: d(v,u) =4 \}$. Pick any $u \in N_S(K)$ and
  consider a neighbor $u'$ of $u$ in $K$. We assert that
  $u \in N_S(K_3)$. If $u' \in K_3$, we are trivially done. Otherwise,
  pick any $t \in K_3$ and observe that by Claim~\ref{claim-INC}, we
  have $t \sim u$.
Therefore $N_S[K] = N_S[K_3]$ and by Claim~\ref{c-clique-1}, there
  exists a clique $K'$ containing $K_3$ such that
  $N_S[K] = N_S[K_3] \subseteq N_S[K']$ and $d(v,K') = 2$,
  contradicting our choice of $K$ and $v$.

  Finally, assume that $d(v,K) = 2$ and let
  $K_2 = \{u \in K: d(v,u) =2 \}$ and $K_3 = \{u \in K: d(v,u) =3
  \}$. Let $S_3 = N_S[K_3] \setminus N_S[K_2]$. For any $u \in S_3$,
  there exists $u' \in K_3 \cap N(u)$. If $d(u,v) = 2$, then for any
  $u'' \in K_2$, by Claim~\ref{claim-INC}, $u \sim u''$. Consequently,
  $u \sim K_2$ and thus $u \notin S_3$, a contradiction.
  Consequently, $d(u,v) = 3$, and by (TC), there exists $u'' \in B(v,2)$
  such that $u'' \sim u, u'$.  Since
  $K_2 \cup \{u''\} \subseteq N(u')\cap I(u',v)$, we conclude that
  $u''$ is adjacent to all vertices of $K_2$ by Claim~\ref{claim-INC}.

  \begin{claim} \label{c-S3}
    For any $u,w \in S_3$, either $u'' = w''$ or $u'' \sim w''$.
  \end{claim}

  \begin{proof}
    Suppose that $u'' \neq w''$ and $u'' \nsim w''$. If $w'' \sim u'$,
    we get a contradiction by Claim~\ref{claim-INC}. Similarly, we can
    assume that $u'' \nsim w'$. Let $v'' \in K_2$ and note that
    $v'' \sim u',w'$ and $v'' \nsim u,w$ since $u,w \notin
    N_S[K_2]$. By Claim~\ref{claim-INC}, we have $u''\sim v''$ and
    similarly $v''\sim w''$. By (TC), there exists $w^* \sim v,v'',w''$
    and $u^*\sim v,v'',u''$. If $u^*\sim w''$ (in particular if
    $u^* = w^*$), the vertices $v'', u^*,u'',u',w',w''$ induce a
    $W_5$. We can thus assume that $u^* \nsim w''$ and similarly that
    $w^* \nsim u''$.  By Claim~\ref{claim-INC}, we have $u^* \sim w^*$
    and thus the vertices $v'', u^*,u'',u',w',w'',w^*$ induce a $W_6$.
  \end{proof}

  Consider the clique $K'=K_2 \cup \{u'': u \in S_3\}$ and note that
  $N_S[K] \subseteq N_S[K']$. Since all vertices of $K'$ are at
  distance $2$ from $v$, by Claim~\ref{c-clique-1}, there exists a
  clique $K''$ containing $K'$ such that $d(v,K'') = 1$. Thus
  $N_S[K] \subsetneq N_S[K'']$, contradicting our choice of $K$ and
  $v$.
\end{proof}

We are ready to complete the proof of the second part of Theorem~\ref{7-systolic}.

\begin{lemma}\label{l:7systolic}
  Let $G$ be a locally finite $7$--systolic graph. Then the nerve
  graph $NG(\cX(G))$ of its clique-hypergraph $\cX(G)$ is a Helly
  graph.
\end{lemma}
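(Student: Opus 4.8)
The plan is to deduce Hellyness of $NG(\cX(G))$ from the local-to-global criterion of Theorem~\ref{t:lotogloHell}: it suffices to prove that $NG(\cX(G))$ is clique-Helly and that its clique complex is simply connected. The simple connectivity is the easy half. Since $G$ is $7$--systolic it is in particular systolic, so the clique complex $X(G)$ is contractible; as the maximal cliques of $G$ and all their nonempty intersections are cliques, hence contractible simplices of $X(G)$, Borsuk's Nerve Theorem~\cite{Bor,Bj} gives that the nerve complex $N(\cX(G))$ is homotopy equivalent to $X(G)$ and thus simply connected. The clique complex of the graph $NG(\cX(G))$ shares its $1$--skeleton with $N(\cX(G))$ and has at least as many triangles, so passing to it only fills in extra $2$--cells and keeps $\pi_1$ trivial; hence the clique complex of $NG(\cX(G))$ is simply connected as well.

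The core of the argument is the clique-Helly property, which I would verify through the triangle criterion of Proposition~\ref{clique_Helly_triangle} (applicable since $NG(\cX(G))$ is locally finite, hence has finite cliques). Fix a triangle $C_1C_2C_3$ of $NG(\cX(G))$, i.e.\ three pairwise intersecting maximal cliques of $G$, and let $T^{*}$ be the set of maximal cliques of $G$ meeting at least two of $C_1,C_2,C_3$. I would introduce the vertex set $S=\{v\in V(G): v\in B_1(C_i)\cap B_1(C_j)\text{ for some }i\ne j\}$. Two observations drive the proof: first, every $D\in T^{*}$ satisfies $D\subseteq S$ (a vertex $d\in D$ is adjacent to its clique-mates in $D\cap C_i$, hence lies in $B_1(C_i)$ for the two indices witnessing $D\in T^{*}$); second, $S$ has diameter at most $3$, because any two vertices of $S$ lie in a common $B_1(C_m)$ (their index pairs, being $2$--subsets of $\{1,2,3\}$, must overlap) and $\diam(C_m)\le 1$. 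This second observation is exactly what makes Lemma~\ref{lem-VC-simple} applicable: it produces a clique dominating $S$, which I extend to a maximal clique $C^{*}$ of $G$.

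It remains to see that $C^{*}$ is the universal vertex required by Proposition~\ref{clique_Helly_triangle}, that is, $C^{*}$ meets every $D\in T^{*}$ (this also yields $C^{*}\in T^{*}$, since $C_1,C_2,C_3\subseteq S$ are themselves in $T^{*}$). This is the main obstacle: domination only guarantees that each vertex of $D$ is within distance $1$ of $C^{*}$, whereas I need an \emph{actual common vertex} of $C^{*}$ and $D$. The key sub-lemma is that in a bridged graph, if a clique $K$ dominates a maximal clique $D$ (that is, $D\subseteq B_1(K)$), then $K\cap D\ne\emptyset$. I would prove it by choosing $k^{*}\in K$ maximising $|N(k^{*})\cap D|$; if some $d_0\in D$ were non-adjacent to $k^{*}$, then picking $k_0\in K$ with $k_0\sim d_0$ and, by maximality of $k^{*}$, a vertex $d_1\in D$ with $d_1\sim k^{*}$ but $d_1\nsim k_0$, the four vertices $d_0,d_1,k^{*},k_0$ would induce a $4$--cycle, contradicting that $G$ is bridged. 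Hence $k^{*}$ is adjacent to all of $D$, and maximality of $D$ forces $k^{*}\in D$, so $k^{*}\in C^{*}\cap D$.

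Applying this sub-lemma to each $D\in T^{*}$ (all contained in $S$, hence dominated by $C^{*}$) completes the verification of the triangle criterion, and Theorem~\ref{t:lotogloHell} then yields that $NG(\cX(G))$ is Helly. I would emphasise that $7$--systolicity enters the argument only through Lemma~\ref{lem-VC-simple} (whose proof forbids induced $6$--cycles in links via the $W_6$ configuration), whereas the decisive domination-to-intersection upgrade needs nothing beyond the absence of induced $4$--cycles, i.e.\ bridgedness.
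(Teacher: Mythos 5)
Your proof is correct and takes essentially the same route as the paper's: Borsuk's Nerve Theorem for simple connectivity of the (flag-completed) nerve, a diameter-$3$ bound on the relevant vertex set feeding into Lemma~\ref{lem-VC-simple}, and the identical induced-$C_4$-plus-maximality argument upgrading domination of a maximal clique to actual intersection (your sub-lemma is precisely the paper's Claim~\ref{KKij}). The only cosmetic difference is that you certify clique-Hellyness via the triangle criterion of Proposition~\ref{clique_Helly_triangle}, whereas the paper verifies the finite clique-Helly property directly for an arbitrary finite family of pairwise intersecting maximal cliques of the nerve graph.
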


\begin{proof}
  Since $G$ is locally finite, $NG(\cX(G))$ is also locally
  finite. Since $X(G)$ is simply connected, by Borsuk's Nerve
  Theorem~\cite{Bor,Bj}, $N(\cX(G))$ is simply connected and so is its
  flag-completion $X(NG(\cX(G)))$. Thus, by
  Theorem~\ref{t:lotogloHell}, it suffices to show that the nerve
  graph $NG(\cX(G))$ is finitely clique-Helly.  Consider a finite
  family $\cF = \{C_1, \ldots, C_n\}$ of pairwise intersecting maximum
  cliques in $NG(\cX(G))$. Each $C_i$ corresponds to a family
  $\{K_1^i, \ldots, K_{n_i}^i\}$ of pairwise intersecting cliques in
  $G$.

  Let $V_i = \bigcup_{j=1}^{n_i} V(K_j^i)$ for every $1 \leq i \leq n$
  and let $V_\cF = \bigcup_{i=1}^n V_i$.  First note that
  $\diam(V_\cF) \leq 3$. Indeed, for any $u \in K_j^i$ and
  $u' \in K_{j'}^{i'}$, there exists a clique $K \in C_i \cap C_{i'}$
  and two vertices $u_i \in K \cap K_j^i$ and
  $u_{i'} \in K \cap K_{j'}^{i'}$.  Therefore, by
  Lemma~\ref{lem-VC-simple} there exists a maximal clique $K$ of $G$
  such that $d(v,K) \leq 1$ for all $v \in V_\cF$.

  \begin{claim} \label{KKij}
    $K \cap K^i_j \neq \emptyset$ for all $i,j$.
  \end{claim}

  \begin{proof}
    Suppose that there exists $i, j$ such that
    $K\cap K^i_j = \emptyset$ and pick a vertex $v \in K$
    maximizing $|N(v) \cap K_j^i|$. If $v \sim K_j^i$, then $K_j^i$ is
    not a maximal clique of $G$, a contradiction. Thus, there exists
    $u' \in K^i_j$ such that $v \nsim u'$. Since $d(u',K) = 1$, there
    exists $v' \in K$ such that $v'\sim u'$. For any
    $u \in N(v) \cap K_j^i$, the cycle $uvv'u'$ cannot be induced and
    thus $u \sim v'$. Therefore
    $N(v)\cap K_j^i \subsetneq N(v') \cap K_j^i$, contradicting our
    choice of $v$.
  \end{proof}

  Since $K$ intersects all $K_j^i$, by maximality of $C_i$ in
  $NG(\cX(G))$, we have $K \in C_i$ for all $i$. Consequently,
  $NG(\cX(G))$ is finitely clique-Helly and thus Helly.
\end{proof}

\subsection{Rips complexes and nerve complexes of
  $\delta$-ball-hypergraphs}
\label{s:Rips}

The \emph{Rips complex} (also called the \emph{Vietoris-Rips complex})
$R_{\delta}(M)$ of a metric space $(M,d)$ and positive real $\delta$
is an abstract simplicial complex that has a simplex for every finite
set of points of $M$ that has diameter at most $\delta$. If $(M,d)$ is
a connected unweighted graph $G$, then for any positive real $\delta$,
$R_{\delta}(G)$ and $R_{\lfloor \delta \rfloor}(G)$ coincide. In this
case, we can thus assume that $\delta$ is a positive integer, and then
the Rips complex $R_{\delta}(G)$ is just the $\delta$th power
$G^{\delta}$ of $G$. Notice that for any $\delta \in \N$, the nerve
complex $N(\cB_{\delta}(G))$ of the $\delta$-ball-hypergraph
$\cB_{\delta}(G)$ is isomorphic to the the Rips complex
$R_{2\delta}(G)$.

\begin{lemma}Rips complexes $R_{\delta}(G)$ of a Helly graph $G$ are  Helly.
\end{lemma}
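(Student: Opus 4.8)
The plan is to prove the stronger statement that the $\delta$th power $G^{\delta}=R_{\delta}(G)$ is itself a Helly \emph{graph}; since the Rips complex is by definition the clique complex of $G^{\delta}$ (a finite set has diameter $\le\delta$ in $G$ exactly when it is a clique of $G^{\delta}$), this immediately yields the lemma. The whole argument will rest on a single observation: the balls of $G^{\delta}$ are, literally, balls of $G$, so that the Helly property of $G$ transfers verbatim to $G^{\delta}$.

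First I would record the distance formula $d_{G^{\delta}}(u,v)=\lceil d_G(u,v)/\delta\rceil$. The inequality $\le$ follows by cutting a $G$-geodesic from $u$ to $v$ into $\lceil d_G(u,v)/\delta\rceil$ consecutive pieces each of length at most $\delta$; the endpoints of these pieces form a $(u,v)$--path in $G^{\delta}$. The reverse inequality holds because each edge of $G^{\delta}$ is realized by a walk of length at most $\delta$ in $G$, so a $(u,v)$--path of length $m$ in $G^{\delta}$ produces a $u$--$v$ walk of length $\le m\delta$ in $G$, forcing $m\delta\ge d_G(u,v)$. Since $G$ is connected and locally finite, each ball $B_{\delta}(v,G)$ is finite, so $G^{\delta}$ is connected and locally finite as well.

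Next I would deduce the key ball identity. Because $r$ is a non-negative integer, $\lceil k/\delta\rceil\le r$ is equivalent to $k\le r\delta$, and therefore $B_r(v,G^{\delta})=\{u:d_{G^{\delta}}(u,v)\le r\}=\{u:d_G(u,v)\le r\delta\}=B_{r\delta}(v,G)$; thus every ball of $G^{\delta}$ coincides, as a vertex set, with a ball of $G$. Now let $\{B_{r_i}(x_i,G^{\delta})\}_{i\in I}$ be a family of pairwise intersecting balls of $G^{\delta}$. By the identity just proved this is the family $\{B_{r_i\delta}(x_i,G)\}_{i\in I}$ of balls of $G$, and since $G^{\delta}$ and $G$ share the same vertex set, pairwise intersection in $G^{\delta}$ is the same as pairwise intersection in $G$. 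As $G$ is Helly, $\bigcap_{i\in I}B_{r_i\delta}(x_i,G)\ne\emptyset$, and any such common vertex lies in $\bigcap_{i\in I}B_{r_i}(x_i,G^{\delta})$. Hence $G^{\delta}$ is Helly and $R_{\delta}(G)=X(G^{\delta})$ is a Helly complex.

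The proof is essentially immediate once the ball identity is in place, so I do not expect a serious obstacle; the only point demanding a little care is the verification of the distance formula and, in particular, the equivalence $\lceil k/\delta\rceil\le r\Leftrightarrow k\le r\delta$, which relies on the integrality of $r$. (If one instead tried to argue through the local-to-global characterization in Theorem~\ref{t:lotogloHell}, establishing simple connectivity of $X(G^{\delta})$ directly would be the genuinely hard step, which is precisely why the direct ball argument is preferable.)
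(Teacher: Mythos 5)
Your proof is correct, but it takes a genuinely different route from the paper's. The paper argues local-to-global: it observes that $X(G)$ is a simply connected spanning subcomplex of $R_{\delta}(G)$, so by Theorem~\ref{t:lotogloHell} it suffices to show that the $1$--skeleton $G^{\delta}$ is clique-Helly, which it verifies via the triangle criterion of Proposition~\ref{clique_Helly_triangle}: the vertices of $T^*$ are pairwise at $G$-distance at most $2\delta$, so the Helly property applied to the $\delta$-balls centered at $T^*$ produces the required universal vertex. You instead prove the exact distance formula $d_{G^{\delta}}(u,v)=\lceil d_G(u,v)/\delta\rceil$ and the resulting ball identity $B_r(v,G^{\delta})=B_{r\delta}(v,G)$, so that every ball of $G^{\delta}$ \emph{is} a ball of $G$ and the Helly property transfers verbatim. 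Your argument is more elementary (it bypasses Theorem~\ref{t:lotogloHell} entirely) and in fact stronger: it never uses local finiteness, whereas the paper needs it to invoke Proposition~\ref{clique_Helly_triangle}, which fails for graphs with infinite cliques; your ball identity also directly recovers that $G^{\delta}$ is a Helly graph, not merely that $R_\delta(G)$ is a Helly complex. One small correction to your closing remark: simple connectivity of $X(G^{\delta})$ would not actually be the hard step in the alternative route --- any edge $uv$ of $G^{\delta}$ spans, together with the vertices of a $G$-geodesic from $u$ to $v$, a set of diameter at most $\delta$, hence a simplex of $R_{\delta}(G)$, so every loop in $G^{\delta}$ is homotopic to a loop in $G$ and null-homotopic since $X(G)$ is simply connected; this is exactly what the paper's first sentence is silently using. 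But this misjudgment is only in your commentary, not a gap in your proof.
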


\begin{proof}
  As noted above, we can assume that $\delta$ is an integer and thus
  the Rips complex $R_{\delta}(G)$ coincides with the $\delta$th power
  $G^{\delta}$ of $G$.  Observe that for any vertex $v$ and any radius
  $r$, $B_{r}(v, G^{\delta}) = B_{r\delta}(v,G)$. Thus the result
  follows since the family of balls of $G$ satisfies the Helly
  property.
\end{proof}

\subsection{Face complexes}
\label{s:face}

The \emph{face complex} $\fcom{X}$ of a locally finite abstract
simplicial complex $X$ is the simplicial complex whose vertex set
$V(\fcom{X})$ is the set of non-empty simplices of $X$ and where
$\{F_1,F_2,\ldots,F_k\}$ is a simplex of $\fcom{X}$ if
$\bigcup_{i=1}^k F_i$ is contained in a common simplex $F$ of $X$. If
$X$ is the clique complex of a graph $G$, then the vertices of
$\fcom{X}$ are the cliques of $G$ and two cliques $K_1,K_2$ of $G$ are
adjacent in the $1$--skeleton of $\fcom{X}$ if $K_1 \cup K_2$ is a
clique.

Given a maximal simplex $\sigma = \{F_1,F_2,\ldots,F_k\}$ of $\fcom{X}$,
$\bigcup_{i=1}^k F_i$ is contained in a common simplex $F$ of $X$. By
maximality of $\sigma$, $F = \cup \sigma = \bigcup_{i=1}^k F_i$ and
$F$ is a maximal simplex of $X$. Moreover, $F \in \sigma$ and
consequently, since $\sigma$ is maximal,
$\sigma = \cP(F) \setminus \{\emptyset\}$ where $\cP(F)$ is the set of
all subsets of $F$. Conversely, for any maximal simplex $F$ of $X$, by
definition of $\fcom{X}$, $\sigma = \cP(F) \setminus \{\emptyset\}$ is a
simplex of $\fcom{X}$. Since $F$ is a maximal simplex of $X$, $\sigma$
must be a maximal simplex of $\fcom{X}$. As a result, we obtain the
following Lemma:

\begin{lemma}\label{l:triface}
  For any simplicial complex $X$, the map $\sigma \mapsto \cup \sigma$
  defines a bijection from the set of maximal simplices of $\fcom{X}$ to
  the set of maximal simplices of $X$, with inverse given by
  $F \mapsto \cP(F) \setminus \{\emptyset\}$.
\end{lemma}

If we start with the clique complex of a graph, then its face complex is
also the clique complex of a graph.

\begin{lemma}\label{l:trifaceclique}
  For any clique complex $X$, its face complex $\fcom{X}$ is also a
  clique complex.
\end{lemma}

\begin{proof}
  Let $G = G(X)$ be the $1$--skeleton of $X$ and let
  $G' = G(\fcom{X})$ be the $1$--skeleton of $\fcom{X}$. For any edge
  $F_1F_2$ in $G'$, $F_1$, $F_2$, and $F_1\cup F_2$ are cliques of
  $G$. Consequently, for any clique $\sigma=\{F_1, F_2, \ldots, F_k\}$
  in $G(\fcom{X})$, $F_1 \cup F_2 \cup \ldots \cup F_k$ is a clique of
  $G$. Since $X$ is the clique complex of $G$,
  $F_1 \cup F_2 \cup \ldots \cup F_k$ is a clique of $X$ and thus
  $\sigma$ is a simplex of $\fcom{X}$.
\end{proof}

\begin{proposition}\label{p:Hellyface}
  The face complex $\fcom{X}$ of a locally finite clique-Helly
  (respectively, Helly) complex $X$ is a locally finite clique-Helly
  (respectively, Helly) complex.
\end{proposition}

\begin{proof}
  By Lemma~\ref{l:trifaceclique}, $\fcom{X}$ is a clique complex.  Let
  $G = G(X)$ be the $1$--skeleton of $X$ and $G' = G(\fcom{X})$ be the
  $1$--skeleton of $\fcom{X}$. Since $G$ is locally finite, $G'$ is
  also locally finite and thus $\fcom{X}$ is a locally finite
  simplicial complex.

  Consider the bijection $\sigma \mapsto \cup \sigma$ between the
  maximal cliques of $\fcom{X}$ and the maximal cliques of $X$ defined
  in Lemma~\ref{l:triface}.  Observe that if $(\sigma_i)_{i \in I}$ is
  a family of maximal cliques of $F(X)$, then
  $\bigcap_{i\in I} \sigma_i \neq \emptyset$ if and only if
  $\bigcap_{i\in I} (\cup \sigma_i) \neq \emptyset$. Consequently,
  since $X$ is clique-Helly, $F(X)$ is also clique-Helly.

  Suppose now that $X$ is simply connected. Since the nerve complexes
  of the clique hypergraphs of $X$ and $\fcom{X}$ are isomorphic
  thanks to the bijection $\sigma \mapsto \cup \sigma$, $X$ and $F(X)$
  are homotopy equivalent by Borsuk's Nerve
  Theorem~\cite{Bor,Bj}. Consequently, $F(X)$ is simply connected and
  thus by Theorem~\ref{t:lotogloHell}, $F(X)$ is a Helly complex when
  $X$ is a Helly complex.
\end{proof}

 \section{Helly groups} \label{Helly_groups}

As we already defined above, a group is \emph{Helly} if it acts
geometrically on a Helly graph (necessarily, locally finite). The main
goal of this section is to provide examples of Helly groups. More
precisely, in this section we prove Theorems~\ref{t:classes},
\ref{t:prop-coarse-Helly-groups-Helly}, \ref{t:operations}, and
\ref{thm:HellyGeneral_int} from the Introduction, some of their
consequences, and related results.

\subsection{Proving Hellyness of a group}
To prove that a group $\Gamma$ (geometrically) acting on a cell
complex $X$ (or on its 1-skeleton $G(X)$) is Helly, we will derive
from $X$ a Helly complex $X^*$ and prove that $\Gamma$ acts
geometrically on $X^*$. The natural (and most canonical) way would be
to take as $X^*$ the Hellyfication $\He(X)$ of $X$.  By
Theorem~\ref{t:dinjhull}, $\He(X)$ is well-defined and Helly for all
complexes $X$. The group $\Gamma$ acts on $\He(X)$, but the group
action is not always geometrical. However, using the results from
Sections~\ref{injhull+hellyfication} and~\ref{sec-coarse}, and a
result of Lang~\cite{Lang2013}, we will prove that hyperbolic groups
acts geometrically on the Hellyfication of their Cayley graphs that are hyperbolic and thus hyperbolic groups are Helly.

In several other cases, there are more direct ways to derive $X^*$. In
case of \catz cubical groups, based on Proposition \ref{p:medHel} and
the bijection between median graphs and 1-skeletons of \catz cube
complexes \cite{Chepoi2000,Roller1998}, it follows that thickenings
along cubes of locally finite \catz cube complexes are Helly, thus \catz cubical
groups are Helly. By Proposition~\ref{swm-hypercellular}, the
thickenings of locally finite hypercellular complexes and of locally finite swm-complexes are
Helly. Consequently, groups acting geometrically on hypercellular
graphs or swm-graphs are Helly.  We use the same technique by
thickening (along cells) to show that classical \cftf small-cancelation and
graphical \cftf small-cancelation groups are Helly. In all these cases, the
maximal cliques of the thickenings correspond to cells of the original
complex. This allows us to establish that the group $\Gamma$ acts
geometrically on the thickening. Proposition~\ref{graph-cell-complex}
may be useful to establish similar results for groups acting
geometrically on other abstract cell complexes.  Another method is to
prove the Hellyness of the nerve complex (the clique complex of the
intersection graph of maximal cliques of $X$) $N(X)$ on which $\Gamma$ acts
geometrically. In this way, we establish that 7-systolic groups are
Helly (this also follows from the fact that 7-systolic groups are
hyperbolic).

By considering face complexes, we show that Helly groups are stable by
free products with amalgamation over finite subgroups and by quotients
by finite normal subgroups.
Using the theory of quasi-median groups of~\cite{Qm}, we provide
criteria allowing to construct Helly groups from groups acting on
quasi-median graphs. This allows us to show that Helly groups are
stable by taking graph products of groups, $\square$-products,
$\rtimes$-powers, and $\Join$-products. We also show that the
fundamental groups of right-angled graphs of Helly groups are Helly.

\subsection{\catz cubical, hypercellular,  and swm-groups via thickening}
A group $\Gamma$ is called \emph{cubical} if $\Gamma$ acts
geometrically on a median graph $G$ (or on the \catz cube complex of
$G$).  A group $\Gamma$ is called an \emph{swm-group} if it acts
geometrically on an swm-graph $G$ (or on the orthoscheme complex of
$G$). A group $\Gamma$ is called \emph{hypercellular} if it acts
geometrically on a hypercellular graph $G$ (or on the geometric
realization of $G$).

Any group $\Gamma$ acting geometrically on a median graph, swm-graph, or hypercellular graph $G$
also acts geometrically on its thickening $G^{\Delta}$. From Propositions \ref{p:medHel} and \ref{swm-hypercellular} we obtain:

\begin{proposition}\label{p:ccswmhc}
  Cubical groups, swm-groups, and hypercellular groups are Helly.

  More generally, any group acting geometrically on a simply connected
  abstract cell complex $X$ defined on a graph $G$ satisfying the
  conditions of Proposition~\ref{graph-cell-complex} is Helly.
\end{proposition}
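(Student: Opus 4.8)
The plan is to reduce all three cases to a single observation: a group acting geometrically on one of these graphs also acts geometrically on its thickening, which is already known to be Helly. Concretely, let $\Gamma$ act geometrically on a locally finite median, swm-, or hypercellular graph $G$. The thickening $G^{\Delta}$ has the same vertex set as $G$, and its edges are determined entirely by the intrinsic cell structure of $G$---the cubes of a median graph, the Boolean-gated subgraphs of an swm-graph, or the convex hulls of isometric cycles of a hypercellular graph. Since each of these families of cells is defined purely metrically (via gatedness, intervals, and convexity), it is invariant under every automorphism of $G$. Therefore every graph automorphism of $G$ is also an automorphism of $G^{\Delta}$, and the $\Gamma$-action on $G$ induces a $\Gamma$-action on $G^{\Delta}$ by automorphisms.

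Next I would verify that this induced action is geometric. First, $G^{\Delta}$ is locally finite: as $G$ is locally finite, each cell is a finite subgraph and each vertex lies in only finitely many cells, so each vertex has finitely many neighbors in $G^{\Delta}$. Since $V(G^{\Delta})=V(G)$, the vertex stabilizers and the number of vertex orbits are unchanged; hence the action on $G^{\Delta}$ is again properly discontinuous and cocompact. By Proposition~\ref{p:medHel} (for median graphs) and Proposition~\ref{swm-hypercellular} (for swm- and hypercellular graphs), $G^{\Delta}$ is a Helly graph. Consequently $\Gamma$ acts geometrically on the Helly complex $X(G^{\Delta})$, so $\Gamma$ is a Helly group by definition.

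For the more general assertion, suppose $\Gamma$ acts geometrically on a simply connected abstract cell complex $X$ defined on a graph $G$ and satisfying the hypotheses of Proposition~\ref{graph-cell-complex} (each cell gated in $G$, and $\cH(X)$ satisfies the $3$-cell and graded monotonicity conditions). An action on an abstract cell complex is by automorphisms of its cell-hypergraph $\cH(X)$, so $\Gamma$ permutes the cells of $X$ and therefore preserves the thickening $X^{\Delta}=[\cH(X)]_2$. The same local-finiteness and orbit-counting argument as above shows that the action on $X^{\Delta}$ is geometric. By Proposition~\ref{graph-cell-complex}, the simple connectedness of $X$ guarantees that $X^{\Delta}$ is Helly, and hence $\Gamma$ is a Helly group. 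Since \catz cube complexes have median $1$-skeletons and satisfy these conditions, the cubical case is recovered as a special instance of this general statement.

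The routine work is mild; the only point requiring genuine care is the first one---that the metrically defined cell structure is automorphism-invariant, so that the geometric action genuinely transports to the thickening. Once that is established, local finiteness of $G^{\Delta}$ (which keeps the action properly discontinuous with finite cell stabilizers) together with the already-proven Hellyness of the thickening finishes the argument.
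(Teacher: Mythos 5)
Your proof is correct and follows exactly the paper's argument: the paper's entire proof is the one-line observation preceding the proposition, namely that a geometric action on $G$ induces a geometric action on the thickening $G^{\Delta}$, which is Helly by Propositions~\ref{p:medHel}, \ref{swm-hypercellular}, and \ref{graph-cell-complex}. Your additional verifications (automorphism-invariance of the metrically defined cell structures, local finiteness of $G^{\Delta}$, preservation of properness and cocompactness since $V(G^{\Delta})=V(G)$) are the routine details the paper leaves implicit, and they are all sound.
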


In \cite{CCHO}, with every building $\Delta$ of type $C_n$ we
associated an swm-graph $H(\Delta)$ in such a way that any (proper or
geometric) type-preserving group action on $\Delta$ induces a (proper
or geometric) action on $H(\Delta)$.

\begin{corollary}
	\label{c:buildC}
	Uniform type-preserving lattices in isometry groups of buildings of type $C_n$ are Helly.
\end{corollary}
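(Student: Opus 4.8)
The plan is to deduce the corollary directly from Proposition~\ref{p:ccswmhc} by routing the lattice through the swm-graph $H(\Delta)$ that \cite{CCHO} attaches to a building $\Delta$ of type $C_n$. The essential work has already been done in the construction recalled in the sentence preceding the statement, so the argument amounts to assembling three facts: that a uniform lattice acts geometrically on the building, that a type-preserving geometric action is transported to a geometric action on the associated swm-graph, and that swm-groups are Helly.

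First I would unwind the hypotheses. Let $\Delta$ be a building of type $C_n$ and let $\Gamma$ be a uniform type-preserving lattice in its isometry (automorphism) group $\mathrm{Aut}(\Delta)$. By definition of a lattice, the action of $\Gamma$ on $\Delta$ is properly discontinuous; uniformity means $\mathrm{Aut}(\Delta)/\Gamma$, and hence $\Delta/\Gamma$, is compact, so the action is cocompact. Thus $\Gamma$ acts geometrically on $\Delta$, and by hypothesis this action is type-preserving. Next I would transport this action to $H(\Delta)$: by the construction of \cite{CCHO}, every type-preserving action on $\Delta$ that is proper (respectively geometric) induces a proper (respectively geometric) action on the swm-graph $H(\Delta)$. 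Applying this to the $\Gamma$-action yields a geometric action of $\Gamma$ on $H(\Delta)$, so $\Gamma$ is an swm-group.

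Finally, Proposition~\ref{p:ccswmhc} asserts that swm-groups are Helly, and applying it to $\Gamma$ acting geometrically on $H(\Delta)$ finishes the proof. The only point that carries any genuine content is the preservation of geometricity under the passage from $\Delta$ to $H(\Delta)$ --- that is, that the induced $\Gamma$-action on $H(\Delta)$ is again proper and cocompact. This is exactly the transfer property established in \cite{CCHO} and quoted in the sentence before the corollary, so I expect no further obstacle; everything else is a matter of correctly interpreting ``uniform type-preserving lattice'' as ``geometric type-preserving action'' and then citing the two available results in sequence.
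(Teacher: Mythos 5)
Your proposal is correct and follows exactly the route the paper intends: the corollary is stated as an immediate consequence of the sentence preceding it (the construction of the swm-graph $H(\Delta)$ from \cite{CCHO}, with its transfer of proper/geometric type-preserving actions) together with Proposition~\ref{p:ccswmhc} on swm-groups. Your unwinding of ``uniform type-preserving lattice'' into a geometric type-preserving action on $\Delta$ is precisely the bookkeeping the paper leaves implicit, and you correctly isolate the transfer of geometricity to $H(\Delta)$ as the only step with genuine content.
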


\subsection{Hyperbolic and quadric groups via Hellyfication}
\label{s:hypquad}

If a group $\Gamma$ acts geometrically on a graph $G$, it also acts on
its Hellyfication $\He(G)=E^0(G)$ and on its injective hull
$E(G)$. However in general, this action is no longer geometric. This
is because the injective hull $E(G)$ is not necessarily proper and
because the points of $E(G)$ may be arbitrarily far from $e(G)$.  This
does not happen if $G$ is a Helly graph:

\begin{theorem}\label{t:helly=inj}
  Let $G$ be a locally finite Helly graph.
  \begin{enumerate}[(1)]
  \item The injective hull $E(G)$ of $G$ is proper and has the structure of a
    locally finite polyhedral complex with only finitely many isometry
    types of $n$-cells, isometric to injective polytopes in
    $\ell_{\infty}^n$, for every $n\geq 1$. Moreover,
    $d_H(E(G),e(G))\leq 1$.  Furthermore, if $G$ has uniformly
    bounded degrees, then $E(G)$ has finite combinatorial dimension.

  \item A group acting cocompactly, properly or geometrically on $G$
    acts, respectively, cocompactly, properly or geometrically on its
    injective hull $E(G)$.
  \end{enumerate}
\end{theorem}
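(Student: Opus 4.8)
The plan is to control two quantities---the Hausdorff distance $d_H(E(G),e(G))$ and the integer points $E^0(G)=E(G)\cap\Z^{V(G)}$---and then read off everything else. First I would establish $d_H(E(G),e(G))\le 1$. Given an extremal form $f\in E(G)$, the inequalities $f(x)+f(y)\ge d(x,y)$ give $\lceil f(x)\rceil+\lceil f(y)\rceil\ge d(x,y)$, so the integer balls $B_{\lceil f(x)\rceil}(x)$ pairwise intersect; since $G$ is Helly there is a vertex $z$ with $d(z,x)\le\lceil f(x)\rceil\le f(x)+1$ for all $x$. For the matching lower bound I would use the discrete tightness property (Claim~\ref{tight}): choosing $y$ with $f(x)+f(y)=d(x,y)$ gives $d(z,x)\ge d(x,y)-d(z,y)\ge f(x)-1$. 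Hence $d_{\infty}(f,e(z))\le 1$. (This also says $G$ is coarse Helly with constant $1$, so the bounded distance property for $E(G)$ alternatively follows from Proposition~\ref{coarse}.)

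Next I would identify the combinatorial skeleton. Because $G$ is Helly its Hellyfication is itself, $\He(G)=G$, so by Theorem~\ref{t:dinjhull} the integer points $E^0(G)$ are exactly $e(V(G))$; these form the (locally finite) vertex set of the complex. For the cell decomposition I would invoke Lang's description of the injective hull of a discrete metric space~\cite{Lang2013}: each $f\in E(G)$ lies in the relative interior of a cell governed by its tight pairs $\{(x,y):f(x)+f(y)=d(x,y)\}$, and each cell is isometric to an injective polytope in some $\ell_{\infty}^n$ with all vertices in $e(V(G))$. Local finiteness of the resulting complex then follows by combining the first step with the local finiteness of $G$: any cell meeting the unit ball around $e(x)$ has all of its integral vertices within bounded distance of $x$, of which there are only finitely many. \textbf{The main obstacle is precisely the finiteness of the combinatorial dimension and of the number of isometry types of $n$-cells}; this is where the integrality of $e(V(G))$ together with the Helly and local-finiteness hypotheses must be used, and it is exactly what Lang's analysis provides.

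With the complex in hand, properness is soft. Injective spaces are hyperconvex, hence complete, so $E(G)$ is complete. For $\overline{B}_R(f)$, pick $z$ with $d_{\infty}(f,e(z))\le 1$; any point of the ball lies within $1$ of some $e(w)$ with $d(w,z)\le R+2$, and there are finitely many such $w$ by local finiteness of $G$. Thus $\overline{B}_R(f)$ is a closed subset of the union of the (finitely many) cells in the stars of these finitely many vertices, hence compact, so $E(G)$ is proper.

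Finally I would treat the group action. A graph automorphism $g$ acts on forms by $(g\cdot f)(x)=f(g^{-1}x)$; this preserves extremality and $d_{\infty}$ and satisfies $g\cdot e(x)=e(gx)$, so $\Gamma$ acts by isometries on $E(G)$ extending its action on $e(G)\cong G$. Properness transfers because an element moving a point of $E(G)$ boundedly moves a nearby vertex $e(z)$ boundedly, hence moves $z$ boundedly in $G$, and a proper action on the locally finite graph $G$ admits only finitely many such elements. For cocompactness, if $\Gamma\cdot K=V(G)$ with $K$ finite, then by the first step every $f\in E(G)$ is translated into the compact set $\overline{B}_1(e(K))$ by some element of $\Gamma$, so the action on $E(G)$ is cocompact. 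Thus proper (respectively geometric) actions on $G$ induce proper (respectively geometric) actions on $E(G)$, as claimed.
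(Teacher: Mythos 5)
Your bound $d_H(E(G),e(G))\le 1$ is correct and is essentially the paper's argument (the paper routes it through Theorem~\ref{t:dinjhull}, but the content is the same: ceil an extremal form, apply the Helly property to the pairwise intersecting balls $B_{\lceil f(x)\rceil}(x)$, and use the tightness property of Claim~\ref{tight} for the reverse inequality), and your transfer of the group action at the end matches the paper's Proposition~\ref{prop-proper-inj-hull-Helly}. The genuine gap is in the middle step, and you located it yourself but then waved it away. Lang's Theorem~\ref{t:langstable} does \emph{not} deliver properness, local finiteness, finite dimension, and finitely many isometry types of $n$-cells for an arbitrary locally finite graph: those conclusions are proved under the hypothesis of $\beta$-stable intervals, and nothing in your proposal verifies that hypothesis for Helly graphs. ``Integrality of $e(V(G))$ together with the Helly and local-finiteness hypotheses'' is not what Lang's analysis runs on; for a general integer-valued metric the canonical cell decomposition by tight pairs exists, but without interval stability the cells can have unbounded dimension and the complex can fail to be locally finite. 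The paper closes exactly this gap with Lemma~\ref{l:wmstable}: every weakly modular graph -- hence every Helly graph, since Helly graphs are pseudo-modular and thus weakly modular -- has $1$-stable intervals. That lemma is short but is a genuine argument (via the triangle and quadrangle conditions), and your proof never makes it.

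Moreover, the combinatorial control you substitute for it is factually wrong: the $0$-skeleton of Lang's complex is \emph{not} $e(V(G))$, and cells need not have all their vertices at integer points. Already for $G=K_3$, which is Helly and locally finite, the hull $E(K_3)$ is a tripod with three legs of length $1/2$; its branch point is the extremal form $f\equiv 1/2$, a $0$-cell at distance $1/2$ from $e(V(G))$ and not in $E^0(G)$. Theorem~\ref{t:dinjhull} does identify $E^0(G)=E(G)\cap\Z^{V(G)}$ with $e(V(G))$ when $G$ is Helly, but this set is strictly smaller than the vertex set of the polyhedral structure, so your local-finiteness argument (``any cell meeting the unit ball around $e(x)$ has all of its integral vertices within bounded distance of $x$'') does not control the cells as stated -- and even granting integral points in every cell, bounding their location does not bound the \emph{number} of cells through them, which is the point at issue. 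Once you insert Lemma~\ref{l:wmstable} (or any direct proof that Helly graphs have stable intervals) and then quote Theorem~\ref{t:langstable} as a black box, both defects disappear and the remainder of your argument goes through.
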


For $\beta \geq 1$, the graph $G$ has \emph{$\beta$-stable
  intervals}~\cite{Lang2013} if for every triplet of vertices $w,v,v'$
with $v\sim v'$, we have $d_H(I(w,v),I(w,v'))\leq \beta$, where $d_H$
denotes the Hausdorff distance. The proof of the first assertion of
Theorem~\ref{t:helly=inj}(1) is based on the following theorem of Lang~\cite{Lang2013}:

\begin{theorem}[\cite{Lang2013}*{Theorem~1.1}]\label{t:langstable}
  Let $G$ be a locally finite graph with $\beta$-stable
  intervals. Then the injective hull of $G$  is
  proper (that is, bounded closed subsets are compact) and has the
  structure of a locally finite polyhedral complex with only finitely
  many isometry types of $n$-cells, isometric to injective polytopes
  in $\ell_{\infty}^n$, for every $n\geq 1$.
\end{theorem}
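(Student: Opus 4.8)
The plan is to prove the theorem in three movements: (i) a combinatorial lemma showing that every locally finite Helly graph $G$ has $1$-stable intervals, which feeds $G$ into Lang's machine; (ii) an appeal to Lang's Theorem~\ref{t:langstable}, which then yields at once that $E(G)$ is proper and carries the asserted locally finite polyhedral structure with finitely many isometry types of $n$-cells isometric to injective $\ell_\infty^n$-polytopes; and (iii) the Hausdorff estimate $d_H(E(G),e(G))\le 1$, after which the statement about group actions follows formally.

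For (i), fix a vertex $w$ and an edge $vv'$, and let $x\in I(w,v)$; write $i=d(w,x)$, $k=d(w,v)$, $k'=d(w,v')$ and $p=d(x,v')$, so that $d(x,v)=k-i$. I want $d(x,I(w,v'))\le 1$. After discarding the trivial cases $i=0$ (then $x=w\in I(w,v')$) and $p=0$ (then $x=v'\in I(w,v')$), the crux is to choose an integer radius $r_w$ with $i-1\le r_w\le k'-(p-1)$ and to set $r_{v'}:=k'-r_w$. Such a choice exists precisely because $v\sim v'$ forces $k\le k'+1$, while $x\in I(w,v)$ forces $p\le d(x,v)+1=k-i+1$; together these give $(i-1)+(p-1)\le k-1\le k'$, so the interval $[\,i-1,\;k'-(p-1)\,]$ is a nonempty subinterval of $[0,k']$ and contains an integer. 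With these radii the three balls $B_1(x)$, $B_{r_w}(w)$ and $B_{r_{v'}}(v')$ pairwise intersect: the first two meet at a neighbour of $x$ on a geodesic to $w$ (at distance $i-1\le r_w$ from $w$), the first and third at a neighbour of $x$ on a geodesic to $v'$ (at distance $p-1\le r_{v'}$ from $v'$), and the last two at the point of a $(w,v')$-geodesic at distance $r_w$ from $w$. The Helly property of $G$ then produces a common point $x'$. Since $d(x,x')\le 1$ and $d(w,x')+d(x',v')\le r_w+r_{v'}=k'=d(w,v')$, the triangle inequality forces equality, whence $x'\in I(w,v')$ and $d(x,I(w,v'))\le 1$. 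Exchanging the roles of $v$ and $v'$ gives $d_H(I(w,v),I(w,v'))\le 1$, i.e.\ $G$ has $1$-stable intervals. This three-ball argument, and in particular the verification that the chosen radii keep all three balls pairwise intersecting, is the main point; the remaining two movements are comparatively routine. Once $1$-stable intervals are in hand, Theorem~\ref{t:langstable} applied to the locally finite graph $G$ delivers the properness of $E(G)$ and the full polyhedral description, proving the first assertion.

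For (iii), I will show that $G$, viewed as the discrete metric space $(V(G),d)$, satisfies the coarse Helly property with constant $1$. Given a family of pairwise intersecting closed balls $B(x_i,r_i)$ with real radii, the integer balls $B(x_i,\lceil r_i\rceil)$ are still pairwise intersecting, since distances in $G$ are integers and $d(x_i,x_j)\le r_i+r_j\le\lceil r_i\rceil+\lceil r_j\rceil$; the exact Helly property of $G$ then gives these integer balls a common point $z$, and $\lceil r_i\rceil\le r_i+1$ places $z$ in $\bigcap_i B(x_i,r_i+1)$. Proposition~\ref{coarse} converts this into the bounded distance property of $E(G)$ with constant $1$, which is exactly $d_H(E(G),e(G))\le 1$. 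Here $e(G)$ is the integer-valued part $E^0(G)=\He(G)$ of $E(G)$ by Theorem~\ref{t:dinjhull}, and indeed $\He(G)=G$ since $G$ is Helly.

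Finally, the injective hull is functorial with respect to isometries, so the isometric action of $\Gamma$ on $(V(G),d)$ extends canonically to an isometric action on $E(G)$ whose restriction to $e(G)$ is the given action. Combining properness of $E(G)$ (from step (ii)) with $d_H(E(G),e(G))\le 1$ shows that the equivariant isometric embedding $e\colon G\hookrightarrow E(G)$ has cobounded image; since the action of $\Gamma$ on $G$ is proper (respectively geometric) and $E(G)$ is proper, the standard transfer of proper cocompact actions along a cobounded equivariant isometric embedding gives that the induced action on $E(G)$ is proper (respectively geometric). The one point needing care is cocompactness: as $E(G)$ is covered by the $\Gamma$-translates of the finitely many compact balls $B_1(e(x))$, with $x$ ranging over orbit representatives of $V(G)$, the quotient $E(G)/\Gamma$ is compact, completing the proof.
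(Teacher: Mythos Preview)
Your proposal does not prove the stated theorem. Theorem~\ref{t:langstable} is Lang's result, cited in the paper as \cite{Lang2013}*{Theorem~1.1} and used as a black box; the paper gives no proof of it. Your three movements instead constitute a proof of Theorem~\ref{t:helly=inj}: you show a locally finite Helly graph has $1$-stable intervals (movement~(i)), you \emph{apply} Theorem~\ref{t:langstable} to obtain properness and the polyhedral structure (movement~(ii)), and you establish $d_H(E(G),e(G))\le 1$ (movement~(iii)). In movement~(ii) you are invoking the very statement you were asked to prove, so as a proof of Theorem~\ref{t:langstable} the argument is circular.

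If the intended target was Theorem~\ref{t:helly=inj}, then your argument is correct and close to the paper's. The one genuine difference is in movement~(i): the paper proves $1$-stable intervals for all weakly modular graphs (Lemma~\ref{l:wmstable}) by a short induction on $d(w,v)+d(w,v')$ using the triangle and quadrangle conditions, whereas you give a direct three-ball Helly argument specific to Helly graphs. Your route avoids induction and is self-contained, at the cost of not yielding the slightly more general statement; both are short and either is adequate here. For movement~(iii) the paper argues directly from Theorem~\ref{t:dinjhull} (since $G$ is Helly, $\He(G)=G=E^0(G)$, and every extremal form is within distance $1$ of some $e(v)$), while you pass through the coarse Helly property with constant~$1$ and Proposition~\ref{coarse}; these are equivalent formulations of the same computation.
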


Next we show that weakly modular graphs (and thus Helly graphs) have $\beta$-stable
intervals.

\begin{lemma}\label{l:wmstable}
  Every weakly modular graph has $1$-stable intervals.
\end{lemma}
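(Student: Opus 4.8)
The plan is to establish the Hausdorff bound $d_H(I(w,v),I(w,v'))\le 1$ directly from the triangle condition (TC) and the quadrangle condition (QC). Since the statement is symmetric in $v$ and $v'$, it suffices to bound $\sup_{x\in I(w,v)} d(x,I(w,v'))$ for an arbitrary triple with $v\sim v'$. First I would record the elementary observation that if $d(w,v')=d(w,v)+1$ then $v\in I(w,v')$ and in fact $I(w,v)\subseteq I(w,v')$ (any $x\in I(w,v)$ has $d(w,x)+d(x,v')\le d(w,v)+1=d(w,v')$, so $x\in I(w,v')$); symmetrically $d(w,v')=d(w,v)-1$ gives $I(w,v')\subseteq I(w,v)$. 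Thus one Hausdorff direction is free whenever the two distances differ, and the whole problem reduces to bounding $\sup_{x\in I(w,v)}d(x,I(w,v'))$ in the two cases $d(w,v')=d(w,v)$ and $d(w,v')=d(w,v)-1$.

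The heart is the equidistant case $d(w,v')=d(w,v)=:a$. Fix $x\in I(w,v)$, write $i:=d(w,x)$ and $m:=d(x,v)=a-i$; since $v\sim v'$ one checks $d(x,v')\in\{m,m+1\}$, and $d(x,v')=m$ already gives $x\in I(w,v')$, so the only real case is $d(x,v')=m+1$. I would prove, by induction on $i$, the \emph{sharpened} statement that there is $y\in I(w,v')$ with $y=x$ or $y\sim x$ \emph{and} $d(w,y)=d(w,x)$; carrying the same distance-to-$w$ along the induction is exactly what keeps the companion within distance $1$ rather than $2$. For the step, take a neighbour $x''$ of $x$ with $d(w,x'')=i-1$ (a step towards $w$), so $x''\in I(w,v)$ and $d(v',x'')\in\{m+1,m+2\}$. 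If $d(v',x'')=m+1$, then $x,x''$ are adjacent and equidistant from $v'$, so TC based at $v'$ yields a common neighbour $s$ with $d(v',s)=m$; as $s\sim x''$ gives $d(w,s)\le i$ and $d(w,s)+m\ge d(w,v')=a$ gives $d(w,s)\ge i$, we get $s\in I(w,v')$ with $d(w,s)=i$, and $y:=s$ works. If $d(v',x'')=m+2$, then $x''$ is again in the hard situation at distance $i-1$, so induction gives $y''\sim x''$ in $I(w,v')$ with $d(w,y'')=i-1$, hence $d(v',y'')=m+1$; now $x$ and $y''$ are two neighbours of $x''$ at distance $m+1=d(v',x'')-1$ from $v'$, and QC based at $v'$ produces a common neighbour $r$ of $x,y''$ with $d(v',r)=m$, whence $d(w,r)=i$ and $r\in I(w,v')$ with $r\sim x$. (If $y''$ happens to be adjacent to $x$, one shortcuts via TC on the edge $xy''$ exactly as in the first subcase.)

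The remaining case $d(w,v')=d(w,v)-1$ I would treat by the same scheme. Here $v'\in I(w,v)$, and for $x\in I(w,v)$ one has $d(x,v')\in\{m-1,m,m+1\}$: the value $m-1$ gives $x\in I(w,v')$ outright, the value $m$ is an ``excess-one'' configuration structurally identical to the hard case above, and the value $m+1$ is handled by the same predecessor/TC/QC induction, the only difference being that the companion now sits one step closer to $w$. Combining the two cases and invoking the symmetry between $v$ and $v'$ bounds both Hausdorff directions by $1$, giving $d_H(I(w,v),I(w,v'))\le 1$ and hence $1$-stable intervals.

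I expect the main obstacle to be precisely the non-composability of the bound ``distance $\le 1$'': a naive induction transporting $x$ through a neighbour nearer $w$ only yields a companion at distance $2$. The device that defeats this is the strengthened invariant $d(w,y)=d(w,x)$ together with the quadrangle condition, which ``rotates'' a distance-$2$ companion back onto a genuine neighbour of $x$ without increasing its distance to $w$. The one place where weak modularity is genuinely used is in verifying that each TC/QC application lands in $I(w,v')$ -- that the produced vertex has exactly $d(w,\cdot)=i$ and $d(v',\cdot)=m$ -- and that is where I would be most careful.
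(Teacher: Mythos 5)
Your equidistant case is correct and complete, and it is a genuinely different induction from the paper's: you fix the pair $(v,v')$ and push $x$ towards $w$ with the strengthened level-preserving invariant $d(w,y)=d(w,x)$, whereas the paper fixes $u$ and does a strong induction on $d(w,v)+d(w,v')$, moving the \emph{far} endpoints towards $w$. The gap is in the case $d(w,v')=d(w,v)-1$, which is not ``structurally identical'' as you claim. Write $i=d(w,x)$, $m=d(x,v)$, so $d(w,v')=i+m-1$. Your excess-one subcase $d(x,v')=m$ does transfer (a level-$i$ companion at distance $m-1$ from $v'$ works, by the same TC/QC rotation). But in the excess-two subcase $d(x,v')=m+1$, any witness $y\in I(w,v')$ with $y\sim x$ is forced by the triangle inequality to satisfy $d(w,y)=i-1$ \emph{and} $d(y,v')=m$ simultaneously; and for a predecessor $x''\sim x$ with $d(w,x'')=i-1$, the triangle inequality now only pins $d(x'',v')\in\{m,m+1,m+2\}$ --- three subcases, not two as in the equidistant case (there, $d(w,v')=i+m$ forced $d(x'',v')\geq m+1$). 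The extremes are fine: $d(x'',v')=m$ gives $y=x''$ outright, and $d(x'',v')=m+2$ is your inductive step plus QC.

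The middle value $d(x'',v')=m+1$ is where the scheme breaks. Then $x''$ has excess one, and the companion $y''\sim x''$ supplied by the excess-one result has $d(w,y'')=i-1$, $d(y'',v')=m$; but now the two neighbours $x$ and $y''$ of $x''$ lie at \emph{different} distances from $v'$ ($m+1$ versus $m$), so neither TC nor QC based at $v'$ applies to merge them into a single neighbour of $x$ --- your rotation device needs both of them at distance $d(v',x'')-1$ from $v'$. The fallback, TC on the edge $xx''$ (both at distance $m+1$ from $v'$), produces $s\sim x,x''$ with $d(v',s)=m$, but the available inequalities only give $d(w,s)\in\{i-1,i\}$: the level is no longer pinned, and if $d(w,s)=i$ then $s\notin I(w,v')$, leaving you with an excess-one vertex adjacent to $x$ and no way to convert it into a neighbour of $x$ inside $I(w,v')$ (further TC/QC applications only yield points at distance $2$ from $x$). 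So ``the same predecessor/TC/QC induction'' does not close this case; nothing in your setup rules out that \emph{every} neighbour of $x$ towards $w$ has $d(\cdot,v')=m+1$. The paper sidesteps exactly this configuration by changing which pair it recurses on: it takes $u^*\in N(v)\cap I(x,v)$, applies QC at $w$ to the two neighbours $u^*,v'$ of $v$ to get $v^*\sim u^*$ with $v^*\in I(w,v')$ and $d(w,v^*)=d(w,v)-2$, and then invokes the induction (on $d(w,v)+d(w,v')$) for the triple $(w,u^*,v^*)$, using $x\in I(w,u^*)$ and $I(w,v^*)\subseteq I(w,v')$. Some such change of recursion --- moving the far endpoints rather than $x$ --- is what your case three needs.
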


\begin{proof}
  We need to show that for every triplet of vertices $w,v,v'$ with
  $d(v, v') \leq 1$, and every vertex $u\in I(w,v)$ there exists a
  vertex $u'\in I(w,v')$ with $d(u,u')\leq 1$. If $v = v'$, we are
  done by taking $u' = u$. Suppose now that $v \sim v'$.  We proceed
  by induction on $k=d(w,v)+d(w,v')$. For $k=0$ the statement is
  obvious. Assume now that the statement holds for any $j < k$ and
  that $d(w,v)+d(w,v')=k$. If $d(w,v') = d(w,v)+1$, then
  $I(w,v) \subseteq I(w,v')$ and the statement obviously holds. If
  $d(w,v)=d(w,v')$ then, by the triangle condition (TC) (see
  Subsection~\ref{s:bagraphs}) there exists a vertex $v^*\sim v,v'$
  such that $v^* \in I(w,v) \cap I(w,v')$. Since
  $d(w,v) + d(w,v^*) = d(w,v) +d(w,v')-1 = k-1$, by the induction
  hypothesis, for any $u \in I(w,v)$, there exists
  $u' \in I(w,v^*) \subseteq I(w,v')$ such that $d(u,u') \leq
  1$. Suppose now that $d(w,v') = d(w,v) -1$, i.e., $v' \in
  I(w,v)$. For any $u \in I(w,v)$, let $u^* \in N(v) \cap I(u,v)$. By
  the quadrangle condition, there exists $v^*$ such that
  $v^* \sim v',u^*$ and $v^* \in I(w,v') \cap I(w,u^*)$. Since
  $d(w,u^*) +d(w,v^*) = k-2$ and since $u \in I(w,u^*)$, by the induction
  hypothesis, there exists $u'$ such that $d(u,u') \leq 1$ and
  $u' \in I(w,v^*) \subseteq I(w,v')$.
\end{proof}

To establish the second assertion of  Theorem~\ref{t:helly=inj}(1), we use Lang's results  relating the
combinatorial dimension with the notion of cones.  In a graph $G$, the \emph{cone}~\cite{Lang2013}
determined by the directed pair $(x,v)$ of vertices of $G$ is the set $C(x,v) = \{y \in V(G): v \in
I(x,y)\}$. Given a vertex $v \in V(G)$, we denote by $\cC(v)$ the set
of all cones $C(x,v)$ for $x \in V(G)$. For a ball $B$ of $G$, we
denote by $\cC(B)$ the set of all pointed cones $(v,C(x,v))$ with
$v \in B$ and $x \in V(G)$. By~\cite[Lemma~5.8]{Lang2013}, the size of
$\cC(B)$ is finite and bounded by a function of the size of $B$.

\begin{proposition}[{\cite[Proposition~5.12]{Lang2013}}]\label{prop-Lang-rang}
  Let $G$ be a locally finite graph with $\beta$-stable
  intervals. Given a vertex $z \in V(G)$ and $\alpha > 0$, let $B$ be
  the ball $B_{2\alpha\beta}(z)$. Then for every $f \in E'(G)$ such
  that $f(z) \leq \alpha$, we have $\rk(A(f)) \leq \frac{1}{2} |\cC(B)|$.
\end{proposition}

\begin{proof}[Proof of Theorem~\ref{t:helly=inj}(1)]
  Properness and the structure of a locally finite polyhedral complex
  follow from Theorem~\ref{t:langstable} and Lemma~\ref{l:wmstable}.

  We now show that $d_H(E(G),e(G))\leq 1$.  Pick any $f \in E(G)$ and
  consider $f' \in \Delta^0(G)$ defined by setting
  $f'(x) = \lceil f(x) \rceil$ for any $x \in V(G)$. Let
  $f'' \in E^0(G)$ such that $f'' \leq f'$ and notice that for any
  $x \in V(G)$, we have $f''(x) \leq f'(x) < f(x) +1$. On the other hand, for
  any $x \in V(G)$, by Claim~\ref{tight}, for any $\epsilon > 0$,
  there exists $y \in V(G)$ such that
  $f(x) + f(y) < d(x,y) + \epsilon \leq f''(x) + f''(y) +\epsilon \leq
  f''(x) + f(y) +1 + \epsilon$. Consequently,
  $f(x) < f''(x) +1 + \epsilon$ for any $\epsilon > 0$ and thus,
  $f(x) \leq f''(x) +1$.  Since $G$ is a Helly graph, by
  Theorem~\ref{t:dinjhull}, $E^0(G)$ and $G$ coincide and thus, there
  exists a vertex $z \in V(G)$ such that $f'' = d_z$, establishing
  that $d_\infty(f,d_z) \leq 1$.

  Now, additionally suppose that $G$ has uniformly bounded degrees. To show that
  $E(G)$ is finite dimensional,  pick any $f \in E'(G)$ and consider the vertex
  $z \in V(G)$ such that $f(z) = d_\infty(f,d_z) \leq 1$. By
  Lemma~\ref{l:wmstable}, $G$ has $1$-stable intervals, and by
  Proposition~\ref{prop-Lang-rang} applied with $\alpha = \beta = 1$,
  we have that $\rk(A(f)) \leq \frac{1}{2}|\cC(B_2(z))|$. Since $G$
  has bounded degrees, the size of the balls of radius 2 in $G$ is
  also bounded and by~\cite[Lemma~5.8]{Lang2013}, the size of
  $|\cC(B_2(z))|$ is uniformly bounded by some constant
  $K$. Consequently, by Proposition \ref{prop-Lang-rang} all cells of $E'(X)$ are of dimension at most
  $\frac{1}{2}K$. By~\cite[Theorem~4.5]{Lang2013}, $E'(G)=E(G)$. This prove that
  $E(G)$ has finite combinatorial dimension.
\end{proof}

Theorem~\ref{t:helly=inj}(2) is an immediate corollary of
Theorem~\ref{t:helly=inj}(1) and of the next proposition.

\begin{proposition}\label{prop-proper-inj-hull-Helly}
  Let $G$ be a locally finite graph such that the injective hull
  $E(G)$ is proper and satisfies the bounded distance property and let
  $\Gamma$ be a group acting on $G$.
  \begin{enumerate}
  \item if $\Gamma$ acts cocompactly on $G$, then $\Gamma$ acts cocompactly
    on $E(G)$ and $E^0(G)$;
  \item if $\Gamma$ acts properly on $G$, then $\Gamma$ acts properly
    on $E(G)$ and $E^0(G)$;
  \item if $\Gamma$ acts geometrically on $G$, then $\Gamma$ acts
    geometrically on $E(G)$ and $E^0(G)$ and thus $\Gamma$ is a Helly
    group.
  \end{enumerate}
\end{proposition}

\begin{proof}
  Consider the Helly graph $E^0(G)$.  Since the set $E^0(G)$ is an
  integer-valued subspace of $E(G)$ and $E(G)$ is proper, the balls of
  $E^0(G)$ are compact. Therefore, the graph $E^0(G)$ is a proper
  metric space and thus is locally finite. In particular, all compact
  sets of $E^0(G)$ are finite. Since $E(G)$ satisfies the bounded
  distance property, there exists $\delta$ such that for each
  $f \in E(G)$, we have $d_{\infty}(f,e(G)) \leq \delta$.

  We first assume that $\Gamma$ acts cocompactly on $G$ and we show
  that $\Gamma$ acts cocompactly on $E^0(G)$ and $E(G)$. The proof is
  the same in both cases; we provide it for $E^0(G)$.  Since $\Gamma$ acts
  cocompactly on $G$, there exists $v \in V(G)$ and $r \in \N$ such
  that $V(G) = \bigcup_{g\in \Gamma} V(B_r(gv,G))$. Let
  $R = r + \delta$ and consider
  $\bigcup_{g\in \Gamma} V(B_R(ge(v),E^0(G)))$. For any
  $f \in E^0(G)$, there exists $v' \in V(G)$ such that
  $d_{\infty}(f,e(v')) \leq \delta$. Since there exists $g \in \Gamma$
  such that $d_G(v',gv) \leq r$,
  $d_{\infty}(f,ge(v)) = d_{\infty}(f,e(gv)) \leq d_{\infty}(f,e(v'))
  + d_{\infty}(e(v'),e(gv)) \leq \delta + d_G(v',gv) \leq \delta
  +r$. This shows that
  $E^0(G) = \bigcup_{g\in \Gamma} V(B_R(ge(v),E^0(G)))$ and thus
  $\Gamma$ acts cocompactly on $E^0(G)$.

  We now assume that $\Gamma$ acts properly on $G$ and we show that
  $\Gamma$ acts properly on $E^0(G)$ and $E(G)$.
  Consider a compact set $K$ in $E^0(G)$ or $E(G)$ and let
  $K' = \{v \in V(G): \exists f \in K, d_{\infty}(f,e(v)) \leq
  \delta\}$.  Since $K'$ is a bounded subset of $V(G)$, $K'$ is finite
  and thus $e(K')$ is also finite.  Pick any $g \in \Gamma$ such that
  $\bar{g}K \cap K \neq \emptyset$ (where $\bar{g}$ is the inverse of
  $g$ in $\Gamma$) and some $f \in K$ such that $\bar{g}f \in K$. Let
  $v \in K'$ such that $d_{\infty}(f,e(v)) \leq \delta$. Since
  $\Gamma$ acts on  $E^0(G)$ and $E(G)$,
  $d_{\infty}(\bar{g}f,\bar{g}e(v)) = d_{\infty}(f,e(v)) \leq
  \delta$. Since $\bar{g}e(v) = e(\bar{g}v)$, $\bar{g}v \in K'$ and
  thus $v \in K'\cap gK'$. This shows that
  $\{g \in \Gamma: \bar{g}K \cap K \neq \emptyset\} \subseteq \{g \in
  \Gamma: {g}K' \cap K' \neq \emptyset\}$. Since $\Gamma$ acts
  properly on $G$, the second set is finite and thus $\Gamma$ acts
  properly on $E^0(G)$ and $E(G)$.

  Finally, if $\Gamma$ acts properly and cocompactly on $E^0(G)$, since
  $E^0(G)$ is a Helly graph, $\Gamma$ is a Helly group.
\end{proof}

If we consider a group $\Gamma$ acting on a coarse Helly graph $G$,
then $\Gamma$ is a Helly group provided that $G$ has
$\beta$-stable intervals:

\begin{proposition}\label{prop-coarse-Helly-groups-Helly}
  A group acting geometrically on a coarse Helly graph with
  $\beta$-stable intervals is Helly.
\end{proposition}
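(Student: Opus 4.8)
The plan is to show that $\Gamma$ acts geometrically on the Hellyfication $\He(G)=E^0(G)$ of $G$, which is a Helly graph by Theorem~\ref{t:dinjhull}; this immediately yields that $\Gamma$ is Helly. First I would record that the induced action is by isometries: $\Gamma$ acts on $\R^{V(G)}$ by $(g\cdot f)(x)=f(g^{-1}x)$, and since each $g$ is an automorphism of $G$ this action preserves integer metric forms, their minimality, and hence the vertex set $E^0(G)$; moreover the Kuratowski embedding $e$ is equivariant, $g\cdot e(v)=e(gv)$, and is an isometry for the sup-metric. Recall from the proof of Theorem~\ref{t:dinjhull} that the path metric on the graph $E^0(G)$ coincides with $d_{\infty}$, so $e(G)$ sits inside $E^0(G)$ as a $\Gamma$-invariant, isometrically embedded copy of $G$. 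It then remains to verify that this isometric action is both cocompact and proper.

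For local finiteness and properness of the ambient hull I would invoke the hypothesis of $\beta$-stable intervals. By Lang's Theorem~\ref{t:langstable}, the injective hull $E(G)$ is proper and carries the structure of a locally finite polyhedral complex. Since $E^0(G)=E(G)\cap\Z^{V(G)}$ is a uniformly discrete subset of $E(G)$ (distinct integer forms differ by at least $1$ in the sup-metric), every closed ball of $E(G)$, being compact, meets $E^0(G)$ in a finite set; in particular the graph $E^0(G)$ is locally finite. This is the crucial consequence of $\beta$-stable intervals, and the step I expect to be the main obstacle: without it the Hellyfication of a locally finite graph need not be locally finite, so the induced action could fail to be properly discontinuous or cocompact.

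Next I would feed in the coarse Helly hypothesis through Proposition~\ref{coarse}: $E(G)$ has the bounded distance property, so there is a $\delta\ge 0$ with $d_{\infty}(f,e(V(G)))\le\delta$ for every $f\in E(G)$, and in particular for every vertex $f$ of $E^0(G)$. Cocompactness then follows: choosing finitely many $\Gamma$-orbit representatives $v_1,\dots,v_m$ of $V(G)$, every vertex of $E^0(G)$ can be $\Gamma$-translated into the finite set $\bigcup_i B_{\delta}(e(v_i))$ (finite by local finiteness), so $E^0(G)$ has only finitely many $\Gamma$-orbits of vertices. For properness, fix a vertex $f\in E^0(G)$ and set $S_f=\{v\in V(G): d_{\infty}(f,e(v))\le\delta\}$; this set is nonempty by the bounded distance property and finite because $e(V(G))$ is uniformly discrete while the $\delta$-ball about $f$ in $E(G)$ is compact. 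As $\Gamma$ acts by isometries and fixes $f$, the stabilizer $\Gamma_f$ permutes $S_f$, giving a homomorphism $\Gamma_f\to\mathrm{Sym}(S_f)$ whose kernel fixes some $v_0\in S_f$ and is therefore contained in the finite stabilizer $\mathrm{stab}_{\Gamma}(v_0)$; hence $\Gamma_f$ is finite. The action of $\Gamma$ on the Helly graph $E^0(G)=\He(G)$ is thus geometric, so $\Gamma$ is a Helly group.
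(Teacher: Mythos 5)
Your proof is correct and follows essentially the same route as the paper, which proves this proposition by combining Proposition~\ref{coarse} (coarse Helly $\Leftrightarrow$ bounded distance property of $E(G)$) and Lang's Theorem~\ref{t:langstable} (properness of $E(G)$ from $\beta$-stable intervals) with a geometric-action argument on the Hellyfication $E^0(G)$, exactly as you do. The only cosmetic difference is in the properness verification: the paper shows $\{g\in\Gamma:\bar{g}K\cap K\neq\emptyset\}$ is finite for finite $K\subseteq E^0(G)$ by transporting $K$ to a finite set $K'\subseteq V(G)$, whereas you establish finiteness of vertex stabilizers via the permutation action on the finite set $S_f$ --- an equivalent check given cocompactness and local finiteness.
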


This result is a particular case of Proposition~\ref{coarse},
Theorem~\ref{t:langstable}, and Proposition \ref{prop-proper-inj-hull-Helly}.

From Propositions~\ref{coarse-Helly-hyperbolic}
and~\ref{prop-coarse-Helly-groups-Helly}, we also get the following
corollary.

\begin{corollary}\label{cor-hyperbolic-groups-Helly}
  Hyperbolic groups are Helly.
\end{corollary}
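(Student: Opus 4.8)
The plan is to exhibit a hyperbolic group $\Gamma$ as acting geometrically on a single graph meeting both hypotheses of Proposition~\ref{prop-coarse-Helly-groups-Helly}: coarse Hellyness and $\beta$-stable intervals. Fix a finite generating set $S$ of $\Gamma$ and let $G$ be the associated Cayley graph, on which $\Gamma$ acts by left multiplication. This action is geometric: it is free and properly discontinuous (stabilizers are trivial) and cocompact (a single orbit of vertices, with compact quotient). Since $\Gamma$ is hyperbolic, $G$ is $\delta$-hyperbolic for some $\delta\geq 0$, so Proposition~\ref{coarse-Helly-hyperbolic} immediately yields that $G$ is coarse Helly with constant $2\delta$. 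It then remains only to verify that $G$ has $\beta$-stable intervals for some $\beta=\beta(\delta)$, after which Proposition~\ref{prop-coarse-Helly-groups-Helly} gives that $\Gamma$ is Helly.

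To establish $\beta$-stable intervals I would use two standard consequences of $\delta$-hyperbolicity. First, all geodesics between a fixed pair $a,b$ of vertices lie pairwise within Hausdorff distance $C_1\delta$, so the interval $I(a,b)$ --- being their union --- is contained in the $C_1\delta$-neighborhood of any single geodesic $[a,b]$, while conversely $[a,b]\subseteq I(a,b)$. Second, geodesic triangles are thin with a constant $\delta'$ depending only on $\delta$. Now fix $w,v,v'$ with $v\sim v'$ and consider the geodesic triangle on $w,v,v'$ whose third side is the single edge $vv'$.

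Given $x\in I(w,v)$, bounded width gives $x_0$ on a fixed geodesic $[w,v]$ with $d(x,x_0)\leq C_1\delta$; thinness places $x_0$ within $\delta'$ of $[w,v']\cup\{v,v'\}$. In the first case $x$ is within $C_1\delta+\delta'$ of a point of $[w,v']\subseteq I(w,v')$; in the second case, since the side $vv'$ has length $1$ and $v'\in I(w,v')$, $x$ is within $C_1\delta+\delta'+1$ of $I(w,v')$. The symmetric estimate bounds $I(w,v')$ against $I(w,v)$, so $d_H(I(w,v),I(w,v'))\leq C_1\delta+\delta'+1=:\beta$. One may also simply cite Lang~\cite{Lang2013}, who proved that Cayley graphs of hyperbolic groups have $\beta$-stable intervals, his link with Cannon's finiteness of cone types; this bypasses the computation.

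The main obstacle is precisely this verification of $\beta$-stable intervals. The delicate point is that an interval is the union of all geodesics between its endpoints rather than a single geodesic, so one cannot reason on one geodesic alone; the crucial input is the uniform bound $C_1\delta$ on the width of intervals in a hyperbolic graph, which is exactly what turns the union into something controllable and must be invoked carefully.
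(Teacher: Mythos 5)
Your proposal is correct and follows the paper's own route exactly: the Cayley graph is $\delta$-hyperbolic, hence coarse Helly with constant $2\delta$ by Proposition~\ref{coarse-Helly-hyperbolic}, it has $\beta$-stable intervals, and Proposition~\ref{prop-coarse-Helly-groups-Helly} concludes. The only divergence is in the stable-intervals check, where your detour through a fixed geodesic (costing an extra $C_1\delta$) is unnecessary: each vertex $x\in I(w,v)$ already lies on \emph{some} geodesic from $w$ to $v$, and applying slimness directly to the triangle formed by that geodesic, a geodesic $[w,v']$, and the edge $vv'$ yields the paper's $(\delta+1)$-stability at once.
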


\begin{proof}
  By Proposition~\ref{coarse-Helly-hyperbolic}, any
  $\delta$-hyperbolic graph $G$ is coarse Helly with constant
  $2\delta$. Moreover, if $G$ has $\delta$-thin geodesic triangles,
  then one can easily check that $G$ has $(\delta+1)$-stable
  intervals. The result then follows from
  Proposition~\ref{prop-coarse-Helly-groups-Helly}.
\end{proof}

A group $\Gamma$ is \emph{quadric} if it acts geometrically on a quadric
complex~\cite{HodaQuadric}. \emph{Quadric complexes} are cell complexes that
have hereditary modular graphs as $1$-skeletons.

\begin{corollary}
  Quadric groups are Helly.
\end{corollary}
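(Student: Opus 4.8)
The plan is to reduce the statement to Proposition~\ref{prop-coarse-Helly-groups-Helly}, which asserts that a group acting geometrically on a coarse Helly graph with $\beta$-stable intervals is Helly. Let $\Gamma$ be a quadric group, so that $\Gamma$ acts geometrically on a quadric complex $X$. By definition the $1$-skeleton $G := G(X)$ is a hereditary modular graph, and the (cellular, proper, cocompact) action of $\Gamma$ on $X$ restricts to a geometric action of $\Gamma$ on $G$; in particular $G$ is locally finite. It therefore suffices to check that $G$ is coarse Helly and has $\beta$-stable intervals for some $\beta$, after which the corollary follows by a single application of Proposition~\ref{prop-coarse-Helly-groups-Helly}.

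First I would establish the coarse Helly property. This is immediate from Proposition~\ref{coarse-Helly-hwm}, whose ``in particular'' clause states precisely that hereditary modular graphs are coarse Helly (with constant $1$). No further work is needed here, since a hereditary modular graph contains no isometric cycle of length $>4$, and \emph{a fortiori} none of length $>5$, nor a house or a $3$-deltoid, so that all hypotheses of Proposition~\ref{coarse-Helly-hwm} are met.

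Next I would verify the stable-intervals hypothesis. Every hereditary modular graph is in particular modular, and modular graphs are weakly modular by Lemma~\ref{lem-weakly-modular}. By Lemma~\ref{l:wmstable}, every weakly modular graph has $1$-stable intervals, hence so does $G$. Combining these two facts, $G$ is a locally finite coarse Helly graph with $1$-stable intervals on which $\Gamma$ acts geometrically, whence $\Gamma$ is Helly by Proposition~\ref{prop-coarse-Helly-groups-Helly}.

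The argument is essentially a bookkeeping assembly of earlier results, so I do not expect a substantial obstacle. The only points requiring care are routine: confirming that the geometric action on the polyhedral complex $X$ genuinely descends to a geometric action on the graph $G$ (so that local finiteness, properness, and cocompactness all transfer to the $1$-skeleton), and recalling the implications ``hereditary modular $\Rightarrow$ modular $\Rightarrow$ weakly modular'' that let both Proposition~\ref{coarse-Helly-hwm} and Lemma~\ref{l:wmstable} apply to $G$.
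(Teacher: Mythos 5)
Your argument is correct and is essentially identical to the paper's proof, which likewise observes that the hereditary modular $1$-skeleton of a quadric complex is weakly modular, hence has $1$-stable intervals by Lemma~\ref{l:wmstable} and is coarse Helly by Proposition~\ref{coarse-Helly-hwm}, so that Proposition~\ref{prop-coarse-Helly-groups-Helly} applies. Your extra remarks (the geometric action descending to the $1$-skeleton, and re-checking the hypotheses of Proposition~\ref{coarse-Helly-hwm}, whose ``in particular'' clause already covers hereditary modular graphs) are sound but routine bookkeeping left implicit in the paper.
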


\begin{proof}
  Since hereditary modular graphs are weakly modular, they have
  $1$-stable intervals by Lemma~\ref{l:wmstable} and they are
  coarse Helly by Proposition~\ref{coarse-Helly-hwm}.
\end{proof}

By~\cite[Theorem~B]{HodaQuadric}, any group admitting a finite
\cftf presentation acts geometrically on a quadric complex,
leading thus to the following corollary:

\begin{corollary}
  Any group admitting a finite \cftf presentation is Helly.
\end{corollary}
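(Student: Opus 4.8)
The plan is to deduce this statement by chaining together exactly two facts: the structural input from \cite[Theorem~B]{HodaQuadric} and the corollary immediately preceding, which asserts that quadric groups are Helly. There is essentially no new argument to construct here; the work is organizational, and the substantive content has already been isolated into these two results.

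First I would invoke \cite[Theorem~B]{HodaQuadric} to convert the algebraic hypothesis into a geometric one. That theorem produces, from any finite \cftf presentation, a quadric complex---a cell complex whose $1$--skeleton is a hereditary modular graph---on which the group acts geometrically. This is the step carrying all of the genuine combinatorial weight: one must build, out of the small cancellation data, an actual complex with the required nonpositive-curvature-like structure and a proper cocompact action. Since that construction is external to the present paper, I would treat it as a black box and simply record that the group so presented acts geometrically on a quadric complex, hence is a quadric group in the sense defined just above.

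Having obtained a geometric action on a quadric complex, the conclusion is then immediate from the preceding corollary, which states that quadric groups are Helly. Unwinding that corollary, the point is that the $1$--skeleton, being hereditary modular and thus weakly modular, has $1$--stable intervals by Lemma~\ref{l:wmstable}, and is coarse Helly by Proposition~\ref{coarse-Helly-hwm}; Proposition~\ref{prop-coarse-Helly-groups-Helly} then upgrades the geometric action on this coarse Helly graph with $\beta$--stable intervals to a geometric action on a genuine Helly graph via Hellyfication. Composing, the group of a finite \cftf presentation is Helly.

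The only conceivable obstacle lies entirely in the cited result: one must be certain that the quadric complex furnished by \cite[Theorem~B]{HodaQuadric} is \emph{locally finite} and that the action is genuinely geometric (proper and cocompact), since Proposition~\ref{prop-coarse-Helly-groups-Helly} and the Hellyfication machinery of Subsection~\ref{injhull+hellyfication} require local finiteness throughout. Given a finite presentation this is automatic, so no difficulty arises, and the proof reduces to the one-line citation chain described above.
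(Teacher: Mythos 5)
Your proposal is correct and matches the paper's proof exactly: the paper likewise cites \cite[Theorem~B]{HodaQuadric} to obtain a geometric action on a quadric complex and then invokes the immediately preceding corollary that quadric groups are Helly (itself resting on Lemma~\ref{l:wmstable}, Proposition~\ref{coarse-Helly-hwm}, and Proposition~\ref{prop-coarse-Helly-groups-Helly}). Your added remark about local finiteness and properness of the action is a reasonable sanity check but introduces nothing beyond what the paper's one-line deduction already assumes.
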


\subsection{7-Systolic groups via nerve graphs of clique-hypergraphs}
\label{s:7sys}

A group $\Gamma$ is called \emph{systolic} (respectively,
\emph{7-systolic}) if $\Gamma$ acts geometrically on a systolic
(respectively, \emph{7-systolic}) graph (or complex). Since
$7$-systolic groups are hyperbolic~\cite{JS}, they are Helly by
Corollary~\ref{cor-hyperbolic-groups-Helly}.  Since 7-systolic
graphs are coarse Helly, by
Proposition~\ref{prop-coarse-Helly-groups-Helly}, each 7-systolic
group acts geometrically on the Hellyfication $\He(G)$ of a 7-systolic
graph $G$.

Any group $\Gamma$ acting geometrically on a graph $G$
also acts geometrically on the nerve graph $NG(\cX(G))$ of its
clique-hypergraph $\cX(G)$. Since the nerve graph $NG(\cX(G))$ of the
clique-hypergraph $\cX(G)$ of a $7$-systolic graph is Helly by
Theorem~\ref{7-systolic}, a group $\Gamma$ geometrically acting on a
$7$-systolic graph $G$ act also geometrically on the Helly graph
$NG(\cX(G))$ and is thus Helly.

\begin{proposition}
  If a group $\Gamma$ acts geometrically on a $7$-systolic graph $G$,
  then $\Gamma$ acts geometrically on the Helly graphs $\He(G)$ and
  $NG(\cX(G))$, i.e., $7$-systolic groups are Helly.
\end{proposition}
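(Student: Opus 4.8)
The plan is to establish the two asserted geometric actions separately, in each case by assembling facts already proved, and then to read off Hellyness directly from Definition~\ref{d:Hellygroup}.

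First I would treat the action on the Hellyfication $\He(G)$. Since a $7$-systolic graph is in particular systolic (bridged), and bridged graphs are weakly modular (they contain no induced $4$- or $5$-cycle), Lemma~\ref{l:wmstable} shows that $G$ has $1$-stable intervals. Independently, $7$-systolic graphs are coarse Helly: this is immediate from Proposition~\ref{coarse-Helly-hwm}, and could alternatively be obtained from the hyperbolicity of $7$-systolic graphs \cite{JS,ChDrEsHaVa} via Proposition~\ref{coarse-Helly-hyperbolic}. These are exactly the two hypotheses of Proposition~\ref{prop-coarse-Helly-groups-Helly}, but I would unwind its proof to extract the sharper statement I want: coarse Hellyness of $G$ gives, through Proposition~\ref{coarse}, that $E(G)$ satisfies the bounded distance property; $1$-stable intervals give, through Theorem~\ref{t:langstable}, that $E(G)$ is proper; and Proposition~\ref{prop-proper-inj-hull-Helly} then produces a proper cocompact action of $\Gamma$ on the Helly graph $E^0(G)=\He(G)$.

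Next I would treat the action on the nerve graph $NG(\cX(G))$. The key point is that the clique-hypergraph construction is equivariant: every automorphism of $G$ permutes the maximal cliques of $G$ preserving their pairwise intersection pattern, hence descends to an automorphism of $NG(\cX(G))$, so $\Gamma$ acts on $NG(\cX(G))$ by graph automorphisms. This action is cocompact because $G$ is locally finite and $\Gamma$-cocompact, so there are finitely many $\Gamma$-orbits of vertices of $G$ and each vertex lies in only finitely many maximal cliques, whence finitely many $\Gamma$-orbits of maximal cliques, i.e.\ of vertices of $NG(\cX(G))$. It is proper because the stabiliser of a maximal clique $K$ acts on the finite vertex set of $K$, the kernel of this action being a finite-index subgroup that fixes a vertex of $K$ and hence lies in a (finite) vertex-stabiliser; thus clique-stabilisers are finite. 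Since $NG(\cX(G))$ is Helly by Theorem~\ref{7-systolic}, this is a geometric action on a Helly graph.

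Either action exhibits $\Gamma$ as a Helly group. The argument is conceptually light, since both routes reduce to theorems proved earlier; the only genuine work is the equivariance together with the properness and cocompactness checks for the two induced combinatorial actions. I expect the main (though still routine) obstacle to be verifying that the action on $NG(\cX(G))$ is proper, namely that clique-stabilisers are finite, which relies on the finiteness of vertex-stabilisers and on local finiteness of $G$; for $\He(G)$ the corresponding verification is already packaged inside the proof of Proposition~\ref{prop-proper-inj-hull-Helly}.
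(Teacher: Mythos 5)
Your proof follows the paper's own argument exactly: the action on $\He(G)$ is obtained from coarse Hellyness plus stable intervals via Proposition~\ref{prop-coarse-Helly-groups-Helly} (which you correctly unwind into Proposition~\ref{coarse}, Theorem~\ref{t:langstable} and Proposition~\ref{prop-proper-inj-hull-Helly}, noting that weak modularity of bridged graphs supplies the $1$-stable intervals), and the action on $NG(\cX(G))$ from Theorem~\ref{7-systolic}. The only difference is that you spell out the routine equivariance, cocompactness and properness checks for the induced action on the nerve graph (finitely many orbits of maximal cliques, finiteness of clique-stabilizers via finite vertex-stabilizers), which the paper asserts without proof; these checks are correct.
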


\subsection{\cftf graphical small cancellation groups via thickening}
\label{s:c4t4}
The main goal of this subsection is to prove that finitely presented
graphical \cftf small cancellation groups are Helly. Our exposition
follows closely \cite[Section 6]{OsajdaPrytula}, where graphical
$C(6)$ groups were studied. We begin with general notions concerning
complexes, then graphical \cftf complexes, and proving the Helly
property for a class of graphical \cftf complexes. From this we
conclude the Hellyness of the corresponding groups.

\medskip

In this subsection, unless otherwise stated, all complexes are $2$--dimensional CW--complexes  with combinatorial attaching maps  (that is, restriction to an open cell is a homeomorphism onto an open cell) being immersions -- see \cite[Section 6]{OsajdaPrytula} for details.
 A \emph{polygon} is a $2$--disk with the cell structure that consists of $n$ vertices, $n$ edges, and a single $2$--cell. For any $2$--cell $C$ of a $2$--complex $X$ there exists a map $R \to X$, where $R$ is a polygon and the attaching map for $C$ factors as $S^{1} \to \partial R \to X$. In the remainder of this section by a \emph{cell} we will mean a map $R \to X$ where $R$ is a polygon. An \emph{open cell} is the image in $ X$ of the single $2$--cell of $R$.
  A \emph{path} in $X$ is a combinatorial
 map $P \to X$ where $P$ is either a subdivision of the interval or a single vertex. In the latter case we call $P\to X$ a \emph{trivial} path. The \emph{interior} of the path is the path minus its endpoints. 
 Given paths $P_1\to X$ and $P_2 \to X$ such that the terminal point of $P_1$ is equal to the initial point of $P_2$, their \emph{concatenation} is an obvious path $P_1P_2 \to X$ whose domain is the union of $P_1$ and $P_2$ along these points. 
A \emph{cycle} is a map $C \to X$, where $C$ is a subdivision of the circle $S^1$. The cycle $C\to X$ is \emph{non-trivial} if it does not factor through a map to a tree.
 A path or cycle is \emph{simple} if it is injective on vertices. Notice that a simple cycle (of length at least $3$) is non-trivial.
The \emph{length} of a path $P$ or a cycle $C$ denoted by $|P|$ or $|C|$ respectively is the number of $1$--cells in the domain.
A subpath $Q \to X$ of a path $P \to X$ (or a cycle) is a path that factors as $Q \to P \to X$ such that $Q \to P$ is an injective map. Notice that the length of a subpath does not exceed the length of the path.

 A \emph{disk diagram} is a contractible finite $2$--complex $D$ with a specified embedding into the plane. We call $D$ \emph{nonsingular} if it is homeomorphic to the $2$--disc, otherwise $D$ is called \emph{singular}. The \emph{area} of $D$ is the number of $2$--cells.
The boundary cycle $\partial D$ is the attaching map of the $2$--cell that contains the point $\{\infty\}$, when we regard $S^2= \mathbb{R}^2 \cup \{\infty\}$.
A \emph{boundary path} is any path $P\to D$ that factors as $P\to \partial D \to D$. An \emph{interior path} is a path such that none of its vertices, except for possibly endpoints, lie on the boundary of $D$.
 If $X$ is a $2$--complex, then a \emph{disk diagram in} $X$ is a  map $D \to X$.

A \emph{piece} in a disk diagram $D$ is a path $P \to D$ for which
 there exist two different lifts to $2$--cells of $D$, i.e., there are
 $2$--cells $R_i \to D$ and $R_j \to D$ such that $P \to D$ factors
 both as $P \to R_i \to D$ and $P \to R_j \to D$, but there does not
 exist an isomorphism $R_j \to R_i$ making the following diagram commutative:
 \[
   \begindc{\commdiag}[20]
   \obj(12,1)[a]{$R_j$}
   \obj(35,1)[b]{$D$}
   \obj(35,1)[b']{}
   \obj(35,16)[c]{$R_i$}
   \obj(35,17)[c']{}
   \obj(12,17)[d]{$P$}
   \obj(12,17)[d']{}
   \mor{a}{b'}{}
   \mor{c}{b}{}
   \mor{a}{c}{}
   \mor{d'}{c'}{}
   \mor{d'}{a}{}
   \enddc
 \]

 Let $\varphi \colon \bfG \to \Theta$ be an immersion of graphs, assume that $\Theta$
 is connected and that $\bfG$ does not have vertices of degree
 $0$ or $1$. For convenience we will write $\bfG$ as the union of its
 connected components $\bfG=\bigsqcup_{i \in I}G_i$, and refer to
 the connected graphs $G_i$ as \emph{relators}.  
 A \emph{thickened graphical complex} $X$ is a $2$--complex with
 $1$--skeleton $\Theta$ and a $2$--cell attached along every immersed
 cycle in $\bfG$, i.e., if a cycle $C \to \bfG$ is immersed, then in
 $X$ there is a $2$--cell attached along the composition
 $C \to \bfG \to \Theta$.  A (nonthickened) \emph{graphical complex}
 $X^{\ast}$ is a $2$--complex obtained by gluing a simplicial cone $C(G_i)$ along each
 $G_i \to \Theta$:
 \[X^{\ast}= \Theta \cup_{\varphi} \bigsqcup_{i\in I} C(G_i).\] 

 For any $G_i \to X$ we have a \emph{thick cell}
 $\thi{G_i} \to X$, where  $\thi{G_i}$ is formed by gluing $2$--cells along all immersed
 cycles in $G_i$. In $X^{\ast}$ a \emph{cone-cell} is the
 corresponding map $C(G_i) \to X$. Note that the two complexes $X$ and
 $X^{\ast}$ have the same fundamental groups. To be consistent with
 the approach in \cite{OsajdaPrytula} in the following material we
 work usually with the thickened complex $X$, however the results
 could be formulated also for $X^{\ast}$.

Let $X$ be a thickened graphical complex. A \emph{piece} in $X$ is a
 path $P \to X$ for which there exist two different lifts to $\bfG$,
 i.e., there are two relators $G_i$ and $G_j$ such that the path
 $P \to X$ factors as $P \to G_i \to X$ and $P \to G_j \to X$, but
 there does not exist an isomorphism $\thi{G_j} \to \thi{G_i}$ such that the
 following diagram commutes:	
 \[  \begindc{\commdiag}[30]
   \obj(12,1)[a]{$\thi{G_j}$}
   \obj(35,1)[b]{$X$}
   \obj(35,1)[b']{}
   \obj(35,17)[c]{$\thi{G_i}$}
   \obj(35,17)[c']{}
   \obj(12,17)[d]{$P$}
   \obj(12,17)[d']{}
   \mor{a}{b'}{}
   \mor{c}{b}{}
   \mor{a}{c}{}
   \mor{d'}{c}{}
   \mor{d'}{a}{}
   \enddc  \] 	
A disk diagram $D \to X$ is \emph{reduced} if for every piece  $P \to D$ the composition  $P \to D \to X$ is a piece in $X$.

 \begin{lemma}[Lyndon-van Kampen Lemma]\label{l:vank}
 	Let $X$ be a thickened graphical complex and let $C \to X$ be a closed homotopically trivial path. Then
 	\begin{enumerate} 
 		
 		\item \label{l:vk1} there exists a disk diagram $D\to X$ such that the path $C$ factors as $C \to \partial D \to X$, and $C \to \partial D$ is an isomorphism,
 		
 		\item \label{l:vk2} if a diagram $D \to X$ is not reduced, then there exists a diagram $D_1 \to X$ with smaller area and the same boundary cycle in the sense that there is a commutative diagram:
 		
 		 	$$  \begindc{\commdiag}[30]
 		\obj(35,1)[b]{$X$}
 		\obj(35,1)[b']{}
 		\obj(35,17)[c]{$\partial D$}
 		\obj(35,17)[c']{}
 		\obj(12,17)[d]{$\partial D_1$}
 		\obj(12,17)[d']{}
 		\mor{c'}{b}{}
 		\mor{d'}{c}{$\cong$}
 		\mor{d'}{b}{}
 		\enddc  $$

 		\item \label{l:vk3} any minimal area diagram $D \to X$ such that $C$ factors as $C \xrightarrow{\cong} \partial D \to X$ is reduced.
 	\end{enumerate}
 \end{lemma}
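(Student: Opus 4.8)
The plan is to follow the classical Lyndon--van Kampen argument \cite[Chapter~V]{LySch}, adapted to the thickened graphical setting as in \cite[Section~6]{OsajdaPrytula}. First I would observe that part~(\ref{l:vk3}) is a formal consequence of parts~(\ref{l:vk1}) and~(\ref{l:vk2}): given a homotopically trivial path $C$, part~(\ref{l:vk1}) produces at least one disk diagram $D\to X$ with $C\xrightarrow{\ \cong\ }\partial D\to X$, so that diagrams of minimal area exist; and were such a minimal diagram not reduced, part~(\ref{l:vk2}) would yield a diagram of strictly smaller area with the same boundary cycle, contradicting minimality. Thus the content lies entirely in~(\ref{l:vk1}) and~(\ref{l:vk2}).

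For part~(\ref{l:vk1}) I would use the topological characterization of simple connectivity together with cellular approximation. Since $C\to X$ is homotopically trivial it extends to a continuous map $f\colon \mathbb{D}^2\to X$ of the $2$-disk; after a homotopy rel $\partial\mathbb{D}^2$ one may take $f$ to be combinatorial, so that the preimages of the open $2$-cells of $X$ partition $\mathbb{D}^2$ into finitely many open cells (finiteness holding because $f(\mathbb{D}^2)$ is compact). The resulting cell structure on $\mathbb{D}^2$ is a disk diagram $D$ equipped with a map $D\to X$ restricting to $C$ on $\partial D$. Equivalently --- and this is the version I would write down --- one uses that $\pi_1(X)$ is presented with the immersed cycles of $\bfG$ as relators, expresses the null-homotopic $C$ as a product of conjugates of these relators, and converts such an expression into a singular disk diagram by the standard construction (a wedge of $2$-cells joined to the basepoint by arcs, read off around the boundary). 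Either route may force cut vertices, which is exactly why $D$ is allowed to be singular.

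The heart of the matter is part~(\ref{l:vk2}). Suppose $D\to X$ is not reduced, as witnessed by a piece $P\to D$ whose image $P\to D\to X$ fails to be a piece in $X$. Unwinding the definitions, $P$ lifts to two distinct $2$-cells $R_i\to D$ and $R_j\to D$ with no filler $R_j\to R_i$, while the failure of $P\to D\to X$ to be a piece in $X$ means precisely that there is a map $\thi{G_j}\to\thi{G_i}$ making the defining square commute --- i.e.\ the relators carrying $R_i$ and $R_j$ are compatibly identified over $X$ along $P$. This is the graphical analogue of two $2$-cells being mirror images along a common subpath. I would then perform the standard cancellation surgery: delete the open cells $R_i$ and $R_j$ together with the interior of $P$, use the commuting square to see that the two residual boundary arcs of $\partial R_i$ and $\partial R_j$ have equal image in $X$, and fold these arcs together. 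The outcome is a finite, contractible, planar $2$-complex $D_1$ with strictly fewer $2$-cells and with $\partial D_1\xrightarrow{\ \cong\ }\partial D$, furnishing the area-reducing diagram required.

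I expect the main obstacle to be making this surgery rigorous in the thickened graphical category rather than in the classical presentation complex. Two points need care. First, one must check that the filler $\thi{G_j}\to\thi{G_i}$ identifies enough of the attaching cycles of $R_i$ and $R_j$ --- not merely the subpath $P$ --- for the two residual arcs to genuinely coincide in $X$ after folding; this is where the precise shape of the ``piece'' condition, namely the (non-)existence of such a filler, must be exploited. Second, one must verify that folding preserves planarity and contractibility and that iterating the move terminates; the latter is immediate by downward induction on the area, since each application strictly decreases the number of $2$-cells. With~(\ref{l:vk2}) in hand,~(\ref{l:vk1}) and the minimality argument deliver~(\ref{l:vk3}) at once.
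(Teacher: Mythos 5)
The paper states this lemma without proof, treating it as the standard adaptation of the classical van Kampen lemma \cite{LySch}*{Chapter V} to the graphical setting of \cite{OsajdaPrytula}, so your proposal must stand on its own. Your part (\ref{l:vk1}) (cellular approximation of a filling disk, or conversion of an expression of $C$ as a product of conjugates of relators into a singular diagram) and your derivation of (\ref{l:vk3}) from (\ref{l:vk1}) and (\ref{l:vk2}) are correct and standard.

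The genuine gap is in your surgery for (\ref{l:vk2}), and it sits exactly at the point you flagged but did not resolve: the two residual boundary arcs of $\partial R_i$ and $\partial R_j$ do \emph{not} in general have equal image in $X$, and no verification will make them coincide. The failure of $P \to D \to X$ to be a piece in $X$ only yields a filler $\thi{G_j} \to \thi{G_i}$ of \emph{thick cells}; this identifies the two relator copies compatibly along $P$, but the attaching cycles of $R_i$ and $R_j$ then become merely two (generally distinct) immersed cycles $c_1 \neq c_2$ in the \emph{same} relator $G_i$ overlapping along $P$ --- for instance take $G_i = G_j$ with the identity filler. This is a piece in $D$ (no map $R_j \to R_i$ exists, since the polygons differ) yet not a piece in $X$, and your move --- delete both cells and fold one residual arc onto the other --- is impossible because the arcs $\alpha \subset \partial R_i$ and $\beta \subset \partial R_j$ complementary to $P$ are different paths. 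The correct operation is a \emph{merge}, not a mirror-image cancellation: the concatenation $\alpha\beta^{-1}$ is a closed path in $G_i$; if it is non-trivial, tighten it to its immersed representative and replace the subdiagram $R_i \cup P \cup R_j$ by the single $2$-cell of $X$ attached along that immersed cycle --- this is precisely where the defining feature of the thickened complex (a $2$-cell for \emph{every} immersed cycle of every relator) is used --- folding the backtracks into the diagram, possibly creating cut vertices, which is why singular diagrams are allowed; if $\alpha\beta^{-1}$ is null-homotopic in $G_i$, the hole closes up entirely by folding. The area then drops by $1$ or $2$ respectively, which is also why the lemma claims only ``smaller area'' rather than the classical ``area minus two''.
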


\begin{definition}\label{de:smallcancgraph} We say that a thickened graphical complex $X$ satisfies:
\begin{itemize}
\item 
the \emph{C$(4)$ condition} if no immersed cycle $C \to X$ that factors as $C \to G_i \to X$
is the concatenation of less than $4$ pieces;

\item 
the \emph{T$(4)$ condition} if there does not exist a reduced nonsingular disk diagram $D\to X$ with $D$ containing an internal $0$-cell $v$, of valence $3$, that is, contained in exactly $3$ corners of $2$--cells.

\end{itemize}

If $X$ satisfies both conditions we call it a \emph{\cftf thickened graphical complex}. The corresponding complex  $X^{\ast}$  is called then a  \emph{\cftf graphical complex}.
\end{definition}

If $D$ is a disk diagram we define small cancellation conditions in a
very similar way, except that a \emph{piece} is understood as a piece
in a disk diagram.

\begin{proposition}\label{reduceddiagram}
  If $X$ is a \cftf thickened
  graphical complex and $D \to X$ is a reduced disk diagram, then $D$
  is a \cftf diagram.
\end{proposition}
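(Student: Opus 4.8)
The plan is to verify the two conditions $C(4)$ and $T(4)$ for $D$ separately, in each case transporting a configuration forbidden in $D$ into a configuration forbidden in $X$; the only property doing real work is the definition of reducedness, namely that every piece $P\to D$ composes to a piece $P\to D\to X$ of $X$. Write $\psi\colon D\to X$ for the diagram map. For $C(4)$, I would argue by contradiction: suppose the boundary cycle $\partial R\to D$ of some $2$--cell $R$ of $D$ is a concatenation $P_1P_2\cdots P_k$ of $k\le 3$ pieces of $D$. The cell $R$ maps under $\psi$ to a $2$--cell of $X$, whose attaching map is a non-trivial immersed cycle $C\to G_i\to X$ for some relator $G_i$; thus $\psi\circ\partial R$ is exactly this cycle and is non-trivial. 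Since $\psi$ is reduced, each $\psi\circ P_\ell$ is a piece of $X$, so $C\to X$ is a non-trivial cycle factoring through $G_i$ that decomposes as a concatenation of at most $k\le 3$ pieces of $X$, contradicting the $C(4)$ condition for $X$. Hence no such $R$ exists and $D$ satisfies $C(4)$.

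For $T(4)$ I would first dispose of the nonsingular case, where the argument is immediate: if $D$ is nonsingular then $\psi$ is itself a reduced nonsingular disk diagram in $X$, so the $T(4)$ condition for $X$ directly forbids an internal $0$--cell of valence $3$ lying in three distinct $2$--cells. The general, possibly singular, case I would reduce to this one by passing to blocks. Decompose $D$ along its cut vertices into its maximal nonsingular subdiagrams $B$. Each inclusion $B\hookrightarrow D$ is an embedding, so a path in $B$ is a piece of $B$ exactly when it is a piece of $D$ (the non-existence of the commutative square of thick/polygonal cells is detected over $B$ if and only if over $D$). Consequently every piece of $B$ maps under $\psi$ to a piece of $X$, i.e.\ each restriction $\psi|_B\colon B\to X$ is again a reduced disk diagram, now nonsingular, so $T(4)$ for $X$ applies to each block.

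It then remains to observe that any internal $0$--cell $v$ of $D$ of valence $3$ contained in three distinct $2$--cells is interior to the block containing it: valence $3$ together with three surrounding $2$--cells forces those three closed cells to fill a full neighbourhood of $v$, so a closed disk neighbourhood of $v$ lies in a single block $B$ and $v$ is an interior vertex of $B$; applying $T(4)$ for $X$ to the reduced nonsingular diagram $\psi|_B$ yields the contradiction. I expect the main obstacle to lie precisely in this passage from a possibly singular $D$ to its nonsingular blocks: one must check carefully that reducedness is inherited by each block and, above all, that an internal valence-$3$ vertex cannot be a cut vertex (a vertex surrounded entirely by $2$--cells has a disk neighbourhood and so cannot separate $D$), so that $T(4)$ of $X$ can legitimately be invoked. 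The $C(4)$ half, by contrast, is a direct application of the reducedness hypothesis and should require no such care. Assembling both halves shows $D$ is a \cftf diagram.
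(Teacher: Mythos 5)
Your proof is correct and follows essentially the same route as the paper, which disposes of the proposition in one sentence (``follows immediately from the definitions of a reduced map and a piece''): reducedness transports pieces of $D$ to pieces of $X$, so a $2$--cell boundary made of at most $3$ pieces of $D$ would exhibit a non-trivial cycle through a relator violating $C(4)$ for $X$, and an internal valence-$3$ vertex would exhibit a reduced nonsingular diagram violating $T(4)$ for $X$. Your extra verifications --- that an immersed attaching cycle is non-trivial, and that in the singular case one may pass to the nonsingular block containing the internal vertex (which inherits reducedness, since pieces of a subdiagram are pieces of $D$) --- are correct fillings-in of details the paper leaves implicit rather than a different argument.
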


\begin{proof}
  The assertion follows immediately from the definitions of a reduced
  map and a piece.
\end{proof}

The following lemma is a graphical \cftf analogue of \cite[Theorem 6.10]{OsajdaPrytula} (the graphical $C(6)$ case) and \cite[Propositions 3.4, 3.5, 3.7 and Corollary 3.6]{HodaQuadric} (the classical \cftf case). 

\begin{lemma}\label{lem:intersections}
  Let $X$ be a simply connected \cftf thickened graphical complex.
Then the following hold:
  \begin{enumerate}[(1)]
  \item \label{1intersect} For every relator $G_i$, the map
    $G_i \to X$ is an embedding.
  \item \label{2intersect} The intersection of (the images of) any two
    relators is either empty or it is a finite tree.
  \item \label{3intersect} If three relators pairwise intersect then
    they triply intersect and the intersection is a finite tree.		
  \end{enumerate}
\end{lemma}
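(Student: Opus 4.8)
The plan is to run everything through reduced van Kampen diagrams over $X$. Since $X$ is simply connected, Lemma~\ref{l:vank} says every closed homotopically trivial path bounds a disk diagram $D\to X$, and a minimal-area such diagram is reduced; by Proposition~\ref{reduceddiagram} it is then a \cftf diagram. The engine for all three parts is a combinatorial Gauss--Bonnet argument on reduced \cftf diagrams: put a right-angled corner on every cell, so that an interior vertex $v$ receives curvature $2\pi-\tfrac{\pi}{2}\deg(v)$ and a $2$--cell whose boundary decomposes into $n$ pieces receives curvature at most $2\pi-\tfrac{\pi}{2}n$. The T$(4)$ condition of Definition~\ref{de:smallcancgraph} forbids interior vertices of valence $3$, so every interior vertex has valence $\ge 4$ and hence nonpositive curvature; the C$(4)$ condition forces every cell boundary to be a concatenation of at least $4$ pieces, so every cell is nonpositively curved. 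Thus in any reduced \cftf disk diagram all positive curvature is concentrated on the boundary (total $2\pi$), and one extracts the standard structural consequence: a reduced diagram whose boundary carries only a bounded number of convex corners is forced to be degenerate (a single cell, or a \emph{ladder} of cells each meeting the next along a single piece). I would first isolate this shape statement as a standalone lemma and then apply it three times.

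For part~\ref{1intersect}, suppose $G_i\to X$ is not injective. Because $G_i\to\Theta$ is an immersion, non-injectivity produces two distinct vertices of $G_i$ with a common image in $X$; joining them by a shortest arc $\alpha$ in $G_i$ gives a closed path $\bar\alpha\to X$ factoring through $G_i$ that is immersed (minimality of $\alpha$ rules out a backtrack at the identified vertex). I would fill $\bar\alpha$ by a minimal reduced diagram $D$. A diagram of area $0$ would force the immersed cycle $\bar\alpha$ to factor through a tree and hence be trivial, contradicting that the two vertices are distinct; so $\mathrm{Area}(D)\ge 1$. But then every outermost cell of $D$ shares with $\bar\alpha\subseteq G_i$ a boundary arc, which (using the immersion) is a piece, and the shape statement forces $D$ to be a single cell whose boundary is $\bar\alpha$. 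Such a cell is attached along an immersed cycle of a relator, which combined with $\bar\alpha\subseteq G_i$ and the C$(4)$ count contradicts minimality of $\alpha$. Hence $G_i$ embeds, and from now on we identify each relator with its image.

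For part~\ref{2intersect} assume $G_i\cap G_j\neq\emptyset$. Acyclicity is immediate from C$(4)$: a simple cycle $C$ contained in $G_i\cap G_j$ is, via part~\ref{1intersect}, a nontrivial cycle factoring through $G_i$, yet lying in both relators it is a concatenation of at most two pieces, contradicting the C$(4)$ condition. For connectedness I would argue by contradiction: if two points $a,b$ of the intersection lay in distinct components, join them by arcs $\alpha\subseteq G_i$ and $\beta\subseteq G_j$ whose interiors leave the intersection, and fill the bigon $\alpha\bar\beta$ by a minimal reduced diagram $D$. Its boundary has convex corners only at $a$ and $b$, so by the shape statement $D$ is a ladder between $G_i$ and $G_j$; tracking the valences of $a$ and $b$ inside $D$ together with the two relator cells produces an interior valence-$3$ vertex, contradicting T$(4)$, unless $\alpha$ and $\beta$ can be pushed into the intersection, i.e.\ $a,b$ already share a component. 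Finiteness of the resulting tree holds because, in the finitely presented setting, the intersection is a subcomplex of a finite relator.

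For part~\ref{3intersect}, choose $a\in G_i\cap G_j$, $b\in G_j\cap G_k$, $c\in G_k\cap G_i$ and form a triangle with sides in $G_j,G_k,G_i$; fill it by a minimal reduced diagram. If the three relators had no common vertex, the Gauss--Bonnet budget (three boundary corners against nonpositive interior curvature) would force a configuration in which the three relator cells meet at an interior vertex of valence $3$, contradicting T$(4)$; hence $G_i\cap G_j\cap G_k\neq\emptyset$, and being a subcomplex of the pairwise-intersection trees it is itself a finite tree by part~\ref{2intersect}. The main obstacle throughout is making the Gauss--Bonnet/shape bookkeeping rigorous in the thickened-graphical setting: one must verify that the filling diagrams remain reduced after the relator cells are taken into account (no cancellation between an outermost cell of $D$ and a relator), translate T$(4)$ and C$(4)$ precisely into the curvature inequalities while correctly counting pieces that sit inside thick cells $\thi{G_i}$, and justify the ladder dichotomy for bigons. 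This combinatorial verification, rather than the global curvature count, is where the real care is needed.
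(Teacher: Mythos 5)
Your global strategy (fill by a minimal reduced diagram, then run combinatorial Gauss--Bonnet on \cftf diagrams) is the same in spirit as the paper's, which outsources the shape statements to \cite[Propositions 3.4, 3.5, 3.7]{HodaQuadric} instead of rederiving them. But your proof has a genuine gap at exactly the point you defer to ``bookkeeping'': the identification of pieces along the boundary. In part (1) you assert that every outermost cell of $D$ meets $\bar\alpha\subseteq G_1$ in an arc which ``(using the immersion) is a piece''. This is false in general: a boundary cell of $D$ may itself map into the thick cell $\thi{G_1}$ of the \emph{same} relator (or into a $\thi{G_j}$ admitting a commuting map $\thi{G_j}\to\thi{G_1}$), in which case the shared arc has two lifts to $\bfG$ related by a commuting diagram and is \emph{not} a piece under the definition in Subsection~\ref{s:c4t4}. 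For such cells neither the C$(4)$ count nor your ladder dichotomy applies, and the curvature budget cannot be closed. The same defect infects parts (2) and (3): cells of the bigon/triangle diagram adjacent to the sides $\alpha\subseteq G_i$, $\beta\subseteq G_j$ need not meet them in pieces. You name this issue in your closing sentence, but you treat it as routine verification and supply no mechanism for it; in fact the possible cancellation between $D$ and the relators is not a nuisance to be ruled out --- it is the engine of the paper's proof.

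Concretely, the paper caps the diagram with extra cells $F_1$ (resp.\ $F_1,F_2$; $F_1,F_2,F_3$) whose boundaries map into the relators (these exist because relators have no degree-one vertices), shows the capped diagram $D\cup F_1\cup\cdots$ cannot be reduced --- otherwise it would be a reduced \cftf diagram of a shape forbidden by the structural results of \cite{HodaQuadric}, and here your Gauss--Bonnet lemma could legitimately replace those citations --- and then localizes the failure of reducedness: since $D$ itself is reduced, the offending non-piece $P$ lies on the original boundary, it sits on a cell $F'$ of $D$ whose complementary boundary arc $Q$ also maps into $\bfG$, and replacing $P$ by $Q$ produces a counterexample of strictly smaller area. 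Crucially this move \emph{changes the boundary loop}, so the induction must run over all counterexamples (all pairs of identified vertices and all geodesics between them), as the paper arranges; your minimality over fillings of the fixed loop $\bar\alpha$ gives no leverage against it. To repair your argument you would need to restructure all three parts as area inductions over all counterexamples so that the $P\mapsto Q$ replacement is available, and prove the capped-diagram non-reducedness via your shape lemma. Two smaller points: your acyclicity argument via C$(4)$ in part (2) is exactly the paper's, and is fine; but your justification of \emph{finiteness} of the intersection tree invokes finiteness of the relators, which holds in the finitely presented application but is not among the hypotheses of the lemma as stated.
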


\begin{proof}
The proofs of all the items (1), (2), (3) follow the same lines: we assume the statement does not hold and we show that this leads to a forbidden reduced disk diagram, hence reaching a contradiction.

(1) Suppose there is a relator $G_1$ that does not embed. Let $v,v'$ be two vertices of $G_1$ mapped to
a common vertex $v_{11}$ in $X$, and let $\gamma$ be a geodesic path in $G_1$ between $v$ and $v'$. The path $\gamma$ is mapped to a loop $\gamma_{1}$ in $X$. By simple connectedness and by Lemma~\ref{l:vank}
there exists a reduced disk diagram $D$ for $\gamma_{1}$, see Figure~\ref{f:c4t4A} left. We may assume that we choose a counterexample so
that the area (the number of $2$-cells) of $D$ is minimal among all counterexamples.
	
\begin{figure}[h]
  \centering
  \includegraphics[width=0.84\textwidth]{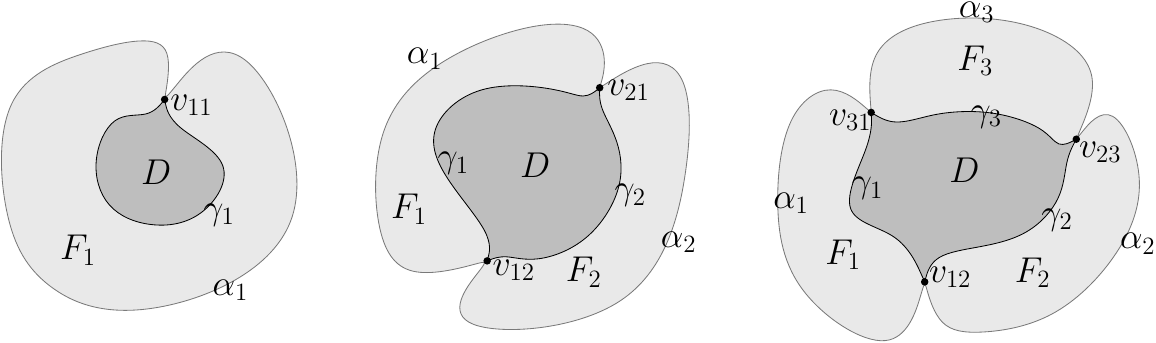}\caption{The proof of Lemma~\ref{lem:intersections}. From left to right: (1), (2), (3).}\label{f:c4t4A}
\end{figure}
	
Now, consider a larger disk diagram $D\cup F_1$ where $F_1$ is a cell
whose boundary is the concatenation $\gamma_1\alpha_1$ which is mapped
to a loop in $G_1$, and the only common point of $\gamma_1$ and
$\alpha_1$ is $v_{11}$, see Figure~\ref{f:c4t4A} left. The existence
of such cell $F_1$ follows from our assumptions on no degree-one
vertices in relators. The diagram $D\cup F_1$ cannot be reduced, since
otherwise it would be a \cftf diagram by
Proposition~\ref{reduceddiagram}, and this would contradict e.g.\
\cite[Proposition 3.4]{HodaQuadric}. Hence, by the definition of a
reduced diagram, there is a piece $P$ in $D\cup F_1$ that does not
lift to a piece in $X$.  Since $D$ is reduced, it follows that the
piece $P$ has to lie on $\gamma_1$. Since $P$ does not lift to a piece
in $X$, $P$ is a part the boundary of a cell $F'$ such that its other
boundary part $Q$ maps to $G_1$ as well, see Figure~\ref{f:c4t4C}. Thus
replacing the subpath $P$ of $\gamma_1$ by $Q$ and, if necessary, reducing the resulting
loop to get an immersed one  we get a new
counterexample with a diagram $D'$, such that $D=D'\cup F'$, of
smaller area --- contradiction proving (1).

\begin{figure}[h]
  \centering
  \includegraphics[width=0.29\textwidth]{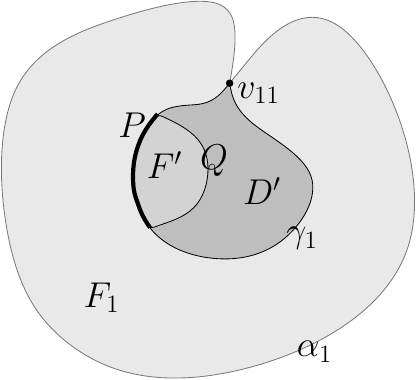}\caption{The proof of Lemma~\ref{lem:intersections}(1).}\label{f:c4t4C}
\end{figure}

(2) First we prove that the intersection of two relators is connected. We proceed analogously to the proof of (1). Suppose not, and let $G_1,G_2$ intersect in a non-connected subgraph leading to a reduced disk
diagram as in Figure~\ref{f:c4t4A} in the middle, with the boundary of $F_i$ mapping to $G_i$. Again, we assume that $D$ has the minimal area among counterexamples and we consider the extended disk diagram
$D\cup F_1 \cup F_2$. By \cite[Proposition 3.5]{HodaQuadric} the new diagram is not reduced and hence,
as in the proof of (1) we get to a contradiction by finding a new counterexample with a smaller area diagram.
This proves the connectedness of the intersection of two relators.

The fact that such intersections does not contain cycles follows immediately from the $C(4)$ condition.

(3) By (1) and (2) it is enough to show that the triple intersection is non-empty.
Here we proceed analogously to (1) and (2). The corresponding diagrams are depicted in Figure~\ref{f:c4t4A} on the right, and the fact that the extended diagram $D\cup F_1 \cup F_2 \cup F_3$ is not reduced follows
from \cite[Proposition 3.7]{HodaQuadric}.
\end{proof}

\begin{lemma}\label{l:c4t4strongH}
  Let $G_1,G_2,G_3$ be three pairwise intersecting relators in a
  simply connected \cftf thickened graphical complex $X$. Then the
  intersection $G_i\cap G_j$ of any two relators is contained in the
  third one.
\end{lemma}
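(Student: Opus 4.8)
The plan is to argue by contradiction, following the same pattern as the proof of Lemma~\ref{lem:intersections}: assume the conclusion fails, manufacture a reduced disk diagram that violates the T$(4)$ condition, and conclude. By the symmetry among the three relators it suffices to show $G_1 \cap G_2 \subseteq G_3$. Suppose this fails. By Lemma~\ref{lem:intersections} each pairwise intersection $T_{12} = G_1 \cap G_2$, $T_{13} = G_1 \cap G_3$, $T_{23} = G_2 \cap G_3$ is a finite tree, and the triple intersection $T = G_1 \cap G_2 \cap G_3$ is a nonempty finite tree; moreover $T = T_{12} \cap T_{13} = T_{12} \cap T_{23}$. Since $T_{12}$ is a tree strictly containing the subtree $T$, I would first extract an edge $uw \subseteq T_{12}$ with $u \in T$ and $w \in T_{12} \setminus T$; by definition of $T$ this means $uw \subseteq G_1 \cap G_2$ while $w \notin G_3$.

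Next I would use $u$ as the prospective interior vertex. Since $u \in G_3$, relators have no vertices of degree $1$, and $u \in T \subseteq T_{13} \cap T_{23}$, one arranges three $2$--cells around $u$: a cell $F_1 \to \thi{G_1}$ and a cell $F_2 \to \thi{G_2}$ both containing the edge $uw$ (so $uw \subseteq F_1 \cap F_2$), together with a cell $F_3 \to \thi{G_3}$, chosen so that $F_1 \cap F_3 \subseteq T_{13}$ and $F_2 \cap F_3 \subseteq T_{23}$ meet $u$ along edges of $T_{13}$ and $T_{23}$ respectively. Gluing these three cells, and completing the configuration to a disk diagram $D \to X$ for the overlap of $G_1, G_2, G_3$ (invoking Lemma~\ref{l:vank} to fill in the remaining homotopically trivial cycles), I expect $u$ to become an internal $0$--cell lying in exactly three distinct $2$--cells, i.e.\ an interior vertex of valence $3$.

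To finish, I would select the counterexample so that $D$ has minimal area, exactly as in Lemma~\ref{lem:intersections}. Minimality forces $D$ to be reduced, since otherwise Lemma~\ref{l:vank}(\ref{l:vk2}) produces a diagram of smaller area with the same boundary, hence a smaller counterexample. By Proposition~\ref{reduceddiagram} a reduced diagram in a \cftf thickened graphical complex is a \cftf diagram, and then the interior valence-$3$ vertex $u$ contradicts the T$(4)$ condition outright. This contradiction yields $w \in G_3$, whence $G_1 \cap G_2 \subseteq G_3$, and by symmetry $G_i \cap G_j \subseteq G_k$ for every permutation of $\{1,2,3\}$.

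The hard part will be the local analysis around $u$ certifying that the three cells really close up to give \emph{exactly} valence $3$ with $u$ interior. The difficulty is that $u$ may be a leaf, or an isolated point, of $T_{13}$ or $T_{23}$, so that no edge of one of these trees emanates from $u$ and the naive cyclic arrangement of $F_1, F_2, F_3$ degenerates. Handling this will likely require either choosing $u$ to be a suitable branch vertex of the union $T_{12} \cup T_{13} \cup T_{23}$, or replacing the purely local picture by a global minimal-area diagram bounded by paths in the three relators and arguing, as in Lemma~\ref{lem:intersections}(3), that the extended diagram $D \cup F_1 \cup F_2 \cup F_3$ fails to be reduced. A secondary technical point is that the $F_i$ are $2$--cells of the thickened complex attached along immersed cycles of the \emph{graphs} $G_i$ rather than single polygons, so one must track the thick-cell structure when verifying reducedness and nonsingularity of $D$.
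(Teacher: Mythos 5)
Your overall strategy is the paper's: assume the containment fails and assemble a reduced disk diagram with an interior valence-$3$ vertex, contradicting the T$(4)$ condition. The paper's actual proof is shorter and more direct than your plan: it picks a witness $v_i\in G_j\cap G_k\setminus G_i$ for \emph{all three} index choices, a triple-intersection vertex $v$ (via Lemma~\ref{lem:intersections}), and immersed paths $\gamma_i\subseteq G_j\cap G_k$ from $v$ to $v_i$; the three cells $F_i\to \thi{G_i}$, glued pairwise along the nontrivial paths $\gamma_j$, already form a disk with $v$ interior of valence $3$ (Figure~\ref{f:c4t4D}) -- no further filling is needed, and reducedness is arranged by construction using the no-degree-one-vertices assumption.

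There are two genuine gaps in your version. First, the degenerate case you flag is not a technicality you can defer: because you negate only the single containment $G_1\cap G_2\subseteq G_3$, it is entirely possible that $G_1\cap G_3$ and $G_2\cap G_3$ both coincide with the triple intersection (which may even be a single vertex), so that \emph{no} edge of $T_{13}$ or $T_{23}$ emanates from any admissible choice of $u$, and your three cells cannot close up around an interior vertex; your two suggested remedies (branch vertices, or a non-reduced extended diagram as in Lemma~\ref{lem:intersections}(3)) are not carried out, and it is exactly here that the content of the lemma lies. The paper's tripod construction needs all three paths $\gamma_i$ to be nontrivial, which is why it works from witnesses for all three pairs; your single-pair setup does not supply them. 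Second, your reducedness mechanism is misapplied: Lemma~\ref{l:vank}(\ref{l:vk2}) replaces a non-reduced diagram by a smaller-area one with the \emph{same boundary}, which preserved the contradiction in Lemma~\ref{lem:intersections} because there the contradiction lived on the boundary; here your contradiction is the \emph{interior} valence-$3$ vertex $u$, which need not survive the replacement, so ``minimal area forces a reduced counterexample'' does not go through. For the same reason, invoking Lemma~\ref{l:vank}(\ref{l:vk1}) to ``fill in the remaining homotopically trivial cycles'' gives you no control over the interior cells adjacent to $u$, so you cannot certify that $u$ ends up interior and contained in exactly three $2$--cells. Both defects disappear in the paper's argument precisely because the diagram consists of the three prescribed cells and nothing else, with reducedness checked directly from the definition of pieces (the gluing paths lift to two distinct relators).
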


\begin{proof}
  Suppose not. Let $v_i$ be a vertex in $G_j\cap G_k$ not in $G_i$, for
  $\{i,j,k\}=\{1,2,3\}$.  By Lemma~\ref{lem:intersections} there
  exists a vertex $v\in G_1\cap G_2 \cap G_3$ and immersed paths
  $\gamma_i\subseteq G_j\cap G_k$ from $v$ to $v_i$, for all
  $\{i,j,k\}=\{1,2,3\}$.  By our assumption on no degree-one vertices,
  we may find a reduced disk diagram consisting of cells $F_i$ mapped
  to $G_i$, for $i=1,2,3$, as in Figure~\ref{f:c4t4D}. This
  contradicts the $T(4)$ condition.
\end{proof}	
\begin{figure}[h]
  \centering
  \includegraphics[width=0.24\textwidth]{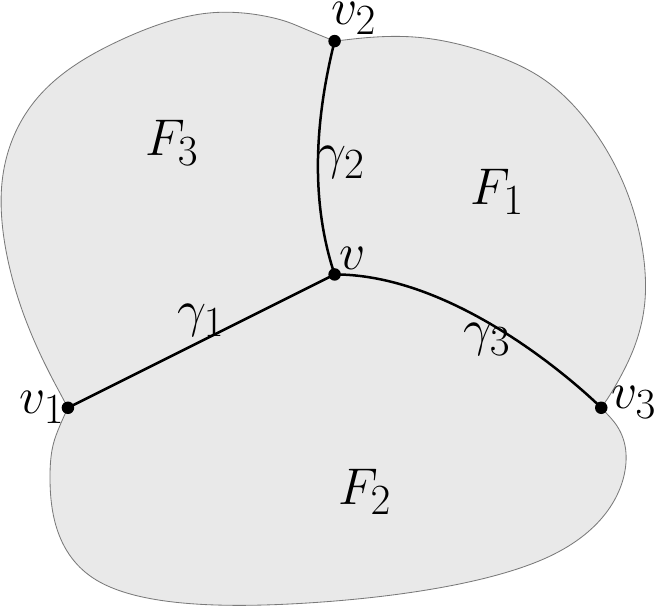}\caption{The proof of Lemma~\ref{l:c4t4strongH}.}\label{f:c4t4D}
\end{figure}

\begin{lemma}\label{lem:flag} Let $X$ be a simply connected \cftf thickened graphical complex and consider a collection $\{G_i \to X\}_{i \in I}$ of relators. If for every $i,j \in I$ the intersection $G_i \cap G_j$ is non-empty then the intersection $\bigcap_{i \in I} G_i$ is a non-empty tree.
\end{lemma}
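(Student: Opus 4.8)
The plan is to deduce the (possibly infinite) intersection property directly from the two facts already established for triples of relators: the Helly-type statement of Lemma~\ref{lem:intersections} and, crucially, the stronger containment of Lemma~\ref{l:c4t4strongH}. The key observation I would exploit is that the latter is strong enough to collapse an arbitrary intersection of the family to a single pairwise intersection, so that no separate compactness or limiting argument for infinitely many subtrees is ever needed; the proof will be insensitive to whether $I$ is finite or infinite.

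Concretely, I would first reduce to the generic case: I may assume $|I| \geq 2$, the case $|I| = 2$ being precisely Lemma~\ref{lem:intersections}(2), which already guarantees that $G_i \cap G_j$ is a non-empty finite tree. So fix two distinct indices $a, b \in I$ and set $T := G_a \cap G_b$; by Lemma~\ref{lem:intersections}(2) this $T$ is a non-empty finite tree. The whole lemma will then follow once I show that $T$ is contained in \emph{every} member of the family.

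The main step is exactly this containment $T \subseteq G_c$ for all $c \in I$. When $c \in \{a,b\}$ it is trivial. When $c \notin \{a,b\}$, the relators $G_a, G_b, G_c$ form a triple of pairwise intersecting relators (pairwise intersection being guaranteed by hypothesis), so Lemma~\ref{l:c4t4strongH} applies and gives $G_a \cap G_b \subseteq G_c$, i.e. $T \subseteq G_c$. Intersecting over all $c \in I$ yields $T \subseteq \bigcap_{i \in I} G_i$, while the reverse inclusion $\bigcap_{i \in I} G_i \subseteq G_a \cap G_b = T$ is immediate from $a,b \in I$. Hence $\bigcap_{i \in I} G_i = T$ is a non-empty finite tree, which is what we want.

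The only point demanding attention — and what I regard as the genuine content, rather than a real obstacle — is the invocation of Lemma~\ref{l:c4t4strongH}: for each third index $c$ one must verify that $G_a, G_b, G_c$ is honestly a pairwise intersecting triple of \emph{distinct} relators, which follows from the hypothesis together with the fact that distinct components have distinct images (each embeds by Lemma~\ref{lem:intersections}(1)). Once this is in place the strong containment does all the work. I would emphasize in the write-up that it is precisely the strengthening from ``triple intersection is nonempty'' to ``each pairwise intersection lies inside the third relator'' that upgrades a finite Helly property into the infinite statement, so that neither an induction on $|I|$ nor a Helly-for-subtrees compactness argument is required.
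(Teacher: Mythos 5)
Your proof is correct and is essentially the paper's own argument: the paper disposes of this lemma in one line, saying it ``follows directly from Lemmas~\ref{l:c4t4strongH} and \ref{lem:intersections}(\ref{3intersect})'', and your write-up is exactly the intended unpacking --- use the strong containment of Lemma~\ref{l:c4t4strongH} to show a fixed pairwise intersection $G_a \cap G_b$ lies in every other relator, so the total intersection equals $G_a \cap G_b$, a non-empty finite tree. The only cosmetic difference is that you invoke Lemma~\ref{lem:intersections}(\ref{2intersect}) where the paper cites part (\ref{3intersect}); both suffice, and your observation that no compactness or induction on $|I|$ is needed is exactly right.
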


\begin{proof}
  The lemma follows directly from Lemmas~\ref{l:c4t4strongH} and \ref{lem:intersections}(\ref{3intersect}).
\end{proof}

In view of Lemmas~\ref{l:c4t4strongH} and \ref{lem:intersections}, for a simply connected \cftf graphical complex $X^{\ast}$ we may define a flag simplicial complex $X^{\Delta}$, called its \emph{thickening} as follows:
vertices of $X^{\Delta}$ are the vertices of $X^{\ast}$, and two vertices are connected by an edge iff they are
contained in a common cone-cell. (Observe that the thickening of a graphical complex is not the corresponding thickened graphical complex.)

\begin{theorem}
  \label{l:thicksc}
  Let $X^{\ast}$ be a simply connected \cftf graphical complex.
  Then the $1$-skeleton of the thickening $X^{\Delta}$ of $X^{\ast}$
is Helly. Consequently, a group acting geometrically on $X^{\ast}$ is Helly.
\end{theorem}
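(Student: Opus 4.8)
The plan is to set $G := X^{\Delta,(1)}$ and to verify the two hypotheses of the local-to-global criterion Theorem~\ref{t:lotogloHell}(iv): that $G$ is clique-Helly and that its clique complex $X(G)$ --- which is exactly $X^{\Delta}$, since the thickening is flag by construction --- is simply connected. Hellyness of $G$ then follows at once, and the statement about group actions is obtained by observing that the thickening is defined purely combinatorially from the cone-cell structure of $X^{\ast}$, so any geometric action of $\Gamma$ on $X^{\ast}$ permutes vertices and relators and hence induces a simplicial action on $X^{\Delta}$; this action is again geometric (finitely many orbits of cells, finite cell-stabilisers, and local finiteness are all inherited from $X^{\ast}$).

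The first step is to identify the maximal cliques of $G$ with relators. I would note that, by definition of the thickening, the vertex set of each relator $G_i$ is a clique of $G$, and then prove the converse: \emph{every clique of $G$ is contained in a single relator}. This is the exact analogue of Proposition~\ref{prop-clique-SGP}, and I would argue by induction on the clique size. The base case $|K|=3$ is handled by Lemma~\ref{l:c4t4strongH}: if the edges $uv$, $vw$, $uw$ lie in relators $G_1,G_2,G_3$ respectively, then these pairwise intersect (in $v$, $w$, $u$), so $u\in G_1\cap G_3\subseteq G_2$, placing the whole triangle in $G_2$. For $|K|=k\ge 4$ with $K=\{v_1,\dots,v_k\}$, the induction hypothesis puts $K\setminus\{v_1\}$ in a relator $G_A$, $K\setminus\{v_2\}$ in $G_B$, and the triangle $\{v_1,v_2,v_3\}$ in $G_C$; these three relators pairwise intersect (they all contain $v_3$), and $v_1\in G_B\cap G_C\subseteq G_A$ by Lemma~\ref{l:c4t4strongH}, whence $K\subseteq G_A$ (the degenerate cases in which two of $G_A,G_B,G_C$ coincide being immediate). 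Consequently every maximal clique of $G$ is the vertex set of a relator. Clique-Hellyness is then direct: a family of pairwise intersecting maximal cliques corresponds, via the embedding of relators (Lemma~\ref{lem:intersections}(1)), to a family of pairwise intersecting relators, which by the Helly property for relators (Lemma~\ref{lem:flag}) possesses a common vertex.

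For simple connectivity of $X^{\Delta}$ I would work at the level of edge-loops. Every edge $uv$ of $G$ has its endpoints in a common relator $G_i$, so it is homotopic rel endpoints, inside the contractible simplex spanned by $V(G_i)$, to any path from $u$ to $v$ running along edges of $G_i$ (such edges being edges of $\Theta$, hence of $G$). Carrying out this replacement edge by edge homotopes an arbitrary loop of $G$ to a loop $\ell$ lying in the $1$-skeleton $\Theta$ of $X^{\ast}$. Since $X^{\ast}$ (equivalently the thickened complex $X$) is simply connected, Lemma~\ref{l:vank} furnishes a reduced disk diagram $D\to X$ with boundary $\ell$; each $2$-cell of $D$ maps to a cell whose boundary cycle lies in a single relator, hence within one simplex of $X^{\Delta}$. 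Filling each such $2$-cell by the corresponding contractible simplex yields a nullhomotopy of $\ell$, and therefore of the original loop, in $X^{\Delta}$. This shows that $X(G)=X^{\Delta}$ is simply connected, and Theorem~\ref{t:lotogloHell} then gives that $G$ is Helly.

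I expect the simple-connectivity step to be the main obstacle, as it is where the combinatorial small-cancellation machinery (the Lyndon--van Kampen Lemma~\ref{l:vank} together with the reducedness and intersection results of Lemma~\ref{lem:intersections}) must be married with the purely simplicial thickening; the delicate points are verifying that the edge-contraction homotopies remain inside the relator simplices and that every face of the van Kampen diagram lands in exactly one relator. The clique analysis of the first step, by contrast, is a clean induction once Lemma~\ref{l:c4t4strongH} is in hand.
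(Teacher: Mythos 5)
Your proposal is correct, and it reaches the theorem along the same skeleton as the paper --- verify that the $1$-skeleton $G$ of $X^{\Delta}$ is clique-Helly and that $X^{\Delta}$ is simply connected, then apply Theorem~\ref{t:lotogloHell} --- but you execute both halves by different means. For simple connectivity the paper argues in one stroke: cone-cells are contractible and their intersections are empty or finite trees (Lemmas~\ref{lem:intersections} and~\ref{lem:flag}), so Borsuk's Nerve Theorem gives a homotopy equivalence $X^{\Delta}\simeq X^{\ast}$. Your van Kampen route --- push a loop of $G$ into $\Theta$ through relator simplices, take a disk diagram via Lemma~\ref{l:vank}, and fill each face inside the simplex spanned by the relator carrying its boundary --- is sound (each $2$-cell of $X$ is attached along an immersed cycle, which lies in a single connected component of the defining graph, so every face of the diagram lands in one relator; reducedness of the diagram is not even needed) and more elementary, at the cost of length. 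For clique-Hellyness the paper observes that Lemmas~\ref{lem:intersections} and~\ref{l:c4t4strongH} say precisely that the hypergraph of relator vertex-sets is triangle-free in the sense of Subsection~\ref{s:bahgraphs} and quotes Proposition~\ref{hypergraph-clique-Helly}; your induction showing every clique lies in a relator re-proves the conformality half by hand (it is the exact analogue of Proposition~\ref{prop-clique-SGP}, with the degenerate coinciding-relator cases correctly dispatched), after which Lemma~\ref{lem:flag} supplies the Helly property of maximal cliques directly. One caveat common to both arguments: each implicitly assumes every vertex and edge of $\Theta$ lies in some relator --- the paper needs this for the cone-cells to cover $X^{\ast}$ in the nerve step, you need it for $\Theta$ to be a subgraph of $G$ --- so this is not a gap of yours relative to the paper.
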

\begin{proof}
  Since cone-cells are contractible and, by Lemma~\ref{lem:flag} all
  their intersections are contractible or empty, by Borsuk's Nerve
  Theorem~\cite{Bor,Bj}, the thickening $X^{\Delta}$ is homotopically
  equivalent to $X^{\ast}$.  By Lemmas~\ref{l:c4t4strongH} and
  \ref{lem:intersections}, the hypergraph defined by the thickening is
  triangle-free and hence, by
  Proposition~\ref{hypergraph-clique-Helly} the $1$-skeleton of
  $X^{\Delta}$ is clique-Helly.  The theorem follows by applying the
  local-to-global characterization of Helly graphs from \cite{CCHO} --
  Theorem~\ref{t:lotogloHell}.
\end{proof}

Examples of groups as in Theorem~\ref{l:thicksc} are given by the
following construction.  A \emph{graphical presentation}
$\mathcal P=\langle S \; | \; \varphi \rangle$ is a graph
$\bfG=\bigsqcup_{i \in I}G_i$, and an immersion
$\varphi \colon \bfG \to R_S$, where every $G_i$ is finite and
connected, and $R_S$ is a rose, i.e., a wedge of circles with edges
(cycles) labelled by a set $S$. Alternatively, the map
$\varphi \colon \bfG \to R_S$, called a \emph{labelling}, may be
thought of as an assignment: to every edge of $\bfG$ we assign a
direction (orientation) and an element of $S$.

A graphical presentation $\mathcal P$ defines a group $\Gamma= \Gamma(\mathcal{P})= \pi_1(R_S)/\left<\left<
    \varphi_{\ast}(\pi_1 (G_i))_{i\in I} \right>\right>$.
In other words $\Gamma$ is the quotient of the
free group $F(S)$ by the normal closure of the group generated by all words (over $S\cup S^{-1}$) read along cycles in $\bfG$ (where an oriented edge labelled by $s\in S$ is identified with the edge of the opposite
orientation and the label $s^{-1}$). Observe that removing vertices of degree one from $\bfG$ does not change the group hence we may assume that there are no such vertices in $\bfG$.
A \emph{piece} is a path $P$ labelled by $S$ such that there exist two immersions $p_1 \colon P \to \bfG$ and $p_2\colon P \to \bfG$, and there is no automorphism $\Phi \colon \bfG
\to \bfG$ such that $p_1=\Phi \circ p_2$.

Consider the following graphical complex:
$ X^{\ast}=R_S \cup_{\varphi} \bigsqcup_{i \in I} C(G_i)$.  The
fundamental group of $X^{\ast}$ is isomorphic to $\Gamma$. In the
universal cover $\widetilde X^{\ast}$ of $X^{\ast}$ there might be
multiple copies of cones $C(G_i)$ whose attaching maps differ by lifts
of Aut$(G_i)$.  After identifying all such copies, we obtain the
complex $\widetilde X^{+}$. The group $\Gamma$ acts geometrically, but
not necessarily freely on $\widetilde X^{+}$.  We call the
presentation $\mathcal P$ a \emph{\cftf graphical small cancellation
  presentation} when the complex $X^{\ast}$ is a \cftf graphical
complex.  The presentation $\mathcal P$ is \emph{finite}, and the
group $\Gamma$ is \emph{finitely presented} if the graph $\bfG$ is
finite and the set $S$ (of generators) is finite.  As an immediate
consequence of Theorem~\ref{l:thicksc} we obtain the following.

\begin{corollary}
	\label{c:c4t4}
	Finitely presented graphical \cftf small cancellation groups are Helly.
\end{corollary}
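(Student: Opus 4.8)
The plan is to reduce the statement directly to Theorem~\ref{l:thicksc}, whose conclusion is precisely that a group acting geometrically on a simply connected \cftf graphical complex is Helly. Thus the only real work is to produce, from a finite \cftf graphical small cancellation presentation $\mathcal P = \langle S \mid \varphi \rangle$ of $\Gamma$, a simply connected \cftf graphical complex on which $\Gamma$ acts geometrically.

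First I would form the graphical complex $X^{\ast} = R_S \cup_{\varphi} \bigsqcup_{i \in I} C(G_i)$, which by construction satisfies $\pi_1(X^{\ast}) \cong \Gamma$ and is a \cftf graphical complex because $\mathcal P$ is a \cftf graphical small cancellation presentation. Passing to the universal cover $\widetilde{X}^{\ast}$ and then identifying those lifted cones whose attaching maps differ by lifts of $\mathrm{Aut}(G_i)$, I obtain the complex $\widetilde{X}^{+}$ described in the text, on which $\Gamma$ acts geometrically (cocompactly because $S$ and $\bfG$ are finite, and properly discontinuously with finite cell-stabilisers since the presentation is finite). The finiteness of the presentation also guarantees that $\widetilde{X}^{+}$ is locally finite, which is needed both for the action to be geometric and for Theorem~\ref{l:thicksc} to apply.

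Next I would verify the two hypotheses of Theorem~\ref{l:thicksc} for $\widetilde{X}^{+}$. Simple connectivity follows from the fact that $\widetilde{X}^{\ast}$ is simply connected and that the identification producing $\widetilde{X}^{+}$ only collapses together contractible cones, hence preserves $\pi_1$. That $\widetilde{X}^{+}$ is again a \cftf graphical complex is where the small cancellation machinery enters: the notions of relator and of piece are local, so the $C(4)$ and $T(4)$ conditions are inherited by the (modified) universal cover from $X^{\ast}$. Concretely, a reduced disk diagram in $\widetilde{X}^{+}$ projects to a reduced disk diagram in $X^{\ast}$, so any violation of $C(4)$ or $T(4)$ upstairs would descend to one downstairs, contradicting the assumption on $X^{\ast}$.

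Finally, with $\widetilde{X}^{+}$ exhibited as a simply connected \cftf graphical complex carrying a geometric $\Gamma$-action, Theorem~\ref{l:thicksc} gives immediately that $\Gamma$ is Helly. The main obstacle I anticipate is the bookkeeping in the previous paragraph: checking carefully that the identification of cones yields a genuine graphical complex in the sense of the definition (rather than merely a homotopy-equivalent space), and that the small cancellation conditions survive this identification together with the passage to the cover. Everything else is a formal invocation of Theorem~\ref{l:thicksc}.
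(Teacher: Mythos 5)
Your proposal is correct and takes essentially the same route as the paper: the paper obtains Corollary~\ref{c:c4t4} as an immediate consequence of Theorem~\ref{l:thicksc} applied to the complex $\widetilde{X}^{+}$ constructed in the paragraph preceding the corollary, exactly as you do. The details you supply (geometric action from finiteness of $S$ and $\bfG$, simple connectivity of $\widetilde{X}^{+}$, and the descent of any $C(4)$ or $T(4)$ violation along the covering via reduced disk diagrams) are precisely what the paper leaves implicit in calling the corollary ``immediate.''
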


\subsection{Free products with amalgamation over finite subgroups}
\label{s:free_prod}

Let $H$ be a graph with vertex set $\{w_j\}_{j\in J}$. For a collection $\{H_j\}_{j\in J}$ of graphs indexed by vertices of $H$, we consider the collection $\mathcal F {\mathcal H}:=\{F(H_j)\}_{j\in J}$, of their face complexes.
For every edge $e=\{u_j,u_{j'}\}$ in $H$ we pick
vertices $w_j^e\in F(H_j)$ and $w_{j'}^e\in F(H_{j'})$.
The \emph{amalgam of $\mathcal F{\mathcal H}$ over $H$}, denoted $H(\mathcal F\mathcal H)$ is a graph defined as follows.
Vertices of $H(\mathcal F\mathcal H)$ are equivalence classes of the equivalence relation on $\bigcup_{j\in J} V(F(H_j))$ induced by the relation $w_j^e\sim w_{j'}^e$, for all edges $e$ of $H$.
Edges of $H(\mathcal F\mathcal H)$ are induced by edges in the disjoint union $\bigsqcup_{j\in J}F(H_j)$.
The part of Theorem~\ref{t:operations}(\ref{t:operations(1)}) concerning free products with amalgamations over finite subgroups follows from the following result. The case of HNN-extensions follows analogously.

\begin{theorem}
	\label{t:tree_amalgam_groups}
	For $i=1,2$, let $\Gamma_i$ act geometrically on a Helly graph $G_i$, and let $\Gamma'_i < \Gamma_i$ be a finite subgroup, such that $\Gamma'_1$ and $\Gamma'_2$ are isomorphic.
	Then the free product $\Gamma_1 \ast_{\Gamma'_1\cong \Gamma'_2} \Gamma_2$ of $\Gamma_1$ and $\Gamma_2$ with amalgamation over $\Gamma'_1\cong \Gamma'_2$ acts geometrically on an amalgam $H(\mathcal F\mathcal H)$ of
$\mathcal F{\mathcal H}$ over $H$, where $H$ is a tree, elements of $\mathcal H$ are copies of $G_1,G_2$,
and such that $H(\mathcal F\mathcal H)$ is Helly.
\end{theorem}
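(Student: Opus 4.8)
The plan is to realise $\Gamma=\Gamma_1\ast_{\Gamma'}\Gamma_2$ as acting on a tree of face complexes and then to establish Hellyness through the local-to-global criterion of Theorem~\ref{t:lotogloHell}. First I would check that each factor $\Gamma_i$ acts geometrically on the face complex $F(H_i):=F(X(G_i))$. Indeed, an automorphism of $G_i$ induces an automorphism of $X(G_i)$ and hence of its face complex; the action is cocompact because a cocompact action on the locally finite graph $G_i$ has finitely many orbits of cliques, and proper because the stabiliser of a clique (a vertex of $F(H_i)$) lies inside the finite stabiliser of any of its vertices. By Theorem~\ref{t:main-graphs}(\ref{t:main-graphs(3)}), $F(H_i)$ is a Helly graph. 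The decisive point is that, by the fixed-clique property for finite group actions on Helly graphs (Theorem~\ref{t:fixedpt}), the finite subgroup $\Gamma'_i$ stabilises a clique $K_i$ of $G_i$; the vertex $w_i\in V(F(H_i))$ corresponding to $K_i$ is then \emph{fixed} by $\Gamma'_i$. Passing from $G_i$ to its face complex is exactly what turns the $\Gamma'_i$-invariant clique into a genuine fixed point, which is what makes a vertex-gluing possible.

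Next I would take $H$ to be the Bass--Serre tree $T$ of $\Gamma$, with vertices the cosets $g\Gamma_1,g\Gamma_2$ and edges the cosets $g\Gamma'$. Over a type-$i$ vertex I place a copy of $F(H_i)$, and along the edge $e=\{g\Gamma_1,g\Gamma_2\}$ I identify $g\cdot w_1$ with $g\cdot w_2$. Since $\Gamma'$ fixes both $w_1$ and $w_2$, the edge stabiliser $g\Gamma'g^{-1}$ fixes the glued vertex, so the identifications are $\Gamma$-equivariant and the resulting graph is precisely the amalgam $H(\mathcal F\mathcal H)$ over the tree $H=T$, with $\mathcal H=\{G_1,G_2\}$. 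The induced action is geometric: it is cocompact (two $\Gamma$-orbits of copies, each carrying finitely many vertex-orbits, and a single orbit of gluing vertices) and proper (a non-gluing vertex has stabiliser inside a conjugate of $\Gamma_i$, equal to a finite clique-stabiliser, while a gluing vertex is stabilised by the finite edge group $g\Gamma'g^{-1}$). Note that $H(\mathcal F\mathcal H)$ stays locally finite even when the tree has infinite valence, since the infinitely many edges at a tree-vertex are glued at \emph{distinct} vertices of the orbit $\Gamma_i\cdot w_i$ in the corresponding copy, each shared by exactly two copies.

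It then remains to prove that $H(\mathcal F\mathcal H)$ is Helly. The key structural observation is that cliques are local: two distinct copies meet in at most one gluing vertex and no edge runs between their other vertices, so every clique of $H(\mathcal F\mathcal H)$ lies inside a single copy $F(H_i)_u$. This localises both hypotheses of Theorem~\ref{t:lotogloHell}. For clique-Hellyness I would apply Proposition~\ref{clique_Helly_triangle}: for a triangle $T$, the test set $T^*$ of vertices adjacent to two vertices of $T$ again lies in the copy containing $T$ (an outside vertex could touch that copy only at its single shared vertex), and the clique-Helly property of $F(H_i)$ from Lemma~\ref{l:Hellyface} furnishes the required universal vertex. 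For the topology, each clique complex $X(F(H_i))$ is simply connected (indeed contractible, as a locally finite finite-dimensional Helly complex, by Theorem~\ref{t:lotogloHell}(v)), while the edge spaces of the gluing are points and the underlying graph $H=T$ is a tree; a standard graph-of-spaces (van Kampen) argument then shows that the clique complex of $H(\mathcal F\mathcal H)$ is simply connected. Theorem~\ref{t:lotogloHell} yields that $H(\mathcal F\mathcal H)$ is Helly, and together with the geometric action this proves the theorem.

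I expect the clique-locality step to be the crux: once cliques are confined to single copies, clique-Hellyness, simple connectivity, proper discontinuity and local finiteness all reduce transparently to the pieces. The genuinely essential --- though short --- ingredient is the fixed-clique property of $\Gamma'_i$, which dictates the entire design: it is the reason the face-complex model, rather than the Helly graph $G_i$ itself, is used, since only there does the finite amalgamating subgroup fix a vertex. The HNN case is identical: one takes $H=T$ to be the Bass--Serre tree of the HNN-extension and glues the single piece $F(H_1)$ to itself along the two fixed vertices determined by the two embeddings of $\Gamma'_1$.
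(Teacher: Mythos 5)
Your proposal is correct and follows essentially the same route as the paper's own (very terse) proof: Bass--Serre tree, passage to face complexes so that the Fixed Point Theorem~\ref{t:fixedpt} turns the clique invariant under each conjugate of $\Gamma'_1\cong\Gamma'_2$ into a genuinely fixed vertex, an equivariant choice of gluing vertices, and Hellyness of the resulting tree-like amalgam of Helly graphs glued at single vertices. The only difference is that the paper treats the last step as obvious (citing \cite{Miesch15} for a more general gluing) whereas you prove it via clique-locality and Theorem~\ref{t:lotogloHell}; one micro-correction: distinct tree-edges at a vertex may glue at the \emph{same} vertex of the orbit $\Gamma_i\cdot w_i$ when $\mathrm{stab}(w_i)$ properly contains the image of $\Gamma'_i$, but since this assignment is finite-to-one (vertex stabilizers being finite), your local-finiteness and clique-locality conclusions are unaffected.
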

\begin{proof}
	Let $H$ be the Bass-Serre tree for  $\Gamma_1 \ast_{\Gamma'_1\cong \Gamma'_2} \Gamma_2$. For a vertex $w_j$ of $H$
	corresponding to $\Gamma_i$ we define $H_j$ to be a copy of $G_i$. For an edge $e$ in $H$
	we define $w_j^e$ to be a vertex fixed in $H_j$ by the corresponding conjugate of $\Gamma'_1\cong \Gamma'_2$ (such vertex exists by Theorem~\ref{t:fixedpt} and Proposition~\ref{p:Hellyface}). An equivariant choice of
	vertices $w_j^e$ leads to an amalgam $H(\mathcal F\mathcal H)$ acted geometrically upon $\Gamma_1 \ast_{\Gamma'_1\cong \Gamma'_2} \Gamma_2$. The graph $H(\mathcal F\mathcal H)$ is Helly since it can be obtained by consecutive gluings
	of two Helly graphs along a common vertex -- such gluing obviously results in a Helly graph (for a more general gluing procedure, see \cite{Miesch15}).
\end{proof}

\subsection{Quotients by finite normal subgroups}
\label{s:quotient}
Let $\Gamma$ act (by automorphisms) on a complex $X$. Then $\Gamma$ acts on $\fcom{X}$ and we define
the \emph{fixed point complex} $\fcom{X}^\Gamma$ in the face complex, as the subcomplex spanned by all
vertices of $\fcom{X}$ fixed by $\Gamma$ (that correspond to the cliques of $X$ stabilized by $\Gamma$).
Theorem~\ref{t:operations}(\ref{t:operations(5)}) follows from the following.

\begin{theorem}
	\label{t:quotient}
	Let $\Gamma$ be a group acting by automorphisms on a clique-Helly graph $G$. Let $N\lhd \Gamma$ be a finite normal subgroup. Then $\Gamma / N$ acts by automorphisms on the clique-Helly complex $F(X(G))^N$.
	If $G$ is Helly then $F(X(G))^N$ is Helly as well. If the $\Gamma$ action on $G$ is proper, or cocompact then the induced action of $\Gamma / N$ on  $F(X(G))^N$ is, respectively, proper, or cocompact.
\end{theorem}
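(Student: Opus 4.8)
The plan is to reduce everything to a single statement about fixed-point sets of finite groups acting on (clique-)Helly graphs, and then to invoke the fact that these classes are closed under retracts. Since $\Gamma$ acts simplicially on $X(G)$ it acts simplicially on the face complex $F(X(G))$, whose vertices are the cliques of $G$ with $g\cdot K=gK$. Because $N\lhd\Gamma$, the collection of $N$-invariant cliques is $\Gamma$-invariant: for $g\in\Gamma$, $n\in N$ and an $N$-invariant clique $K$ we have $n(gK)=g\bigl((g^{-1}ng)K\bigr)=gK$, as $g^{-1}ng\in N$. Hence $\Gamma$ preserves the fixed-point complex $F(X(G))^N$; moreover $N$ fixes each of its vertices, so it acts as the identity and the action factors through $\Gamma/N$. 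By Lemma~\ref{l:triface} we have $F(X(G))=X(\widehat G)$ for $\widehat G:=G(F(X(G)))$, and the full subcomplex spanned by the $N$-fixed vertices is the clique complex of the induced subgraph on those vertices. Writing $\widehat G^{\,N}$ for the subgraph of $\widehat G$ induced on the $N$-invariant cliques (equivalently, the fixed-point subgraph of the $N$-action on $\widehat G$), we obtain $F(X(G))^N=X(\widehat G^{\,N})$. Thus it remains only to show that $\widehat G^{\,N}$ is clique-Helly, respectively Helly.

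This is precisely the reason for passing to the face complex: an $N$-invariant clique of $G$ is the same thing as a vertex of $\widehat G$ fixed by $N$, so the face-complex construction converts invariant cliques (of which there are many, even when $N$ fixes no vertex of $G$) into genuine fixed vertices. By Lemma~\ref{l:Hellyface} the graph $\widehat G$ is always clique-Helly, and by Theorem~\ref{t:main-graphs}(3) it is Helly as soon as $G$ is Helly. Hence the theorem follows from the Key Lemma: \emph{the fixed-point subgraph $H^N$ of a finite group $N$ acting by automorphisms on a (clique-)Helly graph $H$ is again (clique-)Helly.} Since both classes are closed under retracts by Proposition~\ref{direct-products}, it suffices to construct an $N$-equivariant nonexpansive retraction $\rho\colon H\to H^N$.

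In the Helly case I would build $\rho$ through the injective hull. By Theorem~\ref{t:helly=inj} the hull $E(H)$ is a proper injective metric space, and by Theorem~\ref{t:dinjhull} its integer points recover $E^0(H)=H$; by uniqueness and functoriality of injective hulls (Theorem~\ref{injective-hull-isbell} and Corollary~\ref{monotone}) the isometric $N$-action on $H$ extends to an isometric $N$-action on $E(H)$. A finite group of isometries of a hyperconvex space has non-empty fixed-point set which is itself hyperconvex and a $1$-Lipschitz retract of the ambient space --- for instance via the canonical circumcenter supplied by the convex geodesic bicombing of Theorem~\ref{t:injbicomb}, or by a direct center construction. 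Restricting the resulting equivariant $1$-Lipschitz retraction to the integer points and using $H^N=E^0(H)^N$ produces the desired nonexpansive retraction $H\to H^N$, whence $H^N$ is Helly by Proposition~\ref{direct-products}, so $F(X(G))^N=X(\widehat G^{\,N})$ is a Helly complex whenever $G$ is Helly. For the clique-Helly conclusion I would instead verify the triangle criterion of Proposition~\ref{clique_Helly_triangle} on $\widehat G^{\,N}$: given a triangle $K_1K_2K_3$ of $N$-invariant cliques, a dominating vertex of $F(X(G))^N$ is exactly an $N$-invariant clique all of whose vertices dominate $U:=\bigcup_{F\in T^*}F$, where $T^*$ is the set of $N$-invariant cliques meeting at least two of the $K_i$. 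As in the proof of Lemma~\ref{l:Hellyface}, clique-Hellyness of $G$ shows that the $N$-invariant set $Z$ of vertices dominating $U$ is non-empty, and one then selects inside $Z$ an $N$-invariant clique using the Fixed Point Theorem~\ref{t:fixedpt}.

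I expect the Key Lemma --- equivalently, the equivariant selection just described --- to be the main obstacle: the honest difficulty is that $Z$ (and more generally an arbitrary $N$-invariant vertex set) need not induce a (clique-)Helly subgraph, so the fixed clique cannot be produced by naive restriction, and one really must exploit the global Helly/injective structure (the retraction, or the circumcenter of a bounded orbit); alternatively one may shuttle between the clique-Helly and Helly settings through Theorem~\ref{t:lotogloHell_bis}. Granting the Key Lemma, the remaining assertions are routine and follow from equivariance. If $\Gamma$ acts properly on $G$ then every clique $K$ is a simplex of $X(G)$ with finite stabiliser, and $N\le\mathrm{stab}_\Gamma(K)$ for an $N$-invariant $K$, so $\mathrm{stab}_{\Gamma/N}(K)=\mathrm{stab}_\Gamma(K)/N$ is finite and $\Gamma/N$ acts properly on $F(X(G))^N$. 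If $\Gamma$ acts cocompactly then it has finitely many orbits of simplices of $X(G)$, hence finitely many $\Gamma$-orbits of $N$-invariant cliques; since $G$ is locally finite the graph $\widehat G^{\,N}$ is locally finite, so the induced $\Gamma/N$-action on $F(X(G))^N$ is cocompact.
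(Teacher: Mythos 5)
Your framing matches the paper's: the action factors through $\Gamma/N$, $F(X(G))^N=X(\widehat G^{\,N})$, the face complex turns $N$-invariant cliques into honest fixed vertices, and your properness/cocompactness bookkeeping is exactly the (routine) argument the paper leaves implicit. But the core of your proposal --- the Key Lemma for an \emph{arbitrary} (clique-)Helly graph $H$ --- is false, and your injective-hull argument fails at the restriction step. For falsity: let $H=C_4\ast K_2$ be the join of the $4$-cycle $u s_1 w s_2$ with the edge $c_1c_2$. Then $H$ is Helly (it is dismantlable, and all four maximal cliques contain $\{c_1,c_2\}$, so Theorem~\ref{t:lotogloHell}(iii) applies), and the involution fixing $u,w$ and swapping $s_1\leftrightarrow s_2$, $c_1\leftrightarrow c_2$ is an automorphism whose fixed vertex set is $\{u,w\}$: non-empty but inducing a disconnected subgraph, hence not Helly. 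In particular no nonexpansive retraction $H\to H^N$ can exist, since the image of the connected graph $H$ would be a connected subgraph of $H^N$ containing both $u$ and $w$. The precise point where your construction breaks is the sentence ``restricting the \dots retraction to the integer points'': the $1$-Lipschitz retraction $r\colon E(H)\to E(H)^N$ takes values in fixed metric forms that need not be integer-valued, so $r(H)\not\subseteq H^N$ in general. Indeed $E(H)^N$ is typically strictly larger than anything governed by $H^N$: already for $H=K_2$ with the swap, $E(H)^N$ is the midpoint of the unit segment while $H^N=\emptyset$. The identity $H^N=E^0(H)^N$ is true as an equality of vertex sets but gives no control whatsoever over $r(E^0(H))$, and equivariance of $r$ does not help --- the obstruction is the half-integer offset of $E(H)^N$, which is exactly the phenomenon the face complex is there to kill (in the example above, the invariant clique $\{c_1,c_2\}$ reconnects the fixed set at the level of $F(X(H))$).

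Since your Helly-branch proof nowhere uses that $H=\widehat G$ is a face-complex graph, it cannot be repaired in place; the paper proves the fixed-set statements specifically for $F(X)$ (Lemma~\ref{l:fixsetcliqueHelly} and Corollary~\ref{c:fixsetHelly}), exploiting that structure twice. For clique-Hellyness, it takes a universal vertex $z$ of $\widehat G$ for the given triangle (Lemma~\ref{l:Hellyface} plus Proposition~\ref{clique_Helly_triangle}); any two universal vertices for the same triangle lie in $T^*$ and are therefore adjacent, so the orbit $Nz$ spans a simplex of $F(X)$, and by Lemma~\ref{l:triface} the union $\bigcup_{n\in N} nz$ is a single $N$-invariant clique of $G$ which is still universal. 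This is precisely the equivariant selection inside your set $Z$ that you flagged as the main obstacle --- no fixed-point theorem on the (possibly non-Helly) subgraph induced by $Z$ is needed. For Hellyness, instead of a retraction the paper verifies simple connectivity of $F(X)^N$ and applies Theorem~\ref{t:lotogloHell}: starting from an $N$-invariant clique $\sigma$ (Theorem~\ref{t:fixedpt}), it exhausts $X$ by the $N$-invariant, Helly, hence dismantlable sets $\bigcap_{v\in\sigma^{(0)}}B_r(v)$ and invokes contractibility of fixed-point sets of finite groups acting on barycentric subdivisions of dismantlable complexes \cite{BarmakMinian2012,HenselOsajdaPrzytycki}. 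These two steps --- the orbit trick and the simple-connectivity argument --- are exactly what your proposal is missing, and neither is recoverable from the hull-retraction strategy.
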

\begin{proof}
	The $\Gamma$-action on $G$ induces the $\Gamma$-action on $F(X(G))$, and consequently the $\Gamma /N$-action on $F(X(G))^N$. It is clear that the latter is proper or cocompact if the initial action is so.
	By Lemma~\ref{l:fixsetcliqueHelly} and Corollary~\ref{c:fixsetHelly} the complex $F(X(G))^N$
	is (clique-)Helly if $G$ is so.
\end{proof}

\subsection{Actions with Helly stabilizers}
\label{s:qmedian}

Our goal now is to apply the general theory developed in \cite{Qm} in order to show that the family of Helly groups is stable under several group-theoretic operations.
The main theorem in this direction is Theorem \ref{thm:HellyGeneral} below, which shows that, if a group acts on a quasi-median graph in a specific way and if clique-stabilizers
are Helly, then the group must be Helly as well. We emphasize that, contrary to the rest of the article, our quasi-median graphs may not be locally finite; in particular, their cliques will be typically infinite. We begin by giving general definitions and properties related to quasi-median graphs.

\subsubsection{Preliminaries on quasi-median graphs} Recall that a graph is \emph{quasi-median} if it is weakly modular and does not contain $K_4^-$ and $K_{3,2}$ as induced subgraphs.  Several subgraphs are of interest in the study of quasi-median graphs:
\begin{itemize}
\item Contrary to the rest of the paper, in this subsection, by a
  \emph{clique}, we mean a maximal complete subgraph.
	\item A \emph{prism} is an induced subgraph which decomposes as a Cartesian product of cliques. The maximal number of factors of a prism in a quasi-median graph is referred to as its \emph{cubical dimension} (which may be infinite). (Observe that, by maximality of our cliques, a single vertex defines a prism of zero cubical dimension if and only if it is isolated.)
	\item A \emph{hyperplane} is an equivalence class of edges with respect to the transitive closure of the relation which identifies two edges whenever they belong to a common triangle or they are opposite sides of a square (i.e., a four-cycle). Two cliques are \emph{parallel} if they belong to the same hyperplane. Two hyperplanes are \emph{transverse} if their union contains two adjacent edges of some square.
	\item According to \cite[Proposition 2.15]{Qm}, a hyperplane \emph{separates} a quasi-median graph, i.e., the graph obtained by removing the interiors of the edges of a hyperplane contains at least two connected components. Such a component is a \emph{sector} delimited by the hyperplane.
\end{itemize}
According to \cite{quasimedian} and \cite[Lemmas 2.16 and 2.80]{Qm}, cliques and prisms are gated subgraphs. For convenience, in the sequel, we will refer to the map sending a vertex to its gate in a given gated subgraph as the \emph{projection} onto this subgraph.

\subsubsection{Systems of metrics} Given a quasi-median graph $G$, a \emph{system of metrics} is the data of a metric $\delta_C$ on each clique $C$ of $G$. Such a system is \emph{coherent} if for any two parallel cliques $C$ and $C'$ one has
$$\delta_C(x,y)= \delta_{C'}(t_{C \to C'}(x),t_{C\to C'}(y)) \ \text{for every vertices $x,y \in C$},$$
where $t_{C \to C'}$ denotes the projection of $C$ onto $C'$. As shown in \cite[Section 3.2]{Qm}, it is possible to extend a coherent system of metrics to a global metric on $G$. Several constructions are possible, we focus on the one which will be relevant for our study of Helly groups. A \emph{chain} $R$ between two vertices $x,y \in V(G)$ is a sequence of vertices
$(x_1=x, \ x_2, \ldots, x_{n-1}, \ x_n=y)$
such that, for every $1 \leq i \leq n-1$, the vertices $x_i$ and $x_{i+1}$ belong to a common prism, say $P_i$. The \emph{length} of $R$ is
$\ell(R) = \sum\limits_{i=1}^{n-1} \delta_{P_i}(x_i,x_{i+1})$
where $\delta_{P_i}$ denotes the $\ell_\infty$-metric associated to the local metrics defined on the cliques of $P_i$. Then the global metric extending our system of metrics is
\[\delta_\infty : (x,y) \mapsto \min \{ \ell(R):  \text{$R$ is a chain between $x$ and $y$} \}.\]
Along this section, all our local metrics will be graph-metrics. It is
worth noticing that, in this case, $\delta_\infty$ turns out to be a
graph-metric as well. Consequently, $(G, \delta_\infty)$ will be
considered as a graph. More precisely, this graph has $V(G)$ as its
vertex-set and its edges link two vertices if they are at
$\delta_\infty$-distance one. Notice that, if
$P=C_1 \times \cdots \times C_n$ is a prism of $G$, then the graph
$(P, \delta_\infty)$ is isometric to the direct product
$(C_1,\delta_{C_1}) \boxtimes \cdots \boxtimes (C_n,\delta_{C_n})$.

The main result of this section is that extending a system of Helly graph-metrics produces a global metric which is again Helly. More precisely:

\begin{proposition}\label{prop:ExtendingHelly}
Let $G$ be a quasi-median graph of finite cubical dimension endowed with a coherent system of graph metrics $\{ \delta_C: \text{$C$ clique of $G$} \}$. Suppose that $(C,\delta_C)$ is a locally finite Helly graph for every clique $C$ of $G$ and that each vertex belongs to only finitely many cliques. Then $(G,\delta_\infty)$ is a Helly graph.
\end{proposition}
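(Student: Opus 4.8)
The plan is to verify the local-to-global criterion of Theorem~\ref{t:lotogloHell}: it suffices to show that $(G,\delta_\infty)$ is clique-Helly and that its clique complex is simply connected. The first task is to understand adjacency and cliques in $(G,\delta_\infty)$. Since the local metrics $\delta_C$ are integer-valued graph-metrics, two vertices $x,y$ satisfy $\delta_\infty(x,y)=1$ exactly when they lie in a common prism $P=C_1\times\cdots\times C_n$ with $\max_i \delta_{C_i}(x_i,y_i)=1$; that is, the edges of $(G,\delta_\infty)$ inside $P$ are exactly the edges of the direct product $(C_1,\delta_{C_1})\boxtimes\cdots\boxtimes(C_n,\delta_{C_n})$. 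Using that prisms are gated in $G$ (\cite{quasimedian}, \cite[Lemmas 2.16 and 2.80]{Qm}) together with the coherence of the system of metrics, I would first show that every clique of $(G,\delta_\infty)$ is contained in a single prism $P$. Inside such a $P$ the graph $(P,\delta_\infty)$ is the direct product of the locally finite Helly graphs $(C_i,\delta_{C_i})$, so by Proposition~\ref{direct-products} it is itself Helly, hence clique-Helly, and its maximal cliques are the direct products of maximal cliques of the factors. In particular every clique of $(G,\delta_\infty)$ is finite, which is what permits the use of the triangle criterion below.

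Next I would establish that $(G,\delta_\infty)$ is clique-Helly. As all its cliques are finite, it suffices by Proposition~\ref{clique_Helly_triangle} to check the triangle criterion. Given a triangle $T=u_1u_2u_3$, the previous step places it in a common prism $P$, which is gated; let $\pi$ denote the projection (gate map) onto $P$. For a vertex $v\in T^*$ adjacent to two vertices of $T$, I would use the gatedness of $P$ and the coherence relation $\delta_C=\delta_{C'}\circ t_{C\to C'}$ to show that $\pi(v)$ is $\delta_\infty$-adjacent to those two vertices, so that $\pi$ carries $T^*$ into the analogous set $T^*_P$ computed inside $P$ while preserving the relevant adjacencies. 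Since $(P,\delta_\infty)$ is clique-Helly, $T^*_P$ has a vertex $w\in P$ adjacent to all of $T^*_P$; the projection being a retraction compatible with $\delta_\infty$ would then promote $w$ to a vertex adjacent to every vertex of $T^*$, yielding the criterion.

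For simple connectivity I would cover the clique complex $X(G,\delta_\infty)$ by the subcomplexes $X(P,\delta_\infty)$, one for each maximal prism $P$. Each is the clique complex of a direct product of Helly graphs, hence contractible; and since the intersection of two prisms is gated, it is again a prism (possibly a single vertex) or empty, so the nonempty intersections of members of this cover are again contractible. Hence, by Borsuk's Nerve Theorem~\cite{Bor,Bj}, $X(G,\delta_\infty)$ is homotopy equivalent to the nerve of this cover. But this nerve depends only on the intersection pattern of the maximal prisms as vertex-subsets of $G$, which is identical to the intersection pattern governing the prism complex of the quasi-median graph $G$; the latter is simply connected (indeed contractible) by \cite{Qm}, so its nerve, and therefore $X(G,\delta_\infty)$, is simply connected.

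I expect the clique-Helly step to be the main obstacle, specifically the claim that every clique lies in a single prism and the verification that the gate projection $\pi$ onto a prism sends $\delta_\infty$-neighbors to $\delta_\infty$-neighbors and transports the universal vertex of $T^*_P$ back to all of $T^*$; both hinge on a careful interplay between the gatedness of prisms in $G$ and the coherence of $\{\delta_C\}$, for which the structural results of \cite{Qm} on hyperplanes, sectors, and projections are the essential input. Once clique-Hellyness and simple connectivity are in hand, Theorem~\ref{t:lotogloHell} yields that $(G,\delta_\infty)$ is Helly, completing the proof.
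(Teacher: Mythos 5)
Your global skeleton (clique-Helly plus simply connected clique complex, then Theorem~\ref{t:lotogloHell}) matches where the paper's argument ultimately lands, but the execution of the clique-Helly step contains a genuine gap. Your plan rests on the gate projection $\pi$ onto a prism $P$ being ``compatible with $\delta_\infty$'' strongly enough to promote a universal vertex $w \in P$ for $T^*_P$ to a universal vertex for all of $T^*$. This fails: prisms are gated in $(G,d_G)$, but they are in general \emph{not} gated in $(G,\delta_\infty)$. Already in the simplest example --- $G = \mathbb{Z}^2$ with every $\delta_C$ the standard metric on $K_2$, so that $(G,\delta_\infty)$ is the $\ell_\infty$-grid --- the vertex $(2,0)$ is $\delta_\infty$-adjacent to both $(1,0)$ and $(1,1)$ in the prism $\{0,1\}^2$, so it admits no gate in the new metric. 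For $v\in T^*$ lying outside $P$, knowing $w\sim \pi(v)$ only yields $\delta_\infty(v,w)\le \delta_\infty(v,\pi(v))+1=2$; nonexpansiveness of $\pi$ bounds distances in the wrong direction, and nothing forces $v$ and $w$ into a common prism with coordinatewise adjacency, which is what $\delta_\infty(v,w)=1$ actually requires. Your preliminary claim that every clique of $(G,\delta_\infty)$ lies in a single prism is also not a consequence of gatedness and coherence alone: it is precisely Proposition~\ref{prop-clique-SGP}, whose proof needs the $3$-piece condition. (Your nerve-theoretic simple-connectivity argument silently uses this same claim, to know that the prism subcomplexes cover the clique complex of $(G,\delta_\infty)$.)

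The paper sidesteps both problems by coordinatizing globally rather than projecting locally: it fixes a set $\mathcal{C}$ of representatives of cliques modulo parallelism and shows that $x \mapsto (\pi_C(x))_{C \in \mathcal{C}}$ is an injective graph morphism from $(G,\delta_\infty)$ into $\boxtimes_{C\in\mathcal{C}}(C,\delta_C)$, exhibiting $(G,\delta_\infty)$ as an SGP whose pieces are the images of prisms; the $3$-piece condition then follows from gatedness of prisms in $G$ (in the \emph{original} metric, where it is available, since the total intersection of three pairwise intersecting prisms is a non-empty prism), and Theorem~\ref{th-SGP-3piece} does the rest. The crucial point is that in the proof of Theorem~\ref{th-SGP-3piece} the universal vertex $w^*$ is assembled factor-by-factor, choosing each coordinate $w_j$ via the clique-Helly property of the factor and matching the singleton coordinates of the relevant pieces --- exactly the simultaneous, all-factor bookkeeping that a single-prism projection argument cannot perform. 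For simple connectivity the paper argues by induction on the number of hyperplanes crossed by a loop (Lemma~\ref{lem:ExtendingSimplyConnected}), using sectors and gated hulls; your Borsuk nerve alternative could plausibly be made to work given contractibility of the prism complex of a quasi-median graph, but it remains contingent on first repairing the clique-in-prism claim, so the projection-based portion of your plan needs to be replaced rather than patched.
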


We begin by proving the following preliminary lemma:

\begin{lemma}\label{lem:ExtendingSimplyConnected}
Let $G$ be a quasi-median graph endowed with a coherent system of graph metrics $\{ \delta_C:  \text{$C$ clique of $G$} \}$. Suppose that the clique complex of $(C,\delta_C)$ is simply connected for every clique $C$ of $G$. Then the clique complex of $(G,\delta_\infty)$ is simply connected as well.
\end{lemma}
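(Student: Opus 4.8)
The plan is to show that every combinatorial cycle in the $1$-skeleton of $(G,\delta_\infty)$ bounds a singular disk diagram inside its clique complex; by Van Kampen's lemma this is exactly simple connectivity. The argument proceeds in three stages: a local normalization of cycles coming from the product structure of prisms, a passage to the underlying quasi-median graph $G$ (whose clique complex is known to be simply connected), and an ``inflation'' of a disk diagram over $G$ into one over $(G,\delta_\infty)$.

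First I would normalize cycles. If $\delta_\infty(x,y)=1$ then, since every local metric is a graph-metric and hence takes values $\geq 1$ on distinct vertices, any minimal chain realizing $\delta_\infty(x,y)$ reduces to a single step inside one prism $P=C_1\times\cdots\times C_n$ with $\delta_P(x,y)=\max_i \delta_{C_i}(x_i,y_i)=1$; in particular each coordinate satisfies $\delta_{C_i}(x_i,y_i)\leq 1$. Using the identification $(P,\delta_\infty)\cong \boxtimes_i (C_i,\delta_{C_i})$ recorded before Proposition~\ref{prop:ExtendingHelly}, the strong-product adjacency shows that, writing $z_k=(y_1,\dots,y_k,x_{k+1},\dots,x_n)$, the set $\{x,z_1,\dots,z_n\}$ is a single clique of $(P,\delta_\infty)$. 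Hence the edge $xy$ is homotopic rel endpoints, across this one simplex, to the ``staircase'' $x,z_1,\dots,z_n=y$, each nondegenerate edge of which changes a single coordinate and so lies in one clique $C_i$ of $G$ with $\delta_{C_i}$-length $1$. Applying this to each edge, any cycle is homotopic in $X((G,\delta_\infty))$ to one all of whose edges are such \emph{single-clique edges}.

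Next I pass to $G$. A single-clique edge $uv$ has $u,v$ in a common clique $C$ of $G$, so $uv\in E(G)$; thus a normalized cycle $\gamma$ is simultaneously a cycle $\bar\gamma$ in $G$. Here I would invoke two features of quasi-median graphs. They are $K_4^-$-free, whence any two distinct maximal cliques meet in at most one vertex (a shared edge together with a private vertex in each clique would induce a $K_4^-$), so \emph{each edge, and each triangle, of $G$ lies in a unique maximal clique}. And they are retracts of Hamming graphs, so $X(G)$ is simply connected. Therefore $\bar\gamma$ bounds a singular disk diagram $D\to X(G)$ whose inner faces are triangles, each carried by a unique clique of $G$.

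Finally I would inflate $D$. For every edge $e=pq$ of $D$, lying in its unique clique $C=C(pq)$, fix a $\delta_C$-geodesic $\sigma_{pq}$ of single-clique edges from $p$ to $q$; for boundary edges of $D$, which carry $\delta_C$-length $1$, take $\sigma_{pq}=e$, so that the inflated boundary is exactly $\gamma$. Replacing each $e$ by $\sigma_e$ subdivides $D$, and the boundary of a triangle $pqa\subseteq C$ becomes a loop $\sigma_{pq}\cdot\sigma_{qa}\cdot\sigma_{ap}$ lying in $(C,\delta_C)$, which bounds a disk in the simply connected complex $X((C,\delta_C))\subseteq X((G,\delta_\infty))$ by hypothesis. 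These fillings glue, because an interior edge shared by two triangles forces both triangles into a common clique (uniqueness of the clique through an edge), so both faces use the same $\sigma_e$ computed in the same $(C,\delta_C)$. The resulting singular disk diagram has boundary $\gamma$, establishing simple connectivity. I expect this gluing consistency to be the crux: it is precisely where the quasi-median hypothesis is indispensable, since if an edge could sit in several cliques carrying incompatible metrics the local fillings need not agree — and this is also where coherence of the system enters, through the very identification of prisms with strong products that underlies the normalization step.
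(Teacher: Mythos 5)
Your stage 2 contains a genuine gap: the clique complex of a quasi-median graph is \emph{not} simply connected in general, and the inference ``retracts of Hamming graphs, so $X(G)$ is simply connected'' already fails for Hamming graphs themselves. The $4$-cycle $C_4=K_2\square K_2$ is a Hamming graph (indeed median, hence quasi-median), and its clique complex is the $4$-cycle with no $2$-cells. Running your argument on $G=C_4$, with each clique an edge carrying the $K_2$-metric, makes the defect concrete: the whole square is a prism, so $(G,\delta_\infty)=K_4$ and the lemma's conclusion holds, but the $4$-cycle itself is already a normalized cycle of single-clique edges, and it bounds no singular disk diagram over $X(G)$ with triangular faces --- the only filling lives in the prism, which is invisible to $X(G)$. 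In quasi-median graphs cycles are filled by triangles \emph{and squares}, so your diagram $D$ need not exist, and your gluing argument (which correctly exploits the unique maximal clique through each edge, via $K_4^-$-freeness) never gets to engage with the square faces. Your stages 1 and 3 are sound as far as they go: the reduction of $\delta_\infty$-edges to staircases of single-clique edges through the strong-product structure of prisms is correct (graph metrics force a minimal chain of total length $1$ to be a single prism step), and the triangle-inflation with its compatibility check is fine.

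The natural repair is to replace $X(G)$ by the triangle--square (or prism) complex of $G$, which \emph{is} simply connected --- quasi-median graphs are weakly modular, and simple connectivity of their triangle--square complexes follows from \cite{CCHO} (contractibility of the prism complex is also established in \cite{Qm}). You would then also have to inflate square faces: an induced square spans a prism, its inflated boundary lies in a product of a clique with an edge (or of two cliques), and $(C_1,\delta_{C_1})\boxtimes(C_2,\delta_{C_2})$ has simply connected clique complex because a finite direct product of graphs with simply connected clique complexes does --- exactly the fact the paper invokes for prisms. With that addition your route becomes a legitimate alternative to the paper's proof, which is quite different in spirit: it inducts on the number of hyperplanes crossed by the loop, passes to the gated hull, handles the pairwise-transverse case by observing the hull is a single prism, and otherwise pushes the loop off a hyperplane using sectors and \cite[Proposition 3.16]{Qm}. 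As written, however, the proposal is incomplete at precisely the point you flagged as routine rather than at the gluing step you identified as the crux.
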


\begin{proof}
Let $\gamma$ be a cycle in the one-skeleton of $(G,\delta_\infty)$. We want to prove by induction over the number of hyperplanes of $G$ crossed by $\gamma$ that $\gamma$ is null-homotopic in the clique complex of $(G,\delta_\infty)$. Of course, if $\gamma$ does not cross any hyperplane, then it has to be reduced to a single vertex and there is nothing to prove. So from now on we assume that $\gamma$ crosses at least one hyperplane.

Let $Y \subseteq V(G)$ denote the gated hull of the vertex-set of $\gamma$. Notice that the subgraph of $(G,\delta_\infty)$ spanned by the vertices of $Y$ coincides with $(Y,\delta_\infty)$. According to \cite[Proposition 2.68]{Qm}, the hyperplanes of $Y$ are exactly the hyperplanes of $G$ crossed by $\gamma$. If the hyperplanes of $Y$ are pairwise transverse, then it follows from \cite[Lemma 2.74]{Qm} that $Y$ is a single prism. Consequently, $(Y,\delta_\infty)$ is the direct product of graphs whose clique complexes are simply connected, so that $\gamma$ must be null-homotopic in the clique complex of $(G,\delta_\infty)$. From now on, assume that $Y$ contains at least two hyperplanes, say $J$ and $H$, which are not transverse.

Let $S$ denote the sector delimited by $H$ which contains $J$. Decompose $\gamma$ as a concatenation of subpaths $\alpha_1 \beta_1 \cdots \alpha_n \beta_n \alpha_{n+1}$ such that $\alpha_1, \ldots, \alpha_{n+1}$ are included in $S$ and $\beta_1, \ldots, \beta_n$ intersect $S$ only at their endpoints. For every $1 \leq i \leq n$, fix a path $\sigma_i \subset (Y,\delta_\infty)$ between the endpoints of $\beta_i$ which does not cross $J$ (such a path exists as a consequence of \cite[Proposition 3.16]{Qm}). Notice that $\beta_i\sigma_i^{-1}$ is a cycle which does not cross $H$, so by our induction assumptions we know that $\beta_i$ and $\sigma_i$ are homotopic (in the clique complex). Therefore, $\gamma$ is homotopic (in the clique complex) to the cycle $\alpha_1 \sigma_1 \cdots \alpha_n \sigma_n \alpha_{n+1}$ which does not cross $H$. We conclude that $\gamma$ is null-homotopic (in the clique complex) by our induction assumptions.
\end{proof}

\begin{proof}[Proof of Proposition \ref{prop:ExtendingHelly}.]
Fix a set $\mathcal{C}$ of representatives of cliques modulo parallelism. For every $C \in \mathcal{C}$, let $\pi_C : G \to C$ denote the projection onto $C$. We claim that
$$\pi : \left\{ \begin{array}{ccc} (G,\delta_\infty) & \to & \underset{C \in \mathcal{C}}{\boxtimes} (C,\delta_C) \\ x & \mapsto & (\pi_C(x)) \end{array} \right.$$
is an injective graph morphism.

Let $x,y \in (G,\delta_\infty)$ be two adjacent vertices, i.e., two vertices of $G$ satisfying $\delta_\infty(x,y)=1$. So there exists a prism $P$ of $G$, thought of as a product of cliques $C_1 \times \cdots \times C_n$, which contains $x,y$ and such that the projections of $x,y$ onto each $C_i$ are identical or $\delta_{C_i}$-adjacent. For every $1 \leq i \leq n$, let $C_i' \in \mathcal{C}$ denote the representative of $C_i$. Because our system of metrics is coherent, we also know that the projections of $x,y$ onto each $C_i'$ are identical or $\delta_{C_i'}$-adjacent. Therefore, $\pi(x)$ and $\pi(y)$ are adjacent in the subgraph $\underset{1 \leq i \leq n}{\boxtimes} (C_i',\delta_{C_i'})$ of $\underset{C \in \mathcal{C}}{\boxtimes} (C,\delta_C)$. Thus, we have proved that $\pi$ is a graph morphism.

Now, let $x,y \in (G,\delta_\infty)$ be two distinct vertices. As a consequence of \cite[Proposition 2.30]{Qm}, there exists a hyperplane separating $x$ and $y$. Therefore, if $C \in \mathcal{C}$ denotes the representative clique dual to this hyperplane, then $\pi_C(x) \neq \pi_C(y)$. Hence $\pi(x) \neq \pi(y)$, proving that $\pi$ is indeed injective.

Notice that the image of a prism of $G$ under $\pi$ is a finite subproduct of $\underset{C \in \mathcal{C}}{\boxtimes} (C,\delta_C)$. Moreover, because every vertex of $G$ belongs to only finitely many cliques and because each $(C,\delta_C)$ is locally finite, we know that $(G,\delta_\infty)$ must be locally finite.
As a consequence, $(G,\delta_\infty)$ is an UGP over $\{ (C,\delta_C), \ C \in \mathcal{C}\}$. Now, we claim that our UGP satisfies the 3-piece condition. So let $P_1,P_2,P_3$ be three pairwise intersecting prisms in $G$. Because prisms are gated, they satisfy the Helly property, so there exists a vertex $x \in P_1 \cap P_2 \cap P_3$. Let $\mathcal{J}$ denote the set of all the hyperplanes that have a clique in at least two prisms among $P_1,P_2,P_3$. Observe that any two distinct hyperplanes $J_1,J_2 \in \mathcal{J}$ are transverse (i.e., there exists a prism containing cliques from both $J_1$ and $J_2$). For every $J \in \mathcal{J}$, fix a clique $C_J \subset P_1 \cup P_2 \cup P_3$ in $J$ that contains $x$. And let $P$ denote the gated hull of the union of all the $C_J$, $J \in \mathcal{J}$. Because the hyperplanes in $\mathcal{J}$ are pairwise transverse, we deduce from \cite[Proposition~2.68 and Lemma~2.74]{Qm} that $P$ is a prism. Now, our goal is to show that $P$ is the piece of $G$ we are looking for. So let $C \in \mathcal{C}$ be such that at least two prisms among $P_1,P_2,P_3$ have projection $C$ on the $C$-coordinate. It follows from \cite[Lemma~2.20]{Qm} that the hyperplane $J$ containing $C$ intersects at least two prisms among $P_1,P_2,P_3$, hence $J \in \mathcal{J}$. By construction, $P$ contains a clique in $J$, hence a clique parallel to $C$. In other words, $C$ is also the projection of $P$ on the $C$-coordinate, as desired. Thus, we have verified that the 3-piece condition holds. We conclude that $(G,\delta_\infty)$ is a Helly graph by combining Theorem \ref{th-UGP-3piece} with Lemma \ref{lem:ExtendingSimplyConnected}.
\end{proof}

\subsubsection{Constructing Helly groups} We are now ready to
construct new Helly groups from old ones. Recall from \cite{Qm} that
the action of group $\Gamma$ on a quasi-median graph $G$ is
\emph{topical-transitive} if it satisfies the two following conditions:
\begin{enumerate}[(1)]
\item for every hyperplane $J$, every clique $C \subset J$ and every
  $g \in \mathrm{stab}(J)$, there exists  $h \in \mathrm{stab}(C)$
  such that $g$ and $h$ induce the same permutation on the set of
  sectors delimited by $J$;
\item  for every clique $C$ of $G$,
  \begin{itemize}
  \item either $C$ is finite and $\mathrm{stab}(C)= \mathrm{fix}(C)$;
  \item or $\mathrm{stab}(C) \curvearrowright C$ is free and
    transitive on the vertices.
  \end{itemize}
\end{enumerate}
Then the statement we are interested in is:

\begin{theorem}\label{thm:HellyGeneral}
Let $\Gamma$ be a group acting topically-transitively on a quasi-median graph $G$. Suppose that:
\begin{itemize}
	\item every vertex of $G$ belongs to finitely many cliques;
	\item every vertex-stabilizer is finite;
	\item the cubical dimension of $G$ is finite;
	\item $G$ contains finitely many $\Gamma$-orbits of cliques;
	\item for every maximal prism $P= C_1 \times \cdots \times C_n$, $\mathrm{stab}(P)= \mathrm{stab}(C_1) \times \cdots \times \mathrm{stab}(C_n)$.
\end{itemize}
If clique-stabilizers are Helly, then so is $\Gamma$.
\end{theorem}

Before turning to the proof of Theorem \ref{thm:HellyGeneral}, we need the following easy observation (which can be proved by following the lines of \cite[Lemma 4.34]{Qm}):

\begin{lemma}\label{lem:HellyVertexTrivial}
For every Helly group $\Gamma$, there exist a Helly graph $G$ and a vertex $x_0 \in G$ such that $\Gamma$ acts geometrically on $G$ and $\mathrm{stab}(x_0)$ is trivial.\qed
\end{lemma}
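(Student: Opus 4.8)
The plan is to start from the geometric action guaranteed by the definition and then enlarge the graph by an equivariant blow-up so that one vertex loses its stabilizer, while keeping the Helly property through the direct-product machinery of this section. By hypothesis $\Gamma$ acts geometrically on some Helly graph $G_0$; since $G_0$ is locally finite and the action is proper, every vertex-stabilizer is finite, and cocompactness yields finitely many $\Gamma$-orbits of vertices. Observe first that a vertex with trivial stabilizer forces the action to be faithful, and that since $\Gamma$ may contain torsion it cannot act freely on any Helly graph at all (a nontrivial finite subgroup fixes a vertex by the Fixed Point Theorem~\ref{t:fixedpt}). Hence the goal is not a free action but only a single vertex avoiding the fixed-point sets of all nontrivial elements, exactly as in the model result \cite[Lemma~4.34]{Qm}, which I would follow closely.

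The core of the construction is a \emph{clique blow-up}. First I would reduce to a faithful action: the kernel $N$ of $\Gamma \curvearrowright G_0$ is a finite normal subgroup, and after the blow-up below one arranges $\bigcap_{v \in V(G_0)} \mathrm{stab}(v) = \{1\}$; because stabilizers are finite, finitely many vertices $v_1,\dots,v_k$ then already satisfy $\bigcap_{i} \mathrm{stab}(v_i) = \{1\}$. The blow-up replaces each vertex $v$ of a suitable orbit by a clique indexed by $\mathrm{stab}(v)$, joining cliques completely over the edges of $G_0$. When stabilizer sizes are constant this is literally a direct product $G_0 \boxtimes K$ with a clique factor $K$, which is Helly by Proposition~\ref{direct-products} since cliques are Helly; in general the blow-up is a space of graph products whose pieces are products of cliques, hence Helly by Theorem~\ref{th-SGP-3piece}, the $3$-piece condition holding because the relevant prisms/products are gated, precisely as in the proof of Proposition~\ref{prop:ExtendingHelly}. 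The essential point is to equip this blown-up graph with a \emph{twisted} $\Gamma$-action, built from a cocycle whose restriction to each finite stabilizer is its regular representation on the corresponding clique, so that a chosen vertex $x_0$ sitting over $v_1,\dots,v_k$ acquires stabilizer $\bigcap_i \mathrm{stab}(v_i) = \{1\}$.

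Finally I would check that the enlarged graph $G$ is still acted on geometrically: the blow-up has uniformly bounded fibres (finitely many conjugacy classes of finite stabilizers by cocompactness), so it remains locally finite with finitely many $\Gamma$-orbits of vertices, giving cocompactness, while properness is inherited since every blow-up vertex has finite stabilizer. The main obstacle — and the part I would import essentially verbatim from \cite[Lemma~4.34]{Qm} — is the \emph{simultaneous} realization of the two demands: defining the blow-up together with its $\Gamma$-action equivariantly, i.e.\ constructing a global cocycle compatible with the gluing of the cliques across edges of $G_0$, in such a way that the graph stays Helly \emph{and} some vertex-stabilizer becomes trivial. All the Helly bookkeeping collapses to Proposition~\ref{direct-products} and Theorem~\ref{th-SGP-3piece}, so the genuine work lies in the equivariant combinatorics of the blow-up rather than in verifying the Helly property itself.
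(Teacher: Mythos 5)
Your overall strategy---blow up an orbit into cliques carrying the regular representation of the finite stabilizer, and defer the equivariant construction to \cite[Lemma~4.34]{Qm}---is the route the paper intends (its proof is exactly ``follow the lines of \cite[Lemma~4.34]{Qm}''), but two of your steps would fail as written. First, the stabilizer computation is incoherent: a vertex of a clique blow-up lies over a \emph{single} vertex of $G_0$, so ``a vertex $x_0$ sitting over $v_1,\dots,v_k$ acquiring stabilizer $\bigcap_i \mathrm{stab}(v_i)$'' does not parse, and the whole reduction via the kernel and finitely many vertices with $\bigcap_i \mathrm{stab}(v_i)=\{1\}$ is both unnecessary and incompatible with your construction. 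It suffices to blow up \emph{one} orbit $\Gamma v$ with $H=\mathrm{stab}(v)$: index the fibre over $u=gv$ by $\{a \in \Gamma : av = u\}$ (the coset $gH$), so that left multiplication gives the $\Gamma$-action with no cocycle to construct, and the vertex labelled $1$ over $v$ has trivial stabilizer outright. Second, your Helly bookkeeping in the variable-size case is wrong: a vertex of an SGP over $\{C_v\}$ is a \emph{global} tuple in $\boxtimes_v C_v$ choosing an element in every factor (see Lemma~\ref{lem-SGP-agreement}), whereas a vertex of the blow-up selects one base vertex, so the blow-up is not a union of finite subproducts and Theorem~\ref{th-SGP-3piece} does not apply. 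Two correct fixes: (i) observe that the blow-up is a true-twin expansion, and for radii $\geq 1$ balls about a twin coincide with balls about the original vertex, so the Helly property transfers directly (alternatively via Proposition~\ref{clique_Helly_triangle} and Theorem~\ref{t:lotogloHell}, since twins are dominated and the clique complex stays simply connected); or (ii) reduce to the case you \emph{can} handle by blowing up every vertex by a clique of the same size $|H|$, i.e.\ take $G_0 \boxtimes K_{|H|}$, twisting the action on the clique factor by a transversal over the orbit $\Gamma v$ and trivially elsewhere---this is Helly by Proposition~\ref{direct-products}, and fibrewise bijections over automorphisms of $G_0$ are automorphisms of the direct product.

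Separately, your motivating claim that ``since $\Gamma$ may contain torsion it cannot act freely on any Helly graph at all'' is false: $\mathbb{Z}/2$ acts freely and geometrically on the Helly graph $K_2$ by swapping the endpoints of an edge. Theorem~\ref{t:fixedpt} produces an \emph{invariant clique} (a fixed vertex of the face complex), not a fixed vertex of $G$, so it does not obstruct free actions of finite groups. This error is not load-bearing for the construction, but it should be removed; the geometricity checks (bounded fibres, finitely many orbits, finite stabilizers) in your last paragraph are fine.
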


\begin{proof}[Proof of Theorem \ref{thm:HellyGeneral}.]
First of all, observe that $G$ contains only finitely many $\Gamma$-orbits of prisms. Indeed, let $\mathcal{C}$ be a finite collection of representatives of cliques modulo the action of $\Gamma$. For every $C \in \mathcal{C}$, fix a vertex $x_C \in C$. Let $\mathcal{P}$ denote the set of all the prisms in $G$ that contain a $x_C$ for some $C \in \mathcal{C}$. Because each vertex belongs to only finitely many cliques by assumption, we know that $\mathcal{P}$ is a finite collection. Now, if $P$ is an arbitrary prism in $G$, there must exist $g \in \Gamma$ and $C \in \mathcal{C}$ such that $gP$ contains $C$, and a fortiori $x_C$, hence $gP \in \mathcal{P}$. This proves our observation. By combining Lemma \ref{lem:HellyVertexTrivial} with \cite[Proposition 7.8]{Qm}, we know that there exists a new quasi-median graph $Y$ endowed with a coherent system of metrics $\{\delta_C: \text{$C$ clique of $Y$} \}$ such that $\Gamma$ acts geometrically on $(Y, \delta_\infty)$ and such that $(C,\delta_C)$ is a Helly graph for every clique $C$ of $Y$. Because $(Y,\delta_\infty)$ defines a Helly graph according to Proposition \ref{prop:ExtendingHelly}, we conclude that $\Gamma$ is a Helly group.
\end{proof}

We now record several applications of Theorem \ref{thm:HellyGeneral}.

\subsubsection{Graph products of groups.} Given a \emph{simplicial graph} $G$ and a collection of groups $\mathcal{G}=\{ \Gamma_u:  u \in V(G) \}$ indexed by the vertices of $G$ (called \emph{vertex-groups}), the \emph{graph product} $G \mathcal{G}$ is the quotient $$\left( \underset{u \in V(G)}{\ast} \Gamma_u \right) / \langle \langle [g,h]=1, g \in \Gamma_u, h \in \Gamma_v \ \text{if} \ (u,v) \in E(G) \rangle \rangle.$$
For instance, if $G$ has no edge, then $G \mathcal{G}$ is the free product of $\mathcal{G}$; and if $G$ is a complete graph, then $G \mathcal{G}$ is the direct sum of $\mathcal{G}$. One often says that graph products interpolate between free products and direct sums.

By combining Theorem \ref{thm:HellyGeneral} with \cite[Proposition 8.14]{Qm}, one obtains:

\begin{theorem}
Let $G$ be a finite simplicial graph and $\mathcal{G}$ a collection of groups indexed by $V(G)$. If the vertex-groups are Helly, then so is the graph product $G \mathcal{G}$.
\end{theorem}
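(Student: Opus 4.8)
The plan is to verify the hypotheses of Theorem~\ref{thm:HellyGeneral} for the canonical action of the graph product on its associated quasi-median graph, and then to identify the clique-stabilisers with the vertex-groups. The final deduction is then immediate, so essentially all of the work is bookkeeping supplied by the structure theory of~\cite{Qm}.

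First I would recall from~\cite{Qm} that the graph product $G\mathcal{G}$ acts on a quasi-median graph $X$ whose vertices are the elements of $G\mathcal{G}$, the action being by left multiplication, where two elements are adjacent whenever they differ by a single nontrivial syllable. In this model the cliques of $X$ are exactly the cosets $g\Gamma_u$ of the vertex-groups, and the prisms are the cosets of the subgroups $\langle \Gamma_u : u \in \sigma \rangle$ associated with the complete subgraphs $\sigma$ of $G$. The content of~\cite[Proposition 8.14]{Qm} is precisely that this action is topical-transitive and, because $G$ is finite, satisfies every combinatorial condition required by Theorem~\ref{thm:HellyGeneral}: each vertex lies in one clique per vertex-group and hence in finitely many cliques; vertex-stabilisers are trivial, hence finite, since the action is by left multiplication; the cubical dimension is bounded by the clique number of $G$, which is finite; there are finitely many $\Gamma$-orbits of prisms, indexed by the complete subgraphs of $G$; and for a maximal prism $P = C_1 \times \cdots \times C_n$ the stabiliser splits as $\mathrm{stab}(C_1) \times \cdots \times \mathrm{stab}(C_n)$ because vertex-groups of adjacent vertices commute, which is exactly the defining relation of the graph product.

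Next I would identify the clique-stabilisers. The stabiliser of a clique $g\Gamma_u$ is the conjugate $g\Gamma_u g^{-1}$, which is isomorphic to the vertex-group $\Gamma_u$. Since $\Gamma_u$ is Helly by hypothesis and Hellyness is an invariant of the abstract isomorphism type of a group (being defined through the existence of a geometric action on a Helly complex), every clique-stabiliser of $X$ is Helly. With all hypotheses verified, Theorem~\ref{thm:HellyGeneral} applies and yields that $G\mathcal{G}$ is Helly.

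The main obstacle is not in the final invocation, which is routine, but in the verification that the action of $G\mathcal{G}$ on $X$ is genuinely topical-transitive and that maximal-prism stabilisers split as direct products; both are delicate points about how the structure of the graph product is reflected in the geometry of $X$. These are exactly the facts assembled in~\cite[Proposition 8.14]{Qm} together with the theory of~\cite{Qm}, so in practice the task reduces to correctly matching the cliques and prisms of $X$ with the cosets of vertex-groups and of the subgroups spanned by complete subgraphs of $G$, and to checking the finiteness consequences of $G$ being finite.
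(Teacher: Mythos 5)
Your proposal is correct and follows essentially the same route as the paper, whose entire proof is the one-line combination of Theorem~\ref{thm:HellyGeneral} with \cite[Proposition 8.14]{Qm} applied to the action of $G\mathcal{G}$ on its quasi-median Cayley graph; your write-up merely makes explicit the verification of the hypotheses (cliques as cosets $g\Gamma_u$ with stabilisers $g\Gamma_u g^{-1}$, prisms from complete subgraphs of $G$, finiteness from $|V(G)|<\infty$) that the citation encapsulates.
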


\subsubsection{Diagram products of groups.} Let $\mathcal{P}= \langle \Sigma:  \mathcal{R} \rangle$ be a semigroup presentation. We assume that, if $u=v$ is a relation which belongs to $\mathcal{R}$, then $v=u$ does not belong to $\mathcal{R}$; in particular, $\mathcal{R}$ does not contain relations of the form $u=u$. The \emph{Squier complex} $S(\mathcal{P})$ is the square-complex
\begin{itemize}
	\item whose vertices are the positive words $w \in \Sigma^+$;
	\item whose edges $(a,u=v,b)$ link $aub$ and $avb$ where $(u=v) \in \mathcal{R}$;
	\item and whose squares $(a,u=v,b,p=q,c)$ are delimited by the edges $(a,u=v,bpc)$, $(a,u=v,bqc)$, $(aub,p=q,c)$, $(avb,p=q,c)$.
\end{itemize}
The connected component of $S(\mathcal{P})$ containing a given word $w \in \Sigma^+$ is denoted by $S(\mathcal{P},w)$. Given a collection of groups $\mathcal{G}= \{\Gamma_s, s \in \Sigma\}$ labelled by the alphabet $\Sigma$, the \emph{diagram product} $D(\mathcal{P},\mathcal{G},w)$ is isomorphic to the fundamental group of the following 2-complex of groups:
\begin{itemize}
	\item the underlying 2-complex is the 2-skeleton of the Squier complex $S(\mathcal{P},w)$;
	\item to any vertex $u=s_1\cdots s_r \in \Sigma^+$ is associated the group $\Gamma_u=\Gamma_{s_1} \times \cdots \times \Gamma_{s_r}$;
	\item to any edge $e=(a, u \to v,b)$ is associated the group $\Gamma_{e}=\Gamma_a \times \Gamma_b$;
	\item to any square is associated the trivial group;
	\item for every edge $e=(a,u \to v,b)$, the monomorphisms $\Gamma_e \to \Gamma_{aub}$ and $\Gamma_e \to \Gamma_{avb}$ are the canonical maps $\Gamma_a \times \Gamma_b \to \Gamma_a \times \Gamma_u \times \Gamma_b$ and $\Gamma_a \times \Gamma_b \to \Gamma_a \times \Gamma_v \times \Gamma_b$.
\end{itemize}
We refer to \cite{MR1725439} and \cite[Section 10]{Qm} for more information about diagram products of groups. By combining Theorem \ref{thm:HellyGeneral} with \cite[Proposition 10.33 and Lemma 10.34]{Qm}, one obtains:

\begin{theorem}\label{thm:DiagProducts}
Let $\mathcal{P}= \langle \Sigma: \mathcal{R} \rangle$ be a finite semigroup presentation, $\mathcal{G}$ a collection of groups indexed by the alphabet $\Sigma$ and $w \in \Sigma^+$ a baseword. If $\{ u \in \Sigma^+: \text{$u=w$ mod $\mathcal{P}$} \}$ is finite and if the groups of $\mathcal{G}$ are all Helly, then the diagram product $D(\mathcal{P}, \mathcal{G},w)$ is a Helly group.
\end{theorem}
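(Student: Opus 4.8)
The plan is to realise $D(\mathcal{P},\mathcal{G},w)$ as a group acting topically-transitively on a quasi-median graph with Helly clique-stabilisers, and then to invoke Theorem~\ref{thm:HellyGeneral}. The starting point is the construction from \cite[Section~10]{Qm}, which associates to the diagram product a quasi-median graph $M$ built over the Squier complex $S(\mathcal{P},w)$: roughly, the combinatorial directions of $M$ are governed by the edges and squares of $S(\mathcal{P},w)$, while each clique of $M$ is a copy acted upon by one of the vertex-groups $\Gamma_s$ ($s \in \Sigma$). First I would recall from \cite[Proposition~10.33]{Qm} that $D(\mathcal{P},\mathcal{G},w)$ acts topically-transitively on $M$, that each clique-stabiliser is conjugate to one of the vertex-groups $\Gamma_s$, and that for every maximal prism $P = C_1 \times \cdots \times C_n$ one has $\mathrm{stab}(P) = \mathrm{stab}(C_1) \times \cdots \times \mathrm{stab}(C_n)$.

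The heart of the verification is the translation of the single combinatorial hypothesis --- finiteness of the class $\{ u \in \Sigma^+ : u = w \bmod \mathcal{P}\}$ --- into the remaining finiteness conditions required by Theorem~\ref{thm:HellyGeneral}, which is the content of \cite[Lemma~10.34]{Qm}. Since the vertices of $S(\mathcal{P},w)$ are exactly the words in that class, its finiteness makes $S(\mathcal{P},w)$ a finite complex; I would then deduce in turn that $M$ has only finitely many $\Gamma$-orbits of prisms, that its cubical dimension is finite, that every vertex of $M$ lies in finitely many cliques, and that the vertex-stabilisers are finite. The cubical-dimension bound is the most structural of these: it comes from bounding the number of pairwise-transverse hyperplanes of $M$, equivalently the number of mutually independent relation-applications available in a finite Squier complex.

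With all the structural hypotheses in place, the only remaining input is that the clique-stabilisers are Helly; but by the first step these are exactly the vertex-groups $\Gamma_s$, which are Helly by assumption. Theorem~\ref{thm:HellyGeneral} then applies and yields that $D(\mathcal{P},\mathcal{G},w)$ is a Helly group. The main obstacle lies not in this final assembly, which is a clean application of the combination theorem, but in the two imported results from \cite{Qm}: establishing topical-transitivity of the action together with the prism-stabiliser decomposition, and, above all, showing that finiteness of the $\mathcal{P}$-class of $w$ forces the geometric finiteness of $M$ (finitely many orbits of prisms and finite cubical dimension). Once these are granted, no further work is needed.
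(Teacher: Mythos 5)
Your proposal is correct and follows exactly the paper's route: the paper proves Theorem~\ref{thm:DiagProducts} precisely by combining Theorem~\ref{thm:HellyGeneral} with \cite[Proposition~10.33 and Lemma~10.34]{Qm}, which is the assembly you describe (topical-transitivity, Helly clique-stabilisers conjugate to vertex-groups, prism-stabiliser decomposition from the former; the geometric finiteness conditions derived from finiteness of the $\mathcal{P}$-class of $w$ from the latter). Your additional remarks about where the real work lies --- in the imported results from \cite{Qm} rather than in the final application of the combination theorem --- accurately reflect the structure of the paper's argument.
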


Explicit examples of diagram products can be found in \cite[Section 10.7]{Qm}. For instance, the \emph{$\square$-product} of two groups $\Gamma_1$ and $\Gamma_2$, defined by the relative presentation
$$\Gamma_1 \square \Gamma_2 = \langle \Gamma_1,\Gamma_2,t: [g,h]=[g,tht^{-1}]=1, \ g \in \Gamma_1, h \in \Gamma_2 \rangle.$$
is a diagram product \cite[Example 10.65]{Qm}. As it satisfies the assumptions of Theorem~\ref{thm:DiagProducts}, it follows that:

\begin{corollary}
If $\Gamma_1$ and $\Gamma_2$ are two Helly groups, then so is $\Gamma_1 \square \Gamma_2$.
\end{corollary}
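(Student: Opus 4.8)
The plan is to recognize $\Gamma_1 \square \Gamma_2$ as a diagram product and then invoke Theorem~\ref{thm:DiagProducts} directly. The starting point is the observation, recorded in \cite[Example 10.65]{Qm}, that the relative presentation
$$\Gamma_1 \square \Gamma_2 = \langle \Gamma_1, \Gamma_2, t : [g,h] = [g, tht^{-1}] = 1, \ g \in \Gamma_1, \ h \in \Gamma_2 \rangle$$
realizes $\Gamma_1 \square \Gamma_2$ as a diagram product $D(\mathcal{P}, \mathcal{G}, w)$ for an explicit semigroup presentation $\mathcal{P} = \langle \Sigma : \mathcal{R} \rangle$, a collection $\mathcal{G}$ of vertex groups indexed by $\Sigma$, and a baseword $w \in \Sigma^+$. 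In this realization each group of $\mathcal{G}$ is a copy of $\Gamma_1$ or of $\Gamma_2$; since both factors are Helly by hypothesis, the second assumption of Theorem~\ref{thm:DiagProducts} (that every group of $\mathcal{G}$ be Helly) holds automatically.

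First I would make the identification of $\mathcal{P}$, $\mathcal{G}$, and $w$ completely explicit, reading off the data from \cite[Example 10.65]{Qm}. The only remaining hypothesis of Theorem~\ref{thm:DiagProducts} to verify is the finiteness of the $\mathcal{P}$-equivalence class $\{ u \in \Sigma^+ : u = w \bmod \mathcal{P} \}$ of the baseword. This is where the concrete shape of $\mathcal{P}$ matters: I would check directly from the rewriting rules in $\mathcal{R}$ that only finitely many positive words are $\mathcal{P}$-equivalent to $w$, so that the connected component $S(\mathcal{P}, w)$ of the Squier complex has finitely many vertices. Once this is confirmed, both hypotheses of Theorem~\ref{thm:DiagProducts} are met, and the theorem immediately yields that $D(\mathcal{P}, \mathcal{G}, w) = \Gamma_1 \square \Gamma_2$ is a Helly group.

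The bulk of the difficulty is therefore displaced into the two external inputs: the identification in \cite[Example 10.65]{Qm} and Theorem~\ref{thm:DiagProducts} itself, which in turn rests on Theorem~\ref{thm:HellyGeneral} and Proposition~\ref{prop:ExtendingHelly}. The only genuinely new verification at this stage is the finiteness of the equivalence class of the baseword, and I expect this to be the main --- though still routine --- step; it is precisely what guarantees that the diagram product is assembled from finitely many pieces, and hence that the ambient quasi-median graph carrying the action has finitely many $\Gamma$-orbits of prisms, one of the standing assumptions of Theorem~\ref{thm:HellyGeneral}.
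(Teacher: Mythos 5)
Your proposal matches the paper's own argument: the paper likewise cites \cite[Example 10.65]{Qm} to realize $\Gamma_1 \square \Gamma_2$ as a diagram product and then notes that it satisfies the assumptions of Theorem~\ref{thm:DiagProducts} (Hellyness of the vertex groups and finiteness of the equivalence class of the baseword). Your additional remark tying the baseword finiteness to the finitely-many-orbits-of-prisms hypothesis of Theorem~\ref{thm:HellyGeneral} is a correct reading of why that assumption is needed, but otherwise the route is identical.
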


\subsubsection{Right-angled graphs of groups} Roughly speaking, right-angled graphs of groups are fundamental groups of graphs of groups obtained by gluing graph products together along ``simple'' subgroups. We refer to \cite{MR1954121} for more information about graphs of groups.

\begin{definition}
Let $G, H$ be two simplicial graphs and $\mathcal{G}, \mathcal{H}$ two families of groups respectively indexed by $V(G),V(H)$. A morphism $\Phi : G \mathcal{G} \to H \mathcal{H}$ is a \emph{graphical embedding} if there exists an embedding $f : G \to H$ and isomorphisms $\varphi_v : \Gamma_v \to \Gamma_{f(v)}$, $v \in V(G)$, such that $f(G)$ is an induced subgraph of $H$ and $\Phi(g)= \varphi_v(g)$ for every $v \in V(G)$ and $g \in \Gamma_v$.
\end{definition}

\begin{definition}
A \emph{right-angled graph of groups} is a graph of groups such that each (vertex- and edge-)group has a fixed decomposition as a graph product and such that each monomorphism of an edge-group into a vertex-group is a graphical embedding (with respect to the structures of graph products we fixed).
\end{definition}

In the following, a \emph{factor} will refer to a vertex-group of one of these graph products. Let $\mathfrak{G}$ be a right-angled graph of groups. Notice that, if $e$ is an oriented edge from a vertex $x$ to another $y$, then the two embeddings of $\Gamma_e$ in $\Gamma_x$ and $\Gamma_y$ given by $\mathfrak{G}$ provide an isomorphism $\varphi_e$ from a subgroup of $\Gamma_x$ to a subgroup of $\Gamma_y$. Moreover, if $\Gamma \subset \Gamma_x$ is a factor, then $\varphi_e(\Gamma):= \{ g \in \Gamma_y \mid \exists h \in \Gamma, \varphi_e(h)=g\}$ is either empty or a factor of $\Gamma_y$. Set
$$\Phi(\Gamma)= \{ \varphi_{e_k} \circ \cdots \circ \varphi_{e_1}: \text{$e_1,\ldots,e_k$ oriented cycle at $x$, $\varphi_{e_k} \circ \cdots \circ \varphi_{e_1}(\Gamma)=\Gamma$} \},$$
thought of as a subgroup of the automorphism group $\mathrm{Aut}(\Gamma)$.

By combining Theorem \ref{thm:HellyGeneral} with \cite[Proposition 11.26 and Lemma 11.27]{Qm}, one obtains:

\begin{theorem}\label{thm:RAGG}
Let $\mathfrak{G}$ be a right-angled graph of groups such that $\Phi(\Gamma)= \{ \mathrm{Id} \}$ for every factor $\Gamma$. Suppose that the underlying abstract graph and the simplicial graphs defining the graph products are all finite. If the factors are Helly, then so is the fundamental group of $\mathfrak{G}$.
\end{theorem}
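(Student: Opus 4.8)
The plan is to reduce Theorem~\ref{thm:RAGG} to Theorem~\ref{thm:HellyGeneral}, exactly as was done for graph products and diagram products above. Let $\Gamma$ denote the fundamental group of $\mathfrak{G}$. The first step is to produce, using the theory of \cite{Qm}, a quasi-median graph $G$ on which $\Gamma$ acts, such that the cliques of $G$ correspond to the cosets of the factors and the clique-stabilizers are precisely the factors. By hypothesis each factor is Helly, so the clique-stabilizers are Helly; it then remains only to verify that the action of $\Gamma$ on $G$ satisfies the hypotheses of Theorem~\ref{thm:HellyGeneral}.

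The key input here is \cite[Proposition 11.26 and Lemma 11.27]{Qm}, which describe the quasi-median graph associated with a right-angled graph of groups together with the structure of its clique- and prism-stabilizers. I would first invoke Proposition 11.26 to obtain the graph $G$ and the identification of clique-stabilizers with factors, and then use Lemma 11.27 to establish the two structural conditions of Theorem~\ref{thm:HellyGeneral}: that the action is topically-transitive and that for each maximal prism $P=C_1\times\cdots\times C_n$ one has $\mathrm{stab}(P)=\mathrm{stab}(C_1)\times\cdots\times\mathrm{stab}(C_n)$. This is precisely the place where the hypothesis $\Phi(\Gamma)=\{\mathrm{Id}\}$ enters: it guarantees that no factor is conjugated back to itself by a nontrivial element coming from a loop in the graph of groups, so that distinct cliques in a common prism carry independent stabilizers and the transitivity condition (2) in the definition of a topical-transitive action holds on each clique.

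The remaining hypotheses of Theorem~\ref{thm:HellyGeneral} are finiteness conditions that follow from the standing assumption that the underlying abstract graph of $\mathfrak{G}$ and the simplicial graphs defining the graph products are all finite. From finiteness of the defining graphs one obtains that each vertex of $G$ lies in only finitely many cliques and that the cubical dimension of $G$ is finite (the latter being bounded by the maximal size of a clique in any of the simplicial graphs), while finiteness of the underlying graph of groups gives finitely many $\Gamma$-orbits of prisms. Finally, as in the graph-product and diagram-product cases, the quasi-median graph is built so that each factor acts freely and transitively on the corresponding clique, whence vertex-stabilizers are trivial, in particular finite. With all five hypotheses checked and clique-stabilizers Helly, Theorem~\ref{thm:HellyGeneral} yields that $\Gamma$ is Helly.

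The main obstacle is not in the application of Theorem~\ref{thm:HellyGeneral} itself, which is purely a matter of matching hypotheses, but in the verification---via \cite{Qm}---of topical-transitivity and of the splitting $\mathrm{stab}(P)=\mathrm{stab}(C_1)\times\cdots\times\mathrm{stab}(C_n)$ for maximal prisms. These are genuinely about how the edge-monomorphisms of the right-angled graph of groups interact with the graph-product structure, and they are exactly the conditions that the hypothesis $\Phi(\Gamma)=\{\mathrm{Id}\}$ is designed to force. I expect the bulk of the honest work to be in citing and assembling the relevant statements from \cite{Qm} rather than in any new computation.
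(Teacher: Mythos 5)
Your proposal matches the paper's own argument, which consists precisely of combining Theorem~\ref{thm:HellyGeneral} with \cite[Proposition 11.26 and Lemma 11.27]{Qm}; the paper gives no further detail, and your verification of the five hypotheses (topical-transitivity, finite vertex-stabilizers, finite cubical dimension, finitely many orbits of prisms, and the splitting of maximal-prism stabilizers) correctly fills in what those citations are meant to supply, including the role of the hypothesis $\Phi(\Gamma)=\{\mathrm{Id}\}$. This is essentially the same approach as the paper.
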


Explicit examples of fundamental groups of right-angled graphs of groups can be found in \cite[Section 11.4]{Qm}. For instance, the \emph{$\rtimes$-power} of a group $\Gamma$ \cite[Example~11.38]{Qm}, defined by the relative presentation
$$\Gamma^{\rtimes} = \langle \Gamma,t: [g,tgt^{-1}]=1, \ g \in \Gamma \rangle,$$
is the fundamental group of a right-angled graph of groups satisfying the assumptions of Theorem \ref{thm:RAGG}, hence:

\begin{corollary}
If $\Gamma$ is a Helly group, then so is $\Gamma^\rtimes$.
\end{corollary}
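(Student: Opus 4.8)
The plan is to realize $\Gamma^\rtimes$ as the fundamental group of a right-angled graph of groups whose factors are all copies of $\Gamma$, and then to invoke Theorem~\ref{thm:RAGG}. Concretely, I would describe $\Gamma^\rtimes$ as the HNN-extension whose underlying abstract graph is a single vertex carrying one loop; whose vertex-group is the direct product $\Gamma \times \Gamma'$ of two copies of $\Gamma$ (this is the graph product over the single edge $K_2$ with both vertex-groups equal to $\Gamma$); whose edge-group is $\Gamma$ (the graph product over a single vertex); and whose two edge-monomorphisms send $\Gamma$ isomorphically onto the first, respectively the second, factor of $\Gamma \times \Gamma'$. Reading off the associated relation, the stable letter $t$ conjugates the first factor onto the second, which recovers exactly $[g, tgt^{-1}]=1$ for all $g \in \Gamma$; hence this graph of groups has fundamental group $\Gamma^\rtimes$. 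This is precisely the structure recorded in \cite[Example~11.38]{Qm}.

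Next I would verify the hypotheses of Theorem~\ref{thm:RAGG}. Both edge-monomorphisms are \emph{graphical embeddings} in the sense of the definition preceding Theorem~\ref{thm:RAGG}, since each identifies $\Gamma$ with one full factor of the vertex graph product. The underlying abstract graph (one vertex, one loop) and the simplicial graphs defining the graph products ($K_2$ for the vertex-group, a single point for the edge-group) are all finite. Finally, the factors occurring throughout the construction are all copies of $\Gamma$, which is Helly by hypothesis. With these verifications in hand, Theorem~\ref{thm:RAGG} yields immediately that $\Gamma^\rtimes$ is Helly.

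The remaining and only delicate point is the condition $\Phi(\Gamma)=\{\mathrm{Id}\}$. I would argue this by tracking the isomorphisms $\varphi_e$ attached to the oriented loop $e$: traversing $e$ forward carries the first factor onto the second, while traversing it backward does the reverse. A composition $\varphi_{e_k}\circ\cdots\circ\varphi_{e_1}$ sends a given factor back to itself only when the forward and backward traversals pair off, and any such composition is the identity automorphism; hence $\Phi(\Gamma)=\{\mathrm{Id}\}$. This is the step where the specific \emph{non-crossed} form of the $\rtimes$-power matters, and it is exactly the verification carried out in \cite[Example~11.38]{Qm} (using \cite[Lemma~11.27]{Qm}). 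Since essentially all the structural work is already contained in \cite{Qm}, the corollary reduces to assembling these references and applying Theorem~\ref{thm:RAGG}; I expect no genuine obstacle beyond confirming the $\Phi(\Gamma)=\{\mathrm{Id}\}$ bookkeeping.
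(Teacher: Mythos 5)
Your route is, in outline, the same as the paper's: the paper's entire proof of this corollary consists of invoking \cite[Example~11.38]{Qm} to exhibit $\Gamma^\rtimes$ as the fundamental group of a right-angled graph of groups and then applying Theorem~\ref{thm:RAGG}, and your graph-of-groups data (one vertex with one loop, vertex group the $K_2$-graph product $\Gamma\times\Gamma'$, edge group the one-vertex graph product $\Gamma$, edge-monomorphisms onto the two factors) together with the finiteness, graphical-embedding, and $\Phi(\Gamma)=\{\mathrm{Id}\}$ checks is exactly the intended construction.

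There is, however, a genuine gap at your central identification step. The HNN extension $\langle \Gamma\times\Gamma', t \mid t(g,1)t^{-1}=(1,g),\ g\in\Gamma\rangle$ does \emph{not} ``recover exactly $[g,tgt^{-1}]=1$ for all $g\in\Gamma$'': eliminating the second factor via $g'=tgt^{-1}$ converts the direct-product relations $[g,h']=1$ into $[g,tht^{-1}]=1$ for \emph{all pairs} $g,h\in\Gamma$, so the fundamental group of your graph of groups is $\langle \Gamma,t \mid [g,tht^{-1}]=1,\ g,h\in\Gamma\rangle$ (equivalently, the graph product of copies of $\Gamma$ over the bi-infinite path, extended by the shift). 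The diagonal relations in the displayed definition of $\Gamma^\rtimes$ do not imply the mixed ones in general, so the group defined by the diagonal presentation properly surjects onto your HNN extension. Concretely, for $\Gamma=\mathbb{Z}^2=\langle a,b\rangle$, let $N$ be the quotient of the free class-$2$ nilpotent group on $\{a_i,b_i\}_{i\in\mathbb{Z}}$ by the relations $[a_i,b_i]=1$, $[a_i,a_{i+1}]=[b_i,b_{i+1}]=1$, $[a_i,b_{i+1}][b_i,a_{i+1}]=1$, and triviality of all commutators between copies at distance at least $2$; then $z_i:=[a_i,b_{i+1}]$ has infinite order, and in $H=N\rtimes_{\mathrm{shift}}\mathbb{Z}$ bilinearity of commutators in class $2$ gives $[a^mb^n,\,t(a^mb^n)t^{-1}]=z_0^{mn}z_0^{-mn}=1$ for all $m,n$, while $[a,tbt^{-1}]=z_0\neq 1$. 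Thus $H$ satisfies every defining relation of the diagonal presentation but not the mixed relation, so for $\Gamma=\mathbb{Z}^2$ the diagonal group is strictly bigger than your HNN extension (the two coincide when $\Gamma$ is locally cyclic, e.g.\ $\Gamma=\mathbb{Z}$, which may be the source of the slip). Consequently you must either (i) read the corollary's group as the full-commutation group, i.e.\ as the fundamental group of the graph of groups you describe, with $g$ and $h$ ranging independently in the relative presentation --- in which case the rest of your verification goes through and matches \cite[Example~11.38]{Qm} --- or (ii) take the diagonal presentation literally, in which case your decomposition computes a different group and the proof as written fails for, e.g., $\Gamma=\mathbb{Z}^2$. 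As stated, the sentence ``reading off the associated relation \ldots recovers exactly $[g,tgt^{-1}]=1$'' is false, and this is precisely the point that needs to be resolved.
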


Also, the \emph{$\Join$-product} of two groups $\Gamma_1$ and $\Gamma_2$ \cite[Example 11.39]{Qm}, defined by the relative presentation
$$\Gamma_1 \Join \Gamma_2 = \langle \Gamma_1,\Gamma_2,t: [g,h]=[g,tht^{-1}]= [h,tht^{-1}]=1, \ g \in \Gamma_1, h\in \Gamma_2 \rangle,$$
is the fundamental group of a right-angled graph of groups satisfying the assumptions of Theorem \ref{thm:RAGG}, hence:

\begin{corollary}
If $\Gamma_1$ and $\Gamma_2$ are Helly groups, then so is $\Gamma_1 \Join \Gamma_2$.
\end{corollary}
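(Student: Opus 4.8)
The plan is to deduce the corollary directly from Theorem~\ref{thm:RAGG}, exactly as was done for the $\rtimes$-power in the corollary immediately above. First I would invoke \cite[Example~11.39]{Qm}, which exhibits $\Gamma_1 \Join \Gamma_2$ as the fundamental group of an explicit right-angled graph of groups whose factors are copies of $\Gamma_1$ and $\Gamma_2$. Concretely, the defining relations $[g,h]=[g,tht^{-1}]=[h,tht^{-1}]=1$ record that $\Gamma_1$ commutes with both $\Gamma_2$ and its $t$-conjugate $t\Gamma_2t^{-1}$, and that the two copies $\Gamma_2$ and $t\Gamma_2t^{-1}$ commute with each other. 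This commutation pattern is precisely what is encoded by the vertex- and edge-groups (graph products of copies of $\Gamma_1,\Gamma_2$) glued along a loop carrying the stable letter $t$, so that the presentation is realized as a right-angled graph of groups in the sense of the definitions preceding Theorem~\ref{thm:RAGG}.

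Next I would verify that this graph of groups satisfies the three hypotheses of Theorem~\ref{thm:RAGG}. The finiteness assumption holds because the underlying abstract graph and the simplicial graphs defining the graph products involve only the two vertex-groups $\Gamma_1,\Gamma_2$, hence are finite. The factors of these graph products are exactly $\Gamma_1$ and $\Gamma_2$, which are Helly by hypothesis. The only substantive condition that remains is $\Phi(\Gamma)=\{\mathrm{Id}\}$ for every factor $\Gamma$, i.e.\ that every composition $\varphi_{e_k}\circ\cdots\circ\varphi_{e_1}$ of edge-isomorphisms along an oriented loop fixing a factor acts as the identity on that factor; this is the verification performed in \cite[Example~11.39]{Qm}. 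Granting it, Theorem~\ref{thm:RAGG} applies verbatim and yields that $\Gamma_1 \Join \Gamma_2$ is Helly.

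The only genuine obstacle is the condition $\Phi(\Gamma)=\{\mathrm{Id}\}$, since it is the one requirement that cannot simply be read off the presentation but must be checked against the graph-of-groups data. I expect it to be immediate from the structure recorded in \cite[Example~11.39]{Qm}: the edge-isomorphisms arising in the $\Join$-construction are the identity maps between the designated copies of $\Gamma_1$ and of $\Gamma_2$, so every relevant composition along a loop is again the identity. With this in hand the proof reduces, as for the $\square$-product and the $\rtimes$-power, to a single application of the combination theorem.
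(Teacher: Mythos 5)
Your proposal is correct and coincides with the paper's own argument: the corollary is deduced, exactly as you do, from Theorem~\ref{thm:RAGG} by citing \cite[Example~11.39]{Qm}, which realizes $\Gamma_1 \Join \Gamma_2$ as the fundamental group of a right-angled graph of groups with factors $\Gamma_1,\Gamma_2$ satisfying all hypotheses of that theorem (including $\Phi(\Gamma)=\{\mathrm{Id}\}$). Your additional remarks spelling out why the hypotheses hold are consistent with what the paper delegates to \cite{Qm}.
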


 \section{Properties of Helly groups}\label{s:Hprops}
The main goal of this section is proving
Theorem~\ref{t:properties1}(\ref{t:properties1(2)})-(\ref{t:properties1(4)})(\ref{t:properties1(6)})-(\ref{t:properties1(9)})
from the
Introduction. (Theorem~\ref{t:properties1}(\ref{t:properties1(1)}) is
proved in the subsequent Section~\ref{s:biautomatic} and
Theorem~\ref{t:properties1}(\ref{t:properties1(5)}) follows from
Theorems~\ref{t:injbicomb} and~\ref{t:helly=inj}). On the way we show
also some immediate consequences of the main results and prove related
facts concerning groups acting on Helly graphs.

\subsection{Fixed points for finite group actions}
\label{s:fixedpts}

In this subsection we prove Theorem~\ref{t:properties1}(\ref{t:properties1(2)}), stating that every Helly group has only finitely many conjugacy classes of finite subgroups. It is an immediate consequence of the following result interesting on it own.
\begin{theorem}[Fixed Point Theorem]\label{t:fixedpt}
  Let $\Gamma$ be a group acting by automorphisms on a Helly graph $G$
  without infinite cliques. If $\Gamma$ has bounded orbits, then there
  exists a clique of $G$ stabilized by $\Gamma$.  In particular, there
  is a fixed vertex of the induced action of $\Gamma$ on the face
  complex $F(G)$.
\end{theorem}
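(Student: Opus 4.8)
The plan is to exploit the key structural fact about Helly graphs established earlier, namely that they are \emph{clique-Helly} (the hypergraph $\mathcal{X}(G)$ of maximal cliques has the Helly property, by Theorem~\ref{t:lotogloHell}) and, crucially, \emph{dismantlable} clique-Helly graphs with a contractible clique complex. The natural approach is to produce a $\Gamma$-invariant finite complete subgraph (clique) via a \emph{combinatorial barycenter} construction: given that $\Gamma$ is finite, any orbit of a vertex is finite, and in a Helly graph finite sets have a well-behaved ``center''. I would first show that $\Gamma$ preserves some bounded object, then contract it using the Helly/fixed-clique property to land on an invariant clique.

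More concretely, first I would fix any vertex $v_0 \in G$ and consider its finite orbit $O = \Gamma v_0$. Since $G$ is Helly (hence pseudo-modular and weakly modular), finite sets have a finite \emph{radius} and a nonempty set of \emph{centers}; I would take the minimal enclosing ball. The family of balls $\{B_r(x) : x \in O\}$ for the appropriate minimal radius $r$ has pairwise nonempty intersections (by the choice of $r$ as the radius of $O$), so by the Helly property their total intersection $C(O) = \bigcap_{x \in O} B_r(x)$ is nonempty. This central set $C(O)$ is visibly $\Gamma$-invariant (because $\Gamma$ permutes $O$ and acts by isometries), and it is finite. Iterating this centering operation---passing to the center of the center---strictly decreases the diameter until the $\Gamma$-invariant set has diameter at most $1$, i.e.\ is a clique. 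The cleanest way to organize this is to invoke the established result of Quilliot~\cite{Qui} cited right after Theorem~\ref{t:lotogloHell_bis}, that any finite Helly graph has the \emph{fixed clique property}: a complete subgraph invariant under its full automorphism group. The induction reduces the infinite locally finite Helly graph to a finite $\Gamma$-invariant Helly subgraph (the gated hull of a finite orbit, which is finite by local finiteness and gatedness), on which Quilliot's theorem applies directly to the finite image of $\Gamma$ in its automorphism group.

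The key steps, in order, are therefore: (1) pick a vertex and form its finite $\Gamma$-orbit; (2) take the convex or gated hull of this orbit, which is a finite, $\Gamma$-invariant, isometrically embedded---hence Helly---subgraph $H$ (finiteness uses local finiteness of $G$ together with $\Gamma$ finite); (3) apply the fixed clique property for finite Helly graphs~\cite{Qui} to the finite group $\Gamma|_H \leq \mathrm{Aut}(H)$, obtaining a clique $K$ of $H$ with $\Gamma K = K$; (4) observe that $K$ is then a clique of $G$ fixed setwise by $\Gamma$, and by definition of the face complex $F(G)$, the vertex of $F(X(G))$ corresponding to $K$ is fixed by the induced $\Gamma$-action.

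The main obstacle I anticipate is step~(2): ensuring that the $\Gamma$-invariant substructure one extracts is genuinely \emph{finite} and \emph{Helly}. Gated hulls are convex and hence isometrically embedded, and isometric subgraphs of Helly graphs need \emph{not} be Helly in general, so I must be careful---the correct move is to use that gated subgraphs of Helly graphs are retracts (via the gate projection, which is $1$-Lipschitz and idempotent), and retracts of Helly graphs \emph{are} Helly since retractions preserve the Helly property (as noted in the proof of Proposition~\ref{direct-products}). Finiteness then follows because a locally finite graph has only finitely many vertices within any bounded radius, and the gated hull of a finite set in a Helly graph is bounded. Alternatively, one bypasses the finiteness worry entirely by applying the centering/Helly argument directly in $G$ and appealing to the clique-Helly property of $G$ to contract a finite $\Gamma$-invariant family of pairwise intersecting maximal cliques to a common fixed clique; but reducing to the finite case and citing~\cite{Qui} is the most economical route.
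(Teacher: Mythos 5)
Your global architecture---replace $G$ by a finite $\Gamma$-invariant Helly subgraph built from the orbit $\Gamma v_0$, then quote a fixed-clique theorem for finite Helly (equivalently, dismantlable) graphs---is exactly the architecture of the paper's proof. But your step~(2), the reduction to the finite case, has a genuine gap: the gated hull of a finite set in a Helly graph need not be bounded, let alone finite. Gated sets in Helly graphs are extremely scarce (gatedness of cliques is a quasi-median phenomenon, not a Helly one). Indeed, if $S$ is gated and $p \notin S$ is adjacent to two distinct vertices $q, q' \in S$, then the gate $g$ of $p$ satisfies $1 = d(p,g) + d(g,q) = d(p,g) + d(g,q')$ with $d(p,g) \geq 1$, forcing $q = g = q'$, a contradiction; so a gated set must contain every common neighbour of any two of its vertices. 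In the thickening of the grid $\mathbb{Z}^2$ (the strong product of two bi-infinite paths, a locally finite Helly graph by Proposition~\ref{p:medHel}), this closure condition propagates from a single edge to the entire graph, so the gated hull of an edge is infinite. Taking $\Gamma \cong \mathbb{Z}/2$ acting by the reflection $(x,y) \mapsto (y,x)$, the orbit of $(0,1)$ is precisely the edge $\{(0,1),(1,0)\}$, and your invariant subgraph $H$ is all of $G$: no reduction has taken place. Your fallback sketches do not close this gap either: in the centering argument, the centre $C(O) = \bigcap_{x \in O} B_r(x)$ with $r = \lceil \operatorname{diam}(O)/2 \rceil$ only satisfies $\operatorname{diam}(C(O)) \leq 2r$, which does not strictly decrease, and the induced subgraph on $C(O)$ is not obviously Helly, so you cannot iterate down to diameter one nor apply \cite{Qui} to it; and the clique-Helly variant presupposes a $\Gamma$-invariant pairwise intersecting family of maximal cliques, whose existence is essentially what is to be proved.

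The paper repairs exactly this step and otherwise follows your plan: instead of a gated hull, it takes the discrete injective hull (Hellyfication) $E^0(\Gamma v)$ of the orbit. By Theorem~\ref{t:dinjhull}, $E^0(\Gamma v)$ is a Helly graph; it is finite because the injective hull of a finite metric space is finite (extremal integer forms on $\Gamma v$ are pointwise bounded by the diameter); it carries a canonical $\Gamma$-action, since $\Gamma$ permutes $\Gamma v$ isometrically and hence acts on extremal forms; and it embeds isometrically into $\He(G) = G$, using the remark that $\He(G)$ coincides with $G$ when $G$ is Helly. The fixed clique is then produced by \cite[Theorem A]{Polat1996} applied to the finite dismantlable graph $E^0(\Gamma v)$ (Helly graphs are dismantlable by Theorem~\ref{t:lotogloHell}); invoking Quilliot's fixed clique property for finite Helly graphs \cite{Qui}, as you propose, would serve equally well at this final step, and your steps~(3) and~(4) are fine as stated. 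So the one missing idea is to replace ``gated hull of the orbit'' by ``Hellyfication of the orbit''.
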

\begin{proof}
  Pick a vertex $v$ of $G$ and consider its $\Gamma$-orbit $\Gamma v$.
  Let $N$ be the diameter of $\Gamma v$. The intersection
  $B:=\bigcap_{g \in \Gamma}B_N(gv)$ of $N$-balls centered at vertices
  of the orbit $\Gamma v$ is a non-empty bounded $\Gamma$-invariant
  Helly graph.  Since $G$ does not contain infinite simplices, by
  \cite[Theorem A]{Polat1993}, the graph $B$ contains a clique
  stabilized by $\Gamma$.
\end{proof}

\noindent \emph{Proof of Theorem~\ref{t:properties1}(\ref{t:properties1(2)}).}  This follows immediately from the Fixed Point Theorem~\ref{t:fixedpt}, as e.g.\ in the case of \catz groups in \cite[Proposition I.8.5]{BrHa}.
\hfill $\square$

\begin{remark}
	\label{r:conjcl}
	Theorem~\ref{t:properties1}(\ref{t:properties1(2)}) can be also deduced from \cite{Dress1989} or \cite[Proposition 1.2]{Lang2013} combined with our Theorem~\ref{t:helly=inj}.
\end{remark}

\subsection{Flats vs hyperbolicity}
\label{s:flats-hyper}

\noindent
\emph{Proof of Theorem~\ref{t:properties1}(\ref{t:properties1(3)}).}
Suppose that $\Gamma$ is hyperbolic. Then $G$ is hyperbolic and,
clearly, does not contain an isometric $\ell_{\infty}$--square-grid.
For the converse, recall that if $\Gamma$ is not hyperbolic then $G$
contains isometric finite $\ell_{\infty}$--square-grids of arbitrary
size, by Proposition~\ref{prop-Helly-hyp}. Since $\Gamma$ acts
geometrically on $G$ (and, in particular, $G$ is locally finite), by a
diagonal argument it follows that $G$ contains an isometric infinite
$\ell_{\infty}$--grid (see e.g.\ \cite[Lemma II.9.34 and
Theorem II.9.33]{BrHa}). \hfill $\square$

\subsection{Contractibility and Hellyness of the fixed point
  set}\label{s:contrfix} The aim of this section is to prove that for
a group acting on a Helly complex, its fixed point set is
contractible. This leads to a proof of
Theorem~\ref{t:properties1}(\ref{t:properties1(4)}) showing that the
Helly complex is a model for the classifying space for proper actions.
Furthermore, we show that the fixed point subcomplex of the face
complex (of the Helly complex on which the group acts) is Helly.

\begin{lemma}\label{l:barSubCont}
  Let $\Gamma < \mr{Aut}(X)$ be a group of automorphisms of a locally
  finite Helly complex $X$. The fixed point set $X'^\Gamma$ of the
  barycentric subdivision $X'$ of $X$ is contractible.
\end{lemma}

\begin{proof}
  Let $\sigma$ be a simplex of $X$ stabilized by $\Gamma$. For every
  $N>0$, the intersection $B_N:=\bigcap_{v\in \sigma^{(0)}}B_N(v)$ of
  $N$-balls centered at vertices of $\sigma$ is Helly, hence
  dismantlable.  It is also $\Gamma$-invariant, by construction.  The
  fixed point set $B_N'^\Gamma$ in the barycentric subdivision $B_N'$
  of $B_N$ is contractible by \cite[Theorem 6.5]{BarmakMinian2012} or
  \cite[Theorem 1.2]{HenselOsajdaPrzytycki}.  Since the sets $B_N$
  exhaust $X$ it follows that the fixed point set $X'^\Gamma$ in the
  barycentric subdivision $X'$ of $X$ is contractible.
\end{proof}

Theorem~\ref{t:properties1}(\ref{t:properties1(4)}) is a part of the following corollary of Theorem~\ref{t:fixedpt} and Lemma~\ref{l:barSubCont}.

\begin{corollary}\label{c:EG}
  Let $\Gamma$ be a group acting properly on a locally finite Helly
  graph $G$. Then, the Helly complex $X(G)$ is a model for the
  classifying space $\underline{E}\Gamma$ for proper actions of
  $\Gamma$. If the action is cocompact then the model is finite
  dimensional and cocompact.
\end{corollary}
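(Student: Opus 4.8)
The plan is to verify the standard criterion characterizing a model for $\underline{E}\Gamma$: a $\Gamma$-CW complex $Y$ with finite cell-stabilizers is a model for the classifying space for proper actions precisely when the fixed-point set $Y^H$ is contractible for every finite subgroup $H \le \Gamma$ and is empty for every infinite subgroup $H \le \Gamma$. I would take $Y = X(G)$ with the simplicial $\Gamma$-action induced from the action on $G$; after passing to the barycentric subdivision if necessary this is a $\Gamma$-CW complex, and since $\Gamma$ acts properly on $G$ every cell-stabilizer is finite. It thus remains to analyze the fixed-point sets.

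First I would dispose of the emptiness condition. A point $x \in |X(G)|$ lies in the interior of a unique carrier simplex $\sigma$, so any $h \in \Gamma$ fixing $x$ must satisfy $h\sigma = \sigma$, whence $\mathrm{Stab}(x) \le \mathrm{Stab}(\sigma)$. The homomorphism $\mathrm{Stab}(\sigma) \to \mathrm{Sym}(\sigma^{(0)})$ has finite image, and its kernel fixes every vertex of $\sigma$ and hence lies in a vertex-stabilizer, which is finite by properness; therefore $\mathrm{Stab}(x)$ is finite. Consequently $X(G)^H = \emptyset$ whenever $H$ is infinite. For a finite subgroup $H$, the Fixed Point Theorem~\ref{t:fixedpt} produces a clique fixed by $H$, so in particular $X(G)^H \neq \emptyset$.

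The heart of the argument is the contractibility of $X(G)^H$ for finite $H$, and this is exactly the content already extracted in the proof of Corollary~\ref{c:fixsetHelly}. Fixing an $H$-invariant clique $\sigma$ supplied by Theorem~\ref{t:fixedpt}, I would consider the $H$-invariant subgraphs $B_N := \bigcap_{v \in \sigma^{(0)}} B_N(v)$; each is a Helly graph and hence dismantlable, so by \cite[Theorem 6.5]{BarmakMinian2012} (or \cite[Theorem 1.2]{HenselOsajdaPrzytycki}) the fixed-point set $|B_N|^H$, realized as a subcomplex of the barycentric subdivision, is contractible. Here I would use that for a simplicial action the geometric fixed-point set $|X|^H$ coincides with the fixed-point subcomplex of the barycentric subdivision. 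Since the $B_N$ exhaust $G$ and the inclusions $|B_N|^H \hookrightarrow |B_{N+1}|^H$ are cofibrations, the fixed-point set $X(G)^H = |X(G)|^H = \bigcup_N |B_N|^H$ is a nested union of contractible subcomplexes; as every compact subset (in particular the image of any sphere) lies in some $|B_N|^H$, it is weakly contractible, hence contractible by Whitehead's theorem. This establishes all the hypotheses of the criterion, so $X(G)$ is a model for $\underline{E}\Gamma$.

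Finally, for the cocompact case, if $\Gamma \curvearrowright G$ is cocompact then, $G$ being locally finite with finitely many vertex-orbits, the maximal clique size is bounded, so $X(G)$ is finite-dimensional, and there are finitely many $\Gamma$-orbits of simplices, so the model is cocompact. The main obstacle is the contractibility step for non-cocompact actions, where both $X(G)$ and the fixed-point sets may be infinite: one must combine the dismantlability of the Helly balls $B_N$ with the exhaustion/limit argument above, and must correctly identify the geometric fixed-point set with a subcomplex of a subdivision in order to invoke the contractibility results for fixed points of finite groups acting on dismantlable complexes.
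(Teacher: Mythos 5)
Your proposal is correct and takes essentially the same route as the paper: the paper deduces Corollary~\ref{c:EG} from the Fixed Point Theorem~\ref{t:fixedpt} together with the contractibility argument embedded in the proof of Corollary~\ref{c:fixsetHelly} (the $H$-invariant Helly balls $B_N=\bigcap_{v\in\sigma^{(0)}}B_N(v)$ are dismantlable, their fixed-point sets in the barycentric subdivision are contractible by the cited results of Barmak--Minian and Hensel--Osajda--Przytycki, and the exhaustion gives contractibility of the full fixed-point set), which is exactly the argument you reproduce. Your remaining verifications --- finiteness of point stabilizers, the standard fixed-point criterion for $\underline{E}\Gamma$, and finite-dimensionality and cocompactness of the model in the cocompact case --- are the routine steps the paper leaves implicit, and they are carried out correctly.
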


In view of Theorem~\ref{t:helly=inj} and \cite[Theorem 1.4]{Lang2013} there exists also another model for $\underline{E}\Gamma$, defined as follows.
\begin{theorem}\label{t:EG}
  Let $\Gamma$ be a group acting properly on a locally finite Helly
  graph $G$.  The injective hull $E(G)$ of $G$ is a model for the
  classifying space $\underline{E}\Gamma$ for proper actions of
  $\Gamma$. If the action is cocompact then the model is finite
  dimensional and cocompact.
\end{theorem}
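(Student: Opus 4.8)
The plan is to obtain Theorem~\ref{t:EG} as a direct consequence of Theorem~\ref{t:helly=inj} together with \cite[Theorem~1.4]{Lang2013}. First I would invoke Theorem~\ref{t:helly=inj}: since $G$ is a locally finite Helly graph, its injective hull $E(G)$ is proper and carries the structure of a locally finite polyhedral complex, and the proper (respectively geometric) action of $\Gamma$ on $G$ extends to a proper (respectively geometric) action of $\Gamma$ on $E(G)$ by cellular isometries. In the cocompact case this already yields finite-dimensionality and cocompactness of the space $E(G)$, so the only remaining point is to recognize $E(G)$ as a model for the classifying space for proper actions.

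To that end I would use the Bredon-type characterization of $\underline{E}\Gamma$: a proper $\Gamma$-CW complex $X$ is a model for $\underline{E}\Gamma$ precisely when the fixed-point set $X^H$ is contractible for every finite subgroup $H \leq \Gamma$ and empty for every infinite $H$. The polyhedral structure provided by Theorem~\ref{t:helly=inj} makes $E(G)$ a $\Gamma$-CW complex, because the injective hull and its cell decomposition are canonical and hence preserved by the extended $\Gamma$-action. As this action is proper, every point-stabilizer is finite; in particular $E(G)^H = \emptyset$ for every infinite subgroup $H$, so that half of the criterion is automatic. This mirrors the route taken for the clique complex in Corollary~\ref{c:EG}, with $E(G)$ now playing the role of $X(G)$.

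The essential step is then the contractibility of $E(G)^F$ for each finite subgroup $F \leq \Gamma$. Here I would exploit injectivity of $E(G)$ in two equivalent ways. On the one hand, the orbit of any point under $F$ is bounded, and an isometric action with bounded orbits on an injective (hyperconvex) metric space has a nonempty fixed-point set; moreover the fixed-point set of a group of isometries on an injective space is itself injective, hence contractible. On the other hand, one may appeal to the unique convex, consistent, reversible geodesic bicombing $\sigma$ furnished by Theorem~\ref{t:injbicomb}: by uniqueness $\sigma$ is $\Gamma$-equivariant, so it restricts to $E(G)^F$ and contracts this bicombing-convex, nonempty set to a point. This is exactly the input required by \cite[Theorem~1.4]{Lang2013}, which packages these facts and concludes that $E(G)$ is a model for $\underline{E}\Gamma$.

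The main obstacle, and the heart of the matter, is precisely the nonemptiness and contractibility of the fixed-point sets $E(G)^F$; everything else (properness, the $\Gamma$-CW structure, and, in the geometric case, finite-dimensionality and cocompactness) is read off directly from Theorem~\ref{t:helly=inj}. Since \cite[Theorem~1.4]{Lang2013} already establishes this fixed-point contractibility for finite subgroups acting on injective hulls, the cleanest route is to cite it rather than reprove the hyperconvex fixed-point theory; I would only emphasize that finite-dimensionality and cocompactness in the geometric case are inherited from Theorem~\ref{t:helly=inj} and from the cocompactness of the $\Gamma$-action on $G$.
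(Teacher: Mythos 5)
Your proposal is correct and follows essentially the same route as the paper, which states Theorem~\ref{t:EG} precisely as a direct consequence of Theorem~\ref{t:helly=inj} combined with \cite[Theorem~1.4]{Lang2013}. Your additional elaboration of the Bredon criterion and the fixed-point theory for finite groups acting on injective spaces is accurate but is already packaged inside Lang's theorem, exactly as you note.
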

\begin{remark}
	\label{r:EG}
	Observe that $X(G)$ can be non-homeomorphic to $E(G)$. For example, if $G$ is an  $(n+1)$-clique then obviously
	the clique complex $X(G)$ is an $n$-simplex, whereas the injective hull $E(G)$ is a cone over $n+1$ points,
	that is, a tree.
\end{remark}

Recall that the fixed point complex $\fcom{X}^\Gamma$ in the face
complex is the subcomplex spanned by all vertices of $\fcom{X}$ fixed
by $\Gamma$. We now prove that $\fcom{X}^\Gamma$ is Helly.

\begin{lemma}[Clique-Helly fixed point set]\label{l:fixsetcliqueHelly}
  Let $\Gamma < \mr{Aut}(X)$ be a group of automorphisms of a locally
  finite clique-Helly complex $X$. Then the fixed point complex
  $\fcom{X}^\Gamma$ is clique-Helly.
\end{lemma}
\begin{proof}
  Let $uvw$ be a triangle in $\fcom{X}^\Gamma$.
By the clique-Helly property for $\fcom{X}$
  (Propostion~\ref{p:Hellyface}) there is a vertex $z\in \fcom{X}$
  adjacent to all vertices of $F(X)$ spanning triangles with an edge
  of $uvw$ (Proposition~\ref{clique_Helly_triangle}). Since $uvw$
  belongs to $\fcom{X}^\Gamma$, all vertices in the orbit $\Gamma
  z$ have the same property as $z$, i.e., they are adjacent to all
  vertices of $\fcom{X}$ spanning triangles with an edge of
  $uvw$. Consequently, they span a simplex of $F(X)$. Let $\sigma$ be
  the union of the simplices of $X$ corresponding to the vertices of
  $\Gamma z$ in $\fcom{X}$.  By Lemma~\ref{l:triface}, $\sigma$ is a
  simplex of $X$. Let $y$ be the vertex of $\fcom{X}$ corresponding to
  $\sigma$. Notice that $y$ belongs to $\fcom{X}^\Gamma$. We now prove
  that $y$ satisfies the assumption of
  Proposition~\ref{clique_Helly_triangle}. Pick a vertex $x$ of
  $\fcom{X}^\Gamma$ spanning a triangle with an edge of $uvw$, say
  with $uv$. By the definition of $z$, $x$ is adjacent to $z$ and to any
  $z' \in \Gamma z$. Consequently, for any $z' \in \Gamma z$, $x$ and
  $z'$ correspond to two subsimplices $\tau_x$ and $\tau_{z'}$ of a
  common simplex of $X$. Therefore, all vertices of $\tau_x$ are
  adjacent to all vertices of $\tau_{z'}$. Since $\sigma = \bigcup_{z'
    \in \Gamma z} \tau_{z'}$, $\tau_x$ and $\sigma$ are also
  subsimplices of a common simplex of $X$. This implies that $x$ and
  $y$ are adjacent in $\fcom{X}$.
\end{proof}

\begin{corollary}[Helly fixed point set]\label{c:fixsetHelly}
  Let $\Gamma < \mr{Aut}(X)$ be a group of automorphisms of a locally
  finite Helly complex $X$. Then the fixed point complex
  $\fcom{X}^\Gamma$ is Helly.
\end{corollary}
\begin{proof}
  Since every edge in $\fcom{X}^\Gamma$ is homotopic to a path in
  $X'^\Gamma$, we have that every cycle in $\fcom{X}^\Gamma$ is
  homotopic to a cycle in $X'^\Gamma$, and hence $\fcom{X}^\Gamma$ is
  simply connected by Lemma~\ref{l:barSubCont}. Hence by
  Lemma~\ref{l:fixsetcliqueHelly} and Theorem~\ref{t:lotogloHell},
  $\fcom{X}^\Gamma$ is Helly.
\end{proof}

\subsection{EZ-boundaries}
\label{s:EZ}

For a group $\Gamma$ acting geometrically on $X$, by an \emph{EZ-structure for $\Gamma$} we mean a pair $(\overline{X},\partial X)$, where $\overline{X}=X\cup  \partial X$ is a compactification of $X$ being an Euclidean retract with the following additional properties. The \emph{EZ-boundary} $\partial X$ is a
$Z$-set in $\overline{X}$ such that, for every compact $K\subset X$ the sequence $(gK)_{g\in \Gamma}$ is
a null sequence, and the action $\Gamma \curvearrowright X$ extends to an action $\Gamma \curvearrowright \overline{X}$ by homeomorphisms. This notion was first introduced by Bestvina \cite{Besvina1996} (without
the requirement of extending $\Gamma \curvearrowright X$ to $\Gamma \curvearrowright \overline{X}$), then by
Farrell-Lafont \cite{FaLa2005} (for free actions), and finally in \cite{OsaPrzy2009} (in the form above).
Homological invariants of the boundary are related to homological invariants of the group, and the existence of
an EZ-structure has some important consequences (e.g.\ it implies the Novikov conjecture in the torsion-free case). Conjecturally,
all groups with finite classifying spaces admit EZ-structures, but such objects were constructed only for
limited classes of groups -- notably for hyperbolic groups and for \catz groups. Theorem~\ref{t:properties1}(\ref{t:properties1(6)}) is a consequence of the following.

\begin{theorem}
	\label{t:EZ}
	Let $\Gamma$ act geometrically on a Helly graph $G$. Then there exists an EZ-boundary $\partial G$
	such that $(X(G)\cup \partial G,\partial G)$ and $(E(G)\cup \partial G,\partial G)$ are
	EZ-structures for $\Gamma$.
\end{theorem}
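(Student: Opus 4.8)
The plan is to push everything onto the injective hull and exploit the convex geodesic bicombing it carries. First I would invoke Theorem~\ref{t:helly=inj}: since $\Gamma$ acts geometrically on the locally finite Helly graph $G$, it acts geometrically on the injective hull $E(G)$, which is a proper metric space with the structure of a locally finite polyhedral complex; cocompactness forces finitely many orbits of cells and hence finite combinatorial dimension. Being injective of finite combinatorial dimension, $E(G)$ admits, by Theorem~\ref{t:injbicomb}, a unique convex, consistent, reversible geodesic bicombing $\sigma$. The uniqueness is crucial: it makes $\sigma$ invariant under every isometry of $E(G)$, in particular under the $\Gamma$-action, so the combing descends equivariantly.

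Next I would build the boundary. Fix a basepoint $x_0 \in E(G)$ and let $\partial G := \partial E(G)$ be the set of asymptotic classes of $\sigma$-geodesic rays issuing from $x_0$, with $\overline{E(G)} = E(G) \cup \partial G$ given the associated cone topology. Consistency of $\sigma$ guarantees that each point of $\overline{E(G)}$ is joined to $x_0$ by a unique $\sigma$-geodesic, and convexity of $\sigma$ makes the straight-line homotopy $H \colon \overline{E(G)} \times [0,1] \to \overline{E(G)}$ that slides each point toward $x_0$ along its $\sigma$-geodesic continuous. This is the exact analogue of the geodesic contraction used in the CAT(0) setting, with convexity of the bicombing replacing convexity of the CAT(0) metric. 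Following the standard scheme for such spaces (cf.\ \cite{DesLang2015,DesLang2016,Desc2016}), I would check that $\overline{E(G)}$ is compact, metrizable, contractible and locally contractible; combined with finite-dimensionality inherited from the finite combinatorial dimension of $E(G)$, this yields that $\overline{E(G)}$ is a Euclidean retract.

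I would then verify the three defining conditions. The contraction $H$ instantly displaces $\overline{E(G)}$ off $\partial G$ into the interior, giving that $\partial G$ is a $Z$-set. Properness of $E(G)$ together with cocompactness gives the null-sequence condition: for any compact $K$, the translates $gK$ eventually leave every fixed ball, so their diameters in $\overline{E(G)}$ tend to $0$. Since each $g \in \Gamma$ is an isometry preserving $\sigma$, it permutes $\sigma$-rays and extends to a homeomorphism of $\overline{E(G)}$, so the action extends by homeomorphisms; hence $(E(G) \cup \partial G, \partial G)$ is an EZ-structure. For the clique complex, the Kuratowski embedding realizes $V(G)$ in $E(G)$ with $d_H(E(G),e(G)) \le 1$ by Theorem~\ref{t:helly=inj}, so $G \hookrightarrow E(G)$, and therefore $X(G)$, is $\Gamma$-equivariantly quasi-isometric to $E(G)$. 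Using this equivariant quasi-isometry I would attach the \emph{same} boundary $\partial G$ to $X(G)$ (a point converges to $\xi \in \partial G$ exactly when its image does), with the $\sigma$-rays again furnishing the cone topology and the contraction. The verifications that $X(G) \cup \partial G$ is a Euclidean retract, that $\partial G$ is a $Z$-set, and that the null-sequence and equivariance conditions hold are then formally identical, the finite-dimensionality coming this time from local finiteness and finite dimension of the Helly complex $X(G)$.

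The hard part will be establishing that the compactifications are Euclidean retracts and that $\partial G$ is a $Z$-set. In the CAT(0) case these rest on convexity of the metric through nearest-point projections and convex contractions; here the correct substitute is the convexity and consistency of $\sigma$, and the delicate point is proving continuity of the cone topology and of the contraction $H$ \emph{up to and including} the boundary, in the absence of geodesic uniqueness for the underlying metric. A secondary technical issue is checking that transferring the boundary along the equivariant quasi-isometry yields precisely the same set $\partial G$ and a compatible topology on $X(G) \cup \partial G$, so that both models genuinely share one EZ-boundary.
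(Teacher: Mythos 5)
Your proposal takes essentially the same route as the paper: pass to the injective hull via Theorem~\ref{t:helly=inj}, use the unique convex, consistent geodesic bicombing of Theorem~\ref{t:injbicomb} (uniqueness giving $\Gamma$-invariance), and take $\partial G$ to be the space of classes of combing rays with the cone topology; the only difference is that the paper outsources the construction of the $Z$-structure to Descombes--Lang \cite{DesLang2015} and the equivariant (EZ) upgrade to \cite{OsaPrzy2009}, whereas you sketch those verifications directly. The delicate points you correctly flag (continuity of the contraction and cone topology up to the boundary, and transferring the structure to $X(G)$) are precisely what those citations cover, so your outline is sound.
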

\begin{proof}
  It is shown in \cite{DesLang2015} that for a complete metric space
  $E(G)$ with a convex and consistent bicombing there exists
  $\partial G$ (space of equivalence classes of combing rays) such
  that $(E(G)\cup \partial G,\partial G)$ is a so-called
  Z-structure. The proof is easily adapted to show that it is an
  EZ-structure (see e.g.\ \cite{OsaPrzy2009} where a much weaker
  version of a `coarse bicombing' is used to define an
  EZ-structure). It follows that $(X(G)\cup \partial G,\partial G)$ is
  an EZ-structure as well.
\end{proof}

\subsection{Farrell-Jones conjecture}
\label{s:FJC}

For a discrete group $\Gamma$ the \emph{Farrell-Jones Conjecture} asserts that the $K$-theoretic (resp.\ $L$-theoretic) \emph{assembly map}
\begin{align*}
\label{e:FJK}
H_n^\Gamma(E_{\mathcal{VCY}}(\Gamma);{\bf K}_R) \to K_n(R\Gamma)\\
(\mathrm{resp.}\; H_n^\Gamma(E_{\mathcal{VCY}}(\Gamma);{\bf L}_R^{\langle -\infty \rangle}) \to L_n^{\langle -\infty \rangle}(R\Gamma))
\end{align*}
is an isomorphism. Here, $R$ is an associative ring with a unit, $R\Gamma$
is the group ring, and $K_n(R\Gamma)$ are the algebraic $K$--groups of
$R\Gamma$. By $E_{\mathcal{VCY}}(\Gamma)$ we denote the classifying space for
the family of virtually cyclic subgroups of $\Gamma$, and ${\bf K}_R$ is
the spectrum given by algebraic $K$--theory with coefficients from $R$
(resp.\ we have the $L$-theoretic analogues) (see e.g.\
\cite{BarLuc2012,KasRup2017} for more details). We say that $\Gamma$
satisfies the \emph{Farrell-Jones conjecture with finite wreath
  products} if for any finite group $F$ the wreath product $\Gamma \wr F$
satisfies the Farrell-Jones conjecture.
\medskip

\noindent
\emph{Proof of Theorem~\ref{t:properties1}(\ref{t:properties1(7)}).} Kasprowski-R{\"u}ping
\cite{KasRup2017} showed that the Farrell-Jones conjecture with finite wreath
products holds for
groups acting geometrically on spaces with convex geodesic
bicombing. Hence our result follows from Theorem~\ref{t:helly=inj} and Theorem~\ref{t:injbicomb}.
\hfill $\square$

\subsection{Coarse Baum-Connes conjecture}
\label{s:coarseBC}

For a metric space $X$ the \emph{coarse assembly map} is a homomorphism from the coarse $K$-homology of $X$ to the $K$-theory of the Roe-algebra of $X$. The space $X$ satisfies the \emph{coarse Baum-Connes conjecture} if the coarse assembly map is an isomorphism. A finitely generated group $\Gamma$ satisfies the {coarse Baum-Connes conjecture} if the conjecture holds for $\Gamma$ seen as a metric space with a word metric given by a finite
generating set. Equivalently, the conjecture holds for $\Gamma$ if a metric space (equivalently: every metric space) acted geometrically upon by $\Gamma$ satisfies the conjecture.

\medskip

\noindent
\emph{Proof of Theorem~\ref{t:properties1}(\ref{t:properties1(8)}).}
Fukaya-Oguni \cite{FukayaOguni2019} introduced the notion of \emph{geodesic coarsely convex} space, and
proved that the coarse Baum-Connes conjecture holds for such spaces. A geodesic coarsely convex space is a metric space
with a coarse version of a bicombing satisfying some coarse convexity condition. In particular, metric spaces
with a convex bicombing -- hence all proper injective metric spaces (Theorem~\ref{t:injbicomb}) -- are geodesic coarsely convex spaces.
Therefore, our result follows from Theorem~\ref{t:helly=inj}.
\hfill $\square$

\subsection{Asymptotic cones}\label{s:AsCones}

\noindent
In this section, we are interested in asymptotic cones of Helly groups. More precisely, we prove Theorem~\ref{t:properties1}(\ref{t:properties1(9)}). Before turning to the proof, let us begin with a few definitions.

\medskip \noindent
An \emph{ultrafilter} $\omega$ over a set $S$ is a collection of subsets of $S$ satisfying the following conditions:
\begin{itemize}
	\item $\emptyset \notin \omega$ and $S \in \omega$;
	\item for every $A,B \in \omega$, $A \cap B \in \omega$;
	\item for every $A \subset S$, either $A\in \omega$ or $A^c \in \omega$.
\end{itemize}
Basically, an ultrafilter may be thought of as a labelling of the subsets of $S$ as ``small'' (if they do not belong to $\omega$) or ``big'' (if they belong to $\omega$). More formally, notice that the map
$$\left\{ \begin{array}{ccc} \mathfrak{P}(S) & \to & \{ 0,1\} \\ A & \mapsto & \left\{ \begin{array}{cl} 0 & \text{if $A \notin \omega$} \\ 1 & \text{if $A \in \omega$} \end{array} \right. \end{array} \right.$$
defines a finitely additive measure on $S$.

\medskip \noindent
The easiest example of an ultrafilter is the following. Fixing some $s \in S$, set $\omega= \{ A \subset S: s \in A \}$. Such an ultrafilter is called \emph{principal}. The existence of non-principal ultrafilters is assured by Zorn's lemma; see \cite[Section 3.1]{KapovichLeebCones} for a brief explanation.

\medskip \noindent
Now, fix a metric space $(X,d)$, a non-principal ultrafilter $\omega$ over $\mathbb{N}$, a \emph{scaling sequence} $\epsilon= (\epsilon_n)$ satisfying $\epsilon_n \to 0$, and a sequence of basepoints $o=(o_n) \in X^{\mathbb{N}}$. A sequence $(r_n) \in \mathbb{R}^{\mathbb{N}}$ is \emph{$\omega$-bounded} if there exists some $M \geq 0$ such that $\{ n \in \mathbb{N}: |r_n| \leq M \} \in \omega$ (i.e., if $|r_n| \leq M$ for ``$\omega$-almost all $n$''). Set
$$B(X,\epsilon,o) = \{ (x_n) \in X^{\mathbb{N}}: \text{$(\epsilon_n \cdot d(x_n,o_n))$ is $\omega$-bounded} \}.$$
We may define a pseudo-distance on $B(X,\epsilon,o)$ as follows. First, we say that a sequence $(r_n) \in \mathbb{R}^{\mathbb{N}}$ \emph{$\omega$-converges} to a real $r \in \mathbb{R}$ if, for every $\epsilon>0$, $\{ n \in \mathbb{N}: |r_n-r| \leq \epsilon \} \in \omega$. If so, we write $r= \lim\limits_\omega r_n$. It is worth noticing that an $\omega$-bounded sequence of $\mathbb{R}^\mathbb{N}$ always $\omega$-converges; see \cite[Section 3.1]{KapovichLeebCones} for more details. Then, our pseudo-distance is
$$\left\{ \begin{array}{ccc} B(X,\epsilon,o)^2 & \to & [0,+ \infty) \\ (x,y) & \mapsto & \lim\limits_\omega \epsilon_n \cdot d(x_n,y_n) \end{array} \right.$$
Notice that the previous $\omega$-limit always exists since the sequence under consideration is $\omega$-bounded.

\begin{definition}
The \emph{asymptotic cone} $\mathrm{Cone}_\omega(X,\epsilon,o)$ of $X$ is the metric space obtained by quotienting $B(X,\epsilon,o)$ by the relation: $(x_n) \sim (y_n)$ if $d\left( (x_n),(y_n) \right)=0$.
\end{definition}

\noindent
The picture to keep in mind is that $(X, \epsilon_n \cdot d)$ is a sequence of spaces we get from $X$ by ``zooming out'', and the asymptotic cone is the ``limit'' of this sequence. Roughly speaking, the asymptotic cones of a metric space are asymptotic pictures of the space. For instance, any asymptotic cone of $\mathbb{Z}^2$, thought of as the infinite grid in the plane, is isometric to $\mathbb{R}^2$ endowed with the $\ell_1$-metric; and the asymptotic cones of a simplicial tree (and more generally of any Gromov-hyperbolic space) are real trees.

\medskip \noindent
Because quasi-isometric metric spaces have bi-Lipschitz-homeomorphic asymptotic cones \cite[Proposition 3.12]{KapovichLeebCones}, one can define asymptotic cones of finitely generated groups up to bi-Lipschitz homeomorphism by looking at word metrics associated to finite generating sets.

\medskip \noindent
We are now ready to turn to Theorem~\ref{t:properties1}(\ref{t:properties1(9)}), which will be a consequence of the following statement:

\begin{proposition}\label{prop:AsCone}
Let $(X,d)$ be a finite dimensional proper injective metric space. Then its asymptotic cones are contractible.
\end{proposition}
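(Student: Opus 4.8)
The plan is to exploit the convex geodesic bicombing provided by Theorem~\ref{t:injbicomb} and to show that it descends to any asymptotic cone, since a metric space carrying a convex bicombing is contractible by a straight-line homotopy to a basepoint. Throughout I fix a non-principal ultrafilter $\omega$, a scaling sequence $\epsilon=(\epsilon_n)$ with $\epsilon_n \to 0$, and basepoints $o=(o_n)$, and write $d_\omega$ for the resulting metric on $\mathrm{Cone}_\omega(X,\epsilon,o)$. Let $\sigma$ be the convex, consistent, reversible bicombing of $X$ from Theorem~\ref{t:injbicomb}.

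First I would record the single estimate that drives everything. Since $t \mapsto d(\sigma_{xy}(t), \sigma_{x'y'}(t))$ is convex and $\sigma_{xy}, \sigma_{x'y'}$ are constant-speed geodesics, the chord bound evaluated at the endpoints gives, for all $x,y,x',y' \in X$ and $t \in [0,1]$,
\begin{equation}
d(\sigma_{xy}(t), \sigma_{x'y'}(t)) \le (1-t)\, d(x,x') + t\, d(y,y').
\tag{$\ast$}
\end{equation}
Next I would transport $\sigma$ to the cone by setting $\bar\sigma_{xy}(t) := [(\sigma_{x_n y_n}(t))_n]$ for $x=[(x_n)]$ and $y=[(y_n)]$. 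The checks to carry out are: the representing sequence lies in $B(X,\epsilon,o)$ (immediate from $d(\sigma_{x_ny_n}(t), o_n) \le d(x_n,o_n)+d(x_n,y_n)$ after scaling); $\bar\sigma_{xy}$ is independent of representatives (this is exactly $(\ast)$ scaled by $\epsilon_n$ and pushed through $\lim_\omega$); $\bar\sigma_{xy}$ is a constant-speed geodesic from $x$ to $y$ (the constant-speed identity $\epsilon_n\,d(\sigma_{x_ny_n}(s),\sigma_{x_ny_n}(t))=|s-t|\,\epsilon_n\,d(x_n,y_n)$ survives the $\omega$-limit); and convexity of $t \mapsto d_\omega(\bar\sigma_{xy}(t),\bar\sigma_{x'y'}(t))$ holds because each $t \mapsto \epsilon_n\,d(\sigma_{x_ny_n}(t),\sigma_{x'_ny'_n}(t))$ is convex and an $\omega$-limit of convex functions is convex. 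This yields a convex geodesic bicombing $\bar\sigma$ on the cone.

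Finally I would deduce contractibility from $\bar\sigma$. Fixing a basepoint $p$, I set $H(x,t)=\bar\sigma_{xp}(t)$, so $H(\cdot,0)=\mathrm{id}$ and $H(\cdot,1)\equiv p$. The limiting form of $(\ast)$ gives $d_\omega(H(x,t),H(y,t)) \le (1-t)\,d_\omega(x,y)$ (both rays end at $p$), while constant speed gives $d_\omega(H(y,t),H(y,s)) = |t-s|\,d_\omega(y,p)$; combining these via the triangle inequality shows $H$ is jointly continuous, hence a contraction onto $p$.

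I expect the only genuinely delicate point to be the bookkeeping around $\omega$-limits --- verifying that the representing sequences remain inside $B(X,\epsilon,o)$ and that well-definedness, the geodesic property, and convexity all persist under $\lim_\omega$. Each of these reduces to the estimate $(\ast)$ together with the elementary facts that $\omega$-limits respect inequalities and preserve convexity, so once $(\ast)$ and Theorem~\ref{t:injbicomb} are in hand there is no substantial obstacle; the work is in assembling these limits cleanly rather than in any single hard argument.
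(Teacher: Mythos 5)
Your proposal is correct and follows essentially the same route as the paper: the paper's proof also takes the convex geodesic bicombing $\sigma$ from Theorem~\ref{t:injbicomb}, defines $\rho(t,x)=(\sigma(o_n,x_n,t))$ on the cone, and observes that the geodesic property keeps the sequence in $B(X,\epsilon,o)$ while convexity gives continuity, yielding a contraction to the basepoint. The only difference is one of packaging --- you transport the entire bicombing to the cone and then contract along $\bar\sigma_{xp}$, whereas the paper contracts directly along the combing lines to $o$ --- but the key estimate and the verification steps are the same.
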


\begin{proof}
Let $\sigma : X \times X \times [0,1]$ denote the combing provided by Theorem \ref{t:injbicomb}. Fix a non-principal ultrafilter $\omega$, a sequence of basepoints $o=(o_n)$ and a sequence of scalings $\epsilon= (\epsilon_n)$. For every point $x=(x_n) \in \mathrm{Cone}_\omega(X,o,\epsilon)$ and every $t \in [0,1]$, let $\rho(t,x)$ denote $(\sigma(o_n,x_n,t))$. Notice that, because $\sigma$ is geodesic, $\rho(t,x)$ defines a point of $\mathrm{Cone}_\omega(X, o,\epsilon)$. Also, because $\sigma$ is convex, the map
$$\rho : \left\{ \begin{array}{ccc} [0,1] \times \mathrm{Cone}_\omega(X,o,\epsilon) & \to & \mathrm{Cone}_\omega(X,o,\epsilon) \\ (t,x) & \mapsto & \rho(t,x) \end{array} \right.$$
is continuous. In other words, $\rho$ defines a retraction of $\mathrm{Cone}_\omega(X,o,\epsilon)$ to the point $o$.
\end{proof}

\begin{proof}[Proof of Theorem~\ref{t:properties1}(\ref{t:properties1(9)}).]
Let $\Gamma$ be a group acting geometrically on a Helly graph $G$. As a consequence of Theorem~\ref{t:helly=inj}, $\Gamma$ acts geometrically on the injective hull $E(G)$ of $G$, which is a finite dimensional proper injective metric space. As every asymptotic cone of $\Gamma$ must be bi-Lipschitz homeomorphic to an asymptotic cone of $E(G)$, the desired conclusion follows from Proposition \ref{prop:AsCone}.
\end{proof}

 \section{Biautomaticity of Helly groups}
\label{s:biautomatic}

Biautomaticity is a strong property implying numerous algorithmic and
geometric features of a group \cites{ECHLPT,BrHa}.  Sometimes the fact
that a group acting on a space is biautomatic may be established from
the geometric and combinatorial properties of the space. For example,
one of the important and nice results about \catz cube complexes is a
theorem by Niblo and Reeves \cite{NiRe} stating that the groups acting
geometrically on such complexes are biautomatic. Januszkiewicz
and {\'S}wi{\c{a}}tkowski \cite{JS} established a similar result for
groups acting on systolic complexes. It is also well-known that
hyperbolic groups are biautomatic \cite{ECHLPT}.
{\'S}wi{\c{a}}tkowski \cite{Swiat} presented a general framework of
locally recognized path systems in a graph $G$ under which proving
biautomaticity of a group acting geometrically on $G$ is reduced to proving local
recognizability and the $2$--sided fellow traveler property for some
paths. 
\medskip

In this section we use a different meaning of the term `bicombing'. Here the bicombing is a combinatorial
object that should not be confused with the (continuous) geodesic bicombing from Subsection~\ref{s:bicombing}.

\subsection{Main results} In this section, similarly to the results of
\cite{NiRe} for \catz cube complexes, of \cite{JS} for systolic
complexes, and of \cite{CCHO} for swm-graphs, we define the concept of
normal clique-path and prove the existence and uniqueness of normal
clique-paths in all Helly graphs $G$. These clique-paths can be viewed
as usual paths in the $1$--skeleton of the face complex $F(X(G))$ of
$X(G)$ and also give rise to paths in the $1$--skeleton $\beta(G)$ of the
first barycentric subdivision of $X(G)$. From their definition, it
follows that the sets of normal clique-paths are locally recognized
sensu \cite{Swiat}. Moreover, we prove that they satisfy the
$2$--sided fellow traveler property. As a consequence, we conclude
that groups acting geometrically on Helly graphs are biautomatic.

\begin{theorem} \label{biautomatic}
The set of normal clique-paths between all vertices of a Helly graph
$G$ defines a regular geodesic bicombing in $\beta(G)$.  Consequently,
a group acting geometrically on a Helly graph is biautomatic.
\end{theorem}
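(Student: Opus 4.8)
The plan is to follow the framework of Świątkowski \cite{Swiat} for establishing biautomaticity via a locally recognized path system enjoying the two-sided fellow traveler property. The core of the argument is to define, for every ordered pair of vertices $(u,w)$ in a Helly graph $G$, a canonical \emph{normal clique-path}, to prove that such a path exists and is unique, and then to verify the two hypotheses (local recognizability and the fellow traveler property) that feed into Świątkowski's machine. Since Helly graphs are dismantlable clique-Helly graphs with simply connected clique complexes (Theorem~\ref{t:lotogloHell}), one can leverage their metric structure heavily.

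First I would define normal clique-paths. Given $u,w$, I expect the right notion to be a sequence of cliques $(K_0, K_1, \ldots, K_n)$ together with vertices, living in the barycentric subdivision $\beta(G)$, that advances greedily toward $w$ while staying on geodesics. The natural construction uses the Helly property: at each step, given the current vertex $v_i$ at distance $k$ from $w$, the set of neighbors of $v_i$ lying in $I(v_i,w)$ spans a clique (this follows because $G$ is clique-Helly and weakly modular, so the vertices one step closer to $w$ are pairwise adjacent), and one takes $K_i$ to be the \emph{maximal} such clique, then passes to its gate-like projection toward $w$. Uniqueness should follow from the maximality in the definition together with the Helly/gatedness properties of balls; existence follows by induction on $d(u,w)$, using that each normal step strictly decreases the distance to $w$, so the path is geodesic in $\beta(G)$ (up to the barycentric factor of $2$). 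This is where I would invoke weak modularity and the characterization of Helly graphs as weakly modular $1$-Helly graphs.

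Next, local recognizability in the sense of \cite{Swiat} requires that whether a path is normal can be read off from bounded-length subpaths. This should be immediate from the definition: normality is a local condition asserting, at each clique $K_i$, that $K_i$ is the maximal clique of neighbors advancing toward the next landmark, a condition depending only on $K_{i-1}, K_i, K_{i+1}$ and the local structure of $G$ (which is uniformly bounded, as $G$ is locally finite with uniformly bounded degrees when a group acts geometrically). The two-sided fellow traveler property is the technical heart: for adjacent endpoints $w \sim w'$, I would show that the normal clique-paths from $u$ to $w$ and from $u$ to $w'$ stay uniformly Hausdorff-close. Here I expect to use Lemma~\ref{l:wmstable}, which guarantees that weakly modular graphs have $1$-stable intervals, i.e., $d_H(I(w,v),I(w,v'))\leq 1$; since normal clique-paths track intervals, their closeness should propagate step by step. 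The synchronous fellow traveler property then follows by combining interval stability with the uniqueness and greedy maximality of the normal path, controlling how the two paths can diverge at each clique.

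The main obstacle I anticipate is the fellow traveler estimate, specifically controlling the \emph{synchronization}: even if the two geodesics stay close as point sets, Świątkowski's framework demands that the parametrized paths remain close at matching times. The greedy maximal-clique definition can in principle produce paths that fall out of sync when one endpoint is perturbed, so I would need to prove that the maximal clique chosen at step $i$ for target $w$ and the one chosen for target $w'$ differ in a controlled way, using that their respective intervals differ by Hausdorff distance at most one (Lemma~\ref{l:wmstable}) and that the gates of nearby vertices onto a fixed clique are nearby (cliques are gated, hence projection is $1$-Lipschitz). Assembling these local comparisons into a uniform global bound on $\max_t d(\text{path}_w(t),\text{path}_{w'}(t))$ is the delicate step. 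Once both the local recognizability and the two-sided fellow traveler property are in hand, Świątkowski's theorem yields a regular geodesic bicombing in $\beta(G)$, and biautomaticity of any group acting geometrically on $G$ follows immediately, since such an action is cocompact and properly discontinuous on the locally finite complex $\beta(G)$.
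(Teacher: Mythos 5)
Your high-level skeleton is the same as the paper's (\'Swi\c{a}tkowski's criterion: a complete, locally recognized path system with the $2$-sided fellow traveler property), but the core construction is flawed. You claim that for a vertex $v_i$ at distance $k$ from $w$, the neighbors of $v_i$ lying in $I(v_i,w)$ are pairwise adjacent, and you build the normal clique-path greedily from a ``maximal such clique.'' This claim is false in Helly graphs: in the $\ell_\infty$-grid (the king graph, a standard Helly graph), with $v=(0,0)$ and $w=(k,0)$, the advancing neighbors are $(1,-1),(1,0),(1,1)$, and $(1,-1)\nsim(1,1)$. Worse, the advancing neighbors then contain two distinct maximal cliques, $\{(1,-1),(1,0)\}$ and $\{(1,0),(1,1)\}$, so your greedy choice is not canonical and your uniqueness argument collapses at the first step. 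The paper avoids this with a genuinely different object, the \emph{imprint}: $f_\tau(\sigma)=B^*_{k-1}(\tau)\cap B^*_1(\hR_\tau(\sigma))$ where $\hR_\tau(\sigma)=B^*_k(\tau)\cap B^*_1(\sigma)$ --- the clique of vertices one step closer to $\tau$ that dominate \emph{all} candidate next steps at once. Its non-emptiness is exactly where the Helly property enters (Lemma~\ref{uniform}), and canonicity is built in. Note also that the paper's recursion runs backward from $\sigma$ with the imprint taken relative to the fixed source $\tau$ (equation~(\ref{eqn:given1})), and a separate local-to-global lemma (Lemma~\ref{lem-ncp-gl}) shows this agrees with the purely local condition $\sigma_i=f_{\sigma_{i-1}}(\sigma_{i+1})$; your forward greedy scheme has no analogue of this step.

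Your treatment of the fellow traveler property is also a gap you yourself flag but do not close. Hoping that $1$-stable intervals (Lemma~\ref{l:wmstable}) plus Lipschitz gate projections will synchronize the two paths is not enough: Hausdorff closeness of intervals gives no control over which canonical representative each path selects at matching times. The paper instead proves the synchronous bound directly by induction on the imprints (Proposition~\ref{prop-bicombing}, giving $d(\sigma_i,\tau_i)\leq 1$ for clique-paths and hence constant $3$ for normal paths in Corollary~\ref{normal-bicombing}), with the monotonicity property $\sigma'\subseteq\sigma\Rightarrow f_\tau(\sigma')\subseteq f_\tau(\sigma)$ (Lemma~\ref{lem-ncp-inclusion}) doing the work that you hoped interval stability would do; in the paper, $1$-stable intervals are used only for Lang's theorem on injective hulls, not here. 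The parts of your proposal that do survive are the local recognizability argument (normality is indeed testable in balls of radius $2$, finitely many types under a geometric action) and the final appeal to Theorem~\ref{th:swiat}, which match Proposition~\ref{2recog_Helly}.
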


\begin{remark}
  A natural generalization of this theorem would be to prove that
  injective groups (i.e., groups acting geometrically on injective
  metric spaces) are biautomatic. Recently, Hugues and
  Valiunas~\cite[Corollary D]{HuVa} proved this is not the case: they
  constructed an injective group that is not biautomatic and thus not
  Helly.
\end{remark}

\subsection{Bicombings and biautomaticity}

We continue by recalling the definitions of (geodesic) bicombing and
biautomatic group \cites{ECHLPT,BrHa}.  Let $G=(V,E)$ be a graph and
suppose that $\Gamma$ is a group acting geometrically by automorphisms
on $G$.  These assumptions imply that the graph $G$ is locally finite
and that the degrees of the vertices of $G$ are uniformly bounded.
Denote by ${\mathcal P}(G)$ the set of all paths of $G$.  A \emph{path
  system} $\mathcal P$ \cite{Swiat} is any subset of
${\mathcal P}(G)$.  The action of $\Gamma$ on $G$ induces the action
of $\Gamma$ on the set ${\mathcal P}(G)$ of all paths of $G$.  A path
system ${\mathcal P}\subseteq {\mathcal P}(G)$ is called
$\Gamma$--\emph{invariant} if $g\cdot \gamma \in \mathcal P$, for all
$g\in \Gamma$ and $\gamma \in \mathcal P$.

Let $[0,n]^*$ denote the set of integer points
from the segment $[0,n].$ Given a path $\gamma$ of length $n=|\gamma|$ in $G$, we can parametrize it  and denote it by $\gamma:[0,n]^*\rightarrow V(G)$. It will be convenient to extend
 $\gamma$ over $[0,\infty]$ by setting $\gamma(i)=\gamma(n)$ for any $i>n$.  A path system $\mathcal P$ of a graph $G$ is said to satisfy the
 \emph{2-sided fellow traveler property} if there are constants $C>0$ and $D\ge 0$ such that for any two paths $\gamma_1,\gamma_2\in \mathcal P$, the following inequality holds  for all natural $i$:
$$d_G(\gamma_1(i),\gamma_2(i))\le C\cdot \max\{ d_G(\gamma_1(0),\gamma_2(0)),d_G(\gamma_1(\infty),\gamma_2(\infty))\}+D.$$
A path system $\mathcal P$ is \emph{complete} if any two vertices are endpoints of some path in $\mathcal P$.
A \emph{bicombing} of a graph $G$ is a complete path system $\mathcal P$ satisfying the $2$--sided fellow traveler property.   If all paths in
the bicombing ${\mathcal P}$ are shortest paths of $G$, then
${\mathcal P}$ is called a \emph{geodesic bicombing}.

We recall here quickly the definition of a biautomatic structure for a group. Details can be found in  \cites{ECHLPT,BrHa,Swiat}.
Let $\Gamma$ be a group generated by a finite set $S$. A \emph{language} over $S$ is some set of words in $S\cup S^{-1}$ (in the
free monoid $(S\cup S^{-1})^{\ast}$).
A language over $S$ defines a $\Gamma$--invariant path system in the Cayley graph Cay$(\Gamma,S)$.
A language is \emph{regular} if it is accepted by some finite state automaton.
A \emph{biautomatic structure} is a pair $(S,\mathcal L)$, where $S$ is as above, $\mathcal L$ is a regular language over $S$, and
the associated path system in Cay$(\Gamma,S)$ is a bicombing. A group is
\emph{biautomatic} if it admits a biautomatic structure.
In what follows we use specific conditions implying biautomaticity for groups acting geometrically on graphs. The method, relying on the notion of locally recognized path system, was developed by {\'S}wi{\c{a}}tkowski \cite{JS}.

Let $G$ be a graph and let $\Gamma$ be a group acting geometrically on
$G$.  Two paths $\gamma_1$ and $\gamma_2$ of $G$ are
$\Gamma$-\emph{congruent} if there is $g\in \Gamma$ such that
$g\cdot\gamma_1=\gamma_2$. Denote by ${\mathcal S}_k$ the set of
$\Gamma$-congruence classes of paths of length $k$ of $G$.  Since
$\Gamma$ acts geometrically on $G$, the sets ${\mathcal S}_k$ are
finite for any natural $k$. For any path $\gamma$ of $G$, denote by
$[\gamma]$ its $\Gamma$-congruence class.

For a subset $R\subset {\mathcal S}_k$, let ${\mathcal P}_R$ be the path system in $G$ consisting of all paths $\gamma$ satisfying the following two conditions:
\begin{itemize}
\item[(1)] if $|\gamma|\ge k$, then $[\eta]\in R$ for any subpath $\eta$ of length $k$ of $\gamma$;
\item[(2)] if $|\gamma|<k$, then $\gamma$ is a prefix of some path $\eta$ such that $[\eta]\in R$.
\end{itemize}

A path system $\mathcal P$ in $G$ is $k$--\emph{locally recognized} if for some $R\subset {\mathcal S}_k$, we have ${\mathcal P}={\mathcal P}_R$, and $\mathcal P$ is \emph{locally recognized}
if it is $k$--locally recognized for some $k$. The following result of  {\'S}wi{\c{a}}tkowski  \cite{Swiat} provide sufficient conditions for biautomaticity in terms of local recognition
and bicombing.

\begin{theorem} \label{swiat} \cite[Corollary 7.2]{Swiat} \label{th:swiat} Let $\Gamma$ be group acting geometrically on a graph $G$ and let $\mathcal P$ be a path system in
$G$ satisfying the following conditions:
\begin{itemize}
\item[(1)] $\mathcal P$ is locally recognized;
\item[(2)] there exists $v_0\in V(G)$ such that any two vertices from the orbit $\Gamma \cdot v_0$ are connected by a path from $\mathcal P$;
\item[(3)] $\mathcal P$ satisfies the $2$--sided fellow traveler property.
\end{itemize}
Then $\Gamma$ is biautomatic.
\end{theorem}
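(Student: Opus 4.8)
The plan is to build an explicit biautomatic structure $(S,\mathcal L)$ for $\Gamma$ out of the path system $\mathcal P$, transporting everything from $G$ to the Cayley graph via the orbit map $g\mapsto gv_0$. First I would fix the basepoint $v_0$ from condition (2) and recall that, since the action is geometric, $G$ is locally finite with uniformly bounded degree and the orbit map $\Gamma\to G$, $g\mapsto gv_0$, is a quasi-isometry by the \v{S}varc--Milnor lemma; this quasi-isometry is what moves metric estimates back and forth. The combinatorial heart is a $\Gamma$-equivariant finite labelling of the directed edges of $G$: because $\Gamma$ acts cocompactly there are only finitely many $\Gamma$-orbits of directed edges, and because it acts properly the vertex-stabilizers are finite, so one can choose a finite alphabet $A$ together with a labelling $\lambda$ of directed edges such that a path $\gamma$ issuing from a fixed vertex is reconstructible from the letter-sequence $\lambda(\gamma)$ (refining $A$ by bounded stabilizer data if necessary). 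This turns each path of $\mathcal P$ into a word and, conversely, lets me read words as paths.

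Next I would produce the generating set and the language. Taking $S$ to be the finite set of group elements $g$ with $d_G(v_0,gv_0)$ bounded by a constant large enough to realize every label as a bounded word yields a finite generating set (generation follows from completeness on the orbit together with the quasi-isometry). The language $\mathcal L$ is then the image under this encoding of the set of paths in $\mathcal P$ issuing from $v_0$. The key point is that $\mathcal P$ is $k$-locally recognized: membership of a path is decided solely by the list of $\Gamma$-congruence classes of its length-$k$ subpaths, and the set $\mathcal S_k$ of such classes is finite since the action is cocompact. I would therefore build a finite-state automaton whose states record the $\Gamma$-congruence class of the last $k$ steps read, with transitions dictated by $\lambda$ and acceptance determined by the prescribed $R\subseteq\mathcal S_k$ (together with the prefix clause for short paths); this witnesses that $\mathcal L$ is regular.

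It then remains to verify the two defining properties of a bicombing in the Cayley graph. Completeness is immediate from condition (2): every $g\in\Gamma$ is joined to $v_0$ by some $\gamma\in\mathcal P$, whose encoding is a word of $\mathcal L$ representing $g$. For the 2-sided fellow traveler property I would take two words of $\mathcal L$, pass to the corresponding paths $\gamma_1,\gamma_2\in\mathcal P$, apply the inequality $d_G(\gamma_1(i),\gamma_2(i))\le C\max\{d_G(\gamma_1(0),\gamma_2(0)),d_G(\gamma_1(\infty),\gamma_2(\infty))\}+D$ of condition (3), and pull the resulting bound back to a fellow traveler bound on the words, using that $g\mapsto gv_0$ is a quasi-isometry and that the encoding distorts distances by at most a bounded multiplicative--additive amount.

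I expect the main obstacle to be the first, purely combinatorial step: arranging a finite, $\Gamma$-equivariant labelling under which words and rooted paths correspond faithfully, so that the automaton really recognizes $\mathcal L$ and the \emph{two-sided} (as opposed to merely one-sided) fellow traveler property survives the passage between $G$ and the Cayley graph. Properness (finite stabilizers) is what makes the faithful reconstruction of paths from labels possible, and cocompactness (finiteness of $\mathcal S_k$) is what makes the automaton finite; keeping both equivariances compatible while simultaneously matching basepoints for the two-sided estimate is the delicate part. This is essentially the content of \cite[Corollary 7.2]{Swiat}, whose general framework of locally recognized path systems packages exactly these reductions.
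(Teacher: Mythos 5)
The paper does not actually prove this statement: it is quoted from {\'S}wi{\c{a}}tkowski's work (\cite[Corollary 7.2]{Swiat}), and the paper only verifies its hypotheses for the system of normal paths in a Helly graph. So your sketch can only be measured against {\'S}wi{\c{a}}tkowski's own argument, whose architecture you do reproduce correctly at the top level: encode paths of $\mathcal P$ as words over a finite generating set obtained from bounded orbit displacement, derive regularity from local recognition (finiteness of $\mathcal S_k$ by cocompactness), get completeness from condition (2), and transfer the two-sided fellow traveler property through the orbit quasi-isometry $g \mapsto g v_0$ --- that last transfer is indeed routine and your treatment of it is fine.

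The genuine gap is in your first, ``purely combinatorial'' step, which you correctly locate but do not close. A finite $\Gamma$-equivariant labelling of directed edges from which a rooted path is faithfully reconstructible cannot exist in general: equivariance forces any two edges issuing from a vertex $v$ that lie in the same orbit of the finite stabilizer $\mathrm{stab}(v)$ to carry the same label, so when stabilizers act nontrivially on vertex stars the letter sequence does not determine the path, and ``refining $A$ by bounded stabilizer data'' cannot simultaneously be equivariant and separating. More seriously, even granting such a labelling, a sequence of edge labels is not yet a word over a generating set of $\Gamma$: the paths of $\mathcal P$ pass through vertices outside the orbit $\Gamma v_0$, so to obtain a language $\mathcal L \subseteq (S \cup S^{-1})^{\ast}$ one must choose, for each vertex $\gamma(i)$ of a path, a group element $g_i$ with $d_G(\gamma(i), g_i v_0)$ bounded by the cocompactness constant (with $g_0 = 1$) and take the letters $g_i^{-1} g_{i+1} \in S$; the crux is then to organize these choices by a finite local rule --- for instance a nondeterministic automaton whose states are $\Gamma$-congruence classes of $k$-subpaths decorated with bounded lift data, with legality governed by the set $R \subseteq \mathcal S_k$ --- so that the resulting set of words is regular, noting that properness is what keeps the lift data finite. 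That step is precisely the content of {\'S}wi{\c{a}}tkowski's machinery of regular path systems, and your closing appeal to \cite[Corollary 7.2]{Swiat} for exactly this point makes the proposal circular as a proof of that corollary rather than an independent argument.
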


\subsection{Normal clique-paths in Helly-graphs}
For a set $S$ of vertices of a graph $G=(V,E)$ and an integer $k\ge
0$, let $B^*_k(S):=\bigcap_{s \in S}B_k(s)$. In particular, if $S$ is
a clique, then $B_1^*(S)$ is the union of $S$ and the set of vertices
adjacent to all vertices in $S$.  Notice also that if $S \subseteq
S'$, then $B^*_k(S) \supseteq B^*_k(S')$.  For two cliques $\tau$ and
$\sigma$ of $G$, let $\dm(\tau,\sigma):=\max \{ d(t,s): t\in \tau,
s\in \sigma\}$. We recall also the notation $d(\tau,\sigma)=\min \{
d(t,s): t\in \tau, s\in \sigma\}$ for the standard distance between
$\tau$ and $\sigma$.  We  say that two cliques $\sigma, \tau$ of a
graph $G$ are at \emph{uniform-distance} $k$ (notation
\du{\sigma}{\tau}{k}) if $d(s,t)=k$ for any $s\in \sigma$ and any
$t\in \tau$. Equivalently, \du{\sigma}{\tau}{k} if and only if
$\dm(\tau,\sigma)=d(\tau,\sigma)=k$.

Given two cliques $\sigma, \tau$ of $G$ with $\dm(\tau,\sigma)=k\geq
2$, let $\hR_\tau(\sigma):=B^*_k(\tau)\cap B^*_1(\sigma)$ and let
$f_\tau(\sigma):= B^*_{k-1}(\tau)\cap B^*_1(\hR_\tau(\sigma))$. The following observations can be helpful for the understanding of these notions:
\begin{itemize}
\item  $\hR_\tau(\sigma)$ is the union of the maximal cliques of $B_k^*(\tau)$ that contain $\sigma$.
\item $B^*_1(\hR_\tau(\sigma))$ is the intersection of the maximal
  cliques of $B_k^*(\tau)$ that contain $\sigma$.
\item $f_{\tau}(\sigma)$ is the intersection of $B_{k-1}^*(\tau)$ and
  the maximal cliques of $B_k^*(\tau)$ that contain $\sigma$.
\end{itemize}

Since $G$ is a Helly graph, the set $f_\tau(\sigma)$ is non-empty and we
 call it the \emph{imprint} of $\sigma$ with respect
to $\tau$.  Note that since $\sigma$ is a clique, we have $\sigma
\subseteq \hR_\tau(\sigma)$ and thus we also have $f_\tau(\sigma)
\subseteq \hR_\tau(\sigma)$. Note also that each vertex  in $f_{\tau} (\sigma)$ is
adjacent to all other vertices in $\hR_{\tau} (\sigma)$, whence $\hR_{\tau}(\sigma) \subseteq B_1^*(f_\tau (\sigma))$ and $f_\tau(\sigma)$ is a clique.

\begin{lemma} \label{uniform} For any two cliques $\sigma, \tau$ of a
  Helly graph $G$ such that $\dm(\tau,\sigma)=k\geq 2$, the imprint
  $f_\tau(\sigma)$ is a non-empty clique such that
  $\dm(\tau,f_{\tau}(\sigma)) = k-1 = \dm(\tau,s')$ for any
  $s' \in f_\tau(\sigma)$. Moreover, if $\du{\sigma}{\tau}{k}$, then
  $\du{f_\tau(\sigma)}{\tau}{k-1}$.
\end{lemma}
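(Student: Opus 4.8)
The plan is to prove the three assertions --- that $f_\tau(\sigma)$ is a non-empty clique, that $\dm(\tau,f_\tau(\sigma))=k-1$, and the uniform-distance refinement --- separately, leaning on the three elementary facts recorded immediately before the statement: that $\sigma\subseteq\hR_\tau(\sigma)$, that $f_\tau(\sigma)\subseteq\hR_\tau(\sigma)$, and that each vertex of $f_\tau(\sigma)$ is adjacent to every other vertex of $\hR_\tau(\sigma)$. Once these are in hand, everything reduces to short applications of the triangle inequality.

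First I would dispatch non-emptiness and the clique property. Non-emptiness is exactly the Helly property applied to the family of balls $\{ B_{k-1}(t): t\in\tau\}\cup\{ B_1(y): y\in\hR_\tau(\sigma)\}$, whose common intersection is $f_\tau(\sigma)$ by definition. To invoke Helly I would verify pairwise intersection: two balls $B_{k-1}(t),B_{k-1}(t')$ with $t,t'\in\tau$ contain $t$ (here the hypothesis $k\geq 2$ enters, since $d(t,t')=1\leq k-1$); two balls $B_1(y),B_1(y')$ with $y,y'\in\hR_\tau(\sigma)\subseteq B^*_1(\sigma)$ contain any fixed vertex of $\sigma$; and a ball $B_{k-1}(t)$ with $t\in\tau$ meets $B_1(y)$ with $y\in\hR_\tau(\sigma)\subseteq B^*_k(\tau)$ because $d(t,y)\leq k=(k-1)+1$. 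That $f_\tau(\sigma)$ is a clique then follows from $f_\tau(\sigma)\subseteq\hR_\tau(\sigma)$ together with the recorded fact that each of its vertices is adjacent to all other vertices of $\hR_\tau(\sigma)$, hence to one another.

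Next comes the distance identity. The inequality $\dm(\tau,f_\tau(\sigma))\leq k-1$ is immediate, since $f_\tau(\sigma)\subseteq B^*_{k-1}(\tau)$ forces $d(x,t)\leq k-1$ for every $x\in f_\tau(\sigma)$ and $t\in\tau$. For the reverse inequality I would choose witnesses $s_0\in\sigma$ and $t_0\in\tau$ of the equality $\dm(\tau,\sigma)=k$, so that $d(s_0,t_0)=k$. Since $s_0\in\sigma\subseteq\hR_\tau(\sigma)$ and every $x\in f_\tau(\sigma)$ lies in $B^*_1(\hR_\tau(\sigma))$, we have $d(x,s_0)\leq 1$; the triangle inequality then gives $d(x,t_0)\geq d(s_0,t_0)-d(x,s_0)\geq k-1$, whence $\dm(\tau,f_\tau(\sigma))\geq k-1$ and equality holds.

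Finally, for the refinement, assume $\du{\sigma}{\tau}{k}$. I would fix an arbitrary $x\in f_\tau(\sigma)$ and $t\in\tau$, already knowing $d(x,t)\leq k-1$, and then pick any $s\in\sigma$: the same reasoning gives $d(x,s)\leq 1$, while uniformity gives $d(s,t)=k$, so $d(x,t)\geq k-1$ by the triangle inequality. Thus $d(x,t)=k-1$ for all such $x$ and $t$, which is precisely $\du{f_\tau(\sigma)}{\tau}{k-1}$. I do not expect any genuine obstacle: the whole weight of the argument is already packaged into the two structural observations $\sigma\subseteq\hR_\tau(\sigma)$ and $\hR_\tau(\sigma)\subseteq B^*_1(f_\tau(\sigma))$ stated before the lemma, and the only points requiring care are tracking the correct ball radii and using $k\geq 2$ to guarantee $k-1\geq 1$.
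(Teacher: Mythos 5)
Your proof is correct and follows essentially the same route as the paper's: the Helly property applied to the family $\{B_{k-1}(t): t\in\tau\}\cup\{B_1(y): y\in\hR_\tau(\sigma)\}$ (with the same three pairwise-intersection checks, including the use of $k\geq 2$ for the balls centered on $\tau$) gives non-emptiness, and the distance statements follow from $f_\tau(\sigma)\subseteq B^*_{k-1}(\tau)$ together with the triangle inequality via $d(x,s)\leq 1$ for $s\in\sigma\subseteq\hR_\tau(\sigma)$. The only cosmetic difference is that the paper packages $d(s',s)\leq 1$ by observing that $f_\tau(\sigma)\cup\sigma$ is a clique, whereas you extract it directly from $f_\tau(\sigma)\subseteq B^*_1(\hR_\tau(\sigma))$; these are the same observation.
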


\begin{proof}
Note that by definition, $f_\tau(\sigma) \subseteq B^*_{k-1}(\tau)$.
Note also that for any $r, r' \in \hR_\tau(\sigma)$, $\sigma \subseteq
B_1(r) \cap B_1(r')$. Moreover, for any $r \in \hR_\tau(\sigma)$ and
any $t \in \tau$, $d(r,t) \leq k$ and thus $B_{k-1}(t)\cap B_1(r)
\neq \emptyset$. Note also that since $\tau$ is a clique and $k \geq
2$, $\tau \subseteq B^*_{k-1}(\tau)$.  Consequently, since $G$ is a
Helly graph, $f_\tau(\sigma) \neq \emptyset$. Since
$f_\tau(\sigma)\cup \sigma \subseteq \hR_\tau(\sigma)$ and each vertex of $f_\tau(\sigma)$ is
adjacent to all other vertices of $\hR_\tau(\sigma)$, necessarily
$f_\tau(\sigma)\cup \sigma$ is a clique. Therefore, for any $t \in
\tau$, $s \in \sigma$ such that $d(t,s) = \dm(\tau,\sigma) = k$, and
any $s' \in f_\tau(\sigma)$, we have $d(s',t) \geq d(s,t) - d(s,s') =
k-1$. Since $s' \in f_\tau(\sigma) \subseteq B^*_{k-1}(\tau)$, we have
$d(s',t) = k-1$. Thus, $\dm(\tau,f_\tau(\sigma)) = k-1$ and
$\du{f_\sigma(\tau)}{\tau}{k-1}$ when $\du{\sigma}{\tau}{k}$.
\end{proof}

\begin{lemma}\label{lem-ncp-inclusion}
  Consider three cliques $\sigma, \sigma', \tau$ of a Helly graph $G$
  such that $\dm(\tau,\sigma) = \dm(\tau,\sigma') = k \geq 2$. If
  $\sigma' \subseteq \sigma$, then $\hR_\tau(\sigma) \subseteq
  \hR_\tau(\sigma')$ and $f_\tau(\sigma') \subseteq f_\tau(\sigma)$.
  In particular, if $\du{\sigma}{\tau}{k}$, then for every $s \in
  \sigma$, we have $f_\tau(s) \subseteq f_\tau(\sigma)$.
\end{lemma}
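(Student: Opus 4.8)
The lemma concerns three cliques $\sigma,\sigma',\tau$ in a Helly graph with $\dm(\tau,\sigma)=\dm(\tau,\sigma')=k\ge 2$ and $\sigma'\subseteq\sigma$. I must prove two inclusions: $\hR_\tau(\sigma)\subseteq\hR_\tau(\sigma')$ and $f_\tau(\sigma')\subseteq f_\tau(\sigma)$. The plan is to read off both from the definitions $\hR_\tau(\sigma)=B^*_k(\tau)\cap B^*_1(\sigma)$ and $f_\tau(\sigma)=B^*_{k-1}(\tau)\cap B^*_1(\hR_\tau(\sigma))$, using only the elementary monotonicity property of the $B^*$ operator already noted in the text, namely that $S\subseteq S'$ implies $B^*_j(S)\supseteq B^*_j(S')$.

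\textbf{First inclusion.} Since $\sigma'\subseteq\sigma$, monotonicity of $B_1^*$ gives $B^*_1(\sigma)\subseteq B^*_1(\sigma')$. Intersecting both sides with the common set $B^*_k(\tau)$ yields
\[
  \hR_\tau(\sigma)=B^*_k(\tau)\cap B^*_1(\sigma)\subseteq B^*_k(\tau)\cap B^*_1(\sigma')=\hR_\tau(\sigma').
\]
This requires no Helly property at all; it is pure set-theoretic monotonicity, and this is the step I would write out first.

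\textbf{Second inclusion.} Applying the monotonicity of $B_1^*$ to the inclusion $\hR_\tau(\sigma)\subseteq\hR_\tau(\sigma')$ just established, I get $B^*_1(\hR_\tau(\sigma'))\subseteq B^*_1(\hR_\tau(\sigma))$. Intersecting both sides with the common set $B^*_{k-1}(\tau)$ then gives
\[
  f_\tau(\sigma')=B^*_{k-1}(\tau)\cap B^*_1(\hR_\tau(\sigma'))\subseteq B^*_{k-1}(\tau)\cap B^*_1(\hR_\tau(\sigma))=f_\tau(\sigma),
\]
which is exactly the desired inclusion. The inequality direction reverses once (from $\sigma'\subseteq\sigma$ to $\hR_\tau(\sigma)\subseteq\hR_\tau(\sigma')$) and reverses again at the imprint level, so the imprints nest in the same direction as the original cliques. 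The only subtlety is bookkeeping the two reversals of inclusion, but there is no genuine obstacle here.

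\textbf{The final assertion.} For the ``in particular'' clause, suppose $\du{\sigma}{\tau}{k}$ and fix $s\in\sigma$. Viewing the single vertex $s$ as a clique $\{s\}\subseteq\sigma$, I first need $\dm(\tau,\{s\})=k$ so that $f_\tau(s)$ is defined and the general inclusion applies; this is immediate since $\du{\sigma}{\tau}{k}$ means $d(t,s')=k$ for all $t\in\tau,\,s'\in\sigma$, so in particular $d(t,s)=k$ for all $t\in\tau$, giving $\dm(\tau,\{s\})=d(\tau,\{s\})=k$. With $\sigma'=\{s\}$ and $\sigma$ playing their roles in the already-proved inclusion $f_\tau(\sigma')\subseteq f_\tau(\sigma)$, I conclude $f_\tau(s)\subseteq f_\tau(\sigma)$. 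The main thing to verify carefully is that the uniform-distance hypothesis guarantees the hypothesis $\dm(\tau,s)=k$ needed to invoke the preceding part; everything else is a direct specialization.
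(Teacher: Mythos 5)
Your proof is correct and follows essentially the same route as the paper's: both arguments rest solely on the anti-monotonicity of the $B^*$ operator, first deducing $\hR_\tau(\sigma)\subseteq\hR_\tau(\sigma')$ from $B^*_1(\sigma)\subseteq B^*_1(\sigma')$ and then reversing once more at the imprint level to get $f_\tau(\sigma')\subseteq f_\tau(\sigma)$. Your explicit verification that $\dm(\tau,\{s\})=k$ under the uniform-distance hypothesis, which the paper leaves implicit, is a small but welcome addition.
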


\begin{proof}
  Recall that $\hR_\tau(\sigma):=B^*_k(\tau)\cap B^*_1(\sigma)$ and
  $\hR_\tau(\sigma'):=B^*_k(\tau)\cap B^*_1(\sigma')$. Since
  $\sigma' \subseteq \sigma$, we have $B^*_1(\sigma) \subseteq
  B^*_1(\sigma')$ and thus $\hR_\tau(\sigma) \subseteq
  \hR_\tau(\sigma')$. Consequently, $B^*_1(\hR_\tau(\sigma'))
  \subseteq B^*_1(\hR_\tau(\sigma))$ and thus $f_\tau(\sigma') =
  B^*_{k-1}(\tau)\cap B^*_1(\hR_\tau(\sigma')) \subseteq
  B^*_{k-1}(\tau)\cap B^*_1(\hR_\tau(\sigma)) = f_\tau(\sigma)$.
\end{proof}

A sequence of cliques $(\sigma_0, \sigma_1, \ldots, \sigma_k)$ of a
Helly graph $G$ is called a \emph{normal clique-path} if the following
local  conditions hold:
\begin{enumerate}[(1)]
  \item for any $0 \leq i \leq k-1$, $\sigma_i$ and
    $\sigma_{i+1}$ are disjoint and $\sigma_i \cup \sigma_{i+1}$ is
    a clique of $G$,
  \item for any $1 \leq i \leq k-1$, $\sigma_{i-1}$ and
    $\sigma_{i+1}$ are at uniform-distance 2,
  \item for any $1 \leq i \leq k-1$, $\sigma_i =
    f_{\sigma_{i-1}}(\sigma_{i+1})$.
\end{enumerate}

Notice that if $k \geq 2$, then condition (1) follows from conditions (2) and (3).

\begin{theorem}[{Normal clique-paths}]\label{th-ncp}
For any pair $\tau,\sigma$ of cliques of a Helly graph $G$ such that
$\du{\sigma}{\tau}{k}$, there exists a unique normal clique-path
$\gamma_{\tau\sigma} = (\tau = \sigma_0, \sigma_1, \sigma_2, \ldots,
\sigma_k=\sigma)$, whose cliques are given by
\begin{equation}\label{eqn:given1}
\sigma_{i} = f_\tau(\sigma_{i+1}) \mbox{ for each } i=k-1,\ldots,
2,1,
\end{equation}
and any sequence of vertices $P = (s_0, s_1, \ldots, s_k)$ with $s_i \in \sigma_i$ for $0 \leq i \leq k$ is a shortest path from
$s_0$ to $s_k$. In particular, any two vertices $p,q$ of $G$ are connected by a unique normal
clique-path $\gamma_{pq}$.
\end{theorem}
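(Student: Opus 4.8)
The plan is to show that the sequence produced by the stated recursion is a normal clique-path, read off the shortest-path property, and then establish uniqueness. Set $\sigma_k=\sigma$ and define $\sigma_i:=f_\tau(\sigma_{i+1})$ downwards for $i=k-1,\dots,1$, together with $\sigma_0=\tau$. Since $\du{\sigma_k}{\tau}{k}$, Lemma~\ref{uniform} applies at each step and gives, by downward induction, that every $\sigma_i$ is a clique with $\du{\sigma_i}{\tau}{i}$; in particular $\dm(\tau,\sigma_{i+1})=i+1\ge 2$, so every imprint taken is legitimate. From $f_\tau(\sigma_{i+1})=B^*_i(\tau)\cap B^*_1(\hR_\tau(\sigma_{i+1}))$ and $\sigma_{i+1}\subseteq\hR_\tau(\sigma_{i+1})$ one reads off that every vertex of $\sigma_i$ is adjacent to every vertex of $\sigma_{i+1}$; as their distances to $\tau$ are $i$ and $i+1$ the two cliques are disjoint, so $\sigma_i\cup\sigma_{i+1}$ is a clique and $\du{\sigma_i}{\sigma_{i+1}}{1}$, which is condition~(1). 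Condition~(2), $\du{\sigma_{i-1}}{\sigma_{i+1}}{2}$, then follows, the lower bound being the triangle inequality through $\tau$ and the upper bound being $\sigma_{i-1}\sim\sigma_i\sim\sigma_{i+1}$.

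The crux is condition~(3), $\sigma_i=f_{\sigma_{i-1}}(\sigma_{i+1})$, which I would prove by a double inclusion whose driving tool is the containment $\hR_\tau(\sigma_{i+1})\subseteq B^*_1(f_\tau(\sigma_{i+1}))=B^*_1(\sigma_i)$ observed when the imprint was defined. Write $\hR_{\sigma_{i-1}}(\sigma_{i+1})=B^*_2(\sigma_{i-1})\cap B^*_1(\sigma_{i+1})$. For $\sigma_i\subseteq f_{\sigma_{i-1}}(\sigma_{i+1})$, any $r\in\hR_{\sigma_{i-1}}(\sigma_{i+1})$ has $d(r,\tau)\le 2+(i-1)=i+1$, hence $r\in\hR_\tau(\sigma_{i+1})$, hence $r\sim\sigma_i$; together with $\sigma_i\sim\sigma_{i-1}$ this gives $\sigma_i\subseteq f_{\sigma_{i-1}}(\sigma_{i+1})$. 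For the reverse inclusion, any $w\in f_{\sigma_{i-1}}(\sigma_{i+1})$ has $d(w,\tau)\le 1+(i-1)=i$, and for any $r\in\hR_\tau(\sigma_{i+1})$ the containment gives $r\sim\sigma_i$, so $d(r,\sigma_{i-1})\le 1+1=2$ by $\du{\sigma_{i-1}}{\sigma_i}{1}$; thus $r\in\hR_{\sigma_{i-1}}(\sigma_{i+1})$ and $w\sim r$, whence $w\in B^*_i(\tau)\cap B^*_1(\hR_\tau(\sigma_{i+1}))=\sigma_i$. Notably this step uses only the triangle inequality and the containment fact, \emph{not} weak modularity.

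For the shortest-path statement, in any transversal $(s_0,\dots,s_k)$ consecutive vertices are adjacent by~(1), so it is a walk of length $k$; since $\du{\tau}{\sigma}{k}$ forces $d(s_0,s_k)=k$, the walk is geodesic. As this holds for \emph{every} choice of $s_i\in\sigma_i$, and subpaths of geodesics are geodesics, any normal clique-path with endpoints $\tau,\sigma$ automatically satisfies $\du{\sigma_i}{\sigma_j}{|i-j|}$ for all $i,j$, and in particular $\du{\sigma_i}{\tau}{i}$. Taking $\tau=\{p\}$ and $\sigma=\{q\}$ as singleton cliques, for which $\du{\{q\}}{\{p\}}{d(p,q)}$ holds trivially, yields the final assertion about vertices.

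For uniqueness, let $(\tau=\sigma_0,\dots,\sigma_k=\sigma)$ be any normal clique-path; by the previous paragraph it carries the uniform distances $\du{\sigma_i}{\tau}{i}$, and by downward induction it suffices to prove $\sigma_i=f_\tau(\sigma_{i+1})$, which condition~(3) rephrases as the \emph{locality} identity $f_{\sigma_{i-1}}(\sigma_{i+1})=f_\tau(\sigma_{i+1})$. I would isolate a general locality lemma: if a clique $C$ satisfies $\du{C}{\tau}{i-1}$, $\du{C}{\sigma_{i+1}}{2}$, and $\du{f_\tau(\sigma_{i+1})}{C}{1}$, then $f_C(\sigma_{i+1})=f_\tau(\sigma_{i+1})$; its proof is the same double inclusion as for condition~(3), again powered only by the containment fact. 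The main obstacle is verifying the last hypothesis for $C=\sigma_{i-1}$, namely the adjacency $\du{f_\tau(\sigma_{i+1})}{\sigma_{i-1}}{1}$ between the $\tau$-imprint and the path's clique $\sigma_{i-1}$: both sit on geodesics from $\tau$ to $\sigma_{i+1}$ at the consecutive levels $i$ and $i-1$, and I expect pinning down this adjacency (equivalently, the uniqueness of this ``one-step'' clique) to require the weak modularity of Helly graphs — the triangle and quadrangle conditions — rather than the purely metric bookkeeping that suffices for existence.
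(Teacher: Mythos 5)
Your existence half is sound and follows essentially the paper's route: your double inclusion for condition~(3) is the computation of Lemma~\ref{lem-ncp-gl} (one shows $\hR_{\sigma_{i-1}}(\sigma_{i+1})=\hR_\tau(\sigma_{i+1})$ using the containment $\hR_\tau(\sigma_{i+1})\subseteq B^*_1(f_\tau(\sigma_{i+1}))$, then compares the two imprints), and the geodesy of transversals of the \emph{constructed} path $\gamma_{\tau\sigma}$ is read off exactly as in the paper.

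The uniqueness half has a genuine gap, in two places. First, you write ``let $(\tau=\sigma_0,\dots,\sigma_k=\sigma)$ be any normal clique-path'' --- already indexed to length $k$ --- and claim it ``carries the uniform distances $\du{\sigma_i}{\tau}{i}$'' by your shortest-path paragraph. But that paragraph only concerns the canonical path: an arbitrary normal clique-path has some unknown length $l$, its transversals are merely walks of length $l$, and nothing in the purely local conditions (1)--(3) yet excludes $l>k$ or a path that wanders, i.e., whose distance to $\tau$ does not grow by one at every step. Ruling this out is a substantial part of the theorem: in the paper it is the case $\dm(\tau,\varrho_{l-1})\geq k$ of Proposition~\ref{prop-ncp-lg}, eliminated by taking the minimal index $i$ admitting $p\in\varrho_i$ with $\max\{d(p,t):t\in\tau\}\leq i-1$ and contradicting $\du{\varrho_{i-2}}{\varrho_i}{2}$. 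Your inference is circular: geodesy of transversals of an arbitrary normal clique-path is a \emph{consequence} of uniqueness, not an input to it. Second, even granting the uniform distances, your locality lemma hinges on the adjacency $\du{f_\tau(\sigma_{i+1})}{\sigma_{i-1}}{1}$, which you leave unproven --- the triangle inequality through $\sigma_{i+1}$ only gives $d\leq 3$ --- and you merely ``expect'' weak modularity to supply it. The paper never needs this adjacency: it inducts on the length of the arbitrary path, so the prefix $(\varrho_0,\dots,\varrho_{l-1})$ is already identified with the canonical path, giving $\varrho_{l-2}=f_\tau(\varrho_{l-1})$ for free; then Lemma~\ref{lem-ncp-lg1} --- a \emph{reversed} version of your locality computation, with hypotheses $\varrho'=f_{\varrho''}(\varrho)$ and $\varrho''\subseteq f_\tau(\varrho')$ --- converts condition~(3) at the final step into $\varrho_{l-1}=f_\tau(\sigma)$. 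Note also that for this prefix induction to close, Proposition~\ref{prop-ncp-lg} must be stated under the weaker hypothesis that $\max\{d(s,t):t\in\tau\}=k$ for every $s\in\sigma$, since the prefix endpoint $\varrho_{l-1}$ need not be at uniform distance from $\tau$; your scheme, which leans on uniform distances throughout, would stumble on this point as well.
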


\begin{proof} We first prove that $\gamma_{\tau\sigma}$ is a normal clique-path.
The proof  is based on the following result.

\begin{lemma}\label{lem-ncp-gl} Let $\sigma, \sigma', \sigma''$, and $\tau$ be four cliques of a Helly graph $G$
such that $\du{\sigma}{\tau}{k}$ with $k\ge 3$, $\sigma' \subseteq
f_\tau(\sigma)$, and $\sigma'' \subseteq f_\tau(\sigma')$. Then
$f_\tau(\sigma) = f_{\sigma''}(\sigma)$.
\end{lemma}

\begin{proof}  Note that our conditions and Lemma \ref{uniform} imply
that $\du{\sigma'}{\tau}{k-1}$, $\du{\sigma''}{\tau}{k-2}$, and
$\du{\sigma}{\sigma''}{2}$.

We first show that $\hR_{\sigma''}(\sigma) = \hR_\tau(\sigma)$. Recall
that $\hR_{\tau}(\sigma) = B^*_{k}(\tau) \cap B^*_1(\sigma)$ and
$\hR_{\sigma''}(\sigma)=B^*_2(\sigma'')\cap B^*_1(\sigma)$. Since
$\du{\tau}{\sigma''}{k-2}$, we have
$B^*_2(\sigma'')\subseteq B^*_k(\tau)$.  Consequently,
$\hR_{\sigma''}(\sigma) \subseteq \hR_\tau(\sigma)$.  Conversely, by
the definition of $\sigma''$, we have
$\sigma' \subseteq B^*_1(\sigma'')$. Indeed, since
$\sigma'' \subseteq f_\tau(\sigma')$, we have
$B^*_1(\sigma'') \supseteq B^*_1(f_\tau(\sigma')) \supseteq
\hR_\tau(\sigma') \supseteq \sigma'$.  Since
$\sigma' \subseteq f_\tau(\sigma)$, we have
$\hR_\tau(\sigma) \subseteq B_1^*(f_\tau(\sigma)) \subseteq
B_1^*(\sigma') \subseteq B_2^*(\sigma'')$ where the last containment
follows from $\sigma'' \subseteq f_{\tau}(\sigma')$.  Consequently,
$\hR_\tau(\sigma) = B^*_{k}(\tau) \cap B^*_1(\sigma) \subseteq
B^*_2(\sigma'')\cap B^*_1(\sigma) = \hR_{\sigma''}(\sigma)$, and thus
$\hR_{\sigma''}(\sigma) = \hR_\tau(\sigma)$.

  Set $\hR: = \hR_{\sigma''}(\sigma) = \hR_\tau(\sigma)$. Set also
  $\varrho': = f_{\sigma''}(\sigma)$ and $\nu' :=  f_{\tau}(\sigma)$. Recall
  that $\nu' =f_\tau(\sigma) = B^*_{k-1}(\tau)\cap B^*_1(\hR)$ and
  $\varrho' = f_{\sigma''}(\sigma)=B^*_1(\sigma'')\cap B^*_1(\hR)$.
  Since $\du{\tau}{\sigma''}{k-2}$, we have $B^*_1(\sigma'')\subseteq
  B^*_{k-1}(\tau)$ and thus $\varrho' \subseteq \nu'$.
  Conversely, since $\nu' \subseteq \hR_\tau(\nu') =
  B^*_1(\nu') \cap B_{k-1}(\tau) \subseteq B^*_1(\sigma') \cap
  B_{k-1}(\tau)= \hR_\tau(\sigma')$, we have $\nu' \subseteq
  B^*_1(\sigma'')$ by definition of $\sigma''$. Consequently,
  $\nu' \subseteq  B^*_1(\sigma'')\cap B^*_1(\hR) =
  \varrho'$. Therefore $\nu' = \varrho'$ and the lemma holds.
\end{proof}

To prove that $\gamma_{\tau\sigma}$ is a normal clique-path we proceed
by induction on $k$.  If $k \leq 2$, there is nothing to prove. Assume
now that $k \geq 3$. Since $\du{\tau}{\sigma_k}{k}$, $\sigma_{k-1} =
f_\tau(\sigma_k)$, and $\sigma_{k-2} = f_\tau(\sigma_{k-1})$, we have
that $\du{\tau}{\sigma_{k-1}}{k-1}$, $\du{\tau}{\sigma_{k-2}}{k-2}$,
and $\du{\sigma_{k-2}}{\sigma_k}{2}$. By the induction hypothesis,
$(\sigma_0=\tau, \sigma_1, \sigma_2, \ldots, \sigma_{k-1})$ is a
normal clique-path. Applying Lemma~\ref{lem-ncp-gl} with $\sigma=
\sigma_k$, $\sigma' = \sigma_{k-1}$ and $\sigma''= \sigma_{k-2}$, we
have that $\sigma_{k-1} = f_{\sigma_{k-2}}(\sigma_k)$ and thus
$\gamma_{\tau\sigma}$ is a normal clique-path as well.

\medskip

We now prove that an arbitrary normal clique-path
$\gamma'_{\tau\sigma}=(\tau = \varrho_0, \varrho_1, \varrho_2,
\ldots,\varrho_l=\sigma)$ coincides with $\gamma_{\tau\sigma}$. In fact,
we prove this result under a weaker assumption than $\du{\sigma}{\tau}{k}$.

\begin{proposition}\label{prop-ncp-lg} Let $\sigma,\tau$ be two cliques of a Helly graph $G$
and let $k$ be an integer such that for every $s \in \sigma$, $\dm(s,\tau) = k$.
Then any normal clique-path $\gamma'_{\tau\sigma}=(\tau = \varrho_0, 
  \varrho_1, \varrho_2, \ldots,\varrho_l=\sigma)$ coincides with
  $\gamma_{\tau\sigma} = (\tau = \sigma_0, \sigma_1, \sigma_2, \ldots, \sigma_k=\sigma)$,
whose cliques are given by (\ref{eqn:given1}).
\end{proposition}

\begin{proof}
The proof of the proposition is based on the following result.

\begin{lemma}\label{lem-ncp-lg1}
  Let $\varrho, \varrho', \varrho''$, and $\tau$ be four cliques of a
  Helly graph $G$ such that
  $\dm(\tau,\varrho)= 1 +\dm(\tau,\varrho')=:k\ge 3$,
  $\dm(\varrho, \varrho'') \geq 2$, $\varrho'=f_{\varrho''}(\varrho)$
  and $\varrho'' \subseteq f_\tau(\varrho')$. Then
  $\varrho' = f_{\tau}(\varrho)$.
 \end{lemma}

 \begin{proof}
   Let $\sigma' = f_\tau(\varrho)$ and note that our conditions and
   Lemma \ref{uniform} imply that $\dm(\tau,\sigma')=1+ \dm(\tau,\varrho'')=k-1$
   and that  $\dm(\varrho,\varrho'')=2$.

   We first show that $\hR_{\varrho''}(\varrho) = \hR_\tau(\varrho)$.
   Recall that $\hR_{\tau}(\varrho) = B^*_{k}(\tau) \cap
   B^*_1(\varrho)$ and $\hR_{\varrho''}(\varrho) = B^*_2(\varrho'')
   \cap B^*_1(\varrho)$.  Since  $\dm(\tau,\varrho'')=k-2$,
   necessarily $B^*_2(\varrho'') \subseteq
   B^*_k(\tau)$, and consequently, $\hR_{\varrho''}(\varrho) \subseteq
   \hR_\tau(\varrho)$.
In particular, note that $\varrho' \subseteq
   \hR_{\varrho''}(\varrho) \subseteq \hR_\tau(\varrho)$. Consequently,
   $\sigma' \subseteq B^*_1(\hR_\tau(\varrho)) \subseteq
   B^*_1(\varrho')$.
Since $\sigma' \subseteq B^*_{k-1}(\tau)$, we have $\sigma'
   \subseteq B^*_{k-1}(\tau) \cap B^*_1(\varrho') =
   \hR_\tau(\varrho')$. Therefore, by the definition of $\varrho'' \subseteq
   f_\tau(\varrho')$, we have $\sigma' \subseteq B^*_1(\varrho'')$.
Consequently, $B^*_1(\sigma') \subseteq B^*_2(\varrho'')$ and thus
   $\hR_\tau(\varrho) \subseteq B^*_1(\sigma') \subseteq
   B^*_2(\varrho'')$. Therefore $\hR_\tau(\varrho) \subseteq
   B^*_2(\varrho'') \cap B^*_1(\varrho) = \hR_{\varrho''}(\varrho)$ and
   thus $\hR_\tau(\varrho) = \hR_{\varrho''}(\varrho)$.

   Let $\hR = \hR_\tau(\varrho) = \hR_{\varrho''}(\varrho)$ and recall
   that $\varrho' = f_{\varrho''}(\varrho) = B^*_1(\varrho'') \cap
   B^*_1(\hR)$ and that $\sigma' = f_{\tau}(\varrho) = B^*_{k-1}(\tau)
   \cap B^*_1(\hR)$. Since $\sigma' \subseteq B^*_1(\varrho'')$, necessarily
   $\sigma' \subseteq \varrho'$. Conversely, since $\dm(\tau, \varrho'')=k-2$,
   necessarily $B^*_1(\varrho'') \subseteq B^*_{k-1}(\tau)$, and consequently,
   $\varrho' \subseteq \sigma'$. Therefore $\varrho' = \sigma'$ and the
   lemma holds.
\end{proof}

  We prove the proposition by induction on the length $l$ of the normal clique-path
  $\gamma'_{\tau\sigma}$. If $l \leq 2$, there is nothing to
  prove. Assume now that $l \geq 3$ and let $k = \dm(\tau,\sigma)$.

  Suppose first that $\dm(\tau,\varrho_{l-1}) = k-1$. Since
  $\varrho_{l-1}\cup \sigma$ is a clique and since $\dm(s,\tau) = k$
for every $s \in \sigma$, necessarily $\dm(p',\tau) = k-1$
for every $p' \in \varrho_{l-1}$.  By the
  induction hypothesis, the clique-path $\gamma'_{\tau\varrho_{l-1}} =
  (\tau = \varrho_0, \varrho_1, \varrho_2, \ldots,\varrho_{l-1})$
  coincides with $\gamma_{\tau\varrho_{l-1}}$. Consequently, $l = k$
  and $\varrho_{l-2} = f_\tau(\varrho_{l-1})$. Applying
  Lemma~\ref{lem-ncp-lg1} with $\varrho = \sigma$, $\varrho' =
  \varrho_{l-1}$ and $\varrho''=\varrho_{l-2}$, we have that
  $f_\tau(\sigma) = f_{\varrho_{l-2}}(\sigma) = \varrho_{l-1}$. Hence,
  $\gamma'_{\tau\sigma}$ and $\gamma_{\tau\sigma}$ coincide.

  Suppose now that $\dm(\tau,\varrho_{l-1}) \geq k$. Note that in this
  case, necessarily $l \geq k+1$ and so $\dm(\varrho_l, \tau) = k \leq l-1$. Consider the minimal index $i$ for
  which there exists $p \in \varrho_i$ such that $\dm(p,\tau) \leq i-1$.
Note that $i \geq 2$ since otherwise $\tau =
  \varrho_0=\{p\}$ and $\varrho_0 \cap \varrho_1 \neq \emptyset$,
  contradicting the fact that $\gamma'_{\tau\sigma}$ is a normal
  clique-path. Note also that since $\gamma'_{\tau\sigma}$ is a normal
  clique-path, we have $\du{\varrho_0}{\varrho_2}{2}$ and thus $i \geq
  3$.  By the induction hypothesis, $\gamma'_{\tau\varrho_{i-1}} = (\tau =
  \varrho_0, \varrho_1, \varrho_2, \ldots,\varrho_{i-1})$ and
  $\gamma_{\tau\varrho_{i-1}}$ coincide. In particular, this implies
  that $\varrho_{i-2} = f_\tau(\varrho_{i-1})$. Note that $p \in
  B^*_{i-1}(\tau)$ by our choice of $p$ and that $p \in
  B^*_1(\varrho_{i-1})$ since $\varrho_{i-1} =
  f_{\varrho_{i-2}}(\varrho_i)$. Consequently, $p \in
  \hR_{\tau}(\varrho_{i-1}) \subseteq B^*_1(\varrho_{i-2})$. But then
  $\varrho_{i}$ and $\varrho_{i-2}$ are not at uniform-distance $2$,
  contradicting the fact that $\gamma'_{\tau\sigma}$ is a normal
  clique-path. This finishes the proof of Proposition~\ref{prop-ncp-lg}.
\end{proof}

  To conclude the proof of Theorem~\ref{th-ncp}, consider any sequence
  $P = (s_0, s_1, \ldots, s_k)$ such that $s_i \in \sigma_i$ for $0
  \leq i \leq k$. Note that $P$ is a path since $\sigma_i \cup
  \sigma_{i+1}$ is a clique for every $0\leq i \leq k-1$, and that it
  is a shortest path since $d(s_0,s_k) = \dm(\sigma_0,\sigma_k) = k$.
\end{proof}

\subsection{Normal paths in Helly-graphs}

In this subsection, we define the notion of a normal path between any
two vertices $t$ and $s$ of a Helly graph.  Analogously to normal
clique-paths, normal paths can be characterized in a local-to-global
way, and therefore they are locally recognized.  Any two vertices
$t,s$ of $G$ can be connected by at least one normal path, and all
normal $(t,s)$-paths are hosted by the normal clique-path
$\gamma_{ts}$.

A path  $(t=s_0, s_1, \ldots, s_k=s)$ between two vertices $t$ and $s$ of a
Helly graph $G$ is called a \emph{normal path} if the following
local  conditions hold:
\begin{enumerate}[(1)]
  \item for any $1 \leq i \leq k-1$, $d(s_{i-1},s_{i+1})=2$,
  \item for any $1 \leq i \leq k-1$, $s_i\in f_{s_{i-1}}(s_{i+1})$.
\end{enumerate}

\begin{proposition}[{Normal paths}]\label{th-np}
  A path $P_{ts}=(t=s_0, s_1, \ldots, s_k=s)$ between two vertices $t$
  and $s$ of a Helly graph $G$ is a normal path if and only
  $s_i\in f_t(s_{i+1})$ for any $1 \leq i \leq k-1$. In particular,
  this implies that $P_{ts}$ is a shortest path of $G$. If
  $\gamma_{ts}=(\{t\}=\sigma_0, \sigma_1, \ldots, \sigma_k=\{s\})$ is
  the unique normal clique-path between $t$ and $s$, then for any
  normal path $P'_{ts}=(t=s_0, s_1, \ldots, s_k=s)$, we have
  $s_i \in \sigma_i$ for $0 \leq i \leq k$.
\end{proposition}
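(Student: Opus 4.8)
The plan is to transpose the normal clique-path machinery of Theorem~\ref{th-ncp} --- together with its auxiliary Lemmas~\ref{uniform}, \ref{lem-ncp-inclusion}, \ref{lem-ncp-gl} and~\ref{lem-ncp-lg1} --- to the situation in which every clique in sight is a single vertex. The observation that makes this possible is that all imprints occurring in the definition of a normal path are imprints $f_{s_{i-1}}(s_{i+1})$ and $f_t(s_{i+1})$ of singletons with respect to singletons; for vertices uniform-distance coincides with ordinary distance, so $\du{s_{i-1}}{s_{i+1}}{2}$ holds as soon as $d(s_{i-1},s_{i+1})=2$, and the four cited lemmas apply directly once the cliques $\tau,\sigma,\sigma',\sigma''$ are specialised to singletons.

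For the easy implication of the equivalence I would assume $P_{ts}$ is a shortest path with $s_i\in f_t(s_{i+1})$ for all $i$. Then $d(s_{i-1},s_{i+1})=2$ is immediate, since three consecutive vertices of a geodesic are at pairwise distances $1,1,2$; this gives condition~(1). For condition~(2) I would fix $i$ (the case $i=1$ being trivial as $s_0=t$) and apply Lemma~\ref{lem-ncp-gl} with $\tau=\{t\}$, $\sigma=\{s_{i+1}\}$, $\sigma'=\{s_i\}$ and $\sigma''=\{s_{i-1}\}$: geodesicity gives $\du{s_{i+1}}{t}{i+1}$, while the hypotheses $s_i\in f_t(s_{i+1})$ and $s_{i-1}\in f_t(s_i)$ furnish $\sigma'\subseteq f_\tau(\sigma)$ and $\sigma''\subseteq f_\tau(\sigma')$. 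The lemma then yields $f_t(s_{i+1})=f_{s_{i-1}}(s_{i+1})$, whence $s_i\in f_{s_{i-1}}(s_{i+1})$, as required.

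For the converse implication I would argue by induction on the length $k$ of the normal path, proving simultaneously that $P_{ts}$ is geodesic and that $s_i\in f_t(s_{i+1})$ for $1\le i\le k-1$; the cases $k\le 2$ are direct from condition~(1). The prefix $(s_0,\dots,s_{k-1})$ is again a normal path, so by induction it is geodesic and satisfies the global imprint condition, in particular $s_{k-2}\in f_t(s_{k-1})$. The main obstacle --- establishing geodesicity of the full path --- I would handle exactly as in the ``bad case'' of Proposition~\ref{prop-ncp-lg}: if $d(t,s_k)<k$, then $s_k\in B^*_{k-1}(t)\cap B^*_1(s_{k-1})=\hR_t(s_{k-1})$, while $s_{k-2}\in f_t(s_{k-1})$, and since every vertex of $\hR_t(s_{k-1})$ is adjacent to every vertex of $f_t(s_{k-1})$ this forces $d(s_{k-2},s_k)\le 1$, contradicting condition~(1). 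Once geodesicity is secured, Lemma~\ref{lem-ncp-lg1} applied with $\tau=\{t\}$, $\varrho=\{s_k\}$, $\varrho'=f_{s_{k-2}}(s_k)$ and $\varrho''=\{s_{k-2}\}$ gives $f_{s_{k-2}}(s_k)=f_t(s_k)$, hence $s_{k-1}\in f_t(s_k)$, closing the induction.

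Finally, for the hosting statement I would argue by downward induction that $s_i\in\sigma_i$. The canonical clique-path satisfies $\sigma_i=f_t(\sigma_{i+1})$ and $\du{\sigma_i}{t}{i}$ (a downward induction from $\sigma_k=\{s\}$ using the last assertion of Lemma~\ref{uniform}), and $d(t,s_{i+1})=i+1$ by the geodesicity already proved; hence the inclusion $\{s_{i+1}\}\subseteq\sigma_{i+1}$ together with the last sentence of Lemma~\ref{lem-ncp-inclusion} yields $f_t(s_{i+1})\subseteq f_t(\sigma_{i+1})=\sigma_i$, and since $s_i\in f_t(s_{i+1})$ we conclude $s_i\in\sigma_i$. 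I expect the only genuinely delicate point to be the geodesicity argument in the converse implication, where the uniform-distance-$2$ condition~(1) is precisely what prevents the path from doubling back toward $t$.
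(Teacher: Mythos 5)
Your proposal is correct and follows essentially the same route as the paper's proof: the same application of Lemma~\ref{lem-ncp-gl} (with singleton cliques) for the ``if'' direction, the same doubling-back contradiction via $s_k\in \hR_t(s_{k-1})\subseteq B_1^*(f_t(s_{k-1}))\subseteq B_1(s_{k-2})$ followed by Lemma~\ref{lem-ncp-lg1} for the ``only if'' direction, and the same downward induction using Lemma~\ref{lem-ncp-inclusion} for the hosting statement. If anything, your instantiation of Lemma~\ref{lem-ncp-lg1} with $\varrho'=f_{s_{k-2}}(s_k)$ rather than the paper's $\varrho'=\{s_{k-1}\}$ is slightly more faithful to that lemma's hypothesis $\varrho'=f_{\varrho''}(\varrho)$, and the remaining conditions it requires ($\dm(t,\varrho')=k-1$ and $s_{k-2}\in f_t(\varrho')$) do follow from the geodesicity already secured together with $s_{k-2}\in f_t(s_{k-1})$ and Lemma~\ref{lem-ncp-inclusion}.
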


\begin{proof}
  The proof of the first statement of the proposition is similar to
  the proof of Theorem~\ref{th-ncp}. We first prove that $P_{ts}$ is a
  normal path. Observe that Lemma~\ref{uniform}, $P_{ts}$ is a
  shortest path of $G$. To do so, we proceed by induction on the
  distance $k=d(t,s)$.  If $k \leq 2$, there is nothing to
  prove. Assume now that $k \geq 3$. Since $d(t,s_k) = k$,
  $s_{k-1} \in f_t(s_k)$, and $s_{k-2}\in f_t(s_{k-1})$, we have
  $d(t,s_{k-1}) = k-1$ and $d(t,s_{k-2}) = k-2$.  By the induction
  hypothesis, $(s_0=t, s_1, s_2, \ldots, s_{k-1})$ is a normal
  path. Applying Lemma~\ref{lem-ncp-gl} with $\sigma= \{s_k\}$,
  $\sigma' = \{s_{k-1}\}$, $\sigma''= \{s_{k-2}\}$, and
  $\tau = \{t\}$, we conclude that
  $s_{k-1} \in f_t(s_k) = f_{s_{k-2}}(s_k)$ and thus $P_{ts}$ is a
  normal path as well.

We now prove that any normal path $P'_{ts}=(t = p_0,
p_1, \ldots,p_l=s)$ is a shortest path of $G$ and that
$p_i \in f_t(p_{i+1})$ for every $1 \leq
i \leq l$. To do so, we proceed by induction on the length $l$ of
$P'_{ts}$. If $l \leq 2$, there is nothing to
prove. Assume now that $l \geq 3$ and let $k = d(t,p_l)$. By the induction hypothesis
applied to the normal path $P'_{tp_{l-1}} = (t = p_0, p_1, \ldots,p_{l-1})$,
$P'_{tp_{l-1}}$ is a shortest path of $G$ and
we have $p_i\in f_t(p_{i+1})$ for every $1 \leq i \leq
l-2$. In particular, $d(t,p_{l-1})=l-1$.

Suppose first that $d(t,p_{l-1}) = k-1$.   Then $l = k$, therefore $P'_{ts}$ is a shortest path.
Since $p_{l-2} \in f_\tau(p_{l-1})$, applying Lemma~\ref{lem-ncp-lg1} with $\varrho =\{s\}$,
$\varrho' = f_{p_{l-2}}(s)$, and $\varrho''=\{p_{l-2}\}$, we have
that $f_t(s) = f_{p_{l-2}}(s)$, and thus $p_{l-1} \in
f_{p_{l-2}}(s) = f_t(s)$. Consequently, we have $p_i \in f_t(p_{i+1})$
for every $1 \leq i \leq l$ and the proposition holds in
this case. Suppose now that $l-1=d(t,p_{l-1}) \geq k$, i.e., $l \geq k+1$.
By the induction hypothesis applied to the path $P'_{tp_{l-1}}$,
we have $p_{l-2} \in f_t(p_{l-1})$. Note that $p_l \in B_{l-1}(t)$
because $d(t,p_l)=k\le l-1$ and that $p_l \in B_1(p_{l-1})$.
Consequently, $p_l \in \hR_{t}(p_{l-1}) \subseteq
B_1(p_{l-2})$. But then $d(p_l,p_{l-2}) \leq 1$, contradicting the
fact that $P'_{ts}$ is a normal path. This ends the proof of the
first statement of the proposition.

Consider now the normal clique-path $\gamma_{ts}=(\{t\}=\sigma_0,
\sigma_1, \ldots, \sigma_k=\{s\})$ between two vertices $t$ and $s$
and any normal path $P_{ts}=(t=s_0, s_1, \ldots, s_k=s)$. We show by reverse
induction on $i$ that $s_i \in \sigma_i$ for $0 \leq i \leq k$. For $i
= k$, there is nothing to prove.  Suppose now that $i < k$ and that
$s_{i+1}\in \sigma_{i+1}$.  Since $s_i\in f_t(s_{i+1})$ by the first assertion of
the proposition and since
$f_t(s_{i+1}) \subseteq f_t(\sigma_{i+1}) = \sigma_i$ by Lemma
\ref{lem-ncp-inclusion}, we have $s_i\in \sigma_i$.
\end{proof}

\begin{remark}
  The example of Figure~\ref{fig-normal-path-clique} is a Helly graph
  and contains two vertices $s, t$ such that the cliques of the normal
  clique-path $\gamma_{ts}$ contain a vertex not included in any
  normal $(t,s)$-path.
\end{remark}

\begin{figure}[h]
\begin{center}
\includegraphics{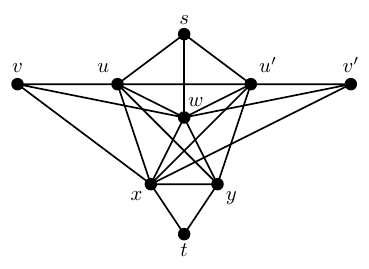}
\end{center}
\caption{In this graph, $y$ appears in a clique of the normal
  clique-path $\gamma_{ts} = (t, \{x,y\},\{u,u',w\}, s)$. However, for
  any normal path $(t=s_0, s_1, s_2, s_3=s)$, $\hR_t(s_2)$ contains
  either $v$ or $v'$ and thus $y \notin
  f_t(s_2)$.}\label{fig-normal-path-clique}
\end{figure}

\subsection{Normal (clique-)paths are fellow travelers}

\begin{proposition}\label{prop-bicombing} Let $G$ be a Helly graph.
Consider two cliques $\sigma, \tau$,  two
  vertices $p,q$ of $G$, and two integers $k' \geq k$ such that
  $\du{p}{\sigma}{k'}$, $\du{q}{\tau}{k}$, $d(\sigma, \tau) \leq 1$,
  and $d(p,q) \leq 1$. For the normal clique-paths
  $\gamma_{p\sigma}=(p=\sigma_0, \sigma_1, \ldots, \sigma_{k'}=\sigma)$
  and $\gamma_{q\tau}=(q=\tau_0, \tau_1, \ldots, \tau_k=\tau)$, we
  have $d(\sigma_i,\tau_i) \leq 1$ for every $0 \leq i \leq k$ and
  $d(\sigma_i,\tau_k) \leq 1$ for every $k \leq i \leq k'$.
\end{proposition}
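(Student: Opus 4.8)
The plan is to prove the statement by induction on $k'$, deleting the last clique from the longer normal clique-path and reducing everything to a single-step comparison. First I would record the structural facts about normal clique-paths that make this work. By Theorem~\ref{th-ncp} every vertex selection along $\gamma_{p\sigma}$ is a shortest path, so $\du{\sigma_i}{\sigma_j}{|i-j|}$ for all indices, and any contiguous subsequence is again a normal clique-path; moreover $\sigma_i=f_p(\sigma_{i+1})$, and from the definition of the imprint each vertex of $f_p(\sigma_{i+1})$ is adjacent to every vertex of $\sigma_{i+1}$, so $\sigma_i\cup\sigma_{i+1}$ is a clique. A short computation with the triangle inequality using $\du{p}{\sigma}{k'}$, $\du{q}{\tau}{k}$, $d(p,q)\le 1$, and $d(\sigma,\tau)\le 1$ also shows $k\le k'\le k+2$, so the amount by which the two paths can fail to be aligned is bounded.

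Next I would organize the induction. If $k'=k$ (the \emph{aligned case}), I would delete the terminal cliques $\sigma=\sigma_k$ and $\tau=\tau_k$; the prefixes are the normal clique-paths $\gamma_{p\sigma_{k-1}}$ and $\gamma_{q\tau_{k-1}}$, so once I know $d(\sigma_{k-1},\tau_{k-1})\le 1$ the induction hypothesis yields $d(\sigma_i,\tau_i)\le 1$ for $0\le i\le k-1$, while the case $i=k$ is the hypothesis $d(\sigma,\tau)\le 1$. If $k'>k$ (so $k'\in\{k+1,k+2\}$), I would delete only the terminal clique $\sigma=\sigma_{k'}$; applying the induction hypothesis to $\gamma_{p\sigma_{k'-1}}$ and $\gamma_{q\tau}$, whose longer length is $k'-1\ge k$, once $d(\sigma_{k'-1},\tau)\le 1$ is known gives the first assertion for $0\le i\le k$ and the second for $k\le i\le k'-1$, the remaining index $i=k'$ being again $d(\sigma,\tau)\le 1$. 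Thus the whole proposition reduces to two single-step facts: the \emph{aligned step} $d(f_p(\sigma),f_q(\tau))\le 1$ under $\du{p}{\sigma}{k}$ and $\du{q}{\tau}{k}$, and the \emph{unaligned step} $d(f_p(\sigma),\tau)\le 1$ under $\du{p}{\sigma}{k'}$ with $k<k'\le k+2$; both under $d(p,q)\le 1$ and $d(\sigma,\tau)\le 1$. The local steps also supply the base of the induction (small $k$, e.g.\ $k=0$, where $\tau=\{q\}$).

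The heart of the argument, and the step I expect to be the main obstacle, is proving these two single-step facts from the imprint definitions together with the Helly and weak-modularity properties of $G$ (recall Helly graphs are weakly modular). For the unaligned step I would use that every vertex of $f_p(\sigma)$ is adjacent to all of $\sigma$, so a witnessing pair $s\in\sigma$, $t\in\tau$ with $d(s,t)\le 1$ already lies within distance two of $f_p(\sigma)$; I would then push $t$, or a suitable neighbour of it, into $f_p(\sigma)$ using the triangle condition together with the distance bounds on $d(q,s)$ and $d(q,t)$ that are forced by $k<k'\le k+2$. For the aligned step I would exploit Lemma~\ref{lem-ncp-inclusion}, which gives $f_p(s)\subseteq f_p(\sigma)$ and $f_q(t)\subseteq f_q(\tau)$ for the witnessing vertices, reducing to comparing the singleton imprints $f_p(s)$ and $f_q(t)$; when $p=q$ and $s=t$ these coincide, and the general case is treated by the triangle and quadrangle conditions applied to the unit moves $p\to q$ and $s\to t$, choosing compatible neighbours on the geodesics toward $p$ and toward $q$.

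I expect the genuine difficulty to lie entirely in the case analysis for the two single-step lemmas, in particular in controlling how the two-stage ball intersection defining an imprint reacts to a unit perturbation of both the base point and the target clique, and in treating uniformly the sub-cases $p=q$ versus $p\sim q$ and $\sigma\cap\tau\neq\emptyset$ versus $d(\sigma,\tau)=1$. The inductive scaffolding, by contrast, should be routine once the bounded misalignment $k'\le k+2$ and the prefix-closure of normal clique-paths are in hand.
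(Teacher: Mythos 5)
Your inductive scaffolding is exactly the paper's: induction on $k'$, the bound $k\le k'\le k+2$, peeling the terminal clique, and the reduction via Lemma~\ref{lem-ncp-inclusion} to comparing singleton imprints $f_p(s)$ and $f_q(t)$ for witnessing vertices $s\in\sigma$, $t\in\tau$ with $d(s,t)=d(\sigma,\tau)$. Your unaligned step is also fine, and in fact simpler than you anticipate: no triangle condition or ``pushing'' is needed, because when $k<k'$ one has $d(p,t)\le d(p,q)+d(q,t)\le k+1\le k'$, so $t$ itself lies in $\hR_p(s)=B^*_{k'}(p)\cap B^*_1(s)$, hence every vertex of $f_p(s)\subseteq f_p(\sigma)=\sigma_{k'-1}$ is adjacent to $t$, giving $d(\sigma_{k'-1},\tau)\le 1$ immediately; Lemma~\ref{uniform} then licenses the inductive call on $\gamma_{p\sigma_{k'-1}}$ and $\gamma_{q\tau}$.

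The genuine gap is in your aligned step, precisely where you flag the difficulty. The easy sub-case, where $d(p,t)\le k$ (which subsumes $p=q$ and $s=t$), is an inclusion chase rather than a TC/QC argument: $t\in B_k(p)\cap B_1(s)=\hR_p(s)$ forces $f_p(s)\subseteq B_1(t)$, and $f_p(s)\subseteq B_{k-1}(p)\subseteq B_k(q)$ then gives $f_p(s)\subseteq\hR_q(t)$, whence $f_q(t)\subseteq B^*_1(f_p(s))$ and $d(f_p(s),f_q(t))\le 1$. But in the remaining sub-case $d(q,s)=d(p,t)=k+1$, your plan --- triangle and quadrangle conditions applied to the unit moves $p\to q$, $s\to t$, ``choosing compatible neighbours on geodesics'' --- does not suffice: TC/QC produce single common neighbours, while the imprint is a \emph{set-level} intersection $B^*_{k-1}(\cdot)\cap B^*_1(\hR)$, and a unit perturbation of both base and target changes $\hR$ in a way that pointwise neighbour choices do not control. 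The paper's missing idea is a relay vertex: take $u\in f_p(t)$ (the imprint of $t$ with respect to $p$, where $\dm(p,t)=k+1$). One checks $d(p,u)=k$, $u$ adjacent to $s$, $t$, and all of $f_q(t)$ (since $\{s,t\}\cup f_q(t)\subseteq\hR_p(t)$), and $d(q,u)=k$; the easy sub-case applied to the pair $(s,u)$ yields $d(f_p(s),f_q(u))\le 1$. The conclusion then transfers back through the monotonicity $f_q(u)\subseteq f_q(t)$, which follows from the chain $\hR_q(t)=B_1(t)\cap B_k(q)\subseteq B_1(t)\cap B_{k+1}(p)=\hR_p(t)\subseteq B_1(u)$, so that $\hR_q(t)\subseteq\hR_q(u)$. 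Without this auxiliary vertex and the imprint-monotonicity mechanism (or an equivalent device), your argument stalls exactly at its self-identified crux, so as written the proposal is an accurate skeleton with the central lemma unproved.
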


\begin{proof}
  We prove the result by induction on $k'$.  If $ k' \leq 1$, there is
  nothing to prove. Assume now that $k' \geq 2$ and that the lemma
  holds for any cliques $\sigma, \tau$, any vertices $p,q$, and any
  integers $l \leq l' \leq k'-1$ such that $\du{p}{\sigma}{l'}$,
  $\du{q}{\tau}{l}$, $d(\sigma, \tau) \leq 1$, and $d(p,q) \leq 1$.

  Suppose first that $k < k'$.  Note that $k+1 \leq k' \leq k+2$ since
  $d(p,q)\leq 1$ and $d(\sigma,\tau)\le 1$. Let $s \in \sigma$ and $t
  \in \tau$ such that $d(s,t) = d(\sigma,\tau) \leq 1$. Note that
  $d(p,t) \leq d(q,t)+1 = k+1 \leq k'$. Consequently, $t \in \hR_p(s)$
  and thus $f_p(s) \subseteq B_1(t)$. Consequently, since $f_p(s)
  \subseteq f_p(\sigma) = \sigma_{k'-1}$ by
  Lemma~\ref{lem-ncp-inclusion}, we have $d(\sigma_{k'-1},\tau_k) \leq
  1$. By Lemma~\ref{uniform}, we have $\du{p}{\sigma_{k'-1}}{k'-1}$
  and thus we can apply the induction hypothesis to $\sigma_{k'-1},
  \tau, p$, and $q$. Therefore, we have $d(\sigma_i,\tau_i) \leq 1$
  for every $0 \leq i \leq k$ and $d(\sigma_i,\tau_k) \leq 1$ for
  every $k \leq i \leq k'-1$.  Since by our assumptions, we have
  $d(\sigma_{k'},\tau_k) \leq 1$, we are done.

  Suppose now that $k = k'$. By the induction hypothesis, it is enough to
  show that $d(f_p(\sigma),f_q(\tau)) \leq 1$. Consider any two
  vertices $s \in \sigma$ and $t \in \tau$ such that $d(s,t) =
  d(\sigma,\tau)$. By Lemma~\ref{lem-ncp-inclusion}, it is enough to
  show that $d(f_p(s),f_q(t)) \leq 1$.

  Assume first that $d(p,t) \leq k$ (note that we are in this case
  when $s = t$ or $p = q$). Note that $t \in B_k(p) \cap B_1(s) =
  \hR_p(s)$ and consequently, $f_p(s) \subseteq B_1(t)$. Since $f_p(s)
  \subseteq B_{k-1}(p) \subseteq B_k(q)$, we have $f_p(s) \subseteq
  B_k(q) \cap B_1(t) = \hR_q(t)$. Therefore, $f_q(t) \subseteq
  B^*_1(f_p(s))$ and $d(f_p(s),f_q(t)) \leq 1$. Using symmetric
  arguments, we have $d(f_p(s),f_q(t)) \leq 1$ when $d(q,s) \leq k$.

  Assume now that $d(q,s) = d(p,t) = k+1$. Note that this implies that
  $p \neq q$, $s \neq t$, $\du{p}{f_q(t)}{k}$ and $\du{q}{f_p(s)}{k}$.
  Since $d(p,s) = k$ and $\du{p}{f_q(t)}{k}$, we have $\{s,t\}\cup
  f_q(t) \subseteq \hR_p(t)$.  Consider a vertex $u \in f_p(t)$. By
  definition of $u$, we have $d(p,u) = k$ and $\{s,t\}\cup f_q(t) \subseteq
  B_1(u)$. Note also that $d(q,u) = k$ since $d(q,s) = k+1$ and since
  $\dm(q,f_q(t)) = k-1$. Therefore, by the previous case replacing $t$
  by $u$, we have $d(f_p(s),f_q(u)) \leq 1$.
  Note that $\hR_q(t) = B_1(t) \cap B_k(q) \subseteq B_1(t) \cap
  B_{k+1}(p) = \hR_p(t)$.  Since $u \in f_p(t)$, we obtain
  $\hR_q(t) \subseteq \hR_p(t) \subseteq B^*_1(f_p(t)) \subseteq B_1(u)$. Consequently,
  $\hR_q(t) \subseteq B_1(u) \cap B_k(q) =\hR_q(u)$ and $f_q(u) \subseteq f_q(t).$  Therefore
  $d(f_p(s),f_q(t))\leq d(f_p(s),f_q(u)) \leq 1$, concluding the proof.
\end{proof}

From Propositions~\ref{th-np} and~\ref{prop-bicombing}, we immediately
get the following result.

\begin{corollary}\label{normal-bicombing}
  In a Helly graph $G$, the set of normal paths satisfies the 2-sided
  fellow traveler property. More precisely, for any four vertices $s, t, p, q$ and two integers
  $k' \geq k$ such that $d(p, s) = k'$, $d(q, t) = k$, $d(s, t) \leq
  1$ and $d(p,q) \leq 1$ and for any normal paths $P = (p=s_0, s_1,
  \ldots, s_{k'}=s)$ and $Q = (q=t_0, t_1, \ldots, t_k=t)$, we have
  $d(s_i,t_i) \leq 3$ for every $0 \leq i \leq k$ and $d(s_i,t_k) \leq
  3$ for every $k \leq i \leq k'$.
\end{corollary}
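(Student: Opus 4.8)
The statement is designed to follow immediately from Propositions~\ref{th-np} and~\ref{prop-bicombing}, so the plan is to combine them and pay the small price of passing from cliques to vertices. First I would specialize Proposition~\ref{prop-bicombing} to the degenerate cliques $\sigma=\{s\}$ and $\tau=\{t\}$. Since a single vertex is a clique, and since for vertices the uniform-distance coincides with the ordinary distance, the hypotheses $\du{p}{\{s\}}{k'}$ and $\du{q}{\{t\}}{k}$ read exactly as $d(p,s)=k'$ and $d(q,t)=k$, while $d(\{s\},\{t\})=d(s,t)\le 1$ and $d(p,q)\le 1$ are assumed. Writing $\gamma_{ps}=(\{p\}=\sigma_0,\ldots,\sigma_{k'}=\{s\})$ and $\gamma_{qt}=(\{q\}=\tau_0,\ldots,\tau_k=\{t\})$ for the two normal clique-paths produced by Theorem~\ref{th-ncp}, Proposition~\ref{prop-bicombing} then yields $d(\sigma_i,\tau_i)\le 1$ for every $0\le i\le k$ and $d(\sigma_i,\tau_k)\le 1$ for every $k\le i\le k'$, where $d(\cdot,\cdot)$ is the minimal distance between the two cliques.

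Next I would invoke the second assertion of Proposition~\ref{th-np} to place the vertices of the two normal paths inside these cliques. Applied to the normal path $P=(p=s_0,\ldots,s_{k'}=s)$ with basepoint $p$ and endpoint $s$, it gives $s_i\in\sigma_i$ for each $i$; applied to $Q=(q=t_0,\ldots,t_k=t)$ it gives $t_i\in\tau_i$ for each $i$. Here I should make sure that the orientation of $\gamma_{ps}$ and $\gamma_{qt}$ (running from basepoint to endpoint) matches the convention of Proposition~\ref{th-np}, which it does by construction in Proposition~\ref{prop-bicombing}.

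Finally, I would convert the clique-distance bound into the claimed vertex-distance bound using that cliques are complete subgraphs, hence have diameter at most $1$. For $0\le i\le k$, choose $x\in\sigma_i$ and $y\in\tau_i$ realizing $d(\sigma_i,\tau_i)\le 1$; since $s_i,x$ lie in the single clique $\sigma_i$ and $t_i,y$ lie in the single clique $\tau_i$, the triangle inequality gives $d(s_i,t_i)\le d(s_i,x)+d(x,y)+d(y,t_i)\le 1+1+1=3$. The identical argument applied to $\sigma_i$ and $\tau_k=\{t\}$ (now with $t_k=t$) yields $d(s_i,t_k)\le 3$ for every $k\le i\le k'$, which is precisely the second inequality.

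I do not expect any genuine obstacle here, as all the substance resides in Propositions~\ref{th-np} and~\ref{prop-bicombing}; the only points requiring care are the identification of a single vertex with a uniform-distance clique when applying Proposition~\ref{prop-bicombing}, and the tracking of the loss of a factor $3$, which is exactly the discrepancy between a combing by cliques (a geodesic bicombing of constant $1$ in $\beta(G)$, as in Theorem~\ref{biautomatic}) and the induced combing by vertices in $G$.
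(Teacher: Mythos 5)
Your proposal is correct and is exactly the derivation the paper intends when it states that the corollary follows immediately from Propositions~\ref{th-np} and~\ref{prop-bicombing}: specialize Proposition~\ref{prop-bicombing} to the singleton cliques $\sigma=\{s\}$, $\tau=\{t\}$, use the second assertion of Proposition~\ref{th-np} to host $s_i\in\sigma_i$ and $t_i\in\tau_i$ (orientations matching, since normal paths are geodesics of length $k'$ resp.\ $k$ by the first assertion), and absorb the two clique diameters to pass from the bound $d(\sigma_i,\tau_i)\le 1$ to $d(s_i,t_i)\le 3$. All steps check out, including the observation that for the tail range $k\le i\le k'$ one even gets the sharper bound $d(s_i,t_k)\le 2$ since $\tau_k=\{t\}$.
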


Now, we are ready to conclude the proof of biautomaticity from Theorem \ref{biautomatic}.

\begin{proposition}\label{2recog_Helly} Let a group $\Gamma$ act geometrically on a Helly graph $G$.  Then $\Gamma$ is biautomatic.
\end{proposition}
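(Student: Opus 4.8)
The plan is to apply Świątkowski's criterion, Theorem~\ref{swiat}, to the path system $\mathcal{P}$ consisting of all normal paths of $G$, so that the proof reduces to verifying the three hypotheses of that theorem. Since $\Gamma$ acts geometrically on the locally finite graph $G$, the induced action on paths is compatible with the combinatorics and the sets $\mathcal{S}_k$ of $\Gamma$-congruence classes of length-$k$ paths are finite, so the local-recognition framework applies.

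First I would check condition (1), local recognition. By definition a path $(s_0,\dots,s_k)$ is normal exactly when $d(s_{i-1},s_{i+1})=2$ and $s_i\in f_{s_{i-1}}(s_{i+1})$ for every interior index $i$, and these are conditions on the three consecutive vertices $s_{i-1},s_i,s_{i+1}$ alone. Letting $R\subseteq\mathcal{S}_2$ be the set of $\Gamma$-congruence classes of length-$2$ paths $(a,b,c)$ with $d(a,c)=2$ and $b\in f_a(c)$, one sees that a path of length $\ge 2$ is normal if and only if each of its length-$2$ subpaths lies in $R$, while paths of length $<2$ are vacuously normal. Hence $\mathcal{P}=\mathcal{P}_R$ is $2$-locally recognized. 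The one point demanding care is the prefix clause in the definition of $\mathcal{P}_R$, namely that each admitted vertex or edge extends to a length-$2$ normal path; this follows from the non-emptiness of imprints (Lemma~\ref{uniform}), the degenerate case of a single clique corresponding to a finite group for which biautomaticity is trivial.

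Condition (2) follows from the existence of normal paths. By Theorem~\ref{th-ncp} any two vertices $p,q$ are joined by a (unique) normal clique-path $\gamma_{pq}$, and by Proposition~\ref{th-np} choosing vertices $s_i\in f_p(s_{i+1})$ backwards from $q$ yields an honest normal $(p,q)$-path. In particular, for any basepoint $v_0$, any two vertices of the orbit $\Gamma\cdot v_0$ are joined by a path of $\mathcal{P}$. Condition (3), the $2$-sided fellow traveler property, is exactly the content of Corollary~\ref{normal-bicombing}. With all three hypotheses in place, Theorem~\ref{swiat} yields that $\Gamma$ is biautomatic.

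I expect no genuinely new obstacle at this final step: the substantive work has already been absorbed into the earlier results, in particular the uniqueness and local-to-global description of normal clique-paths (Theorem~\ref{th-ncp}) and, above all, the fellow traveler estimate of Proposition~\ref{prop-bicombing}. The final proposition is thus an assembly, and the only place calling for a careful line is matching the local definition of normal paths to the precise form of $\mathcal{P}_R$ through its prefix clause.
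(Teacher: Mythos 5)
Your proposal is correct and follows essentially the same route as the paper's proof: you apply {\'S}wi{\c{a}}tkowski's criterion (Theorem~\ref{swiat}) to the path system of all normal paths, getting condition (2) from the existence of normal paths (Theorem~\ref{th-ncp} and Proposition~\ref{th-np}), condition (3) from Corollary~\ref{normal-bicombing}, and condition (1) from the fact that normality is a condition on consecutive vertex triples, hence $2$-locally recognized, with finitely many $\Gamma$-types of radius-$2$ balls by cocompactness. Your explicit attention to the prefix clause of $\mathcal{P}_R$ is actually more detailed than the paper, which disposes of local recognition in a single sentence.
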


\begin{proof} Let $\mathcal P$ denote the set of all normal paths of $G$. We will prove now that the path system
${\mathcal P}$   satisfies the conditions (1)-(3) of Theorem~\ref{swiat}. Condition (2) is
satisfied because  any two vertices of $G$ are connected by a path of ${\mathcal P}$. That ${\mathcal P}$ satisfies the
$2$--sided fellow traveler property follows from Corollary~\ref{normal-bicombing}. Finally, condition (1) that the set ${\mathcal P}$
can be $2$--locally recognized  follows from the definition of normal paths and the fact that conditions (1) and (2)
of this definition can be tested within balls of $G$ of radius $2$. Since   $\Gamma$ acts geometrically on $G$, there exists only a constant number of types of  such balls.
\end{proof}

\begin{remark}
  Proposition \ref{2recog_Helly} can be also proved by viewing the set
  ${\mathcal P}^*$ of normal clique-paths of a Helly graph $G$ as
  paths of the face complex $F(X(G))$ of the clique complex of $G$ and
  establishing that ${\mathcal P}^*$ satisfies conditions (1)-(3) of
  Theorem \ref{swiat}.

  The set ${\mathcal P}^*$ in $F(X(G))$ gives rise to a set
  ${\mathcal P}'$ of paths of the first barycentric subdivision
  $\beta(G)$ of the clique complex $X(G)$ of $G$.  Combinatorially,
  $\beta(G)$ can be defined in the following way: the cliques of $G$
  are the vertices of $\beta(G)$ and two different cliques $\sigma$
  and $\sigma'$ are adjacent in $\beta(G)$ if and only if
  $\sigma\subset \sigma'$ or $\sigma'\subset \sigma$. For each path
  $P$ in ${\mathcal P}^*$, each edge $\sigma\sigma'$ of $P$ is
  replaced by the 2-path $(\sigma, \sigma\cup \sigma', \sigma')$ in
  the path $P'$ of ${\mathcal P}'$ corresponding to $P$. Again, one
  can establish that ${\mathcal P}^*$ satisfies conditions (1)-(3) of
  Theorem \ref{swiat}.
\end{remark}

 \section{Final remarks and questions}
\label{s:questions}

We strongly believe that the theory of Helly graphs, injective metric spaces and groups acting on them
deserves intensive studies on its own. In this article we focused mostly on geometric actions of groups on Helly graphs but, similarly to other nonpositive curvature settings, just proper or cocompact actions
should be studied as well. 

Below we pose a few arbitrary problems following the overall scheme of our main results: the first two concern 
examples of Helly groups, the last one is about their properties.

\begin{Prob}
	\label{q:Hgroups}
	(When) Are the following groups (virtually) Helly: mapping class groups, cubical small cancellation groups,  Artin groups, Coxeter groups?
\end{Prob}

Note that confirming a conjecture stated by the authors of the current article, Nima Hoda \cite{HodanonHelly}
proved recently that the Coxeter group acting on the Euclidean plane and generated by three reflections in the sides of the equilateral Euclidean triangle is not Helly. This group is \catz and systolic (hence also biautomatic).

\begin{Prob}
	\label{q:Hops}
	Combination theorems for group actions with Helly stabilisers. Is a free product of two Helly groups with amalgamation over an infinite cyclic subgroup Helly? Are groups hyperbolic relative to Helly subgroups Helly?
	(When) Are small cancellation quotients of Helly groups Helly?
\end{Prob}

As for general properties of Helly groups it is natural to ask which of the properties of \catz groups
are true in the Helly setting. For a choice of such properties a standard reference is the book \cite{BrHa}. 

\begin{Prob}
	\label{q:Hprops}
	Are abelian subgroups of Helly groups finitely generated? Is there a Solvable Subgroup Theorem for Helly groups? Describe centralizers of infinite order elements in Helly groups. 
	Construct low-dimensional
	models for classifying spaces for families of subgroups (e.g.\ for virtually cyclic subgroups) of Helly groups. Describe quasi-flats in Helly groups.
\end{Prob}

\section*{Acknowledgements}
We are very grateful to the anonymous referee for the careful reading
of all parts of the paper, for all his efforts and time spent, and for
his numerous corrections and improvements.

This work was partially supported by the grant 346300 for IMPAN from
the Simons Foundation and the matching 2015-2019 Polish MNiSW fund.
J.C.\ and V.C.\ were supported by ANR project DISTANCIA
(ANR-17-CE40-0015).
A.G.\ was partially supported by a public grant as part of the Fondation Math\'ematique Jacques Hadamard. 
H.H.\ was supported by JSPS KAKENHI Grant Number JP17K00029
and JST PRESTO Grant Number JPMJPR192A, Japan. 
D.O.\ was partially supported by (Polish) Narodowe Centrum Nauki, grants UMO-2017/25/B/ST1/01335 and UMO-2018/31/G/ST1/02681.

\tableofcontents
\setcounter{tocdepth}{2}

\bibliographystyle{plainurl}\bibliography{mybib}

\end{document}